\numberwithin{equation}{section}
\newtheorem{theorem}{Theorem}[section]
\newtheorem{lemma}[theorem]{Lemma}
\newtheorem{corollary}[theorem]{Corollary}
\newtheorem{proposition}[theorem]{Proposition}
\newtheorem*{theorem*}{Theorem}
\theoremstyle{definition}
\theoremstyle{remark}
\newtheorem*{remark}{Remark}
\theoremstyle{remark}
\newtheorem*{claim}{Claim}
\theoremstyle{definition}
\newcommand{\NN}{\mathbb{N}}
\newcommand{\RR}{\mathbb{R}}
\renewcommand{\SS}{\mathbb{S}}
\newcommand{\ZZ}{\mathbb{Z}}
\newcommand{\cB}{\mathcal B}
\newcommand{\cC}{\mathcal C}
\renewcommand{\cD}{\mathcal D}
\newcommand{\cE}{\mathcal E}
\newcommand{\cF}{\mathcal F}
\newcommand{\cG}{\mathcal G}
\renewcommand{\cH}{\mathcal H}
\newcommand{\cI}{\mathcal I}
\newcommand{\cK}{\mathcal K}
\newcommand{\cM}{\mathcal M}
\newcommand{\cO}{\mathcal O}
\renewcommand{\cR}{\mathcal R}
\newcommand{\cS}{\mathcal S}
\newcommand{\cT}{\mathcal T}
\newcommand{\cU}{\mathcal U}
\newcommand{\cV}{\mathcal V}
\newcommand{\bH}{\mathbf{H}}
\newcommand{\bx}{\mathbf{x}}
\newcommand{\by}{\mathbf{y}}
\newcommand{\bz}{\mathbf{z}}
\newcommand{\bOh}{\mathbf{0}}
\newcommand{\fF}{\mathfrak{F}}
\DeclareMathOperator{\tr}{tr}
\DeclareMathOperator{\Span}{span}
\DeclareMathOperator{\supp}{supp}
\newcommand{\bangle}[1]{\left\langle #1 \right\rangle}
\DeclareMathOperator{\Ric}{Ric}
\newcommand{\eps}{\varepsilon}
\DeclareMathOperator{\sing}{sing}
\DeclareMathOperator{\reg}{reg}
\DeclareMathOperator{\genus}{genus}
\DeclareMathOperator{\Graph}{graph}
\DeclareMathOperator{\Div}{div}
\title{Mean curvature flow with generic initial data}
\author{Otis Chodosh} 
\address{OC: Department of Mathematics, Bldg.\ 380, Stanford University, Stanford, CA 94305, USA}
\email{ochodosh@stanford.edu}
\author{Kyeongsu Choi}
\address{KC: School of Mathematics, Korea Institute for Advanced Study, 85 Hoegiro, Dongdaemun-gu, Seoul 02455, Republic of Korea}
\email{choiks@kias.re.kr}
\author{Christos Mantoulidis} 
\address{CM: Department of Mathematics, Rice University, Houston, TX 77005, USA}
\email{christos.mantoulidis@rice.edu}
\author{Felix Schulze}
\address{FS: Department of Mathematics, Zeeman Building, University of Warwick, Gibbet Hill Road, Coventry CV7 4AL,
UK}
\email{felix.schulze@warwick.ac.uk} 
\begin{document}

\begin{abstract}
We show that the mean curvature flow of generic closed surfaces in $\mathbb{R}^{3}$ avoids asymptotically conical and non-spherical compact singularities. We also show that the mean curvature flow of generic closed low-entropy hypersurfaces in $\RR^{4}$ is smooth until it disappears in a round point. The main technical ingredient is a long-time existence and uniqueness result for ancient mean curvature flows that lie on one side of asymptotically conical or compact shrinking solitons. 
\end{abstract}

\maketitle

\tableofcontents

\newpage

\section{Introduction}

\subsection{Overview of results} 

Mean curvature flow is the analog of the heat equation in extrinsic differential geometry. A family of surfaces $M(t) \subset \RR^{3}$ flows by mean curvature flow if
\begin{equation} \label{eq:mcf}
\left(\tfrac{\partial}{\partial t} \bx \right)^{\perp} = \bH_{M(t)}(\bx),
\end{equation}
where $\bH_{M(t)}(\bx)$ denotes the mean curvature vector of the surface $M(t)$ at $\bx$. Unlike the traditional heat equation, mean curvature flow is nonlinear. As a result, the mean curvature flow starting at a closed  surface $M\subset \RR^{3}$ is guaranteed to become singular in finite time. There are numerous possible singularities and, in general, they can lead to a breakdown of (partial) regularity and of well-posedness. A fundamental problem, then, is to understand singularities as they arise. 

A common theme in PDEs arising in geometry and physics is that a \emph{generic} solution exhibits better regularity or well-posedness behavior than the worst-case scenario. This aspect of the theory of mean curvature flow has been guided by the following well-known conjecture of Huisken \cite[\#8]{Ilmanen:Trieste}:
\begin{quote}
\emph{A generic mean curvature flow has only spherical and cylindrical singularities.}	
\end{quote}
The implications of this conjecture on the partial regularity and well-posedness of mean curvature flow is an important field of research in itself. See Section \ref{sec:intro.shrinkers} for the state of the art on the precise understanding of the effects of spherical and cylindrical singularities on the partial regularity and well-posedness of mean curvature flow.

The most decisive step toward Huisken's  conjecture was taken in the trailblazing work of Colding--Minicozzi \cite{ColdingMinicozzi:generic}, who proved that spheres and cylinders are the only linearly \emph{stable singularity models} for mean curvature flow. In particular, all remaining singularity models are linearly unstable and ought to occur only non-generically. See Section \ref{sec:intro.entropy.stable} for more discussion.
	
In this paper we introduce a new idea and take a second step toward the genericity conjecture and confirm that a large class of unstable singularity models are, in fact, avoidable by a slight perturbation of the initial data. Roughly stated, we prove:
\begin{quote}
\emph{The mean curvature flow of a generic closed embedded surface in $\RR^3$ encounters only spherical and cylindrical singularities until the first time it encounters a singularity (a) with multiplicity $\geq 2$, or (b) that has a cylindrical end but which is not globally a cylinder.}
\end{quote}
Cases (a) and (b) are conjectured to not occur (see the nonsqueezing conjecture and the no cylinder conjecture in \cite{Ilmanen:problems}). This would yield Huisken's conjecture in full.

Using a similar method, we also prove a related statement for hypersurfaces in $\RR^{4}$: 
\begin{quote}
\emph{The mean curvature flow starting from a generic hypersurface $M \subset \RR^4$ with low entropy remains smooth until it dissapears in a round point.}
\end{quote}
In particular, this gives a direct proof of the low-entropy Schoenflies conjecture (recently announced by Bernstein--Wang).

Our genericity results rely on keeping simultaneous track of flows coming out of a family of auxiliary initial surfaces on either side of $M$. The key ingredient is the following new classification result of ancient solutions to mean curvature flow that lie on one side of an asymptotically conical or compact singularity model:
\begin{quote}
\emph{For any smooth asymptotically conical or compact self-shrinker $\Sigma$, there is a unique ancient mean curvature flow lying on one side of $\sqrt{-t}\Sigma$ for all $t<0$. The flow exhibits only multiplicity-one spherical or cylindrical singularities.}
\end{quote}
See Section \ref{sec:intro.generic.statements} for more detailed statements of our results, and Section \ref{sec:intro.generic.method} for a discussion of the method and the technical ingredient.

\subsection{Singularities in mean curvature flow} \label{sec:intro.shrinkers}

Thanks to Huisken's monotonicity formula, if $X$ is a space-time singular point of a mean curvature flow $\cM$, it is possible to perform a parabolic rescaling around $X$ and take a subsequential (weak) limit to find a \emph{tangent flow} $\cM'$  \cite{Huisken:sing,Ilmanen:singularities}. A tangent flow is always self-similar in the sense that it only flows by homotheties. If the $t=-1$ slice of the flow is a smooth hypersurface $\Sigma$, then $\Sigma$ satisfies
\[
\bH + \tfrac{1}{2} \bx^{\perp} = 0,
\]
where $\bH$ is the mean curvature vector of $\Sigma$ and $\bx^{\perp}$ is the normal component of $\bx$. In this case, we call $\Sigma$ a \emph{self-shrinker}. The tangent-flow $\cM'$ at a time $t<0$ is then $\sqrt{-t}\,\Sigma$, though possibly with multiplicity. 

The simplest shrinkers are the generalized cylinders: $\RR^{n-k}\times \SS^{k}(\sqrt{2k})$, $k = 0, \ldots, n$. However, there are known to be many more examples: \cite{Angenent:torus,Nguyen:AC,Ketover:self-shrinkers,KKM:AC, BuzanoNguyenSchulz}. See also the earlier numerical work  \cite{AngenentIlmanenChopp,Chopp,Ilmanen:Trieste}. 

In general, non-cylindrical singularities (in the sense of generalized cylinders) can cause a breakdown in partial regularity or well-posedness of the flow (cf.\ \cite{AngenentIlmanenChopp,Ilmanen:Trieste,White:ICM}). It has thus been desirable to find situations where only cylindrical singularities arise and to use this information to analyze the partial regularity and well-posedness of the flow. To that end, Huisken  classified generalized cylinders as the only self-shrinkers with positive mean curvature \cite{Huisken:sing,Huisken:local-global} (and bounded curvature, cf.\ \cite{White:nature, ColdingMinicozzi:sing-generic}). This has led to a strong understanding of mean curvature flow in the mean convex case thanks to  Huisken--Sinestrari \cite{HuiskenSinnestrari:MCF-mean-convex,HuiskenSinestrari:convexity,HuiskenSinestrari:surgery}, White \cite{White:size,White:nature,White:subsequent}, Brendle and Brendle--Huisken \cite{Brendle:inscribed-sharp,BrendleHuisken:R3}, Haslhofer--Kleiner \cite{HaslhoferKleiner:estimates,HaslhoferKleiner:surgery}, Angenent--Daskalopoulos--\v{S}e\v{s}um \cite{ADS,ADS2}, and Brendle--Choi \cite{BrendleChoi:3d,BrendleChoi:nD}. 

The next level of difficulty is to understand flows of surfaces in $\RR^3$ that needn't be globally mean convex, but which happen to only experience multiplicity-one cylindrical singularities. There have been major recent advances on this topic. Colding--Minicozzi \cite{ColdingMinicozzi:sing-generic} proved (using their earlier work \cite{ColdingMinicozzi:uniqueness-tangent-flow}, cf.\ \cite{ColdingIlmanenMinicozzi}) that mean curvature flows in $\RR^{3}$ having only multiplicity-one cylindrical tangent flows are completely smooth at almost every time and any connected component of the singular set is contained in a time-slice. More recently, Choi--Haslhofer--Hershkovits showed \cite{ChoiHaslhoferHershkovits} (see also \cite{ChoiHaslhoferHershkovitsWhite}) that there is a (space-time) mean-convex neighborhood of any cylindrical singularity. In particular, combined with \cite{HershkovtisWhite}, this settles the well-posedness of a mean curvature flow in $\RR^{3}$ with only multiplicity-one cylindrical tangent flows. 

For flows of general surfaces in $\RR^{3}$, which may run into arbitrary singularities, our understanding of mean curvature flow near a singular point is quite limited at present. The most fundamental issue is the potential for higher multiplicity to arise when taking rescaled limits around a singular point. Nonetheless, some important information is available about the tangent flows at the first singular time due to important results of Brendle \cite{Brendle:genus0} classifying genus zero shrinkers in $\RR^{3}$ and of Wang \cite{Wang:ends-conical} showing that a smooth finite genus shrinker in $\RR^{3}$ has ends that are smoothly asymptotically conical or cylindrical. Besides the issue of multiplicity, another problem is the huge number of potential shrinkers that could occur as tangent flows, greatly complicating the analysis of the flow near such a singular point. (This issue presumably gets considerably worse for hypersurfaces in $\RR^{n+1}$.)

\subsection{Entropy and stability of shrinkers}  \label{sec:intro.entropy.stable} 
Huisken has conjectured \cite[\#8]{Ilmanen:Trieste} that cylinders and spheres are the only shrinkers that arise in a generic (embedded) mean curvature flow. This conjecture provides a promising way of avoiding the latter problem mentioned above.

Huisken's conjecture was reinforced by the numerical observation that non-cylindrical self-shrinkers are highly unstable. This instability was rigorously formulated and proven in the foundational work of Colding--Minicozzi \cite{ColdingMinicozzi:generic}. They defined the entropy 
\[
\lambda(M) := \sup_{\substack{\bx_0\in\RR^{3}\\t_0>0}}\int_{M} (4\pi t_0)^{-\frac n 2} e^{-\frac{1}{4t_0} |\bx - \bx_0|^{2}}  
\]
and observed that $t\mapsto\lambda(M_{t})$ is non-increasing along any mean curvature flow, by virtue of Huisken's monotonicity formula. Moreover, they proved that any smooth self-shrinker  with polynomial area growth, other than generalized cylinders (i.e., $\RR^{n-k} \times \SS^k(\sqrt{2k})$ with $k = 0, \ldots, n$), can be smoothly perturbed to have strictly smaller entropy. This result has been used fundamentally in \cite{ColdingMinicozziIlmanenWhite,BernsteinWang:1} (cf.  \cite{HershkovitsWhite:sharp-entropy}), though we will not need to make explicit use of it in this paper. 

There have been many important applications of Colding--Minicozzi's classification of entropy-stable shrinkers. First, they showed their result can be used to define a piecewise mean curvature flow that avoids non-spherical compact self-shrinkers. This idea has been used to classify low-entropy shrinkers, beginning with the work of Colding--Ilmanen--Minicozzi--White \cite{ColdingMinicozziIlmanenWhite} who showed that the round sphere $\SS^{n}\subset \RR^{n+1}$ has the least entropy among all non-planar self-shrinkers. Subsequently, Bernstein--Wang extended this to show that the round sphere has least entropy among all closed hypersurfaces \cite{BernsteinWang:1} (see also \cite{Zhu:entropy}) and that the cylinder $\RR\times \SS^{1}$ has second least entropy among non-planar self-shrinkers in $\RR^{3}$ \cite{BernsteinWang:TopologicalProperty}. Bernstein--Wang have recently used these classification results, along with a surgery procedure, to show that if $M^{3}\subset \RR^{4}$ has $\lambda(M) \leq \lambda(\SS^{2}\times \RR)$, then $M$ is diffeomorphic to $\SS^{3}$ \cite{BernsteinWang:topology-small-ent} (see also \cite{BernsteinWang:hausdorff-stability}). 

\subsection{Our perturbative statements} \label{sec:intro.generic.statements}
Let us describe our main perturbative results. First, we have a low-entropy result in $\RR^4$:
	
\begin{theorem}\label{theo:low-ent-generic-flow-4D}
Let $M^3\subset \RR^{4}$ be any closed connected hypersurface with $\lambda(M) \leq \lambda(\SS^{2}\times \RR)$. There exist arbitrarily small $C^{\infty}$ graphs $M'$ over $M$ so that the mean curvature flow starting from $M'$ is smooth until it disappears in a round point. 
\end{theorem}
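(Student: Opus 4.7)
The plan is to combine Huisken's entropy monotonicity with the paper's main technical ingredient --- uniqueness of ancient mean curvature flows on one side of an asymptotically conical or compact shrinker --- to show that almost every small $C^\infty$ graph over $M$ has a mean curvature flow whose only tangent flows are multiplicity-one round spheres. \textbf{Restricting tangent flows.} Under $\lambda(M)\le \lambda(\SS^2) = \lambda(\SS^2\times\RR)$, Huisken's monotonicity implies every tangent flow of the flow from any sufficiently $C^\infty$-small graph $M'$ over $M$ is a self-shrinker $\Sigma$ of entropy at most $\lambda(\SS^2)$, with multiplicity one (higher multiplicity is excluded by $k \cdot \lambda(\Sigma) \ge k > \lambda(\SS^2)$ for $k \ge 2$). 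Bernstein--Wang's low-entropy classification in $\RR^4$ then forces $\Sigma$ to be a round $\SS^3$, a round cylinder $\RR\times\SS^2$, an asymptotically conical shrinker, or a compact non-spherical shrinker; the generalized cylinders $\RR^2\times\SS^1$ are ruled out by $\lambda(\SS^1) > \lambda(\SS^2)$.

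\textbf{Generic avoidance of non-cylindrical non-spherical singularities.} Let $\{M_s\}_{|s|<\eps_0}$ be a smooth $1$-parameter family of normal graphs foliating a tubular neighborhood of $M$, with $M_0 = M$. By the avoidance principle the corresponding mean curvature flows $\{\cM_s\}$ remain pairwise disjoint in space-time. Suppose $\cM_s$ develops a tangent flow $\sqrt{-t}\,\Sigma_s$ with $\Sigma_s$ asymptotically conical or compact non-spherical at some space-time point $X_s$; then parabolic rescaling of $\cM_{s'}$ about $X_s$ (for $s'\ne s$ on the appropriate side) produces an ancient limit lying on one side of $\sqrt{-t}\,\Sigma_s$ for all $t<0$. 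The paper's uniqueness theorem identifies this ancient limit as the unique one-sided ancient flow, and that flow only exhibits multiplicity-one spherical or cylindrical singularities. Combined with compactness of the set of shrinkers of entropy $\le \lambda(\SS^2)$ modulo rigid motion, a Fubini-type argument on the foliation parameter $s$ then forces the ``bad'' set $\{s : \cM_s \text{ develops an asymptotically conical or compact non-spherical tangent flow}\}$ to be meager, so a generic $s$ avoids all such singularities.

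\textbf{Cylindrical singularities and conclusion.} A cylindrical tangent flow $\RR\times\SS^2$ saturates the entropy bound $\lambda = \lambda(\SS^2)$, so by Colding--Minicozzi's entropy-stability dichotomy (together with an initial perturbation putting $\lambda(M_s) < \lambda(\SS^2)$ unless $M$ is itself the cylinder, excluded by compactness), a generic $C^\infty$-small graph $M_s$ cannot develop a cylindrical tangent flow either. Combined with Bernstein--Wang's topological theorem --- which yields $M \cong \SS^3$ and therefore finite-time extinction --- the flow $\cM_s$ for generic $s$ has only multiplicity-one round-spherical singularities, and such a flow of an $\SS^3$-topology surface must be smooth until it disappears in a round point. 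The main obstacle is the middle step: one must control rescaled limits of $\cM_{s'}$ at varying space-time points $X_s$ uniformly in the foliation parameter, and carry out the transversality/Fubini argument that uses uniqueness of the one-sided ancient flow to bound the size of the bad parameter set. This is where the paper's novel classification of one-sided ancient flows comes into essential use.
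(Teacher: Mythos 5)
Your overall strategy is the same as the paper's (flow a foliation $\{M_s\}$, use disjointness plus the uniqueness of one-sided ancient flows to control blow-ups of nearby leaves, and use Colding--Minicozzi's entropy reduction to rule out cylindrical tangent flows after an initial perturbation), but the step you yourself flag as ``the main obstacle'' is exactly where your argument has a genuine gap, and it is not a Fubini/transversality argument. First, the blow-up setup as written does not produce a one-sided ancient flow: if you fix $s'\neq s$ and rescale $\cM_{s'}$ about a singular point $X_s$ of $\cM_s$, the limit is empty, since $X_s\notin\supp\cM_{s'}$ and $\cM_{s'}$ escapes under dilation. One must couple the parameter to the scale; in the paper this is done by taking $s_i\nearrow s$, flows $\cM_i$ with worst non-generic singular points $X_i$, letting $X_i\to X\in\sing\cM$, and rescaling the $\cM_i$ about $X$ by $|X_i-X|$. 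One must then also handle the dichotomy that the rescaled limit may coincide with the shrinking tangent flow itself rather than lie strictly to one side (ruled out/used via Lemma \ref{lemm:entire-graph-from-cpt}, Ilmanen's localized avoidance, and the Ecker--Huisken maximum principle), and verify the density hypothesis of the uniqueness theorem.

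Second, and more importantly, knowing that the one-sided limit has only multiplicity-one spherical (and cylindrical) singularities does not by itself make the bad parameter set ``meager'': there is no measure-theoretic or transversality mechanism in your sketch that converts this into smallness of $\{s:\cM_s \text{ has a non-generic tangent flow}\}$. The paper's mechanism is quantitative: define $\cD(s)$ as the supremal Gaussian density over non-generic singular points of all unit-regular flows from $M_s$, and prove a definite density drop $\limsup_{s_i\nearrow s}\cD(s_i)\le \cD(s)-\delta$ (Proposition \ref{prop:schoenflies-blow-up-argument}). This uses three ingredients you do not supply: (a) a uniform gap lemma (Lemma \ref{lemm:schoenflies-entropy-drop}), resting on compactness of low-entropy asymptotically conical shrinkers, saying that on the self-shrinking flow of such a $\Sigma$ every point at unit space-time distance from the origin has density at most $F(\Sigma)-\delta$; (b) stability of spherical singularities under Brakke flow convergence (Lemma \ref{lemm:stab-spherical-sing}), needed to know the approaching points $X_i$ stay non-generic in the limit and that the one-sided limit's generic-only singular structure forces $\Theta_{\cM_i}(X_i)$ to be computed at a point of definite distance from the blow-up center; and (c) upper semicontinuity of density. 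The drop then makes each level set $\{s:\cD(s)\in[\Lambda-(j+1)\delta,\Lambda-j\delta)\}$ discrete, so the bad set is countable and its complement dense, which is what yields arbitrarily small good graphs. Without some such quantitative argument your middle step does not close. (A minor slip elsewhere: the exceptional case in Colding--Minicozzi's entropy-reduction theorem used to get $\lambda(M')\le\lambda(\SS^2)-\eps$ is the round sphere, not the cylinder; if $M$ is a round $\SS^3$ the conclusion is immediate.)
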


We state and prove this ahead of our result for $\RR^3$ because its statement and proof are simpler. The low-entropy assumption allows us to perturb away \emph{all} unstable singularities (in the sense of Colding--Minicozzi) and thus obtain a fully regular nearby flow. In fact, Theorem \ref{theo:low-ent-generic-flow-4D} is a special case of Theorem \ref{theo:low-ent-generic-flow}, which applies in \emph{all} dimensions under suitable conditions. See also Theorem \ref{theo:low-ent-generic-flow-v2} and Corollary \ref{cor:low-ent-generic-flow-v3} for results showing that the above behavior is \emph{generic} in a precise sense. 

Theorem \ref{theo:low-ent-generic-flow-4D} immediately implies the following low-entropy Schoenflies theorem, recently announced by Bernstein--Wang (cf.\ \cite[p.\ 4]{bernsteinWang:top-uniqueness-expanders}).\footnote{We emphasize that our proof of Theorem \ref{theo:low-ent-generic-flow-4D} relies heavily on several of Bernstein--Wang's earlier works \cite{BernsteinWang:1,BernsteinWang:TopologicalProperty,BernsteinWang:topology-small-ent} and as such our proof here of Corollary \ref{cor:low-ent-schoenflies-R4} has several features in common with their announced strategy. The key point here, however, is that our study of generic flows in Theorem \ref{theo:low-ent-generic-flow-4D} allows us to completely avoid the need for any surgery procedure or the refined understanding of expanders obtained in \cite{BernsteinWang:SpaceOfExpanders,BernsteinWang:expander-compactness,BernsteinWang:degree-expander,bernsteinWang:top-uniqueness-expanders,BernsteinWang:relative-entropy}. }
\begin{corollary}[Bernstein--Wang {\cite{BernsteinWang:schoenflies}}]\label{cor:low-ent-schoenflies-R4}
If $M^3\subset \RR^4$ is a closed connected hypersurface with $\lambda(M) \leq \lambda(\SS^2\times \RR)$, then $M$ bounds a smoothly standard $4$-ball and is smoothly isotopic to a round $\SS^3$.\footnote{The isotopy from $M$ to the round $\SS^3$ follows from Theorem \ref{theo:low-ent-generic-flow-4D}, and the fact that $M$ bounds a smooth $4$-ball is then a consequence of the Isotopy Extension Theorem (cf.\ \cite[\S 8, Theorem 1.3]{Hirsch:diff-top}). }
\end{corollary}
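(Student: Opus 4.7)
The plan is to chain together three smooth isotopies—namely $M \to M'$, $M' \to M'_{t_0}$, and $M'_{t_0} \to \SS^{3}$—and then invoke the Isotopy Extension Theorem to deduce the $4$-ball conclusion. The real content lies in Theorem \ref{theo:low-ent-generic-flow-4D}; the rest is essentially a packaging argument. Concretely, I will first apply Theorem \ref{theo:low-ent-generic-flow-4D} to produce an arbitrarily $C^{\infty}$-small normal graph $M' = \{\bx + \phi(\bx)\nu(\bx) : \bx \in M\}$ whose mean curvature flow $\{M'_t\}_{t \in [0,T)}$ is smooth on $[0,T)$ and disappears in a round point $(\bx_0, T)$. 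For $\phi$ small enough in $C^{1}$, the linear graph interpolation $M_s := \{\bx + s\phi(\bx)\nu(\bx) : \bx \in M\}$, $s \in [0,1]$, is a smooth isotopy of closed embedded hypersurfaces from $M$ to $M'$.

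Next, the smooth flow itself is a smooth isotopy from $M'$ to $M'_{t_0}$ for every $t_0 \in [0,T)$. By the definition of disappearing in a round point, after parabolic rescaling centered at $(\bx_0, T)$, the rescaled hypersurfaces converge smoothly to a round $\SS^{3}$ as $t_0 \to T$; hence for $t_0$ close enough to $T$, $M'_{t_0}$ is itself a $C^{1}$-small normal graph over a small round $3$-sphere centered near $\bx_0$, and is therefore isotopic to a round $\SS^{3}$ via yet another linear graph interpolation. Concatenation of these three isotopies yields a smooth isotopy from $M$ to a round $\SS^{3} \subset \RR^{4}$, which is the isotopy half of the corollary.

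For the $4$-ball half, I will feed this isotopy into the Isotopy Extension Theorem (Hirsch, \emph{Differential Topology}, \S 8, Theorem 1.3) to upgrade it to a smooth ambient isotopy $\{\Psi_s\}_{s \in [0,1]}$ of $\RR^{4}$ satisfying $\Psi_0 = \id$ and $\Psi_1(M) = \SS^{3}$; then $\Psi_1^{-1}$ carries the standard closed unit $4$-ball diffeomorphically onto a closed region of $\RR^{4}$ bounded by $M$, so $M$ bounds a smoothly standard $4$-ball. The genuine obstacle is encapsulated inside Theorem \ref{theo:low-ent-generic-flow-4D}; the only mildly delicate point in the packaging is the claim that "smooth flow disappearing in a round point" implies that nearby time slices are $C^{\infty}$-graphs over small round spheres, but this is a standard consequence of smooth compactness together with the very definition of a round singular point.
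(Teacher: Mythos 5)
Your argument is correct and is essentially the paper's own proof, which is contained entirely in the footnote to the corollary: the isotopy from $M$ to a round $\SS^3$ comes from Theorem \ref{theo:low-ent-generic-flow-4D} (small graph perturbation, then the smooth flow, then a graph over a small round sphere near the round extinction point), and the $4$-ball statement follows from the Isotopy Extension Theorem. You have merely spelled out the standard details that the paper leaves implicit.
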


For generic mean curvature flow of embedded surfaces in $\RR^{3}$, we show more:

\begin{theorem}\label{theo:mcf-R3}
	Let $M^2\subset \RR^{3}$ be a closed embedded surface. There exist arbitrarily small $C^{\infty}$ graphs $M'$ over $M$ so that:
	\begin{enumerate}
		\item the (weak) mean curvature flow of $M'$ has only multiplicity-one spherical and cylindrical tangent flows until it goes extinct, or 
		\item there is some $T>0$ so that the previous statement holds for times $t<T$ and at time $T$ there is a tangent flow of $M'$ that either 
		\begin{enumerate}
			\item has multiplicity $\geq 2$, or
			\item has a cylindrical end, but is not a cylinder.
		\end{enumerate}
	\end{enumerate}
\end{theorem}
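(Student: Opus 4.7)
The plan is to identify the first time $T \in (0,\infty]$ at which the weak mean curvature flow of $M$ develops a tangent flow that is neither a multiplicity-one shrinking sphere nor a multiplicity-one shrinking cylinder, classify the shrinker $\Sigma$ that appears there, and either observe that the conclusion (2a) or (2b) already holds, or apply the one-sided ancient-flow classification quoted in the introduction to perturb $M$ and push the first bad time strictly forward. Iterating this with a Baire/diagonal argument yields arbitrarily small $C^\infty$ perturbations with the stated property.

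\emph{Setup and dichotomy at $T$.} Run the Brakke (equivalently level-set) flow $\cM$ of $M$ and set
\[
T(M) := \sup \{t \geq 0 : \text{every tangent flow of } \cM \text{ on } [0,t] \text{ is a multiplicity-one sphere or cylinder}\}.
\]
If $T(M)$ equals the extinction time we are in case (1) with $M' = M$. Otherwise pick a tangent flow at $T$ modeled on a shrinker $\Sigma$ of some multiplicity. If the multiplicity is $\geq 2$, case (2a) holds. Assuming multiplicity one, tangent flows of surface flows in $\RR^3$ have finite genus, so by Wang's structure theorem the ends of $\Sigma$ are smoothly asymptotically conical or cylindrical. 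If $\Sigma$ has a cylindrical end but is not a cylinder, case (2b) holds. In the remaining situation $\Sigma$ has no cylindrical ends, so it is compact (non-spherical, and of positive genus by Brendle's theorem) or fully asymptotically conical---precisely the setting of the one-sided classification.

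\emph{Resolving $\Sigma$ by a one-sided perturbation.} In the remaining case, choose a one-signed $C^\infty$ bump $\varphi$ supported near the spatial singular location and set $M_\eps := \Graph_M(\eps \varphi)$. For $\eps > 0$ small, the strong maximum principle and the avoidance principle force $\cM_\eps := \{M_\eps(t)\}$ to lie strictly on one side of the corresponding slice of $\cM$ on $[0,T]$. Any parabolic rescaling of $\cM_\eps$ based at a sequence of space-time points accumulating on $(X,T)$ then produces, as a subsequential Brakke limit, an ancient mean curvature flow lying on one side of $\sqrt{-t}\,\Sigma$. By the one-sided ancient-flow classification, this limit is the unique such flow and has only multiplicity-one spherical or cylindrical singularities. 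Combining this with a stability/continuity argument---and using the mean-convex neighborhood theorem of Choi--Haslhofer--Hershkovits around each limiting cylindrical singularity to rule out new bad tangent flows nearby---yields that for all sufficiently small $\eps > 0$ the flow $\cM_\eps$ has only spherical or cylindrical tangent flows on a whole space-time neighborhood of $(X,T)$, so $T(M_\eps) > T(M)$.

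\emph{Iteration and the main obstacle.} To finish, iterate: apply the previous step at each successive bad time. Upper semicontinuity of $M \mapsto T(M)$ on the space of small $C^\infty$ graphs over $M$, together with the openness granted by the stability step, shows that the set of perturbations for which the conclusion holds is residual, and a diagonal argument selects the desired arbitrarily small $M'$. The main obstacle is the stability step: one must genuinely control the perturbed flow $\cM_\eps$ (not just its blow-up limits) by the unique one-sided ancient flow, over arbitrarily long rescaled time intervals, so that no new asymptotically conical or compact non-spherical shrinker can re-emerge as a tangent flow of $\cM_\eps$ near $(X,T)$. A secondary difficulty is running the iteration so that perturbations designed to resolve later bad times do not disturb the good behavior already achieved on earlier time intervals.
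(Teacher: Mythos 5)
Your scheme coincides with the paper's up to (and including) the one-sided blow-up step: identify the first non-generic time, use Wang's and Brendle's structure results to reduce to a multiplicity-one compact or asymptotically conical tangent flow, perturb to one side, rescale, and invoke the uniqueness/regularity of the one-sided ancient flow (Theorems \ref{theo:one.sided.construction}, \ref{theo:one.sided.uniqueness}) to push the first bad time forward. The genuine gap is in your iteration. Pushing $T_\textrm{gen}$ strictly forward at each step gives no termination: the bad times could accumulate, you might need infinitely many perturbations, and your claim that the good set is residual (plus a diagonal argument) is unsubstantiated---upper semicontinuity of $M\mapsto T_\textrm{gen}$ is neither established nor the relevant mechanism, and even the paper does not prove a residual statement in $\RR^3$. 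The paper's essential additional ingredient, absent from your proposal, is a strictly decreasing integer invariant: Proposition \ref{prop:key-perturb-result-3d} shows that each one-sided perturbation not only satisfies $T_\textrm{gen}(\cM')>T_\textrm{gen}(\cM)$ but also drops $\genus_{T_\textrm{gen}}$ by at least one. This is proved by showing (Claims (A) and (B) there) that the positive genus of any non-generic tangent flow is captured at a definite spatial scale just before the singular time, while the one-sided flow replacing it is smooth and genus zero in that ball just after, and then transferring this into a global genus drop via the localized topological monotonicity of Appendix \ref{app:loc-top-monotonicity}. Since Proposition \ref{prop:3d-first-non-gen-sing-time-prop} (using \cite{Brendle:genus0} and the non-fattening result of \cite{ChoiHaslhoferHershkovits}) forces every remaining non-generic multiplicity-one tangent flow without the cylindrical-end pathology to have genus at least one, the induction terminates after at most $\genus(M)$ steps, with each perturbation arbitrarily small. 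Without such a quantity your argument does not close.

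Two secondary points. First, in your blow-up step the rescaled limit of $\cM_\eps$ near $(X,T)$ need not lie strictly on one side of $\sqrt{-t}\,\Sigma$: it may coincide with the tangent flow itself, in which case Theorem \ref{theo:one.sided.uniqueness} does not apply. The paper handles this dichotomy by rescaling at the scale of the putative bad singular points of the perturbed flows and using the quantitative density gap for non-generic singularities (Lemma \ref{lemm:R3-lower-bd-first-non-gen-sing}, together with the mechanism of Lemma \ref{lemm:schoenflies-entropy-drop}): if the limit were the tangent flow one would produce a singular point at unit distance from the spacetime origin on a smooth shrinking flow, a contradiction. Second, your compactly supported bump leaves $M_\eps$ touching $M$ outside its support, so strict one-sidedness of the flows (needed before rescaling) requires an additional strong-maximum-principle argument at singular times; the paper sidesteps this by perturbing to globally disjoint surfaces $M_i=\partial K_i$, for which strict separation follows directly from avoidance. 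The mean-convex neighborhood theorem you invoke for stability is not needed in the paper's argument; stability of generic singular points is instead obtained from the density gap and upper semicontinuity of Gaussian density (Lemmas \ref{lemm:R3-lower-bd-first-non-gen-sing} and \ref{lemm:stab-spherical-sing}).
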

Note two things:
\begin{itemize}
	\item In the $\RR^3$ theorem, unlike in the low-entropy higher dimensional theorems, we need to make use of a weak notion of mean curvature flow because we are placing no entropy assumptions and are thus interested in flowing \emph{through} spherical and cylindrical singularities. See Theorem \ref{theo:generic-R3} for the precise statement, which includes the notion of weak mean curvature flow that we make use of.
	\item \emph{Both} of the potential tangent flows in case (2) are conjectured to not exist (see the nonsqueezing conjecture and the no cylinder conjecture in \cite{Ilmanen:problems}). 
\end{itemize}

There are two features of our work that distinguish it from previous related work:
\begin{itemize}
	\item We only need to perturb the initial condition. See  \cite{ColdingMinicozzi:generic} for a piecewise flow construction that perturbs away compact singularity models (see also \cite{Sun:mult-gen}).
	\item We are able to perturb away (certain) non-compact singularity models.
\end{itemize}

\subsection{Our perturbative method: ancient one-sided flows} \label{sec:intro.generic.method} 

For a fixed hypersurface $M_{0}\subset \RR^{n+1}$, one has a weak mean curvature flow $t\mapsto M_{0}(t)$ starting at $M_{0}$. Suppose that $X = (\bx,T)$ is a singular point for $t\mapsto M_{0}(t)$. The usual method for analyzing the singularity structure at $X$ is to study the tangent flows of $t \mapsto M_0(t)$ at $X$, i.e., the (subsequential) limit of the flows
\[
t\mapsto \lambda (M_{0}(T + \lambda^{2}t)- \bx) =: M_{0}^{\lambda}(t)
\]
as $\lambda\to\infty$. As discussed above, by Huisken's monotonicity formula, for $t<0$, this will weakly (subsequentially) converge to a shrinking flow $t\mapsto M'(t)$ associated to a (weak) self-shrinker. 

Our new approach to generic mean curvature flow is to embed the flow $t\mapsto M_{0}(t)$ in a family of flows 
by first considering a local foliation $\{M_{s}\}_{s\in(-1,1)}$ and flowing the entire foliation, simultaneously, by mean curvature flow $t\mapsto M_{s}(t)$. The avoidance principle for mean curvature flow implies that $M_{s}(t)\cap M_{s'}(t) = \emptyset$ for $s\not=s'$. The entire foliation can be passed to the limit simultaneously, i.e., we can consider the flows
\[
t\mapsto \lambda (M_{s}(T + \lambda^{2}t)- \bx) : = M_{s}^{\lambda}(t)
\]
and send $\lambda\to\infty$. 

If we choose $s\searrow 0$ diligently as $\lambda\to\infty$, then after passing to a subsequence, $t\mapsto M_{s}^{\lambda}(t)$ will converge to a non-empty flow $t\mapsto \bar M(t)$ that stays on \emph{one side} of the original tangent flow $t\mapsto M'(t)$ and which is \emph{ancient}, i.e., it exists for all sufficiently negative $t$. If we can prove that the one-sided ancient flow $t\mapsto \bar M(t)$ has certain nice properties (i.e., only cylindrical singularities), then we can exploit this to find a choice of $s$ small so that $t\mapsto M_{s}(t)$ is well behaved. 

We proceed to give more details as to how we exploit this ancient one-sided flow, $t \mapsto \bar M(t)$. Assume that the tangent flow to $M_{0}(t)$ at $X$ is smooth and has multiplicity one, so $M'(t) = \sqrt{-t}\, \Sigma$ for $t<0$. Then, considering the rescaled flow $\tau \mapsto e^{\frac\tau 2} \bar M(-e^{\tau})$, we note that $e^{\frac\tau 2} \bar M(-e^{\tau})$ lies strictly on one side of $\Sigma$ and 
\begin{equation}\label{eq:intro-lim-rescaled-flow-mult1}
\Sigma = \lim_{\tau\to-\infty}e^{\frac\tau 2} \bar M(-e^{\tau}) 
\end{equation}
(a priori, this could occur with multiplicity, but in practice one can rule this out by upper semi-continuity of density). In the current work, we will deal with all $\Sigma$ that are: (i) compact but not spheres, or (ii) non-compact with asymptotically (smoothly) conical structure. These tangent flows encompass all the necessary ones for our aforementioned theorem statements, by virtue of L. Wang's \cite{Wang:ends-conical} characterization of the asymptotic structure of non-compact singularity models.

Our definitive rigidity theorem of ancient one-sided flows is:

\begin{theorem}\label{theo:exist-unique-intro}
Let $\Sigma^{n}\subset \RR^{n+1}$ be a smooth self-shrinker that is either compact or asymptotically (smoothly) conical. Up to parabolic dilation around $(\bOh,0)\in\RR^{n+1}\times \RR$, there exists a unique\footnote{For technical reasons, the long-time aspect of the existence statement currently requires $2 \leq n \leq 6$. If one only cares about sufficiently negative times, existence and uniqueness hold true for all dimensions.} ancient solution to mean curvature flow $t\mapsto \bar M(t)$ so that $\bar M(t)$ is disjoint from $\sqrt{-t}\Sigma$ and has entropy $ < 2 F(\Sigma)$. 
\end{theorem}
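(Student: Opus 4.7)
The plan is two-pronged: for existence, flow small one-sided perturbations of $\Sigma$ and pass to a rescaled limit; for uniqueness, compare any two such flows via sharp asymptotic analysis of the associated rescaled profiles near $\Sigma$.

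For existence, we use that any non-cylindrical, non-spherical shrinker is $F$-unstable in the sense of Colding--Minicozzi, so the Jacobi operator $L = \Delta_\Sigma - \tfrac{1}{2}\bx\cdot\nabla + |A|^2 + \tfrac{1}{2}$ has a positive top eigenvalue $\mu > 0$ with positive eigenfunction $\phi$. For each small $\eps > 0$ let $M_\eps$ be the normal graph of $\eps\phi$ over $\Sigma$ in a chosen normal direction and let $\{M_\eps(t)\}_{t \geq 0}$ be its forward mean curvature flow. In rescaled (self-similar) coordinates, $M_\eps$ corresponds to a graph $u_\eps(\cdot,\tau)$ over $\Sigma$ satisfying a nonlinear parabolic equation whose linearization at $0$ is $(\partial_\tau - L)u = 0$, so while $u_\eps$ is small it grows like $\eps e^{\mu\tau}\phi$. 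For each $\eps$ pick $\tau_\eps$ so that $\|u_\eps(\cdot,\tau_\eps)\|$ equals a fixed small value in an appropriate weighted norm, translate rescaled time accordingly, and pass to a subsequential limit as $\eps\to 0$. The resulting rescaled flow extends to all $\tau\in\RR$ (i.e., all sufficiently negative $t$), stays strictly on one side of $\Sigma$ by the avoidance principle applied against the stationary profile $\Sigma$, and inherits the entropy bound $F(\Sigma)+o(1) < 2F(\Sigma)$ from the perturbations. The long-time extension up to extinction in the dimension range $2 \leq n \leq 6$ is then supplied by Brakke regularity for the level-set flow and the White--Brakke regularity theorem, using the entropy bound to rule out higher-multiplicity tangent flows.

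For uniqueness, let $\bar M$ be any candidate flow and $\tilde M(\tau)$ its rescaled profile. The entropy bound $<2F(\Sigma)$ together with Huisken's monotonicity and upper semicontinuity of Gaussian density force $\tilde M(\tau)$ to converge smoothly, with multiplicity one, to $\Sigma$ as $\tau \to -\infty$; hence for sufficiently negative $\tau$, $\tilde M(\tau)$ is a normal graph $u(\cdot,\tau)$ over $\Sigma$ with $u \to 0$. A Merle--Zaag-style analysis of the eigenmode decomposition of $u$ under the rescaled flow then shows that either $u \equiv 0$ (i.e., $\bar M$ is the shrinker itself, excluded by disjointness) or else $u(\cdot,\tau) = c\,e^{\mu\tau}\phi + o(e^{\mu\tau})$ for some $c \neq 0$. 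The sign of $c$ is fixed by the choice of side, and its magnitude can be normalized to a chosen positive value by a time translation in $\tau$, which on the unrescaled flow is exactly parabolic dilation around $(\bOh,0)$. Any two flows with the same normalized leading coefficient then coincide: a contraction-mapping or Lyapunov-functional argument (in the spirit of Brendle--Choi and Angenent--Daskalopoulos--\v{S}e\v{s}um) in the weighted Gaussian norm controls $u$ globally from its leading asymptotics, yielding equality after unfolding back to $t$.

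The main obstacle is the asymptotically conical case. The spectral analysis of $L$ must be performed on a noncompact domain in weighted Gaussian $L^2$, the graph representation of $\tilde M(\tau)$ near the conical ends is delicate and requires L.~Wang's smooth conical asymptotics, and the compactness argument extracting the ancient limit in the existence step must rule out escape of mass to infinity along the ends. A secondary difficulty is verifying the multiplicity-one smooth convergence $\tilde M(\tau) \to \Sigma$ \emph{uniformly up to the conical ends}, which requires combining the entropy bound with Brakke-type pseudolocality estimates, and simultaneously establishing that the top eigenfunction $\phi$ has the right decay/growth at infinity to genuinely dominate $u$ in the relevant weighted norm. Once these conical-end issues are addressed, the remainder of the argument proceeds in parallel to the compact case.
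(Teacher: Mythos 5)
Your overall strategy (perturb by the lowest eigenfunction and rescale for existence; graphical asymptotics plus a normalization for uniqueness) is the same as the paper's, but as written there are three genuine gaps, two of which concern precisely the parts of the statement that are hardest. First, your mechanism for the long-time aspect is wrong: the hypothesis is entropy $<2F(\Sigma)$, and since $F(\Sigma)$ can be arbitrarily large this bound does not rule out higher-multiplicity tangent flows or bad singularities, and Brakke/White local regularity alone gives nothing here (note also that in your argument the restriction $2\le n\le 6$ would play no role, which is a warning sign). The paper's actual mechanism is \emph{shrinker mean convexity}: the constructed weak flow moves monotonically under parabolic dilation about $(\bOh,0)$, proved by playing the flow off against its own dilations using Ilmanen's localized avoidance principle together with the Ecker--Huisken maximum principle on the conical ends; this monotonicity is what lets White's mean-convex regularity theory (where $n\le 6$ enters, to exclude stable minimal cones as static limit flows) be run on the rescaled flow, and it also forces singularities to be multiplicity-one cylinders. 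Relatedly, the construction must be carried out with level set/Brakke flows from the start (compact truncations, $\rho\to\infty$, then $\eps\to 0$ with a dilation normalization), because the one-sided flow genuinely passes through singularities; a purely smooth forward-flow-and-renormalize scheme cannot produce the long-time object.

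Second, your uniqueness argument stops at the smooth backward regime. The Merle--Zaag dichotomy you state (``either $u\equiv 0$ or $u\sim c\,e^{-\lambda_1\tau}\varphi_1$'') is false without an extra input: Merle--Zaag only says some unstable or neutral mode dominates, and indeed there is an $I$-parameter family of ancient rescaled flows converging to $\Sigma$ with other dominant modes (the paper constructs it). The step that singles out $\varphi_1$ is exactly the one-sidedness: $u\ge 0$ forces the normalized dominant mode to converge to a nonnegative eigenfunction, which must be the first one. You invoke one-sidedness only to fix the sign of $c$, so the key idea is missing from the step where it is needed. Finally, even granting the graphical uniqueness for $\tau\ll 0$ (which the paper obtains by an iteration scheme for the difference $w=\bar u-u$, improving the decay exponent step by step and sending the number of iterations to infinity), this only identifies the two flows for $t\le -T$; the theorem's uniqueness is for the entire ancient (weak) flow, which develops singularities later. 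The paper propagates uniqueness forward by showing, via localized avoidance and Ecker--Huisken again, that the support of any competitor is disjoint from all nontrivial parabolic dilations of the constructed flow, hence contained in its support, and then concludes equality of the Brakke flows using unit-regularity together with connectedness of the regular part (which in turn relies on the smallness of the singular set established by the shrinker-mean-convex regularity theory). None of this forward-in-time matching appears in your proposal, and it cannot be replaced by a contraction or Lyapunov argument in a weighted Gaussian norm, since past the first singular time there is no global graphical description.
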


\begin{remark}
There has  recently been an outburst of activity regarding the rigidity of ancient solutions to geometric flows. We mention here \cite{BrendleHuiskenSinestrari,XJWang:ancient,DHS:RF-surf,DaskalopolousHamiltonSesum:CSF,HuiskenSinestrari:ancient,HaslhoferHerskovits:ancient,DdPS:type2Yamabe,BrendleKapouleas,BrendleChoi:3d,BrendleChoi:nD,Brendle:RF-ancient,ABDS,BDS}.  In the setting at hand, Theorem \ref{theo:exist-unique-intro} was motivated from the recent work in \cite{ChoiMantoulidis} on the classification of compact ancient solutions of gradient flows of elliptic functionals in Riemannian manifolds. However, this is the first time that the \emph{one-sidedness} condition has been exploited so crucially, and geometrically, in the setting of ancient geometric flows. In the elliptic setting, there have been interesting exploitations of one-sided foliations by minimal surfaces; see, e.g., Hardt--Simon  \cite{HardtSimon:foliation}, Ilmanen--White \cite{IlmanenWhite:sharp.entropy}, and Smale \cite{Smale}. Our current parabolic setting, however, presents a number of complications that come from the fact that the shrinkers $\Sigma$ we are interested in are primarily noncompact, and thus the flows cannot be written as global perturbations of the self-similarly shrinking solution.
\end{remark}

\begin{remark}
	Neither of the hypothesis in Theorem \ref{theo:exist-unique-intro} can be removed. There can be many ancient flows that intersect $\sqrt{-t}\, \Sigma$ and converge to $\Sigma$ as $t\to-\infty$ after rescaling; see Theorem \ref{theo:contraction}. Also, for $a\geq0$, the grim reaper in the slab $\RR\times (a,a+\pi)$ is a nontrivial example of an ancient flow that is disjoint from its tangent flow at $-\infty$, $2[\RR\times \{0\}]$. 
\end{remark}

Next, we show that $\bar M(t)$ encounters only generic singularities for as long as it exists. We establish many properties of $\bar M(t)$ in  Theorem \ref{theo:one.sided.construction}, and some  the important ones are summarized here. 

\begin{theorem}\label{theo:prop-ancient-intro}
	Let $t \mapsto \bar M(t)$, $\Sigma^n \subset \RR^{n+1}$ be as in Theorem \ref{theo:exist-unique-intro} and $2 \leq n \leq 6$. Then:
	\begin{itemize}
		\item The flow $t \mapsto \bar M(t)$ only has multiplicity-one, generalized cylindrical singularities: $\RR^{n-k} \times \SS^k(\sqrt{2k})$, $k = 1, \ldots, n$.
		\item At $t=0$, $\bar M(0)$ is smooth and star-shaped.
		\item If $\Sigma$ is noncompact, then $t\mapsto \bar M(t)$ exists for all $t \in \RR$ and
		\[ \lim_{t \to \infty} \tfrac{1}{\sqrt{t}} \bar M(t) \]
		is an outermost expander associated to the asymptotic cone of $\Sigma$. 
	\end{itemize} 
\end{theorem}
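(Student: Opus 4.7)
My plan is to exploit the uniqueness statement of Theorem \ref{theo:exist-unique-intro} together with the entropy bound $\lambda(\bar M) < 2F(\Sigma)$ at every parabolic scale, pivoting on the observation that one-sidedness is preserved under parabolic rescaling. The three assertions are proved in order, with the tangent-flow rigidity of Part (a) being the technical crux from which (b) and (c) follow by comparatively standard flow theory.

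\emph{Cylindrical, multiplicity-one singularities.} Fix an interior singular spacetime point $X$ of $\bar M$. Huisken's monotonicity together with Brakke--White regularity (valid in $2 \leq n \leq 6$) gives that every tangent flow at $X$ has the form $k \cdot \sqrt{-t}\,\Sigma''$ for a smooth self-shrinker $\Sigma''$ and integer $k \geq 1$; by L.\ Wang's structure theorem for shrinker ends, $\Sigma''$ is again compact or asymptotically conical, so Theorem \ref{theo:exist-unique-intro} applies to it. Since parabolic rescalings around $X$ preserve disjointness of $\bar M$ from $\sqrt{-t}\Sigma$, the blow-up limit $\bar N$ at $X$ is itself a one-sided ancient flow to $\sqrt{-t}\Sigma''$, with $\lambda(\bar N) \leq \lambda(\bar M) < 2F(\Sigma)$. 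I would then identify $\bar N$ with the canonical one-sided ancient flow attached to $\Sigma''$ via Theorem \ref{theo:exist-unique-intro}. If $\Sigma''$ were not a generalized cylinder, Colding--Minicozzi produce a smooth perturbation $\Sigma''_\eps$ with $F(\Sigma''_\eps) < F(\Sigma'')$; I would promote this perturbation into a competing one-sided ancient flow of strictly smaller entropy, contradicting the uniqueness/minimality encoded in Theorem \ref{theo:exist-unique-intro} and forcing $\Sigma''$ to be cylindrical. Multiplicity one then follows from a Hamilton-type strong maximum principle: a multiplicity $k \geq 2$ sheeting of $\bar N$ over a smooth cylindrical $\Sigma''$ would collapse distinct sheets onto $\Sigma''$, incompatible with strict one-sidedness to the barrier $\sqrt{-t}\Sigma$ under parabolic rescaling.

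\emph{Smoothness and star-shapedness of $\bar M(0)$.} With Part (a) in hand, the singular set of $\bar M$ consists only of multiplicity-one generalized cylindrical points; combined with White's partial regularity and the Choi--Haslhofer--Hershkovits mean-convex neighborhood theorem, this gives full smoothness on an open dense set of times. To push smoothness to $t=0$, I would use that by \eqref{eq:intro-lim-rescaled-flow-mult1} the tangent flow of $\bar M$ at $(\bOh, 0)$ is the smooth multiplicity-one shrinker $\Sigma$, and apply pseudolocality together with interior Ecker--Huisken estimates to get smooth convergence $\bar M(t) \to \bar M(0)$ on compacta as $t \to 0^-$. Star-shapedness I plan to extract by running a maximum principle for $\bx \cdot \nu$ on the rescaled flow $\Sigma_\tau := e^{\tau/2}\bar M(-e^\tau)$: the shrinker equation $\bH + \tfrac12 \bx^\perp = 0$ on the barrier $\Sigma$, the asymptotic $\Sigma_\tau \to \Sigma$ as $\tau \to -\infty$, and strict one-sidedness yield $\bx \cdot \nu > 0$ asymptotically, and the linear evolution equation satisfied by $\bx \cdot \nu$ along the rescaled flow propagates this strict sign to $\tau = 0$.

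\emph{Long-time existence and outermost expander limit.} For noncompact $\Sigma$ with asymptotic cone $\cC$, Part (a) plus Hershkovits--White yield well-posedness of the weak/level-set flow through cylindrical singularities, hence existence of $\bar M(t)$ for all $t > 0$. For the asymptotic limit, I would extract a Brakke-compactness subsequential limit $\bar M_\infty$ of $\tfrac{1}{\sqrt{t}}\,\bar M(t)$ as $t \to \infty$; the scale invariance of the problem together with asymptotic-cone matching forces $\bar M_\infty$ to be an eternal self-similarly expanding solution asymptotic to $\cC$, i.e., a self-expander for $\cC$. Outermost-ness follows from uniqueness in Theorem \ref{theo:exist-unique-intro}: any strictly larger self-expander for $\cC$ on the same side would, by the avoidance principle run backward in time, yield a strictly larger one-sided ancient flow at $\Sigma$, contradicting uniqueness. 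The main obstacle is Part (a), which demands converting Colding--Minicozzi's entropy-instability of non-cylindrical shrinkers into an explicit competing one-sided ancient flow, and simultaneously making the strong-maximum-principle/sheeting argument for multiplicity one fully precise at every spacetime singular point.
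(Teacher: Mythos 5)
There is a genuine gap, and it sits exactly where you flag the ``technical crux'': your mechanism for Part (a) does not work, and it misses the idea the paper actually uses, namely that the one-sided flow is \emph{shrinker mean convex}. Because the rescaled flow is asymptotic to $\Sigma$ along the positive first eigenfunction $\varphi_1$ (spectral instability of $\Sigma$), one gets $2tH + \bx\cdot\nu > 0$, equivalently the nesting $\cF_\lambda(\cK)\subset\cK^\circ$ for $\lambda>1$, and it is this mean-convex-type structure that lets one run White's regularity theory for the rescaled flow and conclude that all tangent flows are multiplicity-one generalized cylinders. Your proposed substitute fails at several points: (i) you assert that tangent flows at an arbitrary interior singular point are $k\sqrt{-t}\,\Sigma''$ with $\Sigma''$ smooth, but for a general Brakke flow in $\RR^{n+1}$, $3\le n\le 6$, smoothness (and multiplicity one) of tangent flows is not known without mean convexity or a low-entropy hypothesis -- it is a conclusion here, not an input; (ii) one-sidedness with respect to $\sqrt{-t}\,\Sigma$ does \emph{not} pass, under blow-up at an interior singular point $X\neq(\bOh,0)$, to one-sidedness with respect to the tangent flow $\sqrt{-t}\,\Sigma''$ there (the barrier $\sqrt{-t}\,\Sigma$ rescales away or to a plane, not to $\Sigma''$), so Theorem \ref{theo:exist-unique-intro} simply does not apply to the blow-up; and (iii) even if it did, Theorem \ref{theo:exist-unique-intro} is a uniqueness statement with no minimality built in, so the Colding--Minicozzi entropy-decreasing perturbation of a non-cylindrical $\Sigma''$ produces no contradiction -- indeed that theorem holds for every compact or asymptotically conical shrinker, cylindrical or not. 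The multiplicity-one ``sheeting'' argument is likewise not an argument; in the paper multiplicity one at singular points comes out of the White-type theory for the (shrinker/expander) mean convex flow, and multiplicity one of the limit at $-\infty$ from the entropy bound $<2F(\Sigma)$.

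Parts (b) and (c) inherit the problem and contain independent errors. Equation \eqref{eq:intro-lim-rescaled-flow-mult1} describes the limit of the rescaled flow as $\tau\to-\infty$, i.e.\ the asymptotics at $t\to-\infty$; it does \emph{not} say the tangent flow of $\bar M$ at the spacetime point $(\bOh,0)$ is $\Sigma$, so pseudolocality plus Ecker--Huisken at $(\bOh,0)$ is not available as you use it. In the paper, smoothness and star-shapedness at $t=0$ come from quantitative spacetime separation under parabolic dilation (a consequence of strict shrinker mean convexity, propagated through singularities with Ilmanen's localized avoidance), which yields a uniform bound on $H$ and a lower bound on $\bx\cdot\nu$ as $t\to 0^-$; a bounded-$H$ flow cannot develop a cylindrical singularity, so $t=0$ is smooth. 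Your maximum-principle plan for $\bx\cdot\nu$ would also have to cross the singular times before $t=0$, which a naive parabolic maximum principle does not do. For $t>0$, a subsequential Brakke limit of $t^{-1/2}\bar M(t)$ is not automatically self-similar; the paper gets convergence to a smooth expander because the forward rescaled flow is expander mean convex, hence monotone, and it identifies the limit with the \emph{outermost} expander by showing the flow stays disjoint from the level set flow of the asymptotic cone (not by an appeal to uniqueness of the ancient flow, and the avoidance principle cannot be ``run backward in time'').
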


To prove Theorem \ref{theo:prop-ancient-intro}, we show that the one-sided ancient flow $t \mapsto \bar M(t)$ must be \emph{shrinker mean convex}; geometrically, this means that the rescaled flow moves in one direction. This is where the one-sided property is crucially used. Recalling the spectral instability of shrinkers discovered in \cite{ColdingMinicozzi:generic}, and that only the \emph{first} eigenfunction of the linearization of Gaussian area along $\Sigma$ has a sign, we show that the evolution of a one-sided flow is dominated by the first eigenfunction, which in turn yields shrinker mean convexity. Shrinker mean convexity is preserved under the flow and can be used analogously to mean convexity to establish regularity of the flow (cf.\ \cite{Smoczyk,White:size,White:nature,lin:star-shaped,HershkovitsWhite:sharp-entropy}). We emphasize that our analysis of the flow $\bar M(t)$ in Theorem \ref{theo:prop-ancient-intro} is influenced by the work of Bernstein--Wang \cite{BernsteinWang:TopologicalProperty} where they studied a (nearly ancient) flow on one side of a asymptotically conical shrinker of low-entropy. Because we do not assume that the flow has low-entropy (besides assuming the limit at $-\infty$ has multiplicity one), we must allow for singularities (while in \cite{BernsteinWang:TopologicalProperty}, the flow is \emph{a posteriori} smooth). In particular, this complicates the analysis of the flow near $t=0$ significantly. 

Finally, we explain how Theorems \ref{theo:exist-unique-intro} and \ref{theo:prop-ancient-intro} can be used to prove the main results of the paper, Theorems  \ref{theo:low-ent-generic-flow-4D} and \ref{theo:mcf-R3}. We begin by considering the setting of  Theorem \ref{theo:low-ent-generic-flow-4D}, namely $M^3\subset \RR^4$ with $\lambda(M) \leq \lambda(\SS^2\times \RR)$. Up to performing an initial perturbation using \cite{ColdingMinicozzi:generic}, we can assume this inequality is strict $\lambda(M) < \lambda(\SS^2\times \RR)$. As described above, we embed $M$ in a local foliation $\{M_s\}_{s\in (-1,1)}$ in space with $\sup_s \lambda(M_s) < \lambda(\SS^2\times \RR)$. We now flow the entire foliation simultaneously, obtaining flows $\{ M_s(t) \}$.  Suppose that $M_0(t)$ encounters a singularity at $(\bx,T)$. Our low-entropy assumption and work of Bernstein--Wang \cite{BernsteinWang:topology-small-ent} implies that any tangent flow to $M_0(t)$ at $(\bx, T)$ is associated to some compact or asymptotically (smoothly) conical self-shrinker $\Sigma$. If $\Sigma$ is a round sphere, we are done. Otherwise, we may combine $\lambda(M_s) < \lambda(\SS^2\times \RR)$ with Theorems \ref{theo:exist-unique-intro} and \ref{theo:prop-ancient-intro} to find that the $M_s(t)$ are free of singularities at points that are captured by the ancient flow on one side of $\Sigma$. 

The major issue is that points $\by \in M_s(t)$ close to $(\bx,T)$ but with $t-T \gg |\by-\bx|^2$ will \emph{not} be captured by a one-sided flow. In this case, if we rescale $M_s(t)$ around $(\bx,T)$ so that $(\by,t)$ is moved to a point of unit distance, and pass to limits, we instead obtain a flow that agrees with the shrinking $\Sigma$ for $t<0$ and is some unknown flow flowing out of the cone at infinity of $\Sigma$ for $t>0$. The insight is that, by parabolic cone-splitting, these flows will have strictly lower Gaussian density than $\Sigma$. As such, the flow $M_s(t)$ improves as compared to $M_0(t)$ in that it has a lower-density maximal density singular point. We iterate this finitely many times to prove Theorem \ref{theo:low-ent-generic-flow-4D}. 

We now describe the necessary modifications to prove Theorem \ref{theo:mcf-R3}. Consider $M^2\subset \RR^3$ (without any entropy bounds). Arguing as above, we can flow a foliation $\{M_s\}_{s\in (-1,1)}$ and use Theorems \ref{theo:exist-unique-intro} and \ref{theo:prop-ancient-intro} to show that $M_s(t)$ is well-approximated by the ancient-one sided flow in some neighborhood of a compact or asymptotically (smoothly) conical self-shrinking singularity of $M_0(t)$. At this point we do not use the density drop argument described above, but instead must rely on a genus monotonicity argument. To do this, we note that such a self-shrinking singularity must have genus $>0$ by a result of Brendle \cite{Brendle:genus0}. On the other hand, the ancient one-sided flow is star-shaped at time $0$ by Theorem \ref{theo:prop-ancient-intro}. Using this, we find that the one-sided flow strictly loses genus when bypassing the singularity. Thus, after finitely many perturbations there cannot be any singularities that are not round spheres or cylinders. This proves Theorem \ref{theo:mcf-R3}.

\begin{remark}
There have been several significant results related to this paper that appeared between the time the paper first appeared and this version. On one hand, the results of this paper were extended to shrinkers with asymptotically cylindrical ends in \cite{CCS2}. On the other hand, the density drop argument was generalized into a standalone tool to prove generic regularity results for low-entropy flows in \cite{CCMS:low1,CMS:low2}. This density drop argument was also used to generalize the Hardt--Simon generic regularity result \cite{HardtSimon:foliation} for area-minimizing hypersurfaces from $8$ to $9$ and $10$ ambient dimensions \cite{CMS:HS}. In terms of its use in the current paper, the first- and fourth-named authors, along with Daniels-Holgate recently proved \cite{CDHS} that the outermost level set flows are completely smooth for a short time after the occurrence of a singularity modeled on an asymptotically conical self-shrinker. In particular, this result would allow us to avoid the density drop argument used here altogether. Finally, we mention the recent major breakthrough by Bamler--Kleiner who proved \cite{BK:mult} the multiplicity-one conjecture in $\RR^3$.
\end{remark}

\subsection{Other results} We list several other new results we've obtained in this work that might be of independent interest:
\begin{itemize}
\item For any smooth compact or asymptotically conical shrinker $\Sigma$, we construct an $I$ parameter family of smooth ancient mean curvature flows (where $I$ is the index of $\Sigma$ as a critical point of Gaussian area, as defined in \eqref{eq:linearized.equation.eigenvalues})  that---after rescaling---limit to $\Sigma$ as $t\to-\infty$; see Theorem \ref{theo:contraction}.
\item We show that the outermost flows of the level set flow of a regular cone are smooth self-similarly expanding solutions. We also construct associated expander mean convex flows that converge to the given expander after rescaling; see Theorem \ref{theo:outermost-flows}.
\item We include a proof of a localized version of the avoidance principle for weak set flows due to Ilmanen; see Theorem \ref{theo:ilmanen-avoidance}. This implies a strong version of the Frankel property for shrinkers; see Corollary \ref{coro:Frankel}.
\item We improve known results concerning the connectivity of the regular set of a unit-regular Brakke flow with sufficiently small singular set. See Corollary \ref{cor:connected-reg-part}.
\item We localize the topological monotonicity of White \cite{White:topology-weak}. In particular, our results should be relevant in the context of the strict genus reduction conjecture of Ilmanen \cite[\#13]{Ilmanen:problems}. See Appendix \ref{app:loc-top-monotonicity} and the proof of Proposition \ref{prop:key-perturb-result-3d}. 
\end{itemize}

\subsection{Organization of the paper}  In Section \ref{sec:prelim} we recall some conventions and definitions used in the paper. 

The main technical work of the paper is contained in Sections \ref{sec:linearized.equation}--\ref{sec:exist.unique.ancient.Brakke}, which establish the existence and uniqueness, together with regularity of ancient one-sided flows. The geometric applications of this existence and uniqueness result are then given in Sections \ref{sec:generic-schoenflies} and \ref{sec:generic.R3}. As such, the reader less interested in the technicalities in proving existence/uniqueness of the one-sided ancient flow may want to jump straight to Section  \ref{sec:generic-schoenflies}.

More precisely, in Section \ref{sec:linearized.equation} we analyze the linearized graphical mean curvature flow equation over an asymptotically conical shrinker. We use this to study the nonlinear problem in Section \ref{sec:dynamics}. These results are applied in Section \ref{sec:one.sided.flows} to prove our main analytic input, Corollary \ref{coro:one.sided.decay.uniqueness}, the uniqueness of ancient one-sided graphical flows. 

Section \ref{sec:family.flows} contains a construction of the full $I$-parameter family of ancient flows. This is not used elsewhere, since we construct the one-sided flows by GMT methods allowing us to flow through singularities. We begin this GMT construction in Section \ref{sec:gmt.existence} where we construct an ancient one-sided Brakke/weak-set flow pair. In Section \ref{sec:long.time.reg} we establish optimal regularity of the ancient one-sided flow. We put everything together in Section \ref{sec:exist.unique.ancient.Brakke} and give the full existence and uniqueness statement for the ancient one-sided flows. 

We apply this construction to the study of the mean curvature flow of generic low entropy hypersurfaces in Section \ref{sec:generic-schoenflies} and to the study of the first non-generic time of the mean curvature flow of a generic surface in $\RR^3$ in Section \ref{sec:generic.R3}. 

In Appendix \ref{sec:shrinker.geometry} we improve some decay estimates for asymptotically conical ends of shrinkers. In Appendix \ref{sec:schauder} we recall Knerr's non-standard parabolic Schauder estimates. In Appendix \ref{app:uniqueness-BF} we prove that mean curvature flows with bounded curvature and controlled area ratios are unique in the class of Brakke flows. We prove Ilmanen's localized avoidance principle in Appendix \ref{sec:Ilmanen.avoidance}. Appendix \ref{sec:Ecker-Huisken maximum principle} recalls the non-compact Ecker-Huisken maximum principle. In Appendix \ref{app:weak-set-flows} we study weak set flows coming out of cones. We show that Brakke flows with sufficiently small singular set have connected regular part in Appendix \ref{app:reg.set.conn}. Finally, in Appendix \ref{app:loc-top-monotonicity} we localize certain topological monotonicity results.

\subsection{Acknowledgements} O.C.~was partially supported by a Terman Fellowship, a Sloan Fellowship, and NSF grants  DMS-1811059, DMS-2016403, and DMS-2304432. He would also like to acknowledge the MATRIX Institute for its hospitality during the time which some of this article was completed. K.C.~was supported by the KIAS Individual Grant MG078902, a POSCO Science Fellowship, an Asian Young Scientist Fellowship, and the National Research Foundation of Korea (NRF) grant funded by the Korea government (MSIT) (RS-2023-00219980). C.M.~was supported by the NSF grant DMS-1905165. F.S.~was supported by a Leverhulme Trust Research Project Grant RPG-2016-174.  We are grateful to Richard Bamler, Costante Bellettini, Robert Haslhofer, Or Hershkovits, Daren Cheng, Ciprian Manolescu, Leon Simon, and Brian White for useful conversations related to this paper as well as to the anonymous referees for their careful reading and many helpful suggestions.


\section{Preliminaries} \label{sec:prelim}

In this section we collect some useful definitions, conventions, and useful ways to recast mean curvature flow, which we will make use of in the sequel.

\subsection{Spacetime} We will often consider the spacetime of our mean curvature flows, $\RR^{n+1}\times \RR$, with its natural time-projection map $\mathfrak{t} : \RR^{n+1} \times \RR \to \RR$:
\[
\mathfrak{t}(\bx,t) := t.
\]
For any subset $E\subset \RR^{n+1}\times \RR$ we will denote
\[
E(t) := \{ \mathbf{x} \in \RR^{n+1} : (\mathbf{x}, t) \in E \}.
\]

\subsection{The spacetime track of a classical flow}

Let us fix a compact $n$-manifold $M$, possibly with boundary. Suppose that $f:M\times[a,b]\to\RR^{n+1}$ is a continuous map that is smooth on $M^{\circ} \times (a,b]$ (where $M^{\circ}=M\setminus \partial M$) and injective on each $M\times \{t\}$ for $t\in[a,b]$. Assume that $t\mapsto f(M^{\circ},t)$ is flowing by mean curvature flow. Then, we call 
\[
\cM : = \{ f(M,t) \times \{t\} : t\in[a,b]\} \subset \RR^{n+1}\times \RR
\]
a \emph{classical mean curvature flow} and define the \emph{heat boundary} of $\cM$ by
\[
\partial\cM : = f(M,a) \cup f(\partial M,[a,b]). 
\]
By the maximum principle, classical flows that intersect must intersect in a point that belongs to either one of their heat boundaries (cf.\ \cite[Lemma 3.1]{White:topology-weak}). 

\subsection{Weak set flows and level set flows} \label{sec:prelim.weak.flow.level.set.flow}

If $\Gamma \subset \RR^{n+1}\times \RR^{+}$ (where $\RR^{+}=[0,\infty)$ could be shifted as necessary) is a closed subset of spacetime, then $\cM \subset \RR^{n+1}\times \RR$ is a \emph{weak set flow} (generated by $\Gamma$) if:
\begin{enumerate}
\item $\cM$ and $\Gamma$ coincide at $t=0$ and 
\item if $\cM'$ is a classical flow with $\partial\cM'$ disjoint from $\cM$ and $\cM'$ disjoint from $\Gamma$, then $\cM'$ is disjoint from $\cM$. 
\end{enumerate} 
We will often consider the analogous definition with $\RR^{+}$ replaced by $\RR$ in which case one should omit requirement (1).

There may be more than one weak set flow generated by a given $\Gamma$. See \cite{White:topology-weak}. However, there is one weak set flow that contains all other weak set flows generated by $\Gamma$. It is called the \emph{level set flow} (or biggest flow). For $\Gamma \subset \RR^{n+1}\times \RR^{+}$ as above, we define it inductively as follows. Set
\[
W_{0} := \{ (\bx,0) : (\bx,0) \not \in \Gamma\}
\]
and then let $W_{k+1}$ be the union of all classical flows $\cM'$ with $\cM'$ disjoint from $\Gamma$ and $\partial \cM'\subset W_{k}$. We define the level set flow (or biggest flow) generated by $\Gamma$ as:
\[ \cM := (\RR^{n+1} \times \RR^+) \setminus (\cup_{k=0}^\infty W_k) \subset \RR^{n+1} \times \RR^+. \]
See \cite{EvansSpruck1,Ilmanen:elliptic,White:subsequent} for further references for weak set flows and level set flow.

We will sometimes engage in a slight abuse of notation, referring to a weak set flow (or a level set flow) generated by a closed subset $\Gamma_0 \subset \RR^{n+1}$, when we really mean that it is  generated by $\Gamma_0 \times \{0\}$ (or a suitable time-translate) in the sense defined above.

\subsection{Integral Brakke flows} \label{sec:prelim.brakke.flow}

Another important notion of weak mean curvature flow is a Brakke flow (cf.\ \cite{Brakke,Ilmanen:elliptic}). We follow here the conventions used in \cite{White:MCFboundary}. 

An ($n$-dimensional) \emph{integral Brakke flow} in $\RR^{n+1}$ is a $1$-parameter family of Radon measures $(\mu(t))_{t \in I}$ over an interval $I \subset \RR$ so that:
\begin{enumerate}
\item For almost every $t \in I$, there exists an integral $n$-dimensional varifold $V(t)$ with $\mu(t) = \mu_{V(t)}$ so that $V(t)$ has locally bounded first variation and has mean curvature $\bH$ orthogonal to $\textrm{Tan}(V(t),\cdot)$ almost everywhere.
\item For a bounded interval $[t_1,t_2] \subset I$ and any compact set $K\subset \RR^{n+1}$, 
\[
\int_{t_1}^{t_2}\int_K (1+|\bH|^2) d\mu(t) dt < \infty.
\]
\item If $[t_1,t_2] \subset I$ and $f \in C^{1}_{c}(\RR^{n+1}\times [t_1,t_2])$ has $f\geq 0$ then 
\[
\int f(\cdot,t_{2}) \, d\mu(t_{2}) - \int f(\cdot,t_{1}) \, d\mu(t_{1}) \leq \int_{t_{1}}^{t_{2}} \int\left( - |\bH|^{2} f + \bH \cdot \nabla f + \tfrac{\partial }{\partial t} f \right) \, d\mu(t) \, dt.
\]
\end{enumerate}
We will often write $\cM$ for a Brakke flow $(\mu(t))_{t \in I}$, with the understanding that we're referring to the family $I \ni t\mapsto \mu(t)$ of measures satisfying Brakke's inequality.

A key fact that relates Brakke flows to weak set flows, which we will use implicitly throughout the paper, is that the support of the spacetime track of a Brakke flow is a weak set flow \cite[10.5]{Ilmanen:elliptic}.\footnote{The definition of Brakke flow used in \cite{Ilmanen:elliptic} is slightly different than the one given here, but it is easy to see that the proof of \cite[10.5]{Ilmanen:elliptic} applies to our definition as well.}

\subsection{Density and Huisken's monotonicity} \label{sec:prelim.density.huisken.monotonicity}

For $X_{0} : = (\bx_{0},t_{0})\in \RR^{n+1}\times \RR$ consider the (backward) heat kernel based at $(\bx_{0},t_{0})$:
\begin{equation} \label{eq:backwards.heat.kernel}
\rho_{X_{0}}(\bx,t) := (4\pi(t_{0}-t))^{-\frac n2} \exp \left( -\frac{|\bx-\bx_{0}|^{2}}{4(t_{0}-t)} \right),
\end{equation}
for $\bx \in \RR^{n+1}$, $t< t_{0}$. For a Brakke flow $\cM$ and $r>0$ we set
\begin{equation} \label{eq:density.ratio.r}
\Theta_{\cM}(X_{0},r) : = \int_{\bx\in\RR^{n+1}} \rho_{X_{0}}(\bx,t_{0}-r^{2}) \, d\mu(t_{0}-r^{2}).
\end{equation}
This is the density ratio at $X_0$ at a fixed scale $r > 0$. Huisken's monotonicity formula \cite{Huisken:sing} (cf.\ \cite{Ilmanen:singularities}) implies that
\[
\tfrac{d}{dt} \int \rho_{X_{0}}(\bx,t) \, d\mu(t) \leq - \int \left| \bH - \frac{(\bx-\bx_{0})^{\perp}}{2(t-t_{0})} \right|^2 \rho_{X_{0}}(\bx,t) \, d\mu(t)
\]
so in particular, we can define the density of $\cM$ at $X_{0}$ by
\begin{equation} \label{eq:density}
\Theta_{\cM}(X_{0}) : = \lim_{r\searrow 0}\Theta_{\cM}(X_{0},r). 
\end{equation}

\subsection{Unit-regular and cyclic Brakke flows} \label{sec:prelim.unit.regular.cyclic}

An integral Brakke flow $\cM = (\mu(t))_{t \in I}$ is said to be 
\begin{itemize}
	\item \emph{unit-regular} if $\cM$ is smooth in some space-time neighborhood of any spacetime point $X$ for which $\Theta_{\cM}(X) = 1$;
	\item \emph{cyclic} if, for a.e.\ $t \in I$, $\mu(t) = \mu_{V(t)}$ for an integral varifold $V(t)$ whose unique associated rectifiable mod-2 flat chain $[V(t)]$ has $\partial[V(t)]=0$ (see \cite{White:cyclic}). 
\end{itemize} 
Integral Brakke flows constructed by Ilmanen's elliptic regularization approach \cite{Ilmanen:elliptic} (see also \cite[Theorem 22]{White:MCFboundary}) are \emph{unit-regular} and \emph{cyclic}. More generally, if $\cM_{i}$ are unit-regular (resp. cyclic) integral Brakke flows with $\cM_{i}\rightharpoonup \cM$, then $\cM$ is also unit-regular (resp. cyclic) by \cite{White:Brakke} (cf.\ \cite[Theorem 4.2]{SchulzeWhite}; resp. \cite[Theorem 4.2]{White:cyclic}). Recall that a sequence of integral Brakke flows $\cM_{i}$ \emph{converges} to an integral Brakke flow $\cM$, denoted $\cM_i \rightharpoonup \cM$, if
\begin{enumerate}
	\item $\mu_{i}(t) \rightharpoonup \mu(t)$ for each $t$, and
	\item for a.e.\ $t$, we can pass to a subsequence depending on $t$ so that $V_{i}(t)\rightharpoonup V(t)$ as varifolds. 
\end{enumerate}
The motivation for this definition of convergence is that these are the conditions that follow (after passing to a subsequence) if we have local mass bounds for $\cM_{i}$ and seek to prove a compactness theorem (cf.\ \cite[\S7]{Ilmanen:elliptic}).

\subsection{Shrinkers} \label{sec:prelim.shrinkers}

A smooth hypersurface $\Sigma\subset \RR^{n+1}$ is a \emph{self-shrinker} if 
\begin{equation}\label{eq:defn-shrinker}
\bH_{\Sigma} + \tfrac{1}{2} \bx^{\perp} = 0
\end{equation}
where $\bH_{\Sigma}$ is the mean curvature vector of $\Sigma$ and $\bx^{\perp}$ is the normal component of $\bx$. We will always assume that $\Sigma$ has empty boundary, unless specified otherwise. One can easily check that \eqref{eq:defn-shrinker} is equivalent to any of the following properties:
\begin{itemize}
\item $t\mapsto \sqrt{-t}\, \Sigma$ is a mean curvature flow for $t<0$,
\item $\Sigma$ is a minimal hypersurface for the metric $e^{-\frac{1}{2n} |\bx|^{2}}g_{\RR^{n+1}}$, or
\item $\Sigma$ is a critical point of the \emph{$F$-functional}
\[ 
F(\Sigma) := (4\pi)^{-\frac n2} \int_{\Sigma} e^{-\frac{1}{4} |\bx|^{2}}
\]
among compactly supported deformations, as well as translations and dilations.
\end{itemize}
See \cite[\S3]{ColdingMinicozzi:generic}.

We will say that $\Sigma$ is \emph{asymptotically conical} there is a regular cone $\cC$ (i.e., the cone over a smooth submanifold of $\SS^{n}$) so that $\lambda\Sigma \to \cC$ in $C^\infty_\textrm{loc}(\RR^{n+1}\setminus\{0\})$ as $\lambda\searrow 0$.

\begin{remark}
By considering the $t\nearrow 0$ limit (in the Brakke flow sense) of the flow $t\mapsto \sqrt{-t}\,\Sigma$, we see that $\lim_{\lambda \searrow 0} \lambda\Sigma$ is unique in the Hausdorff sense, so the asymptotic cone of $\Sigma$ must be unique. Moreover, because we have assumed that the convergence is in $C^\infty_\textrm{loc}$, there is no potential higher multiplicity in the limit (see, e.g., \cite[\S5]{Wang:ends-conical}).
\end{remark}

\subsection{Curvature conventions} \label{sec:prelim.curvature}

Consider $\Omega\subset \RR^{n+1}$ open with $\partial\Omega=\Sigma$ a self-shrinker. Write $\nu_\Sigma$ for the unit normal vector field to $\Sigma$ that points into $\Omega$. We define the \emph{second fundamental form} 2-tensor $A_\Sigma$ at each $p \in \Sigma$ to equal
\[
A_\Sigma: T_p \Sigma \times T_p \Sigma \to \RR,\qquad A_\Sigma(\xi,\zeta) = - D_\xi \nu_\Sigma \cdot \zeta.
\]
Recall that dual to $A_\Sigma$ is the \emph{shape operator} or \emph{Weingarten map}, defined at each $p \in \Sigma$ to be the tangent space endomorphism given by
\[
S_\Sigma : T_p \Sigma \to T_p \Sigma, \qquad S_\Sigma(\xi) = - D_\xi \nu_\Sigma.
\]
We fix the sign of the \emph{scalar mean curvature} $H_\Sigma$ as follows
\[ \bH_\Sigma = H_\Sigma \, \nu_\Sigma.\]
Thus, $H_\Sigma = \tr_\Sigma A_\Sigma$, with the principal curvatures of $\Sigma$ being the eigenvalues of $A_\Sigma$. With these conventions, the shrinker mean curvature from \eqref{eq:defn-shrinker} can be written as
\[
\bH_\Sigma + \tfrac12 \bx^\perp = \left(H_\Sigma + \tfrac 12  \bx \cdot \nu_\Sigma \right) \nu_\Sigma.
\]
For example, the sphere bounding a unit ball has normal vector pointing to the inside, positive mean curvature, and positive principal curvatures. Conversely, the same sphere bounding the complement of a closed unit ball has normal vector pointing to the outside, negative mean curvature, and negative principal curvatures.

\subsection{Entropy} Following \cite{ColdingMinicozzi:generic}, one uses the backward heat kernel $\rho_{(\mathbf{x}_0,t_0)}$ from \eqref{eq:backwards.heat.kernel} to define the \emph{entropy} of a Radon measure $\mu$ on $\RR^{n+1}$ by
\begin{equation} \label{eq:entropy.t}
\lambda(\mu) := \sup_{\substack{\bx_{0} \in \RR^{n+1}\\t_{0}>0}} \int \rho_{(\bx_{0},t_{0})}(\bx,0) \, d\mu.
\end{equation}
Then, one can define the entropy of an arbitrary Brakke flow $\cM = (\mu(t))_{t\in I}$ by:
\begin{equation} \label{eq:entropy}
\lambda(\cM) := \sup_{t\in I} \lambda(\mu(t)). 
\end{equation}
Huisken's monotonicity formula implies that $t\mapsto \lambda(\mu(t))$ is non-increasing.


\section{Linearized rescaled flow equation} \label{sec:linearized.equation}

Let $\Sigma^n \subset \RR^{n+1}$ be a smooth properly immersed asymptotically conical shrinker.\footnote{The analysis here also holds in the much simpler case of compact $\Sigma$.}

\subsection{Spectral theory in Gaussian $L^2$ space}  \label{sec:linearized.equation.L2.notation}

We consider the following operator on $\Sigma$:
\begin{equation} \label{eq:linearized.equation.linear.operator}
Lu := \Delta_\Sigma u - \tfrac12 \mathbf{x} \cdot \nabla_\Sigma u + \tfrac12 u + |A_\Sigma|^2 u.
\end{equation}
This is the ``stability'' operator for the $F$-functional in Section \ref{sec:prelim.shrinkers} in the sense that 
\[
\tfrac{d^{2}}{ds^{2}}\Big|_{s=0} F(\Graph_{\Sigma}(su)) = \int_{\Sigma} - u (L u) \, \rho,
\]
for any compactly supported function $u : \Sigma \to \RR$, where $\rho$ is the Gaussian weight
\begin{equation} \label{eq:linearized.equation.weighted.density}
\rho(\mathbf{x}) := (4\pi)^{-\frac{n}{2}} e^{-\frac{1}{4} |\mathbf{x}|^2},
\end{equation}
i.e., $\rho := \rho_{(\mathbf{0}, 0)}(\cdot, -1)$ in the notation of \eqref{eq:backwards.heat.kernel}. See \cite[Theorem 4.1]{ColdingMinicozzi:generic}. This stability operator, \eqref{eq:linearized.equation.linear.operator}, is only self-adjoint if we we work on  Sobolev spaces weighted by $\rho$. We thus define a weighted $L^2$ dot product for measurable functions $u$, $v : \Sigma \to \RR$:
\begin{equation} \label{eq:linearized.equation.weighted.L2.dot}
	\langle u, v \rangle_{W} := \int_\Sigma \langle u, v \rangle \rho \, d\cH^n.
\end{equation}
This induces a metric $\Vert \cdot \Vert_{W}$ and a Hilbert space
\begin{equation} \label{eq:linearized.equation.weighted.L2.space}
	L^2_W(\Sigma) := \{ u : \Sigma \to \RR : \Vert u \Vert_{W} < \infty \}.
\end{equation}
Likewise, we define the higher order weighted Sobolev spaces
\begin{equation} \label{eq:linearized.equation.weighted.sobolev.space}
	H^k_W(\Sigma) := \{ u : \Sigma \to \RR : \Vert u \Vert_{W} + \Vert \nabla_\Sigma u \Vert_{W} + \ldots + \Vert \nabla_\Sigma^k u \Vert_{W} < \infty \}.
\end{equation}
They are Hilbert spaces for the dot product
\begin{equation} \label{eq:linearized.equation.weighted.sobolev.dot}
	\langle u, v \rangle_{W,k} := \langle u, v \rangle_{W} + \langle \nabla_\Sigma u, \nabla_\Sigma v \rangle_{W} + \ldots + \langle \nabla_\Sigma^k u, \nabla_\Sigma^k v \rangle_{W},
\end{equation}
whose induced norm is denoted $\Vert \cdot \Vert_{W,k}$. It is with respect to these weighted measures spaces that $L$ is self-adjoint, i.e.,
\begin{equation} \label{eq:linearized.equation.weighted.L2.self.adjoint}
	\langle Lu, v \rangle_{W} = \langle u, Lv \rangle_{W}, \; \forall u, v \in H^2_W(\Sigma).
\end{equation}
We have:

\begin{lemma} \label{lemma:linearized.equation.spectrum}
	There exist real numbers $\lambda_1 \leq \lambda_2 \leq \ldots$ and a corresponding complete $L^2_W$-orthonormal set $\varphi_1, \varphi_2, \ldots : \Sigma \to \RR$ such that $L\varphi_i = -\lambda_i \varphi_i$ and $\lim_i \lambda_i = \infty$.
\end{lemma}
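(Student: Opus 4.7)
The plan is to realize $-L$ as a non-negative self-adjoint operator on $L^2_W(\Sigma)$ (up to a bounded shift) whose form domain embeds compactly into $L^2_W(\Sigma)$, and then invoke the standard spectral theorem for such operators. First, using $\nabla_\Sigma \rho = -\tfrac12 \bx^{\top} \rho$ (where $\bx^{\top}$ is the tangential component of $\bx$), one rewrites $L$ in divergence form
\[ L u = \rho^{-1} \operatorname{div}_\Sigma(\rho \nabla_\Sigma u) + (|A_\Sigma|^2 + \tfrac12) u, \]
which makes $L$ manifestly symmetric on $C^\infty_c(\Sigma)$ with respect to $\langle \cdot, \cdot \rangle_W$. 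Since $\Sigma$ is asymptotically conical, $|A_\Sigma|$ is uniformly bounded on $\Sigma$ (in fact it decays along the ends; see Appendix \ref{sec:shrinker.geometry}), so the quadratic form
\[ Q(u) := \langle -Lu, u\rangle_W = \int_\Sigma \bigl( |\nabla_\Sigma u|^2 - (|A_\Sigma|^2 + \tfrac12) u^2 \bigr) \rho \]
satisfies $Q(u) \geq -C \Vert u \Vert_W^2$ uniformly, and $Q + (C+1)\Vert \cdot \Vert_W^2$ is equivalent to $\Vert \cdot \Vert_{W,1}^2$. Taking the Friedrichs extension produces a self-adjoint operator on $L^2_W(\Sigma)$, still denoted $L$, with form domain $H^1_W(\Sigma)$.

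The crux of the argument is the compactness of the embedding $H^1_W(\Sigma) \hookrightarrow L^2_W(\Sigma)$. I would first establish the weighted Poincar\'e-type inequality
\[ \int_\Sigma |\bx|^2 u^2 \rho \leq C_1 \int_\Sigma |\nabla_\Sigma u|^2 \rho + C_2 \int_\Sigma u^2 \rho, \quad u \in H^1_W(\Sigma), \]
by integrating $\operatorname{div}_\Sigma(u^2 \bx^{\top} \rho)$ over $\Sigma$, using the shrinker identity $\operatorname{div}_\Sigma \bx^{\top} = n + \bx \cdot \bH = n - \tfrac12 |\bx^{\perp}|^2$ together with $\bx^{\top} \cdot \nabla_\Sigma \rho = -\tfrac12 |\bx^{\top}|^2 \rho$, absorbing the cross term by Cauchy--Schwarz, and noting that $|\bx^{\perp}|^2 = 4|\bH|^2$ is uniformly bounded by the asymptotically conical hypothesis. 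The polynomial area growth of $\Sigma$ justifies the integration by parts through a standard cutoff argument. Granting this inequality, compactness follows from a diagonal argument: for any bounded sequence in $H^1_W(\Sigma)$, the classical Rellich theorem extracts an $L^2$-convergent subsequence on each $\Sigma \cap B_R$ (where $\rho$ is bounded above and below), while the tail $\int_{\Sigma \setminus B_R} u^2 \rho \leq R^{-2} \int_\Sigma |\bx|^2 u^2 \rho$ is controlled uniformly in $R$ via the weighted Poincar\'e estimate.

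With $L$ self-adjoint, bounded below, and having compact form-domain inclusion into $L^2_W(\Sigma)$, the standard spectral theorem (applied to $-L$ plus a shift) produces the desired sequence of eigenvalues $\lambda_1 \leq \lambda_2 \leq \cdots$ accumulating at $+\infty$, together with a complete $L^2_W$-orthonormal system of eigenfunctions $\varphi_1, \varphi_2, \ldots$; the min-max characterization of eigenvalues ensures $\lim_i \lambda_i = \infty$. The main obstacle is establishing the weighted Poincar\'e inequality cleanly on a non-compact shrinker (and justifying the boundary terms in the integration by parts via proper cutoffs); everything after that step is essentially classical spectral theory. The compact case is formally identical and strictly easier since the weight is bounded and no tail estimate is needed.
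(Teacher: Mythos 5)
Your proof is correct and follows essentially the same route as the paper: discreteness of the spectrum via the compactness of the embedding $H^1_W(\Sigma) \hookrightarrow L^2_W(\Sigma)$ combined with the standard min-max/spectral-theorem machinery, the only difference being that the paper cites this compactness from Bernstein--Wang (Proposition B.2 of their topological-property paper) while you supply the standard proof of it (divergence-form rewriting of $L$, the weighted Poincar\'e inequality from integrating $\operatorname{div}_\Sigma(u^2 \bx^{\top}\rho)$, Rellich on compact pieces plus the tail estimate). Your derivation of the weighted Poincar\'e inequality is sound (in fact the $|\bx^{\perp}|^2$ term enters with a favorable sign, so its boundedness is not even needed), and the only unstated—but routine—point is elliptic regularity to conclude the eigenfunctions are smooth.
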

\begin{proof}
	This follows from the standard min-max construction of eigenvalues and eigenfunctions and the compactness of the inclusion $H^1_W(\Sigma) \subset L^2(\Sigma)$, in the spirit of the Rellich--Kondrachov theorem, proven in \cite[Proposition B.2]{BernsteinWang:TopologicalProperty}.
\end{proof}

Since $\lambda_j \to \infty$ as $j \to \infty$, there exist $I$, $K \in \NN$ such that
\begin{equation} \label{eq:linearized.equation.eigenvalues}
	\lambda_1 \leq \ldots \leq \lambda_I < 0 \leq \lambda_{I+1} = 0 = \ldots = \lambda_{I+K} < \lambda_{I+K+1} \leq \ldots
\end{equation}
For notational convenience, for any binary relation $\sim \; \in \{ =, \neq, <, >, \leq, \geq \}$ we define the spectral projector $\Pi_{\sim \mu} : L^2_W(\Sigma) \to L^2_W(\Sigma)$ given by:
\begin{equation} \label{eq:linearized.equation.projections}
	\Pi_{\sim \mu} : f \mapsto \sum_{j : \lambda_j \sim \mu} \langle f, \varphi_j \rangle_{W} \varphi_j.
\end{equation}

We wish to study solutions of the inhomogeneous linear PDE
\begin{equation} \label{eq:linearized.equation.L2.pde}
(\tfrac{\partial}{\partial \tau} - L)u = h \text{ on } \Sigma \times \RR_-,
\end{equation}
where $\RR_- = (-\infty, 0]$ in all that follows. (Of course, in practice, $h$ may depend on $u$.)

At a formal level, if $u(\cdot, \tau) \in H^2_W(\cdot, \tau)$ and $h(\cdot, \tau) \in L^2_W(\Sigma)$ for $\tau \in \RR_-$, then we can use Lemma \ref{lemma:linearized.equation.spectrum} and Hilbert space theory to decompose
\begin{equation} \label{eq:linearized.equation.L2.pde.u.h.decomposition}
u(\cdot, \tau) =: \sum_{j=1}^\infty u_j(\tau) \varphi_j, \; h(\cdot, \tau) =: \sum_{j=1}^\infty h_j(\tau) \varphi_j,
\end{equation}
where the $u_j$, $h_j : \RR_- \to \RR$ are expected (by virtue of  \eqref{eq:linearized.equation.L2.pde}) to be solutions of
\begin{equation} \label{eq:linearized.equation.L2.pde.coefficient.ode}
u_j'(\tau) = -\lambda_j u_j(\tau) + h_j(\tau).
\end{equation}
Turning this formal argument into a rigorous one is standard:

\begin{lemma}[Weighted $L^2$ estimate]  \label{lemma:linearized.equation.L2.pde.estimate}
	Fix $\delta > 0$, $0 < \delta' < \min\{ \delta, -\lambda_I \}$. Suppose that
	\begin{equation} \label{eq:linearized.equation.L2.pde.assumption}
	\int_{-\infty}^0 \Big| e^{-\delta \tau} \Vert h(\cdot, \tau) \Vert_{W} \Big|^2 \, d\tau < \infty.
	\end{equation}
	There exists a unique solution $u$ (``strong in $L^2$'') of \eqref{eq:linearized.equation.L2.pde} such that
	\begin{equation} \label{eq:linearized.equation.L2.pde.conclusion.1}
	\Pi_{<0}(u(\cdot, 0)) = 0,
	\end{equation}
	\begin{equation} \label{eq:linearized.equation.L2.pde.conclusion.2}
	\int_{-\infty}^0 \Big| e^{-\delta' \tau} ( \Vert u(\cdot, \tau) \Vert_{W,2} + \Vert \tfrac{\partial}{\partial \tau} u(\cdot, \tau) \Vert_{W}) \Big|^2 \, d\tau < \infty.
	\end{equation}
	It is given by the series representation in \eqref{eq:linearized.equation.L2.pde.u.h.decomposition} with coefficients:
	\begin{equation} \label{eq:linearized.equation.L2.pde.coeff.negative}
	u_j(\tau) := - \int_\tau^0 e^{\lambda_j(\sigma-\tau)} h_j(\sigma) \, d\sigma, \; j = 1, \ldots, I,
	\end{equation}
	\begin{equation} \label{eq:linearized.equation.L2.pde.coeff.nonnegative}
	u_j(\tau) := \int_{-\infty}^\tau e^{\lambda_j(\sigma-\tau)} h_j(\sigma) \, d\sigma, \; j = I+1, I+2, \ldots
	\end{equation}
	Moreover, for every $\tau \in \RR_-$,
	\begin{equation} \label{eq:linearized.equation.L2.pde.conclusion.3}
	e^{-\delta' \tau} \Vert u(\cdot, \tau) \Vert_{W} \leq C \Big[ \int_{-\infty}^0 \Big| e^{-\delta \sigma} \Vert h(\cdot, \sigma) \Vert_{W} \Big|^2 \, d\sigma \Big]^{\frac12},
	\end{equation}
	where $C = C(\delta, \delta', \lambda_1, \ldots, \lambda_I)$. 
\end{lemma}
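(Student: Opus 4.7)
The plan is to make rigorous the formal Fourier-in-space ansatz described just before the lemma. Project the equation \eqref{eq:linearized.equation.L2.pde} against $\varphi_j$ in the $\langle\cdot,\cdot\rangle_W$ inner product: using self-adjointness \eqref{eq:linearized.equation.weighted.L2.self.adjoint} and $L\varphi_j=-\lambda_j\varphi_j$, the PDE becomes the decoupled family of scalar ODEs \eqref{eq:linearized.equation.L2.pde.coefficient.ode} for $u_j(\tau):=\langle u(\cdot,\tau),\varphi_j\rangle_W$ with forcing $h_j(\tau):=\langle h(\cdot,\tau),\varphi_j\rangle_W$. For the unstable modes $j\le I$, the boundary requirement $\Pi_{<0}(u(\cdot,0))=0$ forces $u_j(0)=0$, which selects the backward-in-$\tau$ Duhamel formula \eqref{eq:linearized.equation.L2.pde.coeff.negative}; for the modes $j\ge I+1$, the weighted integrability condition \eqref{eq:linearized.equation.L2.pde.conclusion.2} combined with $\lambda_j\ge 0$ forces decay as $\tau\to-\infty$ and so selects the forward-in-$\tau$ Duhamel formula \eqref{eq:linearized.equation.L2.pde.coeff.nonnegative}. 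In both cases the specific representation is the unique one compatible with the side conditions, which is how uniqueness will ultimately be read off.

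Next I would verify that the series $\sum_j u_j(\tau)\varphi_j$ actually defines a function in the required spaces. By Parseval, $\|u(\cdot,\tau)\|_W^2=\sum_j u_j(\tau)^2$, so it suffices to control the $u_j(\tau)$ modewise and then sum. For $j\le I$ one writes, using Cauchy--Schwarz in $\sigma$ on \eqref{eq:linearized.equation.L2.pde.coeff.negative} together with $\lambda_j<0$ and $0<\delta'<-\lambda_I$,
\begin{equation*}
e^{-2\delta'\tau}u_j(\tau)^2 \;\le\; \Big(\int_\tau^0 e^{2(\lambda_j+\delta')(\sigma-\tau)}\,d\sigma\Big)\Big(\int_\tau^0 e^{-2\delta'\sigma}h_j(\sigma)^2\,d\sigma\Big) \;\le\; C\int_{-\infty}^0 e^{-2\delta\sigma}h_j(\sigma)^2\,d\sigma,
\end{equation*}
with $C=C(\delta,\delta',\lambda_I)$. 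For $j\ge I+1$ one uses \eqref{eq:linearized.equation.L2.pde.coeff.nonnegative} and $\lambda_j\ge 0$ to get the analogous bound with constant independent of $j$. Summing in $j$ and invoking Parseval on the forcing yields \eqref{eq:linearized.equation.L2.pde.conclusion.3}. A nearly identical Cauchy--Schwarz computation, now integrated in $\tau$ as well, gives the $L^2_W$-in-time bound needed to establish \eqref{eq:linearized.equation.L2.pde.conclusion.2}; the control of $\|\nabla_\Sigma u\|_W$ and $\|\nabla_\Sigma^2 u\|_W$ uses in addition the bound $\|\varphi_j\|_{W,2}^2\le C(1+\lambda_j)$ coming from testing the eigenvalue equation against $\varphi_j$ and integrating by parts, while the $\partial_\tau u$ bound then comes directly from the equation.

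Existence is now obtained by verifying that the object built through these series satisfies \eqref{eq:linearized.equation.L2.pde} strongly in $L^2_W$: each partial sum $u^{(N)}=\sum_{j\le N}u_j\varphi_j$ solves \eqref{eq:linearized.equation.L2.pde} with forcing $\Pi_{|\lambda_j|\le\lambda_N}h$, and the Parseval bounds above show $u^{(N)}\to u$ in the norm controlling both sides of \eqref{eq:linearized.equation.L2.pde.conclusion.2}, so we can pass to the limit. Uniqueness is the cleanest part: given two solutions, the difference $w$ solves the homogeneous equation with $\Pi_{<0}(w(\cdot,0))=0$ and $\int_{-\infty}^0 e^{-2\delta'\tau}\|w(\cdot,\tau)\|_W^2\,d\tau<\infty$; projecting gives $w_j(\tau)=e^{-\lambda_j\tau}w_j(0)$ for all $j$, and the two conditions force $w_j\equiv 0$ for $j\le I$ (from $w_j(0)=0$) and for $j\ge I+1$ (since $\lambda_j\ge 0$ makes $e^{-\lambda_j\tau}$ blow up as $\tau\to-\infty$ unless $w_j(0)=0$).

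The main technical delicacy is not any single estimate but keeping the exponent bookkeeping consistent across the two regimes: one needs $\delta'<-\lambda_I$ to make the backward integrals in \eqref{eq:linearized.equation.L2.pde.coeff.negative} converge with the improved weight, and $\delta'<\delta$ to absorb the gap between the assumed weight on $h$ and the claimed weight on $u$. Once these inequalities are built in from the outset, the rest is bookkeeping in Hilbert space.
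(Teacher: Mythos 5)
Your overall architecture is the same as the paper's: the paper proves this lemma by citing Galerkin's method from Evans (weak $L^2$ solutions upgraded to strong ones), which in this setting is exactly your mode-by-mode Duhamel analysis. Your derivation of the explicit coefficients \eqref{eq:linearized.equation.L2.pde.coeff.negative}--\eqref{eq:linearized.equation.L2.pde.coeff.nonnegative}, the pointwise-in-time bound \eqref{eq:linearized.equation.L2.pde.conclusion.3} via Cauchy--Schwarz with the weight split $e^{-\delta'\tau}=e^{-\delta'(\tau-\sigma)}e^{-\delta'\sigma}$, and the uniqueness argument are all correct, with one small imprecision: for the kernel modes $\lambda_j=0$ the homogeneous solution is a nonzero constant, which does not ``blow up''; it is excluded not by growth of $e^{-\lambda_j\tau}$ but by the weighted integrability \eqref{eq:linearized.equation.L2.pde.conclusion.2}, since $\int_{-\infty}^0 e^{-2\delta'\tau}\,d\tau=\infty$.

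The one step that does not go through as written is your treatment of the $\Vert u(\cdot,\tau)\Vert_{W,2}$ part of \eqref{eq:linearized.equation.L2.pde.conclusion.2}. Testing $L\varphi_j=-\lambda_j\varphi_j$ against $\varphi_j\rho$ only yields $\Vert \nabla_\Sigma\varphi_j\Vert_W^2\leq C(1+\lambda_j)$; the second-derivative norm satisfies at best $\Vert\nabla^2_\Sigma\varphi_j\Vert_W^2\leq C(1+\lambda_j)^2$ (via an identity like \eqref{eq:one.sided.decay.uniqueness.laplacian.trick}), so your claimed bound $\Vert\varphi_j\Vert_{W,2}^2\leq C(1+\lambda_j)$ is not justified. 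More importantly, the eigenfunctions are orthonormal only in $L^2_W$, not in $H^2_W$ (the cross terms $\langle\nabla\varphi_j,\nabla\varphi_k\rangle_W$ pick up $|A_\Sigma|^2$ contributions), so there is no Parseval identity in $H^2_W$, and summing the series by the triangle inequality with modewise $H^2_W$ bounds would require $\ell^1$-type control on $|u_j|(1+|\lambda_j|)$ that you have not established. The clean repair is the one implicit in the Evans upgrade the paper cites: since $\Vert Lu(\cdot,\tau)\Vert_W^2=\sum_j\lambda_j^2u_j(\tau)^2$ \emph{is} computable by $L^2_W$-orthogonality, run your weighted Cauchy--Schwarz/Young estimates with the extra factor $\lambda_j^2$ (the ratio $\lambda_j^2/(\lambda_j+\delta')^2$ is bounded uniformly in $j$) to control $e^{-\delta'\tau}\Vert Lu(\cdot,\tau)\Vert_W$ in $L^2_\tau$, and then invoke a weighted elliptic estimate $\Vert u\Vert_{W,2}\leq C(\Vert Lu\Vert_W+\Vert u\Vert_W)$ for the drift operator on $\Sigma$ to convert this into \eqref{eq:linearized.equation.L2.pde.conclusion.2}; the $\partial_\tau u$ bound then follows from the equation, as you say.
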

\begin{proof}
	The proof is a straightforward computation and adaptation of Galerkin's method from linear parabolic PDE. One starts with ``weak $L^2$'' solutions (\cite[\S 7.1.2, Theorems 3, 4]{Evans:PDE}) and upgrades them to strong ones (\cite[\S 7.1.3, Theorem 5]{Evans:PDE}).
\end{proof}

\subsection{Weighted H\"older space notation} \label{sec:linearized.equation.schauder.notation}

Let $\Omega \subset \Sigma$. We assume that the injectivity radius of $\Sigma$ at points in $\Omega$ is at least $i_0>0$. For $k \in \NN$, $\alpha \in (0, 1)$, we will use the following notation for the standard $C^k$ norm, $C^\alpha$ seminorm, and $C^{k,\alpha}$ norm:
\begin{equation} \label{eq:linearized.equation.schauder.ck.norm}
	\Vert f \Vert_{k;\Omega} := \sum_{i=0}^k \sup_\Omega |\nabla^i_\Sigma f|,
\end{equation}
\begin{equation} \label{eq:linearized.equation.schauder.calpha.seminorm}
	[\nabla^k_\Sigma f]_{\alpha;\Omega} := \sup_{\substack{\mathbf{x} \neq \mathbf{y} \in \Omega\\ d_\Sigma(\bx,\by) < i_0}}\frac{|\nabla_\Sigma^k f(\mathbf{x}) - P_{\by\to\bx}\nabla_\Sigma^k f(\mathbf{y})|}{d_\Sigma(\mathbf{x},\mathbf{y})^\alpha}
\end{equation}
for $P_{\by\to\bx}$ parallel transport defined along the unique minimizing geodesic from $\by$ to $\bx$,
\begin{equation} \label{eq:linearized.equation.schauder.ckalpha.norm}
	\Vert f \Vert_{k,\alpha;\Omega} := \Vert f \Vert_{k;\Omega} + [\nabla^k_\Sigma f]_{\alpha;\Omega}.
\end{equation}
Now let $d \in \RR$. We define the weighted counterparts of the quantities above:
\begin{equation} \label{eq:linearized.equation.schauder.ckd.norm}
	\Vert f \Vert_{k;\Omega}^{(d)} := \sum_{i=0}^k \sup_{\mathbf{x} \in \Omega} \tilde r(\mathbf{x})^{-d+i} |\nabla^i_\Sigma f(\mathbf{x})|,
\end{equation}
\begin{equation} \label{eq:linearized.equation.schauder.calphad.seminorm}
	[\nabla^k_\Sigma f]_{\alpha;\Omega}^{(d)} := \sup_{\substack{\mathbf{x} \neq \mathbf{y} \in \Omega\\ d_\Sigma(\bx,\by) < i_0}} \frac{1}{\tilde r(\mathbf{x})^{d-\alpha} + \tilde r(\mathbf{y})^{d-\alpha}} \frac{|\nabla^k_\Sigma f(\mathbf{x}) - P_{\by\to\bx} \nabla^k_\Sigma f(\mathbf{y})|}{d_\Sigma(\mathbf{x}, \mathbf{y})^\alpha},
\end{equation}
\begin{equation} \label{eq:linearized.equation.schauder.ckalphad.norm}
	\Vert f \Vert_{k,\alpha;\Omega}^{(d)} := \Vert f \Vert_{k;\Omega}^{(d)} + [\nabla^k_\Sigma f]_{\alpha;\Omega}^{(d-k)}.
\end{equation}

Above, $\tilde r$ is as in \cite{ChodoshSchulze}, so we briefly remind the reader what it is. Recall from  \cite[Section 2]{ChodoshSchulze} that \cite[Lemma 2.3]{ChodoshSchulze} gives a diffeomorphism $\cC \setminus B_{R}(0) \simeq \Gamma \times [R,\infty)$ on the non-compact part of $\Sigma$, where $\Gamma$ is the link of the asymptotic cone $\cC$. We will thus parametrize points of $\Sigma$ by $(\omega, r) \in \Gamma \times [R,\infty)$. We emphasize that the coordinate $r$ along $\Sigma$ is \emph{not} exactly equal to $d_{\RR^{n+1}}(\cdot,0)$ (like it is along the cone).  Then $r$ is extended to $\tilde r$ defined on all of $\Sigma$ so that $\tilde r\geq 1$ on $\Sigma$ and $\tilde r=r$ outside of $B_{R}$ for $R\geq 1$ as above. 

In any of the above estimates, if we don't indicate the domain $\Omega$ over which the norm is taken, then it must be understood to be $\Omega = \Sigma$.

\subsection{Pointwise estimates} \label{sec:linearized.equation.schauder.estimates}

We fix $\delta_0 \in (0, -\lambda_I)$ and $\alpha \in (0, 1)$ throughout the section. 

We revisit the inhomogeneous linear PDE
\begin{equation} \label{eq:linearized.equation.schauder.pde}
	(\tfrac{\partial}{\partial \tau} - L)u = h \text{ on } \Sigma \times \RR_-.
\end{equation} 
We will treat classical solutions of the PDE, i.e., ones that satisfy it pointwise. We use implicitly throughout the fact that regularity on $h$ yields improved regularity on $u$ by standard (local) parabolic Schauder theory.

\begin{lemma}[Interior $C^{2,\alpha}$ estimate] \label{lemma:linearized.equation.interior.c2alpha}
	Suppose $u$, $h$ satisfy \eqref{eq:linearized.equation.schauder.pde}, 
	\begin{equation} \label{eq:linearized.equation.schauder.interior.c2alpha.apriori.assumption.h}
		\sup_{\tau \in \RR_-} e^{-2\delta_0 \tau} \Vert h(\cdot, \tau) \Vert_{0,\alpha}^{(-1)} < \infty,
	\end{equation}
	\begin{equation} \label{eq:linearized.equation.schauder.interior.c2alpha.projection.assumption}
	\Pi_{<0}(u(\cdot, 0)) \equiv 0,
	\end{equation}
	and
	\begin{equation} \label{eq:linearized.equation.schauder.interior.c2alpha.apriori.assumption.u}
	\int_{-\infty}^0 \Big| e^{-\delta_0 \tau} (\Vert u(\cdot, \tau) \Vert_{W,2} + \Vert \tfrac{\partial}{\partial t} u(\cdot, \tau) \Vert_{W}) \Big|^2 \, d\tau  < \infty.
	\end{equation}
	Then, for every $\tau \in \RR_-$ and compact $K \subset \RR^{n+1}$, 
	\begin{equation} \label{eq:linearized.equation.interior.c2alpha}
	e^{-\delta_0 \tau} \Vert u(\cdot, \tau) \Vert_{2,\alpha; \Sigma \cap  K} \leq C \sup_{\sigma \in \RR_-} e^{-2\delta_0 \sigma} \Vert h(\cdot, \sigma) \Vert_{0,\alpha}^{(-1)},
	\end{equation}
	with $C = C(\Sigma, \alpha, \delta_0, K)$.
\end{lemma}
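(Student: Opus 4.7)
The plan is to combine the weighted $L^2$ theory from Lemma \ref{lemma:linearized.equation.L2.pde.estimate} with standard local parabolic Schauder estimates on compact subsets of $\Sigma$.

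First I would check that the pointwise weighted Hölder hypothesis on $h$ implies the weighted $L^2$ hypothesis of Lemma \ref{lemma:linearized.equation.L2.pde.estimate}. Set $H := \sup_{\sigma \in \RR_-} e^{-2\delta_0 \sigma} \Vert h(\cdot, \sigma) \Vert_{0,\alpha}^{(-1)}$. Directly from the definition of the weighted norm, $|h(\mathbf{x}, \sigma)| \leq \tilde r(\mathbf{x})^{-1} e^{2\delta_0 \sigma} H$. Since $\Sigma$ is asymptotically conical, $\int_\Sigma \tilde r^{-2} \rho < \infty$, giving $\Vert h(\cdot, \sigma) \Vert_W \leq C(\Sigma) H e^{2\delta_0 \sigma}$. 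Picking any $\delta \in (\delta_0, \min\{ 2\delta_0, -\lambda_I \})$ and applying Lemma \ref{lemma:linearized.equation.L2.pde.estimate} with $\delta' = \delta_0$, the hypotheses (\ref{eq:linearized.equation.schauder.interior.c2alpha.projection.assumption}) and (\ref{eq:linearized.equation.schauder.interior.c2alpha.apriori.assumption.u}) place our $u$ in the uniqueness class there, and the estimate (\ref{eq:linearized.equation.L2.pde.conclusion.3}) yields
\[ \Vert u(\cdot, \tau) \Vert_W \leq C H e^{\delta_0 \tau} \quad \text{for all } \tau \in \RR_-. \]

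Next I would fix $\tau_0 \in \RR_-$ and an enlarged compact set $K' \Supset K$, and work on the bounded spacetime cylinder $Q := (\Sigma \cap K') \times [\tau_0 - 1, \tau_0]$. There, the coefficients of $L$ (drift $-\tfrac{1}{2} \mathbf{x} \cdot \nabla_\Sigma$, zeroth-order part $\tfrac{1}{2} + |A_\Sigma|^2$, and the induced metric) are smooth and uniformly bounded, so interior parabolic $L^2$-to-$L^\infty$ estimates (e.g., Moser iteration) followed by interior parabolic Schauder estimates yield
\[ \Vert u(\cdot, \tau_0) \Vert_{2,\alpha; \Sigma \cap K} \leq C(K) \Big( \Vert u \Vert_{L^2(Q)} + \sup_{\tau \in [\tau_0 - 1, \tau_0]} \Vert h(\cdot, \tau) \Vert_{0,\alpha; \Sigma \cap K'} \Big). \]
Since $\rho$ is bounded below by a positive constant on $K'$, the previous step gives $\Vert u \Vert_{L^2(Q)} \leq C(K) H e^{\delta_0 \tau_0}$, while the Hölder term is bounded by $C(K) H e^{2 \delta_0 \tau_0}$ by hypothesis. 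Multiplying through by $e^{-\delta_0 \tau_0}$ and using $\tau_0 \leq 0$ to absorb the remaining $e^{\delta_0 \tau_0}$ yields the desired estimate.

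The main conceptual obstacle is that the drift term $-\tfrac{1}{2} \mathbf{x} \cdot \nabla_\Sigma$ in $L$ has linearly-growing coefficients, so global parabolic estimates on $\Sigma$ are only available in the weighted setting. This is precisely why the weighted $L^2$ theory of Lemma \ref{lemma:linearized.equation.L2.pde.estimate} is needed to extract a priori decay as $\tau \to -\infty$; once this decay is in hand, the remainder of the argument reduces to routine local parabolic regularity on compact subsets where the coefficients of $L$ are bounded.
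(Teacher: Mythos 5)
Your overall strategy is the same as the paper's: use Lemma \ref{lemma:linearized.equation.L2.pde.estimate} (with $\delta'=\delta_0$ and some $\delta\in(\delta_0,2\delta_0)$, after converting \eqref{eq:linearized.equation.schauder.interior.c2alpha.apriori.assumption.h} into a weighted $L^2$ bound $\Vert h(\cdot,\sigma)\Vert_W\lesssim e^{2\delta_0\sigma}$) to obtain the a priori decay $\Vert u(\cdot,\tau)\Vert_W\lesssim e^{\delta_0\tau}$, and then run a local parabolic estimate on a compact spacetime cylinder; your final bookkeeping with the factor $e^{-\delta_0\tau_0}$ is also exactly the paper's.

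The gap is in the step where you invoke ``interior parabolic Schauder estimates'' to pass from an $L^2$ (or sup) bound on $u$ plus $\sup_{\tau}\Vert h(\cdot,\tau)\Vert_{0,\alpha;\Sigma\cap K'}$ to a bound on $\Vert u(\cdot,\tau_0)\Vert_{2,\alpha;\Sigma\cap K}$. The hypothesis \eqref{eq:linearized.equation.schauder.interior.c2alpha.apriori.assumption.h} only controls the \emph{spatial} H\"older norm of $h$, uniformly in time; no H\"older (or even continuity) regularity of $h$ in the time variable is assumed. Standard parabolic Schauder theory requires the full parabolic H\"older norm $\Vert h\Vert_{0,\alpha,\alpha/2}$ on the right-hand side, so the inequality you wrote does not follow from the standard theory and, as stated, the step fails. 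The estimate you want is true, but it is precisely Knerr's non-standard Schauder estimate, Theorem \ref{theo:schauder.estimate.knerr} (or its $L^1$ variant, Corollary \ref{coro:schauder.estimate.L1}, which the paper applies directly, making your Moser-iteration intermediate step unnecessary): these only require spatial H\"older control of the coefficients and of $h$, at the price of yielding only a spatial H\"older bound on $\tfrac{\partial}{\partial\tau}u$. So the fix is to replace ``standard interior parabolic Schauder estimates'' by a citation of Theorem \ref{theo:schauder.estimate.knerr} / Corollary \ref{coro:schauder.estimate.L1}; with that substitution your argument is complete and coincides with the paper's proof.
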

\begin{proof}
	Lemma \ref{lemma:linearized.equation.L2.pde.estimate} applies with 
	$\delta \in (\delta_0,2\delta_0)$
	and 
	$\delta' =\delta_0 < \min\{ \delta, -\lambda_I\}$
	 by virtue of \eqref{eq:linearized.equation.schauder.interior.c2alpha.apriori.assumption.h},  \eqref{eq:linearized.equation.schauder.interior.c2alpha.projection.assumption}, \eqref{eq:linearized.equation.schauder.interior.c2alpha.apriori.assumption.u}, and gives
	\[ e^{-\delta_0 \tau} \Vert u(\cdot, \tau) \Vert_W \leq c \Big[ \int_{-\infty}^0 \Big|e^{-\delta \sigma} \Vert h(\cdot, \sigma) \Vert_W\Big|^2 \, d\sigma \Big]^{\frac12}. \]
	Apply the non-standard Schauder estimate in Corollary \ref{coro:schauder.estimate.L1} of Appendix \ref{sec:schauder} on $\Sigma \cap K'$, where $K'$ is a compact set containing $K$ in its interior. It shows that, for $\tau \leq 0$:
	\begin{align*}
	& \Vert u(\cdot, \tau) \Vert_{C^{2,\alpha}(\Sigma \cap K)} \\
	& \qquad \leq C(\Sigma, \alpha, K) \Big( \Vert u \Vert_{L^1((\Sigma \cap K') \times [\tau-1,\tau])} + \sup_{\sigma \in [\tau-1, \tau]} \Vert h(\cdot, \sigma) \Vert_{0,\alpha;\Sigma \cap K'} \Big) \\
	& \qquad \leq C(\Sigma, \alpha, \delta_0, K) \Big( e^{\delta_0 \tau} \Big[ \int_{-\infty}^0 \Big| e^{-\delta \sigma} \Vert h(\cdot, \sigma) \Vert_{W} \Big|^2 \, d\sigma \Big]^{\frac12} + \sup_{\sigma \in [\tau-1, \tau]} \Vert h(\cdot, \sigma) \Vert_{0,\alpha;\Sigma \cap K'} \Big) \\
	& \qquad \leq C(\Sigma, \alpha, \delta_0, K) \Big( e^{\delta_0 \tau} \Big[ \int_{-\infty}^0 (e^{-\delta \sigma} \Vert h(\cdot, \sigma) \Vert_{0}^{(-1)})^2 \, d\sigma \Big]^{\frac12} + \sup_{\sigma \in [\tau-1, \tau]} \Vert h(\cdot, \sigma) \Vert_{0,\alpha}^{(-1)} \Big) \\
	& \qquad \leq C(\Sigma, \alpha, \delta_0, K) \Big( e^{\delta_0 \tau} \cdot \sup_{\sigma \in \RR_-} e^{-2 \delta_0 \sigma} \Vert h(\cdot, \sigma) \Vert_{0}^{(-1)} + e^{2 \delta_0 \tau} \cdot \sup_{\sigma \in \RR_-} e^{-2 \delta_0 \sigma} \Vert h(\cdot, \sigma) \Vert_{0,\alpha}^{(-1)} \Big).
	\end{align*}
	This gives \eqref{eq:linearized.equation.interior.c2alpha}.
\end{proof}

\begin{lemma}[Global $C^0$ estimate]  \label{lemma:linearized.equation.global.c0}
	Suppose $u$, $h$ satisfy \eqref{eq:linearized.equation.schauder.pde}. If
	\begin{equation} \label{eq:linearized.equation.global.c0.assumption.l2w.decay}
		\lim_{\tau \to -\infty} \Vert u(\cdot, \tau) \Vert_{W} = 0,
	\end{equation}
	and
	\begin{equation} \label{eq:linearized.equation.global.c0.assumption.l2w.bound}
		\int_{\tau-1}^\tau (\Vert u(\cdot, \sigma) \Vert_{W,2}^2 + \Vert \tfrac{\partial}{\partial t} u(\cdot, \sigma) \Vert_{W}^2) \, d\sigma < \infty,
	\end{equation}
	for all $\tau \in \RR_{-}$, 
	then for all $\tau \in \RR_-$ and $R \geq R_0(\Sigma)$:
	\begin{equation} \label{eq:linearized.equation.global.c0}
		\Vert u(\cdot, \tau) \Vert_{0}^{(1)}
		\leq C \sup_{\sigma \leq \tau} \Big[ \Vert u(\cdot, \sigma) \Vert_{0;\Sigma \cap B_R(\mathbf{0})}^{(1)} + \Vert h(\cdot, \sigma) \Vert_{0;\Sigma \setminus B_R(\mathbf{0})}^{(0)} \Big],
	\end{equation}
	for $C = C(\Sigma)$. 
\end{lemma}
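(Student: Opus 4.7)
The approach is a parabolic maximum-principle barrier argument on the outer region $(\Sigma \setminus B_R) \times (-\infty, \tau]$. Denote the right-hand side of the claimed estimate by $M$ and assume $M < \infty$ (otherwise there is nothing to prove). By hypothesis $|u| \leq M \tilde r$ on $(\Sigma \cap B_R) \times (-\infty, \tau]$ and $|h| \leq M$ on $(\Sigma \setminus B_R) \times (-\infty, \tau]$, so it suffices to produce a bound $|u(\cdot, \tau)| \leq C(\Sigma) M \tilde r$ on $\Sigma \setminus B_R$.

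For the barrier, the asymptotically conical structure of $\Sigma$ together with the decay estimates of Appendix \ref{sec:shrinker.geometry} give, on $\Sigma \setminus B_{R_0(\Sigma)}$,
\[
L(1) = \tfrac{1}{2} + |A_\Sigma|^2 \geq \tfrac{1}{2}, \qquad L(\tilde r) = \tfrac{n-1}{\tilde r} + O(\tilde r^{-1}).
\]
I would then set $\Phi(\mathbf{x}) := M(A \tilde r(\mathbf{x}) - B)$ for constants $A, B > 0$ depending only on $\Sigma$, chosen so that on $\Sigma \setminus B_R$ with $R \geq R_0(\Sigma)$ large: (i) $-L\Phi \geq M$ pointwise, amounting to $B/2 \geq 1 + A \sup_{\Sigma \setminus B_R}|L\tilde r|$; (ii) $\Phi \geq M\tilde r$ on $\Sigma \cap \partial B_R$, amounting to $(A-1)\tilde r \geq B$, valid for $R$ large since $\tilde r \sim |\mathbf{x}|$ at infinity. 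With these choices $\Phi \pm u$ satisfies $(\partial_\sigma - L)(\Phi \pm u) = -L\Phi \pm h \geq 0$ on the outer region and is nonnegative on the ``interior boundary'' $\Sigma \cap \partial B_R \times (-\infty, \tau]$.

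To run the parabolic maximum principle on the unbounded spatial domain $(\Sigma \setminus B_R) \times [T, \tau]$ and eventually let $T \to -\infty$, I would perturb by the auxiliary function $\Psi_\epsilon(\mathbf{x}, \sigma) := \epsilon \exp(\alpha |\mathbf{x}|^2 - \sigma)$ for a fixed $\alpha \in (\tfrac{1}{8}, \tfrac{1}{4})$. The shrinker identity $\Delta_\Sigma |\mathbf{x}|^2 = 2n - |\mathbf{x}^\perp|^2$ yields
\[
(\partial_\sigma - L)\Psi_\epsilon = -\Psi_\epsilon\bigl[1 + (4\alpha^2 - \alpha)|\mathbf{x}^T|^2 + O(1)\bigr] \geq 0 \quad \text{on } \Sigma \setminus B_{R_1(\Sigma, \alpha)},
\]
since $\alpha < 1/4$ makes the $|\mathbf{x}^T|^2$-coefficient negative and dominant at infinity. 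The main obstacle is then verifying the ``initial-time'' condition $\pm u \leq \Phi + \Psi_\epsilon$ at $\sigma = T$. My plan is to use $\lim_{\sigma \to -\infty}\|u(\cdot, \sigma)\|_W = 0$ and the time-local $L^2_W$ hypotheses on $u$ and $\partial_\sigma u$, combined with standard interior parabolic regularity applied on intrinsic unit balls, to upgrade the Gaussian-weighted $L^2$ decay into a uniform pointwise bound $|u(\mathbf{x}, \sigma)| \leq C_* e^{|\mathbf{x}|^2/8}$ on $\Sigma \times (-\infty, \tau]$; the Gaussian factor arises when converting $\|u\|_W$ into unweighted $L^2$ on $B_1(\mathbf{x})$, and the constant $C_*$ is uniform because $\|u(\cdot,\sigma)\|_W$ is continuous and tends to $0$ as $\sigma \to -\infty$. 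Since $\alpha > 1/8$, $\Psi_\epsilon(\cdot, T)$ dominates this pointwise bound on $\Sigma \setminus B_R$ as soon as $T \leq T_0(\epsilon, \Sigma, R)$ is sufficiently negative; the same comparison makes $u - \Phi - \Psi_\epsilon \to -\infty$ as $|\mathbf{x}| \to \infty$ uniformly on $[T, \tau]$, which legitimizes the maximum principle on the unbounded domain. Sending $T \to -\infty$ and then $\epsilon \to 0$ yields $|u| \leq \Phi$ on $(\Sigma \setminus B_R) \times \{\tau\}$, i.e. $\tilde r^{-1}|u(\cdot,\tau)| \leq A M$, which is the claimed estimate with $C = C(\Sigma) = A$.
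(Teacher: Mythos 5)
Your barrier $\Phi$ is essentially the paper's own: the proof there likewise takes $\varphi=\alpha|\mathbf{x}|-\beta$ with $\alpha,\beta$ built from $\sup|h|$ outside $B_R(\mathbf{0})$ and $\sup|u|$ on $\Sigma\cap\partial B_R(\mathbf{0})$, and verifies $(\tfrac{\partial}{\partial\tau}-L)(u-\varphi)\le 0$ using $L|\mathbf{x}|=O(|\mathbf{x}|^{-1})$ on the conical end. Where you genuinely diverge is in how the noncompactness of $\Sigma\setminus B_R(\mathbf{0})$ and the ancient time interval are handled: you propose a pointwise Phragm\'en--Lindel\"of argument with the auxiliary supersolution $\Psi_\epsilon=\epsilon e^{\alpha|\mathbf{x}|^2-\sigma}$, whereas the paper runs the comparison in the Gaussian-weighted $L^2$ norm: it multiplies the differential inequality for $f_+=(u-\varphi)_+$ by $f_+\rho$, integrates over $\Sigma\setminus B_R(\mathbf{0})$, applies Ecker's Sobolev inequality to obtain $(R^2-4n-8+O(R^{-2}))\int f_+^2\rho\le -8\tfrac{\partial}{\partial\tau}\int f_+^2\rho$, and then uses \eqref{eq:linearized.equation.global.c0.assumption.l2w.decay} (limit at $-\infty$) and \eqref{eq:linearized.equation.global.c0.assumption.l2w.bound} (to justify differentiating under the integral) to force $f_+\equiv 0$. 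The point of that route is that the purely $L^2_W$ hypotheses plug in directly, with no pointwise information at infinity ever needed.

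The gap in your version is precisely the step where such pointwise information is manufactured: the claim that ``standard interior parabolic regularity'' yields $|u(\mathbf{x},\sigma)|\le C_* e^{|\mathbf{x}|^2/8}$ on $\Sigma\times(-\infty,\tau]$ with uniform $C_*$. Two issues are unaddressed. First, $L$ contains the drift $-\tfrac12\mathbf{x}\cdot\nabla_\Sigma$, whose coefficient is unbounded, so local $L^2\to L^\infty$ estimates on unit parabolic cylinders centered at $\mathbf{x}$ carry constants depending on $|\mathbf{x}|$; you must check that this dependence is sub-Gaussian (it is---Moser iteration gives at most polynomial dependence on the drift bound, or one can undo the self-similar rescaling as in the proof of Lemma \ref{lemma:linearized.equation.global.c2alpha}, where the coefficients become uniformly bounded---but your stated justification, uniformity of $\Vert u(\cdot,\sigma)\Vert_W$ in $\sigma$, addresses a different issue). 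Second, and more seriously, unit cylinders centered at points of the band $R<|\mathbf{x}|<R+2$ reach into $B_R(\mathbf{0})$, where the hypotheses control neither $h$ nor anything beyond $\sup|u|$; so the uniform-in-$\sigma$ bound you need on that band (for the initial-time domination by $\Psi_\epsilon(\cdot,T)$) requires a separate boundary-type or shrinking-radius argument exploiting the sup bound on $u$ over $\Sigma\cap B_R(\mathbf{0})$. A third, smaller point: since the zeroth-order coefficient $\tfrac12+|A_\Sigma|^2$ of $L$ is positive, the pointwise maximum principle on $[T,\tau]$ needs the usual $e^{-\gamma\sigma}$ substitution (or a strictness device) before a positive interior maximum of $\pm u-\Phi-\Psi_\epsilon$ can be excluded. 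All three points look repairable, but the upgrade from weighted-$L^2$ control to pointwise growth is exactly where the real work of the lemma lies, and as written it is asserted rather than proved; the paper's weighted-$L^2$ maximum principle avoids it entirely.
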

\begin{proof}
	Fix $\tau_0 \in \RR_-$. Following \cite[Lemma 3.15]{ChodoshSchulze}, we consider $\varphi := \alpha |\mathbf{x}| - \beta$ with
	\[ \alpha := 2 \sup_{(\Sigma \setminus B_R(\mathbf{0})) \times (-\infty, \tau_0]} |h| + 2 R^{-1}  \sup_{(\Sigma \cap \partial B_R(\mathbf{0})) \times (-\infty, \tau_0]} |u|, \]
	\[ \beta := 4 \sup_{(\Sigma \setminus B_R(\mathbf{0})) \times (-\infty, \tau_0]} |h| + R^{-1} \sup_{(\Sigma \cap \partial B_R(\mathbf{0})) \times (-\infty, \tau_0]} |u|. \]
	Note that, by definition, 
	\begin{equation} \label{eq:linearized.equation.global.c0.phi.static}
		\tfrac{\partial}{\partial \tau} \varphi \equiv 0,
	\end{equation}
	and, if $R \geq 2$,
	\begin{equation} \label{eq:linearized.equation.global.c0.phi.positive}
		\varphi > 0 \text{ on } \Sigma \setminus B_R(\mathbf{0}).
	\end{equation}
	Consider the function
	\[ f := u - \varphi \text{ on } (\Sigma \setminus B_R(\mathbf{0})) \times (-\infty, \tau_0]. \]
	As in \cite[Lemma 3.15]{ChodoshSchulze}, by construction:
	\begin{equation} \label{eq:linearized.equation.global.c0.barrier.diff.boundary.ineq}
		f \leq 0 \text{ on } (\Sigma \cap \partial B_R(\mathbf{0})) \times (-\infty, \tau_0],
	\end{equation}
	\begin{equation} \label{eq:linearized.equation.global.c0.barrier.diff.pde.ineq}
		(\tfrac{\partial}{\partial \tau} - L) f \leq 0 \text{ on } (\Sigma \setminus B_R(\mathbf{0})) \times (-\infty, \tau_0]
	\end{equation}
	for $R$ sufficiently large. Multiply \eqref{eq:linearized.equation.global.c0.barrier.diff.pde.ineq} by $f_+ \rho$, where $f_+ := \max \{ f, 0 \}$ and $\rho$ is as in \eqref{eq:linearized.equation.weighted.density}, and integrate over $\Sigma \setminus B_R(\mathbf{0})$. Using \eqref{eq:linearized.equation.weighted.L2.self.adjoint}, and differentiating under the integral sign using \eqref{eq:linearized.equation.global.c0.assumption.l2w.bound},  \eqref{eq:linearized.equation.global.c0.phi.static}, and \cite[\S 5.9.2, Theorem 3]{Evans:PDE} we have, for a.e. $\tau \leq \tau_0$:
	\begin{align*}
		\int_{\Sigma \setminus B_R(\mathbf{0})} |\nabla_\Sigma f_+|^2 \rho \, d\cH^n
			& \leq \int_{\Sigma \setminus B_R(\mathbf{0})} (\tfrac12 f_+^2 + |A_\Sigma|^2 f_+^2) \rho \, d\cH^n - \tfrac12 \int_{\Sigma \setminus B_R(\mathbf{0})} \big( \tfrac{\partial}{\partial \tau} f_+^2 \big) \rho \, \cH^n \\
			& \leq (\tfrac12 + O(R^{-2})) \int_{\Sigma \setminus B_R(\mathbf{0})} f_+^2 \rho \, d\cH^n - \tfrac12 \tfrac{\partial}{\partial \tau} \int_{\Sigma \setminus B_R(\mathbf{0})} f_+^2 \rho \, d\cH^n.
	\end{align*}
	Plugging into Ecker's Sobolev inequality \cite{Ecker:Sobolev} (cf.\ \cite[Proposition 3.9]{ChodoshSchulze}) we get:
	\begin{equation} \label{eq:ecker.spacetime}
		(R^2 - 4n - 8 + O(R^{-2})) \int_{\Sigma \setminus B_R(\mathbf{0})} f_+^2 \rho \, d\cH^n \leq -8 \tfrac{\partial}{\partial \tau} \int_{\Sigma \setminus B_R(\mathbf{0})} f_+^2 \rho \, d\cH^n.
	\end{equation}
	We take $R_0 \geq 2$ large enough so that the above computation holds and $R^2 - 4n - 8 + O(R^{-2}) > 0$ whenever $R \geq R_0$. By \eqref{eq:linearized.equation.global.c0.assumption.l2w.decay}, \eqref{eq:linearized.equation.global.c0.phi.positive},
	\[ 
	\lim_{\tau\to-\infty} \int_{\Sigma \setminus B_R(\mathbf{0})} f_+^2 \rho \, d\cH^n = 0.
	\]
	Because \cite[\S 5.9.2, Theorem 3]{Evans:PDE} shows absolute continuity of $\int_{\Sigma\setminus B_R(\bOh)} f_+^2 \rho \, d\cH^n$ with respect to $\tau \leq \tau_0$, integrating \eqref{eq:ecker.spacetime} over $(-\infty, \tau_0]$, we find that $f_+ \equiv 0$ on $\Sigma \setminus B_R(\mathbf{0}) \times (-\infty, \tau_0]$. Therefore, $f \leq 0$ on $\Sigma \times (-\infty, \tau_0]$. Thus, on $\Sigma \setminus B_R(\mathbf{0})$,
	\[ \tilde{r}^{-1} u(\cdot, \tau_0) \leq 2 \sup_{(\Sigma \setminus B_R(\mathbf{0})) \times (-\infty, \tau_0]} |h| + 2 R^{-1} \sup_{(\Sigma \cap \partial B_R(\mathbf{0})) \times (-\infty, \tau_0]} |u|. \]
	Redoing this with $-f$ in place of $f$ implies \eqref{eq:linearized.equation.global.c0}.
\end{proof}

\begin{lemma}[Global $C^{2,\alpha}$ estimate]  \label{lemma:linearized.equation.global.c2alpha}
	Suppose $u$, $h$ satisfy \eqref{eq:linearized.equation.schauder.pde}. Then, 
	\begin{equation} \label{eq:linearized.equation.global.c2alpha}
		\Vert u(\cdot, \tau) \Vert_{2,\alpha}^{(1)} \leq C \sup_{\sigma \leq \tau} \Big[ \Vert u(\cdot, \sigma) \Vert_{0}^{(1)} + \Vert h(\cdot, \sigma) \Vert_{0,\alpha}^{(-1)} \Big],
	\end{equation}
	for all $\tau \in \RR_-$, with $C = C(\Sigma, \alpha)$. 
\end{lemma}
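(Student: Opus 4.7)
The plan is to establish the pointwise weighted $C^{2,\alpha}$ bound at each $\mathbf{x}_0 \in \Sigma$ and then take the supremum. At each $\mathbf{x}_0$, I would apply the Knerr-type interior parabolic Schauder estimate of Corollary \ref{coro:schauder.estimate.L1} (used also in Lemma \ref{lemma:linearized.equation.interior.c2alpha}) on a parabolic cylinder whose spatial scale is matched to the natural geometric scale $\tilde r(\mathbf{x}_0)$. The weights $\tilde r^{-1+i}$ on $i$-th derivatives and $\tilde r^{1-\alpha}$ on the H\"older seminorm are precisely the ones consistent with this scale; they appear automatically upon unwinding the change of variables.

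For the interior region $\Sigma \cap B_{R_0}(\bOh)$, the weight $\tilde r$ is uniformly bounded above and below, so a direct (unweighted) application of Corollary \ref{coro:schauder.estimate.L1} on unit parabolic cylinders gives the required bound. For the conical end $\Sigma \setminus B_{R_0}(\bOh)$, introduce coordinates $(\rho, \omega)$ where $\rho := \log \tilde r$ and $\omega$ ranges over the link of the asymptotic cone. In these coordinates, the rescaled surface has uniformly bounded geometry in $C^k$ as $\rho \to \infty$ (by the asymptotically conical hypothesis), and the Ornstein--Uhlenbeck drift $-\tfrac12 \mathbf{x}\cdot \nabla_\Sigma$ reduces to the constant-coefficient first-order operator $-\tfrac12 \partial_\rho$, modulo lower-order corrections of order $\tilde r^{-2}$ from $|A_\Sigma|^2$ and from the normal component of $\mathbf{x}$ (which decays like $\tilde r^{-1}$ by Appendix \ref{sec:shrinker.geometry}). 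Consequently, $\partial_\tau - L$ has uniformly bounded coefficients on $(\rho, \omega, \tau)$-cylinders of unit size, and Corollary \ref{coro:schauder.estimate.L1} supplies a $C^{2,\alpha}$ bound with constants $C = C(\Sigma, \alpha)$ independent of $\rho$. Since $\partial_\rho = \tilde r \partial_{\tilde r}$ in the original coordinates, these bounds translate directly into the weighted $C^{2,\alpha}$ bound \eqref{eq:linearized.equation.global.c2alpha} on $\Sigma \setminus B_{R_0}(\bOh)$.

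The main technical obstacle is managing the drift coefficient $\mathbf{x}/2$, which has magnitude $O(\tilde r)$ in the original coordinates and would blow up any naive Schauder constant as $\tilde r \to \infty$. The log-radial change of variables resolves this by turning the drift into a bounded constant-coefficient operator at the cost of a minor wrinkle: time is not rescaled, so one works on $(\rho, \omega, \tau)$-cylinders of unit size in all variables rather than on standard parabolic cylinders. This is consistent with Corollary \ref{coro:schauder.estimate.L1}, which does not require standard parabolic scaling, and is the same mechanism already exploited in Lemma \ref{lemma:linearized.equation.interior.c2alpha}.
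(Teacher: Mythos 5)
Your reduction of the weighted norm to uniform estimates in log-radial coordinates is where the argument breaks. Setting $\rho = \log \tilde r$ does turn the drift into $-\tfrac12 \partial_\rho$, but it simultaneously degenerates the principal part: on the conical end $\Delta_\Sigma \approx \partial_r^2 + \tfrac{n-1}{r}\partial_r + \tfrac{1}{r^2}\Delta_\Gamma = e^{-2\rho}\big(\partial_\rho^2 + (n-2)\partial_\rho + \Delta_\Gamma\big)$, so in the $(\rho,\omega,\tau)$ variables (with time \emph{not} rescaled, as you specify) the second-order coefficients are of size $e^{-2\rho} = \tilde r^{-2}$ while the drift and $\partial_\tau$ remain of unit size. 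The operator is therefore not uniformly parabolic on unit cylinders, and Knerr's estimate (Theorem \ref{theo:schauder.estimate.knerr} / Corollary \ref{coro:schauder.estimate.L1}) cannot supply constants independent of $\rho$: its constant depends on the ellipticity lower bound $\lambda$, which here is $\tilde r^{-2}$. This is not a technicality that a better Schauder theorem could absorb — as $\rho \to \infty$ the equation on a unit cylinder converges to the transport equation $\partial_\tau u + \tfrac12 \partial_\rho u = \tfrac12 u + h$, which gains no derivatives from H\"older control of $h$, so no estimate of the claimed form can hold locally at that scale. The remark that ``Corollary \ref{coro:schauder.estimate.L1} does not require standard parabolic scaling'' misses that its constants do require uniform ellipticity, which is exactly what your change of variables destroys; and the comparison with Lemma \ref{lemma:linearized.equation.interior.c2alpha} is not apt, since there one works on a fixed compact set where $\tilde r$ is bounded and no such degeneration occurs.

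The paper circumvents this by a genuinely space-time change of variables rather than a spatial one: it sets $\widehat{v}(\cdot, t) := \sqrt{-t}\,\widehat{u}\big(\tfrac{1}{\sqrt{-t}}\Psi(\cdot,t), -\log(-t)\big)$, i.e.\ it undoes the self-similar rescaling and works with the linearized (non-rescaled) mean curvature flow over the shrinking surfaces $\Sigma_t = \sqrt{-t}\,\Sigma$, as in \eqref{eq:linearized.equation.global.c2alpha.transform}. In that picture the drift disappears into the motion of the domain, the surfaces $\Sigma_t$ have uniformly bounded geometry on a fixed annulus $B_{4R}(\bOh)\setminus B_R(\bOh)$ for $t \in [-1,0)$, and Knerr's estimate applies on balls of unit size with constants depending only on $\Sigma, \alpha, R$. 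Undoing the transformation converts these unit-scale bounds into exactly the weighted bounds on the dilated annuli $B_{3Re^{-\tau_0/2}}(\bOh)\setminus B_{2Re^{-\tau_0/2}}(\bOh)$, and taking the supremum over $\tau_0 \in \RR_-$ covers the end; note that this uses, for the estimate at radius $\tilde r$, a history of length $\sim \log \tilde r$ in rescaled time, not a unit interval — which is the quantitative reason your unit-cylinder scheme cannot work. Your treatment of the compact part is fine and agrees with the paper's final step.
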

\begin{proof}
It suffices to prove 
	\begin{equation*}
		\Vert u(\cdot, 0) \Vert_{2,\alpha}^{(1)} \leq C \sup_{\tau\in\RR_{-}} \Big[ \Vert u(\cdot, \tau) \Vert_{0}^{(1)} + \Vert h(\cdot, \tau) \Vert_{0,\alpha}^{(-1)} \Big],
	\end{equation*}
since the general claim will follow by translation in time. 
	
Define the function $\Psi : \Sigma \times (-\infty, 0) \to \RR^{n+1}$ so that $t \mapsto \Psi(\mathbf{x}, t)$ tracks the normal movement in $\RR^{n+1}$ of $\mathbf{x} \in \Sigma$ by mean curvature:
	\[ \tfrac{\partial}{\partial t} \Psi(\mathbf{x}, t) = \mathbf{H}_{\Sigma_t}(\Psi(\mathbf{x}, t)), \]
	where $\Sigma_t := \sqrt{-t}\, \Sigma$, and $\Psi(\cdot, -1) \equiv \operatorname{Id}$. Note that the map 
	$$\Phi_t(\bx):\Sigma  \to \Sigma, (\bx, t) \mapsto \frac{1}{\sqrt{-t}} \Psi(\cdot, t)$$
	satisfies
\begin{equation}\label{eq:evol_Phi}
 \frac{\partial}{\partial t} \Phi_t = \frac{1}{\sqrt{-t}}\left( \frac{\Psi(\cdot, t)}{(-2 t)} + \mathbf{H}_{\Sigma_t}(\Psi(\cdot, t))\right) = \frac{1}{\sqrt{-t}}\frac{\Psi^T(\cdot, t)}{(-2 t)} = \frac{1}{(-2 t)} \Phi_t^T\, .
\end{equation}

	Let $\tau_0 \in \RR_{-}$ be arbitrary. Set:
	\[ \widehat{u}(\cdot, \tau) := u(\cdot, \tau + \tau_0), \; \widehat{h}(\cdot, \tau) := h(\cdot, \tau + \tau_0), \]
	\[ \widehat{v}(\cdot, t) := \sqrt{-t} \widehat{u} \big(\Phi_t(\cdot), -\log(-t) \big). \]
	Note that $\widehat{v}(\cdot, t)$ makes sense for $t \in (-\infty, -e^{\tau_0}]\supset (-\infty,-1]$. Noting that \eqref{eq:evol_Phi} is the same evolution equation as considered in \cite[Definition 3.7]{ChodoshSchulze}, we find that, as in \cite[(3.3)]{ChodoshSchulze}, the transformed function $\hat v$ satisfies 
	\begin{equation} \label{eq:linearized.equation.global.c2alpha.transform}
		\tfrac{\partial}{\partial t} \widehat{v}(\mathbf{x}, t) = \Delta_{\Sigma_t} \widehat{v}(\mathbf{x}, t) + |A_{\Sigma_t}|^2 \widehat{v}(\mathbf{x}, t) + \tfrac{1}{\sqrt{-t}}  \widehat{h}\big( \Phi_t(\mathbf{x}), -\log(-t)\big)
	\end{equation}
	as a function on $\Sigma_{t}$. 
	
	We have, for $R = R(\Sigma)$ sufficiently large and $t \in [-e,-e^{\tau_{0}}]$,
	\begin{equation} \label{eq:linearized.equation.global.c2alpha.transform.c0}
		\Vert \widehat{v}(\cdot, t) \Vert_{0; \Sigma_{t} \cap (B_{4R}(\bOh) \setminus B_{R}(\bOh))} \leq C \Vert \widehat{u}(\cdot, -\log(-t)) \Vert_{0}^{(1)}.
	\end{equation}
	Likewise,
	\begin{equation} \label{eq:linearized.equation.global.c2alpha.transform.calpha}
		\Vert \tfrac{1}{\sqrt{-t}}  \widehat{h}\big( \Phi_t(\cdot), -\log(-t)\big) \Vert_{0,\alpha; \Sigma_{t} \cap(B_{4R}(\bOh) \setminus B_R(\mathbf{0}))} \leq C \Vert \widehat{h}(\cdot, -\log(-t)) \Vert_{0,\alpha}^{(-1)},
	\end{equation}
	with $C = C(\Sigma, \alpha, R)$.

	By Knerr's Schauder estimates (Theorem \ref{theo:schauder.estimate.knerr} in Appendix \ref{sec:schauder}) applied to sufficiently small balls, and \eqref{eq:linearized.equation.global.c2alpha.transform.c0}, \eqref{eq:linearized.equation.global.c2alpha.transform.calpha}, we find that for $R = R(\Sigma)$ sufficiently large
	\begin{align*}
		& \Vert \widehat{v}(\cdot, -e^{\tau_{0}}) \Vert_{2,\alpha; \Sigma_{-e^{\tau_{0}}} \cap (B_{3R}(\bOh) \setminus B_{2R}(\mathbf{0}))} \\
		& \qquad \leq C \sup_{t \in [-1, -e^{\tau_{0}}]}  \Big[ \Vert \widehat{v}(\cdot, t) \Vert_{0;\Sigma_{t}\cap (B_{4R}(\bOh) \setminus B_R(\mathbf{0}))} \\
		& \qquad \qquad +  \Vert \tfrac{1}{\sqrt{-t}} \widehat{h}\big(  \Phi_t(\cdot), -\log(-t) \big) \Vert_{0,\alpha;\Sigma_{t}\cap(B_{4R}(\bOh) \setminus B_R(\mathbf{0}))} \Big] \\
		& \qquad \leq C \sup_{t \in [-1, -e^{\tau_{0}}]} \Big[ \Vert \widehat{u}(\cdot, -\log(-t)) \Vert_{0}^{(1)} + \Vert \widehat{h}(\cdot, -\log(-t)) \Vert_{0,\alpha}^{(-1)} \Big]\\
		& \qquad \leq C \sup_{\tau\in\RR_{-}} \Big[ \Vert u(\cdot, \tau) \Vert_{0}^{(1)} + \Vert h(\cdot, \tau) \Vert_{0,\alpha}^{(-1)} \Big]
	\end{align*}
	Undoing the renormalization for $\hat v$, we thus find 
	\[
	\Vert u(\cdot,0)\Vert_{2,\alpha;\Sigma \cap \left(B_{3Re^{-\tau_{0}/2}}(\bOh)\setminus B_{2Re^{-\tau_{0}/2}}(\bOh)\right)}^{(1)} \leq C \sup_{\tau\in\RR_{-}} \Big[ \Vert u(\cdot, \tau) \Vert_{0}^{(1)} + \Vert h(\cdot, \tau) \Vert_{0,\alpha}^{(-1)} \Big]
	\]
	Taking the supremum over all $\tau_{0}\in\RR_{-}$, we have 
	\[
	\Vert u(\cdot,0)\Vert_{2,\alpha;\Sigma \setminus B_{2R}(\bOh)}^{(1)} \leq C \sup_{\tau\in\RR_{-}} \Big[ \Vert u(\cdot, \tau) \Vert_{0}^{(1)} + \Vert h(\cdot, \tau) \Vert_{0,\alpha}^{(-1)} \Big].
	\]
	Along with standard interior parabolic Schauder estimates, this yields \eqref{eq:linearized.equation.global.c2alpha}. 
\end{proof}

\subsection{Nonlinear error term} \label{sec:linearized.equation.error} 

We work in graphical coordinates over $\Sigma$. On $\Sigma$ itself, we denote the position vector by $\mathbf{x}_\Sigma$, and we fix a unit normal $\nu_\Sigma$ so that following our conventions from Section \ref{sec:prelim.curvature} we can form the mean curvature scalar $H_\Sigma = \mathbf{H}_\Sigma \cdot \nu_\Sigma$. For graphical surfaces $S = \operatorname{graph}_\Sigma u$, with unit normal $\nu$ (so that $\nu \cdot \nu_\Sigma > 0$) and mean curvature $H$ the rescaled mean curvature flow is:
\begin{equation} \label{eq:linearized.equation.graphical.pde}
	\tfrac{\partial}{\partial \tau} u = v \big[   H + \tfrac12 \mathbf{x} \cdot \nu \big],
\end{equation}
where $v$ is the geometric function
\begin{equation} \label{eq:geometric.identities.v}
v := (1 + |(\operatorname{Id}  + uA_\Sigma)^{-1}(\nabla_\Sigma u)|^2)^{\frac12} = (\nu \cdot \nu_\Sigma)^{-1}.
\end{equation}
We can rewrite \eqref{eq:linearized.equation.graphical.pde} as
\begin{equation} \label{eq:linearized.equation.graphical.pde.linearized}
	(\tfrac{\partial}{\partial \tau} - L)u = E(u) \text{ on } \Sigma \times \RR_-,
\end{equation}
where we take $L$ to be precisely the operator from \eqref{eq:linearized.equation.linear.operator} and 
\begin{equation} \label{eq:linearized.equation.error}
	E(u) := v \big[ H + \tfrac12 \mathbf{x} \cdot \nu \big] - \big[ H_\Sigma + \tfrac12 \mathbf{x}_\Sigma \cdot \nu_\Sigma \big] - Lu.
\end{equation}
Note that the second term in parentheses vanishes, since $\Sigma$ satisfies the shrinker equation, but it is helpful to keep this vanishing term in mind in terms of estimating the error. The nonlinear error term can be estimated as follows:

\begin{lemma} \label{lemma:linearized.equation.error}
	There exists $\eta = \eta(\Sigma)$ such that for $u : \Sigma \to \RR$ with $\Vert u \Vert_2^{(1)} \leq \eta$, the nonlinear error term $E(u)$ from \eqref{eq:linearized.equation.error} decomposes as 
	\begin{multline} \label{eq:linearized.equation.error.decomposition}
		E(u)(\mathbf{x}) = u(\mathbf{x}) E_1(\mathbf{x}, u(\mathbf{x}), \nabla_\Sigma u(\mathbf{x}), \nabla_\Sigma^2 u(\mathbf{x})) \\
		+  \nabla_\Sigma u(\mathbf{x}) \cdot \mathbf{E}_2(\mathbf{x}, u(\mathbf{x}), \nabla_\Sigma u(\mathbf{x}), \nabla^2_\Sigma u(\mathbf{x})),
	\end{multline}
	where $E_1$, $\mathbf{E}_2$ are smooth functions on the following domains:
	\[ E_1(\mathbf{x}, \cdot, \cdot, \cdot) : \RR \times T_{\mathbf{x}} \Sigma \times \operatorname{Sym}(T_{\mathbf{x}} \Sigma \otimes T_{\mathbf{x}} \Sigma) \to \RR, \]
	\[ \mathbf{E}_2(\mathbf{x}, \cdot, \cdot, \cdot) : \RR \times T_{\mathbf{x}} \Sigma \times \operatorname{Sym}(T_{\mathbf{x}} \Sigma \otimes T_{\mathbf{x}} \Sigma) \to T_{\mathbf{x}} \Sigma. \]
	Moreover, we can estimate:
	\begin{align} 
		& \tilde r(\mathbf{x}) ^{2+j-\ell} |\nabla_{\mathbf{x}}^i \nabla_z^j \nabla_{\mathbf{q}}^k \nabla_{\mathbf{A}}^\ell E_1(\mathbf{x}, z, \mathbf{q}, \mathbf{A})| \nonumber \\
		& \qquad \qquad \qquad \leq C(\tilde r(\mathbf{x})^{-1} |z| + |\mathbf{q}| + r(\mathbf{x}) |\mathbf{A}|)^{ \max\{0, 1-j-k-\ell\}}, \label{eq:linearized.equation.error.term.1} \\ 
		& \tilde r(\mathbf{x}) ^{1+j-\ell} |\nabla_{\mathbf{x}}^i \nabla_z^j \nabla_{\mathbf{q}}^k \nabla_{\mathbf{A}}^\ell \mathbf{E}_2(\mathbf{x}, z, \mathbf{q}, \mathbf{A})| \nonumber \\
		& \qquad \qquad \qquad \leq C(\tilde r(\mathbf{x})^{-1} |z| + |\mathbf{q}| + r(\mathbf{x}) |\mathbf{A}|)^{ \max\{0, 1-j-k-\ell\}} \label{eq:linearized.equation.error.term.2}.
	\end{align}
	In the above, $C = C(\Sigma)$, $\tilde r$ is as in Section \ref{sec:linearized.equation.schauder.notation}, and $i$, $j$, $k$, $\ell \geq 0$. 
\end{lemma}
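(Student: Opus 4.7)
My plan is to treat $E(u)$ as the evaluation of a smooth function $\Phi$ on the $2$-jet bundle over $\Sigma$, observe that $\Phi$ vanishes identically when $z$ and $\mathbf{q}$ simultaneously vanish, and then extract a factor of $z$ or $\mathbf{q}$ by the fundamental theorem of calculus.

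Structurally, $v$ and $\nu$ are smooth functions of $(u,\nabla_\Sigma u)$, the graph point $\mathbf{x}$ is smooth in $u$, and $H$ of the graph depends smoothly on $(u,\nabla_\Sigma u)$ and affinely on $\nabla_\Sigma^2 u$ (quasilinearity of the mean curvature operator). Thus there is a smooth function $\Phi(\mathbf{x},z,\mathbf{q},\mathbf{A})$, affine in $\mathbf{A}$, with
\[
E(u)(\mathbf{x}) = \Phi\big(\mathbf{x},u(\mathbf{x}),\nabla_\Sigma u(\mathbf{x}),\nabla_\Sigma^2 u(\mathbf{x})\big).
\]
To verify the key vanishing $\Phi(\mathbf{x},0,0,\mathbf{A}) \equiv 0$, I note that when $z=0$ and $\mathbf{q}=0$ the graph is tangent to $\Sigma$ at $\mathbf{x}$, so $v=1$, $\nu=\nu_\Sigma$, the graph position equals $\mathbf{x}_\Sigma$, and the mean curvature formula for the graph gives $H=H_\Sigma+\operatorname{tr}_{g_\Sigma}(\mathbf{A})$; meanwhile, reading off \eqref{eq:linearized.equation.linear.operator}, the formal evaluation of $Lu$ at the jet $(0,0,\mathbf{A})$ is also $\operatorname{tr}_{g_\Sigma}(\mathbf{A})$, since the remaining contributions $-\tfrac12 \mathbf{x}_\Sigma\cdot\mathbf{q}+\tfrac12 z+|A_\Sigma|^2 z$ all carry a factor of $z$ or $\mathbf{q}$. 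All $\mathbf{A}$-terms cancel.

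Given this, I apply the fundamental theorem of calculus along $s\in[0,1]\mapsto (\mathbf{x},sz,s\mathbf{q},\mathbf{A})$ to obtain
\[
\Phi(\mathbf{x},z,\mathbf{q},\mathbf{A}) = z\int_0^1 \partial_z \Phi(\mathbf{x},sz,s\mathbf{q},\mathbf{A})\,ds + \mathbf{q}\cdot\int_0^1 \nabla_{\mathbf{q}} \Phi(\mathbf{x},sz,s\mathbf{q},\mathbf{A})\,ds,
\]
which is \eqref{eq:linearized.equation.error.decomposition} upon defining $E_1$ and $\mathbf{E}_2$ as these integrals; smoothness of $\Phi$ passes to $E_1$, $\mathbf{E}_2$. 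Because $L$ is by construction the full linearization of the graphical rescaled MCF operator, $\Phi$ has no linear part at $(z,\mathbf{q},\mathbf{A})=(0,0,0)$; in particular $E_1(\mathbf{x},0,0,0)=\partial_z\Phi|_0=0$ and $\mathbf{E}_2(\mathbf{x},0,0,0)=\nabla_{\mathbf{q}}\Phi|_0=0$, producing the vanishing of the right-hand side of \eqref{eq:linearized.equation.error.term.1}--\eqref{eq:linearized.equation.error.term.2} at the origin.

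The weighted pointwise bounds then follow from the asymptotically conical geometry of $\Sigma$. Expanding $v$, $\nu$, $H$, and $\mathbf{x}\cdot\nu$ as polynomials in $(z,\mathbf{q},\mathbf{A})$ whose coefficients are built from $g_\Sigma$, $A_\Sigma$, $\nabla_\Sigma A_\Sigma$, $\mathbf{x}_\Sigma^{\perp}$, $\nu_\Sigma$, and their covariant derivatives, each coefficient decays along the end at the rate dictated by the shrinker-geometry estimates of Appendix \ref{sec:shrinker.geometry} (in particular $|\nabla_\Sigma^k A_\Sigma|=O(\tilde r^{-1-k})$ and $|\nabla_\Sigma^k \mathbf{x}_\Sigma^{\perp}|=O(\tilde r^{-1-k})$). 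Assigning formal dilation weights $+1$ to $z$, $0$ to $\mathbf{q}$, and $-1$ to $\mathbf{A}$ (consistent with the $\Vert\cdot\Vert_2^{(1)}$ norm on $u$ and matching the natural scales $\tilde r^{-1}|z|$, $|\mathbf{q}|$, $\tilde r|\mathbf{A}|$ on the right-hand side), one verifies by direct computation that the integrands defining $E_1$ and $\mathbf{E}_2$ satisfy the predicted bounds \eqref{eq:linearized.equation.error.term.1}--\eqref{eq:linearized.equation.error.term.2}. The main obstacle here is precisely this final bookkeeping: organizing the Taylor expansion so that the powers $\tilde r^{2+j-\ell}$ and $\tilde r^{1+j-\ell}$ emerge automatically from each differentiation in $z$ and $\mathbf{A}$. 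It is tedious but conceptually routine once the shrinker-decay estimates are in hand.
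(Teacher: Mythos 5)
Your jet-bundle reformulation and the fundamental-theorem-of-calculus extraction of the factors $u$ and $\nabla_\Sigma u$ is a perfectly legitimate route to the decomposition \eqref{eq:linearized.equation.error.decomposition}, and your verification that the $\mathbf{A}$-affine parts cancel at $(z,\mathbf{q})=(0,0)$ is correct; this part runs parallel to the paper, which instead splits $E(u)$ into a mean-curvature piece $E^H(u)$ (decomposed via the explicit graphical expansions of Chodosh--Schulze) and the piece $E^{\mathbf{x}\cdot\nu}(u)$ coming from $\tfrac12\mathbf{x}\cdot\nu$, written as the convergent series $-\sum_{k\geq 1}u^k A_\Sigma^k(\mathbf{x}_\Sigma,\nabla_\Sigma u)$.

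The genuine gap is in the weighted estimates \eqref{eq:linearized.equation.error.term.1}--\eqref{eq:linearized.equation.error.term.2}, which you dismiss as routine bookkeeping. Writing out the $\mathbf{x}\cdot\nu$ contribution explicitly, one finds $E^{\mathbf{x}\cdot\nu}(u)=-\mathbf{x}^T\cdot\big[(\operatorname{Id}-uA_\Sigma)^{-1}-\operatorname{Id}\big]\nabla_\Sigma u=-\sum_{k\geq1}u^k A_\Sigma^k(\mathbf{x}^T,\nabla_\Sigma u)$, so the relevant coefficients involve the \emph{tangential} position vector $\mathbf{x}^T$, which grows like $\tilde r$; they are not expressible through $\mathbf{x}_\Sigma^\perp$, the only position-vector ingredient in your list. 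With only the inputs you cite, $|\nabla_\Sigma^k A_\Sigma|=O(\tilde r^{-1-k})$ and $|\nabla_\Sigma^k\mathbf{x}_\Sigma^\perp|=O(\tilde r^{-1-k})$, the crude bound $|A_\Sigma(\mathbf{x}^T,\cdot)|\leq|A_\Sigma||\mathbf{x}^T|=O(1)$ is all your weight assignment produces, and then the $k=1$ term of $\mathbf{E}_2$, namely $-zA_\Sigma(\mathbf{x}^T,\cdot)$, gives $\tilde r|\mathbf{E}_2|\gtrsim\tilde r|z|=\tilde r^{2}\cdot\tilde r^{-1}|z|$, i.e.\ the estimate \eqref{eq:linearized.equation.error.term.2} (and likewise \eqref{eq:linearized.equation.error.c0}) fails by a factor of $\tilde r^{2}$. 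What closes the argument is the sharp \emph{radial} decay $|\nabla_\Sigma^{(k)}\big(A_\Sigma(\mathbf{x}^T,\cdot)\big)|=O(\tilde r^{-2-k})$ of Corollary \ref{coro:shrinker.geometry.sff.radial}, which encodes the cancellation $A_{\cC}(\partial_r,\cdot)\equiv0$ on the asymptotic cone (equivalently, via the shrinker equation, the identity $A_\Sigma(\mathbf{x}^T,\cdot)=2\nabla_\Sigma H_\Sigma$) and rests on the sharpened end asymptotics of Lemma \ref{lemma:shrinker.geometry.decay}. This is exactly the ingredient the paper invokes to differentiate and estimate the series termwise, and it is absent from your argument; once it is added (for instance by rewriting $A_\Sigma^k(\mathbf{x}^T,\nabla_\Sigma u)=2A_\Sigma^{k-1}(\nabla_\Sigma H_\Sigma,\nabla_\Sigma u)$ so that every coefficient decays), your scheme does go through, but as written the final "direct computation" does not.
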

\begin{proof}
	It will be convenient to rewrite \eqref{eq:linearized.equation.error} as
	\[ E(u) = \underbrace{\big[ vH - H_\Sigma - \Delta_\Sigma u - |A_\Sigma|^2 u \big]}_{=: \; E^H(u)} + \tfrac12 \underbrace{\big[ v (\mathbf{x} \cdot \nu) - \mathbf{x}_\Sigma \cdot \nu_\Sigma + \mathbf{x}_\Sigma \cdot \nabla_\Sigma u - u \big]}_{=: \; E^{\mathbf{x} \cdot \nu}(u)}. \]
	By linearity, it suffices to check \eqref{eq:linearized.equation.error.decomposition}, \eqref{eq:linearized.equation.error.term.1}, \eqref{eq:linearized.equation.error.term.2} separately for $E^H(u)$, $E^{\mathbf{x} \cdot \nu}(u)$.
	
	Using \cite[(C.4)]{ChodoshSchulze}, $E^H(u)$ readily decomposes as
	\[ E^H(u) = u E^H_1(\cdot, u, \nabla_\Sigma u, \nabla^2_\Sigma u) + \nabla_\Sigma u \cdot \mathbf{E}^H_2(\cdot, u, \nabla_\Sigma u, \nabla^2_\Sigma u). \]
	Estimates \eqref{eq:linearized.equation.error.term.1}, \eqref{eq:linearized.equation.error.term.2} for $E^H_1$, $E^H_2$ are a simple consequence of scaling; indeed, they are the scale-invariant manifestation of the quadratic error nature of the linearization of $H$ on an asymptotically conical manifold where, crucially, $|A_\Sigma| + \tilde r |\nabla_\Sigma A_\Sigma| \leq C \tilde r^{-1}$.
	
	Using \cite[(C.2)]{ChodoshSchulze}, the term $E^{\mathbf{x} \cdot \nu}(u)$ can in fact be written as
	\begin{equation} \label{eq:linearized.equation.error.term.xnu}
		E^{\mathbf{x} \cdot \nu}(u) = \sum_{k=1}^\infty u^k A_\Sigma^k(\mathbf{x}_\Sigma, \nabla_\Sigma u),
	\end{equation}
	where $A_\Sigma^k$  is the $2$-tensor corresponding to the $k$-times composition of the shape operator (the dual to $A_\Sigma$). Note that this is also of the required form, \eqref{eq:linearized.equation.error.decomposition}, and in fact it can be viewed as both $u E^{\mathbf{x}\cdot \nu}_1$ or $\nabla_\Sigma u \cdot \mathbf{E}^{\mathbf{x}\cdot \nu}_2$. The power series in \eqref{eq:linearized.equation.error.term.xnu} is absolutely convergent by \cite[Lemma 2.7]{ChodoshSchulze}. By the sharp derivative estimate in Corollary \ref{coro:shrinker.geometry.sff.radial} of Appendix \ref{sec:shrinker.geometry}, the series in \eqref{eq:linearized.equation.error.term.xnu} can also be differentiated and estimated termwise to yield \eqref{eq:linearized.equation.error.term.1} if we view it as $u E^{\mathbf{x}\cdot\nu}_1$, or \eqref{eq:linearized.equation.error.term.2} if we view it as $\nabla_\Sigma u \cdot \mathbf{E}^{\mathbf{x} \cdot \nu}_2$.
\end{proof}

\begin{corollary} \label{coro:linearized.equation.error}
	There exists $\eta = \eta(\Sigma)$ such that for $u : \Sigma \to \RR$ with $\Vert u \Vert_{2}^{(1)} \leq \eta$:
	\begin{equation} \label{eq:linearized.equation.error.c0}
		\tilde{r} |E(u)| \leq C(\tilde{r}^{-1} |u| + |\nabla_\Sigma u|)(\tilde{r}^{-1} |u| + |\nabla_\Sigma u| + \tilde{r} |\nabla^2_\Sigma u|), 
	\end{equation}
	\begin{equation} \label{eq:linearized.equation.error.calpha}
		\Vert E(u) \Vert_{0,\alpha}^{(-1)} \leq C \Vert u \Vert_{1,\alpha}^{(1)} \Vert u \Vert_{2,\alpha}^{(1)},
	\end{equation}
	and for $\bar u : \Sigma \to \RR$ also with $\Vert \bar u \Vert_{2}^{(1)} \leq \eta$:
	\begin{multline} \label{eq:linearized.equation.error.diff.c0}
			\tilde{r} |E(\bar u) - E(u)| \leq C(\tilde{r}^{-1} |u| + |\nabla_\Sigma u| + \tilde{r} |\nabla_\Sigma^2 u| + \tilde{r}^{-1} |\bar u| + |\nabla_\Sigma \bar u| + \tilde{r} |\nabla_\Sigma^2 \bar u|) \\
			\cdot (\tilde{r}^{-1} |\bar u-u| + |\nabla_\Sigma (\bar u-u)| + \tilde{r} |\nabla^2_\Sigma (\bar u-u)|),
	\end{multline}
	\begin{equation} \label{eq:linearized.equation.error.diff.calpha}
		\Vert E(\bar u) - E(u) \Vert_{0,\alpha}^{(-1)} \leq C (\Vert u \Vert_{2,\alpha}^{(1)} + \Vert \bar u \Vert_{2,\alpha}^{(1)}) \Vert \bar u-u \Vert_{2,\alpha}^{(1)}.
	\end{equation}
	Above, $C = C(\Sigma)$, and \eqref{eq:linearized.equation.error.c0},   \eqref{eq:linearized.equation.error.diff.c0} are pointwise estimates on $\Sigma$.
\end{corollary}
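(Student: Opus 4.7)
The plan is to derive all four estimates from the structural decomposition
\[ E(u) = u\,E_1(\cdot, u, \nabla_\Sigma u, \nabla_\Sigma^2 u) + \nabla_\Sigma u \cdot \mathbf{E}_2(\cdot, u, \nabla_\Sigma u, \nabla_\Sigma^2 u) \]
from Lemma \ref{lemma:linearized.equation.error}, together with the weighted pointwise estimates \eqref{eq:linearized.equation.error.term.1}, \eqref{eq:linearized.equation.error.term.2}. The smallness hypothesis $\Vert u \Vert_2^{(1)} \leq \eta$ (and similarly for $\bar u$) ensures that the arguments $(u(\mathbf{x}), \nabla_\Sigma u(\mathbf{x}), \nabla_\Sigma^2 u(\mathbf{x}))$ stay in the smooth domain of $E_1$ and $\mathbf{E}_2$, so that derivative bounds in the $(z, \mathbf{q}, \mathbf{A})$ variables can be applied freely.

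For the pointwise $C^0$ estimate \eqref{eq:linearized.equation.error.c0}, the case $i=j=k=\ell=0$ in \eqref{eq:linearized.equation.error.term.1}, \eqref{eq:linearized.equation.error.term.2} yields
\[ \tilde r^2 |E_1| + \tilde r |\mathbf{E}_2| \leq C(\tilde r^{-1}|u| + |\nabla_\Sigma u| + \tilde r |\nabla_\Sigma^2 u|). \]
Multiplying the decomposition by $\tilde r$ and plugging in gives \eqref{eq:linearized.equation.error.c0} immediately. For the difference estimate \eqref{eq:linearized.equation.error.diff.c0}, I would write
\[ E(\bar u) - E(u) = \int_0^1 \tfrac{d}{ds} E\bigl( u + s(\bar u - u) \bigr) \, ds \]
and expand using the chain rule. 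Each term produces either a factor of $(\bar u - u)$, $\nabla_\Sigma (\bar u - u)$, or $\nabla_\Sigma^2(\bar u - u)$ multiplied by either $E_1$, $\mathbf{E}_2$, or their $z$-, $\mathbf{q}$-, $\mathbf{A}$-derivatives evaluated at the interpolation. The bounds \eqref{eq:linearized.equation.error.term.1}, \eqref{eq:linearized.equation.error.term.2} at $(i,j,k,\ell) \in \{ (0,1,0,0), (0,0,1,0), (0,0,0,1) \}$ supply exactly the right weights, and the triangle inequality over $s \in [0,1]$ absorbs the interpolation into the sum of the expressions for $u$ and $\bar u$, yielding \eqref{eq:linearized.equation.error.diff.c0}.

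For the weighted Hölder estimates \eqref{eq:linearized.equation.error.calpha} and \eqref{eq:linearized.equation.error.diff.calpha}, the $C^0$ part of each is already handled above, so only the seminorm portion is left. I would argue by the standard dichotomy for weighted Hölder norms on ends with weight function $\tilde r$: for $\mathbf{x}, \mathbf{y} \in \Sigma$, either
\begin{enumerate}
\item[(a)] $d_\Sigma(\mathbf{x}, \mathbf{y}) \geq \tfrac{1}{2} \min\{\tilde r(\mathbf{x}), \tilde r(\mathbf{y})\}$, in which case
\[ \frac{|E(u)(\mathbf{x}) - E(u)(\mathbf{y})|}{d_\Sigma(\mathbf{x}, \mathbf{y})^\alpha \,(\tilde r(\mathbf{x})^{-1-\alpha} + \tilde r(\mathbf{y})^{-1-\alpha})} \]
is dominated by the $C^0$ bound \eqref{eq:linearized.equation.error.c0} applied at each of $\mathbf{x}$, $\mathbf{y}$; or
\item[(b)] $d_\Sigma(\mathbf{x}, \mathbf{y}) < \tfrac{1}{2} \min\{\tilde r(\mathbf{x}), \tilde r(\mathbf{y})\}$, in which case $\tilde r(\mathbf{x}) \asymp \tilde r(\mathbf{y})$, and one applies the chain rule/mean value theorem along a geodesic from $\mathbf{x}$ to $\mathbf{y}$ using the bounds in \eqref{eq:linearized.equation.error.term.1}, \eqref{eq:linearized.equation.error.term.2} at $i = 1$ combined with the Hölder seminorms of $u$ and its first two derivatives.
\end{enumerate}
The bilinear structure comes out by keeping one factor of (the weighted norm of) $u$ from the decomposition and extracting the other from the Hölder variation. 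Exactly the same chain-rule expansion, applied to $E(\bar u) - E(u)$, yields \eqref{eq:linearized.equation.error.diff.calpha}.

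The main obstacle is bookkeeping in step (b) above: tracking how $\tilde r$-weights distribute among the factors produced by the chain rule so that the final product bound acquires precisely the weight $(-1-\alpha)$ and factors as $\Vert u \Vert_{1,\alpha}^{(1)} \Vert u \Vert_{2,\alpha}^{(1)}$ (and analogously for the difference). This is organizational rather than conceptual, since the scaling of the bounds in Lemma \ref{lemma:linearized.equation.error} is exactly designed so that each $\partial_z$, $\partial_{\mathbf{q}}$, $\partial_{\mathbf{A}}$, and $\partial_{\mathbf{x}}$ derivative contributes the correct power of $\tilde r$.
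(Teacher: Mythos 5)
Your proposal is correct and follows essentially the same route as the paper: decompose $E(u)$ via \eqref{eq:linearized.equation.error.decomposition} and use \eqref{eq:linearized.equation.error.term.1}, \eqref{eq:linearized.equation.error.term.2} with $i=j=k=\ell=0$ for the pointwise bounds, and a fundamental-theorem-of-calculus expansion (the paper applies it to $E_1$, $\mathbf{E}_2$ separately, you to $s \mapsto E(u+s(\bar u - u))$, which is equivalent) with the $j+k+\ell=1$ bounds for the differences; your dichotomy argument for the weighted H\"older seminorms is just the bookkeeping the paper leaves implicit. The only caution is that ``mean value theorem along a geodesic'' in case (b) must be understood as applied in the $(z,\mathbf{q},\mathbf{A})$-slots of the smooth functions $E_1$, $\mathbf{E}_2$ (with the variation of the arguments controlled by the H\"older seminorms of $u$, $\nabla_\Sigma u$, $\nabla^2_\Sigma u$), not to the composed function $\mathbf{x} \mapsto E(u)(\mathbf{x})$, since the latter would require $\nabla^3_\Sigma u$, which the norms in the statement do not control---your wording indicates this is what you intend.
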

\begin{proof}
	Estimates \eqref{eq:linearized.equation.error.c0}, \eqref{eq:linearized.equation.error.calpha} follow by applying \eqref{eq:linearized.equation.error.decomposition} to decompose $E(u)$ and \eqref{eq:linearized.equation.error.term.1}, \eqref{eq:linearized.equation.error.term.2} with $i = j = k = \ell = 0$ to estimate the two terms in the decomposition. 
	
	Estimates \eqref{eq:linearized.equation.error.diff.c0}, \eqref{eq:linearized.equation.error.diff.calpha} follow by applying \eqref{eq:linearized.equation.error.decomposition} to decompose $E(u)$, $E(\bar u)$, using the fundamental theorem of calculus to expand 
	\[ E_1(\cdot, \bar u, \nabla_\Sigma \bar u, \nabla_\Sigma^2 \bar u) - E_1(\cdot, u, \nabla_\Sigma u, \nabla_\Sigma^2 u), \; \mathbf{E}_2(\cdot, \bar u, \nabla_\Sigma \bar u, \nabla_\Sigma^2 \bar u) - \mathbf{E}_2(\cdot, u, \nabla_\Sigma u, \nabla_\Sigma^2 u), \]
	and then using \eqref{eq:linearized.equation.error.term.1}, \eqref{eq:linearized.equation.error.term.2} with $i = 0$, $j + k + \ell = 1$ to estimate the Taylor expansion coming from the fundamental theorem of calculus.
\end{proof}


\section{Dynamics of smooth ancient rescaled flows} \label{sec:dynamics}

In what follows, we make extensive use of the $L^2$ projection notation from \eqref{eq:linearized.equation.projections}.

\begin{lemma} \label{lemma:dynamics}
	Suppose $u$, $h$ are such that
	\begin{equation} \label{eq:dynamics.pde}
		(\tfrac{\partial}{\partial \tau} - L)u = h,
	\end{equation}
	and that for some $\mu \in \{ \lambda_1, \ldots, \lambda_I \} \cup \{0\}$:
	\begin{equation} \label{eq:dynamics.mu.stable.decay}
		\lim_{\tau \to -\infty} e^{\mu \tau} \Vert \Pi_{>\mu} u(\cdot, \tau) \Vert_{W} = 0.
	\end{equation}
	Suppose that $h$ satisfies, respectively for each binary relation $>$, $=$, $<$, that
	\begin{equation} \label{eq:dynamics.l2.bound}
		|\langle h(\cdot, \tau), \Pi_{\gtreqqless \mu} u(\cdot, \tau) \rangle_{W}| \leq \delta(\tau) \Vert u(\cdot, \tau) \Vert_{W} \Vert \Pi_{\gtreqqless \mu} u(\cdot, \tau) \Vert_{W} 
	\end{equation}
	for some non-decreasing $\delta : \RR_- \to [0, \delta_0]$. If $\delta_0$ is sufficiently small depending on $\Sigma$, then
	\begin{equation} \label{eq:dynamics.mu.stable.dominated}
		\Vert \Pi_{>\mu} u(\cdot, \tau) \Vert_{W} \leq C \delta(\tau) \Vert \Pi_{\leq \mu} u(\cdot, \tau) \Vert_{W}, \; \forall \tau \in \RR_-,
	\end{equation}
	and either
	\begin{equation} \label{eq:dynamics.mu.unstable.dominant}
		\Vert \Pi_{=\mu} u(\cdot, \tau) \Vert_{W} \leq C \delta(\tau) \Vert \Pi_{<\mu} u(\cdot, \tau) \Vert_{W}, \; \forall \tau \in \RR_-,
	\end{equation}
	or there exists a non-decreasing $\tau_0 : \RR_- \to \RR_-$ such that $\tau_0(\tau) \leq \tau$ for all $\tau \in \RR_-$ and 
	\begin{equation} \label{eq:dynamics.mu.neutral.dominant}
		\Vert \Pi_{<\mu} u(\cdot, \tau) \Vert_{W} \leq C \delta(\bar \tau) \Vert \Pi_{=\mu} u(\cdot, \tau) \Vert_{W}, \; \forall \bar \tau \in \RR_-, \; \tau \leq \tau_0(\bar \tau).
	\end{equation}
	Here, $C = C(\Sigma)$. 
\end{lemma}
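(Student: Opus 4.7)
The plan is to reduce the lemma to a Merle--Zaag type ODE dichotomy for the squared $L^2_W$ norms of the three spectral pieces of $u$. Set $a(\tau) := \|\Pi_{>\mu}u(\cdot,\tau)\|_W^2$, $b(\tau) := \|\Pi_{=\mu}u(\cdot,\tau)\|_W^2$, and $c(\tau) := \|\Pi_{<\mu}u(\cdot,\tau)\|_W^2$. Because $L$ is self-adjoint on $L^2_W$ and commutes with each spectral projector, one differentiates these quantities in $\tau$ and applies \eqref{eq:dynamics.pde}, the spectral gap, and \eqref{eq:dynamics.l2.bound} to produce the system
\[
a' \leq -2\lambda_+ a + 2\delta(\tau)\sqrt{a(a+b+c)}, \quad |b' + 2\mu b| \leq 2\delta(\tau)\sqrt{b(a+b+c)}, \quad c' \geq -2\lambda_- c - 2\delta(\tau)\sqrt{c(a+b+c)},
\]
where $\lambda_+ := \min\{\lambda_j : \lambda_j > \mu\}$ and $\lambda_- := \max\{\lambda_j : \lambda_j < \mu\}$. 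Discreteness of the spectrum gives a uniform gap $\kappa := \min(\lambda_+ - \mu, \mu - \lambda_-) > 0$. Absorbing the drift $\mu$ by setting $A := e^{2\mu\tau}a$, $B := e^{2\mu\tau}b$, $C := e^{2\mu\tau}c$ reduces the system to the standard Merle--Zaag form
\[
A' \leq -2\kappa A + 2\delta(\tau)\sqrt{A(A+B+C)}, \quad |B'| \leq 2\delta(\tau)\sqrt{B(A+B+C)}, \quad C' \geq 2\kappa C - 2\delta(\tau)\sqrt{C(A+B+C)}.
\]
Hypothesis \eqref{eq:dynamics.mu.stable.decay} now reads $A(\tau) \to 0$ as $\tau \to -\infty$, and the conclusions \eqref{eq:dynamics.mu.stable.dominated}--\eqref{eq:dynamics.mu.neutral.dominant} amount, after squaring and rescaling, to (i) $A \leq C_1 \delta(\tau)^2 (B+C)$ on $\RR_-$, together with either (ii) $B \leq C_1 \delta(\tau)^2 C$ on $\RR_-$, or (ii$'$) $C \leq C_1 \delta(\tau)^2 B$ on $(-\infty,\tau_0]$.

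I would prove (i) by applying Young's inequality to the first ODI to obtain $A' + \kappa A \leq C\delta(\tau)^2(B+C)$ (valid once $\delta_0$ is small enough to absorb an $A$ term on the right), then integrating from $-\infty$ using $A(-\infty)=0$ via Gronwall; the exponential kernel $e^{-\kappa(\tau-\sigma)}$ combined with the backward exponential decay of $C$ on the regime where $C \gtrsim B$ (from the third ODI) localizes the contribution to a window of size $O(1/\kappa)$ around $\tau$, yielding the pointwise $\delta(\tau)^2$ factor. For the $(B,C)$-dichotomy I would analyze the ratio $R := B/C$: the ODIs yield a scalar inequality $R' \leq 2\delta(\tau)\sqrt{R(1+R)} - 2\kappa R + o(R)$ with a unique attracting equilibrium at $R \sim \delta(\tau)^2/\kappa^2$. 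If $C$ dominates at $-\infty$ (so $R(-\infty)=0$) then $R$ rises monotonically toward this equilibrium and one obtains case (ii) for all $\tau$; if instead $B$ dominates at $-\infty$, then the reciprocal $C/B$ grows exponentially by the third ODI, so there is a last time $\tau_0$ before which $C$ is negligible, yielding case (ii$'$) on $(-\infty,\tau_0]$.

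The main obstacle is this ODE dichotomy: extracting the sharp $\delta(\tau)$ dependence (rather than the weaker $\delta_0$) requires careful tracking of how the Gronwall kernel $e^{-\kappa(\tau-\sigma)}$ localizes $\delta(\sigma)$ to a neighborhood of $\tau$, and the asymmetric ``for all $\tau$'' versus ``eventually'' nature of the dichotomy reflects that, once the unstable mode $C$ has overtaken the neutral mode $B$, its exponential forward growth at rate $\kappa$ prevents the reverse inequality thereafter. This is the Merle--Zaag phenomenon familiar from the literature on ancient geometric flows (e.g.\ \cite{ColdingMinicozzi:uniqueness-tangent-flow,ADS,ADS2}), which provides the template to adapt.
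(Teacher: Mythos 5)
Your reduction is the same as the paper's: pairing \eqref{eq:dynamics.pde} with the eigenfunctions of $L$ gives three differential inequalities for the spectral pieces, and after weighting by $e^{\mu\tau}$ (you use squared norms and $e^{2\mu\tau}$, an immaterial difference) the lemma becomes a Merle--Zaag type dichotomy, with \eqref{eq:dynamics.mu.stable.decay} playing the role of the decay hypothesis on the stable piece. The paper stops exactly there and invokes the Merle--Zaag ODE lemma of \cite{ChoiMantoulidis} (Lemma B.1) as a black box; you instead sketch a hand proof of the dichotomy, and that is where the gaps are.

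Concretely: (a) your case split ``$C$ dominates at $-\infty$'' versus ``$B$ dominates at $-\infty$'' presupposes what must be proved -- a priori both quantities tend to zero and their ratio $R=B/C$ need not have any limiting behavior; establishing that one mode eventually dominates is precisely the content of the Merle--Zaag argument, not an admissible input to it. (b) The ratio inequality $R'\le 2\delta\sqrt{R(1+R)}-2\kappa R+o(R)$ has an unexplained $o(R)$ term (the coupling to $A$, and the possibility $C=0$, must be handled), and the claim that $R$ ``rises monotonically toward the equilibrium $R\sim\delta(\tau)^2/\kappa^2$'' is not a proof when $\delta$ varies: $R$ can only relax at rate of order $\kappa$, so a pointwise bound $R(\tau)\lesssim\delta(\tau)^2$ does not follow from an attracting-equilibrium heuristic (indeed, for a $\delta$ that drops abruptly such a pointwise bound would fail instantaneously, which is why the application takes $\delta$ to be a running supremum, hence non-decreasing, and why the cited ODE lemma is tailored to that situation). (c) The same issue undermines your Gronwall step for \eqref{eq:dynamics.mu.stable.dominated}: after integrating $A'+\kappa A\le C\delta^2(B+C)$ from $-\infty$ you must dominate $\int_{-\infty}^{\tau}e^{-\kappa(\tau-\sigma)}\delta(\sigma)^2(B+C)(\sigma)\,d\sigma$ by $\delta(\tau)^2(B+C)(\tau)$; your localization uses backward exponential decay of $C$, but in the regime where $B$ is the dominant term there is no backward decay, and $\delta(\sigma)$ for $\sigma\ll\tau$ is not controlled by $\delta(\tau)$ without monotonicity, so the pointwise $\delta(\tau)^2$ factor does not follow as written. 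The clean repair is the paper's route: keep your three weighted differential inequalities and apply the Merle--Zaag ODE lemma in the form of \cite{ChoiMantoulidis}, Lemma B.1 (or reproduce its proof in full, tracking the $\delta(\tau)$ dependence through the actual iteration rather than through equilibrium heuristics).
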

\begin{proof}
	Let $\underline{\mu}$ (resp.\ $\overline{\mu}$) be the largest (resp.\ smallest) eigenvalue of $L$ strictly below (resp.\ strictly above) $\mu$---if $\mu=\lambda_1$, the choice of $\underline\mu$ is irrelevant. Taking the dot product of \eqref{eq:dynamics.pde} with eigenfunctions of $L$ we find, by \eqref{eq:dynamics.l2.bound}, that:
	\begin{equation*} 
		\tfrac{d}{d\tau} \Vert \Pi_{<\mu} u(\cdot, \tau) \Vert_{W} + \underline{\mu} \Vert \Pi_{<\mu} u(\cdot, \tau) \Vert_{W} \geq - C \delta(\tau) \Vert u(\cdot, \tau) \Vert_{W},
	\end{equation*}
	\begin{equation*} 
		|\tfrac{d}{d\tau} \Vert \Pi_{=\mu} u(\cdot, \tau) \Vert_{W} + \mu \Vert \Pi_{=\mu} u(\cdot, \tau) \Vert_{W}| \leq C \delta(\tau) \Vert u(\cdot, \tau) \Vert_{W},
	\end{equation*}
	\begin{equation*} 
		\tfrac{d}{d\tau} \Vert \Pi_{>\mu} u(\cdot, \tau) \Vert_{W} + \overline{\mu} \Vert \Pi_{>\mu} u(\cdot, \tau) \Vert_{W} \leq C \delta(\tau) \Vert u(\cdot, \tau) \Vert_{W},
	\end{equation*}
	for $C = C(\Sigma)$. Note that we may multiply through with $e^{\mu \tau}$ and rewrite these as:
	\begin{equation} \label{eq:dynamics.mu.ode.unstable}
		\tfrac{d}{d\tau} (e^{\mu \tau} \Vert \Pi_{<\mu} u(\cdot, \tau) \Vert_{W}) + (\underline{\mu} - \mu) (e^{\mu \tau} \Vert \Pi_{<\mu} u(\cdot, \tau) \Vert_{W}) \geq - C \delta(\tau) (e^{\mu \tau} \Vert u(\cdot, \tau) \Vert_{W}),
	\end{equation}
	\begin{equation} \label{eq:dynamics.mu.ode.neutral}
		|\tfrac{d}{d\tau} (e^{\mu \tau} \Vert \Pi_{=\mu} u(\cdot, \tau) \Vert_{W})| \leq C \delta(\tau) (e^{\mu \tau} \Vert u(\cdot, \tau) \Vert_{W}),
	\end{equation}
	\begin{equation} \label{eq:dynamics.mu.ode.stable}
		\tfrac{d}{d\tau} (e^{\mu \tau} \Vert \Pi_{>\mu} u(\cdot, \tau) \Vert_{W}) + (\overline{\mu} - \mu) (e^{\mu \tau} \Vert \Pi_{>\mu} u(\cdot, \tau) \Vert_{W}) \leq C \delta(\tau) (e^{\mu \tau} \Vert u(\cdot, \tau) \Vert_{W});
	\end{equation}	
	By the Merle--Zaag ODE lemma (see \cite[Lemma B.1]{ChoiMantoulidis}), applied to \eqref{eq:dynamics.mu.ode.unstable}, \eqref{eq:dynamics.mu.ode.neutral}, \eqref{eq:dynamics.mu.ode.stable}, together with the a priori assumption \eqref{eq:dynamics.mu.stable.decay}, it follows that if $\delta_0$ is sufficiently small, then 
	\begin{equation*}
		e^{\mu \tau} \Vert \Pi_{>\mu} u(\cdot, \tau) \Vert_{W} \leq C \delta(\tau) (e^{\mu \tau} \Vert \Pi_{\leq \mu} u(\cdot, \tau) \Vert_{W}), \; \forall \tau \in \RR_-,
	\end{equation*}
	and that either\footnote{The Merle--Zaag ODE lemma is for a fixed coefficient $\delta$, rather than a variable coefficient $\delta(\cdot)$, on the right hand sides of the differential inequalities. Per the lemma, for any fixed value $\delta(\tau)$, we have a dichotomy: looking backwards in time, either the first alternative (``unstable dominates neutral'') holds \textit{immediately}, or the second alternative (``neutral dominates unstable'') holds \textit{eventually}. Note that if either alternative holds for one coefficient, then it must hold for all smaller coefficients (in view of the alternative) and all previous times (by the monotonicity of $\delta(\cdot)$). The function $\tau_0(\cdot)$ succinctly arranges for the unavoidable fact that the second alternative may go into effect at different times for different values of $\delta(\cdot)$.}
	\begin{equation*} 
		e^{\mu \tau} \Vert \Pi_{=\mu} u(\cdot, \tau) \Vert_{W} \leq C \delta(\tau) (e^{\mu \tau} \Vert \Pi_{<\mu} u(\cdot, \tau) \Vert_{W}), \; \forall \tau \in \RR_-,
	\end{equation*}
	or there exists a non-decreasing $\tau_0 : \RR_- \to \RR_-$ such that $\tau_0(\tau) \leq \tau$ for all $\tau \in \RR_-$ and
	\begin{equation*} 
		e^{\mu \tau} \Vert \Pi_{<\mu} u(\cdot, \tau) \Vert_{W} \leq C \delta(\bar \tau) (e^{\mu \tau} \Vert \Pi_{=\mu} u(\cdot, \tau) \Vert_{W}), \; \forall \bar \tau \in \RR_-, \; \tau \leq \tau_0(\bar \tau).
	\end{equation*}
	This is the required result after canceling out $e^{\mu \tau}$ from all sides.
\end{proof}

\begin{corollary} \label{coro:dynamics}
	Suppose $u$, $h$ are such that \eqref{eq:dynamics.pde}, \eqref{eq:dynamics.l2.bound} hold for all $\mu \in \{ \lambda_1, \ldots, \lambda_I \} \cup \{0\}$. If
	\begin{equation} \label{eq:dynamics.delta.bounded.by.u}
		\delta(\tau) \leq C_0 \sup_{\sigma \leq \tau} \Vert u(\cdot, \sigma) \Vert_{W}, \; \forall \tau \in \RR_-,
	\end{equation}
	and
	\begin{equation} \label{eq:dynamics.u.decay}
		\lim_{\tau \to -\infty} \Vert u(\cdot, \tau) \Vert_{W} = 0,
	\end{equation}
	then either $u \equiv 0$ or there exists $\mu \in \{ \lambda_1, \ldots, \lambda_I \} \cup \{ 0 \}$ and a non-decreasing $\tau_0 : \RR_- \to \RR_-$ with $\tau_0(\tau) \leq \tau$ for all $\tau \in \RR_-$ such that
	\begin{equation} \label{eq:dynamics.dominant.mode}
		\Vert \Pi_{\neq \mu} u(\cdot, \tau) \Vert_{W} \leq C \delta(\bar \tau) \Vert \Pi_{=\mu} u(\cdot, \tau) \Vert_{W}, \; \forall \bar \tau \in \RR_-, \; \tau \leq \tau_0(\bar \tau),
	\end{equation}
	for $C = C(\Sigma, C_0)$, and, if $\mu < 0$, then
	\begin{equation} \label{eq:dynamics.u.decay.sharp}
		0 < \liminf_{\tau \to -\infty} e^{\mu \tau} \Vert u(\cdot, \tau) \Vert_{W} \leq \limsup_{\tau \to -\infty} e^{\mu \tau} \Vert u(\cdot, \tau) \Vert_{W} < \infty.
	\end{equation}
	If $K = 0$ (recall, $K = \dim \ker L$ in $L^2_W(\Sigma)$), then $\mu \neq 0$.
\end{corollary}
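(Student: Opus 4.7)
My plan is to apply Lemma \ref{lemma:dynamics} iteratively to a descending sequence of candidate eigenvalues $\mu \in \{0, \lambda_I, \lambda_{I-1}, \ldots, \lambda_1\}$. At each step, the dichotomy in the lemma's conclusion either identifies $\mu$ as the dominant mode (yielding \eqref{eq:dynamics.dominant.mode}) or forces a descent to the next candidate. Assume throughout that $u \not\equiv 0$; otherwise the claim is trivial.

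For the base case $\mu = 0$, hypothesis \eqref{eq:dynamics.mu.stable.decay} reduces to $\Vert \Pi_{>0} u(\cdot,\tau) \Vert_W \to 0$, which is \eqref{eq:dynamics.u.decay}. Apply Lemma \ref{lemma:dynamics}: if the neutral-dominant alternative \eqref{eq:dynamics.mu.neutral.dominant} holds, then combining with \eqref{eq:dynamics.mu.stable.dominated} gives \eqref{eq:dynamics.dominant.mode} at $\mu = 0$ and we stop; otherwise the unstable-dominant alternative \eqref{eq:dynamics.mu.unstable.dominant} combines with \eqref{eq:dynamics.mu.stable.dominated} to give $\Vert \Pi_{\geq 0} u \Vert_W \leq C \delta(\tau) \Vert \Pi_{<0} u \Vert_W$, and we descend. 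Inductively, suppose we are at $\mu_k$ with $\mu_{k-1}$ the previous candidate (no eigenvalue of $L$ lies strictly between $\mu_k$ and $\mu_{k-1}$), having inherited $\Vert \Pi_{\geq \mu_{k-1}} u \Vert_W \leq C \delta(\tau) \Vert u \Vert_W$. To verify the stability hypothesis \eqref{eq:dynamics.mu.stable.decay} at $\mu_k$, I would feed this bound into the ODE for $\Vert \Pi_{<\mu_{k-1}} u \Vert_W$ underlying Lemma \ref{lemma:dynamics}; a Gr\"onwall argument, absorbing the error using smallness of $\delta$, yields $\Vert u(\cdot,\tau) \Vert_W \leq C e^{(|\mu_k| - \epsilon)\tau}$ for any small $\epsilon > 0$. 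Combined with \eqref{eq:dynamics.delta.bounded.by.u},
\[
e^{\mu_k \tau} \delta(\tau) \Vert u(\cdot, \tau) \Vert_W \leq C e^{(|\mu_k| - 2\epsilon)\tau} \to 0
\]
as $\tau \to -\infty$, which gives \eqref{eq:dynamics.mu.stable.decay} at $\mu_k$. Apply Lemma \ref{lemma:dynamics}: either $\mu_k$ is identified as dominant (stop) or we descend. Termination in at most $I+1$ steps is automatic; note that at the final candidate $\lambda_1$ the unstable-dominant case would force $u \equiv 0$ (since $\Pi_{<\lambda_1} \equiv 0$), so the neutral-dominant alternative must hold there.

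For the sharp asymptotic \eqref{eq:dynamics.u.decay.sharp} when $\mu < 0$: differentiating $\Vert \Pi_{=\mu} u \Vert_W$ using \eqref{eq:dynamics.pde} and \eqref{eq:dynamics.l2.bound} gives an ODE of the form $\tfrac{d}{d\tau} \Vert \Pi_{=\mu} u \Vert_W + \mu \Vert \Pi_{=\mu} u \Vert_W = O(\delta(\tau) \Vert u \Vert_W)$; by this stage the right-hand side is exponentially smaller than the leading behavior, so integration yields $\Vert \Pi_{=\mu} u(\cdot, \tau) \Vert_W \sim C_\star e^{-\mu\tau}$ with $C_\star > 0$ forced by $u \not\equiv 0$ and \eqref{eq:dynamics.dominant.mode}. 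Combined with \eqref{eq:dynamics.dominant.mode} this upgrades to the two-sided bound \eqref{eq:dynamics.u.decay.sharp}. For the $K=0$ addendum, $\Pi_{=0} \equiv 0$, so the neutral-dominant alternative at $\mu = 0$ would force $\Pi_{<0} u \equiv 0$ and hence $u \equiv 0$ via \eqref{eq:dynamics.mu.stable.dominated}, contradicting $u \not\equiv 0$; therefore the base step enters the unstable-dominant case, and the finally identified $\mu$ must be strictly negative.

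The main obstacle is the inductive verification of \eqref{eq:dynamics.mu.stable.decay}: closing the induction requires extracting enough exponential decay at each descent from the Merle--Zaag ODE and the quadratic smallness $\delta(\tau) \Vert u(\cdot,\tau) \Vert_W \leq C_0 (\sup_{\sigma \leq \tau} \Vert u(\cdot,\sigma) \Vert_W)^2$ provided by \eqref{eq:dynamics.delta.bounded.by.u}. Once this bookkeeping is carried through, the remaining steps are essentially routine ODE integration.
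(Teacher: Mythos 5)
Your proposal is correct and is essentially the paper's own argument: the paper chooses the \emph{smallest} $\mu \in \{\lambda_1,\ldots,\lambda_I\}\cup\{0\}$ for which \eqref{eq:dynamics.mu.stable.decay} holds and rules out the unstable-dominant alternative by exactly the Gr\"onwall/bootstrap estimate (using \eqref{eq:dynamics.delta.bounded.by.u} to upgrade the decay of $\delta$) that you use to verify \eqref{eq:dynamics.mu.stable.decay} at the next candidate, so your forward descent and the paper's minimal-choice-plus-contradiction are the same proof reorganized. Your handling of \eqref{eq:dynamics.u.decay.sharp} via the two-sided ODE for $\Vert \Pi_{=\mu} u(\cdot,\tau) \Vert_{W}$ and of the $K=0$ addendum likewise matches the paper.
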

\begin{proof}
	Let $\mu \in \{ \lambda_1, \ldots, \lambda_I \} \cup \{0\}$ be the smallest possible choice for which \eqref{eq:dynamics.mu.stable.decay} holds true; note that this statement isn't vacuous, since \eqref{eq:dynamics.u.decay} guarantees \eqref{eq:dynamics.mu.stable.decay} at least for $\mu = 0$. 
	
	\begin{claim}
		\eqref{eq:dynamics.mu.unstable.dominant} cannot hold.
	\end{claim}
	\begin{proof}[Proof of claim]
		Note that if \eqref{eq:dynamics.mu.unstable.dominant} held, then $\mu\not = \lambda_1$. If $\underline{\mu}$ is the largest eigenvalue smaller than $\mu$, by \eqref{eq:dynamics.pde}, \eqref{eq:dynamics.l2.bound}, \eqref{eq:dynamics.mu.stable.dominated}, and \eqref{eq:dynamics.mu.unstable.dominant}, if it did hold, we would have that
		\[ \tfrac{d}{d\tau} \Vert \Pi_{<\mu} u(\cdot, \tau) \Vert_{W} + \underline{\mu} \Vert \Pi_{<\mu} u(\cdot, \tau) \Vert_{W} \geq - C \delta(\tau) \Vert \Pi_{<\mu} u(\cdot, \tau) \Vert_{W}. \]
		Arguing as in \cite[Claim 4.5]{ChoiMantoulidis}, which requires the knowledge that $\delta(\tau)$ is bounded per \eqref{eq:dynamics.delta.bounded.by.u}, it would follow that
		\[ \Vert \Pi_{<\mu} u(\cdot, \tau) \Vert_{W} \leq C' e^{-\underline{\mu} \tau}, \]
		at which point \eqref{eq:dynamics.mu.stable.dominated}, \eqref{eq:dynamics.mu.unstable.dominant} guarantee that
		\[ \Vert \Pi_{> \underline{\mu}} u(\cdot, \tau) \Vert_{W} = \Vert \Pi_{\geq \mu} u(\cdot, \tau) \Vert_{W} \leq C \delta(\tau) \Vert \Pi_{<\mu} u(\cdot, \tau) \Vert_{W} \leq C e^{-2\underline{\mu} \tau}, \]
		violating the minimal nature of $\mu$. Thus, \eqref{eq:dynamics.mu.unstable.dominant} cannot hold.
	\end{proof}

	So, \eqref{eq:dynamics.mu.neutral.dominant} must hold. Together, \eqref{eq:dynamics.mu.stable.dominated}, \eqref{eq:dynamics.mu.neutral.dominant} give \eqref{eq:dynamics.dominant.mode}. If $\mu = 0$, there is nothing left to prove; the result follows. Otherwise, we simply note that \eqref{eq:dynamics.pde}, \eqref{eq:dynamics.l2.bound}, \eqref{eq:dynamics.mu.stable.dominated}, \eqref{eq:dynamics.mu.neutral.dominant} give:
	\begin{equation} \label{eq:dynamics.dominant.ode}
		|\tfrac{d}{d\tau} \Vert \Pi_{=\mu} u(\cdot, \tau) \Vert_{W} + \mu \Vert \Pi_{=\mu} u(\cdot, \tau) \Vert_{W}| \leq C \delta(\tau) \Vert \Pi_{=\mu} u(\cdot, \tau) \Vert_{W}.
	\end{equation}
	Arguing as in \cite[Claim 4.5]{ChoiMantoulidis} again, with $\mu$ in place of $\lambda_I$, gives the rightmost inequality of \eqref{eq:dynamics.u.decay.sharp}, and the leftmost inequality is obtained by instead using the two-sided nature of the bound in \eqref{eq:dynamics.dominant.ode}.
\end{proof}

The following lemma verifies that assumptions \eqref{eq:dynamics.l2.bound}, \eqref{eq:dynamics.delta.bounded.by.u} are met for ancient rescaled mean curvature flows that stay sufficiently close to $\Sigma$ in the suitable scale-invariant sense:

\begin{lemma} \label{lemma:dynamics.condition.check}
	If $u : \Sigma \times \RR_- \to \RR$ is such that \eqref{eq:linearized.equation.graphical.pde.linearized} and
	\begin{equation} \label{eq:dynamics.condition.c3.o1}
		\lim_{\tau \to -\infty} \Vert u(\cdot, \tau) \Vert_3^{(1)} = 0,
	\end{equation}
	then the choice 
	\begin{equation} \label{eq:dynamics.condition.delta}
		\delta(\tau) := \sup_{\sigma \leq \tau} \Vert u(\cdot, \sigma) \Vert_{2,\alpha}^{(1)}
	\end{equation}
	satisfies \eqref{eq:dynamics.l2.bound} with $h = E(u)$, and \eqref{eq:dynamics.delta.bounded.by.u}.
\end{lemma}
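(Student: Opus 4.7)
My plan is to check the two hypotheses \eqref{eq:dynamics.l2.bound} and \eqref{eq:dynamics.delta.bounded.by.u} separately, deriving \eqref{eq:dynamics.delta.bounded.by.u} first because its conclusion feeds back into the proof of \eqref{eq:dynamics.l2.bound}. Both verifications combine the pointwise error estimates of Corollary \ref{coro:linearized.equation.error} with the parabolic Schauder theory of Lemmas \ref{lemma:linearized.equation.interior.c2alpha}--\ref{lemma:linearized.equation.global.c2alpha}.

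For \eqref{eq:dynamics.delta.bounded.by.u}, I would run a standard bootstrap. Since $u$ satisfies \eqref{eq:linearized.equation.graphical.pde.linearized} with $h = E(u)$, Lemma \ref{lemma:linearized.equation.global.c2alpha} gives
\[
\Vert u(\cdot,\tau) \Vert_{2,\alpha}^{(1)} \leq C \sup_{\sigma \leq \tau} \bigl[ \Vert u(\cdot,\sigma) \Vert_0^{(1)} + \Vert E(u)(\cdot,\sigma) \Vert_{0,\alpha}^{(-1)} \bigr].
\]
The nonlinear error is quadratic by \eqref{eq:linearized.equation.error.calpha}, so the $\Vert E(u)\Vert_{0,\alpha}^{(-1)}$ contribution is absorbed into the left-hand side after taking $\sup_{\sigma \leq \tau}$, using the smallness built into \eqref{eq:dynamics.condition.c3.o1}. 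Next, Lemma \ref{lemma:linearized.equation.global.c0} together with \eqref{eq:linearized.equation.error.c0} reduces $\Vert u\Vert_0^{(1)}$ to a local $C^0$ norm over $\Sigma \cap B_R$, modulo another (absorbable) quadratic term. Finally, the non-standard interior Schauder estimate Corollary \ref{coro:schauder.estimate.L1} bounds this local norm by the spacetime $L^1$ integral over $(\Sigma \cap B_{2R}) \times [\sigma-1,\sigma]$, which in turn is controlled by $\sup_{\sigma \leq \tau} \Vert u(\cdot,\sigma)\Vert_W$ because $\rho$ is bounded below on bounded sets. Taking suprema closes the bootstrap and yields \eqref{eq:dynamics.delta.bounded.by.u}.

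For \eqref{eq:dynamics.l2.bound}, by Cauchy--Schwarz it suffices to establish $\Vert E(u)(\cdot,\tau) \Vert_W \leq C\delta(\tau) \Vert u(\cdot,\tau)\Vert_W$. The pointwise bound \eqref{eq:linearized.equation.error.c0} yields
\[
|E(u)|^2 \leq C \Vert u \Vert_2^{(1),2} \bigl( \tilde r^{-4}|u|^2 + \tilde r^{-2}|\nabla u|^2 \bigr),
\]
so squaring against $\rho$ reduces matters to controlling the weighted gradient integral $\int_\Sigma \tilde r^{-2}|\nabla u|^2 \rho$. I would handle this by integrating by parts against the Gaussian weight, using $\nabla \rho = -\tfrac{1}{2}\mathbf{x}_\Sigma \rho$ together with the pointwise control $|\nabla^2 u| \leq \tilde r^{-1}\Vert u\Vert_2^{(1)}$, which reduces the result to quantities of the form $\int |u|\, \tilde r^{-1} \Vert u\Vert_2^{(1)} \rho$ bounded via Cauchy--Schwarz by $C\Vert u\Vert_2^{(1)} \Vert u\Vert_W$. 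The residual $\Vert u\Vert_2^{(1)}$ factor is then absorbed using $\Vert u\Vert_2^{(1)} \leq \delta(\tau)$ and \eqref{eq:dynamics.delta.bounded.by.u} from Step 1.

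The main technical obstacle is precisely this weighted gradient estimate, since $L^2_W$ does not control $\Vert \nabla u\Vert_W$ by $\Vert u\Vert_W$ a priori; the integration-by-parts trick against the rapidly decaying weight $\rho$, combined with the pointwise $C^2$ control carried by $\Vert u\Vert_2^{(1)}$ and the bound \eqref{eq:dynamics.delta.bounded.by.u} established in Step 1, is what bridges this gap and allows the nonlinear error to be dominated by the linearized quantities.
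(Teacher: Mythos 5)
Your verification of \eqref{eq:dynamics.delta.bounded.by.u} is essentially the paper's argument (global $C^{2,\alpha}$ and $C^0$ estimates, absorption of the quadratic error via \eqref{eq:dynamics.condition.c3.o1}, then Corollary \ref{coro:schauder.estimate.L1} to pass from a local $C^0$ norm to the $L^2_W$ norm over earlier times), and that part is fine. The gap is in your treatment of \eqref{eq:dynamics.l2.bound}. You reduce it, via Cauchy--Schwarz, to the operator-type bound $\Vert E(u)(\cdot,\tau)\Vert_W \leq C\,\delta(\tau)\Vert u(\cdot,\tau)\Vert_W$, which forces you to control $\int_\Sigma \tilde r^{-2}|\nabla_\Sigma u|^2\rho$ by $\Vert u(\cdot,\tau)\Vert_W^2$. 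Your integration by parts does not achieve this: after moving the derivative off one factor of $\nabla_\Sigma u$ you must estimate the remaining $\nabla_\Sigma u$ (or $\nabla^2_\Sigma u$) pointwise by $\Vert u(\cdot,\tau)\Vert_2^{(1)}\leq\delta(\tau)$, and what comes out is
\begin{equation*}
\int_\Sigma \tilde r^{-2}|\nabla_\Sigma u|^2\rho \;\lesssim\; \Vert u(\cdot,\tau)\Vert_W^2 + \delta(\tau)\,\Vert u(\cdot,\tau)\Vert_W ,
\end{equation*}
hence $\Vert E(u)\Vert_W \lesssim \delta(\tau)\Vert u\Vert_W + \delta(\tau)^{3/2}\Vert u\Vert_W^{1/2}$. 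The second term is not of the form $\delta(\tau)\Vert u(\cdot,\tau)\Vert_W$ unless $\delta(\tau)\lesssim\Vert u(\cdot,\tau)\Vert_W$, and that is exactly what you cannot assume: \eqref{eq:dynamics.delta.bounded.by.u} only gives $\delta(\tau)\lesssim\sup_{\sigma\leq\tau}\Vert u(\cdot,\sigma)\Vert_W$, a supremum over \emph{earlier} times, and the scale-invariant H\"older norm is not controlled by the Gaussian $L^2$ norm at the same time slice (mass located far out has negligible $\Vert\cdot\Vert_W$ but order-one $\Vert\cdot\Vert_{2,\alpha}^{(1)}$). Since the Merle--Zaag argument in Lemma \ref{lemma:dynamics} needs precisely the multiplicative structure $\delta(\tau)\Vert u(\cdot,\tau)\Vert_W\Vert\Pi_{\gtreqqless\mu}u(\cdot,\tau)\Vert_W$, the weaker inequality you obtain does not suffice.

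The fix, which is how the paper proceeds, is to never estimate $\Vert E(u)\Vert_W$ itself but only the pairing $\langle E(u),\Pi_{\gtreqqless\mu}u\rangle_W$, exploiting the decomposition $E(u)=uE_1+\nabla_\Sigma u\cdot\mathbf{E}_2$ from Lemma \ref{lemma:linearized.equation.error}. The $uE_1$ piece is harmless. For $\nabla_\Sigma u\cdot\mathbf{E}_2$ one splits $u=\Pi_{<\mu}u+\Pi_{=\mu}u+\Pi_{>\mu}u$: gradients of the low modes satisfy $\Vert\nabla_\Sigma\Pi_{\leq\mu}u\Vert_W\leq C\Vert\Pi_{\leq\mu}u\Vert_W$ because they live in a fixed finite-dimensional span of eigenfunctions, while the problematic high-mode self-interaction is symmetrized as $\nabla_\Sigma\Pi_{>\mu}u\cdot\Pi_{>\mu}u=\tfrac12\nabla_\Sigma\bigl(\Pi_{>\mu}u\bigr)^2$ and integrated by parts, throwing the derivative onto $\mathbf{E}_2$ and $\rho$ (controlled by \eqref{eq:linearized.equation.error.term.2} with one derivative); this produces only $\delta(\tau)\Vert u\Vert_W\Vert\Pi_{>\mu}u\Vert_W$ and never requires an $L^2_W$ bound on $\nabla_\Sigma u$ in terms of $\Vert u\Vert_W$. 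You should replace your second step by this projection-adapted argument.
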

\begin{proof}	
	First let's show that $\delta(\tau)$ satisfies \eqref{eq:dynamics.l2.bound} with $h = E(u)$. We use Lemma \ref{lemma:linearized.equation.error}'s decomposition, \eqref{eq:linearized.equation.error.decomposition}. By virtue of \eqref{eq:linearized.equation.error.term.1} and \eqref{eq:dynamics.condition.delta}, we only need to check that
	\begin{equation} \label{eq:dynamics.condition.check.E2}
		\langle \nabla_\Sigma u(\cdot, \tau) \cdot \mathbf{E}_2(\cdot, u, \nabla_\Sigma u, \nabla_\Sigma^2 u), \Pi_{\gtreqqless \mu} u(\cdot, \tau) \rangle_W \leq C \delta(\tau) \Vert u(\cdot, \tau) \Vert_W \Vert \Pi_{\gtreqqless \mu} u(\cdot, \tau) \Vert_W.
	\end{equation}
	We deal with the cases $<$, $=$ differently than $>$. 
	
	We can deal with $<$ and $=$ at the same time, and we use the symbol $\leqq$ to denote either of these binary relations. Since there are only finitely many eigenvalues $\leq \mu$ by \eqref{eq:linearized.equation.eigenvalues}, one easily sees that:
	\begin{equation} \label{eq:dynamics.condition.check.low.modes}
		\Vert \nabla \Pi_{\leqq \mu} u(\cdot, \tau) \Vert_W \leq C \Vert \Pi_{\leqq \mu} u(\cdot, \tau) \Vert_W,
	\end{equation}
	where $C$ depends on $\Sigma$, $\mu$. In particular, \eqref{eq:dynamics.condition.check.low.modes} implies \eqref{eq:dynamics.condition.check.E2} for $\leqq$ after integrating by parts and using \eqref{eq:linearized.equation.error.term.2} with $i + j + k + \ell \leq 1$.
	
	We now deal with the binary relation $>$. Since
	\[ u(\cdot, \tau) = \Pi_{> \mu} u(\cdot, \tau) + \Pi_{=\mu} u(\cdot, \tau) + \Pi_{< \mu} u(\cdot, \tau), \]
	we can rewrite the left hand side of \eqref{eq:dynamics.condition.check.E2} as
	\begin{align*}
		& \langle \nabla_\Sigma u(\cdot, \tau) \cdot \mathbf{E}_2(\cdot, u, \nabla_\Sigma u, \nabla_\Sigma^2 u), \Pi_{> \mu} u(\cdot, \tau) \rangle_W  \\
		& \qquad = \langle \nabla_\Sigma \Pi_{> \mu} u(\cdot, \tau) \cdot \mathbf{E}_2(\cdot, u, \nabla_\Sigma u, \nabla_\Sigma^2 u), \Pi_{> \mu} u(\cdot, \tau) \rangle_W \\
		& \qquad \qquad + \langle \nabla_\Sigma \Pi_{= \mu} u(\cdot, \tau) \cdot \mathbf{E}_2(\cdot, u, \nabla_\Sigma u, \nabla_\Sigma^2 u), \Pi_{> \mu} u(\cdot, \tau) \rangle_W \\
		& \qquad \qquad + \langle \nabla_\Sigma \Pi_{< \mu} u(\cdot, \tau) \cdot \mathbf{E}_2(\cdot, u, \nabla_\Sigma u, \nabla_\Sigma^2 u), \Pi_{> \mu} u(\cdot, \tau) \rangle_W \\
		& \qquad = \tfrac12 \langle \mathbf{E}_2(\cdot, u, \nabla_\Sigma u, \nabla_\Sigma^2 u), \nabla_\Sigma (\Pi_{> \mu} u(\cdot, \tau))^2 \rangle_W \\
		& \qquad \qquad + \langle \nabla_\Sigma \Pi_{= \mu} u(\cdot, \tau) \cdot \mathbf{E}_2(\cdot, u, \nabla_\Sigma u, \nabla_\Sigma^2 u), \Pi_{> \mu} u(\cdot, \tau) \rangle_W \\
		& \qquad \qquad + \langle \nabla_\Sigma \Pi_{< \mu} u(\cdot, \tau) \cdot \mathbf{E}_2(\cdot, u, \nabla_\Sigma u, \nabla_\Sigma^2 u), \Pi_{> \mu} u(\cdot, \tau) \rangle_W.
	\end{align*}
	The second and third terms we estimate via \eqref{eq:dynamics.condition.check.low.modes} and then $\Vert \Pi_{\leqq \mu} u(\cdot, \tau) \Vert_W \leq \Vert u(\cdot, \tau) \Vert_W$ and \eqref{eq:linearized.equation.error.term.2} with $i+j+k+\ell=0$. The first term we estimate by integrating by parts and then using $\Vert \Pi_{>\mu} u(\cdot, \tau) \Vert_W \leq \Vert u(\cdot, \tau) \Vert_W$ and \eqref{eq:linearized.equation.error.term.2} with $i+j+k+\ell=1$. This completes our proof of \eqref{eq:dynamics.condition.check.E2} and thus \eqref{eq:dynamics.l2.bound} with $h = E(u)$.

	Now we check that $\delta(\tau)$ satisfies \eqref{eq:dynamics.delta.bounded.by.u}. Fix $R > 0$. By Lemma \ref{lemma:linearized.equation.global.c2alpha}, then Lemma \ref{lemma:linearized.equation.global.c0}, and then Corollary  \ref{coro:linearized.equation.error}:
	\begin{align*}
		\Vert u(\cdot, \tau) \Vert_{2,\alpha}^{(1)} 
			& \leq C \sup_{\sigma \leq \tau} \Big[ \Vert u(\cdot, \sigma) \Vert_{0}^{(1)} + \Vert E(u)(\cdot, \sigma) \Vert_{0,\alpha}^{(-1)} \Big] \\
			& \leq C \sup_{\sigma \leq \tau} \Big[ \Vert u(\cdot, \sigma) \Vert_{0;\Sigma \cap B_R(\mathbf{0})} + \Vert E(u)(\cdot, \sigma) \Vert_{0,\alpha}^{(-1)} \Big] \\
			& \leq C \sup_{\sigma \leq \tau} \Big[ \Vert u(\cdot, \sigma) \Vert_{0;\Sigma \cap B_R(\mathbf{0})} + \delta(\sigma) \Vert u(\cdot, \sigma) \Vert_{2,\alpha}^{(1)} \Big].
	\end{align*}
	In particular, since $\delta(\cdot) = o(1)$ by \eqref{eq:dynamics.condition.c3.o1}, we deduce
	\[ \delta(\tau) = \sup_{\sigma \leq \tau} \Vert u(\cdot, \sigma) \Vert_{2,\alpha}^{(1)} \leq C \sup_{\sigma \leq \tau} \Vert u(\cdot, \sigma) \Vert_{0;\Sigma \cap B_R(\mathbf{0})}^{(1)}. \]
	In the compact set $\Sigma \cap B_R(\mathbf{0})$, we can thus control the $C^0$ norm of $u(\cdot,\sigma)$ by the $L^2(\Sigma \cap B_{2R}(\mathbf{0})\times [\sigma-1,\sigma])$ norm of $u$, which is dominated by the $L^2_W(\Sigma)$ norm. Thus, $\delta(\tau)$ satisfies \eqref{eq:dynamics.delta.bounded.by.u}, completing the proof.
\end{proof}


\section{Uniqueness of smooth one-sided ancient rescaled flows} \label{sec:one.sided.flows}

In this section, we characterize smooth ancient flows lying on one side of an asymptotically conical shrinker $\Sigma$, with Gaussian density no larger than twice that of the entropy of $\Sigma$.

\begin{lemma}[One-sided decay] \label{lemma:one.sided.decay}
	Let $(S(\tau))_{\tau \leq 0}$ be an ancient rescaled mean curvature flow lying on one side of $\Sigma$ and such that, for $\tau \leq 0$, we can write $S(\tau) := \operatorname{graph}_\Sigma u(\cdot, \tau)$, $u \geq 0$, with
	\begin{equation} \label{eq:one.sided.decay.assumption}
		\lim_{\tau \to -\infty} \Vert u(\cdot, \tau) \Vert_{3}^{(1)} = 0.
	\end{equation}
	Then, either $u \equiv 0$, or there exists a nonzero constant $\alpha_1 \in \RR$ such that:
	\begin{equation} \label{eq:one.sided.decay.conclusion.1}
		\lim_{\tau \to -\infty} e^{\lambda_1 \tau} \Pi_{=\lambda_1} u(\cdot, \tau) = \alpha_1 \varphi_1,
	\end{equation}
	\begin{equation} \label{eq:one.sided.decay.conclusion.2}
		\limsup_{\tau \to -\infty} e^{2\lambda_1 \tau} \Vert \Pi_{=\lambda_1} u(\cdot, \tau) - \alpha_1 e^{-\lambda_1 \tau} \varphi_1 \Vert_{W} < \infty.
	\end{equation}
	\begin{equation} \label{eq:one.sided.decay.conclusion.3}
		\limsup_{\tau \to -\infty} e^{2\lambda_1 \tau} \Vert u(\cdot, \tau) - \Pi_{=\lambda_1} u(\cdot, \tau) \Vert_{W} < \infty,
	\end{equation}
\end{lemma}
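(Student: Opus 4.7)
The plan is to apply Corollary \ref{coro:dynamics} to $u$ with forcing term $h = E(u)$. Lemma \ref{lemma:dynamics.condition.check} verifies its hypotheses under \eqref{eq:one.sided.decay.assumption} with the choice $\delta(\tau) := \sup_{\sigma \leq \tau} \Vert u(\cdot, \sigma) \Vert_{2,\alpha}^{(1)}$. Applying the corollary yields either $u \equiv 0$ or a dominant eigenmode $\mu \in \{\lambda_1, \ldots, \lambda_I\} \cup \{0\}$ and $\tau_0 \in \RR_-$ with
\begin{equation*}
\Vert \Pi_{\neq \mu} u(\cdot, \tau) \Vert_W \leq C \delta(\tau) \Vert \Pi_{=\mu} u(\cdot, \tau) \Vert_W, \quad \tau \leq \tau_0,
\end{equation*}
together with the sharp two-sided bound $\Vert u(\cdot, \tau) \Vert_W \asymp e^{-\mu \tau}$ when $\mu < 0$. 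All the remaining work is to show that the dominant mode must be $\mu = \lambda_1$ and then to convert the resulting $L^2_W$ information into the conclusions of the lemma.

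\textbf{The main step: forcing $\mu = \lambda_1$ via one-sidedness.} This is where the one-sided hypothesis $u \geq 0$ is used decisively, and I expect it to be the main obstacle. Since $L$ is self-adjoint on the connected manifold $\Sigma$ with positive first eigenfunction (by the min-max characterization, Harnack, and a Perron--Frobenius-style argument), $\lambda_1$ is simple and we may take $\varphi_1 > 0$ on $\Sigma$. Suppose for contradiction that $\mu \neq \lambda_1$, and set $p := \Pi_{=\mu} u(\cdot, \tau)$, $w := \Pi_{\neq \mu} u(\cdot, \tau)$. Since $p \perp \varphi_1$ in $L^2_W$ and $\varphi_1 > 0$, $p$ must change sign whenever $p \not\equiv 0$. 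From $u = p + w \geq 0$ we get $p_- \leq |w|$ pointwise on $\Sigma$, hence
\begin{equation*}
\Vert p_- \Vert_W \leq \Vert w \Vert_W \leq C \delta(\tau) \Vert p \Vert_W.
\end{equation*}
Using $\int_\Sigma p_+ \varphi_1 \rho = \int_\Sigma p_- \varphi_1 \rho$ (from $p \perp \varphi_1$) and Cauchy--Schwarz,
\begin{equation*}
\int_\Sigma |p| \varphi_1 \rho \, d\cH^n \leq 2 \Vert p_- \Vert_W \Vert \varphi_1 \Vert_W \leq C \delta(\tau) \Vert p \Vert_W.
\end{equation*}
Because the $\mu$-eigenspace of $L$ is finite-dimensional and $\varphi_1 > 0$ on $\Sigma$, the functional $p \mapsto \int_\Sigma |p| \varphi_1 \rho \, d\cH^n$ is a norm on this subspace, hence equivalent to $\Vert \cdot \Vert_W$. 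So $\Vert p \Vert_W \leq C \delta(\tau) \Vert p \Vert_W$, and since $\delta(\tau) \to 0$, we conclude $p \equiv 0$ for $\tau$ sufficiently negative; the inequality for $w$ then gives $u \equiv 0$, a contradiction. Therefore $\mu = \lambda_1$.

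\textbf{Extracting the leading coefficient.} With $\mu = \lambda_1$ simple, write $\Pi_{=\lambda_1} u(\cdot, \tau) = a_1(\tau) \varphi_1$. Pairing \eqref{eq:linearized.equation.graphical.pde.linearized} with $\varphi_1$ in $L^2_W$ yields the scalar ODE $a_1'(\tau) + \lambda_1 a_1(\tau) = \langle E(u), \varphi_1 \rangle_W$, and \eqref{eq:dynamics.l2.bound} specialized to the $\lambda_1$-eigenspace gives $|\langle E(u), \varphi_1 \rangle_W| \leq C \delta(\tau) \Vert u(\cdot, \tau) \Vert_W$. Plugging in the sharp estimate $\Vert u(\cdot, \tau) \Vert_W \leq C e^{-\lambda_1 \tau}$ from \eqref{eq:dynamics.u.decay.sharp} and $\delta(\tau) \leq C \sup_{\sigma \leq \tau} \Vert u(\cdot, \sigma) \Vert_W \leq C e^{-\lambda_1 \tau}$ (where the first inequality is \eqref{eq:dynamics.delta.bounded.by.u}), one finds
\begin{equation*}
\bigl| (e^{\lambda_1 \tau} a_1(\tau))' \bigr| = \bigl| e^{\lambda_1 \tau} \langle E(u), \varphi_1 \rangle_W \bigr| \leq C e^{-\lambda_1 \tau}.
\end{equation*}
Since $\lambda_1 < 0$, the right-hand side is integrable on $(-\infty, 0]$, so $e^{\lambda_1 \tau} a_1(\tau)$ converges to some $\alpha_1 \in \RR$ with $|e^{\lambda_1 \tau} a_1(\tau) - \alpha_1| \leq C e^{-\lambda_1 \tau}$; this is \eqref{eq:one.sided.decay.conclusion.1} and \eqref{eq:one.sided.decay.conclusion.2}. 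Nonvanishing $\alpha_1 \neq 0$ follows from the sharp lower bound in \eqref{eq:dynamics.u.decay.sharp}: if $\alpha_1 = 0$ then $|a_1(\tau)| = o(e^{-\lambda_1 \tau})$, and combined with $\Vert \Pi_{\neq \lambda_1} u \Vert_W \leq C \delta(\tau) \Vert \Pi_{=\lambda_1} u \Vert_W$ this would force $\Vert u \Vert_W = o(e^{-\lambda_1 \tau})$, a contradiction. Finally, \eqref{eq:one.sided.decay.conclusion.3} is immediate from $\Vert \Pi_{\neq \lambda_1} u \Vert_W \leq C \delta(\tau) \Vert \Pi_{=\lambda_1} u \Vert_W \leq C e^{-2 \lambda_1 \tau}$.
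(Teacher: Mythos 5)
Your proposal is correct and follows essentially the same route as the paper's proof: apply Corollary \ref{coro:dynamics} via Lemma \ref{lemma:dynamics.condition.check}, use $u \geq 0$ together with the dominance estimate \eqref{eq:dynamics.dominant.mode} and the positivity of $\varphi_1$ to force $\mu = \lambda_1$, then integrate the projected equation using $\delta(\tau) \lesssim e^{-\lambda_1\tau}$ to obtain \eqref{eq:one.sided.decay.conclusion.1}--\eqref{eq:one.sided.decay.conclusion.3}. The only (harmless) variation is inside the claim $\mu = \lambda_1$: the paper normalizes $\Pi_{=\mu}u$ and extracts a nonnegative $\mu$-eigenfunction by compactness, while you argue quantitatively via orthogonality to $\varphi_1$ and norm equivalence on the finite-dimensional eigenspace --- which is fine, since when $\mu<0$ the resulting vanishing of $u$ in the far past contradicts the lower bound in \eqref{eq:dynamics.u.decay.sharp}, and the residual $\mu=0$ technicality (a flow agreeing with $\Sigma$ at some time) is implicitly present in the paper's argument as well.
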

\begin{proof}
	Lemma \ref{lemma:dynamics.condition.check} and \eqref{eq:one.sided.decay.assumption} imply that Lemma \ref{lemma:dynamics}, Corollary \ref{coro:dynamics} are applicable with
	\[ \delta(\tau) := \sup_{\sigma \leq \tau} \Vert u(\cdot, \sigma) \Vert_{2,\alpha}^{(1)}. \]
	Invoke Corollary \ref{coro:dynamics}. If $u \equiv 0$, there is nothing left to prove. Let us suppose $u \not \equiv 0$.
	
	\begin{claim} 
		$\mu = \lambda_1$.
	\end{claim}
	\begin{proof}[Proof of claim]
		Note that
		\[ 0 \leq u(\cdot, \tau) = \Pi_{=\mu} u(\cdot, \tau) + \Pi_{\neq \mu} u(\cdot, \tau) \implies (\Pi_{=\mu} u(\cdot, \tau))_- \leq |\Pi_{\neq \mu} u(\cdot, \tau)|. \]
		By \eqref{eq:dynamics.dominant.mode},
		\begin{align} \label{eq:one.sided.neg.part.bound}
			\Vert (\Pi_{=\mu} u(\cdot, \tau))_- \Vert_{W} 
				& \leq \Vert \Pi_{\neq \mu} u(\cdot, \tau) \Vert_{W} \nonumber \\
				& \leq C \delta(\bar \tau) \Vert \Pi_{=\mu} u(\cdot, \tau) \Vert_{W}, \; \forall \,  \bar \tau \in \RR_-, \; \tau \leq \tau_0(\bar \tau).
		\end{align}
		Denote $h^{(\tau)} := \Vert \Pi_{=\mu} u(\cdot, \tau) \Vert_{W}^{-1} \Pi_{=\mu} u(\cdot, \tau)$. Since $\lambda_1 \leq \mu \leq 0$, it follows from the Rellich--Kondrachov theorem on $L^2_W(\Sigma)$  that $h^{(\tau)}$ converges after passing to a subsequence to some $\mu$-eigenfunction $h^{(-\infty)}$ with $\Vert h^{(-\infty)} \Vert_{W} = 1$. By \eqref{eq:one.sided.neg.part.bound} and the fact that $\lim_{\tau \to -\infty} \delta(\tau) = 0$, it follows that $h^{(-\infty)} \geq 0$, and the claim follows from elementary elliptic theory.
	\end{proof}
	
	In view of $\mu = \lambda_1$,  \eqref{eq:dynamics.u.decay.sharp} implies 
	\begin{equation} \label{eq:one.sided.decay.delta.bound}
		\limsup_{\tau \to -\infty} e^{\lambda_1 \tau} \delta(\tau) < \infty.
	\end{equation}
	Thus,
	\[\Vert \tfrac{d}{d\tau} \Pi_{=\lambda_1} u(\cdot, \tau) + \lambda_1  \Pi_{=\lambda_1} u(\cdot, \tau) \Vert_{W} \leq C \delta(\tau) \Vert \Pi_{=\lambda_1} u(\cdot, \tau) \Vert_{W} \]
	can be integrated to yield the existence of a limit $\lim_{\tau \to -\infty} e^{\lambda_1 \tau} \Pi_{=\lambda_1} u(\cdot, \tau)$, i.e., \eqref{eq:one.sided.decay.conclusion.1}, and by \eqref{eq:one.sided.decay.delta.bound} also gives \eqref{eq:one.sided.decay.conclusion.2}. Finally, we note that Lemma \ref{lemma:dynamics} is applicable with $\mu = \lambda_1$. Indeed,  \eqref{eq:dynamics.l2.bound} always holds by Lemma \ref{lemma:dynamics.condition.check}, while \eqref{eq:dynamics.mu.stable.decay} holds by \eqref{eq:dynamics.dominant.mode},  \eqref{eq:one.sided.decay.delta.bound}. Therefore, conclusion \eqref{eq:dynamics.mu.stable.dominated} of Lemma \ref{lemma:dynamics} implies
	\begin{equation} \label{eq:one.sided.decay.diff.lambda1}
		\Vert u(\cdot, \tau) - \Pi_{=\lambda_1} u(\cdot, \tau) \Vert_{W} = \Vert \Pi_{>\lambda_1} u(\cdot, \tau) \Vert_{W} \leq C \delta(\tau) \Vert \Pi_{=\lambda_1} u(\cdot, \tau) \Vert_{W} \leq C e^{-2\lambda_1 \tau},
	\end{equation}
	which implies \eqref{eq:one.sided.decay.conclusion.3}.
\end{proof}

\begin{corollary}[One-sided uniqueness for graphical flows]  \label{coro:one.sided.decay.uniqueness}
	Up to time translation, there is at most one non-steady ancient rescaled mean curvature flow $(S(\tau))_{\tau \leq 0}$ on one side of $\Sigma$ satisfying \eqref{eq:one.sided.decay.assumption}.
\end{corollary}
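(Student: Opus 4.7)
The plan is to derive uniqueness by subtracting two candidate solutions, normalizing them to share the same leading-order asymptotics via time translation, and then applying the Merle--Zaag type dynamics analysis to their difference, showing the difference decays so fast that the only compatible dominant mode is the trivial one.

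Let $u_1, u_2 : \Sigma \times \RR_- \to \RR_{\geq 0}$ be two non-steady ancient solutions satisfying \eqref{eq:one.sided.decay.assumption}. By Lemma \ref{lemma:one.sided.decay}, each has a nonzero leading coefficient $\alpha_1^{(i)} \in \RR$. The first eigenfunction $\varphi_1$ of $L$ on $L^2_W(\Sigma)$ is of constant sign (variational characterization), WLOG positive, so $u_i \geq 0$ and $u_i \not\equiv 0$ force $\alpha_1^{(i)} > 0$. The rescaled flow equation \eqref{eq:linearized.equation.graphical.pde.linearized} is $\tau$-translation invariant, and translating $u_2(\cdot, \tau) \mapsto u_2(\cdot, \tau + \tau_0)$ sends $\alpha_1^{(2)} \mapsto \alpha_1^{(2)} e^{-\lambda_1 \tau_0}$. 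We may thus choose $\tau_0$ to arrange $\alpha_1^{(1)} = \alpha_1^{(2)} =: \alpha_1$. Setting $w := u_2 - u_1$, estimates \eqref{eq:one.sided.decay.conclusion.2}, \eqref{eq:one.sided.decay.conclusion.3} applied to both $u_1$ and $u_2$ give
\[ \|w(\cdot, \tau)\|_W = O(e^{-2\lambda_1 \tau}) \text{ as } \tau \to -\infty. \]

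By subtracting \eqref{eq:linearized.equation.graphical.pde.linearized} for $u_1$ and $u_2$, $w$ satisfies $(\partial_\tau - L)w = h$ with $h := E(u_2) - E(u_1)$. Corollary \ref{coro:linearized.equation.error} gives $\|h(\cdot, \tau)\|_{0,\alpha}^{(-1)} \leq C (\|u_1(\cdot, \tau)\|_{2,\alpha}^{(1)} + \|u_2(\cdot, \tau)\|_{2,\alpha}^{(1)}) \|w(\cdot, \tau)\|_{2,\alpha}^{(1)}$. Define
\[ \Delta(\tau) := \sup_{\sigma \leq \tau} (\|u_1(\cdot, \sigma)\|_{2,\alpha}^{(1)} + \|u_2(\cdot, \sigma)\|_{2,\alpha}^{(1)}), \]
which tends to $0$ as $\tau \to -\infty$ by \eqref{eq:one.sided.decay.assumption}. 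Adapting the integration-by-parts and decomposition scheme of Lemma \ref{lemma:dynamics.condition.check} to the first-order-in-$w$ expansion $E(u_2) - E(u_1) = \int_0^1 DE(u_1 + tw)\cdot w \, dt$ (obtained from the decomposition \eqref{eq:linearized.equation.error.decomposition} and its derivative estimates \eqref{eq:linearized.equation.error.term.1}, \eqref{eq:linearized.equation.error.term.2}), one derives, for each binary relation $\sim \in \{<, =, >\}$ and each $\mu \in \{\lambda_1, \ldots, \lambda_I\} \cup \{0\}$, the bound
\[ |\langle h, \Pi_{\sim \mu} w \rangle_W| \leq C \Delta(\tau) \|w\|_W \|\Pi_{\sim \mu} w\|_W. \]

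With this, we are in the setting of Lemma \ref{lemma:dynamics}/Corollary \ref{coro:dynamics}, now applied to $w$ with the function $\Delta$ playing the role of $\delta$. Since $\Delta$ is bounded (and in fact tends to $0$) independently of $w$, the Merle--Zaag trichotomy applies with only small modifications to the proof of Corollary \ref{coro:dynamics}; observe that in the proof, the hypothesis $\delta \leq C_0 \|u\|_W$ is only used to guarantee boundedness. The conclusion is that either $w \equiv 0$, or there exist $\mu \in \{\lambda_1, \ldots, \lambda_I\} \cup \{0\}$ and $\tau_0 \in \RR_-$ with $\|\Pi_{\neq \mu} w\|_W \leq C \Delta(\tau) \|\Pi_{=\mu} w\|_W$ for $\tau \leq \tau_0$; if $\mu < 0$, Corollary \ref{coro:dynamics} further yields $c_1 e^{-\mu \tau} \leq \|w\|_W \leq c_2 e^{-\mu \tau}$. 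Combined with $\|w\|_W = O(e^{-2\lambda_1 \tau})$, this forces $\mu \leq 2\lambda_1$, contradicting $\mu \geq \lambda_1 > 2\lambda_1$. In the remaining case $\mu = 0$, the projected equation $\partial_\tau \Pi_{=0} w = \Pi_{=0} h$ gives $|\partial_\tau \|\Pi_{=0} w\|_W| \leq C \Delta(\tau) \|\Pi_{=0} w\|_W$; since $\Delta$ is integrable at $-\infty$ (because $\|u_i\|_{2,\alpha}^{(1)} \sim e^{-\lambda_1 \tau}$), this would make $\|\Pi_{=0} w\|_W$ bounded away from zero as $\tau \to -\infty$, contradicting $\|w\|_W \to 0$. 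Hence $w \equiv 0$, i.e., $u_1 = u_2$.

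\emph{The main obstacle} is verifying the bilinear bound on $\langle h, \Pi_{\sim \mu} w\rangle_W$ with $\delta = \Delta$ independent of $w$. This requires carefully re-running the argument of Lemma \ref{lemma:dynamics.condition.check} for the difference $E(u_2) - E(u_1)$ rather than for a single $E(u)$: the decomposition \eqref{eq:linearized.equation.error.decomposition} must be applied to both $E(u_1)$ and $E(u_2)$, and the difference of the coefficient functions $E_1(\cdot, u_i, \nabla u_i, \nabla^2 u_i)$ and $\mathbf{E}_2(\cdot, u_i, \nabla u_i, \nabla^2 u_i)$ handled via the fundamental theorem of calculus and the derivative estimates \eqref{eq:linearized.equation.error.term.1}, \eqref{eq:linearized.equation.error.term.2}. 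Once this quantitative structure is in hand, the spectral dynamics and asymptotic mismatch do the rest.
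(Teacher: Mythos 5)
Your overall strategy (normalize by a time translation so that the leading $\varphi_1$-coefficients agree, hence $\Vert w \Vert_W = O(e^{-2\lambda_1\tau})$ with $w = u_2 - u_1$, then run the spectral/Merle--Zaag dynamics on $w$ and contradict the sharp decay of a putative dominant mode) is genuinely different from the paper's argument, which after the weaker normalization $\lim_{\tau\to-\infty}e^{\lambda_1\tau}\Vert w\Vert_W = 0$ abandons the spectral machinery entirely and instead proves weighted $L^2$ and $H^1$ energy inequalities for $w$ (including the Bochner/Gauss/shrinker-equation identity \eqref{eq:one.sided.decay.uniqueness.laplacian.trick}) and closes with an iteration producing factorially improving decay, forcing $w \equiv 0$. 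The problem is that your route has a genuine gap exactly at the step you flag as ``the main obstacle'': the claimed bilinear bound $|\langle h, \Pi_{\sim\mu} w\rangle_W| \leq C\Delta(\tau)\Vert w\Vert_W \Vert \Pi_{\sim\mu} w\Vert_W$ does not follow by adapting Lemma \ref{lemma:dynamics.condition.check}. That lemma exploits the specific quadratic structure $E(u) = u E_1 + \nabla_\Sigma u \cdot \mathbf{E}_2$, in which one factor of the solution appears at order $\leq 1$ and all second derivatives are buried inside coefficients controlled by $\delta(\tau)$; after at most one integration by parts everything is estimated by $\Vert u\Vert_W\Vert\Pi u\Vert_W$. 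For the difference, the FTC expansion (the paper's \eqref{eq:one.sided.decay.uniqueness.w.error}) is \emph{linear} in $(w,\nabla_\Sigma w,\nabla_\Sigma^2 w)$ with coefficients of size $\Delta(\tau)$, and the terms $\bigl[\bar u\int_0^1 D_{\mathbf{A}}E_1\bigr]\cdot\nabla^2_\Sigma w$ and $\bigl[\nabla_\Sigma\bar u\cdot\int_0^1 D_{\mathbf{A}}\mathbf{E}_2\bigr]\cdot\nabla^2_\Sigma w$ cannot be paired against $\Pi_{\sim\mu}w$ leaving only $\Vert w\Vert_W$: one integration by parts leaves a surviving $\nabla_\Sigma w$ (so the natural estimate is $C\Delta(\tau)\Vert w\Vert_{W,1}\Vert\Pi_{\sim\mu}w\Vert_W$, which is exactly the form the paper records in \eqref{eq:one.sided.decay.uniqueness.w.dot.error}), while moving both derivatives onto the coefficients requires pointwise control of fourth derivatives of $u_1,u_2$, which \eqref{eq:one.sided.decay.assumption} does not give. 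Since $\Vert w\Vert_{W,1}$ is not controlled by $\Vert w\Vert_W$, the hypotheses \eqref{eq:dynamics.l2.bound} of Lemma \ref{lemma:dynamics} are not verified, and the whole trichotomy argument does not launch. This derivative loss is precisely why the paper introduces the $\Vert\nabla_\Sigma w\Vert_W^2$ evolution estimate and the Gronwall-type iteration \eqref{eq:one.sided.decay.uniqueness.w.gradw.sq}--\eqref{eq:one.sided.decay.uniqueness.w.small.6} instead of reusing Corollary \ref{coro:dynamics}.

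Two further points. First, even if you repaired the estimate via parabolic regularity for the linear equation satisfied by $w$ (say $\Vert w(\cdot,\tau)\Vert_{W,1}\leq C\sup_{\sigma\leq\tau}\Vert w(\cdot,\sigma)\Vert_W$, in the spirit of Lemmas \ref{lemma:linearized.equation.global.c0}, \ref{lemma:linearized.equation.global.c2alpha}), the resulting bound carries a $\sup_{\sigma\leq\tau}$ and is no longer of the form \eqref{eq:dynamics.l2.bound}, so the ODE lemma would need to be reproved in a modified form; none of this is addressed. Second, your remark that the hypothesis \eqref{eq:dynamics.delta.bounded.by.u} ``is only used to guarantee boundedness'' is not accurate: in the proof of Corollary \ref{coro:dynamics} it is what makes $\delta(\tau)$ decay (eventually like $e^{-\mu\tau}$), which is needed both to contradict minimality of $\mu$ in the Claim and to integrate \eqref{eq:dynamics.dominant.ode} for the two-sided bound \eqref{eq:dynamics.u.decay.sharp}. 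In your setting $\Delta(\tau)\lesssim e^{-\lambda_1\tau}$ does decay exponentially (by Lemma \ref{lemma:one.sided.decay} applied to $u_1,u_2$), so this particular point is repairable, but the justification as written is wrong; the central bilinear estimate is the real missing ingredient, and filling it essentially forces you back to the energy estimates that constitute the paper's proof.
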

\begin{proof}
	We assume that $u$, $\bar u \not \equiv 0$ are two such solutions. It follows from Lemma \ref{lemma:one.sided.decay} that we can translate either $u$ or $\bar u$ in time so that
	\begin{equation} \label{eq:one.sided.decay.uniqueness.neg.infty}
		\lim_{\tau \to -\infty} e^{\lambda_1 \tau} \Vert (\bar u - u)(\cdot, \tau) \Vert_W = 0.
	\end{equation}
	It will also be convenient to write $\delta(\tau)$, $\bar \delta(\tau)$ for the quantities corresponding to \eqref{eq:dynamics.condition.delta} for $u$, $\bar u$. By Lemmas \ref{lemma:dynamics.condition.check} and \ref{lemma:one.sided.decay}, 
	\begin{equation} \label{eq:one.sided.decay.uniqueness.delta}
		\delta(\tau) + \bar \delta(\tau) \leq C_1 e^{-\lambda_1 \tau}, \; \tau \in \RR_-
	\end{equation}
	for a fixed $C_1$. Finally, we introduce the notation
	\[ w := \bar u - u, \; E^w := E(\bar u) - E(u), \]
	so that
	\begin{equation} \label{eq:one.sided.decay.uniqueness.w.pde}
		(\tfrac{\partial}{\partial \tau} - L) w = E^w.
	\end{equation}
	Using \eqref{eq:linearized.equation.error.decomposition} and the fundamental theorem of calculus,
	\begin{align} \label{eq:one.sided.decay.uniqueness.w.error}
		E^w 
			& = \bar u E_1(\cdot, \bar u, \nabla_\Sigma \bar u, \nabla_\Sigma^2 \bar u) + \nabla_\Sigma \bar u \cdot \mathbf{E}_2(\cdot, \bar u, \nabla_\Sigma \bar u, \nabla_\Sigma^2 \bar u) \nonumber \\
			& \qquad - u E_1(\cdot, u, \nabla_\Sigma u, \nabla_\Sigma^2 u) - \nabla_\Sigma u \cdot \mathbf{E}_2(\cdot, u, \nabla_\Sigma u, \nabla_\Sigma^2 u) \nonumber \\
			& = w E_1(\cdot, u, \nabla_\Sigma u, \nabla_\Sigma^2 u) \nonumber \\
			& \qquad + \nabla_\Sigma w \cdot \mathbf{E}_2(\cdot, u, \nabla_\Sigma u, \nabla_\Sigma^2 u) \nonumber \\
			& \qquad + \bar u \big[ E_1(\cdot, \bar u, \nabla_\Sigma \bar u, \nabla_\Sigma^2 \bar u) - E_1(\cdot, u, \nabla_\Sigma u, \nabla_\Sigma^2 u) \big] \nonumber \\
			& \qquad + \nabla_\Sigma \bar u \cdot \big[ \mathbf{E}_2(\cdot, \bar u, \nabla_\Sigma \bar u, \nabla_\Sigma^2 \bar u) - \mathbf{E}_2(\cdot, u, \nabla_\Sigma u, \nabla_\Sigma^2 u) \big] \nonumber \\
			& = w E_1(\cdot, u, \nabla_\Sigma u, \nabla_\Sigma^2 u) \nonumber \\
			& \qquad + \nabla_\Sigma w \cdot \mathbf{E}_2(\cdot, u, \nabla_\Sigma u, \nabla_\Sigma^2 u) \nonumber \\
			& \qquad + \Big[ \bar u \int_0^1 D_z E_1(\cdots) \, dt \Big] w \nonumber \\
			& \qquad + \Big[ \bar u \int_0^1 D_{\mathbf{q}} E_1(\cdots) \, dt \Big] \cdot \nabla_\Sigma w \nonumber \\
			& \qquad + \Big[ \bar u \int_0^1 D_{\mathbf{A}} E_1(\cdots) \, dt \Big] \cdot \nabla^2_\Sigma w \nonumber \\
			& \qquad + \Big[ \nabla_\Sigma \bar u \cdot \int_0^1 D_z \mathbf{E}_2(\cdots) \, dt \Big] w \nonumber \\
			& \qquad + \Big[ \nabla_\Sigma \bar u \cdot \int_0^1 D_{\mathbf{q}} \mathbf{E}_2(\cdots) \, dt \Big] \cdot \nabla_\Sigma w \nonumber \\
			& \qquad + \Big[ \nabla_\Sigma \bar u \cdot \int_0^1 D_{\mathbf{A}} \mathbf{E}_2(\cdots) \, dt \Big] \cdot \nabla^2_\Sigma w,
	\end{align}
	where, in all six instances, $\cdots$ stands for $(\cdot, u + tw, \nabla_\Sigma u + t \nabla_\Sigma w, \nabla_\Sigma^2 u + t \nabla^2_\Sigma w)$. 
We note that we can formally write
$$ E^w= w F + \nabla_\Sigma w \cdot \mathbf{F} + \nabla^2_\Sigma w \cdot \mathcal{F}$$
with 
$$|F| + |\mathbf{F}| + |\mathcal{F}| \leq C (\delta(\tau) + \bar\delta(\tau))\, .$$	
	We take the $L^2_W$ dot product of \eqref{eq:one.sided.decay.uniqueness.w.error} with $w$ and integrate the $ w \nabla^2_\Sigma w \cdot \mathcal{F} $ terms by parts so that, in every term, we have at least two instances of $w$ and $\nabla_\Sigma w$. In particular, we will pick up derivatives of $D_{\mathbf{A}} E_1$ and $D_{\mathbf{A}} \mathbf{E}_2$. Furthermore, when integrating by parts, we pick up terms of the form
	$$ \int_\Sigma w\, (\bx^T \otimes \nabla_\Sigma w) \cdot \mathcal{F} \, \rho\, d\mathcal{H}^n \, .$$
	Recall that Ecker's Sobolev inequality \cite{Ecker:Sobolev} (cf.\ \cite[Proposition 3.9]{ChodoshSchulze}) implies that
$$ \| |\bx| f  \|^2_W \leq 4n \| f\|_{W,1}^2\, ,$$
and we can thus estimate
$$ \left| \int_\Sigma w\, (\bx^T \otimes \nabla_\Sigma w) \cdot \mathcal{F} \, \rho\, d\mathcal{H}^n \right| \leq C (\delta(\tau) + \bar\delta(\tau)) (\| |\bx|w\|_W \|\nabla_\Sigma w\|_W) \leq C (\delta(\tau) + \bar\delta(\tau)) \| w\|_{W,1}^2 \, .$$
	 Using Lemma \ref{lemma:linearized.equation.error}, \eqref{eq:one.sided.decay.assumption}, and  \eqref{eq:one.sided.decay.uniqueness.delta}, we find
	\begin{equation} \label{eq:one.sided.decay.uniqueness.w.dot.error}
		|\langle w(\cdot, \tau), E^w(\cdot, \tau) \rangle_W| \leq C_2 e^{-\lambda_1 \tau} \Vert w(\cdot, \tau) \Vert^2_{W,1}, \; \tau \in \RR_-,
	\end{equation}
	for a fixed $C_2$. Here, $\Vert \cdot \Vert_{W,1}$ is the norm induced from \eqref{eq:linearized.equation.weighted.sobolev.dot} with $k=1$.
	
	We use \eqref{eq:one.sided.decay.uniqueness.w.dot.error} to derive two estimates on the evolution of $\Vert w \Vert_W^2$. First, together with \eqref{eq:one.sided.decay.uniqueness.w.pde} and \eqref{eq:linearized.equation.eigenvalues}, it implies
	\begin{align*}
		\tfrac12 \tfrac{d}{d\tau} \Vert w(\cdot, \tau) \Vert_W^2
			& = \langle w(\cdot, \tau), Lw(\cdot, \tau) + E^w(\cdot, \tau) \rangle_W \\
			& \leq - \lambda_1 \Vert w(\cdot, \tau) \Vert^2_W + C_2 e^{-\lambda_1 \tau} \Vert w(\cdot, \tau) \Vert^2_{W,1}, \; \tau \in \RR_-,
	\end{align*}
	which in turn implies
	\begin{equation} \label{eq:one.sided.decay.uninqueness.ddt.e2lambda.w.sq}
		\tfrac{d}{d\tau} (e^{2\lambda_1 \tau} \Vert w(\cdot, \tau) \Vert^2_W) \leq C_2 e^{\lambda_1 \tau} \Vert w(\cdot, \tau) \Vert^2_{W,1}, \; \tau \in \RR_-.
	\end{equation}
	Second, recalling the definition of $L$ in \eqref{eq:linearized.equation.linear.operator}, integrating by parts, and using \eqref{eq:one.sided.decay.uniqueness.w.dot.error}, it follows that there exists a sufficiently negative $\tau_0$ such that:
	\begin{align}
		\tfrac12 \tfrac{d}{d\tau} \Vert w \Vert_W^2
			& = 	- \Vert \nabla_\Sigma w \Vert_W^2 + \langle w, (\tfrac12 + |A_\Sigma|^2)w + E^w \rangle_W \nonumber \\
			& \leq - \tfrac12 \Vert \nabla_\Sigma w \Vert_W^2 + C_3 \Vert w \Vert_W^2, \; \tau \leq \tau_0, \label{eq:one.sided.decay.uninqueness.ddt.w.sq}
	\end{align}
	with a fixed $C_3$. 

	We next compute the evolution of $\Vert \nabla_\Sigma w \Vert_W^2$. To that end, we need a couple of preliminary computations. By the Gauss equation,
	\begin{equation} \label{eq:one.sided.decay.uniqueness.gauss}
		\Ric_\Sigma(\nabla_\Sigma w, \nabla_\Sigma w) = H_\Sigma A_\Sigma(\nabla_\Sigma w, \nabla_\Sigma w) - A_\Sigma^2(\nabla_\Sigma w, \nabla_\Sigma w),
	\end{equation}
	where $A_\Sigma^2$ is the $2$-tensor corresponding to the self-composition of the shape operator (the dual to $A_\Sigma$). From the definition of the second fundamental form and the shrinker equation \eqref{eq:defn-shrinker},  $H_\Sigma + \tfrac12 \mathbf{x} \cdot \nu_\Sigma = 0$, we have
	\begin{equation} \label{eq:one.sided.decay.uniqueness.xgradw}
		\nabla_\Sigma (\mathbf{x} \cdot \nabla_\Sigma w) \cdot \nabla_\Sigma w = |\nabla_\Sigma w|^2 - 2H_\Sigma A_\Sigma(\nabla_\Sigma w, \nabla_\Sigma w) + \mathbf{x} \cdot \nabla^2_\Sigma w(\nabla_\Sigma w, \cdot).
	\end{equation}
	In what follows, we recall the Gaussian density $\rho$, defined in \eqref{eq:linearized.equation.weighted.density}, which satisfies $\nabla \rho = - \tfrac12 \rho \mathbf{x}$. An integration by parts, followed by the Bochner formula $\Delta_\Sigma \nabla_\Sigma w = \nabla_\Sigma \Delta_\Sigma w + \Ric_\Sigma(\nabla_\Sigma w, \cdot)$, \eqref{eq:one.sided.decay.uniqueness.gauss}, \eqref{eq:one.sided.decay.uniqueness.xgradw}, implies:
	\begin{align} \label{eq:one.sided.decay.uniqueness.laplacian.trick}
		& \int_\Sigma (\Delta_\Sigma w - \tfrac12 \mathbf{x} \cdot \nabla_\Sigma w)^2 \rho \, d\cH^n \nonumber \\
		& \qquad = \int_\Sigma (\Delta_\Sigma w - \tfrac12 \mathbf{x} \cdot \nabla_\Sigma w) \operatorname{div}_\Sigma(\rho \nabla_\Sigma w) \, d\cH^n \nonumber \\
		& \qquad = - \int_\Sigma \nabla_\Sigma (\Delta_\Sigma w - \tfrac12 \mathbf{x} \cdot \nabla_\Sigma w) \cdot \nabla_\Sigma w \, \rho \, d\cH^n \nonumber \\
		& \qquad = - \int_\Sigma (\Delta_\Sigma \nabla_\Sigma w - \Ric_\Sigma(\nabla_\Sigma w, \cdot) - \tfrac12 \nabla_\Sigma (\mathbf{x} \cdot \nabla_\Sigma w)) \cdot \nabla_\Sigma w \, \rho \, d\cH^n \nonumber \\
		& \qquad = - \int_\Sigma (\Delta_\Sigma \nabla_\Sigma w - \tfrac12 \mathbf{x} \cdot \nabla^2_\Sigma w + A_\Sigma^2(\nabla_\Sigma w, \cdot) - \tfrac12 \nabla_\Sigma w) \cdot \nabla_\Sigma w \, \rho \, d\cH^n \nonumber \\
		& \qquad = \int_\Sigma \big[ - \operatorname{div}_\Sigma (\rho \nabla_\Sigma^2 w) + \big( - A_\Sigma^2(\nabla_\Sigma w, \cdot) + \tfrac12 \nabla_\Sigma w \big) \rho \big] \cdot \nabla_\Sigma w \, d\cH^n \nonumber \\
		& \qquad = \int_\Sigma (|\nabla_\Sigma^2 w|^2 - A_\Sigma^2(\nabla_\Sigma w, \nabla_\Sigma w) + \tfrac12 |\nabla_\Sigma w|^2) \, \rho \, d\cH^n.
	\end{align}
	We can now estimate the evolution of $\Vert \nabla_\Sigma w \Vert_\Sigma^2$. Using \eqref{eq:one.sided.decay.uniqueness.w.pde} and the definition of $L$ in \eqref{eq:linearized.equation.linear.operator}:
	\begin{align*} 
		\tfrac12 \tfrac{d}{d\tau} \Vert \nabla_\Sigma w \Vert_W^2
			& = \langle \nabla_\Sigma w, \nabla_\Sigma \tfrac{\partial}{\partial \tau} w \rangle_W \\
			& = -\langle \Delta_\Sigma w - \tfrac12 \mathbf{x} \cdot \nabla_\Sigma w, \tfrac{\partial}{\partial \tau} w \rangle_W \\
			& = -\langle \Delta_\Sigma w - \tfrac12 \mathbf{x} \cdot \nabla_\Sigma w, \Delta_\Sigma w - \tfrac12 \mathbf{x} \cdot \nabla_\Sigma w + |A_\Sigma|^2 w + \tfrac12 w + E^w  \rangle_W \\
			& = - \Vert \Delta_\Sigma w - \tfrac12 \mathbf{x} \cdot \nabla_\Sigma w \Vert_W^2 + \langle \nabla_\Sigma w, \nabla_\Sigma(|A_\Sigma|^2 w + \tfrac12 w) \rangle_W \\
			& \qquad - \langle \Delta_\Sigma w - \tfrac12 \mathbf{x} \cdot \nabla_\Sigma w, E^w \rangle_W \\
			& = - \Vert \Delta_\Sigma w - \tfrac12 \mathbf{x} \cdot \nabla_\Sigma w \Vert_W^2 + \Vert (\tfrac12 + |A_\Sigma|^2)^{\frac12} \nabla_\Sigma w \Vert_W^2 + \langle \nabla_\Sigma w, w \nabla_\Sigma |A_\Sigma|^2 \rangle_W \\
			& \qquad - \langle \Delta_\Sigma w - \tfrac12 \mathbf{x} \cdot \nabla_\Sigma w, E^w \rangle_W.
	\end{align*}
	We claim that this implies:
	\begin{equation} \label{eq:one.sided.decay.uninqueness.ddt.gradw.sq}
		\tfrac12 \tfrac{d}{d\tau} \Vert \nabla_\Sigma w \Vert_W^2 \leq C_4 \Vert w \Vert_{W,1}^2, \; \tau \leq \tau_0,
	\end{equation}
	with fixed $C_4$, after possibly choosing a more negative $\tau_0$. Indeed, in the immediately preceding expression, we use Cauchy--Schwarz on the last term, which together with the first term yield
	\[ - \Vert \Delta_\Sigma w - \tfrac12 \mathbf{x} \cdot \nabla_\Sigma w \Vert_W^2 - \langle \Delta_\Sigma w - \tfrac12 \mathbf{x} \cdot \nabla_\Sigma w, E^w \rangle_W \leq - \tfrac12 \Vert \Delta_\Sigma w - \tfrac12 \mathbf{x} \cdot \nabla_\Sigma w \Vert_W^2 + \tfrac12 \Vert E^w \Vert_W^2. \]
	In the right hand side, the $- \tfrac12 \Vert \Delta_\Sigma w - \tfrac12 \mathbf{x} \cdot \nabla_\Sigma w \Vert_W^2$ term is used, via \eqref{eq:one.sided.decay.uniqueness.laplacian.trick}, to dominate all $\nabla_\Sigma^2 w$ terms in $E^w$, which we computed in  \eqref{eq:one.sided.decay.uniqueness.w.error}; note that these terms have small coefficients for sufficiently negative $\tau$ by virtue of \eqref{eq:one.sided.decay.uniqueness.delta}. This yields \eqref{eq:one.sided.decay.uninqueness.ddt.gradw.sq}.
	
	Together, \eqref{eq:one.sided.decay.uninqueness.ddt.w.sq}, \eqref{eq:one.sided.decay.uninqueness.ddt.gradw.sq} imply that there exist $C_5 \geq 1$, $C_6$ such that
	\begin{equation} \label{eq:one.sided.decay.uniqueness.ddt.w.gradw.sq}
		\tfrac{d}{d\tau} (\Vert \nabla_\Sigma w \Vert_W^2 + C_5 \Vert w \Vert_W^2) \leq C_6 \Vert w \Vert_W^2, \; \tau \leq \tau_0.
	\end{equation}
	Integrating \eqref{eq:one.sided.decay.uniqueness.ddt.w.gradw.sq} from $-\infty$ to $\tau$ and using the decay of $w$, we deduce:
	\begin{equation} \label{eq:one.sided.decay.uniqueness.w.gradw.sq}
		\Vert w(\cdot, \tau) \Vert_{W,1}^2 \leq \Vert \nabla_\Sigma w(\cdot, \tau) \Vert_W^2 + C_5 \Vert w(\cdot, \tau) \Vert_W^2 \leq C_6 \int_{-\infty}^\tau \Vert w(\cdot, s) \Vert_W^2 \, ds, \; \tau \leq \tau_0.
	\end{equation}
	By \eqref{eq:one.sided.decay.uniqueness.neg.infty}, we may take $\tau_0$ more negative yet so that
	\begin{equation} \label{eq:one.sided.decay.uniqueness.w.small.1}
		\Vert w(\cdot, \tau) \Vert_W^2 \leq e^{-2\lambda_1 \tau}, \; \tau \leq \tau_0.
	\end{equation}
	Thus, by evaluating the integral in \eqref{eq:one.sided.decay.uniqueness.w.gradw.sq} using the crude estimate in \eqref{eq:one.sided.decay.uniqueness.w.small.1}, we find
	\begin{equation} \label{eq:one.sided.decay.uniqueness.w.small.2}
		\Vert w(\cdot, \tau) \Vert_{W,1}^2 \leq \frac{C_6}{2|\lambda_1|} e^{-2\lambda_1 \tau}, \; \tau \leq \tau_0,
	\end{equation}
	with the same $\tau_0$. Integrating \eqref{eq:one.sided.decay.uninqueness.ddt.e2lambda.w.sq} from $-\infty$ to $\tau$, and using \eqref{eq:one.sided.decay.uniqueness.neg.infty} at $-\infty$ and \eqref{eq:one.sided.decay.uniqueness.w.small.2}, we get the following improvement over \eqref{eq:one.sided.decay.uniqueness.w.small.1}:
	\begin{equation} \label{eq:one.sided.decay.uniqueness.w.small.3}
		\Vert w(\cdot, \tau) \Vert_W^2 \leq \frac{C_2 C_6}{2 |\lambda_1|^2} e^{-3\lambda_1 \tau}, \; \tau \leq \tau_0,
	\end{equation}
	with the same $\tau_0$. Now we iterate. Using \eqref{eq:one.sided.decay.uniqueness.w.gradw.sq} again, with \eqref{eq:one.sided.decay.uniqueness.w.small.3} in place of \eqref{eq:one.sided.decay.uniqueness.w.small.1}:
	\begin{equation} \label{eq:one.sided.decay.uniqueness.w.small.4}
		\Vert w(\cdot, \tau) \Vert_{W,1}^2 \leq \frac{C_2 C_6^2}{3! |\lambda_1|^3} e^{-3\lambda_1 \tau}, \; \tau \leq \tau_0,
	\end{equation}
	with the same $\tau_0$. Integrating \eqref{eq:one.sided.decay.uninqueness.ddt.e2lambda.w.sq} from $-\infty$ to $\tau$, and using \eqref{eq:one.sided.decay.uniqueness.w.small.4} rather than \eqref{eq:one.sided.decay.uniqueness.w.small.2}, we get the following improvement over \eqref{eq:one.sided.decay.uniqueness.w.small.3}:
	\begin{equation} \label{eq:one.sided.decay.uniqueness.w.small.5}
		\Vert w(\cdot, \tau) \Vert_W^2 \leq \frac{C_2^2 C_6^2}{2 \cdot 3! |\lambda_1|^4} e^{-4\lambda_1 \tau}, \; \tau \leq \tau_0,
	\end{equation}
	with the same $\tau_0$. Repeating this $k \in \NN$ times altogether (we showed steps $k = 1$, $2$), we find
	\begin{equation} \label{eq:one.sided.decay.uniqueness.w.small.6}
		\Vert w(\cdot, \tau) \Vert_W^2 \leq \frac{C_2^k C_6^k}{k! (k+1)! |\lambda_1|^{2k}} e^{-(2+k)\lambda_1 \tau}, \; \tau \leq \tau_0,
	\end{equation}
	with the same $\tau_0$. Fixing $\tau \leq \tau_0$ and sending $k \to \infty$, \eqref{eq:one.sided.decay.uniqueness.w.small.6} gives $w(\cdot, \tau) \equiv 0$. 
\end{proof}


\section{A family of smooth ancient rescaled flows} \label{sec:family.flows}

In this section we construct an $I$-dimensional family (recall, $I$ is as in \eqref{eq:linearized.equation.eigenvalues}) of smooth ancient rescaled mean curvature flows that flow out of the fixed asymptotically conical shrinker $\Sigma^n\subset \mathbb{R}^{n+1}$ as $\tau \to -\infty$. Using the tools at our disposal, this is a straightforward adaptation of \cite[Section 3]{ChoiMantoulidis}. For the convenience of the reader, we emphasize that this section is not used elsewhere in the paper and may be skipped on first read. It is purely of independent interest.

\begin{remark} \label{rema:existence.one.sided}
When $\Sigma$ is asymptotically conical, it seems nontrivial to verify that the construction in this section proves the existence of a \emph{one-sided} flow without performing further error-term analysis near infinity. (If $\Sigma$ is compact this follows easily.) We find it easier to instead prove this existence of one-sided flows in Section \ref{sec:gmt.existence} using geometric measure theory, which we also use to show that the one-sided flow can be continued \emph{through} singularities, which is crucial for subsequent applications. We emphasize that the uniqueness of one-sided flows was established in Section \ref{sec:one.sided.flows}. 
\end{remark}

\subsection{The nonlinear contraction} \label{sec:family.flows.contraction}

We continue to fix $\delta_0 \in (0, -\lambda_I)$, $\alpha \in (0, 1)$. It will be convenient to also consider the operator 
\begin{equation} \label{eq:contraction.iota.minus}
\iota_- : \bm{a} = (a_1, \ldots, a_I) \in \RR^I \mapsto \sum_{j=1}^I a_j e^{-\lambda_j \tau} \varphi_j.
\end{equation}

\begin{theorem} \label{theo:contraction}
	There exists $\mu_{0} = \mu_{0}(\Sigma, \alpha, \delta_0)$ such that, for every $\mu \geq \mu_0$,  there exists a corresponding $\eps = \eps(\Sigma, \alpha, \delta_0, \mu)$ with the following property: 
	
	For any $\bm{a} \in B_\eps(\mathbf{0}) \subset \RR^I$ there exists a unique $\mathscr{S}(\bm{a}) : \Sigma \times \RR_- \to \RR$ so that the hypersurfaces $S(\tau) := \operatorname{graph}_\Sigma \mathscr{S}(\bm{a})(\cdot, \tau)$ satisfy the rescaled mean curvature flow
	\begin{equation} \label{eq:linearized.equation.rescaled.mcf.pde}
	\tfrac{\partial}{\partial \tau} \mathbf{x} = \mathbf{H}_{S(\tau)}(\mathbf{x}) + \tfrac12 \mathbf{x}^\perp, \; \forall \mathbf{x} \in S(\tau),
	\end{equation}
	with the a priori decay 
	\begin{equation} \label{eq:linearized.equation.quadratic.decay}
	\sup_{\tau \in \RR_-} e^{-\delta_0 \tau} \Vert (\mathscr{S}(\bm{a}) - \iota_-(\bm{a}))(\cdot, \tau) \Vert_{2,\alpha}^{(1)} \leq \mu |\bm{a}|^2
	\end{equation}
	and the terminal condition $\Pi_{<0}(\mathscr{S}(\bm{a}))(\cdot, 0) = \iota_-(\bm{a})(\cdot, 0)$.
\end{theorem}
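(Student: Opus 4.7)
The plan is to construct $\mathscr{S}(\bm{a})$ by a Banach fixed point argument in a suitable weighted parabolic H\"older space. One writes
\[ \mathscr{S}(\bm{a}) = \iota_-(\bm{a}) + u, \]
where $\iota_-(\bm{a})$, as defined in \eqref{eq:contraction.iota.minus}, is an exact solution of the linear homogeneous equation $(\partial_\tau - L) v = 0$ carrying the prescribed terminal data $\Pi_{<0}(\mathscr{S}(\bm{a}))(\cdot, 0) = \iota_-(\bm{a})(\cdot, 0)$, and $u$ is the nonlinear correction to be constructed. Using \eqref{eq:linearized.equation.graphical.pde.linearized}, $u$ must then satisfy
\[ (\tfrac{\partial}{\partial\tau} - L) u = E(\iota_-(\bm{a}) + u), \quad \Pi_{<0} u(\cdot,0) = 0, \quad u \to 0 \text{ as } \tau \to -\infty \text{ in a weighted sense.} \]

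The functional setup is the Banach space $\cX$ of measurable $u: \Sigma \times \RR_- \to \RR$ normed by
\[ \Vert u \Vert_\cX := \sup_{\tau \in \RR_-} e^{-\delta_0 \tau} \Vert u(\cdot, \tau) \Vert_{2,\alpha}^{(1)}. \]
For $u \in \cX$, define $\Phi(u) := w$ to be the solution of the inhomogeneous linear PDE $(\partial_\tau - L) w = E(\iota_-(\bm{a}) + u)$ with $\Pi_{<0} w(\cdot, 0) = 0$, obtained in $L^2_W$ via Lemma \ref{lemma:linearized.equation.L2.pde.estimate} and upgraded to $C^{2,\alpha}$ regularity with the requisite weighted decay via Lemma \ref{lemma:linearized.equation.interior.c2alpha} and the global estimates of Lemmas \ref{lemma:linearized.equation.global.c0}, \ref{lemma:linearized.equation.global.c2alpha}. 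Chaining these estimates yields
\[ \Vert \Phi(u) \Vert_\cX \leq C \sup_{\tau \in \RR_-} e^{-2\delta_0 \tau} \Vert E(\iota_-(\bm{a})+u)(\cdot, \tau) \Vert_{0,\alpha}^{(-1)}. \]

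The quadratic nonlinear estimates of Corollary \ref{coro:linearized.equation.error} give
\[ \Vert E(\iota_-(\bm{a}) + u)(\cdot,\tau) \Vert_{0,\alpha}^{(-1)} \leq C \big( \Vert \iota_-(\bm{a})(\cdot,\tau) \Vert_{2,\alpha}^{(1)} + \Vert u(\cdot,\tau) \Vert_{2,\alpha}^{(1)} \big)^2. \]
The eigenfunctions $\varphi_1, \ldots, \varphi_I$ lie in $C^{2,\alpha}_{(1)}$ by standard elliptic regularity together with the sharp decay estimates of Appendix \ref{sec:shrinker.geometry}, and $0 < \delta_0 < -\lambda_I \leq -\lambda_j$ for $j \leq I$ ensures the exponent balance
\[ e^{-2\delta_0 \tau} \big( |\bm{a}| e^{-\lambda_I \tau} + \Vert u \Vert_\cX e^{\delta_0 \tau} \big)^2 \leq C \big( |\bm{a}|^2 + \Vert u \Vert_\cX^2 \big) \text{ for all } \tau \leq 0. \]
Thus $\Vert \Phi(u) \Vert_\cX \leq C(|\bm{a}|^2 + \Vert u \Vert_\cX^2)$; choosing $\mu_0 = 2C$ and then $\eps$ small enough (depending on $\mu \geq \mu_0$) makes $\Phi$ map the closed ball $\{ u \in \cX : \Vert u \Vert_\cX \leq \mu |\bm{a}|^2 \}$ into itself. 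The difference estimate \eqref{eq:linearized.equation.error.diff.calpha}, handled identically, yields
\[ \Vert \Phi(u) - \Phi(\bar u) \Vert_\cX \leq C\big( |\bm{a}| + \Vert u \Vert_\cX + \Vert \bar u \Vert_\cX \big) \Vert u - \bar u \Vert_\cX, \]
which is a strict contraction for $|\bm{a}| \leq \eps$ small. Banach fixed point produces the unique $u$ in the ball, and standard interior parabolic Schauder regularity promotes $u$ (hence $\mathscr{S}(\bm{a})$) to smoothness, giving a classical ancient rescaled MCF. The main technical obstacle is making the linear solution map bounded from the weighted $C^{0,\alpha}_{(-1)}$ space (with weight $e^{-2\delta_0\tau}$) to the weighted $C^{2,\alpha}_{(1)}$ space (with weight $e^{-\delta_0\tau}$) on an \emph{asymptotically conical} shrinker; this is precisely the novel input of Section \ref{sec:linearized.equation.schauder.estimates}, which has no counterpart in the compact setting of \cite{ChoiMantoulidis}.
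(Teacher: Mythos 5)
Your proposal is correct and is essentially the paper's own argument: a contraction mapping in the $e^{-\delta_0\tau}$-weighted $\Vert\cdot\Vert_{2,\alpha}^{(1)}$ space, where the iteration map solves the linear problem via Lemma \ref{lemma:linearized.equation.L2.pde.estimate} upgraded through Lemmas \ref{lemma:linearized.equation.interior.c2alpha}, \ref{lemma:linearized.equation.global.c0}, \ref{lemma:linearized.equation.global.c2alpha}, with the quadratic error bounds of Corollary \ref{coro:linearized.equation.error} closing the scheme and Knerr-type Schauder estimates giving smoothness; the paper merely iterates on $\mathscr{S}(\cdot;\bm{a})$ itself rather than on the correction $u = \mathscr{S}(\bm{a}) - \iota_-(\bm{a})$, and includes the term $\Vert \tfrac{\partial}{\partial\tau}\cdot\Vert_{0,\alpha}^{(-1)}$ in its norm $\Vert\cdot\Vert_*$. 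One small point: the needed bound $\Vert \iota_-(\bm{a})\Vert \lesssim |\bm{a}|$ in the weighted space is obtained in the paper by applying Lemmas \ref{lemma:linearized.equation.global.c0} and \ref{lemma:linearized.equation.global.c2alpha} to the homogeneous solutions $e^{-\lambda_j\tau}\varphi_j$, rather than from Appendix \ref{sec:shrinker.geometry} (which controls the shrinker's geometry, not eigenfunction decay), so you should route that step through the paper's global linear estimates.
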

\begin{proof}
	The geometric PDE \eqref{eq:linearized.equation.rescaled.mcf.pde} is equivalent to  \eqref{eq:linearized.equation.graphical.pde.linearized}.
	Consider the affine space
	\[ \mathscr{C}[\bm{a}] := \{ u : \Sigma \times \RR_- \to \RR : \Pi_{<0}(u(\cdot, 0)) = \iota_-(\bm{a})(\cdot, 0), \; \Vert u \Vert_* < \infty \} \]
	where 
	\[ \Vert u \Vert_* := \sup_{\tau \in \RR_-} e^{-\delta_0 \tau} (\Vert u(\cdot, \tau) \Vert_{2,\alpha}^{(1)} + \Vert \tfrac{\partial}{\partial \tau} u(\cdot, \tau) \Vert_{0,\alpha}^{(-1)}). \]
	It is complete with respect to $d_*(\bar u, u) := \Vert \bar u - u \Vert_*$. Note that Lemmas \ref{lemma:linearized.equation.global.c0} and \ref{lemma:linearized.equation.global.c2alpha} imply that $\Vert \iota_-(\bm{a})\Vert_* \leq C|\bm{a}|$.
	
	Let $\eta > 0$ be as in Corollary  \ref{coro:linearized.equation.error}. For $u \in \mathscr{C}[\bm{a}]$, $\Vert u \Vert_* \leq \eta$, let $\mathscr{S}(u; \bm{a})$ be a solution of
	\begin{equation} \label{eq:linearized.equation.iterative.pde}
	(\tfrac{\partial}{\partial \tau} - L) \mathscr{S}(u; \bm{a}) = E(u)
	\end{equation}
	with $\mathscr{S}(u; \bm{a})(\cdot, 0) = \iota_-(\bm{a})(\cdot, 0)$. Equivalently, we are solving
	\[ (\tfrac{\partial}{\partial \tau} - L)(\mathscr{S}(u; \bm{a}) - \iota_-(\bm{a})) = E(u), \; \Pi_{<0}(\mathscr{S}(u; \bm{a}) - \iota_-(\bm{a}))(\cdot, 0) = 0. \]
	Existence is guaranteed by Lemma \ref{lemma:linearized.equation.L2.pde.estimate}, since the a priori decay of $u$ implies quadratic decay of $E(u)$ by \eqref{eq:linearized.equation.error.calpha}. Now Lemma \ref{lemma:linearized.equation.interior.c2alpha} and \eqref{eq:linearized.equation.error.calpha} imply:
	\begin{equation} \label{eq:linearized.equation.iterative.pde.interior.c2alpha}
	\sup_{\tau \in \RR_-} e^{-\delta_0 \tau} \Vert (\mathscr{S}(u; \bm{a}) - \iota_-(\bm{a}))(\cdot, \tau) \Vert_{2,\alpha;\Sigma \cap B_R(\mathbf{0})} \leq C \Vert u \Vert_*^2.
	\end{equation}
	Then, \eqref{eq:linearized.equation.error.c0},   \eqref{eq:linearized.equation.iterative.pde.interior.c2alpha}, and Lemma \ref{lemma:linearized.equation.global.c0} imply, for $\tau \in \RR_-$:
	\begin{multline} \label{eq:linearized.equation.iterative.pde.global.c0}
	e^{-\delta_0 \tau} \Vert (\mathscr{S}(u; \bm{a}) - \iota_-(\bm{a}))(\cdot, \tau) \Vert_{0}^{(1)} \\
	\leq C e^{-\delta_0 \tau} \Big[ \sup_{(\Sigma \setminus B_R(\mathbf{0})) \times (-\infty, \tau]} |E(u)| + \sup_{\partial B_R(\mathbf{0}) \times (-\infty, \tau]} |\mathscr{S}(u; \bm{a}) - \iota_-(\bm{a})| \Big] \leq C \Vert u \Vert_*^2.
	\end{multline}
	Finally, \eqref{eq:linearized.equation.error.calpha},  \eqref{eq:linearized.equation.iterative.pde.global.c0} and Lemma \ref{lemma:linearized.equation.global.c2alpha} imply, for $\tau \in \RR_-$:
	\begin{multline*} 
	e^{-\delta_0 \tau} \Vert (\mathscr{S}(u; \bm{a}) - \iota_-(\bm{a}))(\cdot, \tau) \Vert_{2,\alpha}^{(1)}  \\
	\leq C e^{-\delta_0 \tau} \sup_{\sigma \leq \tau} \Big[ \Vert (\mathscr{S}(u; \bm{a}) - \iota_-(\bm{a}))(\cdot, \sigma) \Vert_{0}^{(1)} + \Vert E(u)(\cdot, \sigma) \Vert_{0,\alpha}^{(-1)} \Big] \leq C \Vert u \Vert_*^2.
	\end{multline*}
	Recalling also Knerr's parabolic Schauder estimates (see Theorem \ref{theo:schauder.estimate.knerr} in Appendix \ref{sec:schauder}), this implies:
	\begin{equation} \label{eq:linearized.equation.iterative.pde.global.c2alpha}
	\Vert \mathscr{S}(u; \bm{a}) - \iota_-(\bm{a}) \Vert_* \leq C \Vert u \Vert_*^2.
	\end{equation}
	Therefore, $\mathscr{S}(u; \bm{a}) \in \mathscr{C}[\bm{a}]$. Note that solutions of \eqref{eq:linearized.equation.iterative.pde} are uniquely determined within $\mathscr{C}[\bm{a}]$ (e.g., due to Lemma \ref{lemma:linearized.equation.L2.pde.estimate}). Thus, $\mathscr{S}(\cdot, \bm{a})$ is a well-defined map of small 
	elements of $\mathscr{C}[\bm{a}]$ into $\mathscr{C}[\bm{a}]$.
	
	Likewise, for $\bar u \in \mathscr{C}[\bm{a}]$, 
	$\Vert \bar u \Vert_* \leq \eta$, we have
	\[ (\tfrac{\partial}{\partial \tau} - L)(\mathscr{S}(\bar u; \bm{a}) - \mathscr{S}(u; \bm{a})) = E(u) - E(\bar u), \; \Pi_{<0}(\mathscr{S}(\bar u; \bm{a}) - \mathscr{S}(u; \bm{a}))(\cdot, 0) = 0. \]
	Therefore the discussion above applies with $\bar u - u$ in place of $u - \iota_-(\bm{a})$ and Corollary  \ref{coro:linearized.equation.error}'s \eqref{eq:linearized.equation.error.diff.c0}, \eqref{eq:linearized.equation.error.diff.calpha} instead of \eqref{eq:linearized.equation.error.c0}, \eqref{eq:linearized.equation.error.calpha}, and gives:
	\begin{multline*} \label{eq:linearized.equation.iterative.pde.global.c2alpha.diff}
	e^{-\delta_0 \tau} \Vert (\mathscr{S}(\bar u; \bm{a}) - \mathscr{S}(u; \bm{a}))(\cdot, \tau) \Vert_{2,\alpha}^{(1)} \\
	\leq C \sup_{\sigma \in \RR_-} e^{-\delta_0 \sigma} \Big[ \Vert u(\cdot, \sigma) \Vert_{2,\alpha}^{(1)} + \Vert \bar u(\cdot, \sigma) \Vert_{2,\alpha}^{(1)} \Big] \cdot \sup_{\sigma \in \RR_-} e^{-\delta_0 \sigma} \Vert \bar u - u \Vert_{2,\alpha}^{(1)}
	\end{multline*}
	i.e.,
	\begin{equation} \label{eq:linearized.equation.iterative.pde.global.c2alpha.diff}
	\Vert \mathscr{S}(\bar u; \bm{a}) - \mathscr{S}(u; \bm{a}) \Vert_* \leq C (\Vert u \Vert_* + \Vert \bar u \Vert_*) \Vert \bar u - u \Vert_*.
	\end{equation}
	Consider the subset $X := \{ u \in \mathscr{C}[\bm{a}] : \Vert u - \iota_-(\bm{a}) \Vert_* \leq \mu |\bm{a}|^2 \}$. There exists $\mu_0 = \mu_0(\Sigma, \alpha, \delta_0)$ such that, for all $\mu \geq \mu_0$, there exists $\eps = \eps(\Sigma, \alpha, \delta_0, \mu)$ such that $\bm{a} \in B_\eps(\mathbf{0}) \subset \RR^I$ and $u \in X$ imply $\mathscr{S}(u; \bm{a}) \in X$, by the triangle inequality and \eqref{eq:linearized.equation.iterative.pde.global.c2alpha}. Thus, $\mathscr{S}(\cdot; \bm{a})$ maps $X$ into itself. By \eqref{eq:linearized.equation.iterative.pde.global.c2alpha.diff}, it is a contraction mapping. By the completeness of $X$, there exists a unique fixed point of $\mathscr{S}(\cdot; \bm{a})$ in $X$, which we denote $\mathscr{S}(\bm{a})$. Note that, by construction, it satisfies \eqref{eq:linearized.equation.quadratic.decay} and $\Pi_{<0}(\mathscr{S}(\bm{a})(\cdot, 0)) = \iota_-(\bm{a})(\cdot, 0)$. 
	It remains to check that $\mathscr{S}(\bm{a})$ satisfies \eqref{eq:linearized.equation.graphical.pde.linearized} smoothly.  Indeed, $E(\mathscr{S}(\bm{a}))$ is H\"older continuous in spacetime by Corollary \ref{coro:linearized.equation.error} and Theorem \ref{theo:schauder.estimate.knerr} in Appendix \ref{sec:schauder}, and the result follows by bootstrapping standard parabolic Schauder estimates to get smoothness on $\mathscr{S}(\bm{a})$.
\end{proof}

\begin{remark} \label{rema:bourni.langford.mramor}
	Bourni--Langford--Mramor \cite{BLM:angenent-torus} recently constructed, using different methods, ancient one-sided flows coming out the Angenent torus and its higher dimensional analog.  
	Our work can be used to construct one-sided flows coming out of any compact and any asymptotically conical shrinker.
\end{remark}


\section{Existence of a smooth ancient shrinker mean convex flow} \label{sec:gmt.existence}

In this section, we construct a smooth ancient shrinker mean convex flow on one side of an asymptotically conical shrinker $\Sigma^n\subset \mathbb{R}^{n+1}$. It would be possible to prove this more in the spirit of the previous section, but thanks to the uniqueness statement from Corollary \ref{coro:one.sided.decay.uniqueness}, we can construct such a flow by any method that is convenient. As such, we use methods that will also apply to construct a (generalized) eternal flow which is smooth for very negative times. We will do so by modifying techniques used in \cite{BernsteinWang:TopologicalProperty} to the present setting. 

We fix a component $\Omega$ of $\RR^{n+1}\setminus \Sigma$ and assume that the unit normal to $\Sigma$ points into $\Omega$. Note that by Colding--Minicozzi's classification of entropy stable shrinkers, \cite[Theorems 0.17 and 9.36]{ColdingMinicozzi:sing-generic}, asymptotically conical shrinkers are entropy unstable. This (and more) is encoded in the following result:

\begin{lemma}[{\cite[Propositions 4.1 and 4.2] {BernsteinWang:TopologicalProperty}}]
The first eigenvalue $\mu: = \lambda_1$ of the $L$ operator (see Lemma \ref{lemma:linearized.equation.spectrum}) satisfies $\lambda_1 < -1$. The corresponding eigenfunction $\varphi_1$ can be taken to be positive. For any $\beta>0$, it satisfies
\begin{align*}
(1+|\mathbf{x}|^2)^{\frac 12 + \mu - \beta} \lesssim \varphi_1(\mathbf{x}) & \lesssim (1+|\mathbf{x}|^2)^{\frac 12 + \mu + \beta}\\
|\nabla^m_\Sigma \varphi_1(\mathbf{x})| & \lesssim (1+|\mathbf{x}|^2)^{\frac 12 + \mu + \beta - \frac m2}.
\end{align*}
Moreover, there is $\eps_0 = \eps_0(\Sigma)>0$ so that for $\eps \in (0,\eps_0)$, the normal graph of $\eps\varphi_1$ is a smooth surface $\Sigma_\eps \subset \Omega$. Denote $\Omega_\eps \subset \Omega$ by the open set with $\partial\Omega_\eps = \Sigma_\eps$. The surface $\Sigma_\eps$ is strictly shrinker mean convex to the interior of $\Omega_\eps$ in the sense that
\[
2H_{\Sigma_\eps} + \mathbf{x}\cdot \nu_{\Sigma_\eps} \geq C \eps (1+ |\mathbf{x}|^2)^{\mu}
\]
for $C=C(\Sigma)$. 
\end{lemma}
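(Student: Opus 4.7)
The plan is to address the four claims in order: spectral bound, positivity, pointwise asymptotics, and shrinker mean convexity of the perturbation.

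\textbf{Step 1: $\lambda_1 < -1$.} I would invoke the Colding--Minicozzi classification of entropy-stable shrinkers \cite{ColdingMinicozzi:generic,ColdingMinicozzi:sing-generic}: the only shrinkers that are $F$-stable modulo the trivial variations coming from translation and dilation are the generalized cylinders. The function $H_\Sigma$ is an eigenfunction of $L$ with $L H_\Sigma = H_\Sigma$, giving $-1$ as an eigenvalue (dilation direction); the components $\langle \nu_\Sigma, e_i \rangle$ give the eigenvalue $-\frac12$ (translation directions). Since $\Sigma$ is asymptotically conical, it is none of the generalized cylinders, hence $F$-unstable beyond these trivial directions. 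On a compactly supported test function orthogonal to translations and dilations this yields a value of the quadratic form $-\langle u, Lu\rangle_W$ that is strictly negative, so by the min-max characterization of $\lambda_1$ one obtains $\lambda_1 < -1$.

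\textbf{Step 2: Positivity of $\varphi_1$.} I would appeal to a standard Krein--Rutman / maximum principle argument for the first eigenfunction, which applies because Lemma \ref{lemma:linearized.equation.spectrum} gives the compactness needed to realize $\varphi_1$ as a minimizer of the Rayleigh quotient in $H^1_W(\Sigma)$. The key subtlety is that $\Sigma$ is noncompact, but the Gaussian weight gives compactness of the embedding $H^1_W\hookrightarrow L^2_W$ (this is exactly \cite[Proposition B.2]{BernsteinWang:TopologicalProperty}), so the standard rearrangement $|\varphi_1| \in H^1_W$ is also a minimizer, and the strong maximum principle (applied to the interior of $\{\varphi_1 > 0\}$) promotes non-negativity to strict positivity.

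\textbf{Step 3: Asymptotics of $\varphi_1$.} This is the main obstacle. I would use that $\Sigma$ is asymptotically conical to reduce the eigenvalue equation $L\varphi_1 = -\lambda_1 \varphi_1$ to a perturbation of an Ornstein--Uhlenbeck type equation at infinity. On the asymptotic cone, $|A_\Sigma|^2 = O(\tilde r^{-2})$ and the radial direction dominates, so $L$ is modeled by the operator
\begin{equation*}
\tilde L u := \Delta_\Sigma u - \tfrac12 \mathbf{x}\cdot\nabla_\Sigma u + \tfrac12 u.
\end{equation*}
Using $\mathbf{x}\cdot\nabla_\Sigma u \approx r\,\partial_r u$ at infinity (since $\mathbf{x}^\perp = -2\mathbf{H}_\Sigma$ decays), radial ans\"atze $u = r^\alpha$ give the indicial equation $-\tfrac{\alpha}{2} + \tfrac12 = -\lambda_1$, i.e., $\alpha = 1 + 2\lambda_1 = 1 + 2\mu$. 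Turning this formal asymptotic into the stated two-sided estimate $(1+|\mathbf{x}|^2)^{\frac12 + \mu - \beta} \lesssim \varphi_1 \lesssim (1+|\mathbf{x}|^2)^{\frac12 + \mu + \beta}$ requires barrier arguments: the functions $\tilde r^{1+2\mu\pm 2\beta}$ are super/sub-solutions of $L + \lambda_1$ outside a compact set, and comparison together with positivity from Step 2 and a boundary bound on a large ball gives the claim. Derivative estimates of the form $|\nabla_\Sigma^m \varphi_1| \lesssim (1+|\mathbf{x}|^2)^{\frac12 + \mu + \beta - \frac m2}$ then follow from rescaled Schauder estimates applied to the equation on annuli of radius $\sim r$, exactly as in Appendix \ref{sec:shrinker.geometry}.

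\textbf{Step 4: Shrinker mean convexity.} Parameterizing $\Sigma_\eps$ as the normal graph of $\eps\varphi_1$ over $\Sigma$, the linearization of the shrinker operator $2H - \mathbf{x}\cdot\nu$ along $\Sigma$ is precisely $-2L$ (see \eqref{eq:linearized.equation.linear.operator}), so formally
\begin{equation*}
2H_{\Sigma_\eps} + \mathbf{x}\cdot\nu_{\Sigma_\eps} = 2\eps\, L\varphi_1 + O(\eps^2\text{ error}) = -2\eps \lambda_1 \varphi_1 + \eps^2 E(\eps\varphi_1),
\end{equation*}
after adjusting signs for the orientation chosen. The main linear term is bounded below by $C\eps (1+|\mathbf{x}|^2)^{\frac12+\mu-\beta}$, which dominates $(1+|\mathbf{x}|^2)^\mu$ for $\beta$ small and $|\mathbf{x}|$ large. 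The nonlinear error $E$ is controlled pointwise by Corollary \ref{coro:linearized.equation.error}: the bound \eqref{eq:linearized.equation.error.c0} gives $\tilde r |E(\eps\varphi_1)| \lesssim (\tilde r^{-1}|\eps\varphi_1| + |\eps\nabla_\Sigma \varphi_1| + \tilde r|\eps\nabla^2_\Sigma \varphi_1|)^2$, and the decay estimates from Step 3 make this $O(\eps^2\tilde r^{1+4\mu+4\beta})$, which is of lower order than the linear term for $|\mathbf{x}|$ large and $\beta$ small. In the compact region $|\mathbf{x}|\leq R_0$ one takes $\eps_0$ small to absorb the nonlinear error into the linear term by continuity of the geometric quantities. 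Combining the two regimes yields the global lower bound $2H_{\Sigma_\eps} + \mathbf{x}\cdot\nu_{\Sigma_\eps} \geq C\eps(1+|\mathbf{x}|^2)^\mu$ as stated. Smoothness of $\Sigma_\eps$ and $\Sigma_\eps\subset \Omega$ follow immediately from $\eps\varphi_1 > 0$ and the smoothness of $\varphi_1$.
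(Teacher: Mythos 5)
First, note that the paper itself gives no proof of this lemma: it is quoted directly from \cite[Propositions 4.1 and 4.2]{BernsteinWang:TopologicalProperty}, with the inequality $\lambda_1<-1$ attributed in the preceding sentence to \cite[Theorems 0.17 and 9.36]{ColdingMinicozzi:sing-generic}. So you are reconstructing the cited argument rather than competing with anything internal; Steps 2--4 follow the Bernstein--Wang strategy in outline, but Step 1 contains a genuine gap. Producing a compactly supported $u$ that is $L^2_W$-orthogonal to $H_\Sigma$ and to $\langle \nu_\Sigma, e_i\rangle$ with $-\langle u,Lu\rangle_W<0$ does \emph{not} give $\lambda_1<-1$ by min--max: since $H_\Sigma$ and the translation functions are eigenfunctions, the cross terms with $u$ vanish, so such a $u$ only exhibits an additional negative direction of the quadratic form (it raises the index beyond $n+2$), which is perfectly consistent with $\lambda_1=-1$. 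The actual argument runs the other way around: $LH_\Sigma=H_\Sigma$ with $H_\Sigma\in L^2_W$, so $-1$ is an eigenvalue and $\lambda_1\le -1$; if $\lambda_1=-1$, then simplicity of the first eigenvalue (available here because the Rayleigh quotient is minimized, using the compact embedding $H^1_W(\Sigma)\subset L^2_W(\Sigma)$ you already invoke in Step 2) forces $H_\Sigma$ to be a multiple of the signed first eigenfunction, hence $H_\Sigma\ge 0$ or $H_\Sigma\equiv 0$, and Colding--Minicozzi's classification of mean-convex shrinkers \cite{ColdingMinicozzi:generic,ColdingMinicozzi:sing-generic} then makes $\Sigma$ a generalized cylinder (or a hyperplane in the case $H_\Sigma\equiv0$), contradicting that $\Sigma$ is a nontrivial asymptotically conical shrinker. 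Equivalently, one can simply quote \cite[Theorem 9.36]{ColdingMinicozzi:sing-generic}, as the paper does.

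Two further points. In Step 3 the barrier argument for the two-sided $C^0$ bound is the right mechanism, but the derivative estimates do not follow from ``rescaled Schauder estimates on annuli of radius $\sim r$'': rescaling such an annulus to unit size turns the drift term $-\tfrac12\mathbf{x}\cdot\nabla_\Sigma$ into a coefficient of size $r^2$, so the elliptic Schauder constants are not uniform, and Appendix \ref{sec:shrinker.geometry} concerns decay of $A_\Sigma$, not eigenfunction estimates. The gain of one power of $\tilde r$ per derivative comes from the self-similar parabolic trick of Lemma \ref{lemma:linearized.equation.global.c2alpha}: apply the transformation there to $u(\cdot,\tau)=e^{-\lambda_1\tau}\varphi_1$ (a solution of the linear rescaled equation with $h\equiv 0$), use Knerr's estimates (Theorem \ref{theo:schauder.estimate.knerr}) at unit scale in a \emph{fixed} annulus, and undo the rescaling as $\tau_0\to-\infty$; this produces exactly the weighted bounds $|\nabla^m_\Sigma\varphi_1|\lesssim (1+|\mathbf{x}|^2)^{\frac12+\mu+\beta-\frac m2}$, with the $\beta$-loss inherited from the $C^0$ bound. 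Finally, in Step 4 the error bound should read $\tilde r\,|E(\eps\varphi_1)|\lesssim \eps^2\tilde r^{\,4\mu+4\beta}$ rather than $\eps^2\tilde r^{\,1+4\mu+4\beta}$; this is harmless, since it is still dominated by the linear term $\eps\,\tilde r^{\,1+2\mu-2\beta}$ for $\beta$ small, and the rest of Step 4 (the smallness hypothesis $\Vert \eps\varphi_1\Vert_2^{(1)}\le\eta$ needed for Corollary \ref{coro:linearized.equation.error}, and absorbing the error on a compact region by shrinking $\eps_0$) is correct.
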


The following lemma is essentially \cite[Proposition 4.4]{BernsteinWang:TopologicalProperty}. Note that because $\Sigma_\eps$ has uniformly bounded curvature (along with derivatives) the time interval for which \cite{EckerHuisken:interior} guarantees short-time existence is independent of $\eps\to 0$. 
\begin{lemma}\label{lemm:short-time-exist-Sigma-eps}
There is $\delta = \delta(\Sigma) \in (0,1)$ so that there is a smooth mean curvature flow $\Sigma_\eps(t)$ for $t \in [-1,-1+\delta]$ with $\Sigma_\eps(-1) = \Sigma_\eps$. The flow remains strictly shrinker mean convex with the bound
\[
2t H_{\Sigma_\eps(t)} + \mathbf{x}\cdot\nu_{\Sigma_\eps(t)} \geq C \eps (1+|\mathbf{x}|^2 + 2n(t+1))^{\mu}.
\]
\end{lemma}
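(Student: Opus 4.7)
The proof splits into short-time existence on a $\delta$-interval uniform in $\eps$, and preservation of the quantitative shrinker mean convexity. For the first, because $\Sigma$ is smoothly asymptotically conical, $|A_\Sigma|$ and all its covariant derivatives are bounded by constants depending only on $\Sigma$. Combined with the polynomial decay on $\varphi_1$ from the preceding lemma, the graph $\Sigma_\eps$ of $\eps \varphi_1$ has uniformly bounded geometry (to all orders), uniformly in $\eps \in (0,\eps_0)$. Ecker--Huisken's interior estimates \cite{EckerHuisken:interior} therefore yield a smooth mean curvature flow $\Sigma_\eps(t)$ on $[-1,-1+\delta]$ for a $\delta = \delta(\Sigma)$ independent of $\eps$.

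For the second part, I would track
\[
G(\bx,t) := 2t\, H_{\Sigma_\eps(t)}(\bx) + \bx \cdot \nu_{\Sigma_\eps(t)}(\bx),
\]
which vanishes on the shrinking flow $t \mapsto \sqrt{-t}\,\Sigma$ and hence measures deviation from self-similarly shrinking. Using the standard MCF evolution identities $(\partial_t - \Delta)H = |A|^2 H$, $\partial_t \nu = \pm \nabla H$, and $\Delta(\bx \cdot \nu) = \mp \bx^T \cdot \nabla H \mp H - (\bx \cdot \nu)|A|^2$ (exact signs depending on the paper's convention for $\mathbf{H} = H\nu$), together with the algebraic identity $2t\nabla H = \nabla G + A(\bx^T, \cdot)$ which trades first-order derivatives of $H$ for first-order derivatives of $G$, one arrives at an evolution equation of the form
\[
(\partial_t - \Delta - V \cdot \nabla) G = |A|^2 G
\]
for a suitable tangential drift $V$ on $\Sigma_\eps(t)$ (essentially $\tfrac{1}{t}\bx^T$). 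I would then compare $G$ to the explicit barrier $\underline G(\bx,t) := C \eps \, P(\bx,t)^\mu$ with $P(\bx,t) := 1 + |\bx|^2 + 2n(t+1)$. The crucial observation is that $P$ is \emph{caloric} under MCF: $(\partial_t - \Delta)|\bx|^2 = -2n$ cancels $\partial_t(2n(t+1)) = 2n$, giving $(\partial_t - \Delta)P = 0$. The chain rule then yields
\[
(\partial_t - \Delta) P^\mu = -\mu(\mu-1) P^{\mu-2} |2\bx^T|^2 \leq 0,
\]
since $\mu(\mu-1) > 0$ because $\mu = \lambda_1 < -1$. Combined with the asymptotic decay $|A|^2 = O(|\bx|^{-2})$ at the conical ends and the explicit form of $V$, this shows $\underline G$ is a subsolution of the $G$-equation (for $C$ chosen small enough so that the drift and $|A|^2 G$ contributions do not destroy the sign).

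The initial comparison $G|_{t=-1} \geq \underline G|_{t=-1}$ is precisely the content of the preceding lemma (up to taking $C$ smaller), and the maximum principle then propagates $G \geq \underline G$ forward on $[-1, -1+\delta]$, yielding the claimed bound. The main obstacle is implementing the maximum principle on the noncompact surfaces $\Sigma_\eps(t)$, since both $G$ and $\underline G$ decay at spatial infinity and there is no parabolic boundary to anchor the comparison there. I would handle this via a weighted (Gaussian) or cutoff maximum principle exploiting the asymptotically conical structure that $\Sigma_\eps(t)$ inherits from $\Sigma$ on the short time interval---which provides quantitative control on $|\bx|$, $|A|$, and $\tilde r$ along the flow---following closely the noncompact comparison argument already used in \cite{BernsteinWang:TopologicalProperty}.
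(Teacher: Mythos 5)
Your proposal is correct and follows essentially the same route as the paper, which disposes of this lemma by invoking Bernstein--Wang's Proposition 4.4 together with the observation that the uniformly bounded geometry of $\Sigma_\eps$ makes the Ecker--Huisken short-time existence interval independent of $\eps$; your barrier argument with the caloric quantity $1+|\bx|^2+2n(t+1)$ and a noncompact maximum principle is precisely the content of that cited argument. As a minor simplification, Smoczyk's identity gives $(\partial_t-\Delta)(2tH+\bx\cdot\nu)=|A|^2(2tH+\bx\cdot\nu)$ with no drift term at all, and since $|A|^2\, C\eps P^\mu\geq 0$ the comparison needs no smallness of $C$ beyond matching the initial bound from the preceding lemma.
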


We now begin the construction of an eternal weak flow that we will later prove to have the desired properties. Fix $R>0$ so that  for all $\eps \in (0,\eps_0)$ and $\rho \geq 1$, $\Sigma$ and $\Sigma_\eps$ intersect $\partial B_{\rho R}$ transversely. 

\begin{proposition}\label{prop:exist-compact-approx-flows}
There is a smooth hypersurface $\Sigma_{\eps,\rho}$ that formed by smoothing the corners of $(\Sigma_\eps \cap B_{\rho R}) \cup (\partial B_{\rho R} \cap \Omega_\eps)$ and then perturbing slightly so that:
\begin{itemize}
\item as $\rho\to \infty$, $\Sigma_{\eps,\rho}$ converges smoothly on compact sets to $\Sigma_\eps$,
\item the level set flow of $\Sigma_{\eps,\rho}$ is non-fattening, and
\item letting $\cK_{\eps,\rho}$ denote the level set flow of the compact region bounded by $\Sigma_{\eps,\rho}$, there is a unit-regular integral Brakke flow $\cM_{\eps,\rho}$ with initial condition $\cM_{\eps,\rho}(-1) = \cH^n\lfloor \Sigma_{\eps,\rho}$ and so that $\supp\cM_{\eps,\rho} \cap\mathfrak{t}^{-1}((-1,\infty)) = \partial \cK_{\eps,\rho} \cap\mathfrak{t}^{-1}((-1,\infty)) $. 
\end{itemize}
\end{proposition}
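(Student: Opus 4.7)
The plan is to build the closed Lipschitz hypersurface $\hat\Sigma_{\eps,\rho} := \partial(\Omega_\eps \cap B_{\rho R})$, which by our choice of $R$ meets $\partial B_{\rho R}$ transversely and has corners along $\Sigma_\eps \cap \partial B_{\rho R}$. I would then smooth these corners in an arbitrarily thin collar about $\{|\mathbf{x}| = \rho R\}$, producing a smooth closed embedded hypersurface $\tilde\Sigma_{\eps,\rho}$ bounding a compact region. Because the smoothing is supported near radius $\rho R$, the first bullet is automatic: for any fixed compact $K \subset \RR^{n+1}$ and all $\rho$ large, the resulting surface coincides with $\Sigma_\eps$ on $K$.

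For the non-fattening bullet, I would embed $\tilde\Sigma_{\eps,\rho}$ in a smooth one-parameter family $\{\tilde\Sigma^s_{\eps,\rho}\}_{s \in (-\delta,\delta)}$ of pairwise disjoint normal perturbations, with $\delta$ much smaller than the corner-smoothing scale so that the first bullet is preserved. The avoidance principle forces the level set flows generated by distinct members to be pairwise disjoint; since each fattening flow occupies a set of positive $\cH^{n+1}$-measure in spacetime, only countably many $s$ can produce fattening flows. We may therefore choose $s$ arbitrarily close to $0$ with non-fattening level set flow, and set $\Sigma_{\eps,\rho} := \tilde\Sigma^s_{\eps,\rho}$ with corresponding level set flow $\cK_{\eps,\rho}$.

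For the final bullet, since $\Sigma_{\eps,\rho}$ is smooth and compact, Ilmanen's elliptic regularization \cite{Ilmanen:elliptic} (cf.\ \cite[Theorem 22]{White:MCFboundary}) produces a unit-regular, cyclic integral Brakke flow $\cM_{\eps,\rho}$ with $\cM_{\eps,\rho}(-1) = \cH^n \lfloor \Sigma_{\eps,\rho}$. By \cite[10.5]{Ilmanen:elliptic}, $\supp \cM_{\eps,\rho}$ is a weak set flow generated by $\Sigma_{\eps,\rho}$, hence contained in $\partial \cK_{\eps,\rho}$. For the reverse inclusion on $\mathfrak{t}^{-1}((-1,\infty))$ I would use non-fattening: both outermost weak set flows starting from the two open components of $\RR^{n+1} \setminus \Sigma_{\eps,\rho}$ agree with $\partial \cK_{\eps,\rho}$, and they can be identified with $\supp \cM_{\eps,\rho}$ via the avoidance principle together with the standard characterization of the support of elliptic-regularization Brakke flows (as the limit of the level sets of Ilmanen's approximating translators).

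The main obstacle is the final identification $\supp \cM_{\eps,\rho} \cap \mathfrak{t}^{-1}((-1,\infty)) = \partial \cK_{\eps,\rho} \cap \mathfrak{t}^{-1}((-1,\infty))$, which crucially uses non-fattening to rule out the Brakke flow's abrupt loss of mass on a portion of the topological boundary. The construction of the surface and the generic non-fattening perturbation are standard in isolation; the care required is in sequencing the steps so that the smoothing scale dominates the non-fattening perturbation scale, and in arranging a smooth one-parameter family of pairwise disjoint perturbations of the already-smoothed surface.
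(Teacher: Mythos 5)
Your construction of the surface (smooth the corners, then perturb inside a one--parameter family of disjoint normal graphs so that all but countably many leaves have non-fattening level set flow) matches the paper's first step, and the inclusion $\supp\cM_{\eps,\rho}\subset\partial\cK_{\eps,\rho}$ for $t>-1$ via ``support of a Brakke flow is a weak set flow'' plus non-fattening is fine. The gap is in the reverse inclusion, which you yourself flag as the main obstacle: you take $\cM_{\eps,\rho}$ to be an elliptic-regularization flow started from the fixed surface and then assert that its support can be identified with the outer/inner flow ``via the avoidance principle together with the standard characterization of the support of elliptic-regularization Brakke flows (as the limit of the level sets of Ilmanen's approximating translators).'' Neither tool gives a \emph{lower} bound on the support: avoidance only yields disjointness/upper bounds, and under Brakke flow convergence one only has $\supp\cM_{\eps,\rho}\subset\lim$ of the supports of the approximators (mass can drop in the limit), so ``support $=$ limit of the translator level sets'' is not a quotable fact. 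Non-fattening of the initial surface does not by itself rule out that a given unit-regular integral Brakke flow starting from it suddenly loses mass on a portion of $\partial\cK_{\eps,\rho}$ after the first singular time; ruling this out is exactly the nontrivial content here.

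The paper closes this by \emph{not} using elliptic regularization from the chosen surface: it picks the leaf $a$ of the foliation so that, in addition to non-fattening (\cite[11.3]{Ilmanen:elliptic}), the a.e.-level identity $\{u=a\}_+=(\overline{\partial^*\{u>a\}})_+$ of \cite[12.11]{Ilmanen:elliptic} holds, and takes the Brakke flow to be $t\mapsto\cH^n\lfloor\partial^*\{u(\cdot,t)>a\}$ furnished by \cite[11.4]{Ilmanen:elliptic}; the support statement is then a short chain of equalities using \cite[11.6(iii)]{Ilmanen:elliptic}, \cite[12.11]{Ilmanen:elliptic}, and non-fattening. Your route could be repaired with a genuine citation in place of the ``standard characterization'': namely the Hershkovits--White result (\cite[Theorems B.5, B.6]{HershkovtisWhite}, quoted in Appendix F of the paper) that there exists a unit-regular integral Brakke flow whose support is the spacetime track of the outer flow, combined with your observation that non-fattening forces the outer and inner flows to coincide with $\partial\cK_{\eps,\rho}$ for $t>-1$. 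As written, though, the key step is asserted rather than proved.
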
 
\begin{proof}
Let $\{\Sigma_{\eps,\rho}^a\}_{a \in (-1,1)}$ denote a foliation of smooth surfaces close to $(\Sigma_\eps \cap B_{\rho R}) \cup (\partial B_{\rho R} \cap \Omega_\eps)$ chosen so that as $\rho\to\infty$, each $\Sigma_{\eps,\rho}^a$ converges smoothly on compact sets to $\Sigma_\eps$. For all but countably many $a$, the level set flow of $\Sigma_{\eps,\rho}^a$ does not develop a space-time interior (i.e., does not fatten); see \cite[11.3]{Ilmanen:elliptic}. Write $\Gamma_{\eps,\rho}^a(t) : = \{\bx : u(\bx,t) = a\}$ for the corresponding level set flow. We can arrange (after re-labeling $a$ and changing $u$ if necessary) that the level set flow of the pre-compact open set bounded by $\Sigma_{\eps,\rho}^a$ is $\{x : u(x,t) > a\}$. On the other hand, for a.e.\ $a \in (-1,1)$, \cite[12.11]{Ilmanen:elliptic} guarantees that\footnote{Note that in \cite{Ilmanen:elliptic}, there is a typo in the definition of $(\cdot)_+$; it is clear that the proof of \cite[12.11]{Ilmanen:elliptic} only considers points $(t,x)$ for $t$ strictly greater than the initial time.} 
\begin{equation}\label{eq:reduced-bdry-interior-bdry}
\{u=a\}_+ = (\overline{\partial^*\{u>a\}})_+,
\end{equation} 
where $Z_+ = Z \cap \mathfrak{t}^{-1}((-1,\infty))$. Assume that $a=a(\eps,\rho) \in (-1,1)$ is chosen so that \eqref{eq:reduced-bdry-interior-bdry} holds and the level set flow does not fatten. 

Non-fattening guarantees that $t\mapsto \cH^n\lfloor \partial^*\{x : u(x,t) > a\}$ is a unit-regular integral Brakke flow $\cM_{\eps,\rho}$ by \cite[11.4]{Ilmanen:elliptic} (cf.\ \cite[Theorem 3.10]{BernsteinWang:1}). It remains to check the condition concerning the support of both flows. Note that
\begin{align*}
(\supp\cM_{\eps,\rho})_+& = \left(\overline{\bigcup_{t\geq -1} \partial^*\{\bx : u(\bx,t) > a\} \times \{t\}}\right)_+\\
& = (\overline{\partial^*\{u>a\}})_+\\
& = \{u=a\}_+\\
& = (\partial \{u\geq a\})_+
\end{align*}
The second equality is proven as in \cite[11.6(iii)]{Ilmanen:elliptic}, the third is \eqref{eq:reduced-bdry-interior-bdry} and the final equality follows from non-fattening of $\Gamma_{\eps,\rho}^a$. This completes the proof. 
\end{proof}

Note that we could have used the work of Evans--Spruck \cite{EvansSpruck:levelset4} instead of Ilmanen's approach \cite{Ilmanen:elliptic} in the previous proof. 

\begin{lemma}\label{lemm:approximators-conicality}
There is $r_0 = r_0(\Sigma) > 0$ so that for $r > \tfrac{r_0}{2}$, we can take $\rho$ sufficiently large depending on $r$ to conclude that in the space-time region
\[
(B_r\setminus \bar B_{r_0/2})\times [-1,2],
\]
we have that $\partial\cK_{\eps,\rho}$ and $\cM_{\eps,\rho}$ agree with the set flow and Brakke flow associated to the same smooth mean curvature flow of hypersurfaces. Moreover, there is $C=C(\Sigma)>0$ independent of $r$ so that this flow has second fundamental form bounds 
\[
|\bx| |A| + |\bx|^2|\nabla A| + |\bx|^3 |\nabla^2 A| \leq C. 
\]
\end{lemma}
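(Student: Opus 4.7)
The plan is to combine the asymptotically conical structure of $\Sigma$ with pseudolocality for mean curvature flow, and then identify the weak flows $\cM_{\eps,\rho}$, $\partial \cK_{\eps,\rho}$ with the resulting smooth classical MCF by uniqueness.

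First I would record scale-invariant curvature bounds on $\Sigma_{\eps,\rho}$ in the annular region. Asymptotic conicality of $\Sigma$ (Appendix \ref{sec:shrinker.geometry}) gives $|\mathbf{x}|^{1+k} |\nabla^k A_\Sigma(\mathbf{x})| \leq C(\Sigma)$ for $k=0,1,2$ and $|\mathbf{x}| \geq r_0/4$, once $r_0 = r_0(\Sigma)$ is taken large. Combined with the pointwise decay of $\varphi_1$, these bounds transfer to $\Sigma_\eps = \Graph_\Sigma(\eps \varphi_1)$ uniformly in $\eps \in (0,\eps_0)$. For $\rho$ sufficiently large depending on $r$, the approximator $\Sigma_{\eps,\rho}$ is a $C^\infty$-small perturbation of $\Sigma_\eps$ throughout $B_{2r}$, so $\Sigma_{\eps,\rho}$ satisfies the same scale-invariant bounds in $B_{2r}\setminus B_{r_0/4}$ at $t=-1$.

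Next I would promote these bounds to smoothness on the full time interval via pseudolocality. Because $|\mathbf{x}|^{-1}\Sigma$ converges smoothly to the regular cone $\cC$ on $\RR^{n+1}\setminus \{\bOh\}$, for any $\mathbf{x}$ with $|\mathbf{x}|\geq r_0/2$ the hypersurface $\Sigma_{\eps,\rho}$ is, after rescaling by $|\mathbf{x}|^{-1}$, arbitrarily $C^3$-close to the tangent hyperplane of $\cC$ at $|\mathbf{x}|^{-1}\mathbf{x}$ on a ball of definite size, provided $r_0$ is large enough. Ilmanen-type pseudolocality (equivalently, White's local regularity applied to the associated Brakke flow) then produces a classical smooth MCF inside $B_{c|\mathbf{x}|/2}(\mathbf{x}) \times [-1, -1+\gamma c^2 |\mathbf{x}|^2]$ with curvature estimates $|\mathbf{x}|^{1+k}|\nabla^k A|\leq C(\Sigma)$ for $k=0,1,2$. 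Choosing $r_0 = r_0(\Sigma)$ large enough that $\gamma c^2 (r_0/2)^2 > 3$, and covering the annulus by overlapping such parabolic balls, yields a smooth classical MCF on all of $(B_r\setminus \bar B_{r_0/2})\times [-1,2]$ with the curvature bounds claimed in the lemma.

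Finally, I would identify the weak flows with this smooth MCF. Since $\cM_{\eps,\rho}$ is a unit-regular integral Brakke flow that agrees with the smooth MCF at $t=-1$ in the annular region, local density-$1$ at every interior spacetime point $X$ (by smooth convergence to the smooth sheet) forces $\cM_{\eps,\rho} = \cH^n\lfloor(\text{smooth surface})$ there by unit-regularity. The corresponding statement for $\partial \cK_{\eps,\rho}$ follows from sandwiching the smooth MCF by weak set flows on its two sides via the classical avoidance principle. The main technical point will be compatibility of the pseudolocality time scale with the \emph{fixed} interval $[-1,2]$: the natural parabolic scale on the inner boundary is $\sim (r_0/2)^2$, which must exceed the duration $3$. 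This is what dictates the choice of $r_0=r_0(\Sigma)$, after which the conclusion holds uniformly in $r > r_0/2$ and (for $\rho$ large depending on $r$) in $\eps, \rho$.
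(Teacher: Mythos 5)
Your proposal is correct and follows essentially the same route as the paper, whose proof is a brief citation of pseudolocality (\cite[Theorem 1.5]{IlmanenNevesSchulze}) and local interior curvature estimates (\cite[Propositions 3.21, 3.22]{Ecker:book}) applied on large balls far out along $\Sigma_\eps$, using that $\Sigma_{\eps,\rho}$ is a small perturbation of $\Sigma_\eps$ there when $\rho$ is large; your choice of $r_0$ large so that the scale-invariant pseudolocality time exceeds the fixed interval is exactly the implicit point. The only stylistic difference is that the standard argument applies pseudolocality and the interior estimates directly to the weak flow $\cM_{\eps,\rho}$ (then invokes unit-regularity/local regularity), rather than building an auxiliary classical flow and identifying the weak flows with it afterwards, which slightly streamlines your final identification step.
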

\begin{proof}
This follows from pseudolocality (cf.\ \cite[Theorem 1.5]{IlmanenNevesSchulze}) and local curvature estimates (cf.\ \cite[Proposition 3.21 and 3.22]{Ecker:book}) applied on large balls far out along $\Sigma_\eps$. See also \cite[Proposition 4.4]{BernsteinWang:TopologicalProperty}.
\end{proof}

We can now pass to a subsequential limit\footnote{We always use Kuratowski convergence to consider limits of sets. Recall that $Z_n\to Z$ if $Z=\{x : \limsup_n d(x,Z_n)=0\}=\{x : \liminf_n d(x,Z_n)\}$. Because $\RR^{n+1}\times \RR$ is separable, subsequential limits in this sense always exist. See \cite[\S 9]{HerskovitsWhite:avoidance-set-theoretic} for further discussion.} $\rho_i\to\infty$ to find a Brakke flow $\cM_\eps$ (resp.\ weak set flow $\cK_\eps$) with initial conditions $\cH^n\lfloor \Sigma_\eps$ (resp.\ $K_\eps$, the closed region above $\Sigma_\eps$; in other words, $K_{\eps}$ is the unique closed set with $K_\eps\subset \Omega$ and $\partial K_\eps = \Sigma_\eps$). 

\begin{lemma}\label{lemm:K-eps-vs-M-eps-vs-K}
We have $\partial\cK_\eps \setminus \mathfrak{t}^{-1}(-1) \subset\supp\cM_\eps \subset \cK_\eps$. 
\end{lemma}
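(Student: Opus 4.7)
The plan is to establish the two inclusions separately: $\supp\cM_\eps \subset \cK_\eps$ via Kuratowski upper semi-continuity of supports under weak convergence of Radon measures, and $\partial\cK_\eps \setminus \mathfrak{t}^{-1}(-1) \subset \supp\cM_\eps$ via an intermediate-value argument combined with upper semi-continuity of Huisken's Gaussian density. Throughout, I would pass to a common diagonal subsequence along which $\cM_{\eps,\rho_i} \rightharpoonup \cM_\eps$ as Brakke flows and $\cK_{\eps,\rho_i} \to \cK_\eps$ locally in spacetime Hausdorff distance.

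For the first inclusion, Proposition \ref{prop:exist-compact-approx-flows} gives $\supp\cM_{\eps,\rho_i} \subset \cK_{\eps,\rho_i}$. Weak convergence of Radon measures forces any $X \in \supp\cM_\eps$ to be a limit of points $X_i \in \supp\cM_{\eps,\rho_i} \subset \cK_{\eps,\rho_i}$, and such limits lie in $\cK_\eps$ by Hausdorff convergence. Therefore $\supp\cM_\eps \subset \cK_\eps$.

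For the second inclusion, fix $X \in \partial\cK_\eps$ with $\mathfrak{t}(X) > -1$ and any small $\delta > 0$. Since $X \in \partial\cK_\eps$ and $\cK_\eps^c$ is open, one can find $Y \in B_\delta(X)$ admitting $\eta > 0$ with $B_\eta(Y) \cap \cK_\eps = \emptyset$; Hausdorff convergence then forces $B_{\eta/2}(Y) \cap \cK_{\eps,\rho_i} = \emptyset$ for all large $i$. On the other hand, $X \in \cK_\eps$ produces $X_i \in \cK_{\eps,\rho_i} \cap B_\delta(X)$ with $X_i \to X$. The spacetime segment from $X_i$ to $Y$ lies in $B_\delta(X)$ by convexity, starts inside $\cK_{\eps,\rho_i}$, and ends outside, so it meets the topological boundary $\partial\cK_{\eps,\rho_i}$ at some $Z_i \in B_\delta(X)$. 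For $\delta$ sufficiently small we have $\mathfrak{t}(Z_i) > -1$, hence $Z_i \in \supp\cM_{\eps,\rho_i}$ by Proposition \ref{prop:exist-compact-approx-flows}, and then $\Theta_{\cM_{\eps,\rho_i}}(Z_i) \geq 1$ by integrality together with Huisken's monotonicity. A diagonal argument over $\delta \searrow 0$ produces $Z_{i_k} \to X$, and upper semi-continuity of Gaussian density under weak Brakke convergence (a direct consequence of Huisken monotonicity at any fixed scale $r > 0$) yields $\Theta_{\cM_\eps}(X) \geq 1$, i.e., $X \in \supp\cM_\eps$.

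I expect the main obstacle to be the intermediate-value step of the second inclusion: one must verify that it genuinely produces a spacetime boundary point close enough to $X$ while respecting $\mathfrak{t} > -1$. Convexity of $B_\delta(X)$ and choosing $\delta$ small enough handle this cleanly, since each $\cK_{\eps,\rho_i}$ is closed with open complement, so any continuous path from interior to exterior meets the topological boundary. A secondary point of care is that the Brakke and Hausdorff convergence subsequences must be arranged compatibly, but this is achieved by standard diagonalization against a countable exhaustion of spacetime by compact sets.
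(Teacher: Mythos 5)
Your proof is correct and follows essentially the same route as the paper's (much terser) argument: the paper also obtains boundary points $X_i \in \partial\cK_{\eps,\rho_i}\setminus\mathfrak{t}^{-1}(-1) = \supp\cM_{\eps,\rho_i}\setminus\mathfrak{t}^{-1}(-1)$ converging to $X$ and concludes $X \in \supp\cM_\eps$ via the monotonicity formula, with the second inclusion following from closedness of $\cK_\eps$. Your intermediate-value and density upper semi-continuity steps simply spell out the details that the paper leaves implicit.
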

\begin{proof}
For $X \in \partial\cK_\eps \setminus\mathfrak{t}^{-1}(-1)$, there is $X_i \in \partial\cK_{\eps,\rho_i} \setminus\mathfrak{t}^{-1}(-1) = \supp \cM_{\eps,\rho_i} \setminus \mathfrak{t}^{-1}(-1)$ with $X_i\to X$. The monotonicity formula thus guarantees that $X \in \supp \cM_\eps$. The other claim follows directly from the fact that $\cK_\eps$ is closed. 
\end{proof}

\begin{lemma}\label{lemm:conical-ends-smooth-flow}
Take $r_0=r_{0}(\Sigma)$ in Lemma \ref{lemm:approximators-conicality} larger if necessary. Then in the space-time region
\[
(\RR^{n+1}\setminus \bar{B}_{r_0}) \times [-1,1],
\]
both $\partial\cK_\eps$ and $\cM_\eps$ are the same smooth mean curvature flow which we denote by $\Sigma_{\eps}(t)$, and satisfy
\[
|\bx||A| + |\bx|^2 |\nabla A| + |\bx|^3|\nabla^2A| \leq C.
\]
Finally, $\Sigma_{\eps}(t)$ intersects the spheres $\partial B_{r}$ transversely, for all $r > r_{0}$. 
\end{lemma}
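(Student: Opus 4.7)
The plan is to use Lemma \ref{lemm:approximators-conicality} as a source of uniform local regularity for the approximators, and then pass to a smooth subsequential limit that can be identified with both $\cM_\eps$ and $\partial\cK_\eps$ outside a fixed large ball.

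First, I would fix an exhaustion by radii $R_j \nearrow \infty$ and, for each $j$, apply Lemma \ref{lemm:approximators-conicality} to extract $\rho_{i,j}$ along which $\cM_{\eps,\rho_{i,j}}$ and $\partial\cK_{\eps,\rho_{i,j}}$ coincide with a smooth mean curvature flow on $(B_{R_j}\setminus \bar B_{r_0/2})\times[-1,2]$ satisfying the uniform bound $|\bx||A|+|\bx|^2|\nabla A|+|\bx|^3|\nabla^2 A|\leq C(\Sigma)$. Standard interior parabolic Schauder estimates then upgrade this to uniform bounds on all higher derivatives of $A$ on slightly smaller parabolic cylinders. Arzel\`a--Ascoli together with a diagonal argument across $R_j$ would produce a smooth mean curvature flow $\Sigma_\eps(t)$ on $(\RR^{n+1}\setminus\bar B_{r_0})\times[-1,1]$ (possibly after enlarging $r_0$ slightly to absorb the inner buffer) inheriting the stated $C^2$ bound on $A$.

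Next, I would identify this smooth limit with both $\cM_\eps$ and $\partial\cK_\eps$ in the region. For the Brakke side, $\cM_{\eps,\rho_i}\rightharpoonup\cM_\eps$ combined with the smooth convergence of the approximators forces $\cM_\eps = \cH^n\lfloor \Sigma_\eps(t)$ on $(\RR^{n+1}\setminus\bar B_{r_0})\times[-1,1]$, since the limit is smooth and comes with multiplicity one. For the weak set flow side, Hausdorff convergence of $\partial\cK_{\eps,\rho_i}$, sandwiched via Lemma \ref{lemm:K-eps-vs-M-eps-vs-K} between $\supp\cM_\eps$ and $\cK_\eps$, pins down $\partial\cK_\eps = \Sigma_\eps(t)$ in the region. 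For transversality, I would argue by rescaling: given $\bar\bx \in \Sigma_\eps(t)$ with $|\bar\bx|=r$ large, rescale by $r^{-1}$ so the uniform bound becomes $|\tilde\bx||\tilde A|\leq C$, independent of $r$. As $r\to\infty$, the rescaled flows $r^{-1}\Sigma_\eps(t)$ converge on compact subsets of $\RR^{n+1}\setminus\{\bOh\}$ to the asymptotic cone $\cC$ of $\Sigma$, because the $\eps\varphi_1$ perturbation and a finite-time MCF both contribute bounded corrections that become negligible in the blowdown. Since $\cC$ is regular, the position vector is tangent to $\cC$ and normal to $\partial B_1$, so $\cC$ is transverse to $\partial B_1$; by continuity, $\Sigma_\eps(t)$ is transverse to $\partial B_r$ for all $r > r_0$ after enlarging $r_0=r_0(\Sigma)$ if necessary.

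The main obstacle I anticipate is the identification of $\partial\cK_\eps$ with the smooth limit: Hausdorff convergence of closed sets is a priori much weaker than Brakke convergence, and one must rule out stray components of $\partial\cK_\eps$ coming from accumulation near the compactifying caps $\partial B_{\rho R}$ or from possible fattening in the limit. Lemma \ref{lemm:K-eps-vs-M-eps-vs-K} is the decisive tool here, since it allows one to transfer the smooth regularity already established for $\cM_\eps$ over to the set-theoretic boundary $\partial\cK_\eps$. The other steps are essentially bookkeeping applications of interior parabolic regularity and the asymptotic conicity of $\Sigma$.
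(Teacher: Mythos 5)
Your argument for the first two claims is essentially the paper's: the smoothness and the bound $|\bx||A|+|\bx|^2|\nabla A|+|\bx|^3|\nabla^2 A|\leq C$ are obtained by passing the estimates of Lemma \ref{lemm:approximators-conicality} to the limit along a diagonal sequence of radii (note you need not extract new subsequences $\rho_{i,j}$: the conclusion of Lemma \ref{lemm:approximators-conicality} holds for \emph{all} sufficiently large $\rho$, so it applies eventually along the already-fixed sequence $\rho_i$, which is what lets you identify the limit with $\cM_\eps$ and $\cK_\eps$), and the identification of $\partial\cK_\eps$ with $\supp\cM_\eps$ in the exterior region is exactly the paper's use of the sandwich $\partial\cK_\eps\subset\supp\cM_\eps\subset\cK_\eps$ from Lemma \ref{lemm:K-eps-vs-M-eps-vs-K} together with the smooth multiplicity-one convergence of the approximators.

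Where you genuinely diverge is the transversality statement. The paper gets it directly from the Ecker--Huisken interior gradient estimate \cite[Theorem 2.1]{EckerHuisken:interior} applied to large balls far out along $\Sigma_\eps=\Sigma_\eps(-1)$: the flow stays locally graphical with controlled gradient over the (nearly conical) initial surface, and since the cone directions contain the radial direction this immediately forces transverse intersection with every sphere $\partial B_r$, $r>r_0(\Sigma)$, with no compactness argument. Your route is a blowdown: rescale by $r^{-1}$ and use $C^1_{\mathrm{loc}}$ convergence to the asymptotic cone $\cC$, which meets $\partial B_1$ transversely. This works, but the conclusion you need is quantified over all $r>r_0(\Sigma)$, all $\eps\in(0,\eps_0)$ and all $t\in[-1,1]$, so you must make the uniformity of the blowdown explicit rather than appeal to ``continuity'': the ingredients are (i) the decay $w=O(r^{-1})$ of $\Sigma$ over $\cC$ and the eigenfunction decay of $\eps\varphi_1$ (uniform in $\eps\leq\eps_0$), and (ii) the already-established bound $|A|\leq C(\Sigma)/|\bx|$, which via $|H|\leq C/|\bx|$ shows the exterior flow moves only a distance $O(1/r)$ at radius $\sim r$ over the time interval $[-1,1]$; together with the curvature bounds this gives $C^1$ closeness of $r^{-1}\Sigma_\eps(t)$ to $\cC$ on a fixed annulus, uniformly in $(\eps,t,r)$ for $r\geq r_0(\Sigma)$, and then uniform transversality of $\cC$ along its compact link finishes the argument (or run the same facts through a contradiction/compactness argument over sequences $r_k\to\infty$, $\eps_k$, $t_k$). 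With that uniformity spelled out, your proof is complete; the paper's citation of Ecker--Huisken is simply the shorter, estimate-based way to package the same geometric fact.
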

Note that the smooth flows from Lemmas \ref{lemm:short-time-exist-Sigma-eps} and \ref{lemm:conical-ends-smooth-flow} agree when they are both defined, so naming this flow $\Sigma_{\eps}(t)$ is not a serious abuse of notation. 
\begin{proof}
The smoothness and curvature estimates follow by passing the curvature estimates in Lemma \ref{lemm:approximators-conicality} to the limit along a diagonal sequence $r\to\infty$. Since $\partial \cK_\eps \subset \supp \cM_\eps \subset  \cK_\eps$, we see that the smooth flows must agree. Finally, transverse intersection follows from \cite[Theorem 2.1]{EckerHuisken:interior} applied to balls far out along $\Sigma_{\eps} = \Sigma_{\eps}(-1)$. 
\end{proof}

\begin{lemma}\label{lemm:Keps-smooth-start}
There is $\delta=\delta(\Sigma)>0$ so that in the space-time region
\[
\mathfrak{t}^{-1}([-1,-1+\delta]),
\]
both $\partial \cK_\eps$ and $\cM_\eps$ agree with the smooth mean curvature flow $\Sigma_\eps(t)$ from Lemma \ref{lemm:short-time-exist-Sigma-eps}. 
\end{lemma}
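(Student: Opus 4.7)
The plan is to show that the approximating smooth classical flows $\Sigma_{\eps,\rho}(t)$ starting from the compact smooth surfaces $\Sigma_{\eps,\rho}$ exist classically on a uniform interval $[-1,-1+\delta]$ with $\delta = \delta(\Sigma) > 0$, and that $\cM_{\eps,\rho}$ and $\partial\cK_{\eps,\rho}$ both coincide with $\Sigma_{\eps,\rho}(t)$ throughout this interval. Sending $\rho_i \to \infty$ then produces $\Sigma_\eps(t)$ as the common limit by smooth compactness of MCF with bounded curvature and uniqueness.

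First I would establish uniform, $\rho$-independent geometric bounds on $\Sigma_{\eps,\rho}$ at a fixed scale. On any fixed ball $B_r$, the smooth convergence $\Sigma_{\eps,\rho} \to \Sigma_\eps$ from Proposition \ref{prop:exist-compact-approx-flows} provides uniform bounds on $|A|$, $|\nabla A|$, $|\nabla^2 A|$ for $\rho$ large. In the exterior, the decay $|\bx||A|+|\bx|^2|\nabla A|+|\bx|^3|\nabla^2 A| \leq C$ from Lemma \ref{lemm:approximators-conicality} provides the analogous bounds uniformly in $\rho$ once $\rho$ is large compared to the radius under consideration. Combining these and the fact that the corner smoothing in Proposition \ref{prop:exist-compact-approx-flows} is performed in a region where $\Sigma_\eps$ itself has well-controlled geometry, one obtains uniform geometric bounds on $\Sigma_{\eps,\rho}$ at a scale depending only on $\Sigma$.

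Second, Ecker--Huisken's interior estimates \cite{EckerHuisken:interior} promote these initial bounds to uniform curvature estimates on a uniform time interval $[-1,-1+\delta]$, with $\delta = \delta(\Sigma)>0$, yielding smooth classical solutions $\Sigma_{\eps,\rho}(t)$ with uniform bounds on every derivative of $A$ on compact subsets of spacetime. Throughout this classical regime, $\Sigma_{\eps,\rho}(t)$ is a smooth multiplicity-one flow out of smooth compact initial data, so by the unit-regularity of $\cM_{\eps,\rho}$ combined with uniqueness of smooth tangent flows (Brakke regularity), $\cM_{\eps,\rho} = \cH^n\lfloor\Sigma_{\eps,\rho}(t)$ on $[-1,-1+\delta]$; and by the avoidance principle for classical flows against the level set flow, $\partial\cK_{\eps,\rho} = \Sigma_{\eps,\rho}(t)$ there.

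Finally I would pass to the limit along the subsequence $\rho_i \to \infty$ used to extract $\cM_\eps$ and $\cK_\eps$. Smooth compactness gives smooth subconvergence $\Sigma_{\eps,\rho_i}(t) \to \Sigma'(t)$ on compact subsets of $\mathfrak{t}^{-1}([-1,-1+\delta])$, where $\Sigma'(t)$ is a smooth classical MCF with $\Sigma'(-1) = \Sigma_\eps$ and uniformly bounded geometry. By uniqueness of classical MCF from bounded-geometry initial data, $\Sigma'(t) = \Sigma_\eps(t)$. Passing the identities $\partial\cK_{\eps,\rho_i} = \Sigma_{\eps,\rho_i}(t)$ and $\cM_{\eps,\rho_i} = \cH^n\lfloor\Sigma_{\eps,\rho_i}(t)$ to the limit on compact sets, and using Lemma \ref{lemm:K-eps-vs-M-eps-vs-K} to rule out loss of support in the limit, gives the lemma. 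The main obstacle is Step 1: obtaining the uniform existence time $\delta$ and uniform interior estimates independent of $\rho$, which requires matching the smooth convergence on compact cores with the exterior decay of Lemma \ref{lemm:approximators-conicality} across the corner-smoothing region at a scale depending only on $\Sigma$.
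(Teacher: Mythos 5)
Your overall architecture (uniform estimates on the approximators, pass to the limit along $\rho_i\to\infty$, then bounded-curvature uniqueness as in Chen--Yin, plus Lemma \ref{lemm:K-eps-vs-M-eps-vs-K} at the end) matches the paper's, but your crucial first step has a genuine gap. You need uniform, $\rho$- and $\eps$-independent curvature bounds on the \emph{entire} closed surfaces $\Sigma_{\eps,\rho}$ — including the corner-smoothing collar near $\partial B_{\rho R}$ and after the additional ``slight perturbation'' made to ensure non-fattening — and from this a uniform classical existence time $\delta=\delta(\Sigma)$ for the whole compact flow $\Sigma_{\eps,\rho}(t)$, together with the identification $\partial\cK_{\eps,\rho}=\Sigma_{\eps,\rho}(t)$ on all of $[-1,-1+\delta]$. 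Proposition \ref{prop:exist-compact-approx-flows} asserts none of this quantitatively: it only says the corners are smoothed and the surface is perturbed so as to be non-fattening, with smooth convergence to $\Sigma_\eps$ on compact sets. Your justification — that the smoothing takes place where $\Sigma_\eps$ has well-controlled geometry — does not suffice: the curvature introduced by the smoothing is governed by the \emph{scale} at which the corner is rounded (and by the size of the non-fattening perturbation), which is not fixed anywhere; a priori the rounded region could have curvature blowing up with $\rho$ and the global classical flow could become singular there long before time $-1+\delta$. One could repair this by redoing the construction with explicit control (smoothing at scale comparable to $\rho R$), but as written the step does not follow from the cited results.

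The paper's proof is designed precisely to avoid needing any control far out: since $\Sigma_{\eps,\rho_i}\to\Sigma_\eps$ smoothly on compact sets, pseudolocality and interior estimates show that, for $i$ large, \emph{one component} of $\partial\cK_{\eps,\rho_i}\cap(B_r\times[-1,-1+\delta])$ (and similarly for $\cM_{\eps,\rho_i}$) is a smooth flow with uniformly bounded curvature on a uniform time interval, while small spherical barriers rule out any other component of the weak flow entering $B_{r/2}\times[-1,-1+\delta]$; the curvature estimates then pass to the limit, and the conclusion follows either from $\partial\cK_\eps\subset\supp\cM_\eps\subset\cK_\eps$ or from uniqueness of smooth bounded-curvature solutions. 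Note that in your global approach the barrier step is not needed (a globally smooth classical flow leaves no room for extra components), but that is exactly because you have pushed all the difficulty into the unverified global uniform control; localizing via pseudolocality, as the paper does, removes the problem rather than assuming it away.
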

\begin{proof}
Because $\Sigma_{\eps,\rho_i}$ are converging smoothly to $\Sigma_\eps$ on compact sets, pseudolocality and interior estimates guarantee that for any $r>0$, there is a uniform $\delta>0$ so that taking $i$ sufficiently large, one component of 
\[
\partial \cK_{\eps,\rho_i}\cap ({B_r}\times [-1,-1+\delta])
\]
is a smooth mean curvature flow with uniformly bounded curvature (and similarly for $\cM_{\eps,\rho_i}$) for $t \in [-1,-1+\delta]$. 

Small spherical barriers show that for $i$ large, no other component of 
\[
\partial \cK_{\eps,\rho_i}\cap (B_r \times [-1,-1+\delta] )
\]
can intersect $B_{r/2}\times[-1,-1+\delta]$. As such, sending $i\to\infty$, we can pass the curvature estimates to the limit to find that $\partial \cK_\eps \cap \mathfrak{t}^{-1}([-1,-1+\delta])$ (and similarly for $\cM_\eps$) are both smooth mean curvature flows with uniformly bounded curvature that agree with $\Sigma_\eps$ at $t=-1$. The assertion thus follows from $\partial\cK_\eps\subset \supp\cM_\eps\subset \cK_\eps$ as before, or alternatively from the uniqueness of smooth solutions to mean curvature flow with bounded curvature, \cite[Theorem 1.1]{ChenYin}.
\end{proof}

We define the parabolic dilation map
\[ \cF_\lambda : \RR^{n+1} \times \RR\to  \RR^{n+1} \times \RR, \qquad \cF_\lambda : (\bx,t) \mapsto (\lambda \bx,\lambda^2 t). \]
The following result is a consequence of Lemma \ref{lemm:short-time-exist-Sigma-eps} and relates the analytic property of shrinker mean convexity to the behavior of the flow under parabolic dilation. It is convenient to define
\begin{equation}\label{eq:defn-lambda_0-rescaling-param}
\lambda_0 : = \bigg(\frac{1-\frac{\delta}{2}}{1-\delta}\bigg)^{\frac{1}{2}} > 1
\end{equation}
where $\delta$ is as in Lemma \ref{lemm:Keps-smooth-start}. Observe that $\cF_\lambda(\Sigma_\eps(t)\times \{t\}) = \lambda \Sigma_\eps(t) \times \{\lambda^2t\}$, so $\lambda \Sigma_\eps(-\lambda^{-2})$ is the $t=-1$ slice of the parabolic rescaling (by $\lambda$) of the space-time track of the flow $t\mapsto \Sigma_\eps(t)$  and the maximal smooth existence time $T>-1+\delta/2$.

\begin{corollary}\label{coro:sep-smooth-flow}
For $\lambda \in (1,\lambda_0)$ the surface $\lambda \Sigma_\eps(-\lambda^{-2})$ is contained in the interior of $\Omega_\eps$. Moreover, for any $r>0$ large, there is $c=c(r,\Sigma) > 0$ so that 
\[
d(\Sigma_\eps \cap B_r, \lambda \Sigma_\eps(-\lambda^{-2}) \cap B_r) \geq c \eps (\lambda-1). 
\]
for all $\lambda \in (1,\lambda_0)$. 
\end{corollary}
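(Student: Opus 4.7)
The plan is to reparametrize the family $\{\lambda\Sigma_\eps(-\lambda^{-2})\}_{\lambda\in[1,\lambda_0]}$ via the rescaled-time variable $\tau := 2\log\lambda$, producing the parabolically rescaled family
\[
\tilde\Sigma(\tau) := e^{\tau/2}\Sigma_\eps(-e^{-\tau}), \quad \tau \in [0,\, 2\log\lambda_0] = [0,\, -\log(1-\delta)],
\]
with $\tilde\Sigma(0) = \Sigma_\eps$ and $\tilde\Sigma(2\log\lambda) = \lambda\Sigma_\eps(-\lambda^{-2})$. Because the underlying unrescaled time $t = -e^{-\tau}$ lies in $[-1,\,-1+\delta]$ on this interval, Lemma \ref{lemm:short-time-exist-Sigma-eps} furnishes both smoothness of $\Sigma_\eps(t)$ and the quantitative shrinker mean convexity that will drive the argument.

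A direct chain-rule computation using $\partial_t \by = \bH_{\Sigma_\eps(t)}$ together with the scaling law $H_{\lambda S}(\lambda\bx) = \lambda^{-1}H_S(\bx)$ shows that the normal velocity of $\tilde\Sigma(\tau)$ at a point $\bx = e^{\tau/2}\by$ equals the rescaled-flow speed $H_{\tilde\Sigma(\tau)}(\bx) + \tfrac{1}{2}\bx\cdot\nu_{\tilde\Sigma(\tau)}$. Pulling back the shrinker mean convexity bound of Lemma \ref{lemm:short-time-exist-Sigma-eps} at $t = -e^{-\tau}$ via the change of variables $\by = e^{-\tau/2}\bx$ translates directly into a pointwise lower bound
\[
H_{\tilde\Sigma(\tau)}(\bx) + \tfrac{1}{2}\bx\cdot\nu_{\tilde\Sigma(\tau)} \,\geq\, c\eps(1+|\bx|^2)^{\mu}, \quad \bx \in \tilde\Sigma(\tau),\ \tau \in [0,\,2\log\lambda_0],
\]
where $c = c(\Sigma,\lambda_0) > 0$ absorbs the uniformly bounded factors $e^{\tau/2}$ and $2n(t+1)$ arising from the rescaling over the bounded $\tau$-interval. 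Crucially, this velocity points into $\Omega_\eps$.

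The strict positivity of the normal velocity into $\Omega_\eps$ means that $\{\tilde\Sigma(\tau)\}_{\tau \in [0, 2\log\lambda_0]}$ is a smooth, monotone foliation of a one-sided neighborhood of $\Sigma_\eps$ inside $\overline{\Omega_\eps}$; in particular $\tilde\Sigma(\tau) \subset \operatorname{int}(\Omega_\eps)$ for every $\tau > 0$, yielding the first conclusion. For the quantitative separation, I fix $r > 0$ and restrict to $B_r$. Since $\mu < 0$, the factor $(1+|\bx|^2)^\mu$ is bounded below by $(1+r^2)^\mu > 0$ on $B_r$, so the normal velocity is bounded below by $c(r,\Sigma,\lambda_0)\eps$ on $(\Sigma_\eps \cap B_r) \times [0, 2\log\lambda_0]$. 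Writing $\tilde\Sigma(\tau) \cap B_r$ as a normal graph over $\Sigma_\eps \cap B_r$ (valid since the displacement is $O(\eps)$ and therefore small) and integrating in $\tau$, the graph function satisfies $u_\tau(\bx) \geq c\eps\tau$. This pointwise normal displacement controls the set-distance $d(\Sigma_\eps \cap B_r,\, \tilde\Sigma(\tau) \cap B_r)$ from below by a comparable quantity, using that $\tilde\Sigma(\tau)$ has curvature and angle to $\nu_{\Sigma_\eps}$ uniformly controlled on $B_r \times [0, 2\log\lambda_0]$. Combined with the elementary bound $\tau = 2\log\lambda \geq c(\lambda_0)(\lambda-1)$ on $\lambda \in [1,\lambda_0]$, this yields the required $d \geq c\eps(\lambda-1)$.

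The main technical subtlety is a careful identification of the \emph{direction} of the rescaled flow's motion relative to $\Omega_\eps$ via sign bookkeeping in Lemma \ref{lemm:short-time-exist-Sigma-eps}'s shrinker mean convexity; once monotonicity into $\Omega_\eps$ is established, both the containment and the quantitative distance bound follow from routine integration along flow lines and the standard comparison between pointwise normal displacement and set-distance within a tubular neighborhood.
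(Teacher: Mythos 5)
Your proof is correct and is essentially the paper's argument: the paper computes the normal speed of the family $\lambda \mapsto \lambda\Sigma_\eps(-\lambda^{-2})$ directly and bounds it below by the shrinker mean convexity from Lemma \ref{lemm:short-time-exist-Sigma-eps}, with positivity giving the containment and the uniform lower bound on $B_r$ giving the distance estimate after integrating in $\lambda$. Your reparametrization $\tau = 2\log\lambda$ to the rescaled flow and the graph/set-distance bookkeeping are just a more detailed rendering of the same computation.
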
 
\begin{proof}
By Lemma \ref{lemm:short-time-exist-Sigma-eps}, the family of hypersurfaces defined by $\lambda \mapsto \lambda \Sigma_\eps(-\lambda^{-2})$ has normal speed given by
\[
 2(- \lambda^{-2}) H_{\Sigma_\eps(-\lambda^{-2})} + \mathbf{x} \cdot \nu_{\Sigma_\eps(-\lambda^{-2})} \geq C \eps (1+|\mathbf{x}|^2 + 2n(1-\lambda^{-2}))^{\mu}.
\]
This is strictly positive, which proves the first statement. Moreover, the speed is strictly bounded below on $B_r$, which proves the second statement. 
\end{proof}

For $\lambda\geq1$, we define $\cK_{\eps}^\lambda : = \cF_\lambda(\cK_{\eps})\cap \mathfrak{t}^{-1}([-1,1])$ and similarly for $\cM_{\eps}^\lambda$. Recall that $\lambda_0$ has been defined in \eqref{eq:defn-lambda_0-rescaling-param}. Note that $\partial \cK_\eps^\lambda \cap \mathfrak{t}^{-1}(-1)= \lambda\Sigma_\eps(-\lambda^{-2})$ for $\lambda \in [1,\lambda_0)$. Below, we will write $\cK_\eps^1$ (and similarly $\cM_\eps^1$) (as opposed to. $\cK_\eps$ and $\cM_\eps$), the difference being that the time parameter has been restricted to $-1\leq t\leq 1$.

\begin{lemma}\label{lemm:rescaling-end-graphical-over-other-end}
There is $r_{1}=r_{1}(\Sigma)>r_0$ so that for any $\lambda \in (1,\lambda_{0})$, $\lambda \Sigma_{\eps}(\lambda^{-2}t) \setminus \bar{B}_{r_{1}}$ can be written as the normal graph of a function $f_{t}$ defined on the end of $\Sigma_{\eps}(t)$ for all $t \in [-1,1]$. The function $f_{t}$ satisfies
\begin{align*}
|\mathbf{x}||f_{t}| + |\mathbf{x}|^{2}|\nabla f_{t}| + |\mathbf{x}|^{3}|\nabla^{2}f_{t}|\leq C,
\end{align*}
where $C=C(\Sigma)$. Moreover, 
\[
\left( \tfrac{\partial}{\partial t} - \Delta_{\Sigma_\eps(t)} \right) f_t =\mathbf{a} \cdot \nabla_{\Sigma_\eps(t)} f_t + b f_t
\]
where $|\mathbf{a}| + |b| \leq C=C(\Sigma)$. 
\end{lemma}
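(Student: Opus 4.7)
The plan proceeds in two stages: first constructing $f_t$ with the stated size estimates, and then deriving its evolution equation.

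For the first stage, I would parameterize both $\Sigma_\eps(t)$ and $\lambda\Sigma_\eps(\lambda^{-2}t)$ as normal graphs over the asymptotic cone $\cC$ of $\Sigma$. By Lemma \ref{lemm:conical-ends-smooth-flow}, $\Sigma_\eps(t)\setminus\bar B_{r_0}$ is a smooth mean curvature flow with the scale-invariant curvature bounds $|\mathbf{x}|^{k+1}|\nabla^k A|\leq C$ for $k=0,1,2$, and its $t=-1$ slice is a small perturbation of the asymptotically conical $\Sigma$. Combining these bounds with the asymptotic conicality of $\Sigma$ (sharpened via Appendix \ref{sec:shrinker.geometry}) and pseudolocality through the short time interval, one obtains a parameterization
\[
\Sigma_\eps(t)\setminus B_{R'}=\{\mathbf{p}+g_t(\mathbf{p})\,\nu_\cC(\mathbf{p}):\mathbf{p}\in\cC\setminus B_R\}
\]
for suitable $R,R'=R(\Sigma)$, with pointwise bounds $|\mathbf{p}||g_t|+|\mathbf{p}|^2|\nabla g_t|+|\mathbf{p}|^3|\nabla^2 g_t|+|\mathbf{p}|^4|\nabla^3 g_t|\leq C(\Sigma)$. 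Since $\lambda\cC=\cC$, the rescaled flow $\lambda\Sigma_\eps(\lambda^{-2}t)$ is likewise a normal graph over $\cC$, with graph function $G_{\lambda,t}(\mathbf{q}):=\lambda g_{\lambda^{-2}t}(\lambda^{-1}\mathbf{q})$ satisfying analogous estimates uniformly in $\lambda\in(1,\lambda_0)$.

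I would then convert these two graph representations into a single representation of $\lambda\Sigma_\eps(\lambda^{-2}t)$ as a normal graph over $\Sigma_\eps(t)$. Because the unit normals of $\Sigma_\eps(t)$ differ from those of $\cC$ by $O(|\mathbf{x}|^{-2})$ (controlled by $\nabla g_t$), the implicit function theorem yields a bijective parameterization on $\{|\mathbf{x}|>r_1\}$ once $r_1=r_1(\Sigma)>r_0$ is large enough in terms of $C$ and the geometry of $\cC$. The resulting $f_t$ inherits the decay estimates of $g_t-G_{\lambda,t}$ to leading order, which yields $|\mathbf{x}||f_t|+|\mathbf{x}|^2|\nabla f_t|+|\mathbf{x}|^3|\nabla^2 f_t|\leq C$ as required.

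For the PDE, I use that both $\Sigma_\eps(t)$ and $\lambda\Sigma_\eps(\lambda^{-2}t)$ are smooth mean curvature flows (the latter by parabolic invariance). Consequently $f_t$ satisfies a quasilinear parabolic equation $\partial_t f_t=F(\mathbf{x},t,f_t,\nabla f_t,\nabla^2 f_t)$ with $F(\mathbf{x},t,0,0,0)=0$, since $f_t\equiv 0$ corresponds to coincidence of the two flows. Expanding $F$ via the fundamental theorem of calculus along the lines of Corollary \ref{coro:linearized.equation.error}, the linearization at $f=0$ is the Jacobi operator $(\partial_t-\Delta_{\Sigma_\eps(t)}-|A|^2)f_t$, and the nonlinear corrections reorganize into coefficients of $f_t$ and $\nabla f_t$. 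This absorption is possible because $|f_t|$, $|\nabla f_t|$, and $|\mathbf{x}|\,|\nabla^2 f_t|$ are uniformly small on $\{|\mathbf{x}|>r_1\}$ and the geometric quantities of $\Sigma_\eps(t)$ are bounded there (in particular $|A|^2\leq C|\mathbf{x}|^{-2}$ contributes to $b$). The hard part will be tracking decay carefully through this absorption step so that mixed nonlinear terms such as $|\nabla f_t|^2\,\nabla^2 f_t$ are correctly repackaged as bounded $\mathbf{a}\cdot\nabla f_t$ contributions rather than spoiling the first-order structure of the equation.
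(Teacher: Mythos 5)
Your proposal is correct and follows essentially the same route as the paper: both flows are written graphically over the asymptotic cone (the paper uses local graphs over balls $B_{\eta|\mathbf{z}|}(\mathbf{z})\subset T_{\mathbf{z}}\cC$ and cites the curvature estimates of \cite[Lemma 2.2]{Wang:uniqueness}, while you use a global normal graph over $\cC$ plus the implicit function theorem, which is the same substance), and the evolution equation is obtained exactly as in the paper by linearizing the mean curvature flow equation and absorbing the quadratic error terms into the bounded coefficients $\mathbf{a}$ and $b$ (cf.\ \cite[Lemma 2.5]{Sesum:rateMCF}), using the uniform decay of $f_t$, $\nabla f_t$, $\nabla^2 f_t$ on $\{|\mathbf{x}|>r_1\}$, just as you indicate.
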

\begin{proof}
This follows from the argument in \cite[Proposition 4.4]{BernsteinWang:TopologicalProperty}. Indeed, we first observe that by taking $r_{1}$ sufficiently large, $\lambda \Sigma_{\eps}(\lambda^{-2}t)$ and $ \Sigma_{\eps}(t)$ are locally graphs of some functions $u,u^\lambda$ over
\[
B_{\eta |\mathbf{z}|}(\bz) \subset T_{\mathbf{z}}\cC
\]
for $\eta=\eta(\Sigma)>0$ and $|\mathbf{z}| > r_{1}$ sufficiently large. Differentiating the mean curvature flow equation as in \cite[Lemma 2.2]{Wang:uniqueness}  yields curvature estimates that prove that $f_t$ exists and satisfies the asserted estimates. Finally, the fact that $f_t$ satisfies the given equation follows by considering the quadratic error terms when linearizing the mean curvature flow equation; a similar argument can be found in \cite[Lemma 2.5]{Sesum:rateMCF}. 
\end{proof}

\begin{proposition}\label{prop:brakke-eps-disjoint-Klambda}
The support of the Brakke flow $\supp\cM_\eps^1$ is disjoint from the scaled weak set flow $\cK_\eps^\lambda$, for all $\lambda \in (1,\lambda_0)$. 
\end{proposition}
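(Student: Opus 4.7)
The plan is to combine three ingredients: (a) strict separation of the initial slices at $t=-1$; (b) a maximum-principle argument for the two smooth flows on the asymptotic ends; and (c) Ilmanen's localized avoidance principle (Theorem~\ref{theo:ilmanen-avoidance}) inside a large ball.

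For (a), $\supp\cM_\eps^1(-1) = \Sigma_\eps$ and $\cK_\eps^\lambda(-1) = \lambda\cK_\eps(-\lambda^{-2})$. Since $\lambda \in (1,\lambda_0) = (1,(1-\delta)^{-1/2})$, we have $-\lambda^{-2} \in (-1,-1+\delta)$, so Lemma~\ref{lemm:Keps-smooth-start} identifies $\cK_\eps(-\lambda^{-2})$ as the closed region bounded by the smooth hypersurface $\Sigma_\eps(-\lambda^{-2})$. Corollary~\ref{coro:sep-smooth-flow} places $\lambda\Sigma_\eps(-\lambda^{-2})$ strictly inside $\Omega_\eps^\circ$ with compact-set separation from $\Sigma_\eps$ at least $c(r,\Sigma)\eps(\lambda-1)$; the asymptotic graph estimate $|\mathbf{x}||f_{-1}|\leq C$ from Lemma~\ref{lemm:rescaling-end-graphical-over-other-end} extends this control to the ends. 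Together these yield $\lambda\cK_\eps(-\lambda^{-2}) \subset \overline{\Omega_\eps}$ disjoint from $\Sigma_\eps$.

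For (b), Lemma~\ref{lemm:conical-ends-smooth-flow} identifies $\supp\cM_\eps = \partial\cK_\eps = \Sigma_\eps(t)$ in $(\RR^{n+1}\setminus\bar B_{r_0})\times[-1,1]$, and $\lambda\Sigma_\eps(\lambda^{-2}t) = \partial\cK_\eps^\lambda(t)$ is smooth there. Lemma~\ref{lemm:rescaling-end-graphical-over-other-end} writes $\lambda\Sigma_\eps(\lambda^{-2}t)$ as a normal graph $f_t$ over the end of $\Sigma_\eps(t)$, with $f_t$ satisfying a linear parabolic equation with bounded coefficients, the decay $|\mathbf{x}||f_t|\leq C$, and $f_{-1}>0$ from (a). I claim $f_t>0$ throughout $[-1,1]$: at a hypothetical first zero $(\mathbf{x}_0,t_0)$, a local graphical representation of both smooth flows over a common tangent hyperplane combined with the strong parabolic maximum principle applied to the difference of their graph functions forces the two flows to agree in a backward parabolic neighborhood of $(\mathbf{x}_0,t_0)$; unique continuation for the quasilinear MCF equation then propagates coincidence along the entire smooth end, contradicting $f_{-1}\not\equiv 0$. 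Consequently $\lambda\Sigma_\eps(\lambda^{-2}t)$ stays strictly on the $\Omega_\eps$-side of $\Sigma_\eps(t)$ throughout $[-1,1]$, so $\lambda\cK_\eps(\lambda^{-2}t)$ (the corresponding rescaled closed region) is strictly separated from $\Sigma_\eps(t)$, and for any $R>r_1$, $\supp\cM_\eps^1(t)\cap\cK_\eps^\lambda(t)=\emptyset$ in $\RR^{n+1}\setminus B_R$ uniformly in $t\in[-1,1]$.

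Finally, apply Theorem~\ref{theo:ilmanen-avoidance} (Ilmanen's localized avoidance) to the pair of weak set flows $(\supp\cM_\eps^1,\cK_\eps^\lambda)$ in the cylinder $B_R\times[-1,1]$: disjoint initial slices at $t=-1$ from (a) and disjoint traces on $\partial B_R\times[-1,1]$ from (b) give disjointness throughout $B_R\times[-1,1]$, which combined with (b) yields the proposition. The main obstacle is the first-contact argument in (b) on a non-compact smooth end; particular care is required near the inner boundary $\partial B_{r_0}$, where the weak flow may transition to singular behavior just inside $B_{r_0}$, and one must verify that the local strong maximum principle and unique continuation remain applicable up to this boundary.
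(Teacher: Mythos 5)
There is a genuine gap in your step (b), and it sits exactly where the paper has to work hardest. Your claim that $f_t>0$ on the ends for all $t\in[-1,1]$ rests on a ``hypothetical first zero'' argument, but on the noncompact end such a first zero need not exist as an attained interior point: since $|\mathbf{x}||f_t|\leq C$, the infimum of $f_t$ over the end can tend to $0$ with the near-minimizing points escaping to spatial infinity, in which case the local strong maximum principle and unique continuation have nothing to act on. This is precisely why the paper does not run a pointwise first-touching argument on the end but instead invokes the Ecker--Huisken maximum principle at infinity (Theorem \ref{theo:ecker-huisken}), whose Gaussian-weighted integrability hypotheses are what the decay estimates of Lemma \ref{lemm:rescaling-end-graphical-over-other-end} are designed to verify. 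Your proposal never brings in this tool, and without it the at-infinity scenario is simply not excluded. Moreover, the first zero could also occur at the inner edge of the graphical region (near $\partial B_{r_1}$), which is a parabolic boundary point where interior arguments do not apply; you flag this but do not resolve it.

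The second, structural, problem is that your ordering --- prove (b) on all of $[-1,1]$ first, then apply Ilmanen's localized avoidance in a ball --- is circular. Any maximum principle on the exterior region (Ecker--Huisken included) needs positivity of $f_t$ on the parabolic boundary, i.e.\ at the inner sphere, for the whole time interval; but that positivity is equivalent to disjointness of the two flows near that sphere, which in your scheme is only supposed to come later from the interior avoidance, which in turn uses (b). The paper breaks this circle with a continuity argument in time: letting $T$ be the first time the conclusion fails, one uses the definition of $T$ together with Theorem \ref{theo:ilmanen-avoidance} to get disjointness in a large ball at time $T$, uses smoothness of both flows outside $B_{r_0}$ to extend disjointness in an annulus a short time $\eta$ past $T$, then applies Theorem \ref{theo:ecker-huisken} (with the annulus supplying the boundary positivity) to get strict positivity of $f_t$ on the ends up to $T+\eta$, and finally applies Theorem \ref{theo:ilmanen-avoidance} again to contradict the choice of $T$. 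Your ingredients (a) and (c) are correct and match the paper, but step (b) as written would fail, and the interleaving via a first-contact time is not an optional refinement --- it is what makes the exterior maximum principle applicable at all.
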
 
\begin{proof}
We follow the proof of \cite[Proposition 4.4]{BernsteinWang:TopologicalProperty}, but use Ilmanen's localized avoidance principle in the compact part, due to the possible presence of singularities. Fix $\lambda \in (1,\lambda_{0})$ and let $T \in [-1,1]$ denote the first time the claim fails:
\[
T = \sup\{ \tau : \supp\cM^{1}_{\eps}\cap \cK^{\lambda}_{\eps} \cap \mathfrak{t}^{-1}((-1,\tau)) = \emptyset\}.
\]
By Lemmas \ref{lemm:short-time-exist-Sigma-eps} and \ref{lemm:Keps-smooth-start}, $T> -1+\delta$. Assume that $T<1$. 

Using Theorem \ref{theo:ilmanen-avoidance} (recall that, by \cite[10.5]{Ilmanen:elliptic} the support of a Brakke flow is a weak set flow), we find that 
\[
\supp \cM_\eps^1(T) \cap \cK_\eps^\lambda(T) \cap B_{5\lambda_0r_1} = \emptyset.
\]
Because $\cM_\eps^1$ and $\partial \cK_\eps$ agree with the smooth flow $\Sigma_\eps(t)$ outside of $B_{r_0}$ by Lemma \ref{lemm:conical-ends-smooth-flow}, there is $\eta>0$ so that\footnote{This is just the claim that two smooth flows that are initially disjoint remain disjoint for a short time; this holds for flows with boundary moving in any arbitrary manner. } 
\[
\supp \cM_\eps^1(t) \cap \cK_\eps^\lambda(t) \cap (B_{4\lambda_0r_1} \setminus B_{2\lambda_0r_1}) = \emptyset.
\]
for $t \in [-1,T+\eta]$. Now, observe that $\Sigma_\eps(t)$ and $\lambda \Sigma_\eps(\lambda^{-2}t)$ are smooth flows with the curvature estimates from Lemma \ref{lemm:conical-ends-smooth-flow} and so that the second is graphical over the first by Lemma \ref{lemm:rescaling-end-graphical-over-other-end} (with appropriate curvature estimates). Moreover, at $t=-1$, the two surfaces are disjoint, so the graphical function is initially positive.  

Now, the Ecker--Huisken maximum principle \cite{EckerHuisken:graphs}, specifically the version in Theorem \ref{theo:ecker-huisken} (which applies because the graphical function satisfies the PDE given in Lemma \ref{lemm:rescaling-end-graphical-over-other-end}), to conclude that the graphical function remains non-negative for $t \in [-1,T+\eta]$ (over the flow $\Sigma_\eps(t) \cap (\RR^{n+1}\setminus B_{3\lambda_0 r_1}))$. Now, the strong maximum principle implies that the graphical function is strictly positive in $\Sigma_\eps(t) \cap (\RR^{n+1}\setminus \bar{B}_{3\lambda_0 r_1})$ for $t\in[-1,T+\eta]$. Applying Theorem \ref{theo:ilmanen-avoidance} again, we conclude that 
\[
\supp \cM_\eps^1(t) \cap \cK_\eps^\lambda(t) = \emptyset.
\]
for $t \in [-1,T+\eta]$. This contradicts the choice of $T$.

Finally, we can repeat the same argument to show that the flows cannot make contact at $t=1$. This completes the proof. 
\end{proof}

\begin{corollary}\label{coro:partialKeps-lambda-disjoint}
For $\lambda \in (1,\lambda_{0})$, $\partial\cK_{\eps}^{1}\setminus\mathfrak{t}^{-1}(-1)$ is disjoint from $\cK_{\eps}^{\lambda}$. 
\end{corollary}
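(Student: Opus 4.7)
The plan is to observe that this corollary is essentially immediate from the combination of Proposition \ref{prop:brakke-eps-disjoint-Klambda} with (the restricted analog of) Lemma \ref{lemm:K-eps-vs-M-eps-vs-K}; almost all of the real work has been done in the preceding proposition.

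First, I would verify the inclusion
\[ \partial\cK_\eps^{1}\setminus\mathfrak{t}^{-1}(-1) \subset \supp\cM_\eps^{1}. \]
The argument is identical to the one given for Lemma \ref{lemm:K-eps-vs-M-eps-vs-K}, carried out after restricting to $\mathfrak{t}^{-1}([-1,1])$: any point $X \in \partial\cK_\eps^{1}\setminus\mathfrak{t}^{-1}(-1)$ arises as a limit of points $X_i \in \partial\cK_{\eps,\rho_i}\setminus\mathfrak{t}^{-1}(-1)$, and, by the third bullet of Proposition \ref{prop:exist-compact-approx-flows}, these $X_i$ lie in $\supp\cM_{\eps,\rho_i}$. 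Huisken's monotonicity formula (Section \ref{sec:prelim.density.huisken.monotonicity}) provides a uniform lower density bound at the $X_i$ that is preserved in the limit, so $X\in\supp\cM_\eps^{1}$.

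Second, I would invoke Proposition \ref{prop:brakke-eps-disjoint-Klambda}, which asserts precisely that $\supp\cM_\eps^{1}\cap\cK_\eps^{\lambda}=\emptyset$ for every $\lambda\in(1,\lambda_0)$. Chaining the two containments gives
\[ \big(\partial\cK_\eps^{1}\setminus\mathfrak{t}^{-1}(-1)\big) \cap \cK_\eps^{\lambda} \subset \supp\cM_\eps^{1}\cap \cK_\eps^{\lambda} = \emptyset, \]
which is the desired conclusion.

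There is no real obstacle at this stage: the hard work (the shrinker mean convexity from Lemma \ref{lemm:short-time-exist-Sigma-eps} and Corollary \ref{coro:sep-smooth-flow}, Ilmanen's localized avoidance principle on the compact part, and the Ecker--Huisken maximum principle on the graphical conical end via Lemma \ref{lemm:rescaling-end-graphical-over-other-end}) has been absorbed into the proof of Proposition \ref{prop:brakke-eps-disjoint-Klambda}. The only thing to check here is that passing from the Brakke flow to the topological boundary of the weak set flow is compatible with taking the $\rho_i\to\infty$ limit, which is exactly the monotonicity-formula argument recycled from Lemma \ref{lemm:K-eps-vs-M-eps-vs-K}.
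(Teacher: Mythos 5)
Your proposal is correct and is exactly the paper's argument: the paper proves the corollary by combining Proposition \ref{prop:brakke-eps-disjoint-Klambda} with the inclusion $\partial\cK_{\eps}\setminus\mathfrak{t}^{-1}(-1)\subset\supp\cM_{\eps}$ from Lemma \ref{lemm:K-eps-vs-M-eps-vs-K}, which is the same two-step chain you describe. Your extra verification that the inclusion survives restriction to $\mathfrak{t}^{-1}([-1,1])$ via the monotonicity-formula limit argument is just a more explicit rendering of the same point.
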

\begin{proof}
This follows from combining Proposition \ref{prop:brakke-eps-disjoint-Klambda} with Lemma \ref{lemm:K-eps-vs-M-eps-vs-K}. 
\end{proof}

Intuitively, this corollary proves that $\cK_\eps^\lambda$ lies inside of $\cK_\eps^1$ (since it has moved away from its boundary). We make this intuition precise below. Write $B^{\circ}$ for the interior of a set $B$ and $B^c$ for its complement. 
\begin{lemma}\label{lemm:bdry-cuts-set-in-two}
If $A,B$ are closed subsets of a topological space with $A$ connected and $\partial B \cap A = \emptyset$, then either $A \subset B^{\circ}$ or $A\cap B = \emptyset$. 
\end{lemma}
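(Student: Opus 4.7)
The plan is to use the connectedness of $A$ together with the standard decomposition of a topological space induced by a closed set $B$: namely, $\RR^{n+1} = B^\circ \sqcup \partial B \sqcup (B^c)^\circ$, where $(B^c)^\circ = B^c$ since $B$ is closed. The hypothesis $\partial B \cap A = \emptyset$ removes the middle piece from $A$, so we obtain a disjoint decomposition
\[ A = (A \cap B^\circ) \sqcup (A \cap B^c). \]

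Next I would observe that both pieces are open in the subspace topology on $A$. Indeed, $B^\circ$ is open in the ambient space by definition, and $B^c$ is open because $B$ is closed; so their intersections with $A$ are open in $A$. Since $A$ is connected, one of these two relatively open sets must be empty.

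If $A \cap B^c = \emptyset$, then $A \subset B$; combining with $A \cap \partial B = \emptyset$ gives $A \subset B \setminus \partial B = B^\circ$. If instead $A \cap B^\circ = \emptyset$, then $A \cap B \subset A \cap \partial B = \emptyset$, so $A \cap B = \emptyset$.

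There is no genuine obstacle here — the argument is entirely a point-set topology exercise, and the only thing to be careful about is the identity $B = B^\circ \cup \partial B$, which uses that $B$ is closed (so that $\partial B \subset B$). A one-paragraph proof should suffice.
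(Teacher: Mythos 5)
Your proof is correct and is essentially the paper's own argument: the paper's one-line proof is precisely the decomposition $A = (A\cap B^{\circ})\cup(A\cap B^{c})\cup(A\cap\partial B)$, which you carry out in detail (relative openness of the two remaining pieces, connectedness of $A$, and $B=B^{\circ}\cup\partial B$ since $B$ is closed). The only cosmetic remark is that you wrote $\RR^{n+1}$ where the lemma is stated for an arbitrary topological space, but nothing in your argument uses this.
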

\begin{proof}
We have $A = (A \cap B^{\circ}) \cup (A \cap B^{c}) \cup (A \cap \partial B)$ for any sets $A,B$.
\end{proof}
\begin{lemma}\label{lemm:cK_epslambda-connected}
For each $\lambda \in (1,\lambda_{0})$, $\cK_{\eps}^{\lambda}\setminus\mathfrak{t}^{-1}(\{\pm1\})$ is connected. 
\end{lemma}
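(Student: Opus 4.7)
The plan is to argue by contradiction: suppose $\cK_{\eps}^{\lambda}\setminus\mathfrak{t}^{-1}(\{\pm 1\})$ decomposes into disjoint, nonempty, relatively closed subsets $A$ and $B$, and derive a contradiction by showing some point of $B$ is also in $\overline{A}$.

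First, I would isolate two large connected subregions that must both lie in a single component. Since $\lambda\in(1,\lambda_{0})$, i.e., $\lambda^{-2}\in(1-\delta,1)$, Lemma~\ref{lemm:short-time-exist-Sigma-eps} furnishes a smooth connected hypersurface $\lambda\Sigma_{\eps}(-\lambda^{-2})=\partial\cK_{\eps}^{\lambda}(-1)$ with bounded geometry. Short-time smooth existence then yields $\eta>0$ so that $\cK_{\eps}^{\lambda}\cap\mathfrak{t}^{-1}([-1,-1+\eta])$ is the smoothly-evolving connected filled region bounded by a smooth MCF. Separately, Lemma~\ref{lemm:conical-ends-smooth-flow} shows that on $\{|\bx|>r_{0}\}\times[-1,1]$, $\partial\cK_{\eps}^{\lambda}$ is a smooth MCF transverse to large spheres, so $\cK_{\eps}^{\lambda}\cap\{|\bx|>r_{0}\}\cap\mathfrak{t}^{-1}((-1,1))$ is a connected smooth spacetime region. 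These two pieces intersect (on $\{|\bx|>r_{0}\}\cap\mathfrak{t}^{-1}((-1,-1+\eta])$), so their union is connected, and without loss of generality both lie in $A$.

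Next, I would set $t_{*}:=\inf\mathfrak{t}(B)$. By the above, $t_{*}\geq -1+\eta>-1$, and since $\emptyset\neq B\subset\mathfrak{t}^{-1}((-1,1))$, we have $t_{*}<1$. Pick $(\bx_{i},t_{i})\in B$ with $t_{i}\downarrow t_{*}$; since all points of $\cK_{\eps}^{\lambda}$ with $|\bx|>r_{0}$ lie in $A$, we get $|\bx_{i}|\leq r_{0}$. Passing to a subsequence $\bx_{i}\to\bx_{*}$, closedness of $\cK_{\eps}^{\lambda}$ and of $B$ (in $\cK_{\eps}^{\lambda}\cap\mathfrak{t}^{-1}((-1,1))$) gives $(\bx_{*},t_{*})\in B$. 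I would then show $(\bx_{*},t_{*})\in\overline{A}$, yielding the contradiction $\overline{A}\cap B=A\cap B=\emptyset$. If $(\bx_{*},t_{*})$ is a spacetime interior point of $\cK_{\eps}^{\lambda}$, a small spacetime ball in $\cK_{\eps}^{\lambda}$ contains $(\bx_{*},t_{*}-\delta)\in A$ for small $\delta>0$ (by definition of $t_{*}$), which accumulates at $(\bx_{*},t_{*})$. If instead $(\bx_{*},t_{*})\in\partial\cK_{\eps}^{\lambda}$, then since $t_{*}>-1$, Lemma~\ref{lemm:K-eps-vs-M-eps-vs-K} gives $(\bx_{*},t_{*})\in\supp\cM_{\eps}^{\lambda}$; Huisken's monotonicity formula applied to the unit-regular Brakke flow $\cM_{\eps}^{\lambda}$ then provides $(\bx_{j},t_{j})\in\supp\cM_{\eps}^{\lambda}\subset\cK_{\eps}^{\lambda}$ with $t_{j}\uparrow t_{*}$ and $\bx_{j}\to\bx_{*}$, all lying in $A$ by definition of $t_{*}$ and accumulating at $(\bx_{*},t_{*})$.

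The main obstacle is the boundary case of the last step: a priori a boundary point of $\cK_{\eps}^{\lambda}$ might ``appear'' in spacetime with no earlier ancestors, obstructing the approach from below. This is precisely where the existence of the associated unit-regular integral Brakke flow $\cM_{\eps}^{\lambda}$ with $\partial\cK_{\eps}^{\lambda}\setminus\mathfrak{t}^{-1}(-1)\subset\supp\cM_{\eps}^{\lambda}$ (Lemma~\ref{lemm:K-eps-vs-M-eps-vs-K}) is essential, as it allows the monotonicity formula to guarantee Brakke ancestors at arbitrarily nearby earlier times.
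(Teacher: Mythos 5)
Your proof is correct in substance and shares the paper's skeleton (use Lemma \ref{lemm:conical-ends-smooth-flow} and Lemma \ref{lemm:Keps-smooth-start} to confine any putative extra piece spatially and away from the initial time, then show its earliest point cannot ``appear from nowhere''), but it differs at the decisive step. The paper takes a connected component $R\subset B_{r_0}\times(-\lambda^{-2},\lambda^{-2})$, locates its minimal-time point, asserts a slightly earlier spacetime ball is disjoint from all of $\cK_\eps$, and contradicts the avoidance property of the weak set flow with a shrinking-sphere barrier. You instead work with a separation $A\sqcup B$ into relatively closed pieces and contradict it directly by producing points of $\cK_\eps^\lambda$ at strictly earlier times accumulating at the earliest point of $B$: trivially in the spacetime-interior case, and via $\partial\cK_\eps^\lambda\setminus\mathfrak{t}^{-1}(-1)\subset\supp\cM_\eps^\lambda$ (Lemma \ref{lemm:K-eps-vs-M-eps-vs-K}) plus a monotonicity argument in the boundary case. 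What your route buys is that it sidesteps the slightly delicate point in the paper's proof that a component of a closed set need not be separated from the other components by a positive distance: with your $A/B$ setup, any earlier nearby point of $\cK_\eps^\lambda$, regardless of which component it lies in, is automatically in $A$ by minimality of $\inf\mathfrak{t}(B)$, and relative closedness of $A$ finishes the job. What it costs is the extra input of the Brakke flow in the boundary case, where the paper only needs the set flow.

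Two points to tighten. First, the assertion that $\cK_{\eps}^{\lambda}\cap\{|\bx|>r_{0}\}\cap\mathfrak{t}^{-1}((-1,1))$ is itself connected is not automatic when the asymptotic cone of $\Sigma$ has several ends (the time slices of the exterior region may then be disconnected); what you actually need, and what is true, is that every connected component of this exterior region meets the connected near-initial region $\cK_{\eps}^{\lambda}\cap\mathfrak{t}^{-1}((-1,-1+\eta])$, e.g.\ by pushing a given exterior point far above $\Sigma_\eps^\lambda(t)$ at fixed time and then sliding back in time; with that amendment the ``both pieces lie in $A$'' conclusion stands. Second, ``Huisken's monotonicity formula provides earlier ancestors of a support point'' is the right idea but is stated loosely: what you are really invoking is that mass of a Brakke flow cannot appear from nowhere, which is most cleanly cited either as Brakke's clearing-out lemma or as the fact that $\supp\cM_\eps^\lambda$ is itself a weak set flow \cite[10.5]{Ilmanen:elliptic} together with shrinking-sphere avoidance (the latter being exactly the paper's mechanism, now applied to $\supp\cM$ instead of $\cK$). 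If you prefer the Gaussian-density phrasing, it does go through, but you must use the entropy/area-ratio bound on $\cM_\eps$ to control the far contribution to the Gaussian integral and argue that a backward parabolic neighborhood free of mass forces vanishing density, hence (via clearing-out) exclusion from the spacetime support; as written, the one-line appeal to monotonicity hides this.
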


\begin{proof}
We will prove that $\cK_{\eps}\cap \mathfrak{t}^{-1}((-\lambda^{-2},\lambda^{-2}))$ is connected for any $\lambda \in (1,\lambda_{0})$. By Lemma \ref{lemm:conical-ends-smooth-flow}, we have that
\[
\cK_{\eps}\cap (\mathfrak{t}^{-1}((-\lambda^{-2},\lambda^{-2})) \setminus (\RR\times B_{r_{0}}))
\]
is the space-time track of the region above $\Sigma_{\eps}(t)$. Hence, if $\cK_{\eps}\cap \mathfrak{t}^{-1}((-\lambda^{-2},\lambda^{-2}))$ is disconnected, then there is a connected component 
\[
\mathscr{R} \subset B_{r_{0}}\times(-\lambda^{-2},\lambda^{-2}).
\] 
Note that $\mathscr{R}\cap \mathfrak{t}^{-1}((-\lambda^{-2},-\lambda_{0}^{-2})) = \emptyset$ by Lemma \ref{lemm:Keps-smooth-start}. 

The component $\mathscr{R}$ ``appears from nowhere,'' which easily leads to a contradiction. Indeed, we have shown that there is a point $(\bx,t) \in \mathscr{R}$ with minimal $t$-coordinate and because $\mathscr{R}$ is a closed connected component of $\cK_\eps$, there is $r>0$ so that $B_{2r}(\mathbf{x})\times \{t-r^2\}$ is disjoint from $\cK_\eps$. This contradicts the avoidance property of $\cK_\eps$. 
\end{proof}

\begin{corollary}\label{coro:Kepslambda-subset-Keps1}
For all $\lambda \in (1,\lambda_{0})$, $\cK_{\eps}^{\lambda}\setminus \mathfrak{t}^{-1}(\{\pm 1\}) \subset (\cK_{\eps}^{1})^{\circ}$. 
\end{corollary}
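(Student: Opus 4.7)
The plan is to deduce this from the topological dichotomy of Lemma \ref{lemm:bdry-cuts-set-in-two}, applied with $A := \cK_\eps^\lambda \setminus \mathfrak{t}^{-1}(\{\pm 1\})$ and $B := \cK_\eps^1$. The set $A$ is connected by Lemma \ref{lemm:cK_epslambda-connected}. To verify the hypothesis $\partial B \cap A = \emptyset$, note that any point of the intersection has time coordinate in $(-1, 1)$ and thus lies in $\partial \cK_\eps^1 \setminus \mathfrak{t}^{-1}(-1)$, which is disjoint from $\cK_\eps^\lambda \supset A$ by Corollary \ref{coro:partialKeps-lambda-disjoint}. The lemma then yields that either $A \subset B^\circ$---the desired conclusion---or $A \cap B = \emptyset$.

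The remaining task, which is the main (if minor) obstacle, is to rule out the second alternative by exhibiting a single common point of $A$ and $B$. For this I would work at a time $t_0 \in (-1, 1)$ chosen only slightly greater than $-1$, in particular so that both $t_0$ and $\lambda^{-2} t_0$ lie in the smooth short-time window $[-1, -1+\delta]$ of Lemma \ref{lemm:Keps-smooth-start}; the condition $\lambda < \lambda_0 = 1/\sqrt{1-\delta}$ ensures this range of admissible $t_0$ is nonempty. At such $t_0$, both $\cK_\eps^1(t_0)$ and $\cK_\eps^\lambda(t_0) = \lambda \cK_\eps(\lambda^{-2} t_0)$ are closed regions bounded by smooth hypersurfaces $\Sigma_\eps(t_0)$ and $\lambda \Sigma_\eps(\lambda^{-2} t_0)$, respectively. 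Corollary \ref{coro:sep-smooth-flow} supplies strict separation at $t = -1$, where $\lambda \Sigma_\eps(-\lambda^{-2}) \subset \Omega_\eps = (\cK_\eps^1(-1))^\circ$, and by continuity of the smooth flow this strict inclusion persists for $t_0$ sufficiently close to $-1$. Hence $\partial \cK_\eps^\lambda(t_0)$ lies in $(\cK_\eps^1(t_0))^\circ$, so any point of $\cK_\eps^\lambda(t_0)$ just interior to its boundary provides the required common point of $A$ and $B$, completing the proof.
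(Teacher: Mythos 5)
Your argument is correct and is essentially the paper's own proof, which consists precisely of combining Corollary \ref{coro:partialKeps-lambda-disjoint} with Lemmas \ref{lemm:bdry-cuts-set-in-two} and \ref{lemm:cK_epslambda-connected}; the paper leaves the exclusion of the alternative $A \cap B = \emptyset$ implicit, and you supply that detail via Lemma \ref{lemm:Keps-smooth-start} and Corollary \ref{coro:sep-smooth-flow}. One minor caveat: since $\Sigma_\eps$ is noncompact, the ``by continuity'' persistence of the \emph{global} boundary inclusion as $t_0$ moves away from $-1$ is not immediate (the separation of Corollary \ref{coro:sep-smooth-flow} is only quantified on fixed balls $B_r$), but since you only need a single common point of $A$ and $B$, it suffices to argue inside one fixed compact ball where the quantitative separation and locally uniform smooth convergence apply, so the proof goes through.
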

\begin{proof}
This follows by combining Corollary \ref{coro:partialKeps-lambda-disjoint} with Lemmas \ref{lemm:bdry-cuts-set-in-two} and \ref{lemm:cK_epslambda-connected}. 
\end{proof}

\begin{lemma}\label{lemm:orig-not-in-K-eps}
We have $(\bOh,0) \not \in \cK_{\eps}$ and for each $t \in [-1,0)$,
\[
\supp\cM_\eps(t) \subset \sqrt{-t} \Omega,
\]
where $\Omega$ is the open set lying above $\Sigma$. 
\end{lemma}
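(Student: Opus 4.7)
The plan is to prove the two claims separately. The second claim follows a direct avoidance argument parallel to Proposition \ref{prop:brakke-eps-disjoint-Klambda}, while the first claim will require a shrinking-ball barrier supplemented by the parabolic rescaling monotonicity of Corollary \ref{coro:Kepslambda-subset-Keps1}.

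For the containment $\supp \cM_\eps(t) \subset \sqrt{-t}\,\Omega$, I will mimic the proof of Proposition \ref{prop:brakke-eps-disjoint-Klambda}, now comparing the weak set flow $\cK_\eps$ against the smooth shrinker level set flow $t \mapsto \sqrt{-t}\,\overline{\Omega^c}$ (rather than a parabolic rescaling of $\cK_\eps$ itself). At $t = -1$, the initial region $K_\eps = \overline{\Omega_\eps}$ is pointwise disjoint from $\overline{\Omega^c}$ since $\Sigma_\eps$ is the normal graph of $\eps\varphi_1 > 0$ over $\Sigma$. Applying Ilmanen's localized avoidance principle (Theorem \ref{theo:ilmanen-avoidance}) on large balls, with the non-compact end controlled by Lemmas \ref{lemm:conical-ends-smooth-flow} and \ref{lemm:rescaling-end-graphical-over-other-end} combined with the graphical Ecker--Huisken maximum principle (Theorem \ref{theo:ecker-huisken}) as in Proposition \ref{prop:brakke-eps-disjoint-Klambda}, yields $\cK_\eps(t) \cap \sqrt{-t}\,\overline{\Omega^c} = \emptyset$, hence $\cK_\eps(t) \subset \sqrt{-t}\,\Omega$. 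Together with the containment $\supp\cM_\eps \subset \cK_\eps$ from Lemma \ref{lemm:K-eps-vs-M-eps-vs-K}, this gives the second claim.

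For the first claim $(\bOh, 0) \notin \cK_\eps$, the strategy is to produce some $t_0 \in (-1, 0)$ and $r_0 > 0$ satisfying $\bar B_{r_0}(\bOh) \cap \cK_\eps(t_0) = \emptyset$ together with the critical ratio $r_0^2 > 2n|t_0|$, and then use the classical shrinking ball $\bar B_{\sqrt{r_0^2 - 2n(t-t_0)}}(\bOh)$ as a barrier: by the avoidance principle for classical flows against weak set flows it remains disjoint from $\cK_\eps$ until its extinction time $t_0 + r_0^2/(2n) > 0$, so that at $t = 0$ a positive-radius neighborhood of $\bOh$ is disjoint from $\cK_\eps(0)$, giving $\bOh \notin \cK_\eps(0)$. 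To produce the required pair $(t_0, r_0)$ I will iterate Corollary \ref{coro:Kepslambda-subset-Keps1}: unwinding the definitions, it shows that $\bar B_r(\bOh) \cap \cK_\eps(t) = \emptyset$ implies $\bar B_{r/\lambda}(\bOh) \cap \cK_\eps(t/\lambda^2) = \emptyset$ for every $\lambda \in (1, \lambda_0)$, so the ratio $r^2/|t|$ is preserved under this parabolic rescaling. Combined with the strict shrinker mean convexity of Lemma \ref{lemm:short-time-exist-Sigma-eps}, which makes the rescaled flow $\tilde \cK_\eps(\tau) := e^{\tau/2} \cK_\eps(-e^{-\tau})$ strictly shrink and recede from $\bOh$ as $\tau \to \infty$, this will give disjointness at times arbitrarily close to $0$ with ratio exceeding the critical value $2n$.

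The main difficulty is producing the critical ratio $r_0^2 / |t_0| > 2n$. Because the iterated scaling from Corollary \ref{coro:Kepslambda-subset-Keps1} merely preserves this ratio, genuine analytic input is required: one needs a quantitative estimate showing that the rescaled distance $d(\bOh, \tilde \cK_\eps(\tau))$ eventually exceeds $\sqrt{2n}$. This is where the strict shrinker mean convexity is essential, since it forces the rescaled flow to converge as $\tau \to \infty$ to an outermost asymptotically conical expander associated with the asymptotic cone of $\Sigma$ (as in the last bullet of Theorem \ref{theo:prop-ancient-intro}), which lies strictly away from $\bOh$.
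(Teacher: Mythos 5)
Your treatment of the second claim is essentially the paper's (omitted) argument: compare with the soliton flow $t\mapsto\sqrt{-t}\,\overline{\Omega^c}$ using the continuity scheme of Proposition \ref{prop:brakke-eps-disjoint-Klambda} (Ilmanen's localized avoidance, Theorem \ref{theo:ilmanen-avoidance}, in the compact part; Ecker--Huisken on the conical end, where you need the soliton analogue of the graphical estimates in Lemma \ref{lemm:rescaling-end-graphical-over-other-end}, which holds by the same argument as in \cite[Prop.\ 4.4]{BernsteinWang:TopologicalProperty}). That part is fine.

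The first claim is where there is a genuine gap. You correctly identify that everything hinges on a supercritical separation $r_0^2>2n|t_0|$ from the origin, and that iterating Corollary \ref{coro:Kepslambda-subset-Keps1} only preserves the ratio $r^2/|t|$; but the analytic input you propose to close this --- ``the rescaled flow $e^{\tau/2}\cK_\eps(-e^{-\tau})$ converges as $\tau\to\infty$ to an outermost expander'' --- is wrong on two counts. First, it is the wrong asymptotic regime: expanders describe the \emph{forward} rescaled limit $t^{-1/2}\cK(t)$ as $t\to+\infty$ (Theorem \ref{theo:convergence-pos-times}), not the limit $t\to 0^-$; for fixed $\eps$ the backward rescaled sets $e^{\tau/2}\cK_\eps(-e^{-\tau})$ either recede to infinity or stay at bounded distance from $\bOh$, and deciding which is exactly the content of the claim you are trying to prove, so no information is gained. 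Second, it is circular: Theorem \ref{theo:prop-ancient-intro} (and the regularity/expander analysis of Sections \ref{sec:long.time.reg}--\ref{sec:exist.unique.ancient.Brakke}) is proved later, for the limiting flow $(\cM,\cK)$, and its construction uses the present lemma (e.g.\ in Theorem \ref{theo:basic-prop-cK-cM}(6) and Lemma \ref{lemm:dist-to-origin-eps-lambda}), so it cannot be invoked here. The missing step has a much more elementary fix, which is the one the paper intends: having established the slice-wise disjointness of claim two inside the relevant balls, apply the \emph{quantitative} conclusion of Theorem \ref{theo:ilmanen-avoidance} with $a=-1$, $b=0$ and $R^2>\gamma+2n$ to the pair $\cK_\eps$ and the level set flow $\cW$ of $\overline{\Omega^c}$; since $(\sqrt{-t}\,\mathbf{y},t)\to(\bOh,0)$ for fixed $\mathbf{y}\in\overline{\Omega^c}$ and $\cW$ is closed, $\bOh\in\cW(0)$, and the conclusion $\cK_\eps(0)\cap\cW(0)\cap B_{\sqrt{R^2-\gamma-2n}}(\bOh)=\emptyset$ gives $(\bOh,0)\notin\cK_\eps$ directly, with no shrinking-ball barrier or dilation iteration needed. (Alternatively, the quantitative lower bound on the graph of $\partial\cK_\eps(t)$ over $\sqrt{-t}\Sigma$ from \cite[Prop.\ 4.4]{BernsteinWang:TopologicalProperty}, which uses $\lambda_1<-1$, supplies the same separation.)
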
 
\begin{proof}
This follows adapting of the argument \cite[Proposition 4.4]{BernsteinWang:TopologicalProperty} to the present setting (using Theorem \ref{theo:ilmanen-avoidance}); as we have already given similar arguments in the proof of Proposition \ref{prop:brakke-eps-disjoint-Klambda}, we omit the details. 
\end{proof}

We now rescale the flow as $\eps\to 0$ to obtain an ancient solution. We consider $\cF_\lambda(\cK_\eps)$ for $\eps$ small and $\lambda$ large (the precise relationship to be quantified in \eqref{eq:choice-lambda-i-constr} below) and consider this a weak set flow with initial condition $ \lambda \Sigma_\eps\times \{-\lambda^2\}$. 

\begin{lemma}\label{lemm:dist-to-origin-eps-lambda}
For $\eps>0$ fixed, the space-time distance satisfies
\[
\lim_{\lambda \to \infty} d((\bOh,0),\cF_\lambda(\cK_\eps)) = \infty.
\]
On the other hand, for $\lambda\geq 1$ fixed,
\[
\lim_{\eps \to 0} d((\bOh,0),\cF_\lambda(\cK_\eps)) = 0.
\]
\end{lemma}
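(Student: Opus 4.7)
\smallskip

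\noindent\emph{Plan.} The plan is to handle the two limits separately, since the required ideas are very different: the first is essentially immediate from $(\bOh,0) \notin \cK_\eps$ combined with the dilation properties of $\cF_\lambda$, while the second will require a continuous-dependence argument for the smooth part of the flow as $\eps \to 0$.

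For the first limit, I would fix $\eps > 0$ and invoke Lemma \ref{lemm:orig-not-in-K-eps} to conclude $(\bOh,0) \notin \cK_\eps$. Since $\cK_\eps$ is closed, this yields $d_0 := d((\bOh,0), \cK_\eps) > 0$. The parabolic dilation $\cF_\lambda$ fixes $(\bOh,0)$ and satisfies $|\cF_\lambda(\bx,t)|^2 = \lambda^2|\bx|^2 + \lambda^4 t^2 \geq \lambda^2 |(\bx,t)|^2$ for $\lambda \geq 1$, so $d((\bOh,0), \cF_\lambda(\cK_\eps)) \geq \lambda d_0 \to \infty$ as $\lambda \to \infty$.

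For the second limit, the plan is to produce, for any prescribed tolerance $\delta > 0$, a point of $\cK_\eps$ close to $(\bOh,0)$ for all small $\eps$; since $\cF_\lambda$ is continuous and fixes $(\bOh,0)$, this will give the claim. Let $\bx^* \in \Sigma$ achieve $r^* := \inf_{\Sigma}|\bx|$, which is finite by properness and non-emptiness of $\Sigma$. Then $\sqrt{-t_0}\,\bx^* \in \sqrt{-t_0}\Sigma$ lies at distance $\sqrt{-t_0}\,r^*$ from the origin, so I would pick $t_0 \in (-1,0)$ with both $\lambda \sqrt{-t_0}\,r^* < \delta/3$ and $\lambda^2 |t_0| < \delta/3$. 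The plan is then to use continuous dependence of smooth mean curvature flow on initial data to conclude that the smooth flow $\Sigma_\eps(t)$ exists on $[-1, t_0]$ for all sufficiently small $\eps$ and converges to the shrinking flow $\sqrt{-t}\Sigma$ in $C^\infty_{\mathrm{loc}}$. This will produce $\bx_\eps \in \Sigma_\eps(t_0)$ with $|\bx_\eps - \sqrt{-t_0}\bx^*| < \delta/(3\lambda)$; since the smooth flow coincides with $\partial \cK_\eps$ wherever it is defined (so $\Sigma_\eps(t_0) \subset \partial \cK_\eps \subset \cK_\eps$), the spacetime point $\cF_\lambda(\bx_\eps, t_0) = (\lambda \bx_\eps, \lambda^2 t_0) \in \cF_\lambda(\cK_\eps)$ will lie within distance $\lambda |\bx_\eps| + \lambda^2 |t_0| < \delta$ of $(\bOh,0)$.

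The hard part will be justifying the $C^\infty_{\mathrm{loc}}$-convergence $\Sigma_\eps(\cdot) \to \sqrt{-(\cdot)}\Sigma$ on the full time interval $[-1, t_0]$ in the non-compact asymptotically conical setting, since Lemma \ref{lemm:Keps-smooth-start} supplies this only on $[-1, -1+\delta]$. The plan is to combine pseudolocality \cite[Theorem 1.5]{IlmanenNevesSchulze} at the conical ends with Ecker--Huisken interior estimates \cite{EckerHuisken:interior} on compact regions, together with Brakke regularity applied to the Brakke flows $\cM_\eps$: since the limit flow $t \mapsto \sqrt{-t}\Sigma$ is smooth and multiplicity one on any compact subset of $\RR^{n+1} \times [-1, t_0]$, Gaussian densities along $\cM_\eps$ are uniformly close to $1$ there for small $\eps$, forcing $\cM_\eps$ to be smooth on $[-1, t_0]$ (on compact sets) and to coincide with $\Sigma_\eps(t)$.
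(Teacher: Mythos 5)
Your treatment of the first limit is correct and is exactly the paper's: $(\bOh,0)\notin\cK_\eps$ by Lemma \ref{lemm:orig-not-in-K-eps}, $\cK_\eps$ is closed, and $\cF_\lambda$ expands spacetime distance to the origin by a factor at least $\lambda$. The skeleton of your second limit also matches the paper (produce points of $\cK_\eps$ near the spacetime origin by comparing with $\sqrt{-t}\,\Sigma$, which comes within $\sqrt{-t_0}\,\inf_\Sigma|\bx|$ of the origin for $t_0$ near $0$). But the justification of your key step is circular as written: you say that ``since the limit flow $t\mapsto\sqrt{-t}\Sigma$ is smooth and multiplicity one \dots, Gaussian densities along $\cM_\eps$ are uniformly close to $1$,'' yet calling $\sqrt{-t}\Sigma$ \emph{the} limit flow of the $\cM_\eps$ on all of $[-1,t_0]$ is precisely what has to be proved. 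A subsequential Brakke limit $\tilde\cM$ of the $\cM_\eps$ does exist and satisfies $\tilde\cM(-1)=\cH^n\lfloor\Sigma$, but Brakke flows are badly non-unique (sudden mass drop/vanishing), so a priori $\tilde\cM$ could differ from $\sqrt{-t}\Sigma$, or disappear, after the short interval $[-1,-1+\delta]$ supplied by Lemma \ref{lemm:Keps-smooth-start}; in that case there is no density-close-to-one statement for $\cM_\eps$ near the track of $\sqrt{-t}\Sigma$ at later times, and White's regularity theorem has nothing to bite on. Pseudolocality and Ecker--Huisken interior estimates by themselves only give control where you already have graphical/curvature bounds (the conical ends, and short time), so they do not close this gap.

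The missing ingredient is the identification $\tilde\cM(t)=\cH^n\lfloor\sqrt{-t}\,\Sigma$ for all $t<0$, and this is how the paper argues: Huisken's monotonicity formula plus unit regularity show that $\tilde\cM$ is a smooth multiplicity-one flow on some interval $[-1,-1+\eta]$; there it is (as in \cite[Proposition 4.4]{BernsteinWang:TopologicalProperty}, following \cite{EckerHuisken:interior,Ecker:book}) a graph over $\sqrt{-t}\Sigma$ with bounded curvature, hence equal to $\sqrt{-t}\Sigma$ by uniqueness of smooth bounded-curvature flows (\cite{ChenYin} or \cite{EckerHuisken:graphs}); iterating gives the identity for all $t<0$. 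Once this is in place, weak convergence $\cM_\eps\rightharpoonup\tilde\cM$ already forces $\supp\cM_\eps$ to accumulate on $\supp\tilde\cM$, producing points of $\supp\cM_\eps\subset\cK_\eps$ (Lemma \ref{lemm:K-eps-vs-M-eps-vs-K}) near $(\bOh,0)$; your final estimate with $\cF_\lambda$ then goes through. Note also that the full $C^\infty_{\mathrm{loc}}$ convergence of $\Sigma_\eps(t)$ on $[-1,t_0]$ you aim for is stronger than the lemma requires---closeness of supports suffices.
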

\begin{proof}
The first claim follows immediately from Lemma \ref{lemm:orig-not-in-K-eps}. To prove the second claim, it suffices (by Lemma \ref{lemm:K-eps-vs-M-eps-vs-K}) to show that 
\[
\lim_{\eps\to0} d((\bOh,0),\supp\cM_\eps) = 0.
\]
Choose a subsequential limit $\tilde \cM$ of the flows $\cM_\eps$ as $\eps\to 0$. Note that $\tilde\cM(-1) = \cH^n\lfloor \Sigma$, since $\Sigma_\eps$ converges locally smoothly to $\Sigma$. Using unit-regularity and uniqueness of smooth mean curvature flows with bounded curvature (cf.\ \cite{ChenYin,EckerHuisken:graphs}) we conclude via Proposition \ref{prop:unique-cont-brakke} that
\[
\tilde \cM(t) = \cH^n\lfloor \sqrt{-t}\, \Sigma
\]
for $t<0$. This proves the claim. 
\end{proof}

Now, choose $\eps_i\to 0$. It is clear that $\lambda\mapsto d((\bOh,0),\cF_\lambda(\cK_{\eps_i}))$ is continuous. Thus, for $i$ sufficiently large, we can choose $\lambda_i$ so that 
\begin{equation}\label{eq:choice-lambda-i-constr}
d((\bOh,0),\cF_{\lambda_i}(\cK_{\eps_i})) = 1. 
\end{equation}
Taking a subsequential limit as $i\to\infty$, we find a weak set flow $\cK$ and Brakke flow $\cM$. Note that since $\eps_i\to 0$ we can ensure that $\lambda_i \to \infty$ and thus the flow $(\cM,\cK)$ is ancient. We summarize the basic properties of $(\cM,\cK)$ in the following theorem. 

\begin{theorem}\label{theo:basic-prop-cK-cM}
The flows $\cM$ and $\cK$ have the following properties: 
\begin{enumerate}
\item we have $d((\bOh,0), \cK) = 1$,
\item the Brakke flow $\cM$ has entropy $\lambda(\cM) \leq F(\Sigma)$,
\item we have $\partial\cK \subset \supp \cM \subset \cK$, 
\item for $\lambda > 1$ we have $\cF_\lambda(\cK)\subset \cK^{\circ}$ and $\supp\cM \cap \cF_\lambda(\cK) = \emptyset$,
\item there is $T>0$ large so that for $t<-T$, $\cM(t)$ and $\partial\cK(t)$ are the same smooth flow which we denote $\Sigma(t)$, 
\item the flow $\Sigma(t)$ lies in $\sqrt{-t}\Omega$ for all $t<-T$,
\item the flow $\Sigma(t)$ is strictly shrinker mean convex for all $t<-T$, 
\item $\tfrac{1}{\sqrt{-t}}\Sigma(t)$ converges smoothly on compact sets to $\Sigma$ as $t\to -\infty$, and
\item there is a continuous function $R(t)$ so that, for any $t \in \RR$,
\[ \cM(t) \lfloor (\RR^{n+1}\setminus B_{R(t)}) \text{ and } \partial \cK \cap (\mathfrak{t}^{-1}(t) \setminus B_{R(t)}) \]
are the same smooth, multiplicity-one, strictly shrinker mean convex  flow, which we will denote by $\Sigma(t)$; moreover, there is $C>0$ so that the curvature of $\Sigma$ satisfies $|\bx| |A_{\Sigma(t)}| \leq C$.
\end{enumerate}
\end{theorem}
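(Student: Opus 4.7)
The plan is to pass each of (1)--(9) to the limit from the corresponding property for the pre-rescaled flows $(\cM^{\lambda_i}_{\eps_i}, \cK^{\lambda_i}_{\eps_i})$ established above.

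Items (1), (2), (3), (4), (6) are relatively direct. Property (1) follows from the normalization \eqref{eq:choice-lambda-i-constr} together with continuity of the distance function under Hausdorff convergence of closed sets on a fixed parabolic ball. Property (2) combines $F(\Sigma_{\eps_i}) \to F(\Sigma)$ (by smooth convergence of $\Sigma_{\eps_i}$ to $\Sigma$ with the appropriate decay given by the estimates on $\varphi_1$), scale-invariance of entropy under $\cF_{\lambda_i}$, Huisken monotonicity, and lower semi-continuity of entropy under Brakke convergence. Property (3) passes Lemma \ref{lemm:K-eps-vs-M-eps-vs-K} to the limit; the inclusion $\partial\cK\subset \supp\cM$ uses Huisken monotonicity to propagate the uniform lower density bound on supports of Brakke flows. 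Property (4) applies Corollary \ref{coro:Kepslambda-subset-Keps1} and Proposition \ref{prop:brakke-eps-disjoint-Klambda} to the rescaled flows and passes to the limit, noting that the admissible range of the rescaling parameter extends to all $\lambda > 1$ since in the rescaled picture we may enlarge $\lambda_0$ freely as $i\to\infty$. Property (6) passes Lemma \ref{lemm:orig-not-in-K-eps} to the limit at each fixed $t<0$.

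The substantive part is (9) together with (5), (7), (8). For (9), at each fixed $t\in\RR$ the scale-invariant curvature bound $|\bx||A| + |\bx|^2|\nabla A| + |\bx|^3|\nabla^2 A| \leq C$ from Lemma \ref{lemm:conical-ends-smooth-flow} is preserved under the parabolic rescaling by $\lambda_i$ and under Brakke convergence on parabolic neighborhoods bounded away from the origin. Combined with pseudolocality applied near points far in space, where the flow is $C^1$-graphical over the asymptotic cone of $\Sigma$, this yields smoothness of $\cM$ and $\partial\cK$ outside some ball $B_{R(t)}$ with the asserted curvature bound. Strict shrinker mean convexity on this tail is obtained by passing the strict inequality from Lemma \ref{lemm:short-time-exist-Sigma-eps} to the limit (noting that $2tH+\bx\cdot\nu$ transforms homogeneously under $\cF_{\lambda_i}$, so the strict bound survives).

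For (5), (7), (8), the smooth tail from (9) together with the inherited shrinker mean convexity implies that the rescaled flow $\tau\mapsto e^{\tau/2}\cM(-e^{-\tau})$ is monotone in $\tau$ wherever it is smooth. Huisken monotonicity and Brakke compactness then extract a shrinker subsequential limit $\Sigma_\infty$ as $\tau\to-\infty$; the Frankel-type property for shrinkers (Corollary \ref{coro:Frankel}) together with the one-sided location of $\cM$ from (6) forces $\Sigma_\infty = \Sigma$, and multiplicity one follows from the entropy bound (2) combined with $F(\Sigma) < 2F(\Sigma)$. This is (8). Standard parabolic pseudolocality and interior estimates then upgrade (8) to full smoothness and strict shrinker mean convexity of $\cM(t)$ for $t\leq -T$ with $T$ sufficiently large, giving (5) and (7). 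The main obstacle is the identification of the tangent flow at $-\infty$ as precisely $\Sigma$ with multiplicity one, where the one-sided construction plays a decisive role via the Frankel property; without the one-sided geometric constraint the argument collapses, since a shrinker $\Sigma_\infty\neq\Sigma$ on the other side of $\Sigma$ could otherwise emerge as the asymptotic limit.
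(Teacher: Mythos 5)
Your outline shares the paper's skeleton (limit of the rescaled one-sided flows, identification of the tangent flow at $-\infty$ via one-sidedness, Frankel and the entropy bound), but several of your ``pass to the limit'' steps fail because the relevant properties are strict/open, or hold only on regions that escape under the rescaling. The most serious is (4): Corollary \ref{coro:Kepslambda-subset-Keps1} and Proposition \ref{prop:brakke-eps-disjoint-Klambda} give disjointness and interior containment only for the pre-limit flows, and neither disjointness of closed sets nor containment in an interior is preserved under Brakke/Hausdorff limits --- the limiting sets can touch. What does survive is only the closed containment $\cF_\lambda(\cK)\subset\cK$ (the paper's property (4')). Upgrading this to $\cF_\lambda(\cK)\subset\cK^{\circ}$ and $\supp\cM\cap\cF_\lambda(\cK)=\emptyset$ is done \emph{last} in the paper, after (5)--(9), by combining the strict shrinker mean convexity of the smooth exterior part with Ilmanen's localized avoidance principle (Theorem \ref{theo:ilmanen-avoidance}); your proposal has no substitute for this step. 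Relatedly, your mechanism for (7) is flawed: the strict bound in Lemma \ref{lemm:short-time-exist-Sigma-eps} is $2tH+\mathbf{x}\cdot\nu\geq C\eps_i(1+|\mathbf{x}|^2+\cdots)^{\mu}$ with $\eps_i\to 0$ (and after dilation the bound is multiplied by $\lambda_i$ with no control on $\lambda_i\eps_i$), so strictness does not survive the limit; one only gets weak shrinker mean convexity from (4'), and strictness must be recovered by the strong maximum principle (Smoczyk's evolution equation) after excluding the alternative $\Sigma(t)=\sqrt{-t}\,\Sigma$ via (6).

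Your argument for (9) also has a genuine gap: after rescaling by $\lambda_i$, the region where Lemma \ref{lemm:conical-ends-smooth-flow} applies is $\RR^{n+1}\setminus B_{\lambda_i r_0}$, which escapes to spatial infinity, so at a fixed time $t$ and fixed radius the pre-limit conical-end estimates give no information; the asserted ``$C^1$-graphicality over the asymptotic cone far in space'' at fixed times is precisely what must be proved, not an input to pseudolocality. The paper's route is to prove (8) first --- the tangent flow at $-\infty$ is a priori only an $F$-stationary varifold supported in $\overline\Omega$ (Lemma \ref{lemm:orig-not-in-K-eps}), and one needs the Frankel property (Corollary \ref{coro:Frankel}), the strong maximum principle for stationary varifolds, the constancy theorem and the entropy bound to conclude it equals $\cH^n\lfloor\Sigma$ with multiplicity one; your version silently treats this limit as a smooth shrinker --- then Lemma \ref{lemm:entire-graph-from-cpt} gives entire graphicality with small scale-invariant norm for very negative times, and pseudolocality propagates the smooth, strictly shrinker mean convex conical end forward to all times, which is (9). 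Since your (5), (7), (8) are made to depend on your (9), this circularity has to be undone by adopting the paper's order: (1)--(3) and (4'), then (5),(8) via the $t\to-\infty$ analysis, then (6),(7),(9), and finally (4). A smaller point: for (2) you need $\lambda(\Sigma_{\eps_i})\leq F(\Sigma)+o(1)$ --- a bound on the full entropy, i.e.\ the supremum of Gaussian densities over all centers and scales, which is the content of the cited appendix of Bernstein--Wang --- and not merely $F(\Sigma_{\eps_i})\to F(\Sigma)$, since $\Sigma_{\eps_i}$ is not a shrinker and its entropy is not computed at the standard center and scale.
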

\begin{proof}
Claim (1) follows by construction. Claim (2) follows from the fact\footnote{Note that the simpler statement $\lambda(\Sigma_\eps) \leq F(\Sigma) + o(1)$ as $\eps\to0$ would suffice here.} that $\lambda(\Sigma_\eps) \leq F(\Sigma)$ proven in \cite[Appendix C]{BernsteinWang:TopologicalProperty}. The claim (3) follows as in Lemma \ref{lemm:K-eps-vs-M-eps-vs-K}. We prove (4) below, but for now, we note that Corollary \ref{coro:Kepslambda-subset-Keps1} immediately implies that $\cF_{\lambda}(\cK) \subset \cK$ for $\lambda \in (1,\lambda_0)$. We will refer to this weaker property as (4'). 

We now turn to (5). Consider $\cM_{-\infty}$, any tangent flow to $\cM$ at $t=-\infty$. We know that $\cM_{-\infty}$ exists and is the shrinking Brakke flow associated to an $F$-stationary varifold $V_{-\infty}$ thanks to the monotonicity formula and the entropy bound $\lambda(\cM) \leq F(\Sigma)$. Lemma \ref{lemm:orig-not-in-K-eps} implies that $\supp V_{-\infty} \subset \overline\Omega$. By the Frankel property for self-shrinkers (cf.\ Theorem \ref{coro:Frankel}), it must hold that $\Sigma \cap \supp V_{-\infty} \not = \emptyset$. By the strong maximum principle for stationary varifolds \cite{SolomonWhite,Ilmanen:maximum} (either result applies here because $\Sigma$ is smooth), there must exist a component of $\supp V_{-\infty}$ which is equal to $\Sigma$. By the constancy theorem (and Frankel property again) we find that $V_{-\infty} = k \cH^n\lfloor \Sigma$, for some integer $k\geq 1$. By the entropy bound in (2), $k=1$. Thus, by Brakke's theorem (c.f.\ \cite{White:Brakke}) and Lemma \ref{lemm:entire-graph-from-cpt}, there is $T>0$ large so that $\cM(t)$ is the multiplicity one Brakke flow associated to a smooth flow $\Sigma(t)$ (and $\tfrac{1}{\sqrt{-t}}\Sigma(t)$ converges smoothly on compact sets to $\Sigma$ as $t\to-\infty$). Since $\partial\cK \subset \supp\cM$, we see that $\partial \cK(t) = \Sigma(t)$ as well. This completes the proof of (5); note that we have proven (8) as well.

By Lemma \ref{lemm:orig-not-in-K-eps}, $\Sigma(t) \subset \sqrt{-t}\, \bar{\Omega}$. Since $\sqrt{-t}\, \Sigma$ and $\Sigma(t)$ are both smooth (for $t<-T$), they cannot touch unless $\Sigma(t) = \sqrt{-t}\, \Sigma$ for all $t < -T$. This cannot happen by an argument along the lines of Lemma \ref{lemm:dist-to-origin-eps-lambda}. This proves (6).

Now, we note that (4') implies that $\Sigma(t)$ is weakly shrinker mean convex. By the strong maximum principle (see \cite[Proposition 4]{Smoczyk} for the evolution equation for the shrinker mean curvature), $\Sigma(t)$ is either a shrinker for all $t<-T$ or strictly shrinker mean convex. The first case cannot occur (by the argument used for (6)), proving (7). 

By Lemma \ref{lemm:entire-graph-from-cpt} proven below, we know that for $t$ sufficiently negative, $\frac{1}{\sqrt{-t}}\Sigma(t)$ is an entire graph over $\Sigma$ of a function with small $\Vert\cdot\Vert_{3}^{(1)}$ norm. From this, we can use pseudolocality to prove (9) exactly as in \cite[Proposition 4.4(1)]{BernsteinWang:TopologicalProperty} (the exterior flow $\cM(t)\lfloor (\RR^{n+1}\setminus B_{R(t)}) = \Sigma(t)$ is weakly shrinker mean convex by (4') and thus strictly so by the strong maximum principle). 

Finally, we prove (4). Strict shrinker mean convexity of the exterior flow guarantees that for $\lambda>1$, $\supp\cM$ and $\cF_{\lambda}(\cK)$ are disjoint outside of a set $D$ in space-time which has $D \cap \mathfrak{t}^{-1}([a,b])$ compact for any $a<b$. Thus, we may apply Ilmanen's localized avoidance principle, Theorem \ref{theo:ilmanen-avoidance}, to show that $\supp \cM$ and $\cF_{\lambda}(\cK)$ are indeed disjoint. Using (3) and (4'), this completes the proof of (4). 
\end{proof}

The following lemma was used above, and we will also use it again when proving uniqueness of ancient one-sided flows. 
\begin{lemma}\label{lemm:entire-graph-from-cpt}
Suppose that $(S(\tau))_{\tau \leq 0}$ is an an ancient rescaled mean curvature flow so that $S(\tau)$ converges to $\Sigma$ smoothly with multiplicity one on compact sets as $\tau\to-\infty$. Then, for $\tau$ sufficiently negative, there is a function $u(\cdot,\tau)$ on $\Sigma$ so that $S(\tau)$ is the normal graph of $u(\cdot,\tau)$ over $\Sigma$ and so that 
\[
\lim_{\tau \to -\infty} \Vert u(\cdot, \tau) \Vert_{3}^{(1)} = 0.
\]
\end{lemma}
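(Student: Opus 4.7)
The statement separates into two regimes: a compact core and the asymptotically conical end. On $\Sigma\cap B_{R}$ for any fixed $R$, the weighted norm $\Vert\cdot\Vert_{3}^{(1)}$ is equivalent to the ordinary $C^{3}$-norm (since $\tilde r$ is bounded above and below on $\Sigma\cap B_R$), so the hypothesized smooth multiplicity-one convergence on compact sets of $\RR^{n+1}$ immediately gives the claim on this core part, up to $\tau$ sufficiently negative.

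For the conical end, the plan is to pass to the unrescaled mean curvature flow $M(t):=\sqrt{-t}\,S(-\log(-t))$ and invoke pseudolocality. Fix $\mathbf{x}_{0}\in\Sigma$ with $|\mathbf{x}_{0}|$ large. Smooth asymptotic conicality of $\Sigma$, together with the sharp decay $|A_{\Sigma}|+|\mathbf{x}|\,|\nabla_\Sigma A_{\Sigma}|\leq C|\mathbf{x}|^{-1}$ collected in Appendix \ref{sec:shrinker.geometry}, furnishes a uniform $\eta=\eta(\Sigma)>0$ such that $\Sigma\cap B_{2\eta|\mathbf{x}_{0}|}(\mathbf{x}_{0})$ is a Lipschitz graph with small Lipschitz constant over the tangent plane to $\cC$ at $\mathbf{x}_{0}/|\mathbf{x}_{0}|$. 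For the target rescaled time $\tau$, pick $\sigma<\tau$ sufficiently negative (e.g.\ $\sigma=\tau-1$) so that smooth convergence gives $S(\sigma)\approx\Sigma$ in $C^{3}(\Sigma\cap B_{3|\mathbf{x}_{0}|})$; unrescaling to $t_{0}:=-e^{-\sigma}$, the slice $M(t_{0})\cap B_{2\eta|\mathbf{x}_{0}|\sqrt{-t_{0}}}(\mathbf{x}_{0}\sqrt{-t_{0}})$ is a Lipschitz graph with small constant over the correspondingly scaled tangent plane.

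Now apply the Ilmanen--Neves--Schulze pseudolocality \cite[Theorem 1.5]{IlmanenNevesSchulze} together with interior curvature estimates \cite[Propositions 3.21, 3.22]{Ecker:book}: on the parabolic cylinder $B_{\eta|\mathbf{x}_{0}|\sqrt{-t_{0}}}(\mathbf{x}_{0}\sqrt{-t_{0}})\times[t_{0},t_{0}+c\eta^{2}|\mathbf{x}_{0}|^{2}(-t_{0})]$, the flow $M$ remains a smooth graph over the plane with $|A|^{2}\lesssim(|\mathbf{x}_{0}|\sqrt{-t_{0}})^{-2}$, and standard Schauder bootstraps give higher-derivative bounds of the same scaling. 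Since $|t-t_{0}|\leq(-t_{0})(1-e^{\sigma-\tau})\leq(-t_{0})$, the window covers $t=-e^{-\tau}$ provided $|\mathbf{x}_{0}|$ is bounded below by a uniform constant depending only on $\Sigma$, which is our regime. Rescaling back and comparing with $\Sigma$ (itself a small graph over the same plane), we obtain $S(\tau)$ as a normal graph over $\Sigma$ near $\mathbf{x}_{0}$ satisfying $\tilde r^{-1+j}|\nabla_{\Sigma}^{j}u(\cdot,\tau)|\leq C\varepsilon$ for $0\leq j\leq 3$, where $\varepsilon=\varepsilon(\tau)\to0$ as $\tau\to-\infty$ uniformly over points $\mathbf{x}_0$ on the conical end. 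Patching with Step~1 and taking $R\to\infty$ yields the desired convergence $\Vert u(\cdot,\tau)\Vert_{3}^{(1)}\to 0$.

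The main obstacle is ensuring uniformity of the pseudolocality/curvature-estimate constants across all large $|\mathbf{x}_{0}|$: this relies both on the uniform control of the Lipschitz parameter of $\Sigma$ over its asymptotic cone (from smooth conicality) and on the correct $|\mathbf{x}_{0}|$-scaling of the interior estimates, both of which are secured by the sharp derivative decay in Appendix~\ref{sec:shrinker.geometry}. The final passage from the plane-graph representation back to the $\Sigma$-graph representation is then routine by the triangle inequality for small norms.
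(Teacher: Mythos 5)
There is a genuine gap in Step 2, and it is an order-of-quantifiers problem. Your hypothesis is that $S(\tau)\to\Sigma$ smoothly on \emph{compact} sets as $\tau\to-\infty$: for each fixed radius $R$ there is a threshold $\tau(R)$ beyond which closeness holds on $B_R$, with no uniformity allowing $R$ to grow at a fixed time. But your argument anchors the pseudolocality step at the single time $\sigma=\tau-1$ and asserts that $S(\sigma)\approx\Sigma$ in $C^3(\Sigma\cap B_{3|\mathbf{x}_0|})$ for \emph{all} large $|\mathbf{x}_0|$ simultaneously. At a fixed $\sigma$ the hypothesis gives you no control whatsoever on $S(\sigma)$ at radii beyond some finite (unknown) scale --- a priori the flow could fail even to be graphical over $\Sigma$ out there --- so the initial small-Lipschitz-graph condition you feed into \cite[Theorem 1.5]{IlmanenNevesSchulze} is unjustified precisely in the regime ($|\mathbf{x}_0|$ arbitrarily large) where you need it. A secondary issue of the same flavor: pseudolocality plus interior estimates give bounded (scale-invariant) curvature and graphicality over the initial plane, i.e.\ a bound $\tilde r\,|A|\lesssim 1$, but the lemma requires \emph{smallness} of the graph over $\Sigma$, uniformly in $\mathbf{x}_0$ and tending to $0$ as $\tau\to-\infty$; that quantitative closeness to the shrinking flow $\sqrt{-t}\,\Sigma$ does not come for free from the triangle inequality and again hinges on the missing uniform initial control.

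The fix is to let the anchor time depend on the spatial scale rather than fixing a one-unit time lag. This is what the paper does: for each $\tau_0\le\tau$ it considers the unrescaled flow $\hat S_{\tau_0}(t)=\sqrt{-t}\,S(\tau_0-\log(-t))$, which by the hypothesis (applied at times near $\tau_0$, where only a \emph{fixed} compact annulus $B_{2r}\setminus B_r$ is involved) converges, with the help of pseudolocality/interior estimates, smoothly on compact subsets of spacetime away from $(\mathbf{0},0)$ to the shrinking flow $\{\sqrt{-t}\,\Sigma\}$; this yields $\sup_{t\in[-1,-e^{\tau_0}]}\Vert\hat u_{\tau_0}(\cdot,t)\Vert_{C^3}\to 0$ as $\tau_0\to-\infty$, uniformly in $t$ up to times close to $0$. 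Undoing the scaling, the piece of $S(\tau)$ at radius $\sim r e^{(\tau-\tau_0)/2}$ is then a graph over $\Sigma$ whose \emph{scale-invariant} norm $\Vert\cdot\Vert_3^{(1)}$ on that annulus is controlled by $\Vert\hat u_{\tau_0}(\cdot,-e^{\tau_0-\tau})\Vert_{C^3}$, and taking the supremum over all $\tau_0\in(-\infty,\tau]$ covers the entire end with a bound that tends to $0$ as $\tau\to-\infty$. Your Step 1 (compact core) agrees with the paper, but Step 2 needs to be reorganized along these lines --- evolving forward from scale-dependent, arbitrarily negative times where the hypothesis genuinely applies --- rather than from the fixed time $\tau-1$.
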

\begin{proof}
This follows from an simplified version of the argument used in \cite[Lemma 9.1]{ChodoshSchulze}. Indeed, 
\[
\hat S_{\tau_0}(t) : = \sqrt{-t}\, S(\tau_0-\log(-t))
\]
is an ancient mean curvature flow for $t \leq -e^{\tau_0}$. Moreover, as $\tau_0\to-\infty$ the flows $(\hat S_{\tau_0}(t) )_{t  \leq -e^{\tau_0}}$ converge smoothly on any compact subset of $((-\infty,0]\times \RR^{n+1}) \setminus \{(0,0)\}$ to the shrinking flow $\{\sqrt{-t}\Sigma\}_{t \leq 0}$ (cf.\ the proof of Lemma \ref{lemm:orig-not-in-K-eps}). This implies that there is $\tau_{1}$ sufficiently negative so that for $\tau_{0}<\tau_{1}$ and $t \in [-1,-e^{\tau_0}]$,
\[
\hat S_{\tau_0}(t) \cap (B_{2r}\setminus B_r)
\]
is the graph of some smooth function $\hat u_{\tau_0}$ defined on a subset of $\sqrt{-t}\Sigma$ and that
\begin{equation}\label{eq:hat-u-tau-0-C3-estimate}
\sup_{t\in[-1,-e^{\tau_0}]} \Vert \hat u_{\tau_0}(\cdot,t)\Vert_{C^3} \to 0
\end{equation}
as $\tau_0\to -\infty$. Below, we will always assume that $\tau_{0}<\tau_{1}$.

We can rescale the above observation back to $S(\tau)$ to find that
\[
S(\tau) \cap \left\{ \bx : r e^{\frac{\tau-\tau_0}{2}} \leq |\bx| < 2 r e^{\frac{\tau-\tau_0}{2}} \right\}
\]
is the graph of some function $u(\cdot,\tau)$ defined on some $\Omega_{r,\tau_{0}}(\tau) \subset \Sigma$, as long as $\tau_{0}\in(-\infty,\tau]$ and $\tau \leq \tau_1$.  For such $\tau$, by varying $\tau_0 \in (-\infty,\tau]$, we find that $S(\tau) \setminus B_r$ is the graph of some function $u(\cdot, \tau)$ over the domain
\[
\Omega_{r}(\tau) : = \bigcup_{\tau_{0} \in (-\infty,\tau]} \Omega_{r,\tau_{0}}(\tau) \subset \Sigma.
\]
Shrinking $\tau_1$ if necessary, we can assume that 
\[
\Sigma \setminus B_{2r} \subset \Omega_{r}(\tau). 
\]
for $\tau < \tau_1$ by the smooth convergence of $S(\tau)$ to $\Sigma$ on compact sets. 

Finally, the $C^3$ estimate \eqref{eq:hat-u-tau-0-C3-estimate} rescales as follows. We have
\[
\Vert u(\cdot,\tau)\Vert_{3;\Omega_{r,\tau_{0}}(\tau)}^{(1)} \lesssim \Vert \hat u_{\tau_0}(\cdot,-e^{\tau_0-\tau}) \Vert_{C^3} 
\]
for $\tau_{0}\in (-\infty,\tau]$. In particular, we find 
\begin{align*}
\Vert u(\cdot,\tau)\Vert_{3;\Omega_{r}(\tau)}^{(1)} & = \sup_{\tau_{0}\in(-\infty,\tau]} \Vert u(\cdot,\tau)\Vert_{3;\Omega_{r,\tau_{0}}(\tau)}^{(1)}\\
& \lesssim \sup_{\tau_{0}\in(-\infty,\tau]} \Vert \hat u_{\tau_0}(\cdot,-e^{\tau_0-\tau}) \Vert_{C^3} \\
& \leq \sup_{\tau_{0}\in(-\infty,\tau]} \sup_{t\in[-1,-e^{\tau_{0}}]} \Vert \hat u_{\tau_0}(\cdot,t) \Vert_{C^3} .
\end{align*}
For $\tau$ sufficiently negative, $u(\cdot,\tau)$ extends across the compact part of $\Sigma$ with $C^{3}$-norm tending to $0$, so combined with the previous inequality and \eqref{eq:hat-u-tau-0-C3-estimate}, the result follows. 
\end{proof}


\section{Long-time regularity of the flow}\label{sec:long.time.reg}

In this section, we analyze further the flow $(\cM,\cK)$ from Theorem \ref{theo:basic-prop-cK-cM}. We must separate our analysis into three time scales, $t<0$, $t = 0$, $t>0$. 

\subsection{Regularity for $t<0$} Here, we show that White's regularity theory \cite{White:size,White:nature} for mean-convex flows applies to the flow $(\cM,\cK)$ for $t<0$.

\begin{remark}
Because it plays a fundamental role in our analysis, we briefly recall White's strategy. The basic setup is to prove that the Brakke flow and level-set flow are compatible in the sense that the Brakke flow is the $n$-dimensional Hausdorff measure restricted to the boundary of the level-set flow. This lets White combine Brakke flow (density/monotonicity) arguments with the fact that the level set flow is moving to one side. In particular, the one-sidedness implies that the level set flow is minimizing to the outside. The key to White's regularity is then to rule out multiplicity two planes as static/quasi-static tangent flows (higher multiplicity cannot occur by the minimizing property and other tangent flows are less common and thus less troublesome thanks to stratification). Using the fact that the flow is moving to one-side and a ``no holes'' argument, White then proves that such a tangent flow will locally separate into two sheets. Then, linear analysis can be used to rule out such a situation.
\end{remark}

We define the rescaled flow $\tilde\cK$ (and analogously for $\tilde \cM$) by 
\[
\tilde \cK := \bigcup_{\tau \in (-\infty,\infty) }e^{\frac \tau 2} \cK(-e^{-\tau})
\]
for $\tau \in (-\infty,\infty)$. It is easy to see that $\tilde \cK$ is still a closed subset of space-time. Indeed, it is the image of a closed set under the diffeomorphism 
\[
\cR: \RR^{n+1}\times (-\infty,0) \to \RR^{n+1}\times \RR, \qquad \cR : (\bx,t) \mapsto ((-t)^{-\frac 12} \bx, -\log(-t)).
\]

\begin{remark} The rescaled flows will be seen to be moving to one side (like a mean convex mean curvature flow). Because the rescaling is ``lower order'' and in particular disappears after taking a blow-up, White's theory will apply to the rescaled flows as well. Here, a serious issue will be that we do not \emph{a priori} know compatibility of the level set flow and the Brakke flow (we only know ``partial' compatibility, cf.\ (3) in Theorem \ref{theo:basic-prop-cK-cM}). As such, we will combine White's theory with a continuity argument to work up until the first time the theory breaks down (cf.\ \eqref{eq:white-reg-cont}). A crucial observation is that White can rule out static/quasi-static multiplicity tangent flows at some time $\bar \tau$ using knowledge of the flow \emph{only} for prior times $\tau< \bar \tau$ (of course, this is simply a manifestation of the parabolic nature of the flow). 
\end{remark}

Let 
\begin{equation}\label{eq:defn-time-translation}
\cT_{h}:\RR^{n+1}\times\RR \to \RR^{n+1}\times \RR, \qquad\cT_{h}:(\bx,t) \mapsto (\bx,t-h)
\end{equation} 
denote the time-translation map. 
\begin{lemma}\label{lemm:rescaled-avoidance-time-trans}
For $h > 0$, we have
\[ \cT_{h}(\tilde\cK) \subset \tilde\cK^{\circ} \text{ and } \supp\tilde\cM \cap \cT_{h}(\tilde\cK) = \emptyset. \]
\end{lemma}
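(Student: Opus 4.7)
The plan is to derive this directly from Theorem~\ref{theo:basic-prop-cK-cM}(4) by recognizing that, under the rescaling diffeomorphism, parabolic dilation of the original flow corresponds precisely to time-translation of the rescaled flow. Concretely, the rescaling map
\[ \cR:\RR^{n+1}\times(-\infty,0)\to\RR^{n+1}\times\RR,\qquad \cR(\bx,t):=((-t)^{-1/2}\bx,-\log(-t)) \]
is a diffeomorphism of spacetimes, and by unwinding the definition of $\tilde\cK$ one sees that $\tilde\cK = \cR(\cK\cap\mathfrak{t}^{-1}((-\infty,0)))$, with the analogous identity $\supp\tilde\cM = \cR(\supp\cM\cap\mathfrak{t}^{-1}((-\infty,0)))$ for the Brakke flow.

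The key observation is the intertwining relation
\[ \cR\circ \cF_{\lambda} = \cT_{h}\circ \cR \qquad \text{whenever } \lambda=e^{h/2}, \]
which I would verify by a direct calculation: applying $\cR$ to $\cF_\lambda(\bx,t)=(\lambda\bx,\lambda^{2}t)$ gives $((-t)^{-1/2}\bx,\,-\log(-t)-2\log\lambda)$, which matches $\cT_{h}\circ\cR(\bx,t) = ((-t)^{-1/2}\bx,\,-\log(-t)-h)$ precisely when $2\log\lambda = h$.

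Given this, fix $h>0$ and set $\lambda := e^{h/2} > 1$. Theorem~\ref{theo:basic-prop-cK-cM}(4) yields
\[ \cF_{\lambda}(\cK)\subset \cK^{\circ} \qquad\text{and}\qquad \supp\cM\cap \cF_{\lambda}(\cK)=\emptyset. \]
Since $\cR$ is a spacetime diffeomorphism, it carries interiors to interiors and preserves disjointness, and applying it to the two displayed relations (using the intertwining identity to replace $\cR\circ \cF_\lambda$ by $\cT_h\circ \cR$) gives
\[ \cT_{h}(\tilde\cK) = \cR(\cF_{\lambda}(\cK))\subset \cR(\cK^{\circ}) = \tilde\cK^{\circ} \]
and
\[ \supp\tilde\cM\cap \cT_{h}(\tilde\cK)= \cR\bigl(\supp\cM\cap \cF_{\lambda}(\cK)\bigr)=\emptyset, \]
which is exactly the claim.

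There is no real obstacle here, since the statement is essentially a change-of-coordinates observation; the only item that warrants a moment of care is confirming that $\cR$ really is a diffeomorphism (so that spacetime interiors are preserved under the correspondence $\cK\leftrightarrow\tilde\cK$), and that the Brakke flow's support transforms as claimed, both of which are immediate.
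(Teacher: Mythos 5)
Your proof is correct and follows exactly the paper's argument: the paper also observes the intertwining identity $\cT_{h}(\tilde\cK) = \cR(\cF_{e^{h/2}}(\cK))$ and then invokes part (4) of Theorem \ref{theo:basic-prop-cK-cM}. You have merely spelled out the change-of-variables computation and the preservation of interiors and disjointness under the diffeomorphism $\cR$, which the paper leaves implicit.
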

\begin{proof}
Note that $\cT_{h}(\tilde \cK) = \cR(\cF_{e^{\frac h 2}}(\cK))$. Thus,  (4) in Theorem \ref{theo:basic-prop-cK-cM} implies both claims.  
\end{proof}

\begin{proposition}\label{prop:rescaled-weak-set-equals-brakke}
We have $\partial\tilde\cK = \supp\tilde\cM$. 
\end{proposition}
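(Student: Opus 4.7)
The plan is to prove the two inclusions separately. The inclusion $\partial\tilde\cK\subset\supp\tilde\cM$ is the easy one: since the map $\cR$ is a diffeomorphism of $\RR^{n+1}\times(-\infty,0)$ onto $\RR^{n+1}\times\RR$, it preserves topological boundaries, so $\partial\tilde\cK=\cR(\partial\cK\cap(\RR^{n+1}\times(-\infty,0)))$. Under the same rescaling, $\supp\tilde\cM=\cR(\supp\cM\cap(\RR^{n+1}\times(-\infty,0)))$, because the support of a parabolically rescaled Brakke flow is just the rescaled support. Applying $\cR$ to the inclusion $\partial\cK\subset\supp\cM$ from Theorem \ref{theo:basic-prop-cK-cM}(3) yields $\partial\tilde\cK\subset\supp\tilde\cM$.

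The substantive part is the reverse inclusion $\supp\tilde\cM\subset\partial\tilde\cK$. First observe that rescaling $\supp\cM\subset\cK$ gives $\supp\tilde\cM\subset\tilde\cK$. So it suffices to show $\supp\tilde\cM\cap\tilde\cK^\circ=\emptyset$. I would argue by contradiction: suppose there exists $X=(\by,s)\in\supp\tilde\cM\cap\tilde\cK^\circ$. Since $\tilde\cK^\circ$ is open in spacetime, there is a spacetime neighborhood of $X$ contained in $\tilde\cK$; in particular, for all sufficiently small $h>0$, the point $(\by,s+h)$ lies in $\tilde\cK$. By the definition of the time-translation $\cT_h$ from \eqref{eq:defn-time-translation}, this means
\[ X=(\by,s)=\cT_h(\by,s+h)\in\cT_h(\tilde\cK). \]
But Lemma \ref{lemm:rescaled-avoidance-time-trans} asserts $\supp\tilde\cM\cap\cT_h(\tilde\cK)=\emptyset$, contradicting $X\in\supp\tilde\cM$.

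The main obstacle has already been dispatched: it is Lemma \ref{lemm:rescaled-avoidance-time-trans}, which encodes (via the rescaling) the strict shrinker mean convexity from Theorem \ref{theo:basic-prop-cK-cM}(4) as a strict nesting of time-translates of $\tilde\cK$ inside $\tilde\cK^\circ$, simultaneously with disjointness of $\supp\tilde\cM$ from those translates. Once that lemma is in hand, the proof above is essentially a one-line openness argument and requires no further analytic input.
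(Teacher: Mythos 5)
Your proof is correct, and it is worth noting that the hard inclusion is handled more directly than in the paper, although both arguments rest on the same two inputs: Theorem \ref{theo:basic-prop-cK-cM}(3) (rescaled to give $\partial\tilde\cK\subset\supp\tilde\cM\subset\tilde\cK$) and Lemma \ref{lemm:rescaled-avoidance-time-trans}. The paper also argues by contradiction from a point of $\supp\tilde\cM\cap\tilde\cK^{\circ}$, but it only uses the lemma through its time-slice consequence: a spatial ball $B_r(\bx)\subset\tilde\cK(\tau)$ is disjoint from $\supp\tilde\cM(\tau-h)$, and one then runs the (rescaled) level set flow of that ball forward from time $\tau-h$ and invokes the avoidance principle to conclude $(\bx,\tau)\notin\supp\tilde\cM$. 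You instead exploit that $\tilde\cK^{\circ}$ is open in \emph{spacetime} in the forward time direction: $(\by,s+h)\in\tilde\cK$ for small $h>0$ places the point $(\by,s)=\cT_h(\by,s+h)$ itself in $\cT_h(\tilde\cK)$, contradicting the lemma's spacetime disjointness $\supp\tilde\cM\cap\cT_h(\tilde\cK)=\emptyset$ outright. This eliminates the auxiliary shrinking-ball barrier and the appeal to the avoidance principle, and it treats the lemma purely as a statement about closed spacetime sets, so no bookkeeping about how slice supports relate to the spacetime support is required; the paper's longer route buys nothing additional for this particular proposition. Your remarks on the easy inclusion (that $\cR$ is a homeomorphism of the open slab onto $\RR^{n+1}\times\RR$, hence commutes with taking boundaries and supports of the restricted sets) are also accurate.
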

\begin{proof}
Suppose that $(\bx,\tau) \in \supp\tilde\cM\setminus \partial\tilde\cK \subset \tilde \cK^{\circ}$. Choose $r>0$ so that $B_{r}(\bx) \subset \tilde \cK(\tau)$. Lemma \ref{lemm:rescaled-avoidance-time-trans} implies that $B_{r}(x)$ is disjoint from $\supp \tilde \cM (\tau - h)$ for all $h>0$ small. For $h$ sufficiently small, the rescaled level set flow $\cB$ generated by $B_{r}(\bx) \times \{\tau -h\}$ has $(\bx,\tau) \in \cB^{\circ}$. On the other hand, $\supp \cM\cap\cB = \emptyset$ by the avoidance principle. In particular, $(\bx,\tau) \not \in \supp\cM$. This is a contradiction. 
\end{proof}

Proposition \ref{prop:rescaled-weak-set-equals-brakke} and \cite[10.5]{Ilmanen:elliptic} imply that $\partial\tilde \cK$ is a (rescaled) weak set flow.

\begin{corollary}\label{coro:reg-pt-rescaled-flows-agree}
If $(\bx,\tau_0) \in\reg\tilde\cM$ then there is $r>0$ so that 
\[
\tilde \cM(\tau) \lfloor B_r(\bx) = \cH^n \lfloor ( \partial \tilde\cK(\tau) \cap B_r(\bx))
\]
for $\tau \in (\tau_0-r^2,\tau_0+r^2)$, and $\tilde \cK^{\circ}\cap (B_r(\bx)\times (\tau_0-r^2,\tau_0+r^2)) \neq \emptyset$.
\end{corollary}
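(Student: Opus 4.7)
My plan is to combine three already-established ingredients: the unit regularity of $\tilde\cM$, the identification $\supp\tilde\cM = \partial\tilde\cK$ from Proposition \ref{prop:rescaled-weak-set-equals-brakke}, and the strict parabolic-translation property in Lemma \ref{lemm:rescaled-avoidance-time-trans}.

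First, I observe that $\cM$ is unit-regular and cyclic by construction, since it arises as a limit of Brakke flows coming from Ilmanen elliptic regularization (see Section \ref{sec:prelim.unit.regular.cyclic}), and these properties persist after passage to the rescaled flow $\tilde\cM$ via the diffeomorphism $\cR$. Consequently, the hypothesis $(\bx, \tau_0) \in \reg \tilde\cM$ produces some $r_0 > 0$ and a smooth, multiplicity-one, rescaled mean curvature flow $\tau \mapsto N(\tau)$ on $B_{r_0}(\bx) \times (\tau_0 - r_0^2, \tau_0 + r_0^2)$ with $\tilde\cM(\tau)\lfloor B_{r_0}(\bx) = \cH^n \lfloor N(\tau)$ for each such $\tau$. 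Proposition \ref{prop:rescaled-weak-set-equals-brakke} identifies $\supp\tilde\cM$ with $\partial\tilde\cK$ in spacetime, and because the spacetime track of $N$ is a smooth embedded hypersurface transverse to time slices, I can shrink $r_0$ to some $r \in (0, r_0]$ so that $N(\tau) = \partial\tilde\cK(\tau) \cap B_r(\bx)$ for all $\tau \in (\tau_0 - r^2, \tau_0 + r^2)$. This yields the first displayed equality.

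For the nonemptiness of $\tilde\cK^\circ$ in the product neighborhood (which I read as the intended object, the paper's ``$\cK^\circ$'' being a natural typographical slip in this rescaled setting), I would use that $(\bx, \tau_0) \in \supp\tilde\cM = \partial\tilde\cK \subset \tilde\cK$ and then invoke Lemma \ref{lemm:rescaled-avoidance-time-trans} with any $h \in (0, r^2)$: that lemma gives $(\bx, \tau_0 - h) \in \cT_h(\tilde\cK) \subset \tilde\cK^\circ$, and this point lies in $B_r(\bx) \times (\tau_0 - r^2, \tau_0 + r^2)$.

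The argument is essentially formal once the three ingredients are in hand; no real analytic difficulty remains. The only mild technicality is the passage from the spacetime identity $\supp\tilde\cM = \partial\tilde\cK$ to its slicewise form, which is immediate in our neighborhood since $\partial\tilde\cK$ coincides there with the smooth multiplicity-one spacetime track of $N$.
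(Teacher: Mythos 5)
Your proof is correct and follows essentially the same route as the paper: the definition of a regular point gives the local smooth multiplicity-one flow, Proposition \ref{prop:rescaled-weak-set-equals-brakke} identifies it with $\partial\tilde\cK$ on the cylinder, and the interior statement (with $\tilde\cK^{\circ}$, as you rightly read it) follows from $(\bx,\tau_0)\in\partial\tilde\cK\subset\tilde\cK$ together with $\cT_h(\tilde\cK)\subset\tilde\cK^{\circ}$ from Lemma \ref{lemm:rescaled-avoidance-time-trans}. The appeals to unit regularity and cyclicity, and the shrinking of $r_0$, are harmless but not needed.
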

\begin{proof}
By definition, there is $r>0$ sufficiently small so that $\tilde \cM(\tau) \lfloor B_r(\bx) = \cH^n\lfloor \tilde M(\tau)$ for $\tilde M(\tau)$ a smooth rescaled mean curvature flow in $B_r(\bx)$. Thus, 
\[
\partial\tilde \cK \cap (B_r(\bx) \times (\tau_0-r^2,\tau_0+r^2)) = \supp\tilde\cM \cap (B_r(\bx) \times (\tau_0-r^2,\tau_0+r^2)) = \bigcup_{|\tau -\tau_0| < r^2} \tilde M(\tau) \times \{\tau\}.
\]
This proves the first statement. The second statement follows from Lemma \ref{lemm:rescaled-avoidance-time-trans} and Proposition \ref{prop:rescaled-weak-set-equals-brakke}. 
\end{proof}

\begin{corollary}
For $\tau_{0} \in \RR$, we have $(\partial\tilde \cK) \cap \{\tau = \tau_{0}\} = \partial (\tilde\cK \cap \{\tau = \tau_{0}\})$. 
\end{corollary}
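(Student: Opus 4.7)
The inclusion $\partial(\tilde \cK \cap \{\tau = \tau_0\}) \subset (\partial \tilde \cK) \cap \{\tau = \tau_0\}$ is elementary: if there exist $\bx_n \to \bx$ with $\bx_n \notin \tilde \cK(\tau_0)$ and $\bx \in \tilde \cK(\tau_0)$, then $(\bx_n, \tau_0) \notin \tilde \cK$ and $(\bx, \tau_0) \in \tilde \cK$, so $(\bx, \tau_0) \in \partial \tilde \cK$. The substance of the proof lies in the reverse inclusion.

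I would prove the reverse inclusion by contradiction. Suppose $(\bx, \tau_0) \in \partial \tilde \cK$ while $B_r(\bx) \subset \tilde \cK(\tau_0)$ for some $r > 0$; by Proposition \ref{prop:rescaled-weak-set-equals-brakke}, $(\bx, \tau_0) \in \supp \tilde \cM$. Applying both clauses of Lemma \ref{lemm:rescaled-avoidance-time-trans} with $h = \tau_0 - \tau$ and taking time slices yields, for every $\tau < \tau_0$, both $\tilde \cK(\tau_0) \subset \tilde \cK(\tau)^\circ$ and $\supp \tilde \cM(\tau) \cap \tilde \cK(\tau_0) = \emptyset$; in particular $\supp \tilde \cM(\tau) \cap B_r(\bx) = \emptyset$ for every $\tau < \tau_0$. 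Unrescaling via $\cR$ and setting $(\by, t_0) := (e^{-\tau_0/2}\bx, -e^{-\tau_0}) \in \supp \cM$ and $r' := e^{-\tau_0/2} r > 0$, this translates to $\supp \cM(t) \cap B_{r(-t)^{1/2}}((-t)^{1/2}\bx) = \emptyset$ for every $t < t_0$. Since the continuously varying ball on the right contains $B_{r'/2}(\by)$ for $t$ close to $t_0$, we conclude $\supp \cM(t) \cap B_{r'/2}(\by) = \emptyset$ on $(t_0 - \delta, t_0)$ for some $\delta > 0$.

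The contradiction then comes from Huisken's monotonicity formula. Since $\mu(t_0 - s^2)$ vanishes on $B_{r'/2}(\by)$ for $0 < s^2 < \delta$, the Gaussian density ratio
\[
\Theta_\cM\big((\by, t_0), s\big) = \int_{|\bz - \by| \geq r'/2} (4\pi s^2)^{-\frac{n}{2}} e^{-|\bz - \by|^2/(4s^2)} \, d\mu(t_0 - s^2)(\bz)
\]
is bounded, after splitting $e^{-|\bz-\by|^2/(4s^2)} \leq e^{-r'^2/(32 s^2)} e^{-|\bz-\by|^2/(8 s^2)}$ and invoking the entropy bound $\lambda(\cM) \leq F(\Sigma)$ from Theorem \ref{theo:basic-prop-cK-cM}(2) to control the remaining integral, by $C(n)\, F(\Sigma)\, e^{-r'^2/(32 s^2)} \to 0$ as $s \searrow 0$. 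Hence $\Theta_\cM(\by, t_0) = 0$. On the other hand, since $\cM$ is a unit-regular integral Brakke flow, $(\by, t_0) \in \supp \cM$ forces $\Theta_\cM(\by, t_0) \geq 1$ via the standard tangent-flow argument (cf.\ \cite{White:Brakke}), yielding the desired contradiction. The principal subtlety is converting the strict forward-in-time inclusion provided by Lemma \ref{lemm:rescaled-avoidance-time-trans} into a backward-in-time vanishing of $\supp \cM$ in a neighborhood of $\by$; once this is in hand, the density computation is routine.
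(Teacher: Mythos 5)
Your proof is correct, but it takes a different route from the paper's for the nontrivial inclusion. The paper stays entirely in the rescaled picture: given $\bx$ in the spatial interior of $\tilde\cK(\tau_0)$, it runs a small shrinking ball started at a slightly earlier time (exactly as in the proof of Proposition \ref{prop:rescaled-weak-set-equals-brakke}), and the avoidance principle for weak set flows then forces $(\bx,\tau_0)\in\tilde\cK^{\circ}$, contradicting $(\bx,\tau_0)\in\partial\tilde\cK$. You instead use Proposition \ref{prop:rescaled-weak-set-equals-brakke} to place the point on $\supp\tilde\cM$, unrescale via $\cR$, and derive the contradiction from Huisken's monotonicity: Lemma \ref{lemm:rescaled-avoidance-time-trans} gives vanishing of $\mu(t)$ on a fixed spatial ball for a backward time interval, the entropy bound of Theorem \ref{theo:basic-prop-cK-cM}(2) makes the Gaussian ratio tend to $0$, and the lower bound $\Theta\geq 1$ on the spacetime support finishes the argument. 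Both proofs hinge on the same input (the strict time-translation property of Lemma \ref{lemm:rescaled-avoidance-time-trans}); the paper's barrier argument is softer and needs no entropy bound or density theory, while yours is a clean quantitative alternative that the paper's framework supports.

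One point deserves more care than your parenthetical ``standard tangent-flow argument'': since $\supp\cM$ is the closure of $\bigcup_t \supp\mu(t)\times\{t\}$, the point $(\by,t_0)$ could a priori lie in the support only through times $t>t_0$, and for such a point a tangent-flow argument is circular (the backward density being zero is precisely what is in question). The robust justification of ``$\Theta\geq 1$ on the support'' in this situation is Brakke's clearing-out type reasoning: the Brakke inequality forbids sudden creation of mass, so $\mu(t_0)$ also vanishes on the ball, and then the shrinking-sphere avoidance for the weak set flow $\supp\cM$ (cf.\ \cite[10.5]{Ilmanen:elliptic}) clears a smaller ball for slightly later times, so $(\by,t_0)\notin\supp\cM$. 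This fact is standard and is implicitly used elsewhere in the paper (e.g.\ in Lemma \ref{lemm:K-eps-vs-M-eps-vs-K}), so your proof stands once that step is justified this way; amusingly, the clearing-out/avoidance mechanism you then need is essentially the paper's original argument.
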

As such, we can (and will) unambiguously write $\partial\tilde\cK(\tau_{0})$ for either of these sets.
\begin{proof}
It is clear that 
\[
(\partial \tilde \cK) \cap \{\tau = \tau_{0}\} \supset \partial (\tilde\cK \cap \{\tau = \tau_{0}\}).
\]
and that 
\[
(\partial \tilde\cK) \cap \{\tau = \tau_{0}\} \subset (\tilde\cK \cap \{\tau = \tau_{0}\}).
\]
Consider now 
\[
x \in (\partial \tilde\cK) \cap \{\tau = \tau_{0}\} \cap (\tilde\cK \cap \{\tau = \tau_{0}\})^{\circ}.
\]
Considering a small shrinking ball from a slightly earlier time, as in the proof of Proposition \ref{prop:rescaled-weak-set-equals-brakke}, we see that $(x,\tau) \in \tilde\cK^{\circ}$, a contradiction.
\end{proof}

\begin{lemma}
The sets $\{\partial\tilde \cK(\tau)\}_{\tau \in \RR}$ form a singular foliation of $\Omega$. 
\end{lemma}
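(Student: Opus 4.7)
The plan is to verify that $\{\partial\tilde\cK(\tau)\}_{\tau\in\RR}$ forms a pairwise disjoint cover of $\Omega$; since the individual leaves $\partial\tilde\cK(\tau)$ may carry singularities, this is precisely what is meant by a singular foliation.

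Pairwise disjointness will follow directly from the strict temporal monotonicity in Lemma \ref{lemm:rescaled-avoidance-time-trans}. For $\tau_1 < \tau_2$, I will apply that lemma with $h := \tau_2 - \tau_1 > 0$: any $(\bx, \tau_2) \in \tilde\cK$ yields $(\bx, \tau_1) \in \tilde\cK^\circ$. Together with the earlier unambiguous identification of $\partial\tilde\cK(\tau)$ as the boundary of the time slice $\tilde\cK(\tau)$, this rules out $\bx \in \partial\tilde\cK(\tau_1) \cap \partial\tilde\cK(\tau_2)$. To also show each $\partial\tilde\cK(\tau) \subset \Omega$, I would compare $\tilde\cM$ with the static rescaled flow $\Sigma$ (which is stationary under rescaled mean curvature flow since $\Sigma$ is a shrinker) via Ilmanen's localized avoidance principle (Theorem \ref{theo:ilmanen-avoidance}): by items (5), (6), (8) of Theorem \ref{theo:basic-prop-cK-cM}, $\supp\tilde\cM(\tau)$ and $\Sigma$ are disjoint for $\tau \ll 0$ with $\supp\tilde\cM(\tau) \subset \Omega$, so they remain disjoint for every $\tau \in \RR$; by Proposition \ref{prop:rescaled-weak-set-equals-brakke}, $\partial\tilde\cK(\tau) = \supp\tilde\cM(\tau) \subset \Omega$.

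For covering, I will fix $\bx \in \Omega$ and consider $\tau^*(\bx) := \sup\{\tau \in \RR : \bx \in \tilde\cK(\tau)\}$, then show $\tau^*(\bx) \in \RR$ with $\bx \in \partial\tilde\cK(\tau^*(\bx))$. For $\tau^*(\bx) > -\infty$, I invoke Lemma \ref{lemm:entire-graph-from-cpt}, by which $\partial\tilde\cK(\tau)$ is an entire normal graph over $\Sigma$ on the $\Omega$-side for $\tau \ll 0$, with graphical function vanishing in $\Vert\cdot\Vert_3^{(1)}$ as $\tau \to -\infty$; since the normal projection of the fixed $\bx$ to $\Sigma$ lies in a compact region of $\Sigma$, the graph will eventually lie strictly between $\Sigma$ and $\bx$, placing $\bx \in \tilde\cK(\tau)$ for such $\tau$. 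For $\tau^*(\bx) < \infty$: if instead $\bx \in \tilde\cK(\tau)$ for every $\tau$, then unrescaling gives $\sqrt{-t}\bx \in \cK(t)$ for all $t \in (-\infty, 0)$; letting $t \to 0^-$ and using closedness of $\cK$ places $(\bOh, 0) \in \cK$, contradicting $d((\bOh, 0), \cK) = 1$ from Theorem \ref{theo:basic-prop-cK-cM}(1).

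Closedness of $\tilde\cK$ in spacetime then gives $\bx \in \tilde\cK(\tau^*)$, and $\bx \in \partial\tilde\cK(\tau^*)$ follows by contradiction: otherwise $B_r(\bx) \subset \tilde\cK(\tau^*)$ for some $r > 0$, and unrescaling to $t_0 := -e^{-\tau^*}$, $\bx_0 := \sqrt{-t_0}\bx$, gives $B_{r\sqrt{-t_0}}(\bx_0) \subset \cK(t_0)$. Applying avoidance to the one-parameter family of mean-curvature-shrinking spheres of initial radii $r'\sqrt{-t_0}$, $0 < r' < r$, centered at $\bx_0$ at time $t_0$, each stays disjoint from $\partial\cK$ and hence inside $\cK$; letting $r' \nearrow r$ shows $B_{\rho(t)}(\bx_0) \subset \cK(t)$ for $t$ slightly larger than $t_0$, where $\rho(t) = \sqrt{r^2(-t_0) - 2n(t - t_0)}$. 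Since $\sqrt{-t}\bx \to \bx_0$ as $t \to t_0$, we obtain $\sqrt{-t}\bx \in \cK(t)$ for such $t$, hence $\bx \in \tilde\cK(\tau)$ for some $\tau > \tau^*$, contradicting the choice of $\tau^*$. The main obstacle will be the careful extraction of $\tau^*(\bx) > -\infty$ uniformly in $\bx \in \Omega$, including near the conical end, which hinges essentially on the entire-graph structure provided by Lemma \ref{lemm:entire-graph-from-cpt}.
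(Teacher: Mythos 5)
Your proof is correct and follows essentially the same route as the paper: disjointness from Lemma \ref{lemm:rescaled-avoidance-time-trans}, existence of a maximal time $\tau^*$ from the asymptotic behavior recorded in Theorem \ref{theo:basic-prop-cK-cM} (your use of $d((\bOh,0),\cK)=1$ and of Lemma \ref{lemm:entire-graph-from-cpt} just makes explicit the facts behind the paper's assertion that $\cT_h(\tilde\cK)\to\overline\Omega$ resp.\ $\emptyset$), and a shrinking-ball barrier as in Proposition \ref{prop:rescaled-weak-set-equals-brakke} to rule out $\bx\in\tilde\cK(\tau^*)^\circ$. The extra verification that the leaves lie in $\Omega$ is harmless and consistent with the construction.
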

\begin{proof}
Note that the sets $\{\partial\tilde \cK(\tau)\}_{\tau \in \RR}$ are disjoint by Lemma \ref{lemm:rescaled-avoidance-time-trans}. Now, note that 
\[
\lim_{h\to-\infty}\cT_{h}(\tilde\cK) = \overline\Omega, \qquad \lim_{h\to\infty} \cT_{h}(\tilde\cK) = \emptyset,
\]
by Theorem \ref{theo:basic-prop-cK-cM}. As such, for $x \in\Omega$, we can choose the maximal $T \in \RR$ so that $(x,T) \in \tilde \cK$. Assume that $(x,T) \in \tilde\cK^{\circ}$. By considering a small shrinking ball barrier as in the proof of Proposition \ref{prop:rescaled-weak-set-equals-brakke}, we can contradict the choice of $T$.
\end{proof}

Recall that the $F$-area of a measure $\mu$ (with $\mu(B_{r})\lesssim r^{k}$ for some $k>0$) is
\[
F(\mu) := (4\pi)^{-\frac n2} \int e^{-\frac{1}{4} |\bx|^{2}} d\mu(x).
\]
(cf. Section \ref{sec:prelim.shrinkers}.) Set also $F(A) : = F(\cH^{n}\lfloor A)$ when it is defined. We have the following proposition, which is a straightforward modification of the corresponding result in the mean-convex case.
\begin{proposition}[cf.\ {\cite[Theorems 3.5, 3.8, and 3.9]{White:size}}]\label{prop:F-area-min-rescaled}
Suppose that $V$ is a locally $F$-area minimizing hypersurface (integral current) contained in $\Omega$ with boundary in $\tilde \cK(\tau)$. Then $V\subset \tilde\cK(\tau)$. In particular, $\partial\tilde\cK(\tau)$ has locally finite $\cH^{n}$-measure and for any $B_{r}(\bx)\subset \Omega$,
\[
F(\partial\tilde \cK(\tau) \cap B_{r}(\bx)) \leq F(\partial B_{r}(\bx)).
\]
Finally, for $B_{r}(\bx) \subset \Omega$, if $S$ is a slab of thickness $2\eps r$ passing through $x$ and $\partial\tilde\cK(\tau) \cap B_{r}(\bx) \subset S$ then $\tilde\cK(\tau)\cap (B_{r}(\bx)\setminus S)$ consists of $k = 0,1$, or $2$ of the connected components of $B_{r}(x)\setminus S$ and
\[
F(\partial\cK(\tau) \cap B_{r}(\bx)) \leq (2-k+2n\eps + e(r)) \omega_{n}r^{n}
\]
where $e(r) = o(1)$ as $r\to0$.\footnote{We emphasize that this last statement does not hold uniformly for all $\bx$.}
\end{proposition}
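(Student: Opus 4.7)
The plan is to adapt White's barrier arguments from~\cite{White:size} for mean-convex flows to this rescaled, Gaussian-weighted setting. The set $\tilde\cK$ plays the role of the mean-convex region, and the crucial structural input is the strict time-translation monotonicity $\cT_h(\tilde\cK)\subset\tilde\cK^\circ$ for $h>0$ from Lemma~\ref{lemm:rescaled-avoidance-time-trans} --- this is the rescaled analogue of mean convexity. Recall also that any locally $F$-area minimizing integral current in $\Omega$ is, on its regular part, $F$-stationary, i.e., a shrinker.

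For the containment $V\subset\tilde\cK(\tau)$, suppose for contradiction that $\partial V\subset\tilde\cK(\tau)$ but $V\setminus\tilde\cK(\tau)\neq\emptyset$. By Theorem~\ref{theo:basic-prop-cK-cM} and Lemma~\ref{lemm:rescaled-avoidance-time-trans}, the sets $\tilde\cK(\tau-h)$ are strictly increasing in $h$ and exhaust $\Omega$ as $h\to\infty$, so there is a smallest $h^\ast>0$ with $V\subset\tilde\cK(\tau-h^\ast)$. Then $V$ touches $\partial\tilde\cK(\tau-h^\ast)$ from the inside at some point $p$ not on $\partial V$. If $p$ is a regular point of $\partial\tilde\cK(\tau-h^\ast)$, then Corollary~\ref{coro:reg-pt-rescaled-flows-agree} combined with the strict shrinker mean convexity from Theorem~\ref{theo:basic-prop-cK-cM}(7) gives a strict shrinker supersolution barrier at $p$, and the strong maximum principle for the shrinker equation contradicts the touching. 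The possibility that $p$ is singular for $\partial\tilde\cK(\tau-h^\ast)$ is handled by approximating the barrier on one side with a smooth strict shrinker supersolution (via Corollary~\ref{coro:reg-pt-rescaled-flows-agree} at nearby regular times) and applying Ilmanen's localized avoidance (Theorem~\ref{theo:ilmanen-avoidance}) together with density lower bounds for $V$. The $F$-area bound $F(\partial\tilde\cK(\tau)\cap B_r(\bx))\le F(\partial B_r(\bx))$ is then a corollary: take $V$ to be the $F$-area minimizer in $\overline{B_r(\bx)}$ with trace $\partial\tilde\cK(\tau)\cap\partial B_r(\bx)$, use the containment to place it inside $\tilde\cK(\tau)$, compare with the spherical cap competitor, and invoke standard varifold regularity with monotonicity-based density lower bounds to obtain $\cH^n$-rectifiability and the stated bound as in~\cite[Theorem~3.8]{White:size}.

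The main obstacle is the slab estimate. The qualitative part that $\tilde\cK(\tau)\cap(B_r(\bx)\setminus S)$ is a union of whole components of $B_r(\bx)\setminus S$, so $k\in\{0,1,2\}$, is immediate from $\partial\tilde\cK(\tau)\cap(B_r(\bx)\setminus S)=\emptyset$. The sharp numerical bound is obtained by running the barrier argument against a test competitor built from (i)~flat disks in each of the $2-k$ excluded components of $B_r(\bx)\setminus S$, each contributing $(1+\tfrac12 e(r))\omega_nr^n$ in $F$-area, plus (ii)~the two flat slices $\partial S\cap B_r(\bx)$, contributing at most $(2n\eps+e(r))\omega_nr^n$ in $F$-area by a direct Gaussian computation --- this is the origin of the $2n\eps$ constant and the non-uniform-in-$\bx$ clause, with $e(r)=O(r^2)$ absorbing the local deviation of $\rho$ from $\rho(\bx)$ across $B_r(\bx)$. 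This is the direct adaptation of~\cite[Theorem~3.9]{White:size} reweighted by the Gaussian $\rho$, and this $F$-weighted bookkeeping is the only substantively non-routine piece.
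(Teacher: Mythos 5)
The paper offers no written proof of this proposition: it is asserted to be a straightforward modification of White's Theorems 3.5, 3.8 and 3.9, with the translation monotonicity $\cT_{h}(\tilde\cK)\subset\tilde\cK^{\circ}$ of Lemma \ref{lemm:rescaled-avoidance-time-trans} playing the role of mean convexity and locally $F$-minimizing currents viewed as minimal surfaces for the Gaussian metric. Your overall route is exactly this intended adaptation, and your treatment of the containment $V\subset\tilde\cK(\tau)$ and of the bound $F(\partial\tilde\cK(\tau)\cap B_{r}(\bx))\leq F(\partial B_{r}(\bx))$ follows it, modulo two points you should tighten: the existence of the first value $h^{*}$ and of an interior touching point requires compactness of (the relevant piece of) $V$, and the touching at a \emph{singular} point of $\partial\tilde\cK(\tau-h^{*})$ is the genuinely delicate step---Corollary \ref{coro:reg-pt-rescaled-flows-agree} supplies no smooth barrier near such a point, and the standard fix is to exploit the weak set flow/barrier property of $\tilde\cK$ itself (as in White) rather than a smooth ``shrinker supersolution.''

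The concrete gap is in the slab estimate, which you single out as the only non-routine piece. The constant $2n\eps$ does not come from the two flat faces $\partial S\cap B_{r}(\bx)$: each of these is an $n$-disk of radius comparable to $r$, so a direct Gaussian computation gives $F$-area comparable to $\rho(\bx)\,\omega_{n}r^{n}$ apiece, which is of order $\omega_{n}r^{n}$ and is certainly not bounded by $(2n\eps+e(r))\omega_{n}r^{n}$ once $\eps$ and $r$ are small. The term $2n\eps\,\omega_{n}r^{n}$ is instead the area of the \emph{spherical band} $\partial B_{r}(\bx)\cap S$, a strip of width about $2\eps r$ around an equatorial $\SS^{n-1}$ of $\partial B_{r}(\bx)$, whose $n$-area is approximately $2n\eps\,\omega_{n}r^{n}$. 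The correct competitor, as in White's Theorem 3.9, is obtained by adjoining the slab: compare $\tilde\cK(\tau)\cap \bar B_{r}(\bx)$ with $(\tilde\cK(\tau)\cup S)\cap\bar B_{r}(\bx)$, whose new boundary consists of the $2-k$ flat faces of $\partial S$ adjacent to the excluded components of $B_{r}(\bx)\setminus S$ (this is the $(2-k)$ term) together with the band $\partial B_{r}(\bx)\cap S$ (this is the $2n\eps$ term), and then invoke the outward-minimizing property supplied by the first part of the proposition, with $e(r)$ absorbing the variation of $\rho$ over $B_{r}(\bx)$. As written, your competitor both double counts (disks in the excluded components \emph{and} both full flat slices) and rests on the false claim that the flat slices have $F$-area at most $(2n\eps+e(r))\omega_{n}r^{n}$; for $k=1$ or $k=2$ your items (i)--(ii), computed correctly, only give a bound of order $\omega_{n}r^{n}$, which does not prove the stated estimate. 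The fix is routine, but the bookkeeping as stated would fail.
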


At this point, we have no guarantee that the Brakke flow $\tilde \cM$ has $\tilde \cM(\tau) = \cH^{n}\lfloor \partial \tilde\cK(\tau)$ as in \cite[\S5]{White:size}. As such, we cannot immediately deduce regularity following \cite{White:size,White:nature}. Instead, we must use a continuity argument: consider the set in space-time 
\begin{equation}\label{eq:white-reg-cont}
\mathfrak{D} : = \{ X \in \RR^{n+1}\times \RR : \Theta_{\tilde \cM}(X) \geq 2\}.
\end{equation}
By upper semi-continuity of density, it is clear that $\mathfrak{D}$ is closed. Moreover, by (5) in Theorem \ref{theo:basic-prop-cK-cM}, it is clear that the projection of $\mathfrak{D}$ onto the $\tau$-axis is bounded from below, and the projection on $\RR^{n+1}$-factor is bounded. As such, if $\mathfrak{D}$ is non-empty, we can choose an element $\bar X = (\bar \bx, \bar \tau) \in \mathfrak{D}$ with smallest possible $\tau$-coordinate. 

\begin{lemma}[cf.\ {\cite[Theorem 5.5]{White:size}}]
If $(\tilde \cM_{i},\tilde\cK_{i})$ is a blow-up sequence limiting to $(\tilde\cM',\tilde\cK')$, around points $(x_{i},\tau_{i})$ with $\limsup_{i\to\infty}\tau_{i} < \bar \tau$ then $\supp\tilde\cM' = \partial\tilde \cK'$ and $\partial\tilde\cK'_{i}\to\partial\tilde\cK'$. 
\end{lemma}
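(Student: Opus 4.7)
The plan is to derive a uniform strict upper bound on Gaussian density along the blow-up from the minimality of $\bar\tau$, and then use this bound to push the identity $\supp\tilde\cM = \partial\tilde\cK$ of Proposition \ref{prop:rescaled-weak-set-equals-brakke} to the limit pair.

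First, parabolic rescaling preserves Proposition \ref{prop:rescaled-weak-set-equals-brakke}, so each $(\tilde\cM_{i},\tilde\cK_{i})$ is a unit-regular cyclic integral Brakke flow paired with a weak set flow satisfying $\supp\tilde\cM_{i} = \partial\tilde\cK_{i}$. The convergences $\tilde\cM_{i} \rightharpoonup \tilde\cM'$ and $\tilde\cK_{i} \to \tilde\cK'$ (on compact subsets of spacetime) are given. Unit regularity and cyclicity of $\tilde\cM'$ follow from \cite{White:Brakke,White:cyclic}, $\tilde\cK'$ is automatically a weak set flow, and $\supp\tilde\cM' \subset \tilde\cK'$ is immediate from $\supp\tilde\cM_{i} \subset \tilde\cK_{i}$ and the two convergences.

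Next I would extract the key density bound. Set $T := \limsup_{i}\tau_{i} < \bar\tau$. Because $\bar\tau$ is the minimal time-coordinate of a point in $\mathfrak{D}$, we have $\Theta_{\tilde\cM} < 2$ at every spacetime point with time coordinate $\leq T$. Since $\Theta_{\tilde\cM}$ is upper semi-continuous and attains its supremum on any compact subset $K \subset \{t \leq T\}$, that supremum is $\leq 2 - \delta_{K}$ for some $\delta_{K} > 0$. Parabolic scale-invariance of Gaussian density then gives $\Theta_{\tilde\cM_{i}} \leq 2 - \delta$ on any fixed bounded spacetime region of the blow-up, for $i$ large and $\delta > 0$ depending on the region. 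Upper semi-continuity under weak Brakke convergence propagates this to $\Theta_{\tilde\cM'}(X) < 2$ for every spacetime point $X$.

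Finally, with the strict density bound $\Theta_{\tilde\cM'} < 2$ in hand, the identity $\supp\tilde\cM' = \partial\tilde\cK'$ and the Hausdorff convergence $\partial\tilde\cK_{i} \to \partial\tilde\cK'$ follow as in the mean-convex setting of \cite[Theorem 5.5]{White:size}. The easy inclusion $\partial\tilde\cK' \subset \supp\tilde\cM'$ holds because if a small spacetime ball around $X \in \partial\tilde\cK'$ avoided $\supp\tilde\cM'$, it would eventually avoid $\supp\tilde\cM_{i} = \partial\tilde\cK_{i}$, forcing it to lie entirely inside or outside $\tilde\cK_{i}$ and hence inside or outside $\tilde\cK'$ by Hausdorff convergence, contradicting $X \in \partial\tilde\cK'$. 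The harder inclusion $\supp\tilde\cM' \subset \partial\tilde\cK'$ uses the density bound: if $X \in \supp\tilde\cM' \cap (\tilde\cK')^{\circ}$, then two sheets of $\partial\tilde\cK_{i}$ would have to accumulate at $X$ from opposite sides, each contributing unit mass in the limit, forcing $\Theta_{\tilde\cM'}(X) \geq 2$ and contradicting the bound. Hausdorff convergence $\partial\tilde\cK_{i} \to \partial\tilde\cK'$ then follows. The main obstacle is making the two-sheet counting rigorous; we would leverage cyclicity of the $\tilde\cM_{i}$ and the well-defined interior of each $\tilde\cK_{i}$, inherited from that of $\tilde\cK$, to argue that the two approaching sheets are genuinely distinct and each carries a separate unit of Gaussian density in the limit.
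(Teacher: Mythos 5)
Your reduction to the density bound is the right instinct, but two steps do not hold up as written. First, the bridge from ``point densities of $\tilde\cM_i$ are $\leq 2-\delta$'' to ``$\Theta_{\tilde\cM'}<2$'' cannot be ``upper semi-continuity under weak Brakke convergence'': that inequality goes the other way (it gives \emph{lower} bounds on limit densities), and in general a bound on the approximators' point densities does not pass to the limit --- two multiplicity-one parallel planes collapsing onto each other have all point densities equal to $1$ but limit to a multiplicity-two plane. What saves you here is that the $\tilde\cM_i$ are rescalings of the single flow $\tilde\cM$ about base points whose times stay below $\bar\tau$: by Huisken monotonicity at fixed scales, every Gaussian density of $\tilde\cM'$ is bounded by $\Theta_{\tilde\cM}$ at (limits of) the base points, hence is $<2$. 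This is the standard ``densities of limit flows are controlled by the density at the blow-up point'' argument (used in the paper in the proof of Proposition \ref{prop:schoenflies-blow-up-argument}), and it is what you should invoke instead.

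Second, and more seriously, the hard inclusion rests on the claim that a point $X\in\supp\tilde\cM'\cap(\tilde\cK')^{\circ}$ forces two sheets of $\partial\tilde\cK_i$ to approach from opposite sides, ``each contributing unit mass,'' and you yourself flag that this counting is not yet rigorous. It genuinely is the gap: (a) at this stage of the argument one only knows the support-level identity $\supp\tilde\cM_i=\partial\tilde\cK_i$ of Proposition \ref{prop:rescaled-weak-set-equals-brakke}, not $\tilde\cM_i(t)=\cH^n\lfloor\partial\tilde\cK_i(t)$ (that is Corollary \ref{coro:rescaled-Brakke-agrees-weakset}, proved \emph{after}, and using, this lemma), so ``each sheet carries a unit of measure'' is not available; (b) even granting it, Gaussian lower bounds at support points are not additive across nearby sheets at a fixed scale --- the same mass can serve both sheets' density ratios --- so concluding $\Theta_{\tilde\cM'}(X)\geq 2$ needs a real separation argument, e.g.\ White's one-sided minimization/slab estimates (here Proposition \ref{prop:F-area-min-rescaled}) or the time-monotonicity of the regions (Lemma \ref{lemm:rescaled-avoidance-time-trans}); cyclicity alone does not produce it, and one must also rule out complement components degenerating onto lower-dimensional sets rather than as a two-sheeted slab. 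The paper avoids all of this by a different mechanism: by White's stratification \cite{White:stratification}, almost every point of $\supp\tilde\cM'$ has a planar tangent flow, which by the density bound and unit regularity is a multiplicity-one static plane, hence a regular point; near such a point the convergence $\tilde\cM_i\rightharpoonup\tilde\cM'$ is smooth, so Corollary \ref{coro:reg-pt-rescaled-flows-agree} supplies points of the complement of $\tilde\cK_i$ at a definite scale, which persist in the limit and show a dense subset of $\supp\tilde\cM'$ lies in $\partial\tilde\cK'$. If you want to salvage your route you would essentially have to reprove the relevant parts of \cite[Theorem 5.5]{White:size} using Proposition \ref{prop:F-area-min-rescaled}; otherwise the regularity-based argument is the shorter path. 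Finally, note that the asserted convergence $\partial\tilde\cK_i\to\partial\tilde\cK'$ is left as ``then follows'' in your write-up and still needs a sentence.
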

\begin{proof}
As usual, we can show that $\partial\tilde\cK'\subset \supp\tilde\cM'\subset \tilde\cK'$. On the other hand, by \cite[\S9]{White:stratification}, almost every $X \in \supp \tilde\cM'$ has a tangent flow that is a static or quasi-static plane. By definition of $\bar \tau$, these must be static and multiplicity-one (by unit regularity). Thus, Corollary \ref{coro:reg-pt-rescaled-flows-agree} implies that there must be points in the complement of $\tilde\cK'$ that are arbitrarily close to $X$, since $(\tilde\cM_i,\tilde\cK_i)$ converges smoothly near $X$. 
This implies that a dense subset of $\supp\tilde\cM'$ is contained in $\partial \tilde\cK'$. This completes the proof. 
\end{proof}

\begin{lemma}[{\cite[Theorem 7.2]{White:size}}]\label{lemm:stat-quas-stat-cont-arg-rescaled-flow}
If $(\tilde\cM',\tilde\cK')$ is a static or quasi-static limit flow at $(x,\tau)$ with $\tau < \bar \tau$, then $\tilde M'$ is a stable minimal hypersurface whose singular set has Hausdorff dimension at most $n-7$. In particular, a non-flat static or quasi-static limit flow cannot exist when $n < 7$. 
\end{lemma}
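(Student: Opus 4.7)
The plan is to adapt White's argument from the mean-convex setting \cite{White:size} essentially verbatim, using the rescaled foliation structure established in this section in place of mean convexity. Since $(x,\tau)$ has $\tau<\bar\tau$, the previous lemma tells us that in any blow-up sequence converging to $(\tilde\cM',\tilde\cK')$ the convergences $\supp\tilde\cM_i\to\supp\tilde\cM'=\partial\tilde\cK'$ and $\partial\tilde\cK_i\to\partial\tilde\cK'$ are both smooth (or at worst Hausdorff) and multiplicity-one. In particular, by passing Lemma \ref{lemm:rescaled-avoidance-time-trans} and the $F$-minimizing property of Proposition \ref{prop:F-area-min-rescaled} to the limit, $\tilde\cK'$ is still foliated by its time translates $\cT_h(\tilde\cK')\subset(\tilde\cK')^\circ$, and $\partial\tilde\cK'(\tau_0)$ is locally $F$-area minimizing to one side in every sufficiently small ball contained in the ambient region.

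In the static case, $\tilde M'=\partial\tilde\cK'(\tau_0)$ is independent of $\tau$, and the one-sided $F$-minimizing property becomes a one-sided local area-minimizing property on small scales (where the Gaussian weight is essentially constant), so $\tilde M'$ is a stationary integral varifold that is locally area-minimizing from one side. In the quasi-static case, one more parabolic rescaling reduces to a stationary cone with the same one-sided minimizing property. In either case, the foliation $\{\partial\tilde\cK'(\tau)\}$ near $\tilde M'$ consists of nearby minimal (resp.\ self-similar) leaves on one side of $\tilde M'$, and differentiating in $\tau$ yields a nonnegative Jacobi field on the regular set of $\tilde M'$; equivalently, one can argue stability directly from the one-sided minimizing property via a standard comparison argument. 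Either way, $\tilde M'$ is a stable minimal (resp.\ self-similar) hypersurface in the regular set.

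The bound on the singular set is then obtained by Federer-style dimension reduction, exactly as in \cite[Theorem 7.2]{White:size}: any tangent cone to $\tilde M'$ at a singular point is again a stationary cone that is locally area-minimizing on one side, so if the singular set of $\tilde M'$ had Hausdorff dimension greater than $n-7$ one would produce by iteration a non-flat one-sided area-minimizing stationary cone in $\RR^k$ with $k\leq 7$, contradicting the classical non-existence of such cones. When $n<7$ this forces the singular set to be empty and $\tilde M'$ to be a smooth complete stable minimal hypersurface in $\RR^{n+1}$ that is locally area-minimizing to one side, hence a hyperplane by the standard Bernstein-type theorem.

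The main technical obstacle is verifying that the key structural ingredients used by White in the classical mean-convex setting, namely (i) that the support of the limit Brakke flow coincides with the boundary of the limit weak set flow and (ii) that this boundary is locally $F$-area minimizing to one side with a foliation by translates, both survive the passage to the limit below time $\bar\tau$. This is precisely what the preceding results of the section (Proposition \ref{prop:rescaled-weak-set-equals-brakke}, Proposition \ref{prop:F-area-min-rescaled}, and the previous lemma) are designed to provide, so once these are in hand the regularity conclusions follow by essentially citing \cite{White:size,White:nature}.
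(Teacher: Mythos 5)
Your proposal is correct and follows the same route the paper intends: the paper gives no separate proof of this lemma, instead citing \cite[Theorem 7.2]{White:size} on the strength of the preceding results (Lemma \ref{lemm:rescaled-avoidance-time-trans}, Propositions \ref{prop:rescaled-weak-set-equals-brakke} and \ref{prop:F-area-min-rescaled}, and the lemma on limit flows before $\bar\tau$), which supply exactly the one-sided $F$-minimizing and foliation-by-translates structure you invoke. Your reconstruction of White's argument (one-sided minimizing plus the foliation giving stability, dimension reduction for the singular set, and blow-down to a stable one-sided minimizing cone, flat for $n<7$) is the intended adaptation.
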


From now on, we assume that $n<7$.
\begin{corollary}\label{coro:sing-set-rescaled-flow-small}
We have $(\sing \tilde \cM) \cap \{\tau < \bar \tau\}$ is of parabolic Hausdorff dimension $\leq n-1$. Moreover, for each $\tau < \bar \tau$, at time $\tau$, the singular set $\sing \tilde\cM(\tau)$ has spatial Hausdorff dimension at most $n-1$. 
\end{corollary}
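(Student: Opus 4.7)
The plan is to combine White's stratification theorem for Brakke flows with the density gap $\Theta_{\tilde\cM}<2$ available for $\tau<\bar\tau$ and the no-non-flat-static classification in Lemma \ref{lemm:stat-quas-stat-cont-arg-rescaled-flow}.

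First I would argue that every tangent flow at a singular point is a \emph{nontrivial shrinker}. Fix $X\in\sing\tilde\cM$ with $\tau(X)<\bar\tau$ and let $\cM'$ be any tangent flow at $X$. By upper semi-continuity of density and the definition of $\bar\tau$, $\Theta_{\cM'}\leq\Theta_{\tilde\cM}(X)<2$. If $\cM'$ were a static or quasi-static limit flow in the sense of \cite{White:stratification}, Lemma \ref{lemm:stat-quas-stat-cont-arg-rescaled-flow} would force it to be a flat hyperplane (since $n<7$), and the density gap would force multiplicity one; Brakke--White $\varepsilon$-regularity would then contradict $X\in\sing\tilde\cM$. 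Therefore $\cM'$ is a genuinely shrinking self-similar flow and, since any shrinker invariant under the full $\RR^n$ of spatial translations must satisfy $\bH+\tfrac12\mathbf{x}^\perp=0$ with $\bH\equiv0$, i.e.\ a hyperplane, its spatial spine has dimension at most $n-1$. In particular, the spacetime symmetry group of $\cM'$ has dimension at most $n-1$.

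Second, I would invoke White's Brakke stratification theorem \cite{White:stratification}: the stratum $\mathcal{S}^{n-1}\subset\supp\tilde\cM$ of points at which no tangent flow admits $n$ or more linearly independent spacetime translational symmetries satisfies $\dim_{\mathcal P}\mathcal{S}^{n-1}\leq n-1$. By the previous paragraph, $\sing\tilde\cM\cap\{\tau<\bar\tau\}\subset\mathcal{S}^{n-1}$, which yields the parabolic Hausdorff dimension bound $\leq n-1$.

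Third, for the spatial bound on each time slice, I would rerun a Federer-type dimension reduction within the fixed slice $\{\tau=\tau_0\}$ for each $\tau_0<\bar\tau$. Every iterated (spatial) blow-up at a singular point of $\tilde\cM(\tau_0)$ is a time slice of a tangent flow to $\tilde\cM$ at a point in $\sing\tilde\cM\cap\{\tau<\bar\tau\}$; by the first step, each such slice is a non-flat shrinker, whose singular set has strictly smaller Hausdorff dimension, so the standard stratification argument in space (as in \cite{White:size,White:nature} in the mean-convex setting) yields $\dim_{\mathcal H}\sing\tilde\cM(\tau_0)\leq n-1$.

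The main technical obstacle is verifying that the spatial blow-up reduction in the third step stays entirely inside the singular set of the fixed time slice and does not jump to nearby times—this is precisely where the density gap $<2$ and the static/quasi-static classification of Lemma \ref{lemm:stat-quas-stat-cont-arg-rescaled-flow} are crucial, since they rule out the only obstructions (flat static or quasi-static tangent flows) to translating the spacetime stratification into a purely spatial one in each slice.
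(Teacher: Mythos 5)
Your proposal is correct and follows essentially the same route as the paper: the paper's proof of this corollary is precisely Lemma \ref{lemm:stat-quas-stat-cont-arg-rescaled-flow} (no non-flat static or quasi-static limit flows before $\bar\tau$ when $n<7$) combined with White's stratification \cite[\S 9]{White:stratification} (cf.\ \cite[Theorem 1.3]{White:size}), and your first two steps spell out exactly the implicit points (density $<2$ before $\bar\tau$ forces any planar static/quasi-static tangent flow to have multiplicity one, whence unit-regularity gives a contradiction, so all tangent flows at singular points are non-planar shrinkers with spines of dimension $\leq n-1$). The only caveat is your third step: the claim that every iterated spatial blow-up of the fixed time-slice is a time-slice of a spacetime tangent flow is not accurate as stated and an in-slice Federer reduction is not needed --- once static and quasi-static tangent flows are excluded, the slice-wise spatial dimension bound is already part of the cited stratification results, which is how the paper obtains it.
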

\begin{proof}
This follows from Lemma \ref{lemm:stat-quas-stat-cont-arg-rescaled-flow} and \cite[\S9]{White:stratification}. See also \cite[Theorem 1.3]{White:size}.
\end{proof}

\begin{corollary}\label{coro:rescaled-Brakke-agrees-weakset}
For $\tau < \bar \tau$, $\tilde \cM(\tau) = \cH^{n}\lfloor \partial\tilde\cK(\tau)$. 
\end{corollary}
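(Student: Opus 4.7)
The plan is to show that $\tilde\cM(\tau)$ and $\cH^n\lfloor\partial\tilde\cK(\tau)$ agree by decomposing via the spatial regular/singular structure. Fix $\tau<\bar\tau$. First, Proposition \ref{prop:rescaled-weak-set-equals-brakke}, combined with the slicing identity established just after it, yields that the supports coincide: $\supp\tilde\cM(\tau)=\partial\tilde\cK(\tau)$. Let $\partial\tilde\cK(\tau)^{\reg}$ denote the spatial regular set (those $\bx$ with $(\bx,\tau)\in\reg\tilde\cM$), and let $\sing\tilde\cM(\tau)$ denote its complement in $\partial\tilde\cK(\tau)$. Corollary \ref{coro:sing-set-rescaled-flow-small} supplies the key smallness $\cH^n(\sing\tilde\cM(\tau))=0$.

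Next, I would apply Corollary \ref{coro:reg-pt-rescaled-flows-agree} at each $(\bx,\tau)\in\reg\tilde\cM$ to get local equality of the two measures on a ball $B_r(\bx)$. Covering the open set $\partial\tilde\cK(\tau)^{\reg}$ by such balls gives
\[ \tilde\cM(\tau)\lfloor\partial\tilde\cK(\tau)^{\reg} = \cH^n\lfloor\partial\tilde\cK(\tau)^{\reg}. \]
It remains to verify that both measures vanish on $\sing\tilde\cM(\tau)$. For $\cH^n\lfloor\partial\tilde\cK(\tau)$ this is automatic from $\cH^n(\sing\tilde\cM(\tau))=0$. For $\tilde\cM(\tau)$, Huisken's monotonicity formula and the entropy bound from Theorem \ref{theo:basic-prop-cK-cM}(2) give a uniform upper $n$-density bound at every point of $\supp\tilde\cM(\tau)$; by a standard fact in geometric measure theory, this forces $\tilde\cM(\tau)\ll\cH^n$, and hence $\tilde\cM(\tau)(\sing\tilde\cM(\tau))=0$.

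The step that needs the most care is ensuring $\tilde\cM(\tau)\ll\cH^n$ holds for \emph{every} $\tau<\bar\tau$, not just for almost every $\tau$. For a.e.\ $\tau$, this is immediate from the integrality of the associated varifold $V(\tau)$. For all $\tau$, the pointwise-in-time upper $n$-density bound from Huisken monotonicity, together with the standard density characterization of absolute continuity (a Radon measure with finite upper $n$-density at every point of its support is absolutely continuous with respect to $\cH^n$), yields the desired vanishing. Combining this with the local agreement on $\partial\tilde\cK(\tau)^{\reg}$ gives the corollary.
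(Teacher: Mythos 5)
Your proposal is correct and follows essentially the same route as the paper: the entropy bound gives uniform area ratios (hence $\tilde\cM(\tau)\ll\cH^n\lfloor\supp\tilde\cM(\tau)$), Corollary \ref{coro:sing-set-rescaled-flow-small} makes the singular slice $\cH^n$-null, and local agreement at regular points (Corollary \ref{coro:reg-pt-rescaled-flows-agree}) handles the rest. Your extra remark about obtaining the density bound for every $\tau$, not just a.e.\ $\tau$, is a fair point of care but does not change the argument.
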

\begin{proof}
Corollary \ref{coro:sing-set-rescaled-flow-small} implies that $\cH^{n}(\supp\tilde\cM(\tau) \setminus \reg \tilde\cM(\tau)) = 0$. Because $\tilde\cM$ has bounded entropy, we have that $\tilde\cM(\tau)(B_{r}(\bx)) \lesssim r^{n}$ which implies that 
\[
\tilde\cM(\tau)(\supp\tilde\cM(\tau) \setminus \reg \tilde\cM(\tau)) = 0. 
\]
Combined with $\supp\tilde\cM(\tau) = \partial\tilde\cK(\tau)$, the assertion follows. 
\end{proof}

\begin{proposition}
The set $\mathfrak{D}$ is empty. Moreover, for any limit flow $(\cM',\cK')$, we have that $\supp \cM' =  \partial \cK'$ and there is $T \leq \infty$ so that 
\begin{enumerate}
\item $\cK'(t)$ is weakly convex for all $t$,
\item $\cK'(t)$ has interior points if and only if $t < T$, 
\item $\partial \cK'(t)$ are smooth for $t<T$,
\item $\cM'(t)$ is smooth and multiplicity one for $t<T$,
\item $\cK'(t)$ is empty for $t>T$. 
\end{enumerate}
If $(\cM',\cK')$ is a tangent flow, then it is a multiplicity one generalized cylinder $\SS^{n-k}\times \RR^{k}$. 
\end{proposition}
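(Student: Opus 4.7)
The plan is to establish $\mathfrak{D} = \emptyset$ by contradiction, then obtain the remaining structural assertions by applying the same analysis at arbitrary spacetime points. Suppose $\mathfrak{D}$ is nonempty, and let $\bar X = (\bar x, \bar\tau)$ realize the minimum $\tau$-coordinate on $\mathfrak{D}$; this exists because $\mathfrak{D}$ is closed, spatially bounded (by the entropy bound and the $R(t)$-cutoff in Theorem \ref{theo:basic-prop-cK-cM}(9)), and bounded below in $\tau$ by the smooth regime for $\tau \ll 0$. I would perform a parabolic blow-up of $\tilde\cM$ at $\bar X$ with dilation factors $\lambda_k \to \infty$, extract a subsequential limit Brakke flow $\cM^*$ and associated set flow $\cK^*$, and observe that under parabolic dilation the shrinker drift term $\tfrac12 \bx^\perp$ in the rescaled-MCF equation becomes negligible, so $\cM^*$ is a standard Brakke flow. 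Huisken's monotonicity then forces $\cM^*$ to be backwards self-similar at $(\mathbf 0, 0)$, so $\cM^*(s) = m \cH^n \lfloor \sqrt{-s}\,\Sigma^*$ for some self-shrinker $\Sigma^*$ and integer $m \geq 1$, with $m F(\Sigma^*) = \Theta_{\cM^*}(\mathbf 0, 0) = \Theta_{\tilde\cM}(\bar X) \geq 2$.

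Three structural properties would next be passed to $(\cM^*, \cK^*)$. First, the translation monotonicity of Lemma \ref{lemm:rescaled-avoidance-time-trans}, $\tilde\cK(\tau + h) \subset \tilde\cK(\tau)^\circ$ for all $h > 0$, is preserved under parabolic blow-up in rescaled time (since the inclusion holds for any positive $h$, it holds in particular at every scale), so in the limit $\cK^*(s + h) \subset \cK^*(s)$ for all $h > 0$; combined with self-similarity, this forces $\Sigma^*$ to be weakly mean-convex in the sense that $H_{\Sigma^*} \geq 0$ with respect to the outward normal. Second, the identity $\partial \tilde\cK = \supp \tilde\cM$ of Proposition \ref{prop:rescaled-weak-set-equals-brakke}, together with the Hausdorff convergence of the boundaries of the blow-up sets, yields $\partial \cK^* = \supp \cM^*$. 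Third, Corollary \ref{coro:rescaled-Brakke-agrees-weakset} yields $\tilde\cM_k(\tau) = \cH^n \lfloor \partial\tilde\cK_k(\tau)$ for all blow-up times in the regular regime $\tau < \bar\tau$ of the underlying flow, which are the only $\tau$ that persist in the limit (since the blow-up is centered at $\bar \tau$ and tangent flows are shrinking to $(\mathbf 0, 0)$ as $s \nearrow 0$); this identity passes to the limit, forcing $\cM^*(s) = \cH^n \lfloor \supp \cM^*(s)$, i.e., multiplicity $m = 1$.

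Now $\Sigma^*$ is a properly embedded, smooth (by unit-regularity and Brakke's regularity on the regular set), mean-convex self-shrinker of polynomial volume growth (from the entropy bound $\lambda(\cM^*) \leq \lambda(\tilde\cM) \leq F(\Sigma)$). Huisken's classification \cite{Huisken:local-global} (cf.\ \cite{ColdingMinicozzi:sing-generic}) forces $\Sigma^* = \SS^{n-k} \times \RR^k$ for some $0 \leq k \leq n$. But every such generalized cylinder has $F(\Sigma^*) = \lambda(\SS^{n-k}) < 2$ in every dimension, contradicting $m F(\Sigma^*) \geq 2$ with $m = 1$. This proves $\mathfrak{D} = \emptyset$.

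With $\mathfrak{D} = \emptyset$ in hand, Lemma \ref{lemm:stat-quas-stat-cont-arg-rescaled-flow} and Corollaries \ref{coro:sing-set-rescaled-flow-small}, \ref{coro:rescaled-Brakke-agrees-weakset}, \ref{coro:reg-pt-rescaled-flows-agree} apply at every spacetime point, and the structural assertions follow routinely: the identities $\supp \cM' = \partial \tilde\cK'$ and $\cT_h \tilde\cK' \subset \tilde\cK'$ are inherited from the corresponding identities for $\tilde\cM$; weak convexity of $\tilde\cK'(t)$ and the existence of an extinction time $T$ follow from translation monotonicity and the entropy bound; smoothness of $\partial\tilde\cK'(t)$ and unit multiplicity of $\tilde\cM'(t)$ for $t < T$ come from the now-global regularity theory. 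The tangent flow classification is then exactly the argument above applied at an arbitrary spacetime point, in which multiplicity-one, mean convexity, and self-similarity force a generalized cylinder. The main obstacle is the careful passage of the measure-theoretic identity $\tilde\cM = \cH^n \lfloor \partial\tilde\cK$ through the blow-up limit to obtain multiplicity one, which parallels White's treatment in the mean-convex setting \cite{White:size,White:nature}.
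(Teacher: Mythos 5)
Your skeleton matches the paper's: take $\bar X \in \mathfrak{D}$ with smallest $\tau$-coordinate, analyze the tangent flow there, and contradict $\Theta_{\tilde\cM}(\bar X)\geq 2$. But the two load-bearing claims in your middle paragraph are not established. First, multiplicity one: the identity $\tilde\cM(\tau) = \cH^n\lfloor \partial\tilde\cK(\tau)$ for $\tau<\bar\tau$ does \emph{not} pass to a parabolic blow-up limit. Under blow-up, two nearby sheets of $\partial\tilde\cK$ can collapse together and produce, for instance, a static plane of multiplicity two, whose Gaussian density is exactly $2$ --- i.e.\ precisely the kind of tangent flow a point of $\mathfrak{D}$ could a priori have --- while the Hausdorff limit $\partial\cK^*$ sees only one sheet. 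So ``$m=1$'' cannot be read off from convergence of the boundaries; ruling out such collapsed limits is exactly the content of the sheeting/non-collapsing results in White's theory (\cite[Corollary 8.5, Theorem 9.2, Theorem 12.3]{White:size}), whose hypotheses are supplied here by the one-directional motion (Lemma \ref{lemm:rescaled-avoidance-time-trans}), the local $F$-minimization and slab estimate (Proposition \ref{prop:F-area-min-rescaled}), and Corollary \ref{coro:rescaled-Brakke-agrees-weakset}. Your closing remark that this step ``parallels White's treatment'' is in fact the entire proof of the proposition, not an afterthought; asserting that the measure identity passes to the limit assumes away the main difficulty.

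Second, smoothness and classification of $\Sigma^*$: unit regularity only gives smoothness near density-one points, and Brakke regularity only gives smoothness almost everywhere, so neither yields a globally smooth shrinker at a point of density $\geq 2$; hence the appeal to Huisken/Colding--Minicozzi's classification of smooth mean-convex shrinkers is not yet available. This is where the restriction $n\leq 6$ and the stability of static/quasi-static limit flows (Lemma \ref{lemm:stat-quas-stat-cont-arg-rescaled-flow}, feeding into the argument of \cite[Theorem 10]{White:nature}) must enter; your core contradiction argument never invokes $n<7$, which it must, since the exclusion of non-flat stable cones is what ultimately forces the tangent flow at $\bar X$ to be a multiplicity-one generalized cylinder (of density $<2$). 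In short, the outline is right, but the two decisive steps --- multiplicity one and smoothness/convexity of the tangent flow at $\bar X$ --- are asserted rather than proved; the paper obtains both by verifying the hypotheses of White's mean-convex regularity theory for $\tau\leq\bar\tau$ and then citing \cite{White:size,White:nature}.
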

\begin{proof}
We first prove that $\mathfrak{D}$ is empty by arguing that we can apply the regularity theory of \cite{White:size,White:nature} at $\bar \tau$. Observe that Lemma \ref{lemm:rescaled-avoidance-time-trans}, Proposition \ref{prop:F-area-min-rescaled}, and Corollary \ref{coro:rescaled-Brakke-agrees-weakset} allow us to apply all of the arguments in \cite{White:size} that do not consider any points from $\{\tau \geq \bar \tau\}$ (after this time we do not know how to relate $\tilde\cM$ and $\tilde\cK$). 

Assuming $\mathfrak{D}\neq \emptyset$, we can fix $(\bar \bx,\bar\tau)  \in \mathfrak{D}$. Let $(\cM',\cK')$ denote a tangent flow pair to $(\tilde \cM,\tilde\cK)$. Arguing as in \cite[Theorem 5.5]{White:size} we find that $\cM'$ is compatible with the associated weak set flow $\cK'$ for times $t<0$ and the rescalings of $\partial\tilde\cK$ around $(\bar \bx,\bar \tau)$ converges to $\partial \cK'$ on $\{t<0\}$ as sets. In particular, this allows us to apply the arguments in \cite[\S 9]{White:size} to conclude that $(\cM',\cK')$ cannot be a multiplicity-two hyperplane for $t<0$ (either static or quasistatic).

\begin{remark}
Note that \cite[\S 9]{White:size} considers multiplicity-two quasistatic planes (and static planes were already ruled out in \cite[Corollary 8.5]{White:size}). We cannot appeal to \cite[Corollary 8.5]{White:size} in this setting, since the argument would need information about the flow for times $\tau > \bar \tau$. However, one may carefully check that the argument in \cite[\S 9]{White:size} makes no reference to any time $\tau > \bar \tau$ nor does it need \cite[Corollary 8.5]{White:size}, just the sheeting theorem \cite[Theorem 8.2]{White:size} which is applied to the blow-up sequence on compact subsets of $\RR^{n+1}\times \{t<0\}$. (In particular, we are making the observation that the argument in \cite[\S 9]{White:size} can be used to rule out static (or quasi-static) multiplicity-two planes while only considering times before the singular time.)
\end{remark}

Now that we have seen that $(\cM',\cK')$ cannot be a multiplicity-two hyperplane for $t<0$, we claim that $\sing\cM'\cap\{t<0\} = \emptyset$. If not, there is some $X \in \sing\cM'\cap\{t<0\}$. An iterated tangent flow $(\cM'',\cK'')$ at $X$ will be static (since $t<0$) and is a limit flow of $(\tilde \cM,\tilde \cK)$ (and will only see points at $<\bar\tau$), cf.\ \cite[Theorem 5.2(1)]{White:size}. Thus, we can repeat the argument in \cite[Theorem 12.3]{White:size} to show that $(\cM'',\cK'')$ cannot be a multiplicity-two plane. (Because $(\cM',\cK')$ is not a multiplicity-two plane, the rescaling chosen in the proof \cite[Theorem 12.3]{White:size} will still yield a tangent flow to $(\cM',\cK')$ and will thus not see any points with $\tau > \bar\tau$.) Iterating this argument and ruling out a non-trivial union of half-planes using Proposition \ref{prop:F-area-min-rescaled} as in \cite[Theorem 7.2]{White:size}, we can conclude (using the assumption $n<7$) that $(\cM'',\cK'')$ is a multiplicity-one hyperplane, contradicting $X\in\sing\cM'$. Since $(\cM',\cK')$ is regular for $t<0$, it must be a generalized cylinder by (mean) convexity, \cite[Theorem 10]{White:nature} (cf.\ \cite[Theorem 10.1]{ColdingMinicozzi:generic}). In particular, this implies that $\Theta_{\tilde\cM}(\bar\bx,\bar\tau) < 2$, contradicting the definition of $\mathfrak{D}$. 

Now that $\mathfrak{D} =\emptyset$, we can use Lemma \ref{lemm:rescaled-avoidance-time-trans}, Proposition \ref{prop:F-area-min-rescaled}, and Corollary \ref{coro:rescaled-Brakke-agrees-weakset} to see that White's regularity theory \cite{White:size,White:nature} applies to $(\tilde\cM,\tilde\cK)$ for all time. This completes the proof.
\end{proof}

We will say that $(\bx,t) \in \sing \cM$ has a \emph{mean convex neighborhood} if\footnote{This definition is slightly simpler than the one used in \cite{HershkovtisWhite,ChoiHaslhoferHershkovits} since all singularities in our setting have the ``same orientation'' (since the shrinker mean convexity rescales to mean convexity in the blow-up limit).} there's $\eps >0$ so that $t+\eps^2 < 0$ and if $t-\eps^2 < t_1 < t_2 < t+\eps^2$ then
\[
\cK(t_2) \cap B_\eps(\bx) \subset \cK(t_1) \cap B_\eps(\bx) \setminus \partial \cK(t_1).
\]
With this definition, we can now summarize the above conclusions for the non-rescaled flow.  
\begin{corollary}\label{coro:summary-tleq0-non-rescaled}
The non-rescaled flows $(\cM,\cK)$ have the following properties for $t<0$
\begin{enumerate}
\item $\cM(t) = \cH^{n}\lfloor \partial\cK(t)$,
\item $\sing\cM \cap\{t<0\}$ has parabolic Hausdorff dimension $\leq n-1$ and for $t<0$, $\sing\cM(t)$ has spatial Hausdorff dimension $\leq n-1$,
\item any limit flow at $X=(\bx,t)$ with $t<0$ is weakly convex on the regular part and all tangent flows are multiplicity one generalized cylinders, and
\item any singular point has a (strict) mean-convex neighborhood. 
\end{enumerate}
\end{corollary}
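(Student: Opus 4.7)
The plan is to transfer the conclusions of the preceding proposition for the rescaled flow $(\tilde\cM, \tilde\cK)$ back to $(\cM, \cK)$ via the diffeomorphism $\cR : \RR^{n+1} \times (-\infty, 0) \to \RR^{n+1} \times \RR$, $\cR(\bx, t) = ((-t)^{-1/2}\bx, -\log(-t))$. The key observation is that $\cR$ is smooth with smooth inverse, and its differential at any $X_0 = (\bx_0, t_0)$ with $t_0 < 0$ is---up to translation---a parabolic rescaling with spatial factor $(-t_0)^{-1/2}$ and time factor $(-t_0)^{-1}$. Moreover, the extra term $\tfrac12 \bx^\perp$ appearing in the rescaled MCF equation becomes a lower-order perturbation of mean curvature at blow-up scales around singular points, so limit flows of $\tilde\cM$ at a point $\cR(X_0)$ are genuine mean curvature flows and correspond bijectively to limit flows of $\cM$ at $X_0$ (up to a fixed parabolic rescaling).

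Granted these preliminaries, claims (1)--(3) follow by direct transfer. For claim (1), I would push the identity $\tilde\cM(\tau) = \cH^n \lfloor \partial\tilde\cK(\tau)$ (from Proposition \ref{prop:rescaled-weak-set-equals-brakke} and Corollary \ref{coro:rescaled-Brakke-agrees-weakset}) through the spatial homothety $\bx \mapsto (-t)^{1/2}\bx$ at each time slice, whose common Jacobian factor affects both sides identically. For claim (2), since $\cR$ is locally bi-Lipschitz with smooth inverse on compact subsets, parabolic Hausdorff dimension is preserved, so the dimension bound on $\sing \tilde\cM$ together with its slice-wise spatial refinement transfers directly. For claim (3), the bijective correspondence of parabolic blow-ups via $\cR$ transfers both the multiplicity-one generalized cylinder property of tangent flows and the weak convexity of $\tilde\cK'(t)$-slices of limit flows to the analogous properties of $\cM$.

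Claim (4) is the main obstacle and is not an immediate corollary of the rescaled-to-non-rescaled transfer, since shrinker mean convexity $\cF_\lambda(\cK) \subset \cK^\circ$ concerns parabolic dilations whereas a mean-convex neighborhood concerns genuine forward-time monotonicity. Given claim (3)---that every singular point $X_0$ of $\cM$ at $t < 0$ has only multiplicity-one generalized cylinder tangent flows---I would invoke the mean-convex neighborhood theorem of Choi--Haslhofer--Hershkovits \cite{ChoiHaslhoferHershkovits} (in $\RR^3$) together with its higher-dimensional extensions (see e.g.~\cite{ChoiHaslhoferHershkovitsWhite}), which guarantee that any cylindrical singularity of a unit-regular, cyclic Brakke flow possesses a strict mean-convex spacetime neighborhood. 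Applying this at each singular point of $\cM \cap \{t < 0\}$ yields claim (4); the unit-regular and cyclic hypotheses of these theorems are inherited by $\cM$ from the construction as a limit of flows produced by Ilmanen's elliptic regularization.
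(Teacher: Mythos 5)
Your treatment of (1)--(3) is essentially the paper's: the paper simply notes that these statements were established in the rescaled setting, and the transfer back through $\cR$ (time slices related by homotheties, parabolic blow-ups unaffected by the lower-order $\tfrac12\bx^\perp$ term, parabolic Hausdorff dimension preserved since $\cR$ is locally bi-Lipschitz for the parabolic metric away from $t=0$) is exactly the intended, routine step. So far so good.

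For (4), however, your argument has a genuine gap. The paper deduces the strict mean-convex neighborhood directly from the conclusion of (3) that \emph{all limit flows} are weakly convex on their regular part, and then applies \cite{HershkovtisWhite}; convexity of limit flows is precisely the input that makes that (comparatively soft) argument work, and it is available here only because the rescaled flow was shown to be shrinker mean convex, so that White's mean-convex regularity theory applies. You instead discard this information and try to invoke the mean-convex neighborhood theorems of \cite{ChoiHaslhoferHershkovits} and \cite{ChoiHaslhoferHershkovitsWhite}. Those results cover cylindrical singularities modeled on $\RR\times\SS^1\subset\RR^3$ and, in higher dimensions, neck singularities modeled on $\RR\times\SS^{n-1}(\sqrt{2(n-1)})$; they do \emph{not} cover the general multiplicity-one generalized cylinders $\RR^{n-k}\times\SS^k(\sqrt{2k})$ with intermediate $k$ that can occur here for $3\leq n\leq 6$ (e.g.\ a bubble-sheet $\RR^2\times\SS^1\subset\RR^4$). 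For those singularities the mean-convex neighborhood conjecture is not established by the cited works, so your step fails except when $n=2$. Beyond the gap, the appeal is also the wrong tool: the whole point of \cite{ChoiHaslhoferHershkovits} is to obtain mean convexity \emph{without} a priori knowledge of convex limit flows, whereas in the present construction that knowledge is already in hand from (3), and one should use it as the paper does.
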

\begin{proof}
Everything but the last claim is proven above (in the rescaled setting). The last claim follows from the fact that all limit flows are convex so \cite{HershkovtisWhite} applies. 
\end{proof}

\subsection{Regularity at $t = 0$} We now turn to regularity near time $t=0$.

For $A,B\subset \RR^{n+1}\times\RR$, subsets of space-time, we write 
\[
d_{E}(A,B) = \inf_{(\mathbf{x}_{a},t_{a})\in A,(\mathbf{x}_{b},t_{b})\in B} \sqrt{|\mathbf{x}_{a}-\mathbf{x}_{b}|^{2} + (t_{a}-t_{b})^{2}}
\]
for the \emph{Euclidean} distance between the two sets. We emphasize that this differs from the usual \emph{parabolic} distance between the sets. Note that the parabolic dilation map $\cF_{\lambda} : \RR^{n+1}\times\RR\to\RR^{n+1}\times \RR$ generates the vector field
\[
V : = \frac{d}{d\lambda}\Big|_{\lambda=1}\cF_{\lambda} = (\mathbf{x},2t) \in T_{x} \RR^{n+1}\oplus T_{t} \RR.
\]
We now consider the geometry of hypersurfaces in space-time swept out by a mean-curvature flow.
\begin{lemma}\label{lemm:shrinker-mean-cuvature-vs-par-star-shaped}
Consider a family of smooth hypersurfaces $(a,b)\mapsto M(t) \subset \RR^{n+1}$ flowing by mean curvature flow. Set 
\[
\mathfrak{M} : = \bigcup_{t\in(a,b)} M(t)\times\{t\}. 
\]
Then, $\mathfrak{M}$ is a smooth hypersurface in spacetime $\RR^{n+1}\times \RR$ with unit normal\footnote{We emphasize that the unit normal is taken with respect to the Euclidean inner product on spacetime $\RR^{n+1}\times \RR\simeq \RR^{n+2}$.} at $(\mathbf{x},t)$ given by
\[
\nu_{\mathfrak{M}} = \frac{\nu_{M(t)} + H_{M(t)}(\mathbf{x}) \partial_{t}}{\sqrt{1+H_{M(t)}(\mathbf{x})^{2}}}.
\]
Moreover, the normal speed of $\lambda \mapsto \cF_{\lambda}(\mathfrak{M})$ at $\lambda =1$ is
\begin{equation}\label{eq:normal-speed-par-dilation}
\frac{2t H_{M(t)} + \mathbf{x}\cdot \nu_{M(t)}}{\sqrt{1+H_{M(t)}(\mathbf{x})^{2}}} 
\end{equation}
\end{lemma}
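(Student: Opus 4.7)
The plan is to prove this directly by constructing a local parametrization of $\mathfrak{M}$ and then identifying its tangent and normal spaces inside spacetime $\RR^{n+1} \times \RR$ viewed with the standard Euclidean inner product.

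First I would construct a local parametrization. Near a point $(\mathbf{x}_0, t_0) \in \mathfrak{M}$, smoothness of the family $t \mapsto M(t)$ allows us to write $M(t)$ as the image of a smooth family of embeddings $F : U \times (t_0 - \epsilon, t_0 + \epsilon) \to \RR^{n+1}$; after composing with a time-dependent diffeomorphism of $U$ we may assume the parametric mean curvature flow equation $\partial_t F = \bH_{M(t)} \circ F$. Then $(p,t) \mapsto (F(p,t), t)$ is a smooth embedding into $\RR^{n+1} \times \RR$, showing $\mathfrak{M}$ is a smooth hypersurface.

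Next I would compute the normal. The tangent space $T_{(\mathbf{x},t)}\mathfrak{M}$ is spanned by the horizontal tangents $(v,0)$ with $v \in T_{\mathbf{x}} M(t)$ together with the parametrization's time-derivative $(\partial_t F, 1) = (\bH_{M(t)}, 1)$. A spacetime vector $(a,b)$ is normal iff $a \perp T_{\mathbf{x}} M(t)$, so $a = c\, \nu_{M(t)}$, and $a \cdot \bH_{M(t)} + b = 0$. Using the paper's sign convention (consistent with the shrinker equation, under which $2tH + \mathbf{x} \cdot \nu = 0$ on a self-shrinker and which amounts to $\bH \cdot \nu = -H$), this gives $b = cH_{M(t)}$; normalizing with $c = 1/\sqrt{1+H^2}$ yields the claimed formula for $\nu_{\mathfrak{M}}$.

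Finally, for the normal speed: the vector field $V = (\mathbf{x}, 2t)$ is the infinitesimal generator of the parabolic dilations $\cF_\lambda$ at $\lambda = 1$, so the normal speed of $\lambda \mapsto \cF_\lambda(\mathfrak{M})$ at $\lambda=1$ at a point $(\mathbf{x},t) \in \mathfrak{M}$ is just the Euclidean inner product $V \cdot \nu_{\mathfrak{M}}$, which computes to $(2tH_{M(t)} + \mathbf{x} \cdot \nu_{M(t)})/\sqrt{1+H_{M(t)}^2}$, matching \eqref{eq:normal-speed-par-dilation}. No real obstacle is anticipated; the only subtle point is bookkeeping of the sign convention $\bH = -H\nu$ so that the formula agrees with the shrinker mean convexity quantity used throughout the paper. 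The rest is linear algebra.
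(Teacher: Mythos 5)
Your proposal is correct and follows essentially the same route as the paper: identify $T_{(\mathbf{x},t)}\mathfrak{M} = T_{\mathbf{x}}M(t) \oplus \Span_{\RR}(\partial_t + \mathbf{H}_{M(t)})$, check the claimed vector is orthogonal to it (using $\mathbf{H}\cdot\nu = -H$, as you correctly pin down from the shrinker convention), and obtain \eqref{eq:normal-speed-par-dilation} as $V\cdot\nu_{\mathfrak{M}}$ with $V=(\mathbf{x},2t)$ the generator of $\cF_\lambda$. Your explicit local parametrization for smoothness is a minor elaboration of what the paper leaves implicit; otherwise the arguments coincide.
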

\begin{proof}
The given unit vector is orthogonal to 
\[
T_{(x,t)}\mathfrak{M}=T_{x}M(t) \oplus \Span_{\RR} (\partial_{t} +\mathbf{H}_{M(t)}(x)).
\]
This implies the expression for $\nu_{\mathfrak{M}}$. To prove \eqref{eq:normal-speed-par-dilation}, we may compute
\[
V\cdot\nu_{\mathfrak{M}} = \frac{(\mathbf{x}+2t\partial_{t})\cdot(\nu_{M(t)} + H_{M(t)}(x) \partial_{t})}{\sqrt{1+H_{M(t)}(x)^{2}}} = \frac{2t H_{M(t)} + \mathbf{x}\cdot\nu_{M(t)}}{\sqrt{1+H_{M(t)}(x)^{2}}} .
\]
This completes the proof. 
\end{proof}

Now, recall that by Theorem \ref{theo:basic-prop-cK-cM}, there is a smooth flow $\Sigma(t)$ so that $\partial\cK(t)$ and $\cM(t)$ agree with $\Sigma(t)$ outside of $B_{R(t)}$ and on $\RR^{n+1}\times (-\infty,-T)$. Choose $R_{0}$ sufficiently large so that $R_{0} \geq R(t)$ for $t \in [-4T,0]$ (we will take $R_{0}$ larger in \eqref{eq:defn-R0-t0-part} in Proposition \ref{prop:euc-space-time-sep-t=0} below). Then, define
\[
\mathfrak{S} : = \left( \bigcup_{-4T\leq t \leq-2T} (\Sigma(t)\cap \bar{B}_{3R_{0}}) \times \{t\} \right) \cup \left( \bigcup_{-2T\leq t \leq 1} \Sigma(t) \cap (\bar{B}_{3R_{0}}\setminus B_{R_{0}}) \times \{t\}\right).
\]

\begin{lemma}\label{lemm:smooth-part-euc-dist-par-dilate}
There is $c=c(R_{0},\Sigma),C(R_{0},\Sigma)> 0$ and $\lambda_{1}=\lambda_{1}(R_{0},\Sigma)>1$ so that 
\[
c(\lambda -1) \leq d_{E}(\mathfrak{S},\cF_{\lambda}(\mathfrak{S}))\leq C(\lambda-1)
\]
for $\lambda \in (1,\lambda_{1})$. 
\end{lemma}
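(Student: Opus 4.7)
The plan is to view $\mathfrak{S}$ as a compact smooth embedded hypersurface in spacetime $\RR^{n+1}\times\RR$, and to reduce both inequalities to an estimate on the (Euclidean) normal speed of the one-parameter deformation $\lambda\mapsto \cF_\lambda(\mathfrak{S})$ at $\lambda=1$. By the choice of $R_0$, $\mathfrak{S}$ lies entirely in the smooth locus of the flow: item (5) of Theorem \ref{theo:basic-prop-cK-cM} handles the range $t\in[-4T,-2T]$ (where $\Sigma(t)$ is smooth everywhere), while item (9), together with $R_0\geq R(t)$, handles the range $t\in[-2T,0]$ (where $\Sigma(t)$ is smooth on $\RR^{n+1}\setminus \bar B_{R_0}$). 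Applying Lemma \ref{lemm:shrinker-mean-cuvature-vs-par-star-shaped}, $\mathfrak{S}$ is smooth with spacetime unit normal $\nu_{\mathfrak{S}}$, and its normal speed under $\cF_\lambda$ at $\lambda=1$ equals
\[
V\cdot \nu_{\mathfrak{S}} \;=\; \frac{2tH_{\Sigma(t)} + \mathbf{x}\cdot \nu_{\Sigma(t)}}{\sqrt{1+H_{\Sigma(t)}^{2}}}.
\]
By strict shrinker mean convexity, item (7) of Theorem \ref{theo:basic-prop-cK-cM}, the numerator is pointwise positive, and by smoothness and compactness of $\mathfrak{S}$ together with the curvature bound in item (9) applied on the annular part, there exist $0<c_0\leq C_0$, depending only on $R_0$ and $\Sigma$, with
\[
c_0 \;\leq\; V\cdot \nu_{\mathfrak{S}} \;\leq\; C_0 \quad\text{and}\quad |V| \;\leq\; C_0 \text{ on a uniform neighborhood of } \mathfrak{S}.
\]

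For the upper bound on $d_E$, I would simply note that for any $X\in \mathfrak{S}$ and $\lambda$ close to $1$, we have $\cF_\lambda(X)-X=\int_1^\lambda V(\cF_s(X))\,ds$, so that $|\cF_\lambda(X)-X|\leq C(\lambda-1)$. Taking the infimum over $X$ gives $d_E(\mathfrak{S},\cF_\lambda(\mathfrak{S}))\leq C(\lambda-1)$.

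For the lower bound, since $\mathfrak{S}$ is a compact smooth embedded hypersurface in spacetime, there is a tubular neighborhood $U$ of $\mathfrak{S}$ on which the Euclidean signed distance function $\delta$ to $\mathfrak{S}$ is smooth and $\nabla \delta|_{\mathfrak{S}}=\nu_{\mathfrak{S}}$, and $|\delta(Y)|=d_E(Y,\mathfrak{S})$ for $Y\in U$. I would then choose $\lambda_1=\lambda_1(R_0,\Sigma)>1$ small enough so that (a) $\cF_\lambda(\mathfrak{S})\subset U$ for all $\lambda\in(1,\lambda_1)$, and (b) the continuous function $V\cdot\nabla\delta$ satisfies $V\cdot\nabla\delta\geq c_0/2$ on the compact set $\bigcup_{s\in[1,\lambda_1]}\cF_s(\mathfrak{S})$; this is possible by continuity and the fact that $V\cdot\nabla\delta=V\cdot\nu_{\mathfrak{S}}\geq c_0$ on $\mathfrak{S}$ itself. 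Then, for $X\in \mathfrak{S}$ and $\lambda\in(1,\lambda_1)$, the fundamental theorem of calculus gives
\[
\delta(\cF_\lambda(X))\;=\;\int_1^\lambda V(\cF_s(X))\cdot (\nabla\delta)(\cF_s(X))\,ds\;\geq\;\tfrac{c_0}{2}(\lambda-1),
\]
so $d_E(\cF_\lambda(X),\mathfrak{S})=|\delta(\cF_\lambda(X))|\geq (c_0/2)(\lambda-1)$, and taking the infimum over $X\in \mathfrak{S}$ concludes the lower bound.

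The only real content is establishing the uniform lower bound $V\cdot\nu_{\mathfrak{S}}\geq c_0>0$ on $\mathfrak{S}$, which is where the strict shrinker mean convexity of the smooth part of the flow $\Sigma(t)$ is used crucially. The rest is standard tubular-neighborhood bookkeeping in a compact smooth regime.
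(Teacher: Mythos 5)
Your argument is essentially the paper's proof: the paper likewise reduces the lemma to showing that $\tfrac{d}{d\lambda}\big|_{\lambda=1} d_{E}(\mathfrak{S},\cF_{\lambda}(\mathfrak{S}))$ is positive and finite, which follows from the normal speed formula \eqref{eq:normal-speed-par-dilation} of Lemma \ref{lemm:shrinker-mean-cuvature-vs-par-star-shaped} together with the strict shrinker mean convexity from items (7) and (9) of Theorem \ref{theo:basic-prop-cK-cM} and the compactness of $\mathfrak{S}$. The only imprecision is that $\mathfrak{S}$ is a compact hypersurface \emph{with boundary} (and corners), so the signed distance to $\mathfrak{S}$ is not smooth on a full tubular neighborhood near $\partial\mathfrak{S}$; this is harmless, since any pair nearly realizing $d_{E}(\mathfrak{S},\cF_{\lambda}(\mathfrak{S}))$ lies within $O(\lambda-1)$ of a single point of $\mathfrak{S}$, where the spacetime track is a smooth graph with uniformly bounded curvature, so the lower bound follows by projecting onto the spacetime normal there (or by first enlarging $\mathfrak{S}$ slightly inside the smooth locus of the track).
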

\begin{proof}
It suffices to show that 
\[
\frac{d}{d\lambda}\Big|_{\lambda = 1} d_{E}(\mathfrak{S},\cF_{\lambda}(\mathfrak{S})) \in (0,\infty).
\]
This follows from \eqref{eq:normal-speed-par-dilation} (and the compactness of $\mathfrak{S}$) since positivity of the shrinker mean curvature of $\Sigma(t)$ was established as (7) and (9) in Theorem \ref{theo:basic-prop-cK-cM}. 
\end{proof}

\begin{proposition}\label{prop:euc-space-time-sep-t=0}
For $r>0$ sufficiently large, there is $c'=c'(r,\Sigma)>0$ and $\lambda_{1}'=\lambda_{1}'(r,\Sigma)$ so that 
\[
d_{E}(\partial\cK \cap (\bar{B}_{r}\times [-1,0]), \cF_{\lambda}(\partial\cK)\cap (\bar{B}_{r}\times [-1,0])) \geq c'(\lambda-1)
\]
for $\lambda \in (1,\lambda_{1}')$. 
\end{proposition}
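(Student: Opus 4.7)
The plan is to decompose $\partial\cK\cap(\bar{B}_r\times[-1,0])$ into an exterior smooth region where I would extend Lemma \ref{lemm:smooth-part-euc-dist-par-dilate}, and an interior compact region handled by a compactness--contradiction argument leveraging the mean-convex regularity from Corollary \ref{coro:summary-tleq0-non-rescaled}.

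First, I would further enlarge the constant $R_{0}=R_{0}(\Sigma)$ beyond the choice in the definition of $\mathfrak{S}$ so that additionally $\partial\cK(t)\subset B_{R_{0}}$ for all $t\in[-4T,-2T]$. This is possible because by Theorem \ref{theo:basic-prop-cK-cM}(5), $\partial\cK$ is smooth and agrees with the smooth flow $\Sigma(t)$ on this compact time interval. With this enlarged $R_{0}$, the set $\partial\cK\cap((\bar{B}_{r}\setminus\bar{B}_{R_{0}})\times[-1,0])$ lies inside the smooth flow $\Sigma(t)$ from Theorem \ref{theo:basic-prop-cK-cM}(9), with uniform second fundamental form bounds. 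The argument of Lemma \ref{lemm:smooth-part-euc-dist-par-dilate}---namely strict shrinker mean convexity from Theorem \ref{theo:basic-prop-cK-cM}(7),(9) combined with the velocity formula \eqref{eq:normal-speed-par-dilation}---extends to this exterior region (since it only uses compactness and smoothness of the spacetime track) and yields $d_{E}(X,\cF_{\lambda}(\partial\cK))\geq c_{\mathrm{ext}}(\lambda-1)$ for $X$ in this exterior region.

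For the interior $\partial\cK\cap(\bar{B}_{R_{0}}\times[-1,0])$, I would argue by contradiction: suppose there exist $\lambda_{i}\searrow 1$, $X_{i}\in\partial\cK\cap\bar{B}_{R_{0}}\times[-1,0]$, and $Y_{i}=\cF_{\lambda_{i}}(Y_{i}')\in\cF_{\lambda_{i}}(\partial\cK)\cap\bar{B}_{r}\times[-1,0]$ with $|X_{i}-Y_{i}|/(\lambda_{i}-1)\to 0$. Passing to a subsequence, $X_{i}\to X_{\infty}\in\partial\cK$ and $Y_{i}',Y_{i}\to X_{\infty}$. Expanding $\cF_{\lambda_{i}}(Y_{i}')-Y_{i}'=(\lambda_{i}-1)V(Y_{i}')+O((\lambda_{i}-1)^{2})$ with $V=(\mathbf{x},2t)$, the hypothesis rewrites as $(X_{i}-Y_{i}')/(\lambda_{i}-1)\to V(X_{\infty})$, so $V(X_{\infty})$ lies in the limiting spacetime tangent cone of $\partial\cK$ at $X_{\infty}$. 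If $t_{\infty}<0$, Corollary \ref{coro:summary-tleq0-non-rescaled} identifies the tangent cone: either $X_{\infty}$ is a regular point (where $V\cdot\nu>0$ by \eqref{eq:normal-speed-par-dilation} and shrinker mean convexity, contradicting that $V(X_{\infty})$ is tangent), or $X_{\infty}$ is a cylindrical singular point with a strict mean-convex neighborhood, in which case the explicit shrinker mean convexity of the tangent cylinder again forces $V(X_{\infty})$ transverse to the tangent cone, a contradiction.

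The main obstacle is the case $t_{\infty}=0$, where the regularity from Section~\ref{sec:long.time.reg}.1 is not yet available. Here I would invoke Theorem \ref{theo:basic-prop-cK-cM}(1), which gives $d_{E}((\mathbf{0},0),\cK)=1$ and hence $|x_{\infty}|\geq 1$, so $|V(X_{\infty})|\geq 1$ is bounded below. The strategy is to thread a Euclidean spacetime ball of radius $\sim(\lambda_{i}-1)$ between $\partial\cK$ and $\cF_{\lambda_{i}}(\partial\cK)$ in a neighborhood of $X_{\infty}$, using the exterior separation already established on $\partial B_{r}\times[-1,0]$ as a boundary condition, and propagate it inward via Ilmanen's localized avoidance principle (Theorem \ref{theo:ilmanen-avoidance}). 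Combined with the lower bound $|V(X_{\infty})|\geq 1$, this produces the required separation $\geq c'(\lambda_{i}-1)$, contradicting $|X_{i}-Y_{i}|=o(\lambda_{i}-1)$.
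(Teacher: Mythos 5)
Your exterior step and your instinct to reach for Ilmanen's localized avoidance are in the right direction, but the interior argument has genuine gaps. For $t_\infty<0$: Corollary \ref{coro:summary-tleq0-non-rescaled} describes \emph{parabolic} tangent flows, not Euclidean spacetime tangent cones, and the two do not match. If you blow up the spacetime track isotropically at a cylindrical singular point, the limit degenerates (the slices collapse into the time-slice hyperplane), and, more to the point, the tangent cylinder is a self-shrinker, so the shrinker mean curvature $2tH+\mathbf{x}\cdot\nu$ of the blow-up limit (relative to its own center) vanishes identically --- there is no strict transversality to contradict, and the strict inequality at nearby regular points is not uniform as you approach the singularity. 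The quantitative statement you would actually need near singular points, a lower bound on $(2tH+\mathbf{x}\cdot\nu)/\sqrt{1+H^2}$, is exactly Corollary \ref{coro:bound-shrinker-mean-curvature-below-tsim0}, which in the paper is \emph{deduced from} this proposition; so your local analysis at singularities is essentially circular.

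The case $t_\infty=0$, which you correctly identify as the crux, is only a sketch and as written cannot be closed: to run Theorem \ref{theo:ilmanen-avoidance} you need, in addition to the lateral (annular) separation, a separation at the \emph{initial time} of the comparison region throughout the interior, and at $t=-1$ the flow inside $B_{R_0}$ may be singular and no $(\lambda-1)$-separation there has been established --- that is part of what is being proved. You also never address that $d_E$ mixes space and time, while avoidance only controls the spatial distance between the two weak set flows at a common time. The paper resolves both points at once: assuming the spacetime distance is $<\tfrac{c}{2}(\lambda-1)$, the time offset $s$ between the nearest points satisfies $|s|\le\tfrac{c}{2}(\lambda-1)$; one then compares $\partial\cK$ with the time-translated flow $\cT_{s}(\cF_{\lambda}(\partial\cK))$, which by Lemma \ref{lemm:smooth-part-euc-dist-par-dilate} is still separated by $\gtrsim(\lambda-1)$ along $\mathfrak{S}$ --- and $\mathfrak{S}$ contains \emph{all} of the flow in $\bar{B}_{3R_0}$ during the early smooth epoch $[-4T,-2T]$, not just an exterior annulus. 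Ilmanen's localized avoidance, started at $t=-3T$ with this initial separation and with the annular piece of $\mathfrak{S}$ as the lateral barrier, propagates a purely spatial separation $|\bz|\gtrsim(\lambda-1)$ up to the time $t\le 0$ in question, and then $|\bz|+|s|\gtrsim(\lambda-1)$ gives the claim. Reworking your interior argument to propagate separation from the smooth early epoch in this way, rather than localizing at singularities or at $t=0$, is what closes the proof.
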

\begin{proof}
Given $r>0$ large, we fix $R_{0}$ by requiring that
\begin{equation}\label{eq:defn-R0-t0-part}
4R_{0}^{2} + 6nT \geq 4 r^{2}
\end{equation}
and that $R_{0} \geq R(t)$ for $t \in [-3T,0]$ (where $R(t)$ is defined in Theorem \ref{theo:basic-prop-cK-cM}). This choice of $R_{0}$ will allow us to use Theorem \ref{theo:ilmanen-avoidance} below. We fix $c=c(R_{0},\Sigma)$ as in Lemma \ref{lemm:smooth-part-euc-dist-par-dilate} and will choose $c'\ll c$ below.

For $\lambda-1>0$ sufficiently small, assume that 
\begin{equation}\label{eq:dist-par-dil-small-contradiction}
d_{E}(\partial\cK \cap (\bar{B}_{r}\times [-1,0]) , \cF_{\lambda}(\partial\cK)\cap (\bar{B}_{r}\times [-1,0]) ) < \frac{c}{2}(\lambda-1)
\end{equation}
(otherwise the assertion follows) and that the distance is achieved at 
\[
(\bx,t) \in \partial\cK \cap (\bar{B}_{r}\times [-1,0]),\qquad (\bx+\bz,t+s) \in\cF_{\lambda}(\partial\cK)\cap (\bar{B}_{r}\times [-1,0]). 
\]
In particular, $|s| \leq \frac c2 (\lambda-1)$. 

Recalling the translation map $\cT_{s}$ defined in \eqref{eq:defn-time-translation}, observe that, Lemma \ref{lemm:smooth-part-euc-dist-par-dilate} and \eqref{eq:dist-par-dil-small-contradiction} imply that
\begin{align*}
d_{E}(\cT_{s}(\cF_{\lambda}(\mathfrak{S})), \mathfrak{S}) & \geq d_{E}(\cF_{\lambda}(\mathfrak{S}), \mathfrak{S}) - d_{E}(\cT_{s}(\cF_{\lambda}(\mathfrak{S})), \cF_{\lambda}(\mathfrak{S})) \\
& \geq d_{E}(\cF_{\lambda}(\mathfrak{S}), \mathfrak{S}) - |s|\\
& \geq \frac c 2(\lambda -1). 
\end{align*}
Consider the weak set flows $\cT_{s}(\cF_{\lambda}(\partial\cK))$ and $\partial\cK$. From the previous estimate and Theorem \ref{theo:ilmanen-avoidance} with $a = t_0= -3T$, $b=t$, $R = 2R_{0}$, $\bx_0 = \bOh$, and $\gamma$ small we see that $\cT_{s}(\cF_{\lambda}(\partial\cK))$ and $\partial\cK$ are disjoint for $t \in [-3T, 0]$. Recall that in Theorem \ref{theo:ilmanen-avoidance} the distance $d_{t}$ (see \eqref{eq:ilmanen-distance}) is defined with the choice $u=(R^2 - |\bx-\bx_0|^2 - 2n(t-t_0))_+$. \eqref{eq:defn-R0-t0-part} implies that $u$ is uniformly bounded from below away from zero on $\bar{B}_{r}\times [-3T,0]$, so Theorem \ref{theo:ilmanen-avoidance} allows further to conclude that (here and below, the implied constant in $\gtrsim,\lesssim$ depend on $r, R_0, \Sigma$ but not on $\lambda$ and $t$)
\[
|\bz| \gtrsim d_{t}(\cT_{s}(\cF_{\lambda}(\partial \cK)),\partial\cK) \geq d_{-3T}(\cT_{s}(\cF_{\lambda}(\partial \cK)),\partial\cK)\, .
\]
However, by Lemma \ref{lemm:smooth-part-euc-dist-par-dilate}, and since $|u| \leq 4R_0^2$, we see that
\[
d_{-3T}(\cT_{s}(\cF_{\lambda}(\partial \cK)),\partial\cK)\gtrsim c(\lambda-1) - |s|\, .
\]
Putting these inequalities together, we find that
\[
c(\lambda-1) \lesssim |\bz| + |s| = d_{E}(\partial\cK \cap (\bar{B}_{r}\times [-1,0]) , \cF_{\lambda}(\partial\cK)\cap (\bar{B}_{r}\times [-1,0]) ). 
\]
This completes the proof. 
\end{proof}

\begin{corollary}\label{coro:bound-shrinker-mean-curvature-below-tsim0}
For $r>0$, there is $s = s(r,\Sigma)>0$ with the following property. Choose $(\mathbf{x},t)\in\reg\cM \cap B_{r}\times [-1,0]$ and fix a space-time neighborhood $U$ of $(\bx,t)$ so that in $U$, $\cM$ agrees with $t\mapsto \cH^n\lfloor M(t)$,  for a smooth mean curvature flow $M(t)$. Then,
\[
2 t H_{M(t)}(\mathbf{x}) + {\mathbf{x}\cdot\nu_{M(t)}(\mathbf{x})} \geq s \sqrt{1+H_{M(t)}(\mathbf{x})^{2}}
\]
\end{corollary}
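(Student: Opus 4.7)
The plan is to reinterpret the quantity $2tH + \mathbf{x}\cdot\nu$ as (a multiple of) the normal velocity of the one-parameter family $\lambda \mapsto \cF_\lambda(\mathfrak{M})$ at $\lambda=1$ via Lemma \ref{lemm:shrinker-mean-cuvature-vs-par-star-shaped}, and then to extract a uniform positive lower bound for this velocity from the Euclidean separation estimate of Proposition \ref{prop:euc-space-time-sep-t=0}.

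First I would fix $(\mathbf{x},t)\in \reg\cM \cap B_r \times [-1,0]$ together with a spacetime neighborhood $U$ of $(\mathbf{x},t)$ in which $\cM$ coincides with a smooth mean curvature flow $t\mapsto M(t)$; write $\mathfrak{M}\subset U$ for its spacetime track. By Lemma \ref{lemm:shrinker-mean-cuvature-vs-par-star-shaped} the hypersurface $\mathfrak{M}$ has unit normal $\nu_\mathfrak{M}$ and the generator $V=(\mathbf{x},2t)$ of parabolic dilation satisfies
\[ V\cdot \nu_\mathfrak{M} = \tfrac{2tH_{M(t)}(\mathbf{x}) + \mathbf{x}\cdot\nu_{M(t)}(\mathbf{x})}{\sqrt{1+H_{M(t)}(\mathbf{x})^2}}. \]
From Theorem \ref{theo:basic-prop-cK-cM}(4) the inclusion $\cF_\lambda(\cK)\subset \cK^\circ$ for $\lambda>1$ forces $V\cdot\nu_\mathfrak{M}\geq 0$ at each regular point, so the content of the corollary is strict positivity with a uniform lower bound.

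Next I would apply Proposition \ref{prop:euc-space-time-sep-t=0} on a slightly enlarged domain $K:=\bar B_{r+1}\times [-2,1]$ so that $\cF_\lambda(\mathbf{x},t)\in K$ whenever $\lambda\in(1,\lambda_1'')$, with $\lambda_1''=\lambda_1''(r,\Sigma)$. The proof of Proposition \ref{prop:euc-space-time-sep-t=0} extends verbatim to this region—one only needs $R_0$ chosen large enough relative to $r+1$ and Theorem \ref{theo:ilmanen-avoidance} to be applied on an appropriate time interval—yielding a constant $c''=c''(r,\Sigma)>0$ with
\[ d_E\bigl(\partial\cK\cap K,\;\cF_\lambda(\partial\cK)\cap K\bigr) \geq c''(\lambda-1), \qquad \lambda\in(1,\lambda_1''). \]
In particular, since $\cF_\lambda(\mathbf{x},t)\in\cF_\lambda(\partial\cK)\cap K$, we obtain the bound $d_E(\cF_\lambda(\mathbf{x},t),\partial\cK\cap K)\geq c''(\lambda-1)$.

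Finally I would combine this with an elementary local upper bound on the same distance. Writing $\cF_\lambda(\mathbf{x},t)-(\mathbf{x},t) = (\lambda-1)V(\mathbf{x},t) + O((\lambda-1)^2)$ and using that $\mathfrak{M}$ is smooth at $(\mathbf{x},t)$, for $\lambda$ close to $1$ the nearest point of $\partial\cK\cap K$ to $\cF_\lambda(\mathbf{x},t)$ lies in $\mathfrak{M}$ and the standard first-order normal approximation gives
\[ d_E\bigl(\cF_\lambda(\mathbf{x},t),\,\partial\cK\cap K\bigr) \leq \bigl|V(\mathbf{x},t)\cdot \nu_\mathfrak{M}\bigr|(\lambda-1) + O\bigl((\lambda-1)^2\bigr). \]
Dividing by $\lambda-1$ and letting $\lambda\searrow 1$ yields $|V\cdot\nu_\mathfrak{M}|\geq c''$, and then the weak shrinker mean convexity from the first paragraph upgrades this to $V\cdot\nu_\mathfrak{M}\geq c''$. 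Rewriting using Lemma \ref{lemm:shrinker-mean-cuvature-vs-par-star-shaped} completes the proof with $s:=c''$.

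\textbf{Main obstacle.} The only substantive technical issue is ensuring that Proposition \ref{prop:euc-space-time-sep-t=0} can be invoked with $\cF_\lambda(\mathbf{x},t)$ lying inside the compact domain $K$—which fails on the literal domain $\bar B_r\times[-1,0]$ as soon as $t$ is near $-1$ or $|\mathbf{x}|$ near $r$. The fix is the straightforward enlargement just described; the proof of Proposition \ref{prop:euc-space-time-sep-t=0} carries over with no genuinely new input once $R_0$ and the ambient time interval are chosen appropriately for the enlarged region.
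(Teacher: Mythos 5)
Your proposal is correct and is essentially the paper's own (very terse) argument: the paper likewise reads off a uniform positive lower bound for the normal speed of $\lambda\mapsto\cF_\lambda(\partial\cK)$ at $\lambda=1$ from the separation estimate of Proposition \ref{prop:euc-space-time-sep-t=0} and then concludes via the speed formula \eqref{eq:normal-speed-par-dilation} of Lemma \ref{lemm:shrinker-mean-cuvature-vs-par-star-shaped}, exactly as you do. One remark on your ``main obstacle'': the domain enlargement is not actually needed, since the desired inequality is obtained in the limit $\lambda\searrow 1$ at each fixed point of $B_r\times(-1,0)$ (for $\lambda$ close enough to $1$, depending on the point, the relevant nearest points already lie in $\bar B_r\times[-1,0]$, and the endpoint times $t=-1,0$ follow by continuity along the local smooth flow), while your proposed extension of Proposition \ref{prop:euc-space-time-sep-t=0} to a larger box including positive times, though it does go through using Theorem \ref{theo:basic-prop-cK-cM} (7) and (9), requires extending $\mathfrak{S}$ and Lemma \ref{lemm:smooth-part-euc-dist-par-dilate} past $t=0$ and so is routine rather than literally verbatim.
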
 
\begin{proof}
Proposition \ref{prop:euc-space-time-sep-t=0} implies that the speed of $\lambda \mapsto \lambda M(\lambda^{-2}t)$ at $\lambda=1$ has a uniformly positive lower bound. Thus, the conclusion follows from \eqref{eq:normal-speed-par-dilation}. 
\end{proof}

\begin{corollary}\label{coro:Hbd-t-sim-0}
There is $C=C(\Sigma)>0$ and $\delta = \delta(\Sigma) \in (-1,0)$ so that $\partial\cK(t)$ is smooth with $|H_{\partial\cK(t)}| \leq C$ for $t\in(\delta,0)$. 
\end{corollary}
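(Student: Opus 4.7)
The plan is to first convert the lower bound of Corollary \ref{coro:bound-shrinker-mean-curvature-below-tsim0} into a uniform pointwise bound on $|H|$ at regular points of $\cM$ inside a fixed ball for $t\in[\delta,0)$, and then to rule out singular points by showing that any cylindrical tangent flow would force regular points with arbitrarily large mean curvature nearby.

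For the first (essentially algebraic) step, I would fix $r_0:=2R(0)+10$, where $R(t)$ is the continuous function from Theorem \ref{theo:basic-prop-cK-cM}(9), let $s_0:=s(r_0,\Sigma)>0$ be as in Corollary \ref{coro:bound-shrinker-mean-curvature-below-tsim0}, and choose $\delta\in(-1,0)$ with $|\delta|<s_0/4$ and small enough that $R(t)<r_0/2$ for all $t\in[\delta,0]$. At any regular point $(\bx,t)\in\reg\cM$ with $|\bx|\leq r_0$ and $t\in[\delta,0)$, Corollary \ref{coro:bound-shrinker-mean-curvature-below-tsim0} together with $|\bx\cdot\nu|\leq r_0$ and $|2tH|\leq 2|\delta|\,|H|$ yields
\[
s_0|H|\leq s_0\sqrt{1+H^2}\leq 2tH+\bx\cdot\nu\leq 2|\delta|\,|H|+r_0,
\]
hence $|H|\leq r_0/(s_0-2|\delta|)\leq 2r_0/s_0=:C_0(\Sigma)$.

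For the second step, I would argue by contradiction: suppose $X_0=(\bx_0,t_0)\in\sing\cM$ with $t_0\in(\delta,0)$. Since $\partial\cK$ agrees with the smooth flow $\Sigma(t)$ outside $B_{R(t)}\subset B_{r_0/2}$, we must have $|\bx_0|<r_0/2$. By Corollary \ref{coro:summary-tleq0-non-rescaled}(3), every tangent flow of $\cM$ at $X_0$ is a multiplicity-one generalized cylinder $\RR^{n-k}\times\SS^k(\sqrt{-2kt})$ with $k\geq1$. Pick $\lambda_i\to\infty$ so that $\cM_i:=\cF_{\lambda_i}(\cM-X_0)$ converges to such a shrinking cylindrical flow $\cC$. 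Fix a regular point $\by$ of $\cC(-1)$, at which $|H_{\cC(-1)}|=\sqrt{k/2}>0$. Brakke's regularity theorem (in the form used in \cite{White:Brakke}) applied at $(\by,-1)$, using the multiplicity-one nature of the tangent flow, upgrades the weak convergence $\cM_i\to\cC$ to smooth convergence in some Euclidean spacetime neighborhood of $(\by,-1)$. Thus, for all large $i$, there exists a regular point $(\by_i,t_i)\in\reg\cM_i$ near $(\by,-1)$ with $|H_{\cM_i(t_i)}(\by_i)|\geq\tfrac12\sqrt{k/2}$. Undoing the parabolic rescaling produces a regular point of $\cM$ in $B_{r_0}\times[\delta,0)$ with mean curvature $\geq\tfrac12\lambda_i\sqrt{k/2}\to\infty$, contradicting Step 1.

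The conclusion follows quickly. The absence of singular points in $\supp\cM\cap(\RR^{n+1}\times(\delta,0))$, combined with Corollary \ref{coro:summary-tleq0-non-rescaled}(1), gives that $\partial\cK(t)=\supp\cM(t)$ is smooth for each $t\in(\delta,0)$; the mean curvature bound $|H_{\partial\cK(t)}|\leq C=C(\Sigma)$ then follows from Step 1 inside $B_{r_0}$ and from the uniform curvature estimate of Theorem \ref{theo:basic-prop-cK-cM}(9) outside $B_{r_0/2}$. I expect the main (though standard) obstacle to be the upgrade from weak to smooth convergence at a regular point of the cylindrical tangent flow; this is where the multiplicity-one property classified in Corollary \ref{coro:summary-tleq0-non-rescaled} is crucially used. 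The rest of the argument is essentially bookkeeping around the inequality in Corollary \ref{coro:bound-shrinker-mean-curvature-below-tsim0}, exploiting that the destabilizing term $2tH$ is forced to be small because $t$ is close to $0$.
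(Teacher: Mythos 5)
Your proposal is correct and follows essentially the same route as the paper: bound $|H|$ at regular points in a fixed ball by combining the inequality of Corollary \ref{coro:bound-shrinker-mean-curvature-below-tsim0} with the smallness of the $2tH$ term near $t=0$ (and Theorem \ref{theo:basic-prop-cK-cM}(9) outside the ball), then rule out singularities because a multiplicity-one cylindrical tangent flow from Corollary \ref{coro:summary-tleq0-non-rescaled}(3) would produce nearby regular points with unbounded mean curvature. The only difference is that you spell out the blow-up/Brakke-regularity step that the paper states in one line, which is fine.
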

\begin{proof}
By (9) in Theorem \ref{theo:basic-prop-cK-cM} it suffices to prove this for points in $B_{r}$ for some $r>0$ sufficiently large. Fixing such an $r$, Corollary \ref{coro:bound-shrinker-mean-curvature-below-tsim0} implies that there is $s>0$ so that
\[
2 t H_{M(t)}(\mathbf{x}) + {\mathbf{x}\cdot\nu_{M(t)}(\mathbf{x})} \geq s \sqrt{1+H_{M(t)}(\mathbf{x})^{2}}
\]
for $(\mathbf{x},t) \in \reg\cM\cap B_{r}\times [-1,0]$. Solving for $H$, we find that $|H|\leq C$ on $\reg \cM\cap (B_{r}\times(-2\delta,0)$ for some $\delta\in(-1,0)$ sufficiently small. 

However, by (3) in Corollary \ref{coro:summary-tleq0-non-rescaled}, any $X \in \sing\cM \cap\{t<0\}$ has a multiplicity-one generalized cylinder as a tangent flow. In particular, there are points $X_{i}\in \reg \cM\cap\{t<0\}$ with $X_i \to X$ and $H(X_{i}) \to\infty$. This contradicts the mean curvature bound, completing the proof. 
\end{proof}

\begin{corollary}
We have that $\sing \cM(0) = \emptyset$, $\cM(0) = \cH^n\lfloor \partial\cK(0)$ and ${\mathbf{x}\cdot\nu_{\partial\cK(0)}} > 0$, 
\end{corollary}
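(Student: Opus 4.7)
The plan is to localize, then invoke classical graphical estimates. By property (9) of Theorem \ref{theo:basic-prop-cK-cM}, outside a fixed ball $\overline{B}_{R_{0}}\supset \overline{B}_{R(0)}$ the flow $\cM$ and the set $\partial\cK$ both agree with a smooth, strictly shrinker mean convex classical flow $\Sigma(t)$; at $t=0$ this immediately yields smoothness, the equality $\cM(0) = \cH^{n}\lfloor \partial\cK(0)$, and $\mathbf{x}\cdot \nu_{\partial\cK(0)} > 0$ on $\partial\cK(0)\setminus \overline{B}_{R_{0}}$. So the whole task is to handle the compact portion $\partial\cK(0)\cap \overline{B}_{R_{0}+1}$.

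Fix an arbitrary $\mathbf{x}_{0}\in \partial\cK(0)\cap \overline{B}_{R_{0}+1}$. Property (1) of Theorem \ref{theo:basic-prop-cK-cM} forces $|\mathbf{x}_{0}|\geq 1$, in particular $\mathbf{x}_{0}\neq \mathbf{0}$. By Corollary \ref{coro:Hbd-t-sim-0}, $\partial\cK(t)$ is smooth with a uniform bound $|H_{\partial\cK(t)}|\leq C$ for $t\in (\delta,0)$, and by Corollary \ref{coro:bound-shrinker-mean-curvature-below-tsim0} applied with $r = R_{0}+2$, there is $s>0$ so that at every point of $\partial\cK(t)\cap \overline{B}_{R_{0}+2}$, $t\in (\delta,0)$,
\[
\nu_{\partial\cK(t)}\cdot \tfrac{\mathbf{x}}{|\mathbf{x}|} \;\geq\; \tfrac{s\sqrt{1+H^{2}}-2tH}{|\mathbf{x}|} \;\geq\; \tfrac{s}{R_{0}+2} > 0.
\]
Thus in a fixed small neighborhood $U$ of $\mathbf{x}_{0}$, for every $t\in(\delta,0)$ close to $0$, $\partial\cK(t)\cap U$ is a smooth graph over the hyperplane through $\mathbf{x}_{0}$ normal to $\mathbf{x}_{0}/|\mathbf{x}_{0}|$, with gradient bounded uniformly in $t$.

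The plan is then to apply Ecker--Huisken's local interior estimates for graphical mean curvature flow \cite{EckerHuisken:interior,EckerHuisken:graphs}: a uniform gradient bound on a short time interval ending at $0$ upgrades to uniform $C^{k}$ bounds on all derivatives of $|A|$ on a slightly smaller parabolic cylinder, up to and including $t=0$. Smooth, multiplicity-one convergence $\partial\cK(t)\to M$ as $t\nearrow 0$ in $U'\subset U$ then forces $M = \partial\cK(0)\cap U'$ to be smooth, and the multiplicity-one Brakke convergence $\cM(t) = \cH^{n}\lfloor \partial\cK(t)$ valid for $t\in(\delta,0)$ passes to the limit as $\cM(0) = \cH^{n}\lfloor \partial\cK(0)$ on $U'$. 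Varying $\mathbf{x}_{0}$ and combining with the exterior case yields $\sing \cM(0) = \emptyset$ and the global identity $\cM(0) = \cH^{n}\lfloor \partial\cK(0)$. The strict star-shapedness $\mathbf{x}\cdot \nu_{\partial\cK(0)} > 0$ now follows by passing $t\to 0^{-}$ in Corollary \ref{coro:bound-shrinker-mean-curvature-below-tsim0}: at $t=0$ the inequality reads $\mathbf{x}\cdot \nu_{\partial\cK(0)}\geq s\sqrt{1+H_{\partial\cK(0)}^{2}}>0$ in the compact part, while in the exterior it is already built into property (9) of Theorem \ref{theo:basic-prop-cK-cM}.

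The main obstacle is bridging the gap between the bare mean curvature bound supplied by Corollary \ref{coro:Hbd-t-sim-0} and full regularity of $\cM(0)$: in general, boundedness of $|H|$ alone does not preclude curvature blow-up at the final time. The crucial extra ingredient is the strict shrinker mean convexity of Corollary \ref{coro:bound-shrinker-mean-curvature-below-tsim0}, which at $t\to 0^{-}$ becomes a uniform positive lower bound on the radial component of $\nu$. Together with $\mathbf{x}_{0}\neq \mathbf{0}$, this is exactly the structural input turning $\partial\cK(t)$ into a uniformly non-degenerate local graph around $\mathbf{x}_{0}$, which is what makes graphical interior estimates applicable.
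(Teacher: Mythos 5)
Your proposal is correct and follows essentially the same route as the paper: combine the uniform bound $|H|\leq C$ from Corollary \ref{coro:Hbd-t-sim-0} with the strict shrinker mean convexity of Corollary \ref{coro:bound-shrinker-mean-curvature-below-tsim0} to get a uniform lower bound on $\mathbf{x}\cdot\nu_{\partial\cK(t)}$ for $t$ near $0^-$ in the compact region (hence local uniform graphicality), apply the Ecker--Huisken interior estimates to push smoothness and strict star-shapedness up to $t=0$, and handle the exterior via properties (7) and (9) of Theorem \ref{theo:basic-prop-cK-cM}. The only quibble is that your displayed inequality $\bigl(s\sqrt{1+H^{2}}-2tH\bigr)/|\mathbf{x}|\geq s/(R_{0}+2)$ is not literally valid when $H<0$ unless you first shrink the time interval using $|H|\leq C$ (the paper's ``taking $\delta$ smaller if necessary''), which your phrase ``$t$ close to $0$'' implicitly does, so the argument stands.
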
 
\begin{proof}
By Corollary \ref{coro:Hbd-t-sim-0}, we know that for $t \in (\delta,0)$ and $\mathbf{x} \in \partial\cK(t)$, $|H_{\partial\cK(t)}(\mathbf{x})| \leq C$. Thus, by Corollary \ref{coro:bound-shrinker-mean-curvature-below-tsim0}, we conclude that for $r$ chosen as in the proof of Corollary \ref{coro:Hbd-t-sim-0}, taking $\delta$ smaller if necessary, for $t \in (\delta,0)$ we find that $\partial\cK(t)$ is strictly star-shaped in $B_r$, i.e., there is $c>0$ so that
\[
{\mathbf{x}\cdot\nu_{\partial\cK(t)}} \geq c 
\]
for $\mathbf{x}\in \partial\cK(t)\cap B_r$. In particular, this implies that $\partial\cK(t)$ is locally uniformly graphical. Interior estimates \cite[Theorem 3.1]{EckerHuisken:interior} then imply that the flow $\partial\cK(t)$ remains smooth and strictly star-shaped up to $t=0$ (outside of $B_r$, the flow is automatically smooth and strictly star-shaped by (7) and (9) in Theorem \ref{theo:basic-prop-cK-cM}). 
\end{proof}

\subsection{Regularity for $t>0$} \label{sec:pos-time}

Using $\sing\cM(0) =\emptyset$ and (9) from Theorem \ref{theo:basic-prop-cK-cM}, there is some $\hat \delta>0$ so that $\cM(t)=\cH^n\lfloor \partial\cK(t)$ is smooth for $t\in[0,\hat\delta)$. We can now consider the rescaled flow 
\[
\hat \cK := \bigcup_{\tau \in (-\infty,\infty)} e^{-\frac \tau 2} \cK(e^\tau),
\]
and $\hat \cM$ similarly defined, exactly as in the $t<0$ situation. The only difference is that the flow is moving outwards rather than inwards:
\[
\cT_h(\hat \cK) \subset \hat \cK^\circ
\]
for $h < 0$ (cf.\ Lemma \ref{lemm:rescaled-avoidance-time-trans}).  This does not seriously affect the arguments used above, and we find that Corollary \ref{coro:summary-tleq0-non-rescaled} holds for $t>0$ as well. 

\subsection{Long time asymptotics} We continue to use our notation from the $t > 0$ regularity section. Moreover, we denote with $C_\Sigma$ the asymptotic cone of the asymptotically conical shrinker. We will also need to consider the integral unit-regular Brakke flows $t \in [0,\infty) \mapsto \mu^\pm(t)$ constructed in Theorem \ref{thm:app.2} whose support agrees with the inner and outer flow $M^\pm(t)$ of $C_\Sigma$. They can be used to prove:

\begin{lemma}\label{lem:disjoint_levelsetflow} For all $t\geq 0$,  $\partial \cK(t)$ is disjoint from the level set flow of $C_\Sigma$.
\end{lemma}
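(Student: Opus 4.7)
The strategy is to apply the avoidance principle (specifically, Ilmanen's localized version, Theorem \ref{theo:ilmanen-avoidance}) between the weak set flow $\partial \cK$ restricted to $t \geq 0$ and the level set flow $\mathcal{L}$ of $C_\Sigma$. Both are weak set flows: $\partial \cK \subset \supp \cM$ together with \cite[10.5]{Ilmanen:elliptic} gives this for $\partial \cK$, while $\mathcal{L}$ is one by construction.

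The main step is to establish initial disjointness at $t = 0$: namely, $\partial \cK(0) \cap C_\Sigma = \emptyset$. I would handle this in two regimes:
\begin{enumerate}
\item \emph{(Noncompact ends.)} By property (9) of Theorem \ref{theo:basic-prop-cK-cM}, outside some large ball $B_{R(0)}$, $\partial\cK(0)$ coincides with a smooth, strictly shrinker mean convex hypersurface which, via the rescaling, is the $t=0$ slice of a smooth flow limiting as $\tau \to -\infty$ to $\Sigma$. Since $\Sigma$ is smoothly asymptotic to $C_\Sigma$ and shrinker mean convexity (combined with property (6)) forces the rescaled hypersurface to lie strictly in $\Omega$ for all $\tau \in \RR$, the end of $\partial\cK(0)$ approaches $C_\Sigma$ strictly from the $\Omega$ side and avoids it.
\item \emph{(Compact region.)} In $B_{R(0)}$ I would use the self-similarity relation $\cF_\lambda(\cK) \subset \cK^\circ$ for $\lambda > 1$ (property (4)) together with the star-shapedness of $\partial\cK(0)$ (Theorem \ref{theo:prop-ancient-intro}). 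The cone $C_\Sigma$ is invariant under spatial dilation, so an intersection point $p \in \partial\cK(0) \cap C_\Sigma$ would produce the entire ray $\{\lambda p : \lambda > 1\} \subset \cK(0)^\circ \cap C_\Sigma$ with $p \in \partial\cK(0)$. One can then trace this ray back for $\lambda < 1$ and use the smooth shrinker-mean-convex ends together with property (6) ($\Sigma(t) \subset \sqrt{-t}\,\Omega$) to derive a contradiction with the fact that the asymptotic end lies strictly on the $\Omega$ side of $C_\Sigma$.
\end{enumerate}

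With initial disjointness in hand, I apply Theorem \ref{theo:ilmanen-avoidance} to $\partial \cK \cap \mathfrak{t}^{-1}([0,\infty))$ and $\mathcal{L}$ on parabolic cylinders of the form $\bar B_R \times [0, T]$. The noncompact ends are controlled as follows: the outermost expander $E$ associated to $C_\Sigma$ (whose rescaled flow bounds $\mathcal{L}$, via Theorem \ref{thm:app.2}) is strictly disjoint from $C_\Sigma$ on the $\Omega$ side, and by the asymptotics $\tfrac{1}{\sqrt{t}}\partial\cK(t) \to E$ from Theorem \ref{theo:prop-ancient-intro}, the ends of $\partial\cK(t)$ stay uniformly (at the appropriate scale) separated from $\mathcal{L}(t)$. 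This lets one iterate the localized avoidance on arbitrarily large balls for arbitrarily large times.

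The main obstacle is the initial disjointness in step~1: both $\partial\cK(0)$ and $C_\Sigma$ are noncompact and share the same asymptotic cone, so a naive barrier argument cannot give uniform separation at infinity. The key is to package the asymptotic shrinker mean convexity of the ends together with the monotonicity property $\cF_\lambda(\cK) \subset \cK^\circ$ for $\lambda > 1$ (which survives up to $t = 0$) into a proof that $\partial\cK(0)$ sits strictly on the $\Omega$ side of $C_\Sigma$ globally.
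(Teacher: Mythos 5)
There is a genuine gap in how you control the noncompact ends, and it makes the argument circular. You propose to run the avoidance principle directly against the level set flow $\mathcal{L}$ of $C_\Sigma$ and to obtain the separation near spatial infinity from the long-time asymptotics $\tfrac{1}{\sqrt{t}}\partial\cK(t)\to E$ with $E$ the \emph{outermost} expander. But the identification of that limit as the outermost expander (equivalently, that $\sqrt{t}\,E$ coincides with the outer flow $M^+(t)$ of $C_\Sigma$) is exactly what Theorem \ref{theo:convergence-pos-times} deduces \emph{from} Lemma \ref{lem:disjoint_levelsetflow}; so Theorem \ref{theo:prop-ancient-intro} (i.e.\ Theorem \ref{theo:one.sided.construction}(11)) cannot be quoted here. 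Even setting the circularity aside, convergence as $t\to\infty$ gives no control on the ends at \emph{finite} times, which is what Theorem \ref{theo:ilmanen-avoidance} actually requires: disjointness in an annular region near $\partial B_R$ throughout each time interval $[0,T]$. Finally, the level set flow $\mathcal{L}$ is an awkward comparison object at infinity, since its time slices may a priori be fattened sets with no smooth graphical structure to work with.

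The paper's route avoids all of this by comparing $\partial\cK$ not with $\mathcal{L}$ but with the inner and outer flows $M^\pm(t)$, realized as the unit-regular Brakke flows $\mu^\pm(t)$ of Theorem \ref{thm:app.2}, which by Remark \ref{rem:app.3} are smooth, unit-multiplicity, self-similarly expanding outside $B_{\sqrt{t}R_0}(\bOh)$. On that exterior region both $M^\pm(t)$ and $\partial\cK(t)$ (via property (9) of Theorem \ref{theo:basic-prop-cK-cM}) are smooth and uniformly conical, so one can write one as a graph over the other and apply the Ecker--Huisken maximum principle (Theorem \ref{theo:ecker-huisken}) to propagate positivity of the graphical function; this supplies precisely the annular separation needed to invoke Ilmanen's localized avoidance (Theorem \ref{theo:ilmanen-avoidance}) on the compact part, and a continuity argument in time, exactly as in Proposition \ref{prop:brakke-eps-disjoint-Klambda}, closes the loop. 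Since $\mathcal{L}$ is trapped by $M^\pm(t)$, disjointness from $M^\pm(t)$ for all $t\geq 0$ yields the lemma. If you want to salvage your plan, replace your step using the expander asymptotics with this Ecker--Huisken comparison against $M^\pm(t)$; your initial-disjointness discussion at $t=0$ can then be streamlined (strict star-shapedness of $\partial\cK(0)$ already rules out one-sided tangencies with the cone away from the origin, since the cone satisfies $\bx\cdot\nu_{C_\Sigma}\equiv 0$), and in any case it is subsumed in the continuity argument.
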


\begin{proof} 
	Note that $(\mu^\pm(t))_{t\geq 0}$ is smooth with unit multiplicity outside of $B_{\sqrt{t}R_0}(0)$ for some $R_0 >0$. Moreover, $\mu^\pm(0)$ is disjoint from $C$ at $t=0$. Thus, we can argue as in Proposition \ref{prop:brakke-eps-disjoint-Klambda}: we may couple the  Ecker--Huisken Maximum Principle (Theorem \ref{theo:ecker-huisken}), with Ilmanen's localized avoidance principle (Theorem \ref{theo:ilmanen-avoidance}) to show that  $\partial \cK(t)$ is disjoint from  $M^\pm(t)$ for all $t \geq 0$. This implies the claim. 
\end{proof}

This allows to characterize the convergence of the rescaled flow for $\tau \rightarrow \infty$. We assume that $M(t)$ lies outside the outer flow $M^+(t)$ of the level set flow of $C_\Sigma$. 

\begin{theorem} \label{theo:convergence-pos-times} The rescaled flow $\hat{\cM}(\tau)$ converges smoothly as $\tau \rightarrow \infty$ to an expander $E$, which is smoothly asymptotic to $C_\Sigma$ and  minimizes the expander functional 
\begin{equation}\label{eq:expander-energy}
\cE(S) = \int_S e^{\frac{1}{4} |\mathbf{x}|^2} \, d\cH^n
\end{equation}
from the outside (relative to compact perturbations) and is thus smooth. 
Furthermore, 
$$M^+(t) = \sqrt{t}\, E$$
for $t>0$.
\end{theorem}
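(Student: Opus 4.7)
The strategy is to exploit expander mean convexity of the rescaled flow and then identify the rescaled limit as the outermost expander of $C_\Sigma$. By the discussion in Section \ref{sec:pos-time}, the rescaled regions satisfy $\hat\cK(\tau_1) \subset \hat\cK(\tau_2)^\circ$ for all $\tau_1 < \tau_2$, so they form a strictly monotone increasing family. The hypothesis that $M(t)$ lies outside $M^+(t)$, Lemma \ref{lem:disjoint_levelsetflow}, and Theorem \ref{theo:outermost-flows} (which says that the outermost level-set flow of $C_\Sigma$ is the self-similar $\sqrt{t} E^+$) together imply that in rescaled coordinates, $\partial\hat\cK(\tau)$ stays outside the static expander $E^+$ for every $\tau \geq 0$. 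Meanwhile, by Theorem \ref{theo:basic-prop-cK-cM}(9), the rescaled boundary agrees outside a fixed large ball with a smooth flow that (after rescaling) is close to $C_\Sigma$ uniformly in $\tau$, trapping $\hat\cK(\tau)$ from above as well. Therefore the closed set $\hat K_\infty := \overline{\bigcup_{\tau \geq 0} \hat\cK(\tau)}$ is a proper region of $\RR^{n+1}$ with $\partial \hat K_\infty$ trapped between $E^+$ and $C_\Sigma$ at infinity.

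The next step is to identify $\partial \hat K_\infty$ as a smooth self-expander. Along the rescaled flow for $\tau \geq 0$ the expander functional $\cE$ from \eqref{eq:expander-energy} plays the role that $F$ plays for shrinkers: it is monotone non-increasing along $\hat\cM(\tau)$, with vanishing derivative exactly on self-expanders. Combined with the uniform curvature bound $|\mathbf{x}||A| \leq C$ from Theorem \ref{theo:basic-prop-cK-cM}(9) (which is scale-invariant and so persists under the rescaling), and the local regularity theory developed in Section \ref{sec:long.time.reg} and Section \ref{sec:pos-time}, applied in ``expander mean convex'' mode (the direct analogue of White's theory for mean convex flows), every subsequential Brakke limit of $\hat\cM(\tau)$ as $\tau \to \infty$ is a smooth unit-multiplicity self-expander, smoothly asymptotic to $C_\Sigma$. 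The monotonicity $\hat\cK(\tau) \nearrow \hat K_\infty$ upgrades the subsequential convergence to full smooth convergence on compact subsets of $\RR^{n+1}$, producing a single limit $E := \partial \hat K_\infty$.

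Finally, $E$ minimizes $\cE$ from the outside: any region $\tilde K \supset \hat K_\infty$ whose symmetric difference with $\hat K_\infty$ is compact can be used as a competitor/barrier against $\hat\cM(\tau)$ for $\tau$ sufficiently large, and the $\cE$-monotonicity in the limit forces $\cE(\partial \tilde K) \geq \cE(E)$. Since $E$ is smooth, asymptotic to $C_\Sigma$, and outermost-minimizing, the characterization of outermost expanders from Theorem \ref{theo:outermost-flows} identifies $E$ with the outermost expander of $C_\Sigma$, and combined with the self-similarity $M^+(t) = \sqrt{t}E$ from the same theorem, this yields the last assertion. The main obstacle is establishing smooth convergence (as opposed to mere Hausdorff subsequential convergence) and the smoothness of the limit; this is handled by the expander analogue of the White-type regularity theory used in Section \ref{sec:long.time.reg}, exploiting the uniform entropy bound, the expander mean convexity of the rescaled flow, and the two-sided trapping between $E^+$ and $C_\Sigma$ at infinity to exclude higher multiplicity and singular limits in the range $2 \leq n \leq 6$.
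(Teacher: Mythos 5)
Your core convergence mechanism coincides with the paper's: the rescaled flow is expander mean convex (the regions $\hat\cK(\tau)$ move monotonically), it is smooth with uniform control outside a fixed ball, and one runs White's \cite[\S 11]{White:size}-type argument with respect to the expander metric $e^{\frac{1}{2n}|\mathbf{x}|^2}g_{\RR^{n+1}}$ to get smooth convergence to an outward-minimizing minimal hypersurface $E$ for that metric, i.e.\ an expander minimizing $\cE$ from the outside, which is then smooth. One caveat: your statement that $\cE$ ``is monotone non-increasing along $\hat\cM(\tau)$'' cannot be taken literally, since $\cE(S)=\int_S e^{|\mathbf{x}|^2/4}\,d\cH^n$ is infinite on any hypersurface asymptotic to $C_\Sigma$; the argument must be phrased via the conformal-metric/outward-minimizing formulation (as in White \S 11) or a localized version of the monotonicity, which is what the paper does.

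The genuine gap is in the final identification $M^+(t)=\sqrt{t}\,E$. You obtain it by invoking Theorem \ref{theo:outermost-flows}, but in the paper that theorem comes \emph{after} the present one and is proved by repeating precisely the arguments of Section \ref{sec:pos-time} (including this convergence statement), so using it here is circular. Moreover, even granting it as a black box, it does not identify the limit $E$ of this particular one-sided flow with the outermost expander $E^+$: ``smooth, asymptotic to $C_\Sigma$, outside-minimizing'' is not known to determine a unique hypersurface, and you never relate $E$ to the level set flow of $C_\Sigma$. The paper closes this with a sandwich argument that is missing from your proposal: any parabolic blow-down of $t\mapsto\cM(t)$, $t\geq 0$, is a weak set flow emanating from $C_\Sigma$ and hence is contained in the level set flow of $C_\Sigma$ by maximality; on the other hand, by the convergence just established, the blow-down at positive times is exactly $t\mapsto\sqrt{t}\,E$, and Lemma \ref{lem:disjoint_levelsetflow} places it weakly on the outer side of that level set flow. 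Being simultaneously inside the level set flow and in the closure of its outer complement forces $\sqrt{t}\,E$ to coincide with the outer flow $M^+(t)$. (The same blow-down containment is also what yields the smooth asymptotics of $E$ to $C_\Sigma$, rather than the trapping ``between $E^+$ and $C_\Sigma$'' you sketch, which again presupposes Theorem \ref{theo:outermost-flows}.)
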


\begin{proof} Since $\tau \in (0,\infty) \mapsto \hat{\cM}(\tau)$ is expander mean convex, and is smooth with uniform control on all derivatives outside of $B_{R_0}(0)$, it follows from the arguments in \cite[\S11]{White:size}, that  $\hat{\cM}(\tau)$ converges smoothly to an outward minimizing minimal surface $E$ in the expander metric $g = e^{\frac{1}{2n} |\mathbf{x}|^2} g_{\RR^{n+1}}$. This yields the claimed regularity and the smoothness of the convergence. Note that any blow down of the flow $t \in [0,\infty) \mapsto \cM(t)$ lies inside the level set flow of $C_\Sigma$, so $E$ has to be smoothly asymptotic to $C_\Sigma$. By Lemma \ref{lem:disjoint_levelsetflow} the flow $t \mapsto \sqrt{t} E$ has to agree with the outer flow of $C_\Sigma$. 
\end{proof}

\subsection{The outermost flows of general hypercones} We consider, for $n < 7$, a general embedded, smooth hypersurface $\Gamma \subset \SS^n$ and the regular hypercone $C(\Gamma) \subset \RR^{n+1}$. We show in this subsection that the previous arguments can be generalized to characterize the outer and inner flows of the level set flow of $C(\Gamma)$ as in Theorem \ref{theo:convergence-pos-times}.

Note that $\Gamma$ divides $\SS^n$ into two open sets $S^\pm$. We can construct smooth hypersurfaces $M^\pm$ which are smooth radial graphs over $S^\pm$, smoothly asymptotic to $C(\Gamma)$ with sufficiently fast decay such that $\mathbf{x} \cdot \nu_{M^\pm} $ (with $\nu_{M^\pm}$ the upwards unit normal) decays to zero at infinity along $M^\pm$. Let $(M^\pm(t))_{t \in [0,T^\pm)}$ be the maximal smooth evolution of $M^\pm$. Note that by the maximum principle of Ecker--Huisken \cite{EckerHuisken:graphs} together with the strong maximum principle  we have that 
\[
2t H_{M^\pm(t)} +  \mathbf{x} \cdot \nu_{M^\pm(t)}  > 0
\]
along $(M^\pm(t))_{t \in (0,T^\pm)}$. We can thus repeat the arguments in Section \ref{sec:pos-time} to construct expander mean convex flows $(\cM^\pm(t))_{t>0}$ such that the corresponding rescaled flows converge to expanders, smoothly asymptotic to $C(\Gamma)$. This implies 

\begin{theorem} \label{theo:outermost-flows} The outermost flows of $C(\Gamma)$ are given by expanding solutions  $t \in (0, \infty) \mapsto \sqrt{t}  E^\pm$ smoothly asymptotic to $C_\Sigma$. The expanders $E^\pm$ minimize the expander energy \eqref{eq:expander-energy} from the outside (relative to compact perturbations) and are smooth.
\end{theorem}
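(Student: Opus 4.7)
The plan is to run the construction and analysis of Sections \ref{sec:gmt.existence}--\ref{sec:long.time.reg} (in particular the $t>0$ arguments of Section \ref{sec:pos-time}) and the argument of Theorem \ref{theo:convergence-pos-times}, with the hypersurfaces $M^\pm$---smooth radial graphs over $S^\pm$, asymptotic to $C(\Gamma)$ with $\mathbf{x} \cdot \nu_{M^\pm}$ decaying at infinity---replacing the role previously taken by the perturbations $\Sigma_\varepsilon$ of an asymptotically conical shrinker. As already noted, combining the Ecker--Huisken maximum principle with the strong maximum principle yields strict expander mean convexity
\[ 2tH_{M^\pm(t)} + \mathbf{x} \cdot \nu_{M^\pm(t)} > 0 \]
along the classical evolution $(M^\pm(t))_{t \in (0,T^\pm)}$; this plays the role of Lemma \ref{lemm:short-time-exist-Sigma-eps} in the present setting.

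First I would truncate and cap each $M^\pm$ at radius $\rho R$ to obtain closed hypersurfaces $M^\pm_\rho$ bounding a compact region on the appropriate side of $C(\Gamma)$, mirroring Proposition \ref{prop:exist-compact-approx-flows}. After a small perturbation the resulting level set flows do not fatten, yielding unit-regular cyclic integral Brakke flows $\cM^\pm_\rho$ and weak set flows $\cK^\pm_\rho$ that, outside a compact set and for short times, agree with the smooth classical flow of $M^\pm$ by pseudolocality and interior curvature estimates (in the spirit of Lemmas \ref{lemm:approximators-conicality}--\ref{lemm:conical-ends-smooth-flow}). Passing $\rho \to \infty$ subsequentially produces a flow pair $(\cM^\pm,\cK^\pm)$ with initial data $M^\pm$, smooth outside a compact set, and satisfying the expander analogue of Theorem \ref{theo:basic-prop-cK-cM}. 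Repeating the argument of Proposition \ref{prop:brakke-eps-disjoint-Klambda} with Ilmanen's localized avoidance principle (Theorem \ref{theo:ilmanen-avoidance}) against the strictly expander mean convex smooth ends gives the key separation $\supp\cM^\pm \cap \cF_\lambda(\cK^\pm) = \emptyset$, now for $\lambda \in (\lambda_0^{-1},1)$ rather than $\lambda > 1$.

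Next, form the rescaled flow $\hat{\cK}^\pm := \bigcup_{\tau \in \RR} e^{-\tau/2} \cK^\pm(e^\tau)$ and run the regularity analysis of Section \ref{sec:pos-time}. Since $n < 7$, White's regularity theory \cite{White:size,White:nature} adapted to expander mean convex flows gives that $\hat{\cM}^\pm$ is smooth for all $\tau$, with every tangent flow a multiplicity-one generalized cylinder or static plane. Following the proof of Theorem \ref{theo:convergence-pos-times}, the monotonicity of $\tau \mapsto \hat{\cK}^\pm$ together with uniform curvature estimates outside $B_{R_0}(\bOh)$ then yields smooth convergence as $\tau \to \infty$ to an outward-minimizing smooth hypersurface $E^\pm$ in the expander metric $g = e^{\frac{1}{2n}|\mathbf{x}|^2} g_{\RR^{n+1}}$, smoothly asymptotic to $C(\Gamma)$; this outward minimization is equivalent to minimizing the expander energy $\cE$ from the outside against compact perturbations. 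The analogue of Lemma \ref{lem:disjoint_levelsetflow}---obtained by running $M^\pm(t)$ against the level set flow of $C(\Gamma)$ via Ecker--Huisken and Ilmanen avoidance in a continuity argument---then identifies $\sqrt{t}\, E^\pm$ with the outer/inner flows of $C(\Gamma)$.

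The main obstacle I anticipate lies in ensuring that the compact approximators $M^\pm_\rho$ can be constructed so that their ends carry uniform (in $\rho$) control on curvature and on the sign of the expander mean curvature, so that the localized avoidance principle can be iterated uniformly in the limit to establish the crucial $\cF_\lambda$-separation. In the shrinker setting this was automatic from the explicit decay of the first eigenfunction $\varphi_1$ of $L$ on $\Sigma$; here one must instead lean on the hypothesis that $M^\pm$ are smooth radial graphs over $S^\pm$ with sufficiently fast decay toward $C(\Gamma)$, together with the Ecker--Huisken graphical interior estimates, to provide the corresponding quantitative control.
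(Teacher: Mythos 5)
Your proposal follows essentially the same route as the paper: the paper's own proof consists precisely of constructing the radial graphs $M^\pm$, deriving strict expander mean convexity $2tH_{M^\pm(t)}+\mathbf{x}\cdot\nu_{M^\pm(t)}>0$ from the Ecker--Huisken maximum principle together with the strong maximum principle, and then repeating the arguments of Section \ref{sec:pos-time} (the GMT construction and $\cF_\lambda$-avoidance, White-type regularity for the expander mean convex flow, and the convergence/identification argument of Theorem \ref{theo:convergence-pos-times}), which is exactly what you spell out. The additional detail you give about compact approximators, uniform control of the conical ends, and the localized avoidance iteration is just an unpacking of that compressed step, so there is no gap.
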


See also the notes of Ilmanen \cite{Ilmanen:Trieste} for the proof of smoothness in case $n=2$. Furthermore by an argument of Ilmanen--White \cite{Ilmanen:Trieste} any such outermost expander has genus zero.


\section{Uniqueness and regularity of one-sided ancient Brakke flows} \label{sec:exist.unique.ancient.Brakke}

We now combine the three regimes considered above with Theorem \ref{theo:basic-prop-cK-cM} to conclude the following existence and regularity for the flow $(\cM,\cK)$. 

\begin{theorem}[One-sided existence]\label{theo:one.sided.construction}
For $n\leq 6$ and $\Sigma^{n}$ a smooth asymptotically conical self-shrinker, choose $\Omega$ a fixed component of $\RR^{n+1}\setminus \Sigma$. Then, there exists an ancient unit-regular integral Brakke flow $\cM$ and weak set flow $\cK$ with the following properties: 
\begin{enumerate}
\item $\cM(t) = \cH^{n}\lfloor \partial\cK(t)$,
\item $\partial\cK(t) \subset \sqrt{-t}\Omega$ for all $t<0$,
\item there is $T>0$ so that for $t<-T$, $\cM(t)$ is a smooth multiplicity one flow $\Sigma(t)$ with $\Sigma(t)$ is strictly shrinker mean convex,
\item $\frac{1}{\sqrt{-t}}\Sigma(t)$ converges smoothly on compact sets to $\Sigma$ as $t\to-\infty$,
\item there is a continuous function $R(t)$ so that for any $t \in \RR$, $\cM(t) \lfloor (\RR^{n+1}\setminus B_{R(t)})$ is a smooth strictly shrinker mean convex multiplicity one flow $\Sigma(t)$,
\item the Brakke flow $\cM$ has entropy $\lambda(\cM) \leq F(\Sigma)$,
\item $\sing\cM$ has parabolic Hausdorff dimension $\leq n-1$ and for any $t \in \RR$, $\sing\cM(t)$ has spatial Hausdorff dimension $\leq n-1$
\item any limit flow is weakly convex on the regular part and all tangent flows are multiplicity one generalized cylinders,
\item any singular point has a strictly mean-convex neighborhood, 
\item there is $\delta>0$ so that $\partial\cK(t)$ is completely smooth for $t \in (-\delta,\delta)$ and $\partial\cK(0)$ is strictly star-shaped, and
\item $\frac{1}{\sqrt{t}} \partial\cK(t)$ converges smoothly on compact sets to an outermost expander coming out of the cone at infinity of $\Sigma$, as $t\to\infty$.
\end{enumerate}
\end{theorem}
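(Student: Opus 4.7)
The plan is to assemble the statement by amalgamating the constructions and regularity theorems established in Sections \ref{sec:gmt.existence}--\ref{sec:long.time.reg}. I take $(\cM,\cK)$ to be the ancient Brakke/weak-set-flow pair produced in Theorem \ref{theo:basic-prop-cK-cM} as a subsequential limit, under the rescaling normalization \eqref{eq:choice-lambda-i-constr}, of the compact smoothings $\Sigma_{\eps_i,\rho_i}$ of the $\eps$-shifted shrinker $\Sigma_\eps = \Graph_\Sigma(\eps \varphi_1)$ lying in the fixed component $\Omega$. Items (2)--(6) are already contained in Theorem \ref{theo:basic-prop-cK-cM}, since the one-sided containment (4) there specializes at $\lambda=1$ to $\cF_\lambda(\cK)\subset\cK^\circ$, i.e.\ the shrinker mean convexity of both the smooth exterior $\Sigma(t)$ and of the whole spacetime track in the rescaled picture.

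For item (1), I would separate $t<0$ from $t>0$. On $t<0$, passing to the rescaled flow $(\tilde\cM,\tilde\cK)$ as in Section \ref{sec:long.time.reg}, Proposition \ref{prop:rescaled-weak-set-equals-brakke} gives $\partial\tilde\cK = \supp\tilde\cM$ and Corollary \ref{coro:rescaled-Brakke-agrees-weakset} upgrades this to $\tilde\cM(\tau)=\cH^n\lfloor\partial\tilde\cK(\tau)$; undoing the rescaling yields (1) for $t<0$. For $t>0$, I run the identical argument with the outward-rescaled flow $(\hat\cM,\hat\cK)$ of Section \ref{sec:pos-time}, where the only change is that the monotone inclusion $\cT_h(\hat\cK)\subset\hat\cK^\circ$ now holds for $h<0$ rather than $h>0$.

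Items (7)--(9) for $t<0$ are precisely Corollary \ref{coro:summary-tleq0-non-rescaled}, which itself follows from White's regularity theory (\cite{White:size,White:nature}) applied via Proposition \ref{prop:F-area-min-rescaled} (local $F$-minimizing behavior), the upper-semicontinuity continuity argument bounding the set $\mathfrak{D}$ of density $\geq 2$ points, and Lemma \ref{lemm:stat-quas-stat-cont-arg-rescaled-flow} (nonexistence of nontrivial static/quasi-static limit flows in dimension $n\leq 6$); the convex-neighborhood conclusion in (9) then follows from \cite{HershkovtisWhite}. The analogue for $t>0$ is obtained by repeating this entire framework verbatim for $(\hat\cM,\hat\cK)$, as indicated in Section \ref{sec:pos-time}.

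Item (10) is the step where I expect the main difficulty to concentrate: neither the $t<0$ nor the $t>0$ analysis rules out, a priori, a shock at $t=0$. Here I would quote the spacetime-separation estimate of Proposition \ref{prop:euc-space-time-sep-t=0}, which converts the strict spacetime shrinker mean convexity of $\partial\cK$ into a quantitative Euclidean distance lower bound between $\partial\cK\cap(\bar B_r\times[-1,0])$ and its parabolic dilations. Via the geometric identity in Lemma \ref{lemm:shrinker-mean-cuvature-vs-par-star-shaped}, this yields Corollary \ref{coro:bound-shrinker-mean-curvature-below-tsim0}'s uniform lower bound on the ``shrinker mean curvature speed'' and, combined with the generalized-cylinder structure of tangent flows from (8), gives the uniform $|H|$-bound of Corollary \ref{coro:Hbd-t-sim-0}. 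The latter bound forces $\sing\cM(0)=\emptyset$, strict star-shapedness at $t=0$, and (by Ecker--Huisken interior estimates \cite{EckerHuisken:interior}) smooth evolution on a small two-sided interval $(-\delta,\delta)$. Finally, item (11) is exactly Theorem \ref{theo:convergence-pos-times}, whose proof uses the expander mean convexity of $\hat\cM(\tau)$, uniform smooth control outside a fixed ball, \cite[\S11]{White:size} for smooth convergence to an expander-metric minimal hypersurface, and Lemma \ref{lem:disjoint_levelsetflow} together with Theorem \ref{theo:outermost-flows} to identify the limit as the outermost expander out of the asymptotic cone of $\Sigma$.
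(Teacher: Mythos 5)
Your proposal is correct and follows essentially the same route as the paper: Theorem \ref{theo:basic-prop-cK-cM} for the construction and items (2)--(6), the rescaled-flow analysis of Section \ref{sec:long.time.reg} (Proposition \ref{prop:rescaled-weak-set-equals-brakke}, Corollary \ref{coro:rescaled-Brakke-agrees-weakset}, Corollary \ref{coro:summary-tleq0-non-rescaled}, and its $t>0$ analogue) for (1) and (7)--(9), the $t=0$ separation estimates for (10), and Theorem \ref{theo:convergence-pos-times} for (11). The only nitpick is the phrase that property (4) of Theorem \ref{theo:basic-prop-cK-cM} ``specializes at $\lambda=1$''; what is actually used is the strict inclusion for $\lambda>1$ (equivalently its infinitesimal version as $\lambda\to 1^+$), which is how the paper extracts shrinker mean convexity.
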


Now, we will combine Theorem \ref{theo:one.sided.construction} with Corollary \ref{coro:one.sided.decay.uniqueness} to prove uniqueness of the flow constructed above. 

\begin{theorem}[One-sided uniqueness] \label{theo:one.sided.uniqueness} For $n\leq 6$, fix $\Sigma^n$ a smooth asymptotically conical self-shrinker as in Theorem \ref{theo:one.sided.construction}. Let $(\mu_t)_{-\infty < t < \infty}$ be a unit-regular integral Brakke flow such that
	\begin{equation} \label{eq:one.sided.assumption}
		\supp \mu_t \text{ is strictly on one side of } \sqrt{-t} \Sigma \text{ for every } t \in (-\infty, 0),
	\end{equation}
	and
	\begin{equation} \label{eq:one.sided.density.assumption}
		0 < \Theta((\mu_t), -\infty) < 2 \lambda(\Sigma).
	\end{equation}
	After a time translation, $\mu_t$ coincides with the Brakke flow from Theorem \ref{theo:one.sided.construction}. 
\end{theorem}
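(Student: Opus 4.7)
The plan is to reduce the statement to the smooth graphical setting of Section \ref{sec:one.sided.flows} and then transport uniqueness forward in time by exploiting the structural properties of the flow constructed in Theorem \ref{theo:one.sided.construction}. Throughout, denote by $(\bar\mu_t)$ the flow of Theorem \ref{theo:one.sided.construction} (so $\bar\mu_t = \cH^n \lfloor \partial\cK(t)$).

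First I would identify the tangent flow at $-\infty$. Huisken's monotonicity together with the entropy bound $\lambda(\mu) \leq \Theta((\mu_t),-\infty) < 2 F(\Sigma) = 2\lambda(\Sigma)$ (using Colding--Ilmanen--Minicozzi--White to identify $F(\Sigma)=\lambda(\Sigma)$ for shrinkers) guarantees existence of a subsequential tangent flow $\mathcal{M}'$ at $t=-\infty$, arising from a shrinking integral varifold $V_{-\infty}$ supported in $\overline\Omega$, where $\Omega$ is the component of $\RR^{n+1}\setminus\Sigma$ dictated by \eqref{eq:one.sided.assumption}. The Frankel property for shrinkers (Corollary \ref{coro:Frankel}) forces $\supp V_{-\infty} \cap \Sigma \neq \emptyset$, and the Solomon--White/Ilmanen strong maximum principle then identifies a component of $\supp V_{-\infty}$ with $\Sigma$. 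The constancy theorem together with \eqref{eq:one.sided.density.assumption} (which rules out multiplicity $\geq 2$) yields $V_{-\infty} = \cH^n\lfloor \Sigma$. Brakke's local regularity theorem then promotes this to smooth, multiplicity-one convergence on compact sets: there is $\tau_\ast \ll 0$ such that the rescaled flow $\tau \mapsto e^{\tau/2}\supp\mu_{-e^{-\tau}}$ converges to $\Sigma$ smoothly on compact sets as $\tau \to -\infty$.

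Next I would pass to graphical coordinates. Lemma \ref{lemm:entire-graph-from-cpt} produces, for $\tau$ sufficiently negative, a function $u(\cdot,\tau)$ on $\Sigma$ whose normal graph is the rescaled flow, with $\Vert u(\cdot,\tau)\Vert_3^{(1)} \to 0$. The one-sidedness \eqref{eq:one.sided.assumption} forces $u$ to be signed (say $u \geq 0$ after fixing the normal). If $u \equiv 0$ the flow is $\sqrt{-t}\,\Sigma$, contradicting \eqref{eq:one.sided.assumption}. Applying the same analysis to the flow $(\bar\mu_t)$ (using items (3)--(4) of Theorem \ref{theo:one.sided.construction}) produces an analogous graphical function $\bar u$. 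Corollary \ref{coro:one.sided.decay.uniqueness} then yields $\tau_0 \in \RR$ such that $u(\cdot,\tau) = \bar u(\cdot,\tau + \tau_0)$ on $\Sigma \times (-\infty, \tau_1]$ for some $\tau_1$. Equivalently, after a time translation in $t$, the two Brakke flows agree smoothly on a time interval $(-\infty, t_1]$.

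The remaining step is to extend the coincidence forward through the possibly singular regime $t > t_1$. Here I would use that both flows are unit-regular integral Brakke flows whose supports agree as a closed subset of $\{t=t_1\} \cap \RR^{n+1}$ with the smooth surface $\partial\cK(t_1)$. Forward uniqueness is obtained from the structural properties of $(\bar\mu_t)$ proved in Section \ref{sec:long.time.reg}: in rescaled coordinates, $\partial\cK$ admits inner and outer time-translates $\cT_h(\tilde\cK)\subset\tilde\cK^\circ$ (Lemma \ref{lemm:rescaled-avoidance-time-trans}), yielding a foliation of $\Omega$ by weak flows that dominate $(\bar\mu_t)$ from both sides. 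Applying Ilmanen's localized avoidance principle (Theorem \ref{theo:ilmanen-avoidance}) with the noncompact smooth ends controlled by item (5) of Theorem \ref{theo:one.sided.construction}, $\supp\mu_t$ is squeezed between consecutive leaves of this foliation for each $t > t_1$, forcing $\supp\mu_t = \partial\cK(t)$. Combined with unit-regularity and the density/entropy identification $\mu_t = \cH^n\lfloor\partial\cK(t)$ on the regular part (whose complement has vanishing $\cH^n$-measure by the dimension bound in item (7) of Theorem \ref{theo:one.sided.construction}), we conclude $\mu_t = \bar\mu_t$ for all $t$. The main obstacle is the last step: carefully showing that matching two unit-regular Brakke flows at a smooth time forces coincidence through the singular set, which requires the full avoidance-principle machinery developed in Section \ref{sec:gmt.existence} and Appendix \ref{sec:Ilmanen.avoidance}, rather than a naive application of smooth short-time uniqueness.
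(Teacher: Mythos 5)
Your first two steps track the paper's own proof closely: identifying the multiplicity-one tangent flow at $-\infty$ via the density bound and the Frankel/constancy argument, reducing to the graphical regime through Lemma \ref{lemm:entire-graph-from-cpt} and Corollary \ref{coro:one.sided.decay.uniqueness}, and then playing the flow off against the strictly nested parabolic dilates of $\cK$ (your rescaled time-translates $\cT_h(\tilde\cK)$ are exactly $\cR(\cF_{e^{h/2}}(\cK))$, so this is the same mechanism as the paper's use of $\cF_\lambda$, via Ilmanen's localized avoidance plus Ecker--Huisken on the conical ends) is precisely how the paper obtains $\supp\mu_t \subset \supp\cM$ for all time.

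The gap is in your final step. The foliation/avoidance argument only yields the support \emph{inclusion} $\supp\mu_t \subset \supp\cM$ (your claim that squeezing between consecutive leaves forces $\supp\mu_t = \partial\cK(t)$ is not justified: a point of $\partial\cK(t)$ missing from $\supp\mu_t$ violates no avoidance statement), and inclusion plus unit-regularity plus $\cH^n(\sing\cM(t))=0$ does not by itself give $\mu_t = \cH^n\lfloor\partial\cK(t)$. The danger is that, after a singular time, $\mu_t$ could simply drop a portion of $\supp\cM$ (or, a priori, carry it with the wrong multiplicity), and nothing in the avoidance machinery rules this out. The paper closes exactly this hole with the connectedness of the regular part: by items (7)--(9) of Theorem \ref{theo:one.sided.construction} every singular point has a mean-convex neighborhood and the singular set is small, so Corollary \ref{cor:connected-reg-part} (the extension result for Brakke flows across sets of vanishing parabolic $\cH^n$-measure, Appendix \ref{app:reg.set.conn}) shows $\reg\cM$ is connected; then the set of regular points of $\cM$ near which $\mu$ coincides with $\cM$ as a smooth multiplicity-one flow is open, closed in $\reg\cM$ (using unit-regularity of both flows and the support inclusion), and nonempty by the early-time agreement, hence all of $\reg\cM$, and the measure equality follows. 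Your write-up attributes the resolution of the "coincidence through the singular set" difficulty to the avoidance principle, but the essential missing ingredient is this connectedness argument; without it (or a substitute), the proof is incomplete.
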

\begin{proof}
As in the proof of (5) in Theorem \ref{theo:basic-prop-cK-cM}, the Gaussian density bound guarantees that the tangent flow to $\mu_{t}$ at $-\infty$ is the multiplicity one shrinker associated to $\Sigma$. As such, Lemma \ref{lemm:entire-graph-from-cpt} and Corollary \ref{coro:one.sided.decay.uniqueness} imply that after a time-translation there is $T>0$ so that for $t \leq -T$, $\mu_{t} = \cH^{n}\lfloor \Sigma(t)$, where $\Sigma(t)$ is the smooth flow from Theorem \ref{theo:one.sided.construction} (4). 

As in Proposition \ref{prop:brakke-eps-disjoint-Klambda}, Ilmanen's localized avoidance principle (Theorem \ref{theo:ilmanen-avoidance}) combined with Ecker--Huisken's maximum principle at infinity (Theorem \ref{theo:ecker-huisken}), we see that $\supp \mu_{t}$ is disjoint from $\cF_{\lambda}(\supp \cM)$ for $\lambda \not = 1$. This implies that $\supp \mu_{t} \subset \supp\cM$. 

Finally, since $\reg\cM$ is connected by (9) in Theorem \ref{theo:one.sided.construction} and Corollary \ref{cor:connected-reg-part}, we see that $\mu_{t} = \cM(t)$ in $(\sing \cM)^{c}$ (using the unit-regularity of $\cM$ and $\mu_{t}$). This completes the proof. 
\end{proof}

\begin{remark}
Both Theorems \ref{theo:one.sided.construction} and \ref{theo:one.sided.uniqueness} clearly hold (with simpler proofs) in the case that $\Sigma$ is a smooth compact shrinker. 
\end{remark}

\begin{remark}
We expect that the dimensional restriction in Theorems \ref{theo:one.sided.construction} and \ref{theo:one.sided.uniqueness} can be removed (cf.\ \cite{White:subsequent,HaslhoferHershkovits,EHIJ:free-boundary}). We note that when $\Sigma$ has sufficiently small $F$-area, Theorems \ref{theo:one.sided.construction} and \ref{theo:one.sided.uniqueness} hold in all dimensions. See \S \ref{sec:generic-schoenflies} for a precise statement. 
\end{remark}


\section{Generic mean curvature flow of low entropy hypersurfaces} \label{sec:generic-schoenflies}
We recall the following notions from \cite{BernsteinWang:topology-small-ent}. Denote by $\cS_{n}$ the set of smooth self-shrinkers in $\RR^{n+1}$ and $\cS_{n}^{*}$ the non-flat elements. Let
\[
\cS_{n}(\Lambda) : = \{ \Sigma \in \cS_{n} : \lambda(\Sigma) < \Lambda\}
\]
and similarly for $\cS_{n}^{*}(\Lambda)$. Let $\cR\cM\cC_{n}$ denote the set of regular minimal cones in $\RR^{n+1}$ and define $\cR\cM\cC_{n}^{*},\cR\cM\cC_{n}(\Lambda),\cR\cM\cC_{n}^{*}(\Lambda)$ analogously. We now recall the following two ``low-entropy'' conditions from \cite{BernsteinWang:topology-small-ent}:
\[
\tag{$\star_{n,\Lambda}$}  \cR\cM\cC_{k}^{*}(\Lambda) = \emptyset \quad \textrm{for all} \quad 3\leq k\leq n
\]
and 
\[
\tag{$\star\star_{n,\Lambda}$}  \cS_{n-1}^{*}(\Lambda) = \emptyset. 
\]
It's convenient to set $\lambda_{k}=\lambda(\RR^{n-k}\times \SS^{k})$.

Given these definitions, we can state the following result. 
\begin{theorem}\label{theo:low-ent-generic-flow}
 For $n\geq 3$ and $\Lambda \in (\lambda_{n},\lambda_{n-1}]$, assume that $(\star_{n,\Lambda})$ and $(\star\star_{n,\Lambda})$ hold. Then if $M\subset \RR^{n+1}$ is a closed hypersurface with $\lambda(M) \leq \Lambda$ there exist arbitrarily small $C^{\infty}$ graphs $M'$ over $M$ and corresponding unit-regular integral Brakke flows $\cM'$ with $\cM'(0) = \cH^n\lfloor M'$, so that $\cM'$ is completely regular until it disappears in a round point. That is, there is $X \in \RR^{n+1}\times \RR$ so that $\sing \cM' = \{X\}$ and so that any tangent flow at $X$ is a round shrinking $\SS^{n}$. 
\end{theorem}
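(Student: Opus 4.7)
The plan is to combine the low-entropy classification of possible tangent flows with the one-sided ancient flow rigidity from Theorems \ref{theo:one.sided.construction}--\ref{theo:one.sided.uniqueness} to iteratively perturb away all non-generic singularities. First I would observe that under $(\star_{n,\Lambda})$ and $(\star\star_{n,\Lambda})$, entropy monotonicity implies that every tangent flow to any unit-regular integral Brakke flow with $\lambda \leq \Lambda$ must be a multiplicity-one shrinking flow $(\sqrt{-t}\,\Sigma)$ with $\Sigma$ either (a) a round $\SS^n$, (b) a round cylinder $\RR \times \SS^{n-1}$ (possible only in the borderline case $\Lambda = \lambda_{n-1}$), (c) a compact non-spherical smooth shrinker, or (d) a smoothly asymptotically conical shrinker. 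Smoothness follows from $(\star\star_{n,\Lambda})$ via the compactness arguments of \cite{BernsteinWang:topology-small-ent}; asymptotic conicality of non-compact examples follows from $(\star_{n,\Lambda})$ together with \cite{BernsteinWang:TopologicalProperty}; multiplicity one follows from $\Lambda < 2$.

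Suppose the flow of $M$ first develops a non-generic (type (c) or (d)) tangent flow $(\sqrt{-t}\,\Sigma)$ at a spacetime point $X_0 = (\bx_0, T)$. I would take a local $C^\infty$ foliation $\{M_s\}_{s \in (-1,1)}$ with $M_0 = M$ and run the MCF of each leaf simultaneously, so that the flows $\cM_s$ remain pairwise disjoint by the avoidance principle. Choosing $s = s(\lambda) \to 0$ as $\lambda \to \infty$ diligently, the parabolic rescalings $\lambda \cdot (\cM_{s(\lambda)}(T + \lambda^{-2} t) - \bx_0)$ subsequentially converge to an ancient Brakke flow $\bar\cM$ lying strictly on one side of $(\sqrt{-t}\,\Sigma)$, with $\lambda(\bar\cM) \leq \Lambda < 2 F(\Sigma)$ by upper semi-continuity of density. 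By Theorem \ref{theo:one.sided.uniqueness}, $\bar\cM$ is the unique one-sided ancient flow from Theorem \ref{theo:one.sided.construction}, so it has only multiplicity-one spherical and cylindrical singularities and either becomes extinct in a round point (when $\Sigma$ is compact) or converges to an outermost expander associated to the asymptotic cone (when $\Sigma$ is asymptotically conical). Pseudolocality then implies that for all sufficiently small $s \neq 0$ of the appropriate sign, $\cM_s$ inherits this generic singular structure in a spacetime neighborhood of $X_0$, strictly postponing the next non-generic singular time.

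Iterating this perturbation, each step strictly reduces the largest $F$-value appearing as a non-generic tangent shrinker. Since under $(\star_{n,\Lambda})$, $(\star\star_{n,\Lambda})$ the set $\{F(\Sigma) : \Sigma \in \cS_n^*(\Lambda)\}$ restricted to compact non-spherical and asymptotically conical shrinkers is discrete by shrinker compactness, this iteration terminates in finitely many steps with a perturbation whose flow has only multiplicity-one spherical and cylindrical tangent flows. To eliminate the remaining cylinders I would make a final small perturbation ensuring $\lambda(M') < \lambda_{n-1}$, after which entropy monotonicity rules out all cylindrical tangent flows. The resulting flow then admits only multiplicity-one round spherical tangent flows, and the mean-convex neighborhood results of \cite{ChoiHaslhoferHershkovits, HershkovtisWhite} imply complete smoothness until extinction at a unique round point.

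The main obstacle will lie in ensuring that the rescaled limit $\bar\cM$ is simultaneously nontrivial and of multiplicity exactly one, so that Theorems \ref{theo:one.sided.construction}--\ref{theo:one.sided.uniqueness} apply. This requires a delicate choice of the diagonal sequence $s(\lambda)$, using the avoidance principle to keep $\cM_{s(\lambda)}$ strictly on one side of $\cM_0$ in the rescaled picture, while exploiting $\lambda(\bar\cM) \leq \Lambda < 2 F(\Sigma)$ to rule out higher multiplicities in the limit. A secondary difficulty is verifying termination of the iteration: while the discreteness of $F$-values on non-generic shrinkers makes this plausible, one must rule out the possibility that a perturbation reintroduces a new non-generic singularity at an earlier time, which ultimately follows from Huisken's monotonicity, pseudolocality, and $C^\infty$-continuity of the smooth MCF on compact spacetime regions away from $X_0$.
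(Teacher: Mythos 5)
Your overall strategy (low-entropy classification of tangent flows plus the one-sided ancient flow rigidity of Theorems \ref{theo:one.sided.construction}--\ref{theo:one.sided.uniqueness}, applied along a foliation by graphs) is the same skeleton as the paper's, but the step that makes the argument close is missing. Your termination mechanism rests on the claim that the set of $F$-values of compact non-spherical and asymptotically conical shrinkers with entropy below $\Lambda$ is \emph{discrete}. This does not follow from the compactness of Lemma \ref{lemm:BW-small-ent-compact-AC}: $C^\infty_{\mathrm{loc}}$-compactness of the family only makes the set of $F$-values a compact subset of $\RR$, which could a priori contain accumulation points, so ``each step strictly reduces the largest $F$-value'' does not yield termination in finitely many steps. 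The paper replaces this with a \emph{quantified} drop: Lemma \ref{lemm:schoenflies-entropy-drop} shows there is a uniform $\delta=\delta(n,\Lambda,\eps)>0$ such that the flow continuing a multiplicity-one asymptotically conical shrinker has Gaussian density at most $F(\Sigma)-\delta$ at every spacetime point at unit distance from the singularity; this fixed quantum is what makes the induction finite (there are at most $\Lambda/\delta$ ``levels'').

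The second gap is how you pass from regularity of the one-sided ancient limit to genericity of the perturbed flows. Your claim that pseudolocality lets $\cM_s$ ``inherit the generic singular structure in a spacetime neighborhood of $X_0$'' controls only a neighborhood of one singular point of the unperturbed flow; it does not rule out non-generic singularities of $\cM_s$ forming elsewhere or at other times, and your closing remark that this ``ultimately follows from Huisken's monotonicity, pseudolocality, and continuity'' is exactly the part that needs an argument. The paper's resolution is global rather than local: it introduces the density-defect function $\cD(s)$ (the largest density over all non-generic singular points of all unit-regular flows from $M_s$), and proves by a compactness/contradiction argument (Proposition \ref{prop:schoenflies-blow-up-argument}) that $\limsup_{s_i\nearrow s}\cD(s_i)\le \cD(s)-\delta$: one takes worst non-generic points $X_i$ of flows $\cM_i\rightharpoonup\cM$, rescales at the limit point $X$ by $|X_i-X|$, and the rescaled limit is either the tangent flow itself (in which case Lemma \ref{lemm:schoenflies-entropy-drop} gives the drop at the unit-distance limit point $\hat X$) or strictly one-sided (in which case Theorems \ref{theo:one.sided.construction}--\ref{theo:one.sided.uniqueness} together with the stability of spherical singularities, Lemma \ref{lemm:stab-spherical-sing}, contradict $X_i$ being non-generic). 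The conclusion is then a countability statement for the bad parameter set $\cB\subset(-s_0,s_0)$ (Theorem \ref{theo:low-ent-generic-flow-v2}), not a direct ``postpone the singularity'' statement; your dichotomy also silently drops the case where the rescaled limit coincides with the tangent flow, which is precisely where the drop is harvested. Finally, your last step (a terminal perturbation to push $\lambda(M')<\lambda_{n-1}$) is done in the paper at the \emph{start}, via \cite[Theorem 4.30]{ColdingMinicozzi:generic}, so that the entire foliation argument runs at entropy $\le\Lambda-\eps$; doing it at the end would require re-establishing genericity for the newly perturbed surface.
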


We will prove this below. Note that $(\star_{3,\lambda_{2}})$ holds by \cite[Theorem B]{MarquesNeves} and $(\star\star_{3,\lambda_{2}})$ holds by \cite[Corollary 1.2]{BernsteinWang:TopologicalProperty}, so Theorem \ref{theo:low-ent-generic-flow} implies Theorem \ref{theo:low-ent-generic-flow-4D}.

We also note that Theorems \ref{theo:one.sided.construction} and \ref{theo:one.sided.uniqueness} hold in all dimensions with the assumption that $(\star_{n,\Lambda})$ holds and $F(\Sigma) \leq \Lambda$. Indeed, the dimension restriction in Theorems \ref{theo:one.sided.construction} and \ref{theo:one.sided.uniqueness} arises due to the use of \cite{White:nature}, where it is used to rule out static cones as limit flows to a mean-convex flow (cf.\ \cite[Theorem 4]{White:nature}). However, in the low-entropy setting static cones cannot occur as limit flows, by assumption $(\star\star_{n,\Lambda})$  (cf.\ \cite[Lemma 3.1]{BernsteinWang:topology-small-ent}) even without assuming mean-convexity.

\begin{lemma}[{\cite[{Proposition 3.3}]{BernsteinWang:topology-small-ent}}]\label{lemm:BW-small-ent-char-tangent-flow}
 For $n\geq 3$ and $\Lambda \in (\lambda_{n},\lambda_{n-1}]$, assume that $(\star_{n,\Lambda})$ and $(\star\star_{n,\Lambda})$ hold. If $\cM$ is a unit-regular integral Brakke flow with $\lambda(\cM) \leq \Lambda$ then any tangent flow to $\cM$ is the multiplicity one shrinker associated to a smooth shrinker that is either (i) compact and diffeomorphic to $\SS^{n}$ or (ii) smoothly asymptotically conical. 
\end{lemma}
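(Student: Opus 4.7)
The strategy is to extract a tangent flow, show that it has multiplicity one, promote the underlying shrinker to a smooth object, and finally obtain the geometric/topological dichotomy.

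\emph{Multiplicity.} By Huisken's monotonicity (Section \ref{sec:prelim.density.huisken.monotonicity}), at any spacetime point $X$ every parabolic blow-up sequence admits a subsequential limit that is a self-similar shrinking integral Brakke flow $\cM'$, so $\cM'(t) = m\,\cH^n \lfloor \sqrt{-t}\,\Sigma$ for a stationary integral $F$-varifold $\Sigma$ and some integer multiplicity $m \geq 1$. Entropy is non-increasing and lower semi-continuous under this convergence, hence $mF(\Sigma) = \lambda(\cM') \leq \lambda(\cM) \leq \Lambda \leq \lambda_{n-1} < 2$. Since any non-trivial shrinker has $F(\Sigma) \geq 1$, necessarily $m = 1$.

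\emph{Regularity of $\Sigma$.} I plan to show $\sing \Sigma = \emptyset$. At any putative $p \in \sing \Sigma$, blowing $\Sigma$ up at $p$ produces a stationary integral $n$-cone $\cC \subset \RR^{n+1}$, since the shrinker weight disappears in the limit. Iterated tangent cones to $\cC$ are stationary integral cones with entropy bounded by $\Theta_\Sigma(p) \leq \Lambda$. By Federer dimension reduction, after finitely many iterations one obtains cones that split off Euclidean factors, so they are of the form $\RR^{n-k} \times \cC_k$ with $\cC_k$ a regular minimal cone in $\RR^{k+1}$, $k \in \{2, \ldots, n\}$. For $k = 2$ such cones are flat by classical results; for $3 \leq k \leq n$ flatness follows directly from $(\star_{n,\Lambda})$ combined with the entropy bound $F(\cC_k) \leq \Lambda$. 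Allard's regularity theorem then bootstraps, forcing $\cC$ to be flat and $\Sigma$ to be smooth at $p$, contradicting $p \in \sing \Sigma$.

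\emph{Classification of smooth $\Sigma$.} Now $\Sigma$ is a smooth self-shrinker with $F(\Sigma) \leq \Lambda \leq \lambda_{n-1}$. If $\Sigma$ is compact, the Bernstein--Wang low-entropy topological classification \cite{BernsteinWang:topology-small-ent} ensures $\Sigma$ is diffeomorphic to $\SS^n$. If $\Sigma$ is non-compact, a higher-dimensional version of L.\ Wang's end theorem \cite{Wang:ends-conical} (combined with standard splitting along a cylindrical end) guarantees that every end of $\Sigma$ is either smoothly asymptotically conical or smoothly asymptotic to some $\RR \times S$ for a non-flat shrinker $S \subset \RR^n$ with $\lambda(S) \leq F(\Sigma) \leq \Lambda$. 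The latter alternative places $S \in \cS^*_{n-1}(\Lambda)$, which is empty by $(\star\star_{n,\Lambda})$, so all ends of $\Sigma$ are smoothly asymptotically conical, as desired.

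\emph{Main obstacle.} The regularity step is the crux. The dimension-reduction scheme must be set up carefully since $\Sigma$ is only a critical point of the $F$-functional, not an area-minimizer, so tangent cone statements require one to invoke Brakke regularity/Allard on each reduction and keep track of the fact that the base cases $k \leq 2$ are classical and the inductive cases $3 \leq k \leq n$ are exactly what $(\star_{n,\Lambda})$ addresses. A subtler point arises in the boundary case $\Lambda = \lambda_{n-1}$, where cylinders $\RR \times \SS^{n-1}$ sit exactly at the entropy threshold and would need to be excluded via strict monotonicity of entropy in the applications where $\lambda(M) < \lambda_{n-1}$ strictly.
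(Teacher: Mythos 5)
First, note that the paper does not actually prove this lemma: it is imported verbatim from Bernstein--Wang (their Propositions 3.2 and 3.3), so your sketch should be measured against that argument. Your multiplicity-one step is fine, and your interior-regularity step (Federer/Almgren dimension reduction on tangent cones of the $F$-stationary varifold, then Allard) is essentially the right mechanism and close to what Bernstein--Wang do; two small points need closing, though. The reduction does not only terminate at cones $\RR^{n-k}\times \cC_k$ with $k\in\{2,\dots,n\}$: for a stationary \emph{integral} varifold it can also terminate at $k=1$, i.e.\ at cones over finite point sets such as $\RR^{n-1}\times Y$ with $Y$ a triple junction, whose link is smooth but which $(\star_{n,\Lambda})$ says nothing about (it only covers $3\leq k\leq n$). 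These must be excluded by the entropy bound itself, using $\Lambda\leq \lambda_{n-1}\leq\lambda(\SS^2)=4/e<3/2$, while Gaussian density $\geq 3/2$ for any such junction cone; your write-up skips this. Also, Allard applies at the end of the reduction because a multiplicity-one planar tangent cone forces density exactly $1$; worth saying, since $\Sigma$ is only stationary for the Gaussian metric, not minimizing.

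The genuine gap is the noncompact case. There is no ``higher-dimensional version of L.\ Wang's end theorem'': \cite{Wang:ends-conical} is specific to finite-genus shrinkers in $\RR^3$, and the dichotomy ``every end is smoothly conical or smoothly cylindrical'' is precisely what has to be \emph{proved} for $n\geq 3$; it cannot be quoted. The actual mechanism (and Bernstein--Wang's) is parabolic: extend the self-similar flow $t\mapsto\sqrt{-t}\,\Sigma$ through $t=0$ as a Brakke flow and take tangent flows at spacetime points $(\mathbf{x},0)$ with $\mathbf{x}\neq\mathbf{0}$. Self-similarity about the spacetime origin forces any such tangent flow to split off the line $\RR\mathbf{x}$, so it has the form $\RR\times\cN$ with $\cN$ a shrinking flow in $\RR^{n}$ of entropy $\leq\Lambda$; then $(\star\star_{n,\Lambda})$, the bound $\Lambda<2$, and (at the borderline) the rigidity case of Huisken's monotonicity force $\cN$ to be a static multiplicity-one plane, so White's local regularity theorem makes the flow smooth near $(\mathbf{x},0)$. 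This simultaneously shows the blow-down of $\Sigma$ is a unique regular cone away from the origin and that $\lambda\Sigma\to\cC$ smoothly on compact subsets of $\RR^{n+1}\setminus\{\mathbf{0}\}$, i.e.\ $\Sigma$ is smoothly asymptotically conical---a uniqueness-of-blow-down statement your end-structure route would not obviously give even if such a theorem existed. Two further remarks: your compact case simply cites \cite{BernsteinWang:topology-small-ent}, which is circular, since the propositions being proved here are inputs to that classification; and your closing observation about $\Lambda=\lambda_{n-1}$ is well taken---as literally stated (with ``$\leq$'' and an arbitrary unit-regular integral Brakke flow) the shrinking cylinder $\RR\times\SS^{n-1}$ satisfies the hypotheses and violates the conclusion, so either strictness or the rigidity argument above is needed; in the paper's applications one indeed has $\lambda(M)<\Lambda$.
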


\begin{lemma}[{\cite[Proposition 3.5]{BernsteinWang:topology-small-ent}}]\label{lemm:BW-small-ent-compact-AC}
Fix $n\geq 3$ and $\Lambda \leq \lambda_{n-1}$ and $\eps>0$. Assume that $(\star_{n,\Lambda})$ and $(\star\star_{n,\Lambda})$ hold. Then, the space of compact or non-flat smoothly asymptotically conical shrinkers $\Sigma\subset\RR^{n+1}$ with entropy $\lambda(\Sigma) \leq \Lambda-\eps$ is compact\footnote{We emphasize that because $\Lambda < 2$, any limit of such shrinkers has multiplicity one. Note that the proof of \cite[Proposition 3.5]{BernsteinWang:topology-small-ent} directly allows to include compact shrinkers with $\lambda(\Sigma) \leq \Lambda-\eps$. Furthermore, \cite[Proposition 3.7]{BernsteinWang:topology-small-ent} gives an extrinsic diameter bound for such compact shrinkers, so a sequence of these cannot converge to a non-compact shrinker.} in $C^{\infty}_{\textnormal{loc}}$.
\end{lemma}

From now on, we fix $n\geq 3$, $\Lambda \in (\lambda_{n},\lambda_{n-1}]$ satisfying $(\star_{n,\Lambda})$ and $(\star\star_{n,\Lambda})$.

\begin{lemma}\label{lemm:schoenflies-entropy-drop}
There is $\delta = \delta(n,\Lambda,\eps) > 0$ so that if $ \cM$ is a unit-regular integral Brakke flow with $ \cM(t) = \cH^{n}\lfloor \sqrt{-t}\Sigma$ for $t<0$, where $\Sigma$ is a compact or non-flat smooth asymptotically conical shrinker with $F(\Sigma) \leq \Lambda-\eps$, then for any $X \in \RR^{n+1}\times\RR$ with $|X| = 1$, we have that $\Theta_{\cM}(X) \leq F(\Sigma) - \delta$. 
\end{lemma}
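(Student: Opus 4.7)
I would argue by contradiction, combining the smooth compactness of non-flat asymptotically conical shrinkers from Lemma~\ref{lemm:BW-small-ent-compact-AC} with strict Huisken monotonicity.

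Suppose no such $\delta$ exists. Then one can produce sequences $\Sigma_i$ of non-flat smooth AC shrinkers with $F(\Sigma_i) \leq \Lambda - \eps$, unit-regular integral Brakke flows $\cM_i$ with $\cM_i(t) = \cH^n \lfloor \sqrt{-t}\,\Sigma_i$ for $t<0$, and basepoints $X_i$ with $|X_i|=1$, such that $\Theta_{\cM_i}(X_i) > F(\Sigma_i) - 1/i$. By Lemma~\ref{lemm:BW-small-ent-compact-AC} (applicable since $F(\Sigma_i) \leq \Lambda - \eps$), pass to a subsequence so that $\Sigma_i \to \Sigma_\infty$ in $C^\infty_{\mathrm{loc}}$ where $\Sigma_\infty$ is a non-flat smooth AC shrinker with $F(\Sigma_\infty) \leq \Lambda - \eps$. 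The uniform entropy bound $\lambda(\cM_i) = F(\Sigma_i) \leq \Lambda - \eps$ provides uniform local mass bounds via Huisken's monotonicity, so a further subsequence yields a unit-regular integral Brakke limit $\cM_i \rightharpoonup \cM_\infty$, which by the smooth convergence of $\Sigma_i$ must satisfy $\cM_\infty(t) = \cH^n \lfloor \sqrt{-t}\,\Sigma_\infty$ for every $t<0$. Pass to a final subsequence so that $X_i \to X_\infty$ with $|X_\infty|=1$, and in particular $X_\infty \neq (\bOh,0)$. By upper semi-continuity of Gaussian density under Brakke convergence,
\[ \Theta_{\cM_\infty}(X_\infty) \geq \limsup_{i \to \infty} \Theta_{\cM_i}(X_i) = \lim_{i \to \infty} F(\Sigma_i) = F(\Sigma_\infty). \]

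It then remains to show $\Theta_{\cM_\infty}(X_\infty) < F(\Sigma_\infty)$, which produces the contradiction. Writing $X_\infty = (\mathbf{x}_\infty, t_\infty)$: for every $r$ sufficiently large that $t_\infty - r^2 < 0$, the density ratio $\Theta_{\cM_\infty}(X_\infty, r)$ is computed against $\cH^n \lfloor \sqrt{r^2 - t_\infty}\,\Sigma_\infty$, and a direct rescaling calculation (together with the AC decay of $\Sigma_\infty$ to control the integrand at infinity) shows $\lim_{r \to \infty} \Theta_{\cM_\infty}(X_\infty, r) = F(\Sigma_\infty)$. Since Huisken monotonicity makes $r \mapsto \Theta_{\cM_\infty}(X_\infty, r)$ non-decreasing, equality $\Theta_{\cM_\infty}(X_\infty) = F(\Sigma_\infty)$ would force this function to be identically $F(\Sigma_\infty)$ on all scales where it is defined. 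The strict form of Huisken's monotonicity then forces $\cM_\infty$ to be self-similar about $X_\infty$ for all sufficiently negative $t$, i.e., the shrinking flow $t\mapsto \sqrt{-t}\,\Sigma_\infty$ is self-similar about $X_\infty \neq (\bOh,0)$ in addition to $(\bOh,0)$.

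The main (and only substantive) obstacle is thus the elementary but necessary step of showing that a non-flat smooth AC self-shrinker cannot be self-similar about two distinct spacetime points. The argument is as follows: writing out the self-similarity $\sqrt{-t}\,\Sigma_\infty = \mathbf{x}_\infty + \sqrt{t_\infty - t}\,\Sigma_\infty$ (matching asymptotic cones as $t \to -\infty$ identifies the two self-similar profiles) and evaluating at $t=-1$ yields $\Sigma_\infty - \mathbf{x}_\infty = \sqrt{t_\infty + 1}\,\Sigma_\infty$. If $t_\infty = 0$, then $\Sigma_\infty$ is invariant under a nontrivial translation by $-\mathbf{x}_\infty$, contradicting the regular AC structure. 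If $t_\infty \neq 0$, then $\Sigma_\infty$ is invariant under a nontrivial homothety with a unique fixed point, forcing it to be a cone from that point, which together with the shrinker equation $\mathbf{H} + \tfrac12 \mathbf{x}^\perp = 0$ makes $\Sigma_\infty$ a hyperplane through the origin---contradicting non-flatness. This yields the desired strict inequality and closes the contradiction.
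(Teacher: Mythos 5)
Your compactness setup is exactly the paper's: argue by contradiction, extract a limiting non-flat AC shrinker $\Sigma_\infty$ via Lemma \ref{lemm:BW-small-ent-compact-AC}, a limiting unit-regular Brakke flow $\cM_\infty$ agreeing with $\cH^n\lfloor\sqrt{-t}\,\Sigma_\infty$ for $t<0$, and a point $X_\infty=(\mathbf{x}_\infty,t_\infty)$ with $|X_\infty|=1$ and $\Theta_{\cM_\infty}(X_\infty)\geq F(\Sigma_\infty)$ by upper semicontinuity. Where you diverge is the endgame: the paper disposes of such a second point of maximal density in one stroke by parabolic cone-splitting (the flow splits off a line, or is static/quasi-static, contradicting non-flat smooth AC), whereas you run the Huisken-rigidity argument by hand: constancy of the density ratio about $X_\infty$ on all scales (correctly justified, since the ratio tends to $F(\Sigma_\infty)$ at infinity and is bounded above by it at scale $0^+$) forces self-similarity of the shrinker flow about the second center, and then you classify shrinker flows self-similar about two distinct spacetime centers. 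This is a legitimate alternative route and is morally the same rigidity that underlies cone-splitting.

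The gap is in your two-case classification. In the case $t_\infty\neq 0$ you evaluate the relation $\sqrt{-t}\,\Sigma_\infty=\mathbf{x}_\infty+\sqrt{t_\infty-t}\,\Sigma_\infty$ only at $t=-1$ and assert that invariance under a single nontrivial homothety forces $\Sigma_\infty$ to be a cone; this inference is false (a set can be invariant under the discrete group generated by one homothety without being a cone, e.g.\ a union of concentric spheres of radii $\mu^k$). The fix is to use the identity for the whole range of $t$, or better its infinitesimal version: equating $\mathbf{H}=\tfrac{\mathbf{x}^\perp}{2t}=\tfrac{(\mathbf{x}-\mathbf{x}_\infty)^\perp}{2(t-t_\infty)}$ on $\sqrt{-t}\,\Sigma_\infty$ gives $t_\infty\,\mathbf{x}^\perp=t\,\mathbf{x}_\infty^\perp$ for a.e.\ $t<\min(t_\infty,0)$; varying $t$ then yields, when $t_\infty\neq 0$, that $\mathbf{x}^\perp\equiv 0$, so $\Sigma_\infty$ is a smooth properly embedded cone, hence a hyperplane, contradicting non-flatness. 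In the case $t_\infty=0$ your stated contradiction ("invariant under a nontrivial translation, contradicting the regular AC structure") does not stand on its own: translation invariance is perfectly compatible with being asymptotically conical, since it only forces the asymptotic cone to be translation-invariant and therefore (being a regular cone) a hyperplane, and a hyperplane is a regular cone. To finish you must then use non-flatness together with an extra rigidity input, e.g.\ L.~Wang's uniqueness theorem for shrinkers asymptotic to a given cone \cite{Wang:uniqueness} (so $\Sigma_\infty$ equals the plane), or argue, as the paper effectively does, that the resulting line-splitting $\Sigma_\infty=\RR\times\Sigma''$ is incompatible with smooth multiplicity-one convergence of the blow-down to a regular cone. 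With these two repairs your argument closes; as written, both branches of the final step are incomplete.
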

\begin{proof} Note that for compact shrinkers one has $\Theta_{\cM}(X) \leq 1$ for all $X \in \RR^{n+1}\times\RR$ with $|X| = 1$ so White's local regularity theorem \cite{White:Brakke} yields the statement.

For the noncompact case, assume there is $\cM_{j}$ (and the associated smooth asymptotically conical shrinkers $\Sigma_{j}$) and $X_{j}$ with $|X_{j}| =1$ so that 
\[
\Theta_{\cM_{j}}(X_{j}) \geq F(\Sigma_{j}) - \frac 1 j.
\]
Up to a subsequence, we can use Lemma \ref{lemm:BW-small-ent-compact-AC} to find a Brakke flow $\cM_\infty$ so that $\cM_\infty = \cH^{n}\lfloor \sqrt{-t}\Sigma_\infty$ for $t<0$, where $\Sigma_\infty$ is a non-flat smooth asymptotically conical shrinker, and $ X$ with $| X| = 1$ and $\Theta_{\cM_\infty}( X) \geq F(\Sigma_\infty)$. Parabolic cone-splitting (cf.\ \cite{White:stratification} and \cite[p.\ 840-1]{CheegerHaslhoferNaber}) implies that either $\Sigma_\infty$ splits off a line or it is static or quasi-static. This is a contradiction, completing the proof. 
\end{proof}

\begin{lemma}\label{lemm:stab-spherical-sing}
For integral unit regular Brakke flows $\cM_i,\cM$, suppose that $X_i\in\sing\cM_i$ has $\cM_i\rightharpoonup\cM$ and $X_i\to X \in \sing \cM$. Suppose that some tangent flow to $\cM$ at $X$ is a round shrinking sphere with multiplicity one, $t\mapsto \cH^n\lfloor \SS^{n}\sqrt{-2nt}$. Then, for $i$ sufficiently large, any tangent flow to $\cM_i$ at  $X_i$ is a round shrinking sphere. 
\end{lemma}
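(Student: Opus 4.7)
The plan is to combine upper semi-continuity of Gaussian density with the Colding--Ilmanen--Minicozzi--White \cite{ColdingMinicozziIlmanenWhite} entropy bound for non-flat shrinkers, together with a compactness/rigidity argument.

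First, since some tangent flow to $\cM$ at $X$ is the multiplicity-one round shrinking sphere, Huisken's monotonicity formula gives $\Theta_{\cM}(X) = \lambda(\SS^n(\sqrt{2n})) = \lambda_n$. Brakke--White upper semi-continuity of density \cite{White:Brakke} along the convergence $\cM_i \rightharpoonup \cM$ and $X_i \to X$ then yields $\limsup_{i \to \infty} \Theta_{\cM_i}(X_i) \leq \lambda_n$.

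Next, I would let $\Sigma_i$ be any tangent flow to $\cM_i$ at $X_i$. Huisken's monotonicity realizes $\Sigma_i$ as an $F$-stationary integral varifold with $F(\Sigma_i) = \Theta_{\cM_i}(X_i)$. For $i$ large, the bound $F(\Sigma_i) < 2$ forces $\Sigma_i$ to be multiplicity one and, by Brakke--White regularity, to be smooth. Unit-regularity of $\cM_i$ at its singular point $X_i$ rules out $\Theta_{\cM_i}(X_i) = 1$, so $\Sigma_i$ is not a flat hyperplane. The Colding--Ilmanen--Minicozzi--White inequality then gives $F(\Sigma_i) \geq \lambda_n$ with equality if and only if $\Sigma_i$ is a round sphere. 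Combined with the upper bound, $\Theta_{\cM_i}(X_i) \to \lambda_n$ and $F(\Sigma_i) \to \lambda_n$ along any such sequence of tangent flows.

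To upgrade $F(\Sigma_i) \to \lambda_n$ to $\Sigma_i$ actually being a round sphere for $i$ large, I argue by contradiction: if, along a subsequence, $\Sigma_i$ is not a round sphere, Ilmanen's compactness \cite{Ilmanen:elliptic} extracts a further Brakke limit $\Sigma_\infty$, a non-flat $F$-stationary integral varifold with $F(\Sigma_\infty) = \lambda_n$. The strict case of CIMW forces $\Sigma_\infty$ to be a round sphere, and Brakke--White regularity (entropy below $2$) upgrades the convergence to smooth convergence in a tubular neighborhood of $\Sigma_\infty$. Because the linearized shrinker operator on $\SS^n(\sqrt{2n})$ reduces to $L = \Delta_{\SS^n(\sqrt{2n})} + 1$, whose spectrum $\{1 - k(k+n-1)/(2n)\}_{k \geq 0}$ avoids $0$ for $n \geq 1$, the round sphere has trivial Jacobi kernel and is therefore rigid among shrinkers: any smooth shrinker sufficiently $C^2$-close to $\SS^n(\sqrt{2n})$ must equal (a rotation of) $\SS^n(\sqrt{2n})$. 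This contradicts our assumption, completing the proof.

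The main technical obstacle is the compactness-and-rigidity step. The strict CIMW inequality gives sharp rigidity but no quantitative gap, so one cannot conclude directly from $F(\Sigma_i) \to \lambda_n$; the argument must pass through subsequential extraction. The uniform entropy bound $F(\Sigma_i) \leq \lambda_n + 1 < 2$ is what prevents loss of compactness (via Brakke--White regularity and Ilmanen's compactness), and the triviality of the Jacobi kernel on the sphere is what converts smooth closeness into equality.
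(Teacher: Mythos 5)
Your reduction to a density bound is fine as far as it goes: $\Theta_{\cM}(X)=\lambda(\SS^n(\sqrt{2n}))=\lambda_n$ and upper semi-continuity of Gaussian density under $\cM_i\rightharpoonup\cM$, $X_i\to X$ do give $\limsup_i\Theta_{\cM_i}(X_i)\leq\lambda_n$, and unit-regularity rules out $\Theta_{\cM_i}(X_i)=1$. The genuine gap is the step ``$F(\Sigma_i)<2$ forces $\Sigma_i$ to be multiplicity one and, by Brakke--White regularity, smooth.'' White's local regularity theorem requires Gaussian density ratios close to $1$; an entropy bound of order $\lambda_n\approx 1.5$ gives no regularity whatsoever for the tangent flows, which a priori are only $F$-stationary integral varifolds (possibly singular shrinkers, including static minimal cones, which do satisfy $\bH+\tfrac12\bx^\perp=0$ in the varifold sense). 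Ruling such objects out at low entropy is precisely the content of the hypotheses $(\star_{n,\Lambda})$, $(\star\star_{n,\Lambda})$ and of Lemmas \ref{lemm:BW-small-ent-char-tangent-flow} and \ref{lemm:BW-small-ent-compact-AC} in Section \ref{sec:generic-schoenflies}, which rest on Bernstein--Wang's structural results and are not available in the generality in which the lemma is stated (arbitrary $n$, no entropy or structural assumptions on $\cM_i$). The same problem recurs in your compactness step: \cite{ColdingMinicozziIlmanenWhite} is a statement about \emph{smooth} shrinkers, the subsequential limit $\Sigma_\infty$ of non-round $\Sigma_i$ is again only a varifold shrinker with $F=\lambda_n$ unless one already has a regularity/compactness theorem for low-entropy shrinkers, and even granting a smooth spherical limit, smooth convergence in a tubular neighborhood of $\SS^n(\sqrt{2n})$ does not by itself exclude pieces of $\Sigma_i$ outside that neighborhood, so the trivial-Jacobi-field rigidity (your spectral computation there is correct) cannot yet be invoked globally.

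The paper's proof avoids entropy considerations entirely and is local and dimension-free: since the tangent flow at $X$ is a multiplicity-one round sphere, for suitable $\eta>0$ the flow $\cM$ restricted to $B_{2r\sqrt\eta}\times(-4\eta,-\eta)$ is a smooth, closed, strictly convex flow; by \cite{White:Brakke} this smooth convexity passes to $\cM_i$ for $i$ large on a slightly smaller region, and then Huisken's theorem \cite{Huisken:convex} forces the subsequent evolution of that closed convex hypersurface---which contains the singularity at $X_i$, since $X_i\to X$---to shrink to a round point, so every tangent flow at $X_i$ is a round sphere. If you want to salvage your route, you would have to import the low-entropy regularity and compactness machinery (and hence extra hypotheses such as $(\star_{n,\Lambda})$, $(\star\star_{n,\Lambda})$ or a dimension restriction), which both weakens the statement and makes the argument substantially heavier than the paper's.
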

\begin{proof}
Assume that $X=(\bOh,0)$. For any $r>0$, there is $\eta>0$, so that 
\[
\cM \lfloor (B_{2r\sqrt{\eta}}(\bOh) \times (-4\eta,-\eta))
\]
is a smooth, strictly convex mean curvature flow (without spatial boundary). Thus, for $i$ sufficiently large, 
\[
\cM_i \lfloor (B_{r\sqrt{\eta}}(\bOh) \times (-3\eta,-2\eta))
\]
is a smooth, strictly convex mean curvature flow. Taking $r$ sufficiently large, this completes the proof (using e.g., \cite{Huisken:convex}). 
\end{proof}

For any integral unit regular Brakke flow $\cM$ with $\lambda(\cM) < \lambda_{n-1} = \lambda(\SS^{n-1}\times \RR)$, denote 
\[
\sing_{\textrm{gen}}\cM 
\]
for the set of singular points for which all tangent flows given by multiplicity-one round shrinking spheres. The previous lemma proves stability of these sets. 

Assume for now that $M^n\subset \RR^{n+1}$ has $\lambda(M) < \Lambda$,  and consider $\varphi \in C^\infty(M), \varphi >0$. Fix $s_0$ small enough so that for $s \in (-s_0,s_0)$, the graph of $s\varphi$, denoted $M_s$, has $\lambda(M_s) < \Lambda-\eps$, for $\eps>0$ fixed. For any $s \in (-s_0,s_0)$, let $\fF(s)$ denote the set of integral unit regular Brakke flows $\cM$ with $\cM(0) = \cH^n\lfloor M_s$. Note that $\fF(s) \not = \emptyset$ (e.g., choose $s_i \to s$ with the level set flow of $M_{s_i}$ non-fattening and pass Brakke flows starting from $M_{s_i}$---these exist by \cite[Theorem 11.4]{Ilmanen:elliptic}---to the limit).

For $s \in (-s_0,s_0)$, set 
\[
\cD(s) : = \sup\{ \Theta_\cM(X) : \cM \in \fF(s), X \in \sing\cM \setminus \sing_\textrm{gen}\cM \}.
\]
We recall that $\sup \emptyset = -\infty$ and note that by compactness of integral unit-regular Brakke flows and upper-semicontinuity of density, $\cD(s)$ is always attained. 

\begin{proposition}\label{prop:schoenflies-blow-up-argument}
We continue to assume that $\lambda(M) \leq \Lambda-\eps$. For $s_0$ as above, suppose that $s_i \nearrow s \in (-s_0,s_0)$. Then 
\[
\limsup_{i\to\infty} \cD(s_i) \leq \cD(s) - \delta,
\]
where $\delta=\delta(n,\Lambda,\eps)>0$ is fixed in Lemma \ref{lemm:schoenflies-entropy-drop}. 
\end{proposition}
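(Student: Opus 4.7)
My plan is to argue by contradiction via a blow-up that produces an ancient one-sided flow to which I can apply the uniqueness theorem (Theorem \ref{theo:one.sided.uniqueness}). Assume, for contradiction, that along a subsequence $\cD(s_i) \to L > \cD(s) - \delta$. I would pick realizers: Brakke flows $\cM_i \in \fF(s_i)$ and points $X_i \in \sing \cM_i \setminus \sing_{\textrm{gen}} \cM_i$ with $\Theta_{\cM_i}(X_i) = \cD(s_i)$. Using Brakke compactness under the uniform entropy bound $\Lambda - \eps$, I would extract $\cM_i \rightharpoonup \cM_\infty \in \fF(s)$ and $X_i \to X_\infty$, with $\Theta_{\cM_\infty}(X_\infty) \geq L$ by upper semicontinuity. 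The first subtask is to rule out that $X_\infty$ is regular or a generic singular point of $\cM_\infty$: unit regularity combined with $\Theta_{\cM_i}(X_i) \to L$ shows that a regular $X_\infty$ would force $X_i \in \reg \cM_i$ for large $i$ (contradiction), and Lemma \ref{lemm:stab-spherical-sing} shows that $X_\infty \in \sing_{\textrm{gen}} \cM_\infty$ would similarly force $X_i \in \sing_{\textrm{gen}} \cM_i$ for large $i$ (also a contradiction). Hence $X_\infty \in \sing \cM_\infty \setminus \sing_{\textrm{gen}} \cM_\infty$, and so $L \leq \Theta_{\cM_\infty}(X_\infty) \leq \cD(s)$.

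Next I would pick a tangent flow $(\sqrt{-t}\Sigma)_{t<0}$ of $\cM_\infty$ at $X_\infty$, which by Lemma \ref{lemm:BW-small-ent-char-tangent-flow} comes from a non-round smooth $\Sigma$ (either compact diffeomorphic to $\SS^n$, or asymptotically conical) with $F(\Sigma) = \Theta_{\cM_\infty}(X_\infty) \leq \cD(s) < \Lambda - \eps$. The key step will be a diagonal rescaling. Ilmanen's localized avoidance principle together with $s_i < s$ forces each $\cM_i$ to lie strictly on one fixed side of $\cM_\infty$. Letting $d_i$ denote the parabolic distance from $X_i$ to $X_\infty$, I would set $\lambda_i := d_i^{-1}$ and consider $\hat\cM_i := \cF_{\lambda_i}(\cM_i - X_\infty)$ with defect points $Y_i := \lambda_i(X_i - X_\infty)$ at unit parabolic distance from the origin. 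Passing to a subsequential limit gives an ancient, unit-regular, integral Brakke flow $\bar\cM$ on one side of $\sqrt{-t}\Sigma$, with $Y_i \to Y_\infty$, $|Y_\infty| = 1$, and $\Theta_{\bar\cM}(Y_\infty) \geq L$. The entropy bound $\lambda(\bar\cM) \leq \Lambda - \eps < 2 F(\Sigma)$ (using $2\lambda_n > \lambda_{n-1} \geq \Lambda$) ensures the backward tangent flow of $\bar\cM$ at $-\infty$ is the multiplicity-one shrinker associated to $\Sigma$, which is what is needed to invoke Theorem \ref{theo:one.sided.uniqueness}.

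I then split into two cases. If $\bar\cM$ coincides with the shrinker flow of $\Sigma$, Lemma \ref{lemm:schoenflies-entropy-drop} (in the asymptotically conical case; the compact case follows from the uniqueness of the self-similar center of a compact shrinker combined with compactness of non-round compact shrinkers of entropy $\leq \Lambda - \eps$) yields $\Theta_{\bar\cM}(Y_\infty) \leq F(\Sigma) - \delta_1$. If instead $\bar\cM$ is strictly one-sided, then Theorem \ref{theo:one.sided.uniqueness} identifies it with the unique ancient one-sided flow of Theorem \ref{theo:one.sided.construction}; by Theorem \ref{theo:one.sided.construction}(8) all its tangent flows are multiplicity-one generalized cylinders $\SS^{n-k}\times\RR^{k}$, and the entropy constraint $\lambda_k \leq \lambda(\bar\cM) \leq F(\Sigma) < \lambda_{n-1}$ forces $k=n$, so $\Theta_{\bar\cM}(Y_\infty) \leq \lambda_n$; combined with the uniform gap $F(\Sigma) \geq \lambda_n + \delta_2$ (from Lemma \ref{lemm:BW-small-ent-compact-AC} and its compact analogue) this gives $\Theta_{\bar\cM}(Y_\infty) \leq F(\Sigma) - \delta_2$. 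Setting $\delta := \min(\delta_1, \delta_2)$ yields $L \leq F(\Sigma) - \delta \leq \cD(s) - \delta$, contradicting the choice of $L$. The main obstacle will be the diagonal rescaling itself: simultaneously arranging $\lambda_i \to \infty$ so that $\bar\cM$ is genuinely ancient (i.e., $\lambda_i^2 \cdot(\textrm{time-depth of } X_\infty) \to \infty$), that the one-sidedness of $\bar\cM$ with respect to $\sqrt{-t}\Sigma$ is preserved in the limit (inherited from $\cM_i$ vs.\ $\cM_\infty$ avoidance together with $\cF_{\lambda_i}(\cM_\infty - X_\infty) \to \sqrt{-t}\Sigma$ smoothly away from $(\bOh, 0)$), and that the backward tangent flow at $-\infty$ has multiplicity one---this last point is where the low-entropy hypothesis is essential to invoke Theorem \ref{theo:one.sided.uniqueness}.
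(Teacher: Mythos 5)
Your overall strategy is the paper's: pick realizing flows $\cM_i\in\fF(s_i)$ and non-generic singular points $X_i$, pass to a limit $\cM_\infty\in\fF(s)$ with $X_i\to X_\infty\in\sing\cM_\infty\setminus\sing_{\textrm{gen}}\cM_\infty$ (Lemma \ref{lemm:stab-spherical-sing}), identify the tangent flow shrinker $\Sigma$ via Lemma \ref{lemm:BW-small-ent-char-tangent-flow}, rescale by $|X_i-X_\infty|$ to get an ancient flow $\bar\cM$ weakly on one side of $\sqrt{-t}\,\Sigma$ with unit-distance defect point, split into the case where $\bar\cM$ coincides with the shrinking flow of $\Sigma$ and the strictly one-sided case (handled with Lemma \ref{lemm:entire-graph-from-cpt}, avoidance, and Theorems \ref{theo:one.sided.construction}--\ref{theo:one.sided.uniqueness}), and close with the density-drop Lemma \ref{lemm:schoenflies-entropy-drop}. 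Up to that case split your outline matches the paper.

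Where you diverge is the strictly one-sided case, and this is also where your argument is weakest. The paper does not try to estimate $\Theta_{\bar\cM}(Y_\infty)$ there at all: since the unique one-sided flow of Theorem \ref{theo:one.sided.construction} has \emph{only} multiplicity-one spherical singularities under the entropy bound $\lambda(\bar\cM)\leq F(\Sigma)<\lambda_{n-1}$, Lemma \ref{lemm:stab-spherical-sing} (plus unit regularity if $Y_\infty$ is regular) forces $Y_i$, hence $X_i$, to be \emph{generic} singular points for large $i$, directly contradicting their choice; so the one-sided case simply cannot occur, and in the remaining case Lemma \ref{lemm:schoenflies-entropy-drop} gives the conclusion with exactly the advertised $\delta$. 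Your alternative---bounding $\Theta_{\bar\cM}(Y_\infty)\leq\lambda_n$ and invoking a uniform gap $F(\Sigma)\geq\lambda_n+\delta_2$---can in principle be completed for asymptotically conical $\Sigma$ (Lemma \ref{lemm:BW-small-ent-compact-AC} plus the Colding--Ilmanen--Minicozzi--White lower bound and lower semicontinuity of entropy), but the ``compact analogue'' of that compactness statement which you also invoke is not in the paper and is not immediate, and even granting it your conclusion only holds with $\min(\delta_1,\delta_2)$ rather than the $\delta$ of Lemma \ref{lemm:schoenflies-entropy-drop} fixed in the statement (harmless for the applications, but not what you set out to prove). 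Note also that both you and the paper must say something when $\Sigma$ is compact non-round in the remaining case; there the situation is in fact easy, since the support of $\bar\cM$ is extinct after the space-time origin and every point at unit parabolic distance has density at most $1<F(\Sigma)-\delta$, which is simpler than the ``self-similar center plus compactness'' route you sketch. In short: your proposal is viable but needs the extra gap/compactness input, whereas using the non-genericity of the $X_i$ one more time, as the paper does, eliminates the one-sided case for free.
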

\begin{proof}
It suffices to assume that $\limsup_{i\to\infty} \cD(s_i) > - \infty$. Choose integral unit-regular Brakke flows $\cM_i\in\cF(s_i)$ and space-time points $X_i \in \sing \cM_i \setminus \sing_\textrm{gen} \cM_i$ with
\[
\Theta_{\cM_i}(X_i) = \cD(s_i).
\]
Passing to a subsequence, we can assume that $\cM_i\rightharpoonup\cM \in \fF(s)$ and
\[
X_i \to X \in \sing \cM \setminus \sing_\textrm{gen} \cM.
\]
The fact that $X \not \in \sing_\textrm{gen}\cM$ follows from Lemma \ref{lemm:stab-spherical-sing}. We now rescale around $X$ so that we can apply Theorem \ref{theo:one.sided.uniqueness}. Note that $\supp\cM_i,\supp\cM$ are all pairwise disjoint, since their initial conditions are compact pairwise disjoint hypersurfaces.

We will repeatedly pass to subsequences without relabeling in the following. Rescale $\cM_i$ around $X$ by $|X_i-X| \not = 0$ to $\hat\cM_i$ and assume that $\hat \cM_i\rightharpoonup \hat\cM$. Similarly, rescale $\cM$ around $X$ by $|X_i-X| \not = 0$ to $\widetilde\cM_i$ and assume that $\widetilde\cM_i\rightharpoonup \widetilde\cM$. Since $\widetilde\cM$ is a tangent flow to $\cM$ at $X \not \in \sing_\textrm{gen}\cM$, by Lemma \ref{lemm:BW-small-ent-char-tangent-flow}, there is a smooth shrinker $\Sigma^n\subset\RR^{n+1}$ that is either compact or asymptotically conical so that $\widetilde\cM(t) = \sqrt{-t} \Sigma$ for $t<0$. Finally, assume that after rescaling $X_i$ around $X$ by $|X_i-X|$ to $\hat X_i$, $\hat X_i\to \hat X$. 

We claim that $\lambda(\hat\cM) \leq \Theta_\cM(X) = F(\Sigma)$. Indeed, choose $\overline X_i\to X$ and $r_i\to 0$ so that
\[
\Theta_{\cM_i}(\overline X_i,r_i) = \lambda(\hat\cM) + o(1). 
\]
On the other hand, 
\[
\Theta_\cM(X,r) = \Theta_\cM(X) + o(1)
\]
as $r\to0$. Hence, 
\[
\Theta_\cM(X,r) = \lim_{i\to\infty} \Theta_{\cM_i}(\overline X_i,r) \geq \lim_{i\to\infty} \Theta_{\cM_i}(\overline X_i,r_i) = \lambda(\hat \cM).
\]
Sending $r\to0$ shows that $\lambda(\hat\cM) \leq \Theta_\cM(X)$. 

Consider a tangent flow to $\hat \cM$ at $-\infty$. Since $\lambda(\hat \cM) \leq \Lambda$, Lemma \ref{lemm:BW-small-ent-char-tangent-flow} implies that any such tangent flow is the shrinking flow associated to a smooth shrinker $\hat \Sigma$. We claim that $\hat\Sigma = \Sigma$ and that $\hat \cM$ lies (weakly) on one side of the shrinking flow associated to $\Sigma$. Indeed, by the Frankel property for self-shrinkers (Corollary \ref{coro:Frankel}), there is $\bx \in \hat \Sigma \cap \Sigma$. Because $\Sigma,\hat\Sigma$ have multiplicity one, we can find regions in $\hat\cM_i,\tilde \cM_i$ that are (after a common rescaling) smooth graphs over connected regions in $\hat \Sigma$ and $\Sigma$ containing $\bx$. Because $\supp\cM_i$ and $\supp\cM$ are disjoint, it must hold that $\hat\Sigma = \Sigma$. Applying Lemma \ref{lemm:entire-graph-from-cpt} (and the maximum principle), we can find a sequence of times $t_i\to-\infty$ so that either $\hat\cM(t_i) = \cH^n\lfloor\sqrt{-t_i} \Sigma$, or $\hat\cM(t_i)$ is a smooth graph over $\sqrt{-t_i}\Sigma$ of a nowhere vanishing function. In the first case, we see that $\hat\cM(t) = \cH^n\lfloor\sqrt{-t}\Sigma$ for all $t<0$ by Proposition \ref{prop:unique-cont-brakke} (cf.\ the proof of Lemma \ref{lemm:dist-to-origin-eps-lambda}), while in the second case, we see that $\supp\hat\cM(t)$ is disjoint from $\sqrt{-t}\Sigma$ for all $t<0$ (by Ilmanen's localized avoidance and the Ecker--Huisken maximum principle, as in Lemma \ref{lemm:orig-not-in-K-eps}). 

We claim that the second case cannot occur. Indeed, Theorems \ref{theo:one.sided.construction} and \ref{theo:one.sided.uniqueness} (and $\Lambda \leq \lambda_{n-1}$) imply (since $\lambda(\hat\cM) \leq F(\Sigma)$) that $\sing\hat\cM = \sing_\textrm{gen}\hat\cM$, so Lemma \ref{lemm:stab-spherical-sing} implies that $\hat X_i\in \sing_\textrm{gen}\hat\cM_i$ for $i$ sufficiently large. This is a contradiction, so the first case (i.e., $\hat \cM(t) = \sqrt{-t}\Sigma$ for $t<0$) must hold. 

Now, we can apply Lemma \ref{lemm:schoenflies-entropy-drop} to $\hat \cM$ and $\hat X$ (the limit of the rescaled points $\hat X_i$; note that $|\hat X| = 1$) to conclude that
\[
\limsup_{i\to\infty} \cD(s_i) = \limsup_{i\to\infty} \Theta_{\cM_i}(X_i) \leq \Theta_{\hat \cM}(\hat X) \leq F(\Sigma) - \delta \leq \cD(s) - \delta.
\]
This completes the proof. 
\end{proof}
Using this, we can prove the existence of generic flows. 
\begin{theorem}\label{theo:low-ent-generic-flow-v2}
For $n\geq 3$ and $\Lambda \in (\lambda_{n},\lambda_{n-1}]$, assume that $(\star_{n,\Lambda})$ and $(\star\star_{n,\Lambda})$ hold. Then if $M\subset \RR^{n+1}$ is a closed hypersurface with $\lambda(M)  < \Lambda$ and $\varphi>0$ is a smooth positive function on $M$, let $M_{s}$ denote the normal graph of $s\varphi$ over $M$. Then, there is $s_0> 0$ and a closed, countable set $\cB \subset (-s_0,s_0)$ so that for $s \in (-s_0,s_0)\setminus \cB$, any unit-regular integral Brakke flow with initial condition $M_{s}$ is completely regular until it disappears in a round point.  
\end{theorem}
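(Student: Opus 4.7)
The plan is to use Proposition \ref{prop:schoenflies-blow-up-argument} as a jump-discontinuity statement for the functional $\cD(s)$, then exploit this to stratify the bad parameters $s$ into countably many discrete strata.

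Fix $\eps := \tfrac12(\Lambda - \lambda(M)) > 0$ and choose $s_0 > 0$ small enough that $\{M_s\}_{|s|<s_0}$ foliates a neighborhood of $M$ and $\lambda(M_s) \leq \Lambda - \eps$ throughout. First observe that the proof of Proposition \ref{prop:schoenflies-blow-up-argument} makes no actual use of the sign of $s_i - s$: it relies only on the disjointness of $\supp \cM_i$ from $\supp \cM$ (which follows from $s_i \neq s$ since the $M_s$ foliate) and on Theorem \ref{theo:one.sided.uniqueness}, which is indifferent to which side of the tangent flow the one-sided ancient flow lies on. Consequently, for every $s \in (-s_0, s_0)$ and every sequence $s_i \to s$,
\[
	\limsup_{i \to \infty} \cD(s_i) \leq \cD(s) - \delta,
\]
where $\delta = \delta(n, \Lambda, \eps) > 0$ is the constant from Lemma \ref{lemm:schoenflies-entropy-drop} and we adopt the convention $-\infty - \delta = -\infty$.

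Set $\cB := \{s \in (-s_0, s_0) : \cD(s) > -\infty\}$. I claim $\cB$ is both countable and closed. By Lemma \ref{lemm:BW-small-ent-char-tangent-flow}, the tangent flow at a non-generic singularity is a smooth non-round self-shrinker, and by Lemma \ref{lemm:BW-small-ent-compact-AC} (together with the analogous, easier compactness for smooth closed shrinkers), the entropy of any such shrinker is bounded below by some $c_0 > 1$; thus $\cD \geq c_0$ on $\cB$. Stratifying by $I_k := [c_0 + k\delta/2, c_0 + (k+1)\delta/2)$ for $k \geq 0$, the jump inequality forces each $\cD^{-1}(I_k) \cap \cB$ to consist of isolated points in $(-s_0, s_0)$, hence to be countable; therefore $\cB$ is a countable union of countable sets. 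For closedness, let $s_i \in \cB$ with $s_i \to s$, and pick $\cM_i \in \fF(s_i)$, $X_i \in \sing \cM_i \setminus \sing_{\textrm{gen}} \cM_i$ with $\Theta_{\cM_i}(X_i) \geq c_0$. Brakke flow compactness (with uniform entropy bound and $M_{s_i} \to M_s$) yields, after passing to a subsequence, $\cM_i \rightharpoonup \cM \in \fF(s)$ and $X_i \to X$ (bounded in spacetime by the controlled extinction times and spatial supports). Upper semicontinuity of density gives $\Theta_\cM(X) \geq c_0 > 1$, so $X \in \sing \cM$, while Lemma \ref{lemm:stab-spherical-sing} rules out $X \in \sing_{\textrm{gen}} \cM$. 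Hence $s \in \cB$.

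For $s \in (-s_0, s_0) \setminus \cB$, every unit-regular integral Brakke flow $\cM$ with $\cM(0) = \cH^n \lfloor M_s$ has $\sing \cM = \sing_{\textrm{gen}} \cM$, so every singularity is a multiplicity-one round shrinking sphere. Near each such singularity, the flow admits a strict mean-convex neighborhood by \cite{ChoiHaslhoferHershkovits,ChoiHaslhoferHershkovitsWhite,HershkovtisWhite}, and the low-entropy hypotheses $(\star_{n,\Lambda})$, $(\star\star_{n,\Lambda})$ combined with the connectedness of $M_s$ (inherited from $M$) prevent the flow from producing disconnected components through an intermediate neckpinch---such a neckpinch would have a cylindrical, non-spherical tangent flow, which would force $s \in \cB$. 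It follows that $\cM$ is smooth everywhere until it vanishes at a single round spherical extinction point. The principal technical obstacle lies in this concluding step: rigorously ruling out multiple, or time-separated, spherical singularities for a connected initial hypersurface. This is handled by invoking the mean-convex neighborhood theorem together with topological monotonicity arguments in the spirit of \cite{White:topology-weak}; the rest of the argument is the soft blow-up/semicontinuity dichotomy driven by the uniqueness of one-sided ancient flows.
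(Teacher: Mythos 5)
Your argument is essentially the paper's own proof: it defines $\cB=\{s:\cD(s)>-\infty\}$, obtains closedness from Brakke-flow compactness, upper semicontinuity of density and Lemma \ref{lemm:stab-spherical-sing}, and obtains countability by banding the values of $\cD$ into intervals of length $<\delta$ and applying the density drop of Proposition \ref{prop:schoenflies-blow-up-argument} (your observation that the proof of that proposition is insensitive to the side from which $s_i\to s$ is exactly what the paper uses implicitly to get two-sided isolation of points in each band). The only comment is that your concluding step is easier than you suggest: for $s\notin\cB$ every singular point has multiplicity-one round spherical tangent flows, so by unit-regularity and local regularity the connected flow is, just before its first singular time, a closed hypersurface close to a small round sphere and is therefore extinguished at that moment in a single round point---no mean-convex neighborhood theorem or topological monotonicity argument is needed.
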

\begin{proof}
Note that 
\[
\cB = \{s \in (-s_0,s_0) : \cD(s) > -\infty \}
\]
is closed by upper semicontinuity of density and Lemma \ref{lemm:stab-spherical-sing}. Thus, it suffices to prove that $\cB$ is countable. Define
\[
\cB_j : = \{ s \in (-s_0,s_0) : \cD(s) \in [\Lambda - j \delta,\Lambda - (j+1)\delta)\},
\]
so $\cB = \cup_{j=0}^J \cB_j$, for $J > \frac{\Lambda}{\delta}$. By Proposition \ref{prop:schoenflies-blow-up-argument}, if $s \in \cB_j$, there is an open interval $I$ so that $I\cap \cB_j = \{s\}$. Hence, $\cB_j$ is countable. This completes the proof. 
\end{proof}

\begin{proof}[Proof of Theorem \ref{theo:low-ent-generic-flow}]
By \cite[Theorem 4.30]{ColdingMinicozzi:generic}, if $M$ is not a round sphere, then after replacing $M$ by a nearby $C^{\infty}$-close hypersurface, we can assume that $\lambda(M) \leq \Lambda - \eps$ for some $\eps>0$. The statement then follows from Theorem \ref{theo:low-ent-generic-flow-v2}.
\end{proof}

\begin{corollary}\label{cor:low-ent-generic-flow-v3}
For $n\geq 3$ and $\Lambda \in (\lambda_{n},\lambda_{n-1}]$, assume that $(\star_{n,\Lambda})$ and $(\star\star_{n,\Lambda})$ hold. Consider the set
\[
\textnormal{Emb}_{<\Lambda}(\RR^{n+1}) : = \{ M \subset \RR^{n+1}, \lambda(M) < \Lambda\} 
\]
with the $C^{\infty}$ topology.\footnote{For example, we can say that $M_{j}\to M$ if for $j$ large, $M_{j}=\Graph_{M}u_{j}$ with $u_{j}\to0$ in $C^{\infty}(M)$.} Define a subset $\cG$ by the set of $M \in \textnormal{Emb}_{<\Lambda}(\RR^{n+1})$ so that any unit-regular integral Brakke flow starting from $M$ is regular until it disappears in a round point. Then $\cG$ is open and dense.
\end{corollary}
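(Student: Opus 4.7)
\textbf{Density.} Fix $M \in \textnormal{Emb}_{<\Lambda}(\RR^{n+1})$ and any positive $\varphi \in C^\infty(M)$. Applying Theorem \ref{theo:low-ent-generic-flow-v2}, the normal graphs $M_s$ of $s\varphi$ lie in $\cG$ for all $s$ in the cocountable subset $(-s_0, s_0) \setminus \cB$. Taking $s \to 0$ yields a sequence in $\cG$ converging to $M$ in $C^\infty$, so $\cG$ is dense.

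\textbf{Openness.} Suppose $M \in \cG$ and $M_i \to M$ in $C^\infty$; we argue by contradiction that $M_i \in \cG$ for $i$ large. Since the entropy $\lambda(\cdot)$ is continuous on $C^\infty$ compact hypersurface variations, $\lambda(M_i) < \Lambda$ for $i$ large. If $M_i \notin \cG$, there exist unit-regular integral Brakke flows $\cM_i$ with $\cM_i(0) = \cH^n \lfloor M_i$ and singular points
\[ X_i \in \sing \cM_i \setminus \sing_{\textrm{gen}} \cM_i. \]
By Huisken's monotonicity, $\lambda(\cM_i) \leq \lambda(M_i) \leq \Lambda$ uniformly, so Brakke flow compactness (as in Section \ref{sec:prelim.unit.regular.cyclic}) yields, after passing to a subsequence, $\cM_i \rightharpoonup \cM'$, where $\cM'$ is a unit-regular integral Brakke flow with $\cM'(0) = \cH^n \lfloor M$. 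Since $M \in \cG$, the flow $\cM'$ has a unique singular point $X_0 \in \sing_{\textrm{gen}} \cM'$ at which it disappears, and $\cM'(t) = 0$ for $t > \mathfrak{t}(X_0)$.

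\textbf{Confining the $X_i$.} By Brakke's (smooth) regularity theorem applied to the convergence $\cM_i \rightharpoonup \cM'$ on $\RR^{n+1} \times [0, \mathfrak{t}(X_0) - \eps]$, where $\cM'$ is smooth, $\sing \cM_i \cap (\RR^{n+1} \times [0, \mathfrak{t}(X_0) - \eps]) = \emptyset$ for $i$ large. Extinction of $\cM_i$ must occur near $\mathfrak{t}(X_0)$ by Hausdorff continuity of the support, and the support converges in Hausdorff distance to that of $\cM'$ on the time interval $[0, \mathfrak{t}(X_0) + \eps]$, so the $X_i$ remain in a fixed compact region of spacetime. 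Pass to a further subsequence so that $X_i \to X_\infty$.

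\textbf{Locating $X_\infty$.} By Lemma \ref{lemm:BW-small-ent-char-tangent-flow}, each tangent flow to $\cM_i$ at $X_i$ is the multiplicity-one shrinking flow associated to some smooth non-flat self-shrinker, necessarily with entropy $\geq \lambda_n > 1$ (by \cite{ColdingMinicozziIlmanenWhite}). Hence $\Theta_{\cM_i}(X_i) \geq \lambda_n > 1$. Upper semicontinuity of density under Brakke convergence then gives $\Theta_{\cM'}(X_\infty) \geq \lambda_n > 1$, so $X_\infty \in \sing \cM' = \{X_0\}$; i.e., $X_i \to X_0$.

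\textbf{Conclusion.} Since $X_0 \in \sing_{\textrm{gen}} \cM'$ has a round shrinking sphere tangent flow, Lemma \ref{lemm:stab-spherical-sing} forces $X_i \in \sing_{\textrm{gen}} \cM_i$ for $i$ large, contradicting the choice of $X_i$. Therefore $M_i \in \cG$ for $i$ large, and $\cG$ is open.

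\textbf{Main obstacles.} The delicate part of the argument is ensuring that the singular points $X_i$ do not escape to infinity in either space or time and that they cannot limit to a regular point of $\cM'$. The former uses Hausdorff continuity of spacetime supports coming from the uniform entropy bound plus extinction; the latter uses the uniform density gap $\Theta \geq \lambda_n > 1$ guaranteed by Lemma \ref{lemm:BW-small-ent-char-tangent-flow} combined with \cite{ColdingMinicozziIlmanenWhite}. Once $X_i \to X_0$, Lemma \ref{lemm:stab-spherical-sing} delivers the contradiction.
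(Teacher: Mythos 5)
Your proof is correct and takes essentially the same route as the paper: density is quoted from Theorem \ref{theo:low-ent-generic-flow-v2}, and openness is obtained by passing putative bad flows $\cM_i$ with non-generic singular points $X_i$ to a subsequential Brakke limit starting from $M$ and contradicting the stability of spherical singularities, Lemma \ref{lemm:stab-spherical-sing}. The paper phrases this as closedness of the complement and leaves implicit the points you spell out (spacetime confinement of the $X_i$, the density gap $\Theta \geq \lambda_n$ from Lemma \ref{lemm:BW-small-ent-char-tangent-flow} together with upper semicontinuity of density forcing $X_\infty \in \sing\cM'$), so your write-up is just a more detailed version of the same argument.
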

\begin{proof}
We claim that $\textnormal{Emb}_{\leq\Lambda}(\RR^{n+1})\setminus \cG$ closed. Consider $M_{j} \in \textnormal{Emb}_{\leq\Lambda}(\RR^{n+1})\setminus \cG$ with $M_{j}\to M\in \textnormal{Emb}_{\leq\Lambda}(\RR^{n+1})$. Let $\cM_{j}$ denote integral unit-regular Brakke flows starting at $M_{j}$ with non-round tangent flows at $X_{j}$. Passing to a subsequence, $\cM_{j}\rightharpoonup\cM$, a integral unit-regular Brakke flow starting from $M$. By Lemma \ref{lemm:stab-spherical-sing}, a further subsequence has $X_{j}\to X \in \sing\cM$ with $\cM$ having a non-round tangent flow at $X$. This shows $\cG$ is open. Finally, the density of $\cG$ follows from Theorem \ref{theo:low-ent-generic-flow-v2}. 
\end{proof}


\section{The first non-generic time for flows in $\RR^3$} \label{sec:generic.R3}

In this section, we will study the mean curvature flow of a generic initial surface in $\RR^{3}$. We will remove the low-entropy assumption considered in the previous section and study the possible singularities that generically arise. 

For $\cM$ an integral unit-regular Brakke flow, define $T_{\textrm{gen}}$ to be the supremum of times $T$ so that at any point $X \in \supp\cM$ with $\mathfrak{t}(X) < T$, all tangent flows at $X$ are multiplicity-one spheres, cylinders, or planes.

\begin{theorem}\label{theo:generic-R3}
Suppose that $M \subset \RR^{3}$ is a closed embedded surface of genus $g$. Then, there exist arbitrarily small $C^{\infty}$ graphs $M'$ over $M$ and corresponding cyclic integral unit-regular Brakke flows $\cM'$ with $\cM'(0) = \cH^2\lfloor M'$, so that either:
\begin{enumerate}
\item $T_\textnormal{gen}(\cM') = \infty$, or
\item there is $\mathbf{x}\in\RR^3$ so that some tangent flow to $\cM'$ at $(\mathbf{x},T_\textnormal{gen}(\cM'))$ is $k\cH^2\lfloor \sqrt{-t}\Sigma$ for $\Sigma$ a smooth shrinker of genus at most $g$ and either: $k\geq2$ or $\Sigma$ has a cylindrical end but $\Sigma$ is not a cylinder.
\end{enumerate}
\end{theorem}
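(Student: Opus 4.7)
The plan follows the ancient one-sided strategy outlined in Section \ref{sec:intro.generic.method}. First, embed $M = M_0$ in a local smooth normal foliation $\{M_s\}_{s\in(-1,1)}$. For all but countably many $s$ the level set flow of $M_s$ is non-fattening (cf.\ \cite[11.3]{Ilmanen:elliptic}), so one may fix a cyclic integral unit-regular Brakke flow $\cM_s$ with $\cM_s(0) = \cH^2 \lfloor M_s$; by avoidance, the spacetime supports $\supp \cM_s$ form a (weak) foliation. Set $T_s := T_{\textrm{gen}}(\cM_s)\in(0,\infty]$. Assume we are not already in case (1) or (2) at $s = 0$: then at some spacetime point $X_0 = (\bx_0, T_0)$ every tangent flow is a multiplicity-one shrinker $\Sigma$ that is neither a sphere, nor a cylinder, nor a plane, and has no cylindrical end.

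Next I would pin down $\Sigma$ and carry out the perturbative step. By the localized topological monotonicity of Appendix \ref{app:loc-top-monotonicity}, $\genus \Sigma \leq g$; by Wang's structure theorem \cite{Wang:ends-conical}, each end of $\Sigma$ is asymptotically conical or cylindrical, so under our hypothesis all ends of $\Sigma$ are asymptotically conical. Thus $\Sigma$ is either compact (not a sphere) or smoothly asymptotically conical---exactly the class governed by Theorems \ref{theo:one.sided.construction} and \ref{theo:one.sided.uniqueness}. Parabolically rescale $\cM_s$ about $X_0$ and let $s \searrow 0$; Huisken's monotonicity, the bounded entropy $\lambda(\cM_0) \leq \lambda(M)$, and preservation of unit regularity/cyclicity under Brakke convergence produce a subsequential limit $\bar{\cM}^+$ that is an ancient, cyclic, unit-regular integral Brakke flow, strictly on one side of $t \mapsto \sqrt{-t}\,\Sigma$ for $t<0$, with $\Sigma$ as its tangent flow at $-\infty$ (via upper semicontinuity of density, Frankel-type arguments as in Corollary \ref{coro:Frankel}, and a diligent choice of the $s_i \searrow 0$), and of entropy $\leq \lambda(\Sigma) < 2 F(\Sigma)$. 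By Theorem \ref{theo:one.sided.uniqueness}, $\bar\cM^+$ coincides (up to time translation) with the ancient flow from Theorem \ref{theo:one.sided.construction}: it has only multiplicity-one generalized cylindrical singularities, is smooth and star-shaped at its time-zero slice, and remains smooth immediately afterwards. The analogous analysis with $s < 0$ handles the other component of $\RR^3\setminus\Sigma$. Pulling back through the rescaling, this forces $\cM_s$ for $0 < |s| \ll 1$ to be smooth in a spacetime neighborhood of $X_0$ extending past $T_0$, with only spherical/cylindrical tangent flows there.

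The main obstacle is the iteration. At time $T_0$ the non-generic singular set of $\cM_0$ need not be a single point; however, by the classification above the local argument applies at every non-generic point, and a finite-covering argument (using compactness of the singular set and the local monotone ``jump'' in $T_s$ at each such point) gives $T_s > T_0$ for small $|s|>0$. To push past arbitrarily large times, I would run a Baire argument: for each $T^* > 0$, let $U_{T^*} \subset (-1,1)$ be the set of $s$ for which $T_s > T^*$ or $\cM_s$ is in case (2). The perturbative step above, combined if needed with a further perturbation of the foliation itself, shows that $U_{T^*}$ is dense, while Brakke compactness together with the stability of generic singularities (cf.\ Lemma \ref{lemm:stab-spherical-sing} and the mean-convex neighborhood theorem \cite{ChoiHaslhoferHershkovits}) gives openness of $U_{T^*}$ within the dense set of non-fattening parameters. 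Any $s \in \bigcap_{T^* \in \NN} U_{T^*}$ then produces an $M_s = M'$ as required. The most delicate point is phrasing the openness of $U_{T^*}$ to accommodate the countable set of fattening parameters and the fact that $\cM_s$ depends only lower-semicontinuously on $s$ in general; this is handled by working within the non-fattening regime and invoking the continuity of the Brakke construction there.
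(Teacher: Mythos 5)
Your local analysis at a single non-generic point (rescaling, one-sidedness via avoidance and the Frankel property, and invoking Theorems \ref{theo:one.sided.construction}, \ref{theo:one.sided.uniqueness} to conclude that nearby flows only see multiplicity-one spherical/cylindrical singularities near that point and slightly past it) is essentially the paper's mechanism, and the step ``$T_s > T_0$ for small $s$'' can indeed be made rigorous along the lines you sketch, using the density gap of Lemma \ref{lemm:R3-lower-bd-first-non-gen-sing} and upper semicontinuity of density. The genuine gap is in your global iteration. Your Baire scheme requires each $U_{T^*}$ to be \emph{dense}, but your perturbative step only shows that a nearby flow survives past the \emph{first} non-generic time $T_0$ of $\cM_0$; it says nothing about reaching an arbitrary prescribed time $T^*$. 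The perturbed flow $\cM_s$ may develop a new non-generic singularity at some time in $(T_0, T^*)$, and to handle it you must perturb again. Without a quantity that strictly decreases under each perturbation, nothing prevents this from requiring infinitely many perturbations, with the successive non-generic times accumulating below $T^*$, so density of $U_{T^*}$ is not established. (In the low-entropy setting of Section \ref{sec:generic-schoenflies} the paper terminates the iteration via the definite density drop of Lemma \ref{lemm:schoenflies-entropy-drop}, which relies on the compactness of the relevant class of shrinkers, Lemma \ref{lemm:BW-small-ent-compact-AC}; that compactness is unavailable here since you impose no entropy bound.) The openness of $U_{T^*}$ is also only gestured at, but that is secondary.

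The missing idea, and the heart of the paper's proof, is \emph{strict genus reduction}: Proposition \ref{prop:key-perturb-result-3d} shows that the perturbation can be chosen so that not only $T_{\textnormal{gen}}(\cM') > T_{\textnormal{gen}}(\cM)$ but also $\genus_{T_{\textnormal{gen}}}(\cM') \leq \genus_{T_{\textnormal{gen}}}(\cM) - 1$. This uses the positive genus of any non-generic multiplicity-one tangent shrinker (Brendle's classification, via Proposition \ref{prop:3d-first-non-gen-sing-time-prop}), the fact that the one-sided flow of Theorem \ref{theo:one.sided.construction} is smooth and genus zero in a spacetime neighborhood of the singular time, and the localized topological monotonicity of Appendix \ref{app:loc-top-monotonicity} together with the homological Lemmas \ref{lemm:R3-good-basis-comp-surf-ball} and \ref{lemm:R3-basis-comp-surf-ball-genus-in-ball-consequence} to conclude that the genus captured at the tangent-flow scale is genuinely lost by the perturbed flow. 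Since $\genus_{T_{\textnormal{gen}}}$ is a nonnegative integer bounded by $g$ and any flow excluded from cases (1)--(2) must have $\genus_{T_{\textnormal{gen}}} \geq 1$, at most $g$ perturbations are needed, globally in time, and no Baire argument is required. Your proposal contains no substitute for this termination mechanism, so as written it does not prove the theorem.
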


We will prove this below. Note that Theorem \ref{theo:generic-R3} yields the following conditional result. Recall that the list of lowest entropy shrinkers is known to be the plane, the sphere, and then the cylinder by \cite{White:Brakke,ColdingMinicozziIlmanenWhite,BernsteinWang:TopologicalProperty}. Suppose that there is $\Lambda_g \in (\lambda_1,2]$ so that any smooth shrinker $\Sigma\subset \RR^3$ with $\genus(\Sigma)\leq g$ and $F(\Sigma) < \Lambda_g$ is either a plane, a sphere, a cylinder, or has no cylindrical ends.\footnote{Work of Brendle \cite{Brendle:genus0} implies that only possible genus zero self-shrinkers are the plane, sphere, and cylinder. This immediately implies that $\Lambda_{0}=2$. Ilmanen has conjectured that no non-cylindrical shrinker can have cylindrical ends \cite[\#12]{Ilmanen:problems}, which would mean we can take $\Lambda_g=2$ for all $g$. However, it could theoretically happen that the next lowest entropy shrinker is a counterexample to Ilmanen's conjecture, i.e., has a cylindrical end.} Then:

\begin{corollary}\label{coro:cond-generic-flow-R3-Lambdag}
If $M\subset \RR^3$ is a closed embedded surface with $\genus(M)\leq g$ and $\lambda(M) \leq \Lambda_g$, then there are arbitrarily small $C^\infty$ graphs $M'$ over $M$ and cyclic integral unit-regular Brakke flows $\cM'$ with $\cM'(0) = \cH^2\lfloor M'$ so that $\cM'$ has only multiplicity-one spherical or cylindrical tangent flows, i.e., $T_\textnormal{gen}(\cM') = \infty$.
\end{corollary}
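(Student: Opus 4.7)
The plan is to apply Theorem \ref{theo:generic-R3} to $M$ and to use the defining property of $\Lambda_g$ to exclude case (2), leaving only case (1), namely $T_\textrm{gen}(\cM')=\infty$. The statement is immediate when $M$ is a round sphere (the flow disappears in a round point), so I may assume otherwise. Before invoking the theorem, I would first reduce to the strict inequality $\lambda(M)<\Lambda_g$: in the borderline case $\lambda(M)=\Lambda_g$, I would replace $M$ by a nearby $C^\infty$ graph of strictly smaller entropy, which is possible either by briefly flowing $M$ by mean curvature (when $M$ is not a shrinker, using strict Huisken monotonicity) or via \cite[Theorem 4.30]{ColdingMinicozzi:generic} (when $M$ is a compact non-spherical shrinker, noting that $M$ cannot be a plane or a cylinder since it is closed).

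Having arranged $\lambda(M)<\Lambda_g$, I would apply Theorem \ref{theo:generic-R3} to produce arbitrarily $C^\infty$-close graphs $M'$ over $M$ together with cyclic integral unit-regular Brakke flows $\cM'$ starting at $\cH^2\lfloor M'$. Since entropy is continuous under $C^\infty$-convergence of closed surfaces, I may require $\lambda(M')<\Lambda_g$ by choosing $M'$ sufficiently $C^\infty$-close to $M$, and Huisken's monotonicity then yields $\lambda(\cM')\leq\lambda(M')<\Lambda_g$. Suppose toward contradiction that case (2) of Theorem \ref{theo:generic-R3} occurs: at the first non-generic time $T_\textrm{gen}(\cM')$, some tangent flow equals $k\cH^2\lfloor\sqrt{-t}\Sigma$ for a smooth shrinker $\Sigma$ with $\genus(\Sigma)\leq g$. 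Huisken's monotonicity gives the density bound $kF(\Sigma)\leq\lambda(\cM')<\Lambda_g$, so in particular $F(\Sigma)<\Lambda_g$, and the defining property of $\Lambda_g$ forces $\Sigma$ to be a plane, a sphere, a cylinder, or to have no cylindrical end. In subcase (a), where $k\geq 2$, we obtain $2F(\Sigma)\leq kF(\Sigma)<\Lambda_g\leq 2$, hence $F(\Sigma)<1$, contradicting the universal lower bound $F(\Sigma)\geq F(\RR^2)=1$ for smooth shrinkers. In subcase (b), $\Sigma$ has a cylindrical end and hence cannot be a plane, a sphere, or a shrinker without cylindrical ends; combined with $\Sigma$ not being a cylinder, this directly contradicts the list of possibilities produced by the $\Lambda_g$-hypothesis.

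The main obstacle in this argument is the preliminary reduction to the strict entropy inequality $\lambda(M')<\Lambda_g$, which I expect to be handled via the standard tools cited above; once this is in place, the remainder is a routine density bookkeeping. The hypothesis on $\Lambda_g$ is engineered precisely to rule out the two alternatives in case (2), so the corollary reduces essentially to the already-established Theorem \ref{theo:generic-R3}.
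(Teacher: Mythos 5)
Your proposal is correct and is essentially the intended argument: the paper states Corollary \ref{coro:cond-generic-flow-R3-Lambdag} as an immediate consequence of Theorem \ref{theo:generic-R3}, and your density bookkeeping (ruling out multiplicity $\geq 2$ via $kF(\Sigma)\geq 2$ and ruling out non-cylindrical shrinkers with cylindrical ends via the defining property of $\Lambda_g$) is exactly the intended deduction. Your preliminary reduction to the strict inequality $\lambda(M)<\Lambda_g$ is the same device the paper itself uses in the proof of Theorem \ref{theo:low-ent-generic-flow} (via \cite[Theorem 4.30]{ColdingMinicozzi:generic} together with briefly flowing when $M$ is not a shrinker), so it matches the paper's approach.
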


We now establish certain preliminary results used in the proof of Theorem \ref{theo:generic-R3}. The proof of Theorem \ref{theo:generic-R3} can be found after the statement of Proposition \ref{prop:key-perturb-result-3d}. We define 
\[
\sing_\textrm{gen}(\cM) \subset \sing(\cM)
\]
as the set of singular points so that one tangent flow (and thus all of them by \cite{ColdingIlmanenMinicozzi}; alternatively, this follows from \cite{Schulze:Loj,ColdingMinicozzi:uniqueness-tangent-flow} or \cite{BernsteinWang:topology-small-ent}) is a multiplicity-one shrinking sphere or cylinder.

First, we note the following result establishing regularity of tangent flows at $T_\textrm{gen}$ (see also the proof of \cite[Theorem 1.2]{ChoiHaslhoferHershkovits}). 

\begin{proposition}\label{prop:3d-first-non-gen-sing-time-prop}
Consider a cyclic integral unit-regular Brakke flow $\cM$  in $\RR^3$, with $\cM(0) = \cH^2\lfloor M$ for a closed embedded surface $M$. Then, $\cM$ has the following properties:
\begin{itemize}
\item the level set flow of $M$ does not fatten before $T_\textrm{gen}(\cM)$,
\item for almost every $t \in [0,T_\textnormal{gen}(\cM)]$, the level set flow of $M$ is given by $M(t)$, a smooth embedded surface,
\item $\cM(t) = \cH^2\lfloor M(t)$ for almost every $t \in [0,T_\textnormal{gen}(\cM)]$, and
\item $t\mapsto \genus(M(t))$ is non-increasing for all smooth times $t \in [0,T_\textnormal{gen}(\cM)]$.
\end{itemize}
Furthermore, assuming that $T_{\textnormal{gen}}(\cM) < \infty$, then any tangent flow $\hat \cM$ at $(\mathbf{x},T_\textnormal{gen}(\cM))$ satisfies 
\[ \hat \cM(t) = k\cH^2\lfloor \sqrt{-t} \hat\Sigma \text{ for } t<0, \]
where $\hat\Sigma$ is a smooth embedded self-shrinker with 
\[ \genus(\hat\Sigma)\leq \lim_{\substack{t\nearrow T_\textnormal{gen}(\cM) \\ t \not \in \mathfrak{t}^{-1}(\sing \cM)}} \genus(M(t)) \]
 Moreover, $\hat\Sigma$ has finitely many ends, each of which is either asymptotically conical or cylindrical (with multiplicity one), and if $(\mathbf{x},T_\textnormal{gen}(\cM)) \in \sing(\cM) \setminus \sing_{\textnormal{gen}}(\cM)$ and $k=1$, then $\hat\Sigma$ has $\genus(\hat\Sigma) \geq 1$.
\end{proposition}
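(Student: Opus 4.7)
The plan is to work through the five assertions in order, as each depends on the previous. For non-fattening before $T_\textnormal{gen}(\cM)$, I would invoke the recent work on cylindrical singularities: at every $X \in \supp\cM$ with $\mathfrak{t}(X) < T_\textnormal{gen}(\cM)$, tangent flows are multiplicity-one shrinking spheres or cylinders, so by \cite{ChoiHaslhoferHershkovits,ChoiHaslhoferHershkovitsWhite} the point admits a mean-convex neighborhood; the well-posedness/non-fattening result of Hershkovits--White \cite{HershkovtisWhite} then gives non-fattening of the level set flow up to $T_\textnormal{gen}(\cM)$. Smoothness at a.e. $t \in [0,T_\textnormal{gen}(\cM)]$ then follows from Colding--Minicozzi's partial regularity theorem \cite{ColdingMinicozzi:sing-generic}, and the coincidence $\cM(t) = \cH^2\lfloor M(t)$ at such times follows from non-fattening combined with unit-regularity and the cyclic property (via the general framework of \cite{Ilmanen:elliptic,White:Brakke,White:cyclic}).

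For the genus monotonicity, I would apply White's topological monotonicity \cite{White:topology-weak} across smooth time intervals. The only singularities $\cM$ encounters before $T_\textnormal{gen}(\cM)$ are multiplicity-one spherical and cylindrical, and these are standard ``neckpinch'' or ``extinction'' singularities which cannot increase the genus (extinction of a connected component removes a genus-zero $\SS^2$, while a neckpinch cuts along an $\SS^1$ which either disconnects or decreases genus by one); hence $t \mapsto \genus(M(t))$ is non-increasing across every smooth interval, and by non-fattening plus the a.e. smoothness of the flow, this extends up to $T_\textnormal{gen}(\cM)$.

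For the tangent flow analysis at $T_\textnormal{gen}(\cM) < \infty$: by Huisken's monotonicity and compactness of unit-regular cyclic integral Brakke flows \cite{White:Brakke,White:cyclic}, any tangent flow $\hat\cM$ at $(\mathbf{x}, T_\textnormal{gen}(\cM))$ is self-similar and unit-regular, hence of the form $\hat\cM(t) = k\cH^2\lfloor \sqrt{-t}\hat\Sigma$ for an $F$-stationary integral varifold that, by Brakke regularity at smooth shrinker points and the absence of triple junctions (from cyclicness), is supported on a smooth properly embedded self-shrinker $\hat\Sigma \subset \RR^3$. The finite-genus structure and the finite-ends dichotomy (asymptotically conical or cylindrical with multiplicity one) is then exactly L.~Wang's theorem \cite{Wang:ends-conical}. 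The genus bound on $\hat\Sigma$ is the main obstacle and would be obtained by a blow-up argument: one approaches $T_\textnormal{gen}(\cM)$ through smooth times $t_i \nearrow T_\textnormal{gen}(\cM)$ and rescales parabolically around $(\mathbf{x}, T_\textnormal{gen}(\cM))$; the smooth slices $M(t_i)$ have uniformly bounded genus and converge (smoothly away from $\sing\hat\cM$, which has parabolic codimension $\geq 2$ by \cite{Wang:ends-conical} plus Ilmanen's stratification) to $\sqrt{-t}\hat\Sigma$, and a standard topological semi-continuity argument (localizing White's topological monotonicity, as developed in Appendix \ref{app:loc-top-monotonicity}) transfers the genus bound to $\hat\Sigma$.

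Finally, if $(\mathbf{x}, T_\textnormal{gen}(\cM)) \in \sing \cM \setminus \sing_\textnormal{gen}\cM$, then $\hat\Sigma$ is neither a round sphere nor a generalized cylinder; by Brendle's classification of genus-zero shrinkers in $\RR^3$ \cite{Brendle:genus0}, which establishes that the only genus-zero smooth properly embedded self-shrinkers are the plane, sphere, and cylinder, we conclude $\genus(\hat\Sigma) \geq 1$. The hardest technical step in this proof plan is the genus passage to the tangent flow, since the singular set of $\hat\cM$ at its base time need only be closed and of the right dimension---extracting a clean topological limit requires the localized version of topological monotonicity and careful handling of the conical/cylindrical ends supplied by \cite{Wang:ends-conical}.
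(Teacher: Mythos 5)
Most of your outline tracks the paper's proof: non-fattening before $T_\textnormal{gen}$ via Choi--Haslhofer--Hershkovits, a.e.\ regularity and the identification $\cM(t)=\cH^2\lfloor M(t)$ via Colding--Minicozzi (the paper additionally invokes Corollary \ref{cor:connected-reg-part} to identify the given Brakke flow with the level set flow), genus monotonicity via White \cite{White:topology-weak}, the structure of the ends via L.~Wang \cite{Wang:ends-conical}, and $\genus(\hat\Sigma)\geq 1$ at non-generic points via Brendle \cite{Brendle:genus0}.

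The genuine gap is in the middle step: the smoothness of $\hat\Sigma$ and the genus bound. You assert that the tangent flow is supported on a smooth embedded shrinker ``by Brakke regularity at smooth shrinker points and the absence of triple junctions (from cyclicness).'' This is not a valid argument: White's local regularity theorem applies only at points of Gaussian density close to $1$, and a priori the tangent flow is merely the shrinking flow of an $F$-stationary integral varifold, possibly with multiplicity $k\geq 2$ and singular support; cyclicness excludes odd junctions but gives no regularity. There is no general regularity theorem for $F$-stationary varifolds in $\RR^3$ to quote here. The paper obtains both smoothness of $\hat\Sigma$ \emph{and} the genus bound in one stroke by repeating Ilmanen's argument \cite{Ilmanen:singularities}: the uniform genus bound on the smooth slices $M(t)$ together with local Gauss--Bonnet yields local total-curvature bounds for the parabolically rescaled slices, hence smooth (possibly multi-sheeted) subconvergence away from finitely many concentration points, which are then removed; the genus bound on the limit comes out of this same analysis. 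Your proposed order of reasoning is circular: your blow-up argument for the genus bound presupposes smooth convergence of the rescaled slices to $\sqrt{-t}\hat\Sigma$ away from a small set, but that smooth convergence is exactly what the genus/Gauss--Bonnet curvature estimates are needed to establish (and when $k\geq 2$ it cannot come from White's regularity theorem, since the density is close to $k$). The appeal to ``$\sing\hat\cM$ of parabolic codimension $\geq 2$ by \cite{Wang:ends-conical} plus Ilmanen's stratification'' is also misplaced: Wang's theorem concerns the ends of an already-smooth shrinker, and once $\hat\Sigma$ is smooth the backward portion of the tangent flow has empty singular set---the real issue is potential curvature concentration in the convergence, not the singular set of the limit. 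To repair the proof, replace this step by Ilmanen's tangent-flow smoothness and genus-bound argument (adapted, as in the paper, only to avoid the countably many singular time-slices when choosing the approximating smooth times).
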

\begin{proof}
By \cite[Theorem 1.9]{ChoiHaslhoferHershkovits}, the level set flow of $M$ does not fatten for $t \in [0,T_\textrm{gen}(\cM))$. Hence, by \cite[Corollary 1.4]{ColdingMinicozzi:sing-generic} and Corollary \ref{cor:connected-reg-part}, for almost every $t\in[0,T_\textrm{gen}(\cM)]$, the level set flow of $M$ at time $t$ is a smooth surface $M(t)$ and we have that $\cM(t) = \cH^2\lfloor M(t)$. Now, by \cite[Theorem 1]{White:topology-weak} (cf.\ \cite{White:top-change-meanconvex}) $t\mapsto \genus(M(t))$ is non-increasing. 

These two facts suffice to repeat the proof in \cite{Ilmanen:singularities} with only superficial changes (to avoid the singular time-slices) Since this is a crucial point, we describe these modifications in some more detail here. The monotonicity formula gives that along the blow-up sequence corresponding to the given tangent flow, we have
\[
\int_{-1-\tau}^{-1} \int_{\lambda_j (M(T + \lambda_j^2 t)-\bx) \cap B_r(\bOh)} \left| \bH \right|^2 d\mu(t)  dt \leq C \tau r^2 R^2 + \delta_R(\lambda_j)
\]
as in \cite[\S 3]{Ilmanen:singularities}, where $C$ depends only on an entropy bound for $\cM$, $r<R$ are fixed, and $\delta_R(\lambda_j) \to 0$ as $j\to\infty$. Note that in \cite[\S 3]{Ilmanen:singularities}, this is proven for a smooth flow, but the same proof holds here since the flow is smooth for a.e.\ time and this inequality is valid at the level of a Brakke flow. Thus, since a.e.\ time $t$ is a regular time, we can follow \cite[Proof of Theorem 1]{Ilmanen:singularities} almost verbatim, except we can insist that $t_j\to-1$ is chosen so that $T+\lambda_j^2t_j$ is a smooth time for $M(t)$. By  \cite[Theorem 3]{Ilmanen:singularities}, $\lambda_j (M(T + \lambda_j^2 t_j)-\bx)$ has second fundamental form uniformly bounded in $L^2$ on compact sets. The remainder of the proof that $\hat\cM(t) = k\cH^2\lfloor \sqrt{-t}\hat \Sigma$ for $\hat\Sigma$ smooth embedded self-shrinker is then completed exactly as in \cite[Proof of Theorem 1]{Ilmanen:singularities}. 


This proves all but the last two claims. Finally, the statement about the ends of $\hat\Sigma$ is proven in \cite{Wang:ends-conical} (cf.\ \cite[Appendix A]{SunWang:compactness}), while genus zero shrinkers are classified in \cite{Brendle:genus0}. 
\end{proof}

We note that by Proposition \ref{prop:3d-first-non-gen-sing-time-prop}, we can unambiguously define:
\[
\genus_{T_\textnormal{gen}}(\cM) : = \lim_{\substack{t\nearrow T_\textnormal{gen}(\cM)\\t \not \in \mathfrak{t}^{-1}(\sing \cM)}} \genus(M(t)),
\]
the genus of $\cM$ right before the first non-generic singular time. This notion will be useful in the following proposition which will be the key mechanism used to perturb away asymptotically conical (and compact, non-spherical) singularities.

\begin{proposition}\label{prop:key-perturb-result-3d}
Suppose that $M \subset \RR^3$ is a closed embedded surface of genus $g$ and $\cM$ is a cyclic integral unit-regular Brakke flow with $\cM(0) = \cH^2\lfloor M$. Assume that $T_\textnormal{gen}(\cM) < \infty$ and that any tangent flow at time $T_\textnormal{gen}(\cM)$ has multiplicity one and that there is no non-cylindrical tangent flow at time $T_\textnormal{gen}(\cM)$ with a cylindrical end. 

Then, there exists arbitrarily small $C^\infty$ graphs $M'$ over $M$, and cyclic integral unit-regular Brakke flows $\cM'$ with $\cM'(0) = \cH^2\lfloor M'$, so that 
\[ T_\textnormal{gen}(\cM') > T_\textnormal{gen}(\cM) \text{ and } \genus_{T_\textnormal{gen}}(\cM') \leq \genus_{T_\textnormal{gen}}(\cM) - 1. \] 
\end{proposition}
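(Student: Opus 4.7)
The strategy is to perturb the initial surface $M$ in a positive normal direction $\varphi > 0$, and to show that each non-generic singular point of $\cM$ at time $T := T_\textnormal{gen}(\cM)$ is ``washed out'' by the perturbation; the uniqueness of one-sided ancient flows (Theorem \ref{theo:one.sided.uniqueness}) identifies the blow-up limit of the perturbed flow near each such point as the canonical smooth, star-shaped, generic ancient one-sided flow of Theorem \ref{theo:one.sided.construction}. The genus drop is then a consequence of Brendle's genus-0 classification \cite{Brendle:genus0} combined with a localized version of White's topological monotonicity.

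\medskip

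Concretely, fix $\varphi > 0$ on $M$, set $M_s := \Graph_M(s\varphi)$, and choose a sequence $s_i \searrow 0$ together with cyclic integral unit-regular Brakke flows $\cM_{s_i}$ starting at $M_{s_i}$ (via Ilmanen's elliptic regularization). Since the hypothesis on $\cM$ forces all singularities before $T$ to be multiplicity-one spherical or cylindrical, the Choi--Haslhofer--Hershkovits mean-convex neighborhood theorem \cite{ChoiHaslhoferHershkovits} yields non-fattening and hence $\cM_{s_i} \rightharpoonup \cM$ as Brakke flows on $\RR^3 \times [0, T - \eps]$ for every $\eps > 0$, with $\supp \cM_{s_i}$ lying strictly on the positive side of $\supp \cM$ by the avoidance principle. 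In particular, for $i$ large, $\cM_{s_i}$ has only generic singularities up to time $T - \eps$, and the stability Lemma \ref{lemm:stab-spherical-sing} handles a neighborhood of every generic spherical/cylindrical singularity of $\cM$ at time $T$.

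\medskip

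The heart of the argument is the analysis near each of the finitely many non-generic singular points $X_1, \dots, X_N$ of $\cM$ at time $T$. By the assumption that every tangent flow at time $T$ has multiplicity one and has no cylindrical end unless it is a cylinder, together with Wang's classification \cite{Wang:ends-conical}, each tangent flow at $X_j$ is of the form $\sqrt{-t}\,\Sigma_j$ for a smooth multiplicity-one shrinker $\Sigma_j$ that is either compact non-spherical or asymptotically conical. Choose parabolic dilation factors $\lambda_i^{(j)} \to \infty$ (by a continuity argument as in \eqref{eq:choice-lambda-i-constr}) so that the rescalings $\widetilde \cM_i^{(j)}$ of $\cM_{s_i}$ centered at $X_j$ are at unit space-time distance from $\sqrt{-t}\,\Sigma_j$. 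Since $\supp \cM_{s_i}$ lies to one side of $\supp \cM$, each limiting Brakke flow $\bar \cM_j$ lies strictly on one side of $\sqrt{-t}\,\Sigma_j$, and Huisken monotonicity together with upper semi-continuity of density forces $\lambda(\bar \cM_j) \leq F(\Sigma_j) < 2F(\Sigma_j)$. Theorem \ref{theo:one.sided.uniqueness} then identifies $\bar \cM_j$, up to a time translation, with the canonical one-sided ancient flow of Theorem \ref{theo:one.sided.construction}; in particular, $\bar\cM_j$ has only multiplicity-one generic singularities and $\bar\cM_j(0)$ is smooth and star-shaped. By upper semi-continuity of density and the smoothness of $\bar\cM_j(0)$, for $i$ sufficiently large, $\cM_{s_i}$ has no non-generic singularities in some fixed space-time neighborhood of each $X_j$, so $T_\textnormal{gen}(\cM_{s_i}) > T_\textnormal{gen}(\cM)$.

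\medskip

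For the strict genus drop, recall that by Brendle's classification \cite{Brendle:genus0}, every smooth embedded self-shrinker in $\RR^3$ of genus zero is the plane, sphere, or cylinder; hence $\genus(\Sigma_j) \geq 1$ for each non-generic $\Sigma_j$. On the other hand, $\bar\cM_j(0)$ is star-shaped, hence (if compact) a topological $2$-sphere or (if non-compact) a union of disks with punctures, so it has genus $0$. The localized topological monotonicity of Appendix \ref{app:loc-top-monotonicity} then implies that the contribution of each $X_j$ to the genus of the regular time-slices of the perturbed flow $\cM_{s_i}$ drops by at least $\genus(\Sigma_j) - \genus(\bar\cM_j(0)) \geq 1$ as we pass time $T$, while the contribution of each generic spherical/cylindrical singularity of $\cM$ at time $T$ is unchanged. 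Summing over $j$ gives $\genus_{T_\textnormal{gen}}(\cM_{s_i}) \leq \genus_{T_\textnormal{gen}}(\cM) - 1$. The main obstacle is making this last step rigorous: the localized topological monotonicity must be formulated so that one can isolate the topological effect of each non-generic singular point $X_j$ from the generic ones and from the rest of the flow, and compare the topology of a regular slice of $\cM_{s_i}$ just after $T$ to that of the slice just before $T$ through the star-shaped model $\bar\cM_j(0)$ in the blow-up limit.
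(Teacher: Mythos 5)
Your outline follows the paper's strategy (perturb to one side, blow up at the non-generic singular points, identify the limit via the one-sided uniqueness Theorem \ref{theo:one.sided.uniqueness}, and conclude a genus drop from Brendle's genus-zero classification plus localized topological monotonicity), but two essential steps are missing. First, the claim that $T_\textnormal{gen}(\cM_{s_i}) > T$ because ``for $i$ sufficiently large, $\cM_{s_i}$ has no non-generic singularities in some fixed space-time neighborhood of each $X_j$'' does not follow from the single blow-up you perform. You rescale only at the rate that places the perturbed flow at unit distance from the shrinking tangent flow; a non-generic singularity of $\cM_{s_i}$ could occur at an intermediate scale---at distance from $X_j$ much larger than that blow-up scale but still tending to zero---and your rescaled limit simply does not see it. The paper handles this with a contradiction argument: assuming $T_\textnormal{gen}(\cM_i)\leq T_\textnormal{gen}(\cM_\infty)$, one takes the purported non-generic points $X_i$, rescales about their limit $X_\infty$ by $|X_i-X_\infty|$, and rules out both possible limits (a one-sided flow, excluded via Theorems \ref{theo:one.sided.construction}, \ref{theo:one.sided.uniqueness} together with the density gap of Lemma \ref{lemm:R3-lower-bd-first-non-gen-sing} and Lemma \ref{lemm:stab-spherical-sing}; or the tangent flow itself for $t\leq 0$, excluded because it is smooth away from the space-time origin). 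This multi-scale step, and the quantitative use of the density gap, are absent from your argument.

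Second, and more seriously, the genus-drop step is not a proof: ``the contribution of each $X_j$ to the genus'' is not a defined quantity, and you yourself flag making it rigorous as ``the main obstacle.'' That obstacle is precisely the content of the paper's argument. One needs (a) that at a fixed spatial scale $r$ and at smooth times shortly before $T$, the perturbed flow restricted to $B_{2r}$ is smooth with positive genus (this uses Lemma \ref{lemm:neigborhood-result-good-gen-sing} and Brakke regularity, not just $\genus(\Sigma_j)\geq 1$); (b) that at times within $O(d_i^2)$ of $T$ the perturbed flow is smooth with genus zero in $B_{3r}$---this does not follow from star-shapedness of the blow-up limit at scale $d_i$ alone, but requires propagating regularity and the estimate $|\mathbf{x}^\perp|\leq \tfrac15|\mathbf{x}|$ through the whole intermediate region $B_{3r}\setminus B_{Rd_i}$, again via a scaling/contradiction argument; and (c) a mechanism converting this local genus collapse into a strict drop of $\genus_{T_\textnormal{gen}}$, which in the paper uses the simple-flow structure in the annulus $B_{3r}\setminus \bar B_r$, the homological bookkeeping of Lemmas \ref{lemm:R3-good-basis-comp-surf-ball} and \ref{lemm:R3-basis-comp-surf-ball-genus-in-ball-consequence}, and the localized monotonicity results of Appendix \ref{app:loc-top-monotonicity} (homotoping loops backward in time while staying outside the ball, injectivity of $H_1$ of the complement). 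Without (a)--(c) the conclusion $\genus_{T_\textnormal{gen}}(\cM')\leq \genus_{T_\textnormal{gen}}(\cM)-1$ is asserted rather than proved, so as written the proposal has a genuine gap exactly where the proposition's difficulty lies.
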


Before proving this, we will show that it implies the full genericity result.
\begin{proof}[Proposition \ref{prop:key-perturb-result-3d} implies Theorem \ref{theo:generic-R3}] 
For $M$ a closed embedded surface of genus $g$, consider any cyclic integral unit-regular Brakke flow $\cM$ with $\cM(0) = \cH^{2}\lfloor M$. Such a flow $\cM$ exists by \cite[Theorem B.3]{HershkovtisWhite} (alternatively, one could perturb $M$ slightly at this step so that the level set flow of $M$ does not fatten, and apply \cite[Theorem 11.4]{Ilmanen:elliptic}). 

First, suppose that either $T_{\textrm{gen}}(\cM) = \infty$ or $T_{\textrm{gen}}(\cM) < \infty$ but at $T_{\textrm{gen}}(\cM)$ there is a tangent flow that either has multiplicity greater than one or is a non-cylindrical shrinker with a cylindrical end. In this case, we can take $M'=M$ and $\cM'=\cM$, completing the proof. In case this does not hold, Proposition \ref{prop:key-perturb-result-3d} yields a small $C^{\infty}$ perturbation $M_1$ of $M$, and a Brakke flow $\cM_{1}$ with $\cM_{1}(0) = \cH^{2}\lfloor M_{1}$. Moreover,
\[
\genus_{T_{\textrm{gen}}}(\cM_{1}) \leq \genus_{T_{\textrm{gen}}}(\cM) - 1 \leq \genus(M)  -1. 
\]
At this point, we can iterate. Either $\cM_{1}$ satisfies the desired conditions, or Proposition \ref{prop:key-perturb-result-3d} applies to $\cM_{1}$. In the former case, we can conclude the proof, and in the latter case we find a small $C^{\infty}$ perturbation $M_2$ of $M$ with a Brakke flow $\cM_{2}$ as above. Repeating this process $k$ times, we find that
\[
\genus_{T_{\textrm{gen}}}(\cM_{k}) \leq \genus(M) - k,
\] 
By Proposition \ref{prop:3d-first-non-gen-sing-time-prop}, it must eventually hold that $M_{k}$, $\cM_{k}$ satisfies one of the two desired conclusions (1) or (2) for some $k \leq \genus(M)$. Thus, after at most $\genus(M)$ perturbations, we find the desired $M'=M_{k}$ and $\cM'=\cM_{k}$. This completes the proof. 
\end{proof}

The proof Proposition \ref{prop:key-perturb-result-3d}, will depend on the following lemmata.

\begin{lemma}\label{lemm:R3-lower-bd-first-non-gen-sing}
	There is $\delta_0>0$ so that if $\cM$ is a cyclic integral unit-regular Brakke flow in $\RR^3$ with $\cM(0) = \cH^2\lfloor M$ for a smooth surface $M$, then for any
	\[
	X \in (\sing(\cM) \setminus \sing_\textnormal{gen}(\cM)) \cap \{ \mathfrak{t} = T_\textnormal{gen}(\cM)\},
	\]
	we have $\Theta_\cM(X) \geq \lambda(\SS^1) + \delta_0$. 
\end{lemma}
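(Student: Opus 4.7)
The plan is to argue by contradiction, so suppose no such universal $\delta_{0}>0$ exists. Then there is a sequence of cyclic integral unit-regular Brakke flows $\cM_{j}$ in $\RR^{3}$, each starting at a closed smooth embedded surface $M_{j}$, together with singular space-time points
\[
X_{j} \in (\sing(\cM_{j}) \setminus \sing_{\textnormal{gen}}(\cM_{j})) \cap \{\mathfrak{t} = T_{\textnormal{gen}}(\cM_{j})\}
\]
such that $\Theta_{\cM_{j}}(X_{j}) < \lambda_{1}+1/j$. By Proposition \ref{prop:3d-first-non-gen-sing-time-prop}, some tangent flow at $X_{j}$ is $k_{j}\cH^{2}\lfloor \sqrt{-t}\,\hat\Sigma_{j}$ for $t<0$, with $\hat\Sigma_{j}\subset\RR^{3}$ a smooth embedded self-shrinker of genus $\geq 1$ (since $X_{j}\notin\sing_{\textnormal{gen}}$) whose ends are finitely many and each smoothly asymptotically conical or cylindrical. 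Hence $\Theta_{\cM_{j}}(X_{j}) = k_{j}F(\hat\Sigma_{j})$. By Brendle's theorem \cite{Brendle:genus0} combined with the low-entropy classifications of \cite{ColdingMinicozziIlmanenWhite,BernsteinWang:TopologicalProperty}, any non-planar, non-spherical smooth self-shrinker in $\RR^{3}$ has $F \geq \lambda_{1}$, so in particular $F(\hat\Sigma_{j}) \geq \lambda_{1} > 1$. The case $k_{j}\geq 2$ is thus excluded for $j$ large because it would force $\Theta_{\cM_{j}}(X_{j}) \geq 2\lambda_{1} > \lambda_{1}+1/j$. So for $j$ large we have $k_{j}=1$ and $F(\hat\Sigma_{j})\to\lambda_{1}$.

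Next, I would apply the compactness theory of smooth self-shrinkers in $\RR^{3}$ with entropy bounded strictly below $2$ (cf.\ \cite{BernsteinWang:TopologicalProperty,BernsteinWang:topology-small-ent,SunWang:compactness}). Passing to a subsequence, $\hat\Sigma_{j}$ converges in $C^{\infty}_{\textrm{loc}}$ with multiplicity one to a smooth embedded shrinker $\hat\Sigma_{\infty}$. Uniform entropy control gives uniform tail decay of the Gaussian integral, so the $F$-functional is continuous along this convergence and $F(\hat\Sigma_{\infty}) = \lambda_{1}$. The classification of low-entropy shrinkers in $\RR^{3}$ \cite{BernsteinWang:TopologicalProperty} then forces $\hat\Sigma_{\infty}$ to be either a plane, a round sphere, or a round cylinder $\SS^{1}\times\RR$. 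The plane ($F=1$) and sphere ($F=\lambda_{2}<\lambda_{1}$) are ruled out by the value of the entropy, so $\hat\Sigma_{\infty}$ must be a cylinder.

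The main obstacle---and key step---is to derive a contradiction from the fact that the $\hat\Sigma_{j}$ have genus $\geq 1$ yet converge smoothly on compact sets with multiplicity one to the cylinder $\SS^{1}\times\RR$. Smooth multiplicity-one convergence preserves topology in compact regions: for any fixed $R>0$ with $\partial B_{R}$ transverse to $\hat\Sigma_{\infty}$, $\genus(\hat\Sigma_{j}\cap B_{R}) = \genus(\hat\Sigma_{\infty}\cap B_{R}) = 0$ for all $j$ large. Consequently, the handles of $\hat\Sigma_{j}$ must be supported at radii $R_{j}\to\infty$. I would then argue that such topology at infinity is incompatible with the prescribed asymptotic end structure: since each end of $\hat\Sigma_{j}$ is smoothly asymptotic to a cone or cylinder (both of which have trivial topology at infinity), the genus of $\hat\Sigma_{j}$ sits on a scale controlled a priori by the entropy bound. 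Concretely, I expect to combine the curvature estimates of Colding--Minicozzi for shrinkers with bounded entropy (cf.\ Appendix \ref{sec:shrinker.geometry}) with a rescaling/blow-down argument applied to $R_{j}^{-1}\hat\Sigma_{j}$, exploiting that any blow-down of a shrinker with conical or cylindrical ends is a cone (or a union of lines), which has no topology. This uniform control on the scale of the ends is what I expect to be the subtle technical point of the argument.
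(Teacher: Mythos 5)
The paper's proof of this lemma is a two-line citation: by Proposition \ref{prop:3d-first-non-gen-sing-time-prop}, any tangent flow at such an $X$ is $k\,\cH^2\lfloor\sqrt{-t}\,\hat\Sigma$ with $\hat\Sigma$ a smooth embedded shrinker of genus $\geq 1$ (hence not a plane, sphere, or cylinder) whose ends are conical or cylindrical, so $\Theta_\cM(X)=k\,F(\hat\Sigma)$, and the uniform bound $\geq\lambda_1+\delta_0$ is then taken directly from the cited Bernstein--Wang result \cite[Corollary 1.2]{BernsteinWang:TopologicalProperty} (together with $\Theta\geq 2$ when $k\geq2$); no compactness of shrinkers is invoked. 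Your first paragraph carries out the same reduction (genus $\geq1$, $k=1$, $F(\hat\Sigma_j)\to\lambda_1^+$), but you then try to manufacture the uniform gap by a compactness/contradiction argument, and that is where your proposal has a genuine gap.

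The compactness step you invoke does not exist in the generality you need. Smooth subsequential convergence of self-shrinkers in $\RR^3$ is known only with a uniform genus bound in addition to entropy/area bounds (cf.\ \cite{ColdingMinicozzi:compactness-shrinkers,SunWang:compactness}), or, as in Lemma \ref{lemm:BW-small-ent-compact-AC}, for asymptotically conical shrinkers with entropy bounded strictly \emph{below} the cylinder's entropy under the $(\star)$ hypotheses. Neither applies here: your $\hat\Sigma_j$ have entropy slightly \emph{above} $\lambda_1$, and since $\delta_0$ must be uniform over initial surfaces $M$ of arbitrary genus, there is no a priori bound on $\genus(\hat\Sigma_j)$. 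An entropy bound below $2$ alone gives no curvature estimates in $\RR^3$; curvature can blow up with genus concentrating, and then one only extracts an $F$-stationary varifold limit, to which neither the smooth low-entropy classification nor your subsequent topological argument applies. Moreover, even granting smooth multiplicity-one convergence to the cylinder, your concluding step is a heuristic rather than a proof: the assertion that the genus ``sits on a scale controlled a priori by the entropy bound'' is precisely the missing uniform-structure statement; the blow-down you propose is of $R_j^{-1}\hat\Sigma_j$, which is \emph{not} a shrinker, so the fact that blow-downs of a fixed shrinker with conical/cylindrical ends are cones does not apply to the rescaled sequence; and Appendix \ref{sec:shrinker.geometry} only provides decay estimates for a single fixed asymptotically conical shrinker, not estimates uniform along a family. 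In short, your reduction is the same as the paper's, but the uniform entropy gap has to be quoted from Bernstein--Wang, as the paper does; the compactness route you sketch does not close.
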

\begin{proof}
	This follows by combining Proposition \ref{prop:3d-first-non-gen-sing-time-prop} with \cite[Corollary 1.2]{BernsteinWang:TopologicalProperty}.
\end{proof}

\begin{lemma}[{cf.\ \cite[Theorem 4.3]{BernsteinWang:topology-small-ent}}]\label{lemm:neigborhood-result-good-gen-sing}
	Suppose that $\cM$ is a cyclic integral unit-regular Brakke flow in $\RR^3$ with $\cM(0) = \cH^2\lfloor M$ for some closed embedded surface $M$. Assume that $T_\textnormal{gen}(\cM) < \infty$ and that any tangent flow at time $T_\textnormal{gen}(\cM)$ has multiplicity one and that there is no non-cylindrical tangent flow with a cylindrical end.\footnote{That is, assume that we cannot simply take $\cM'=\cM$ in Theorem \ref{theo:generic-R3}.}
	
	Then for $(\mathbf{x}_0,T_\textnormal{gen}(\cM)) \in \sing (\cM)\setminus \sing_\textnormal{gen}(\cM)$, there are $r,\rho,\tau>0$ so that 
	\begin{align*}
	\cM_1 & : = \cM\lfloor (B_{4r}(\mathbf{x}_0) \times (T_\textnormal{gen}(\cM) - 2\tau,T_\textnormal{gen}(\cM)] \setminus \{(\mathbf{x_0},T_\textnormal{gen}(\cM))\},\\
	\cM_2 & : = \cM\lfloor ((B_{4r}(\mathbf{x}_0) \setminus B_{r/4}(\mathbf{x_0})) \times (T_\textnormal{gen}(\cM) - 2\tau,T_\textnormal{gen}(\cM)+2\tau)
	\end{align*}
	are smooth mean curvature flows. Moreover, any $(\mathbf{x},t) \in \supp \cM \cap (U_{1} \cup U_{2})$ satisfies
	\begin{equation}\label{eq:near-AC-sing-flow-is-weakly-AC-effective}
	|(\mathbf{x} - \mathbf{x}_0)^\perp| \leq \tfrac{1}{10} |\mathbf{x} - \mathbf{x}_0|,
	\end{equation}
	where
	\begin{align*}
	U_{1} : = \{(\mathbf{x}_0+\mathbf{x},T_\textnormal{gen}(\cM)-t) : 0 < \rho^2 t < |\mathbf{x}|^2 <  16r^2 , t < 2\tau \}\\
	U_{2} : = (B_{4r}(\mathbf{x}_0) \setminus B_{r/4}(\mathbf{x_0})) \times (T_\textnormal{gen}(\cM) - 2\tau,T_\textnormal{gen}(\cM)+2\tau). 
	\end{align*}
\end{lemma}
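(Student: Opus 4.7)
The plan is to identify the tangent flow at $(\mathbf{x}_0, T_\textnormal{gen}(\cM))$ via Proposition \ref{prop:3d-first-non-gen-sing-time-prop} together with the lemma's two hypotheses, to invoke tangent flow uniqueness to obtain smooth convergence of parabolic rescalings, and then to deduce both the smoothness of $\cM_1$, $\cM_2$ by a density upper-semicontinuity contradiction and the $|(\mathbf{x} - \mathbf{x}_0)^\perp|$ bound by asymptotic conicality.

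To begin, by Proposition \ref{prop:3d-first-non-gen-sing-time-prop} combined with the assumption that every tangent flow at $T_\textnormal{gen}(\cM)$ has multiplicity one and that no non-cylindrical shrinker with a cylindrical end appears, any tangent flow $\hat{\cM}$ at $(\mathbf{x}_0, T_\textnormal{gen}(\cM))$ takes the form $\hat{\cM}(t) = \cH^2 \lfloor \sqrt{-t}\,\Sigma$ for $t < 0$, where $\Sigma$ is a smooth shrinker. Since $(\mathbf{x}_0, T_\textnormal{gen}(\cM)) \notin \sing_\textnormal{gen}(\cM)$, the shrinker $\Sigma$ is neither a sphere nor a cylinder, and by the assumed exclusion of non-cylindrical shrinkers with cylindrical ends, $\Sigma$ must be either compact or smoothly asymptotically conical with smooth asymptotic cone $\cC$. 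In either case, the tangent flow at $(\mathbf{x}_0, T_\textnormal{gen}(\cM))$ is unique---by \cite{Schulze:Loj} in the compact case or by \cite{ChodoshSchulze} in the AC case---so the parabolic rescalings
\[
\cM_\lambda(t) := \lambda \bigl( \cM(T_\textnormal{gen}(\cM) + \lambda^{-2} t) - \mathbf{x}_0 \bigr)
\]
converge as $\lambda \to \infty$ to $\hat{\cM}$ smoothly, with multiplicity one, on compact subsets of the smooth part of $\hat{\cM}$. In the AC case, Theorem \ref{theo:convergence-pos-times} identifies the forward continuation ($t > 0$) of $\hat{\cM}$ as the outermost expander emanating from $\cC$, which is smooth away from the spacetime origin and smoothly asymptotic to $\cC$; in the compact case the forward continuation is empty.

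With this tangent flow picture in hand, I would argue smoothness of $\cM_1$ and $\cM_2$ by contradiction: if no choice of $(r, \tau)$ made them smooth, one could extract a sequence of singular points $(\mathbf{x}_j, t_j) \in \sing \cM$ with $(\mathbf{x}_j, t_j) \to (\mathbf{x}_0, T_\textnormal{gen}(\cM))$ but distinct from it. Rescaling parabolically by $\delta_j^{-1}$ with $\delta_j^2 := |\mathbf{x}_j - \mathbf{x}_0|^2 + |t_j - T_\textnormal{gen}(\cM)|$, the rescaled singular points $(\mathbf{x}_j', t_j')$ lie on the parabolic unit sphere and subsequentially converge to some $(\mathbf{x}', t') \neq (\mathbf{0}, 0)$. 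By unit-regularity and Brakke's local regularity, every singular point has density bounded below by a universal $1 + \eta > 1$; upper semi-continuity of density under Brakke flow convergence forces $\Theta_{\hat{\cM}}(\mathbf{x}', t') \geq 1 + \eta$, contradicting the fact that $\hat{\cM}$ is either smooth of multiplicity one (density $1$) or off support (density $0$) at $(\mathbf{x}', t')$. For the conicality estimate, I would exploit that both $\Sigma$ and, in the AC case, the outermost expander are smoothly asymptotic to $\cC$, along which $\mathbf{y}^\perp \equiv 0$; hence $|\mathbf{y}^\perp|/|\mathbf{y}| \leq 1/20$ on the rescaled surfaces in $\{|\mathbf{y}| > \rho\}$ for $\rho$ large, and smooth convergence transports this to $|(\mathbf{x} - \mathbf{x}_0)^\perp| \leq \tfrac{1}{10}|\mathbf{x} - \mathbf{x}_0|$ on $\supp \cM \cap (U_1 \cup U_2)$ once $r, \tau$ are small, with the compact case being vacuous because the tangent flow support is bounded in rescaled coordinates and empty for $t \geq T_\textnormal{gen}(\cM)$. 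The main obstacle is securing multiplicity-one smooth convergence of the rescaled flow uniformly through $t = T_\textnormal{gen}(\cM)$, which is precisely what is afforded by the tangent flow uniqueness results of \cite{Schulze:Loj, ChodoshSchulze} and the identification of the outermost expander in Theorem \ref{theo:convergence-pos-times}.
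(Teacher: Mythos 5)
Your treatment of the backward-in-time region essentially matches the paper: the smoothness of $\cM_1$ is indeed obtained by rescaling a hypothetical sequence of singular points to unit parabolic distance and contradicting the smoothness of the (multiplicity-one, compact or asymptotically conical) tangent flow via upper semicontinuity of density. However, there is a genuine gap in your handling of everything at and after time $T_\textnormal{gen}(\cM)$, i.e.\ the smoothness of $\cM_2$ and the estimate \eqref{eq:near-AC-sing-flow-is-weakly-AC-effective} on $U_2$ (and the part of $U_1$ with rescaled time near $0$). You assert that Theorem \ref{theo:convergence-pos-times} identifies the forward continuation of the tangent flow $\hat\cM$ as the outermost expander of $\cC$, smooth away from the spacetime origin. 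That theorem says no such thing: it concerns the specific ancient one-sided flow $(\cM,\cK)$ constructed in Theorems \ref{theo:basic-prop-cK-cM} and \ref{theo:one.sided.construction}, which lies strictly on one side of $\sqrt{-t}\,\Sigma$ and is shown to be expander mean convex; the forward-in-time part of a tangent flow at a non-generic singular point of an arbitrary Brakke flow is a priori just some weak evolution out of the cone, need not be the outermost expander, and is not known to be smooth away from the spacetime origin. Consequently your density dichotomy ``$\hat\cM$ is either smooth of multiplicity one or off support at $(\mathbf{x}',t')$'' is unjustified precisely at the points with $t'>0$ that arise from potential singularities of $\cM_2$, and tangent-flow uniqueness (\cite{Schulze:Loj}, \cite{ChodoshSchulze}) does not help, since uniqueness only concerns the backward blow-up and gives neither forward-time information nor uniform smooth convergence ``through'' $T_\textnormal{gen}(\cM)$. (A further structural issue: for fixed $r$ the singular points of $\cM_2$ live in a fixed annulus and do not converge to $(\mathbf{x}_0,T_\textnormal{gen}(\cM))$, so your single rescaling contradiction does not even apply to $\cM_2$ as stated.)

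The missing ingredient, which is how the paper closes exactly this gap, is pseudolocality applied along the asymptotically conical end: arguing as in \cite[Lemma 9.1]{ChodoshSchulze} with \cite[Theorem 1.5]{IlmanenNevesSchulze}, one shows that the tangent flow restricted to $(\RR^{n+1}\setminus B_R)\times[-1,1]$ --- note the time interval includes \emph{forward} times --- is a smooth, multiplicity-one flow graphical over the cone with $|\mathbf{x}^\perp|\leq\tfrac{1}{100}|\mathbf{x}|$; one then fixes $r,\tau$ from a single sufficiently large blow-up scale and transfers smoothness and the conicality bound to $\cM_2$ and $U_2$, while the compact-shrinker case is handled by shrinking spherical barriers which make $\cM_2$ empty. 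Similarly, your claim that ``smooth convergence transports'' the asymptotic conicality of $\Sigma$ to all of $U_1$ glosses over the fact that the relevant rescaled points have unboundedly large radius (equivalently, rescaled time tending to $0$), where convergence on compact sets gives nothing; the paper handles this with a separate contradiction argument using compactness of the family of tangent flows (or, alternatively, uniqueness of the tangent flow), and you would need some such uniformization step as well.
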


\begin{proof}
	Observe that by Proposition \ref{prop:3d-first-non-gen-sing-time-prop} and the given hypothesis, any\footnote{We emphasize that while we do not need to refer to uniqueness of the tangent flow in this proof, it does indeed hold in this setting by \cite{Schulze:Loj} for compact tangent flows and \cite{ChodoshSchulze} for asymptotically conical ones.}  tangent flow at $(\mathbf{x}_0,T_\textrm{gen}(\cM))$ is associated to a smooth multiplicity-one shrinker that is either compact or asymptotically conical. 
	
	We begin by proving that the smoothness assertion holds for $\cM_1$ for any $r,\tau>0$ small. Indeed, suppose there are singular points $X_i : = (\mathbf{x}_i,T_\textnormal{gen}(\cM) - t_i) \to (\mathbf{x}_0,T_\textnormal{gen}(\cM))$ with $t_i\geq 0$, rescaling around $ (\mathbf{x}_0,T_\textnormal{gen}(\cM))$ to ensure that $X_i$ are a unit distance from the space-time origin, we would find a singular point in a tangent flow to $\cM$ at $ (\mathbf{x}_0,T_\textnormal{gen}(\cM))$ lying in the parabolic hemisphere
	\[
	\{(\bx,t) : t\leq 0, |\bx|^2 + |t| = 1\}.
	\]
	However, no such point in the tangent flow can be singular (since such a flow would not be asymptotically conical). 
	
	We now consider \eqref{eq:near-AC-sing-flow-is-weakly-AC-effective} for points in $\supp\cM \cap U_{1}$. Note that by the smoothness of $\cM_{1}$, all such points are smooth points of $\cM$. We claim that there is $\rho>0$ sufficiently large so that \eqref{eq:near-AC-sing-flow-is-weakly-AC-effective} holds in $\supp\cM \cap U_{1}$, after shrinking $r,\tau>0$ if necessary. Choose $(\mathbf{x}_0 + \mathbf{x},T_\textrm{gen}(\cM) - t) \in \supp \cM$ with $(\mathbf{x},t) \to (0,0)$ and $0 < \rho^2 t < |\mathbf{x}|^2$ but so that 
	\[
	|\mathbf{x}^\perp| \geq \tfrac{1}{10} |\mathbf{x}|. 
	\]
	Rescaling around $(\mathbf{x}_0,T_\textrm{gen}(\cM))$ and passing to the limit, we find a tangent flow to $\cM$ at $(\mathbf{x}_0,T_\textrm{gen}(\cM))$ with associated shrinker $\Sigma_\rho$ so that for some $\mathbf{x}_\rho \in \Sigma$ with $|\mathbf{x}_\rho| \geq \rho$ 
	\[
	|\mathbf{x}_\rho^\perp| \geq \tfrac{1}{10} |\mathbf{x}_\rho|. 
	\]
	However, this will be in contradiction to \cite{ColdingMinicozzi:compactness-shrinkers}, Proposition \ref{prop:3d-first-non-gen-sing-time-prop}, and the fact that the set of tangent flows is compact.\footnote{Alternatively, one may argue as follows: by \cite{ChodoshSchulze}, $\Sigma_\rho$ is independent of $\rho$, 
	which immediately yields a contradiction since for any fixed asymptotically conical shrinker, $|\mathbf{x}^\perp| \leq o(1) |\mathbf{x}|$ as $\mathbf{x} \to\infty$.} Indeed, consider Brakke flows $\cM_\rho$ associated to $\Sigma_{\rho}$. We consider the point $(\rho^{-1}\mathbf{x}_\rho,-\rho^{-2})$ and take a subsequential limit of $\cM_\rho$ to find $\tilde\cM$ a shrinking flow associated to $\tilde\Sigma$ an asymptotically conical shrinker; however, the subsequential limit $(\tilde{\mathbf{x}},0)$ of the space-time points $(\rho^{-1}\mathbf{x}_\rho,-\rho^{-2})$ lies on the asymptotic cone of $\tilde\Sigma$ (and is not at the origin) and thus has $\tilde{\mathbf{x}}^\perp=0$. This is a contradiction, completing the proof. 
	
	Finally, we prove both the smoothness of $\cM_2$ and \eqref{eq:near-AC-sing-flow-is-weakly-AC-effective} for points in $\supp \cM \cap U_{2}$. If some tangent flow to $\cM$ at $(\mathbf{x}_0,T_\textnormal{gen}(\cM))$ is compact, then by considering shrinking spherical barriers, we can choose $r, \tau>0$ so that $\cM_2$ is empty. As such, we can assume that there is an asymptotically conical shrinker $\Sigma'$ associated to some tangent flow $\cM'$ of $\cM$ at $ (\mathbf{x}_0,T_\textnormal{gen}(\cM))$. Because $\Sigma'$ is asymptotically conical, $|\mathbf{x}||A_{\Sigma'}(\mathbf{x})| = O(1)$ and $|\mathbf{x}^\perp| \leq o(1) |\mathbf{x}|$ as $\mathbf{x}\to\infty$. Arguing as in \cite[Lemma 9.1]{ChodoshSchulze}, we can use pseudocality (e.g., \cite[Theorem 1.5]{IlmanenNevesSchulze}) on large balls along the end of $\Sigma$ to find $R>0$ sufficiently large so that
	\[
	\cM'\lfloor ((\RR^{n+1}\setminus B_R)\times [-1,1]) \text{ is smooth} 
	\]
	and satisfies $|\mathbf{x}^\perp| \leq \tfrac{1}{100} |\mathbf{x}|$. From this, we can choose $r,\tau >0$ so that the assertions about $\cM_2$ follow after choosing a blow-up sequence at $(\mathbf{x}_0,T_\textnormal{gen}(\cM))$ converging to $\cM'$. 
\end{proof}

\begin{proof}[Proof of Proposition \ref{prop:key-perturb-result-3d}]
Fix a closed set $K$ with $\partial K = M$. Choose smooth surfaces $M_{i}=\partial K_{i}$ with $M_{i}$ converging to $M$ in $C^{\infty}$, where $K_{i}$ are closed sets with $K_{i}\subset K_{i+1}$ and $M_{i}\cap M_{i+1}=\emptyset$ and $M_{i}\cap M = \emptyset$. We can moreover assume that the level set flow of $M_{i}$ does not fatten \cite[p.\ 63]{Ilmanen:elliptic}, so by \cite[Theorem 11.4]{Ilmanen:elliptic} there is a cyclic integral unit-regular Brakke flow $\cM_{i}$ with $\cM_{i}(0) = \cH^{2}\lfloor M_{i}$.

Passing to a subsequence, $\cM_{i}$ converges to a Brakke flow $\cM_\infty$ with $\cM_\infty(0) = \cH^{2}\lfloor M$. On the other hand, combining Proposition \ref{prop:3d-first-non-gen-sing-time-prop} with Corollary \ref{cor:connected-reg-part}, we find that $\cM_\infty = \cM$ for $t\in[0,T_\textnormal{gen}(\cM))$. In particular, $T_\textrm{gen}(\cM) = T_\textrm{gen}(\cM_\infty)$ and any tangent flow to $\cM_\infty$ at time $T_\textnormal{gen}(\cM_\infty)$ has multiplicity one and no such tangent flow is non-cylindrical but with a cylindrical end. 

We claim that for $i$ sufficiently large, $M'=M_i$ and $\cM'=\cM_i$ satisfy the assertion. Note that Lemma \ref{lemm:R3-lower-bd-first-non-gen-sing} and upper-semicontinuity of density imply that 
\[
\liminf_{i\to\infty}T_\textrm{gen}(\cM_i) \geq T_\textrm{gen}(\cM_\infty). 
\]
We claim that
\[
T_{\textrm{gen}}(\cM_i) > T_{\textrm{gen}}(\cM_\infty)
\]
for sufficiently large $i$. If not, we can pass to a subsequence so that
\begin{equation}\label{eq:3d-gen-Tgen-lim-eq}
T_\textrm{gen}(\cM_i) \leq T_\textrm{gen}(\cM_\infty).
\end{equation}
We claim that this leads to a contradiction using the strategy of proof from Proposition \ref{prop:schoenflies-blow-up-argument}. Choose 
\[
X_i \in (\sing(\cM_i)\setminus\sing_\textrm{gen}(\cM_i)) \cap \{\mathfrak{t}=T_\textrm{gen}(\cM_i)\},
\]
and let $X_i\to X_\infty$. By \eqref{eq:3d-gen-Tgen-lim-eq} and Proposition \ref{prop:3d-first-non-gen-sing-time-prop} any tangent flow to $\cM_\infty$ at $X_\infty$ is associated to a multiplicity one smooth shrinker with all ends (if any) asymptotically conical (note that $X_\infty$ cannot have a multiplicity-one cylindrical or spherical tangent flow by Lemma  \ref{lemm:R3-lower-bd-first-non-gen-sing}). In particular, Theorems \ref{theo:one.sided.construction} and \ref{theo:one.sided.uniqueness} apply to the shrinkers associated to any tangent flow to $\cM_\infty$ at $X_\infty$. We now use these results to obtain a contradiction to \eqref{eq:3d-gen-Tgen-lim-eq}. Briefly, the strategy will be as follows: rescaling $X_i$ around $X_\infty$ we obtain a flow that lies weakly to one-side of a self-shrinking tangent flow to $\cM_\infty$. If the flow lies strictly to one side, it has no non-spherical/cylindrical singularities so we obtain a contradiction. On the other hand, if it agrees with the tangent flow for $t<0$ then we use the observation that a conical or compact shrinking flow is smooth up to and including $t=0$ except at the origin. (Note that this would fail if the shrinker had a cylindrical end.) This will contradict \eqref{eq:3d-gen-Tgen-lim-eq}. 

We now give the full argument. After rescaling by $|X_i-X_\infty| \not = 0$, the flows $\cM_{i}$ converge either to a flow on one side of the tangent flow to $\cM_\infty$ at $X_\infty$ or a flow which agrees with a tangent flow to $\cM_\infty$ for $t<0$. In the first case, the limit has only multiplicity one cylindrical and spherical singularities by Theorems \ref{theo:one.sided.construction} and \ref{theo:one.sided.uniqueness}. This contradicts the choice of $X_i$ by Lemma  \ref{lemm:R3-lower-bd-first-non-gen-sing}. On the other hand, the second case cannot occur. Indeed, if the second case occured, then \eqref{eq:3d-gen-Tgen-lim-eq} would imply that some tangent flow to $\cM_\infty$ has a singularity at $(\bx,t)$ with $|(\bx,t)| = 1$ and $t\leq 0$, contradicting Proposition \ref{prop:3d-first-non-gen-sing-time-prop} and the assumption that no non-cylindrical tangent flow to $\cM_\infty$ at $T_\textrm{gen}(\cM_\infty)$ has cylindrical ends. 

As such, since the flows $\cM_i$ are converging to a flow on one-side of the tangent flow to $\cM_\infty$ at $X_\infty$, we see that 
\begin{equation}\label{eq:3d-gen-Tgen-lim-not-eq}
T_\textrm{gen}(\cM_i) > T_\textrm{gen}(\cM_\infty)
\end{equation} 
for $i$ sufficiently large by (8) in Theorem \ref{theo:one.sided.construction} combined with Theorem \ref{theo:one.sided.uniqueness}. 

It remains to prove the strict genus reduction.  As above, we first briefly sketch the idea for the reader's convenience. By the work of Brendle \cite{Brendle:genus0}, every non-generic singularity that occurs at time $T_\textnormal{gen}(\cM)$ has to have positive genus. Lemma \ref{lemm:neigborhood-result-good-gen-sing} will be used to show that this positive genus is captured in the tangent flow scale of our non-generic singularities. Our understanding of the long-time behavior of flows to one side of a non-generic shrinker (Theorems \ref{theo:one.sided.construction}, \ref{theo:one.sided.uniqueness}) and Lemma \ref{lemm:neigborhood-result-good-gen-sing} again will then imply that, near the non-generic singularities of $\cM_\infty$, the one-sided flows $\cM_i$ will experience strict genus reduction. The result will follow by a localization of the well-known genus monotonicity property of mean curvature flow, given in Appendix \ref{app:loc-top-monotonicity}.

We assume that 
\[
(\bOh,T_\textnormal{gen}(\cM_\infty)) \in \sing(\cM_\infty)\setminus \sing_\textrm{gen}(\cM_\infty)
\]
Fix the corresponding parameters $r,\rho,\tau$ as in Lemma  \ref{lemm:neigborhood-result-good-gen-sing}. 

Define\footnote{Note that this is \emph{spatial} (Euclidean) distance. }
\[
d_{i} : = d(\supp\cM_i(T_\textrm{gen}(\cM_\infty)),\bOh)>0. 
\]
Note that $\lim_{i\to\infty} d_{i} = 0$. Moreover, rescaling $\cM_\infty$ (resp.\ $\cM_i$) around $(\bOh,T_{\textrm{gen}}(\cM_{\infty}))$ by $d_{i}$ to $\cM_{\infty,i}$ (resp.\ $\tilde \cM_{i}$), we can pass to a subsequence so that as $i\to\infty$, $\cM_{\infty,i}$ converges to a tangent flow to $\cM_\infty$ at $(\bOh,T_\textrm{gen}(\cM_\infty))$ and $\tilde \cM_{i}$ converges to a parabolic dilation of the ancient one-sided flow described in Theorems \ref{theo:one.sided.construction} and \ref{theo:one.sided.uniqueness} associated to this tangent flow.

We begin by proving the following two claims that imply that perturbed flows $\cM_{i}$ lose genus locally around points $\mathbf{x}$. 

\begin{claim}[A]\label{claim:A}
There is $\bar \tau \in (0,\tau]$ so that for any $i$ sufficiently large  and $t \in [\bar \tau,2\bar \tau]$, 
\[
\cM_{i}(T_{\textrm{gen}}(\cM_{\infty})-t) \lfloor B_{3r}(\bOh)
\]
is smooth,\footnote{The restriction to a ball $B$ of a time-$t$ slice of a Brakke flow $\cM$, i.e., $\cM(t) \lfloor B$, is said to be smooth if $\mathfrak{t}^{-1}(t)\cap\sing\cM \cap B = \emptyset$. Note that this is stronger than simply asserting $\cM(t) \lfloor B = \cH^{2}\lfloor M$ for some smooth surface $M \subset B$. 
For example, the flow associated to a shrinking sphere disappearing at time $T$ satisfies $\cM(T) = \cH^{2}\lfloor \emptyset$, but $\cM(T)$ is not smooth in the sense above.} intersects $\partial B_{2r}(\bOh)$ transversely, and $\cM_{i}(T_{\textrm{gen}}(\cM_{\infty})-t) \lfloor B_{2r}(\bOh)$ has positive genus.\footnote{Recall: the genus of a surface (possibly with boundary) $\Gamma$ properly embedded in a ball $B\subset \RR^{3}$ is the genus of the surface obtained from  $\Gamma$ after capping off each boundary component with a disk.}
\end{claim}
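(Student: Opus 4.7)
The plan is to establish the three conclusions (smoothness, transversality with $\partial B_{2r}(\bOh)$, and positive genus) first for the limit flow $\cM_\infty$ in a neighborhood of $(\bOh, T_{\textnormal{gen}}(\cM_\infty))$, and then pass them to $\cM_i$ for $i$ sufficiently large using smooth convergence away from the singular point via White's Brakke regularity theorem \cite{White:Brakke}.

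\emph{Properties of $\cM_\infty$ on the annular region.} By Lemma \ref{lemm:neigborhood-result-good-gen-sing}, the limit flow $\cM_\infty$ is a smooth mean curvature flow on $B_{4r}(\bOh) \times (T_{\textnormal{gen}}(\cM_\infty) - 2\tau, T_{\textnormal{gen}}(\cM_\infty))$, so each slice $\cM_\infty(T_{\textnormal{gen}}(\cM_\infty) - t) \lfloor B_{3r}(\bOh)$ is a smooth embedded surface for $t \in (0, 2\tau]$. The conical estimate \eqref{eq:near-AC-sing-flow-is-weakly-AC-effective}, which holds throughout $U_1 \cup U_2$, yields $|\mathbf{x}^{\perp}| \leq \tfrac{1}{10}|\mathbf{x}|$, so the outward radial unit vector has a tangential component of magnitude at least $\sqrt{99/100}$ at every point of the flow in this region. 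This guarantees transverse intersection of each such slice with $\partial B_{2r}(\bOh)$.

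\emph{Positive genus of $\cM_\infty$ slices.} By Proposition \ref{prop:3d-first-non-gen-sing-time-prop} combined with Brendle's classification of genus-zero shrinkers in $\RR^3$ \cite{Brendle:genus0}, the hypothesis that $(\bOh, T_{\textnormal{gen}}(\cM_\infty))$ is non-generic forces every tangent flow there to be the multiplicity-one shrinking flow associated to a smooth shrinker $\hat\Sigma$ with $\genus(\hat\Sigma) \geq 1$, either compact or asymptotically conical. Uniqueness of tangent flows (\cite{Schulze:Loj} in the compact case, \cite{ChodoshSchulze} in the asymptotically conical case) pins down $\hat\Sigma$, and the parabolic rescaling $t^{-1/2} \cM_\infty(T_{\textnormal{gen}}(\cM_\infty) - t\, \cdot)$ converges smoothly on compact subsets of $\RR^3 \setminus \{\bOh\}$ to $\hat\Sigma$ as $t \to 0^+$. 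Fix $R_0$ large enough that $\hat\Sigma \cap B_{R_0}(\bOh)$ captures the full genus of $\hat\Sigma$ and intersects $\partial B_{R_0}(\bOh)$ transversely. Choose $\bar\tau \in (0,\tau]$ so small that $\sqrt{2\bar\tau}\, R_0 < r$ and so that $t^{-1/2} \big[\cM_\infty(T_{\textnormal{gen}}(\cM_\infty) - t)\big] \lfloor B_{R_0}(\bOh)$ is a small $C^\infty$ perturbation of $\hat\Sigma \lfloor B_{R_0}(\bOh)$ for every $t \in [\bar\tau, 2\bar\tau]$. Rescaling back, $\cM_\infty(T_{\textnormal{gen}}(\cM_\infty) - t) \lfloor B_{\sqrt{t}R_0}(\bOh)$ is smoothly diffeomorphic to $\hat\Sigma \cap B_{R_0}(\bOh)$, hence has genus $\geq 1$; combined with the smoothness and transversality of $\cM_\infty$ on the annular region $B_{2r}(\bOh) \setminus B_{\sqrt{t}R_0}(\bOh)$ from the previous paragraph, the whole slice $\cM_\infty(T_{\textnormal{gen}}(\cM_\infty) - t) \lfloor B_{2r}(\bOh)$ has genus $\geq \genus(\hat\Sigma) \geq 1$.

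\emph{Transfer to $\cM_i$.} For each fixed $t \in [\bar\tau, 2\bar\tau]$, smoothness of $\cM_\infty$ on $B_{3r}(\bOh) \times \{T_{\textnormal{gen}}(\cM_\infty) - t\}$, together with the Brakke convergence $\cM_i \rightharpoonup \cM_\infty$ and White's Brakke regularity theorem \cite{White:Brakke}, upgrade to smooth $C^\infty$ convergence of the corresponding slices. Smoothness, transverse intersection with $\partial B_{2r}(\bOh)$, and positive genus are all open under $C^\infty$ convergence of embedded surfaces with transverse boundary in $B_{3r}(\bOh)$, and a routine compactness argument across $t \in [\bar\tau, 2\bar\tau]$ yields the three conclusions uniformly for $i$ sufficiently large.

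The decisive step is the second, where the positive genus of $\hat\Sigma$ must be captured at the definite scale $\sqrt{\bar\tau}$ inside the fixed ball $B_{2r}(\bOh)$; this relies crucially on the uniqueness of tangent flows at non-generic singularities together with the fact, from Lemma \ref{lemm:neigborhood-result-good-gen-sing}, that $\cM_\infty$ has no other singular points in $B_{3r}(\bOh) \times (T_{\textnormal{gen}}(\cM_\infty) - 2\tau, T_{\textnormal{gen}}(\cM_\infty))$, which lets us propagate the genus from the very small scales $t \to 0^+$ up to $t \in [\bar\tau, 2\bar\tau]$ without the risk of an intermediate handle cancellation.
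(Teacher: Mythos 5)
Your proposal follows essentially the same route as the paper: establish smoothness, transversality with $\partial B_{2r}(\bOh)$, and positive genus for the slices of $\cM_\infty$ at a definite small time scale before $T_{\textnormal{gen}}(\cM_\infty)$ (via Lemma \ref{lemm:neigborhood-result-good-gen-sing} and the positive genus of the tangent flow from Proposition \ref{prop:3d-first-non-gen-sing-time-prop}), and then transfer all three properties to $\cM_i$ for large $i$ using Brakke convergence together with White's local regularity theorem \cite{White:Brakke}. The only real deviation is that your genus-capturing step invokes uniqueness of the tangent flow (\cite{Schulze:Loj}, \cite{ChodoshSchulze}) to pin down a single shrinker $\hat\Sigma$ and get smooth convergence of the rescaled slices, whereas the paper's argument is arranged to avoid appealing to uniqueness (cf.\ the footnote to Lemma \ref{lemm:neigborhood-result-good-gen-sing}); since uniqueness does hold in this setting, this is harmless, just a slightly heavier tool than needed.
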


\begin{claim}[B]
For $i$ sufficiently large, there is $\bar \eps=\bar \eps(i)>0$  so that for $t \in [0,\bar \eps)$
\[
\cM_{i}(T_{\textrm{gen}}(\cM_{\infty})-t) \lfloor B_{3r}(\bOh)
\] 
is a smooth genus zero surface.
\end{claim}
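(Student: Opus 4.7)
Let $\bar \cM$ denote the limit ancient one-sided flow, so $\tilde \cM_i \rightharpoonup \bar \cM$. The plan is to decompose $B_{3r}(\bOh)$ into the inner ball $B_{r/3}(\bOh)$ and the annulus $B_{3r}(\bOh)\setminus B_{r/3}(\bOh)$, and control $\cM_i$ on each piece by a different convergence: on the annulus, by the unrescaled convergence $\cM_i \rightharpoonup \cM_\infty$, using that $\cM_\infty$ is already smooth there by Lemma \ref{lemm:neigborhood-result-good-gen-sing}; on the inner ball, by the rescaled convergence $\tilde \cM_i \rightharpoonup \bar \cM$ to the ancient one-sided flow of Theorems \ref{theo:one.sided.construction} and \ref{theo:one.sided.uniqueness}.

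For smoothness, on the annulus Brakke regularity together with Lemma \ref{lemm:neigborhood-result-good-gen-sing} give that $\cM_i$ is smooth on $(B_{3r}(\bOh)\setminus B_{r/3}(\bOh)) \times (T_\textnormal{gen}(\cM_\infty)-\tau, T_\textnormal{gen}(\cM_\infty)+\tau)$ for $i$ large. For the inner ball, Theorem \ref{theo:one.sided.construction}(10) says $\bar \cM$ is globally smooth on $\RR^3 \times (-\delta,\delta)$ for some $\delta > 0$, so the smooth convergence $\tilde \cM_i \to \bar \cM$ on compacta yields, for each fixed $i$ sufficiently large, smoothness of $\tilde \cM_i$ on $\overline{B_{r/(3d_i)}(\bOh)} \times [-\delta/2, \delta/2]$ in rescaled coordinates; translated back to the original scale, $\cM_i$ is smooth on $B_{r/3}(\bOh) \times (T_\textnormal{gen}(\cM_\infty) - \delta d_i^2/2, T_\textnormal{gen}(\cM_\infty) + \delta d_i^2/2)$. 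Setting $\bar\eps(i) := \min(\tau, \delta d_i^2/2) > 0$ proves smoothness of $\cM_i$ on $B_{3r}(\bOh)$ for $t \in [0, \bar\eps(i))$.

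For the genus zero conclusion, the strict star-shapedness of $\bar \cM(s)$ with respect to $\bOh$ for $s \in (-\delta, \delta)$ from Theorem \ref{theo:one.sided.construction}(10), together with the above smooth convergence, implies that $\cM_i(T_\textnormal{gen}(\cM_\infty) - t) \cap B_{r/3}(\bOh)$ is strictly star-shaped with respect to $\bOh$, hence a radial graph over an open subset of $S^2$, hence of genus zero. In the annulus, the estimate $|\mathbf{x}^\perp|\leq\tfrac{1}{10}|\mathbf{x}|$ from Lemma \ref{lemm:neigborhood-result-good-gen-sing} (passed to $\cM_i$ via the smooth convergence, with a slightly worse constant), combined with the fact that the tangent flow shrinker $\hat\Sigma$ has only asymptotically conical ends (by the standing hypothesis of Proposition \ref{prop:key-perturb-result-3d} and Proposition \ref{prop:3d-first-non-gen-sing-time-prop}), ensures that each connected component of $\cM_i(T_\textnormal{gen}(\cM_\infty) - t) \cap (B_{3r}(\bOh)\setminus B_{r/3}(\bOh))$ is a topological annulus joining $\partial B_{r/3}(\bOh)$ and $\partial B_{3r}(\bOh)$. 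Gluing these annuli to the radial-graph inner part yields a surface of genus zero in $B_{3r}(\bOh)$.

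The main obstacle is the compatibility between the inner (rescaled) and outer (unrescaled) descriptions at the interface $\partial B_{r/3}(\bOh)$, which in rescaled coordinates sits at the growing radius $r/(3d_i) \to \infty$. We handle this by noting that both $\partial \bar\cK(0)$ (the rescaled picture) and $\cM_\infty$ at time $T_\textnormal{gen}(\cM_\infty)$ near $\bOh$ (the unrescaled picture) are smoothly asymptotic to the same cone, namely the asymptotic cone of $\hat\Sigma$. At sufficiently large rescaled radius both are $C^\infty$-close to this common cone with matching links in $S^2$, forcing the star-shaped inner part and the cone-like outer annuli to fit together consistently along $\partial B_{r/3}(\bOh)$ for $i$ large, and producing a single radial-graph-like surface of genus zero. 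Alternatively, one may appeal to the localized topological genus monotonicity from Appendix \ref{app:loc-top-monotonicity}, starting from the smooth inner region and propagating outward.
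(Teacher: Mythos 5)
Your architecture (inner region controlled by the one-sided ancient flow, outer region a union of sphere-transverse annuli) is in the spirit of the paper, but there is a genuine gap at the heart of the argument: you split at the \emph{fixed} radius $r/3$, and then claim that smooth convergence $\tilde\cM_i\rightharpoonup\bar\cM$ ``on compacta'' gives smoothness (and star-shapedness) of $\tilde\cM_i$ on $\overline{B_{r/(3d_i)}(\bOh)}\times[-\delta/2,\delta/2]$. This does not follow: convergence on compact sets only controls $\tilde\cM_i$ on a \emph{fixed} compact region for $i$ large, i.e.\ it controls $\cM_i$ only at spatial scales $O(d_i)$ around $(\bOh,T_{\mathrm{gen}}(\cM_\infty))$, while the unrescaled convergence $\cM_i\rightharpoonup\cM_\infty$ (via Lemma \ref{lemm:neigborhood-result-good-gen-sing} and Brakke regularity) only controls $\cM_i$ at scales bounded away from the singular point. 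A singular point, or a failure of the transversality estimate $|\bx^\perp|\leq\tfrac15|\bx|$, of $\cM_i$ could a priori occur at an intermediate scale $\rho_i$ with $d_i\ll\rho_i\ll r$, and neither convergence rules this out. Your closing paragraph acknowledges this but the proposed fix --- that $\partial\bar\cK(0)$ and the tangent flow are asymptotic to the same cone --- is a statement about the two \emph{limit} objects; it gives no uniform control on the approximating flows $\cM_i$ in the intermediate region, so the interface problem at $\partial B_{r/3}$ is not actually resolved.

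The paper closes exactly this gap with a second rescaling argument, which is the missing idea: for a putative bad point $\by_i$ at time $T_{\mathrm{gen}}(\cM_\infty)-t_i$, $t_i<\bar\eps(i):=\delta d_i^2/2$, set $\tilde d_i:=d((\by_i,T_{\mathrm{gen}}(\cM_\infty)-t_i),(\bOh,T_{\mathrm{gen}}(\cM_\infty)))$ and rescale by $\tilde d_i$ itself. If $\tilde d_i/d_i\to\infty$, the rescaled flows converge to a tangent flow of $\cM_\infty$ at $(\bOh,T_{\mathrm{gen}}(\cM_\infty))$ carrying a singular (resp.\ non-conical) point at unit distance at time $\leq 0$, contradicting Proposition \ref{prop:3d-first-non-gen-sing-time-prop} (resp.\ the conicality of the blow-down); if instead $\tilde d_i\lesssim d_i$, the point stays at bounded distance in the $d_i$-rescaling, where property (10) of Theorem \ref{theo:one.sided.construction} (smoothness and star-shapedness on $(-\delta,\delta)$, plus $|\bx^\perp|\leq\tfrac1{10}|\bx|$ for $|\bx|\geq R$ with $R$ large) rules it out. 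This dichotomy is what lets the paper split at the $i$-dependent radius $Rd_i$ (genus zero inside $B_{2Rd_i}$ from the one-sided flow, sphere-transversality on $B_{3r}\setminus B_{Rd_i}$), rather than at a fixed radius. Without an argument of this type your proof does not go through; with it, your fixed-scale annulus discussion becomes unnecessary.
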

\begin{proof}[Proof Claim (A)]
By Proposition \ref{prop:3d-first-non-gen-sing-time-prop}, any tangent flow to $\cM_{\infty}$ at $(\bOh,T_{\textrm{gen}}(\cM_{\infty}))$ has multiplicity one and positive genus. Thus, by Lemma  \ref{lemm:neigborhood-result-good-gen-sing},\footnote{Specifically, the regularity of $\cM_{1}$.} we can take $\bar \tau$ sufficiently small so that $\cM_{\infty}(T_{\infty}(\cM_{\infty})-t)\lfloor B_{4r}(\bOh)$ is smooth and has positive genus for $t \in [\bar\tau/2,3\bar\tau]$. Combined with Brakke's theorem \cite{White:Brakke} and another application of Lemma \ref{lemm:neigborhood-result-good-gen-sing}\footnote{Specifically,  \eqref{eq:near-AC-sing-flow-is-weakly-AC-effective} on $\supp\cM \cap U_{1}$} the remaining assertions follow. 
\end{proof}

\begin{proof}[Proof of Claim (B)]
We have fixed a tangent flow to $\cM_{\infty}$ and associated one-sided flow from Theorem \ref{theo:one.sided.construction}. Let $\delta>0$ denote the interval of regularity around $t=0$ for the one-sided flow, as described in property (10) of Theorem \ref{theo:one.sided.construction}. We thus define 
\[
\bar \eps(i) = \frac{\delta d_i^2}{2}
\]
This will ensure that when rescaling by $d_{i}$, we are considering a short enough time interval to apply (10) in Theorem \ref{theo:one.sided.construction}.

We first show that for $i$ sufficiently large, $\cM_{i}(T_{\textrm{gen}}(\cM_{\infty})-t) \lfloor B_{3r}(\bOh)$ is smooth for all $t \in [0, \bar \eps(i))$. Suppose, instead, that there were some $\mathbf{y}_i$, $t_i$ such that
\begin{equation} \label{eq:potential.nearby.sing.points.yi}
	\mathbf{y}_i \in (\sing \cM_i (T_\textnormal{gen}(\cM_\infty) - t_i)) \cap B_{3r}(\bOh), \; t_i \in [0, \bar \eps(i)).
\end{equation}
Since
\[
\cM_{i} \lfloor (B_{4r}(\bOh) \times \{t\leq T_{\textrm{gen}}(\cM_{\infty})\}) \rightharpoonup \cM_{\infty} \lfloor (B_{4r}(\bOh) \times \{t\leq T_{\textrm{gen}}(\cM_{\infty})\})
\]
as Brakke flows (for $i \to \infty$), it follows by Lemma  \ref{lemm:neigborhood-result-good-gen-sing}\footnote{Specifically, the smoothness of $\cM_{1}$.} that $\mathbf{y}_{i}\to\bOh$ as $i\to\infty$.

On the other hand, by definition of $\bar\eps(i)$ and (10) in Theorem \ref{theo:one.sided.construction},
\begin{equation}\label{eq:R3-gen-tildedij-vs-dij}
\tilde d_{i} := d((\by_{i},T_{\textrm{gen}}(\cM_{\infty})-t_{i}) ,(\bOh,T_{\textrm{gen}}(\cM_{\infty})) \gg d_{i}.
\end{equation} 
In particular, rescaling $\cM_{i}$ by $\tilde d_{i}$ around $(\bOh,T_{\mathrm{gen}}(\cM_{\infty}))$, the flow converges to some flow $\tilde \cM_{\infty}$. By \eqref{eq:R3-gen-tildedij-vs-dij}, we have that $(\bOh,0) \in \supp \tilde \cM_{\infty}$. Thus, we have that for $t<0$, $ \tilde \cM_{\infty}$ agrees with a tangent flow to $\cM_{\infty}$ at $(\bOh,T_{\textrm{gen}}(\cM_{\infty}))$. This is a contradiction since Proposition \ref{prop:3d-first-non-gen-sing-time-prop} implies that $ \tilde \cM_{\infty} \lfloor ((\RR^{n+1}\times (-\infty,0]) \setminus\{(\bOh,0)\})$ is smooth. Thus, no points $\mathbf{y}_i$ as in \eqref{eq:potential.nearby.sing.points.yi} will exist. This completes the proof of the regularity assertion.

We finally prove that for $t_{i}\in[0,\bar\eps(i))$, the surface $\cM_{i}(T_{\textrm{gen}}(\cM_{\infty})-t_{i}) \lfloor B_{3r}(\bOh)$ has genus zero for $i$ large. We show below that that for some $R>0$ sufficiently large (independent of $i$), for any $i$ large and 
\begin{equation}\label{eq:claim-gen-flow-r3-claim-smooth-gen-zero}
\mathbf{x} \in \supp\cM_{i}(T_{\textrm{gen}}(\cM_{\infty})-t_{i}) \cap (B_{3r}(\bOh) \setminus B_{Rd_{i}}(\bOh)),
\end{equation}
we have $|\bx^{\perp}| \leq \frac{1}{5} |\bx|$. This follows from essentially the same scaling argument as above. Indeed, consider a sequence of a points $\mathbf{y}_{i}$ and times $t_i$ violating this bound while still satisfying \eqref{eq:claim-gen-flow-r3-claim-smooth-gen-zero} (we will choose $R>0$ large below). Rescaling $\cM_{i}$ around $(\bOh,T_\textrm{gen}(\cM_\infty))$ by
\[
\tilde d_{i} := d((\by_{i},T_{\textrm{gen}}(\cM_{\infty})-t_{i}) ,(\bOh,T_{\textrm{gen}}(\cM_{\infty})),
\]
we claim that it now must hold that 
\begin{equation}\label{eq:R3-gen-tildedij-vs-dij-TAKE2}
\limsup_{i\to\infty} \frac{\tilde d_{i}}{d_{i}} < \infty.
\end{equation}
Indeed, if this fails, we can argue precisely as in the previous paragraph to rescale by $\tilde d_i$ to find a tangent flow to $\cM_\infty$ at $(\bOh,T_\textrm{gen}(\cM_\infty))$; the points $(\by_i,t_i)$ converge--after rescaling--to a point on the tangent flow at $t=0$ (at a unit distance from $\bOh$). Clearly the cone satisfies the asserted bound, so this is a contradiction. 

Thus, \eqref{eq:R3-gen-tildedij-vs-dij-TAKE2} holds. In particular, the points $(\mathbf{y}_i,-t_i)$ remain a bounded distance from $(\bOh,0)$ when rescaling by $d_i$ (but lie outside of $B_R(\bOh) \times \RR$). It is easy to see\footnote{By the argument in Lemma \ref{lemm:dist-to-origin-eps-lambda}, the blow-down of the ancient one-sided flow agrees for $t<0$ with the shrinking Brakke flow associated to the fixed shrinker.} that we can take $R>0$ large so that the one-sided flow from Theorem \ref{theo:one.sided.construction} (scaled to have unit distance from $(\bOh,0)$) satisfies $|\mathbf{x}^{\perp}| \leq \frac{1}{10} |\mathbf{x}|$ for $(\mathbf{x},t)$ with $|\mathbf{x}|\geq R$ and $|t| < \delta$.

We now demonstrate that putting these facts together, we have proven the claim. After rescaling by $d_i$ the flows $\cM_i$ converge to the one sided flow to the tangent flow of $\cM_\infty$. By property (10) of Theorem \ref{theo:one.sided.construction} this one sided flow at time zero is smooth and strictly star-shaped and therefore has genus zero. Thus by smooth convergence and thus the transverse intersection at the boundary of the ball $B_{2Rd_{i}}(\bOh)$, together with the choice of $\bar \eps(i)$, $\cM_{i}(T_{\textrm{gen}}(\cM_{\infty})-t_{i}) \lfloor B_{2Rd_{i}}(\bOh)$ has genus zero for $i$ large. 
\end{proof}

Now, take $\bar \tau$ smaller if necessary and then fix $i$ large. We write $\cM_{i}=\cM'$ and assemble the following properties established above:
\begin{enumerate}
\item For $t \in (T_{\textrm{gen}}(\cM_{\infty})-2\bar \tau,T_{\textrm{gen}}(\cM_{\infty}) + 2\bar \tau)$ a smooth time for $\cM'$ we have that
\[ \cM'(t) = \cH^{2}\lfloor M'(t), \]
for $M'(t)$ smooth with
\[ \genus(M'(t))\leq \genus_{T_{\textrm{gen}}}(\cM_{\infty}); \]
this follows from the monotonicity of genus (cf.\ Proposition \ref{prop:3d-first-non-gen-sing-time-prop}) and the fact that $\cM_{i} \rightharpoonup \cM_{\infty}$ as Brakke flows. 
\item For $t\in[\bar \tau,2\bar\tau]$, $\cM'(T_{\infty}(\cM_{\infty})-t) \lfloor B_{3r}(\bOh)$ is smooth and has positive genus in $B_{2r}(\bOh$); this was proven in Claim (A) above. 
\item There is $0 < \bar \eps < \bar \tau$ so that for $t\in[0,\bar\eps)$, $\cM'(T_{\infty}(\cM_{\infty})-t) \lfloor B_{3r}(\bOh)$ is smooth and has zero genus; this was proven in Claim (B) above. 
\item We have that
\[ \cM'\lfloor ((B_{3r}(\bOh) \setminus B_{r/2}(\bOh)) \times (T_{\textrm{gen}}(\cM_{\infty}) - 2\bar\tau,T_{\textrm{gen}}(\cM_{\infty}) + 2\bar\tau)) \]
is a smooth flow of (a disjoint union of) topological annuli, intersecting $\partial B_{r'}(\bOh)$ transversely for $r \leq r' \leq 3r$; this follows from Lemma  \ref{lemm:neigborhood-result-good-gen-sing} and the fact that $\cM_{i} \rightharpoonup \cM_{\infty}$ as Brakke flows. 
\end{enumerate}
Choose
\[ \bar t_{1} \in (T_{\textrm{gen}}(\cM_{\infty}) - 2\bar \tau,T_{\textrm{gen}}(\cM_{\infty}) - \bar \tau], \]
\[ \bar t_{2} \in (T_{\textrm{gen}}(\cM_{\infty}) - \bar \eps, T_{\textrm{gen}}(\cM_{\infty})] \]
smooth times for $\cM'$. We claim that 
\begin{equation}\label{eq:R3-gen-goal-genus-drop}
g : = \genus( \cM'(\bar t_{2})) < \genus(\cM'(\bar t_{1})).
\end{equation}
By property (1) in the above list (and monotonicity of genus, cf.\  Proposition \ref{prop:3d-first-non-gen-sing-time-prop}), once we have established \eqref{eq:R3-gen-goal-genus-drop}, we will find
\[
\genus_{T_{\textrm{gen}}}(\cM') \leq  \genus(\cM'(\bar t_{2})) \leq \genus_{T_{\textrm{gen}}}(\cM_{\infty}) - 1,
\]
which will complete the proof.

It thus remains to establish \eqref{eq:R3-gen-goal-genus-drop}. 
We will show this by combining the properties above with a localization of White's  \cite{White:topology-weak} topological monotonicity, which we have included in Appendix \ref{app:loc-top-monotonicity}. Define
\[ B := B_{2r}(\bOh). \]
The key observation, which makes Appendix \ref{app:loc-top-monotonicity} applicable, is that, by property (4) above, the level set flow for times in $[\bar t_{1},\bar t_{2}]$ of $M'(\bar t_{1}) \times \{\bar t_{1}\}$ (which must agree with the restriction of $\cM'$) is a \emph{simple flow} (defined in Appendix \ref{app:loc-top-monotonicity}) in the tubular neighborhood 
\[
U:= B_{3r}(\bOh)\setminus \bar{B}_{r}(\bOh), 
\]
of $\partial B$ for $t \in [\bar t_1, \bar t_2]$. We can thus apply results of that appendix with $[\bar t_1, \bar t_2]$ in place of $[0,T]$, and $\RR^3 \setminus \bar B$ in place of $\Omega$. (Certainly, we can and will also apply White's global topological monotonicity results.) We invite the reader to recall the notation $W[\bar t_1, \bar t_2]$, $W[\bar t_1]$, $W[\bar t_2]$ from  \eqref{eq:localized.monotonicity.w.t}-\eqref{eq:localized.monotonicity.w.0T} in Appendix \ref{app:loc-top-monotonicity}, which we're going to make use of here. 

To quantify the genus drop, we'll use Lemmas \ref{lemm:R3-good-basis-comp-surf-ball} and \ref{lemm:R3-basis-comp-surf-ball-genus-in-ball-consequence} stated and proved below. Loosely speaking, Lemma \ref{lemm:R3-good-basis-comp-surf-ball} constructs a good choice of linearly independent set of loops in $H_1(W[\bar t_2])$ detecting the number $g$ and compatible with the geometry (namely, the smoothness of the flow in the annular region as established in (4) above). By the localized version of White's topological monotonicity established in Appendix \ref{app:loc-top-monotonicity}, we can homotop these loops back to time $\bar t_1$. The properties established in Lemma \ref{lemm:R3-good-basis-comp-surf-ball} are preserved under this process and then we can apply Lemma \ref{lemm:R3-basis-comp-surf-ball-genus-in-ball-consequence} to show that the genus at time $\bar t_1$ would have to be $\leq g$. This proves the desired genus monotonicity. 

Choose loops $\gamma_1^{\bar t_2},\dots,\gamma_{2g}^{\bar t_2}$ in $W[\bar t_2]$ as in Lemma \ref{lemm:R3-good-basis-comp-surf-ball}. That is, 
\[
\{[\gamma_1^{\bar t_2}],\dots,[\gamma_{2g}^{\bar t_2}]\} \subset H_1(W[\bar t_2])
\]
is linearly independent and each $\gamma_i^{\bar t_2}$ satisfies either:
\begin{itemize}
\item $\gamma_i^{\bar t_2}$ is contained in $\bar B^c$ (since $\genus(M'(\bar t_2)\cap B) = 0$ implies that no $\gamma_i^{\bar t_2}$ can be contained in $B$), or
\item there is some component $\cU_i[\bar t_2]$ of $W[\bar t_2] \cap \partial B$ that has non-zero signed intersection with $\gamma_i^{\bar t_2}$, and zero signed intersection with each previous $\gamma_j^{\bar t_2}$, $j < i$.
\end{itemize}
By the injectivity of $H_1(W[\bar t_2]) \to H_1(W[\bar t_1, \bar t_2])$ \cite[Theorem 6.2]{White:topology-weak}, the inclusion
\[ \{[\gamma_1^{\bar t_2}],\dots,[\gamma_{2g}^{\bar t_2}] \}\subset H_1(W[\bar t_1,\bar t_2]) \]
is linearly independent too. We now construct loops $\gamma_1^{\bar t_1}, \ldots, \gamma_{2g}^{\bar t_1}$ in $W[\bar t_1]$ so that:
\begin{itemize}
\item Each $\gamma_i^{\bar t_2}$ is homotopic to $ \gamma_i^{\bar t_1}$ in $W[\bar t_1,\bar t_2]$; see \cite[Theorem 5.4]{White:topology-weak}.
\item If $\gamma_i^{\bar t_2}$ is entirely contained in $\bar B^c$, then so is $\gamma_i^{\bar t_1}$ and the entire homotopy between them; see Theorem \ref{theo:localized-loops-to-0}.
\item If $\gamma_i^{\bar t_1}$ is not entirely contained in $\bar B^c$, there is some component $\cU_i[\bar t_1]$ of $W[\bar t_1] \cap \partial B$  that has non-zero signed intersection with $\gamma_i^{\bar t_1}$, and zero signed intersection with each previous $\gamma_j^{\bar t_1}$, $j<i$; this follows from the simplicity of the flow in $U\times[\bar t_1,\bar t_2]$ and the fact that  signed intersection is preserved under homotopy.
\end{itemize}
We can now easily complete the proof. If \eqref{eq:R3-gen-goal-genus-drop} were false, then $\genus(M'(\bar t_1)) = g$ by White's global topological monotonicity \cite{White:topology-weak}. Applying Lemma  \ref{lemm:R3-basis-comp-surf-ball-genus-in-ball-consequence} to $\gamma_1^{\bar t_1}, \ldots, \gamma_{2g}^{\bar t_1}$ now says that, because  $\genus(M'(\bar t_1)\cap B) > 0$ by property (2) above, at least one of the  $\gamma_i^{\bar t_1}$ must be contained in $B$, a contradiction.
\end{proof}

\begin{lemma}\label{lemm:R3-good-basis-comp-surf-ball}Suppose that $S \subset \RR^3$ is a closed and embedded genus-$g$ surface which is transverse to a sphere $\partial B \subset \RR^3$.  Denote $W := \RR^3\setminus S$. 
	
We can find loops  $\gamma_1,\ldots,\gamma_{2g}$ inside $W$ so that $\{[\gamma_1],\ldots,[\gamma_{2g}]\}\subset H_1(W) \approx\ZZ^{2g}$ is linearly independent and so that, for every $i = 1, \ldots, 2g$, either:
\begin{itemize}
\item $\gamma_i$ is contained in $B$ or in $\bar B^c$, or 
\item there is a component $\cU_i$ of $\partial B\setminus S$ that has non-zero signed intersection with $\gamma_{i}$, and zero signed intersection with each previous $\gamma_{j}$, $j < i$.
\end{itemize} 
Moreover, we can arrange that exactly $2\genus(S\cap B)$ of the $\gamma_{i}$ are contained entirely in $B$ and that if, in $H_1(W\cap \bar B)$,
\[
\sum_{\{i : \gamma_i \subset B\}} n_i[\gamma_i] = [\beta]
\]
for some cycle $\beta \subset W \cap \partial B$, then all of the coefficients $n_i$ vanish. 
\end{lemma}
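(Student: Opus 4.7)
The plan is to construct the basis $\{\gamma_i\}$ in three stages, exploiting the Mayer--Vietoris decomposition of $W = \RR^3 \setminus S$ induced by $\partial B$. By transversality, $S$ partitions into $S^+ := S \cap B$ and $S^- := S \cap \bar B^c$, joined along the $c$ disjoint circles $S \cap \partial B$, which cut $\partial B$ into the $c+1$ open components $\cU_1, \ldots, \cU_{c+1}$ of $\cU := W \cap \partial B$. Writing $g_\pm := \genus(S^\pm)$, the Euler characteristic identity $\chi(S) = \chi(S^+) + \chi(S^-)$ (combined with $\chi(S^\pm) = 2 - 2g_\pm - c$) yields $g = g_+ + g_- + c-1$. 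The Mayer--Vietoris sequence for the cover of $W$ by slight open thickenings of $W \cap B$ and $W \cap \bar B^c$ (overlap retracting to $\cU$) reads
\[
H_1(\cU) \to H_1(W \cap B) \oplus H_1(W \cap \bar B^c) \to H_1(W) \xrightarrow{\partial} H_0(\cU) \to H_0(W \cap B) \oplus H_0(W \cap \bar B^c).
\]

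I would then build the $\gamma_i$ in three stages. (a) Capping off the $c$ boundary circles of $S^+$ by disks in $\bar B$ forms a closed surface $\widehat{S^+}$ of genus $g_+$. An Alexander-duality-and-excision argument in $\bar B$ then yields $\rank(H_1(W \cap \bar B) / \im H_1(\cU)) = 2 g_+$, realized by $2 g_+$ meridian pushoffs of an interior handle-symplectic basis of $S^+$; these loops lie in $W \cap B$ and form the first $2 g_+$ of the $\gamma_i$, automatically verifying the ``moreover'' independence clause modulo $\partial B$-cycles. (b) Using that $\ker\partial = \im\bigl(H_1(W \cap B) \oplus H_1(W \cap \bar B^c) \to H_1(W)\bigr)$ by exactness, I would add loops supported in $W \cap \bar B^c$ to extend the stage-(a) classes to a basis of $\ker \partial \subset H_1(W)$. (c) For the remaining classes, which surject onto $\im \partial \subset H_0(\cU; \ZZ/2)$, I would realize them by crossing loops in $W$ whose mod-$2$ intersection vectors with $(\cU_1, \ldots, \cU_{c+1})$ span $\im \partial$. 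Since $\rank \im \partial \leq c-1 < c+1$, a straightforward Gaussian-elimination argument on the intersection-vector matrix lets us order these crossings so that each $\gamma_i$ picks up a ``new'' component $\cU_i$ not intersected by any earlier $\gamma_j$.

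The main obstacle is step (a): identifying $\rank(H_1(W \cap \bar B) / \im H_1(\cU))$ as exactly $2 g_+$. I would establish this by comparing $W \cap \bar B = \bar B \setminus S^+$ with the complement of the closed capped surface $\widehat{S^+}$ in $\bar B$ (whose $H_1$ is computable by standard Alexander duality after gluing a second ball across $\partial B$ to form $S^3$), and then invoking Mayer--Vietoris or excision to track precisely how deleting the cap disks changes $H_1$ exactly by the image of $H_1(\cU)$. Once this rank identity is in hand, the Euler-characteristic relation $g = g_+ + g_- + c-1$ ensures overall consistency of the three-stage count, and the crossings can always be ordered as required because $\rank \im \partial$ stays well below the number $c+1$ of components of $\cU$.
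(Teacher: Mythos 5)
Your Mayer--Vietoris skeleton is a genuinely different route from the paper (which argues by induction on the number of circles of $S\cap\partial B$, capping off one \emph{innermost} circle at a time and splitting into separating/non-separating cases), and stages (b), (c) of your plan are workable: by exactness, inside classes are spanned over $\QQ$ by stage-(a) classes together with classes from $\bar B^c$ (a cycle in $\cU$ represents the same class in $H_1(W)$ from either side), and the greedy ordering of the crossing loops does follow from mod-$2$ independence of their intersection vectors (which in turn needs the small observation that $\im\partial$ is a primitive subgroup of $H_0(\cU)$, being the kernel of a map to a free group). But the heart of your argument, stage (a), has a genuine gap. First, the construction of the comparison surface $\widehat{S^+}$ by "capping off the $c$ boundary circles of $S^+$ by disks in $\bar B$" is not generally possible: the caps can be made disjoint from one another, but not, in general, from $S\cap B$ itself --- whenever the circles of $S\cap\partial B$ are nested, a cap for an outer circle must cross the sheets of $S$ entering $B$ through the inner circles, and there need not exist any embedded system of disjoint caps avoiding $S\cap \bar B$. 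This is precisely why the paper's induction only ever caps along an \emph{innermost} circle $\alpha$, whose spanning disk $D\subset\partial B$ satisfies $S\cap D^\circ=\emptyset$. Second, even granting a capped model, the rank identity $\rank\bigl(H_1(W\cap\bar B)/\im H_1(\cU)\bigr)=2\genus(S\cap B)$ is essentially the entire content of the lemma's ``moreover'' clause, and in your write-up it is asserted via an unspecified ``Alexander-duality-and-excision argument'' rather than proved; claiming that deleting the cap disks changes $H_1$ ``exactly by the image of $H_1(\cU)$'' is exactly the statement that needs a proof (and is sensitive to how the caps sit relative to $S$).

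A secondary but real error: the bookkeeping identity $g=g_++g_-+c-1$ is false when $S\cap B$ or $S\setminus\bar B$ is disconnected, which happens in the paper's application (the time slices $M'(t)$ need not even be connected). For example, a sphere meeting $\partial B$ in two circles with $S\cap B$ an annulus and $S\setminus\bar B$ two disks has $g=0$ while $g_++g_-+c-1=1$; the correct relation is $g=g_++g_-+c+k-k_+-k_-$, with $k,k_\pm$ the numbers of components of $S$, $S\cap B$, $S\setminus\bar B$. So the ``consistency of the three-stage count'' cannot be checked the way you propose; the component counts are exactly what the $H_0$ terms in the Mayer--Vietoris sequence record, and your tally must be run through $\rank H_1(W)=\rank\ker\partial+\rank\im\partial$ directly rather than through that Euler-characteristic shortcut. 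In summary: the architecture is plausible and different from the paper's, but as written the proof of the key rank identity is missing and the capping construction it rests on can fail; repairing it would require either an innermost-circle induction (as in the paper) or a careful duality argument for properly embedded surfaces in the ball that does not presuppose embedded caps.
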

\begin{proof}
We induct on the number of components $b$ of $S\cap \partial B$. 

First, consider $b=0$. In this case, $S$ decomposes into the disjoint union of two closed surfaces, $S_B := S \cap B$, $S_{\bar B^c} := S \setminus \bar B$, which do not meet $\partial B$. We have $\genus(S_{B}) + \genus(S_{\bar B^c}) = g$, so by applying Alexander duality we find a linearly independent set
\[
\{[\gamma_{1}],\dots,[\gamma_{2g}] \} \subset H_{1}(W)
\]
with $2\genus(S_{B})$ of the $\gamma_{i}$ contained  in $B$, and the remaining $2\genus(S_{\bar B^c})$ contained  in $\bar B^c$. Moreover, $W \cap \partial B = \partial B$ when $b=0$. Therefore, if a linear combination of $\gamma_i \subset B$ is homologous to a cycle in $W \cap \partial B$ then the combination must be $=0\in H_1(W)$ (since $H_1(\partial B)=0$). This completes the base case. 

Now, we consider the inductive step. Consider the $b$ components of $S\cap \partial B$. By the Jordan curve theorem, each component of $S\cap \partial B$ divides $\partial B$ into two regions. As such, we can find a component $\alpha$ of $S\cap \partial B$ so that there is a disk $D\subset \partial B$ with $\partial D =\alpha$ and $S\cap D^{\circ}=\emptyset$. Form the surface $S'$ by removing an annulus $A = U_{\eps/10}(\alpha) \subset S$ and then by gluing two disks that are small deformations of $D$, into and out of $B$ respectively, to cap off the boundary of $S\setminus A$. We can arrange that this all occurs in $U_{\eps}(D)\subset \RR^{3}$ (with $\eps>0$ small enough so that $U_{\eps}(D)$ is contractible). 

The surface $S'$ now satisfies the inductive hypothesis, since $S'\cap \partial B$ has $b-1$ components. Note that, by definition, 
\begin{equation} \label{eq:R3-good-basis-comp-ball-surf-genus}
\genus(S'\cap B) = \genus(S\cap B),
\end{equation}
although the genus of $S$ might be different from $S'$ as we will see below. 

There are two cases to consider: either $\alpha$ separates the component of $S$ that contains it, or it doesn't separate it.

\emph{Separating case.} Suppose that $\alpha$ separates the component of $S$ that contains it. It will be convenient to give a name to this component, so let us denote it $S_\alpha$. In this case, $S_\alpha \setminus A$ is a disconnected surface with boundary. Hence,\footnote{This can be seen by the inclusion-exclusion principle for Euler characteristic: if we can decompose a connected surface into two connected components $M=M_{1}\cup M_{2}$ where $M_{1}$ and $M_{2}$ intersect in a circle, then $\chi(M) + \chi(\SS^{1}) = \chi(M_{1}) + \chi(M_{2})$. We have that $\chi(\SS^1) = 0$, $\chi(M) = 2-2\genus(M)$, and $\chi(M_i) = 1-2\genus(M_i)$ (because they both have a single boundary component). Hence, $\genus(M) = \genus(M_{1}) + \genus(M_{2})$. }
	\[ \genus(S_\alpha) = \genus(S_\alpha \setminus A), \]
	so $\genus(S') = g$. Applying the inductive step to $S'$ (which has $b-1 < b$ boundary circles), we find a linearly independent set of loops $\gamma'_{1},\dots,\gamma'_{2g}$ in $\RR^{3}\setminus S'$ satisfying the conditions of the lemma with $S'$ in place of $S$. The curves $\gamma_i'$ that are not contained in $B$ or in $\bar B^c$ have associated components $\cU_i' \subset \partial B \setminus S'$ with the required  signed intersection properties, per the inductive step.
	
	Note that we can assume that the loops $\gamma_1', \ldots, \gamma_{2g}'$ are disjoint from $U_{\eps}(D)$. As such, they lie in $W$, so to prove the inductive step we can simply take
	\[ \gamma_1 := \gamma_1', \ldots, \gamma_{2g} := \gamma_{2g}'. \]
	For any $\gamma_i$ that is not contained in $B$ or in $\bar B^c$, we set $\cU_i := \cU_i' \setminus D$ or $\cU_i := \cU_i'$ depending on whether $D \subset \cU_i'$ or not (respectively). We claim this configuration of $\gamma_1, \ldots, \gamma_{2g}$ satisfies the properties we want. Note that the two bullet points are just a consequence of how our curves are disjoint from $U_\eps(D)$, and that $2\genus(S \cap B)$ of the $\gamma_i$ are contained in $B$ in view of  \eqref{eq:R3-good-basis-comp-ball-surf-genus} and the inductive step. It remains to check two required homological properties.
	
	Suppose there are $n_i$ so that 
	\[
	\sum_{\{i : \gamma_i \subset B\}} n_i[\gamma_i] = [\beta] \text{ in } H_1(W \cap \bar B),
	\]
	for some cycle $\beta \subset \partial B \setminus S$.  Note that the components $\beta''$ of $\beta$ that intersect $D$ must be fully contained inside $D$. We write $\beta = \beta' + \beta''$. Note further that we can assume that $\beta'$ consists of finitely many disjoint embedded circles.
	Thus, we can find a 2-chain $\sigma \subset B \setminus S$ such that
	\[
	\partial \sigma = \beta - \sum_{\{i : \gamma_i \subset B\}} n_i\gamma_i.
	\]
	 Using the structure of $\beta''$ we see that we can replace $\sigma$ by $ \sigma' + \sigma''$ such that $\sigma'$ is contained in $\bar B \setminus (S \cup U_{\eps}(D)) \subset \bar B \setminus S'$ and $\sigma''$ is contained in $\bar B \cap U_\eps(D)$. Since the latter region is contractible ($D$ was contractible), this implies that
		\[
	\sum_{\{i : \gamma_i \subset B\}} n_i[\gamma_i] =  [\beta'] \text{ in } H_1(\bar B\setminus S').
	\]
	By the inductive step, all of the coefficients vanish. 
	
	We finally show  $\{[\gamma_{1}],\dots,[\gamma_{2g}]\} \subset H_{1}(W)$ is linearly independent. Assume 
	\begin{equation}\label{eq:R3-comp-S-good-basis-sep-case-triv}
	n_{1}[\gamma_{1}]+\dots+n_{2g}[\gamma_{2g}] = 0 \text{ in } H_1(W).
	\end{equation}
	By construction and the inductive step, for any $\gamma_{i}$ not contained entirely in $B$ or $\bar B^c$, there is a component $\cU_i' \subset \partial B \setminus S'$ that has non-zero signed intersection with $\gamma_i$ and zero signed intersection with each previous $\gamma_j$, $j < i$. Proceeding from large indices to small this implies that any $\gamma_{i}$ not contained entirely in $B$ or in $\bar B^c$ has $n_{i}=0$ in \eqref{eq:R3-comp-S-good-basis-sep-case-triv}. The Mayer--Vietoris sequence for $(W\cap \bar B,W\cap B^c)$ yields the exact sequence
	\[
	\dots \to H_{1}(W \cap \partial B) \to H_{1}(W\cap \bar B) \oplus H_{1}(W\cap B^c) \to H_{1}(W) \to \dots
	\]
	Let $I_{B}$ denote the indices $i$ so that $\gamma_{i}\subset B$ and similarly for $I_{\bar B^c}$. Consider 
	\[
	\left( \sum_{i\in I_{B}} n_{i}[\gamma_{i}], - \sum_{i\in I_{\bar B^c}} n_{i}[\gamma_{i}] \right) \in H_{1}(W\cap \bar B) \oplus H_{1}(W\cap B^c) . 
	\]
	Seeing as we're assuming this is sent to $0\in H_{1}(W)$, exactness yields a $[\beta] \in H_1(W \cap \partial B)$ so that 
	\[
	[\beta] = \sum_{i\in I_{B}} n_{i}[\gamma_{i}] \text{ in } H_1(W\cap \bar B), \text{ and}
	\]
	\[
	[\beta] = - \sum_{i\in I_{\bar B^c}} n_{i}[\gamma_{i}] \text{ in } H_1(W\cap B^c).
	\]
	We have already seen above, though, that $n_i=0$ for all $i \in I_B$ since $\beta$ is a cycle in $\partial B \setminus S$. Thus $[\beta]=0$ in $H_1(W\cap \bar B)$.  Arguing as above we can replace $\beta$ by $\beta'$ (which has no component in $D$), such that
	\[  [\beta'] = 0 \text{ in } H_1(\bar B\setminus S') \]
	and
	\[
	 [\beta'] = - \sum_{i\in I_{\bar B^c}} n_{i}[\gamma_{i}] \text{ in } H_1(\RR^3\setminus (B\cup S')).
	\]
	Using Mayer-Vietoris as above with $S$ replaced by $S'$, we find that 
	\[
	\sum_{i\in I_{\bar B^c}} n_{i}[\gamma_{i}] = 0 \text{ in } H_1(\RR^3\setminus S').
	\]
	The inductive step implies that the $n_i$ all vanish. This completes the proof in the separating case. 

\emph{Nonseparating case}. We turn to case where $\alpha$ does not separate the component of $S$ that contains it. We continue to denote that component of $S$ by $S_\alpha$. Observe that\footnote{For a connected compact surface $M$ with $\partial M$ consisting of two circles, and the surface $M'$ formed by gluing these two boundary circles together, the inclusion-exclusion principle implies $\chi(M) = \chi(M')$.} 
	\[ \genus(S_\alpha \setminus A) + 1 = \genus(S_\alpha), \]
	so $\genus(S') = g-1$. We apply the inductive step to $S'$ (which has $b-1 < b$ boundary circles) to find a linearly independent set $\{[\gamma_1'],\dots,[\gamma_{2g-2}']\} \subset H_1(\RR^3\setminus S')$ satisfying the conditions of the lemma with $S'$ in place of $S$. For every $\gamma_i'$ that is not contained in $B$ or in $\bar B^c$, there exists a component $\cU_i' \subset \partial B \setminus S'$ with the signed intersection properties postulated by the inductive step.
	
	As in the previous case, we can assume that the cycles are disjoint from $U_{\eps}(D)$, and thus lie in $W$. So, we may take
	\[ \gamma_1 := \gamma_1, \; \ldots, \; \gamma_{2g-2} := \gamma_{2g-2}', \]
	and, as before, set $\cU_i := \cU_i' \setminus D$ or $\cU_i'$ depending on whether $D \subset \cU_i'$ or not (respectively). We further define $\gamma_{2g-1}\subset \bar B^c$ to be $\alpha$ shifted slightly into the non-compact component of $\RR^3\setminus (S_\alpha \cup \bar B)$. Finally, we define $\gamma_{2g}$ to be a loop in the compact component enclosed by $S_\alpha$ with the property that $\gamma_{2g}$ intersects the disk $D$ transversely and in precisely one point (it is easy to find such a curve thanks to the non-separating hypothesis); we take $\cU_{2g} := D^\circ$.
	
	We claim that the loops $\gamma_1,\dots,\gamma_{2g}$ satisfy the assertions of the lemma. The two bullet points are easily checked by the construction of $\gamma_{2g-1},\gamma_{2g}$ and the assumption that the curves obtained via the inductive step avoid $U_{\eps}(D)$. The other two claims in the assertion follow by essentially the same argument as in the separating case.

This completes the proof. 
\end{proof}
\begin{lemma}\label{lemm:R3-basis-comp-surf-ball-genus-in-ball-consequence} 
Suppose that $S \subset \RR^3$ is a closed and embedded genus-$g$ surface which is transverse to a sphere $\partial B \subset \RR^3$. Denote $W := \RR^3\setminus S$. 

Assume that we are given  $\{[\gamma_1],\dots,[\gamma_{2g}]\}\subset H_1(W)\approx \ZZ^{2g}$ which is linearly independent and where each $\gamma_i$ satisfies one of the following conditions:
\begin{itemize}
\item $\gamma_i$ is contained in $B$ or in $\bar B^c$, or 
\item there is a component $\cU_i$ of $\partial B\setminus S$ that has non-zero signed intersection with $\gamma_i$ and zero signed intersection with each previous $\gamma_j$, $j < i$.
\end{itemize} 
Then, at least one of the $\gamma_i$ is contained in $B$, provided $\genus(S\cap B)>0$.\footnote{We do not need this here, but with minor modifications one can show that at least $2\genus(S\cap B)$ curves $\gamma_i$ are contained entirely in $B$.}
\end{lemma}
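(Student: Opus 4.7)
The approach is by contradiction: suppose $g_B := \genus(S \cap B) > 0$, yet no $\gamma_i$ is contained in $B$. Let $K$ be the set of indices with $\gamma_i \subset \bar B^c$ (which we perturb to lie strictly in the open exterior $B^c$, disjoint from $\partial B$) and $J$ the set of indices satisfying the second bullet of the hypothesis, so $|J| + |K| = 2g$. Since $g_B > 0$, the compact surface $S_B := S \cap \bar B$ has positive genus, so we can select an interior simple closed curve $c \subset S \cap B$ representing a nontrivial handle class in $H_1(S_B)$, a meridian loop $\mu \subset B \setminus S$ around this handle with $\operatorname{lk}(\mu, c) = 1$, and a compressing disk $\Sigma_c \subset B$ with $\partial \Sigma_c = c$ (such a disk exists because $\bar B$ is simply connected, and it may be perturbed to avoid $\partial B$).

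Write $[\mu] = \sum_i n_i [\gamma_i]$ in $H_1(\RR^3 \setminus S; \ZZ)$ and carry out two independent computations. First, for each $i \in J$, the mod-$2$ intersection $\phi_i(\cdot) := \cdot \cap \cU_i$ annihilates $\mu$ and every $\gamma_j$ with $j \in K$, because all those cycles are disjoint from $\partial B \supset \cU_i$. The remaining pairings $\phi_i(\gamma_j)$ with $i, j \in J$ form an upper-triangular matrix with $1$s on the diagonal by hypothesis ($\phi_i(\gamma_j) = 0$ for $j < i$ and $\phi_i(\gamma_i) = 1$). Solving the system $\sum_{j \in J} n_j \phi_i(\gamma_j) \equiv 0 \pmod 2$ recursively from the largest index down yields $n_j \equiv 0 \pmod 2$ for every $j \in J$. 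Second, $\Sigma_c \subset B$ computes $\operatorname{lk}(\cdot, c)$ as algebraic intersection with $\Sigma_c$: for $j \in K$, $\gamma_j \subset B^c$ is disjoint from $\Sigma_c \subset B$, so $\operatorname{lk}(\gamma_j, c) = 0$ integrally, while $\operatorname{lk}(\mu, c) = 1$ by construction.

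Taking linking number with $c$ on both sides of the homology identity gives
\[
1 = \operatorname{lk}(\mu, c) = \sum_{i \in J} n_i \, \operatorname{lk}(\gamma_i, c) + \sum_{j \in K} n_j \, \operatorname{lk}(\gamma_j, c).
\]
The second sum vanishes integrally, and the first is even since each $n_i$ with $i \in J$ is even, so reducing modulo $2$ gives $1 \equiv 0$, a contradiction. The main technical hurdle is simultaneously arranging all the geometric representatives ($\mu$, $c$, $\Sigma_c$, and generic positions for the $\gamma_i$) to satisfy the required disjointness conditions, but each is achievable by standard transversality. Conceptually, the two calculations are complementary: intersection with $\cU_i \subset \partial B$ eliminates the $J$-coefficients modulo $2$, while linking through a Seifert disk embedded in $B$ eliminates the $K$-coefficients integrally, and together they force a parity contradiction.
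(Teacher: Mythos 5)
Your argument has a genuine gap at its central algebraic step: you write $[\mu]=\sum_i n_i[\gamma_i]$ in $H_1(W)$, but the hypothesis only gives that the classes $[\gamma_1],\dots,[\gamma_{2g}]$ are \emph{linearly independent} in $H_1(W)\approx\ZZ^{2g}$, not that they generate it. Linearly independent sets of maximal cardinality in a free abelian group only span a finite-index subgroup (e.g.\ $2e_1,e_2,\dots,e_{2g}$), so all you may assert is $m[\mu]=\sum_i n_i[\gamma_i]$ for some integer $m\neq 0$. With this correction your endgame collapses: the mod-$2$ eliminations with the $\cU_i$ still give that the $n_i$ with $i\in J$ are even, and linking with $c$ then gives that $m\,\operatorname{lk}(\mu,c)=m$ is even --- which is not a contradiction (nothing rules out the span having even index). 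Note also that proving instead that $\{[\gamma_1],\dots,[\gamma_{2g}],[\mu]\}$ is linearly independent (to contradict the rank) does not follow from your two functionals either, since neither the $\cU_i$-pairings nor $\operatorname{lk}(\cdot,c)$ constrains the coefficients of the exterior curves $\gamma_j\subset\bar B^c$ mod $2$, so no descent is available. (A smaller issue: the ``embedded compressing disk in $B$'' is neither needed nor justified; a singular $2$-chain in $B$ bounding $c$, which exists by contractibility, suffices for the linking computation.)

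The paper's proof avoids this entirely: it never expands an auxiliary class in the $[\gamma_i]$. It takes the cycle $\eta\subset B\setminus S$ produced by Lemma \ref{lemm:R3-good-basis-comp-surf-ball}, which has the stronger property that no nonzero multiple of $\eta$ is homologous in $\bar B\setminus S$ to a cycle in $\partial B\setminus S$, and shows that if no $\gamma_i$ lay in $B$ then $\{[\gamma_1],\dots,[\gamma_{2g}],[\eta]\}$ would be linearly independent, impossible in $\ZZ^{2g}$; the exterior coefficients are handled by Mayer--Vietoris for $(W\cap\bar B,\,W\cap B^c)$ rather than by a linking functional. Your meridian $\mu$, chosen only to satisfy $\operatorname{lk}(\mu,c)=1$, has not been shown to enjoy any such property. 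Your strategy can in fact be repaired, but it requires upgrading from mod-$2$ to integral information: use the \emph{integral} intersection pairing with the oriented surfaces $\overline{\cU_i}$ (well defined on $H_1(W)$ since $\partial\overline{\cU_i}\subset S$); the hypothesis makes the $J\times J$ matrix have odd diagonal and even entries below it, hence odd determinant, and since $\mu$ and the exterior curves pair to exactly zero, the $J$-coefficients vanish identically (not just mod $2$); then $\operatorname{lk}(\cdot,c)$ applied to $m[\mu]=\sum_{j\in K}n_j[\gamma_j]$ forces $m=0$, a contradiction. As written, however, the proposal does not close the argument.
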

\begin{proof}
Note that, since $\genus(S\cap B)>0$, Lemma \ref{lemm:R3-good-basis-comp-surf-ball} implies (among other things) that there is $\eta \subset B \setminus S$ so that $[\eta]\not = 0$ in $H_1(W)$ and so that for any $m\in\ZZ\setminus\{0\}$, $m\eta$ is not homologous in $\bar B\setminus S$ to a cycle in $\partial B \setminus S$. 

Now, assume that none of the $\gamma_i$ described above are contained in $B$. We claim that 
\[
\{[\gamma_1],\dots,[\gamma_{2g}],[\eta]\} \subset H_1(W)\approx \ZZ^{2g}
\]
is a linearly independent set. This is impossible, so we will have proven the lemma. To this end, assume that there are coefficients so that 
\[
m[\eta] = \sum_{i=1}^{2g} n_i[\gamma_i] \text{ in } H_1(W).
\]
As in Lemma \ref{lemm:R3-good-basis-comp-surf-ball}, by working downwards from $i=2g$ and considering the intersection of each $\gamma_i$ with appropriate components of $W \cap \partial B$, using the $\cU_i$'s, we can show that $n_i=0$ unless $\gamma_i$ is contained entirely in $\bar B^c$. As in the proof of Lemma \ref{lemm:R3-good-basis-comp-surf-ball}, applying Mayer--Vietoris to the pair $(W\cap \bar B,W\cap  B^c)$, we find that $m\eta$ must be homologous in $\bar B\setminus S$ to a cycle in $\partial B\setminus S$. This contradicts the above choice of $\eta$ unless $m=0$, but in this case this contradicts the linear independence of the $[\gamma_i]$. This completes the proof. 
\end{proof}

\appendix


\section{Geometry of asymptotically conical shrinkers} \label{sec:shrinker.geometry}

Consider a shrinker $\Sigma^{n}\subset \RR^{n+1}$ that is asymptotic to a smooth cone $\cC$. In \cite[Lemma 2.3]{ChodoshSchulze}, it was shown that the function $w : \cC \setminus B_{R}(\mathbf{0}) \to \RR$ parametrizing the end of $\Sigma$, i.e., such that
\[ \operatorname{graph}_{\cC} w := \{ \mathbf{x} + w(\mathbf{x}) \nu_{\cC}(\mathbf{x}) : \mathbf{x} \in \cC \setminus B_R(\mathbf{0}) \} \subset \Sigma, \]
must satisfy $w=O(r^{-1})$, $\nabla^{(k)}_{\partial_{r}}w = O(r^{-1-k})$, and $\nabla^{(k)} w = O(r^{-1-k+\eta})$ for any $\eta>0$. Here, $r = |\mathbf{x}|$ is the radial coordinate on the cone. The sharp asymptotics of $w$ (which we need in this paper) are, in fact:

\begin{lemma} \label{lemma:shrinker.geometry.decay}
	The function $w$ above satisfies $\nabla^{(k)}_{\cC} w = O(r^{-1-k})$ as $r \to \infty$.
\end{lemma}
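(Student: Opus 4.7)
We proceed by rescaling on dyadic annuli and invoking interior regularity at unit scale. Fix $R \geq R_0$ large, and set $\hat\Sigma_R := R^{-1}\Sigma$. Since the cone $\cC$ is scale-invariant, on the unit-scale annulus $\hat A := \cC \cap (B_2 \setminus B_1)$ the surface $\hat\Sigma_R$ is the normal graph over $\cC$ of
\[ \hat w_R(\mathbf{y}) := R^{-1}w(R\mathbf{y}). \]
The hypothesis $w = O(r^{-1})$ gives $\|\hat w_R\|_{C^0(\hat A)} = O(R^{-2})$. Moreover, the non-sharp bounds $\nabla^{(k)}w = O(r^{-1-k+\eta})$ from \cite[Lemma 2.3]{ChodoshSchulze}, together with the rescaling identity $|\nabla^{(k)}_\cC w(\mathbf{x})| = R^{1-k}|\nabla^{(k)}_\cC \hat w_R(\mathbf{y})|$ valid at corresponding points $\mathbf{x} = R\mathbf{y}$, imply $\|\nabla^{(k)}_\cC \hat w_R\|_{C^0(\hat A)} = O(R^{k\eta - 2})$, which is small for $R$ large (for any fixed $k$ and $\eta$ chosen small enough). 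In particular $\{\hat\Sigma_R\}_{R \geq R_0}$ is a family of small $C^1$-perturbations of $\cC$ on $\hat A$.

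By the self-similarity of the shrinking mean curvature flow $\tau \mapsto \sqrt{-\tau}\Sigma$, the surface $\hat\Sigma_R$ is the $\tau = -1$ time-slice of the rescaled MCF $\tau \mapsto R^{-1}\sqrt{-\tau}\Sigma$. Over the parabolic cylinder $\hat A \times [-\lambda^2, -1]$ for a fixed $\lambda > 1$, this rescaled flow is a smooth graphical MCF over $\cC$ with graphing function $\hat w_R(\mathbf{y}, \tau) := (\sqrt{-\tau}/R)\, w(R\mathbf{y}/\sqrt{-\tau})$, which inherits the same uniform smallness across the full spacetime cylinder as on the $\tau = -1$ slice. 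Ecker--Huisken interior estimates \cite{EckerHuisken:interior} for graphical mean curvature flow, applied in local charts over $\cC$ (whose intrinsic and extrinsic geometry is uniformly controlled at unit scale on $\hat A$), combined with the linearization of the quasilinear MCF equation about the static zero-graph (valid thanks to the uniform $C^1$ smallness), yield for every $k \in \NN$ and every $\hat A' \subset \hat A$ with $\overline{\hat A'} \subset \hat A^\circ$,
\[ \|\hat w_R(\cdot, -1)\|_{C^{k,\alpha}(\hat A')} \leq C(k,\alpha)\, \|\hat w_R\|_{C^0(\hat A \times [-\lambda^2, -1])} = O(R^{-2}), \]
uniformly in $R \geq R_0$.

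Unraveling the rescaling via $|\nabla^{(k)}_\cC w(\mathbf{x})| = R^{1-k}|\nabla^{(k)}_\cC \hat w_R(\mathbf{y})|$ converts the estimate above into $|\nabla^{(k)}_\cC w(\mathbf{x})| \leq C(k)\, R^{-1-k}$ on an annulus comparable to $A_R := \cC \cap (B_{2R} \setminus B_R)$; a dyadic covering of the end of $\Sigma$ then yields $\nabla^{(k)}_\cC w = O(r^{-1-k})$, as claimed. The main obstacle is that the naive rescaled \emph{static} shrinker equation $H + \tfrac{R^2}{2}\mathbf{x}\cdot\nu = 0$ satisfied by $\hat\Sigma_R$ has drift and zero-order coefficients of size $R^2$, which would seem to preclude uniform elliptic Schauder estimates. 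This apparent singular perturbation is resolved by reinterpreting $\hat\Sigma_R$ as a time-slice of the rescaled MCF: the time derivative absorbs the $R^2$ factor into the natural parabolic scaling, producing a uniformly parabolic linearized equation for which interior parabolic estimates control all higher $C^{k,\alpha}$ norms by $C^0$ norms in the standard fashion.
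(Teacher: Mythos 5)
Your key step does not hold up: the family $\tau \mapsto R^{-1}\sqrt{-\tau}\,\Sigma$ is \emph{not} a mean curvature flow. Writing $R^{-1}\sqrt{-\tau}\,\Sigma = M(\tau/R^2)$ with $M(t) = \sqrt{-t}\,\Sigma$, your family is the self-shrinking flow slowed down in time by the factor $R^2$; it satisfies $\partial_\tau \mathbf{x} = R^{-2}\mathbf{H}$, so the graph function $\hat w_R(\cdot,\tau)$ solves $\partial_\tau \hat w_R = R^{-2}(\text{quasilinear elliptic operator})$, which degenerates as $R \to \infty$. Consequently the Ecker--Huisken/parabolic Schauder estimates you invoke do not come with constants uniform in $R$ (in the limit the equation is $\partial_\tau \hat w = 0$, which gives no spatial gain). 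If you instead insist on an honest MCF whose final slice is $R^{-1}\Sigma$ on the unit annulus, you are forced into one of two bad situations by self-similarity: either you rescale time parabolically, in which case the slice of interest sits at time $-R^{-2}$, only $O(R^{-2})$ before the singular time, and interior estimates on a cylinder of height $R^{-2}$ cost a factor $R^{k}$ for the $k$-th derivative, yielding nothing better than $|\nabla^k w| = O(r^{-1})$; or you take a unit-length backward time interval for the time-translated flow $\tau \mapsto \sqrt{-(\tau+1-R^{-2})}\,\Sigma$, in which case the graph over the cone grows backward in time like $|t|$ and the $C^0$ norm over the cylinder is $O(1)$, not $O(R^{-2})$, so the estimate again only reproduces the known non-sharp decay. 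The ``absorption of the $R^2$ factor into the parabolic scaling'' is precisely the point that cannot be made to work: zooming in space forces zooming in time, and there is no unit-size parabolic cylinder ending at the slice $R^{-1}\Sigma$ on which the surface stays $O(R^{-2})$-close to the cone. (A small additional slip: from $\nabla^{(k)}w = O(r^{-1-k+\eta})$ one gets $\|\nabla^{(k)}_\cC \hat w_R\|_{C^0(\hat A)} = O(R^{\eta-2})$, not $O(R^{k\eta-2})$, but this is harmless.)

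It is worth noting that the degenerate limit of your rescaled equation is exactly the structure the paper exploits. In the static rescaled shrinker equation $H + \tfrac{R^2}{2}\mathbf{x}\cdot\nu = 0$ the dominant part as $R \to \infty$ is the first-order scaling (transport) operator, and the paper works with it directly and at the original scale: combining the shrinker equation with the formula for $\nu_\Sigma$ in graphical coordinates gives the exact radial ODE $r\nabla_{\partial_r} w - w = W$, where $W$ is built from $H_\Sigma$ and satisfies $\nabla^{(k)} W = O(r^{-1-k})$ using only the curvature decay of the asymptotically conical end. Differentiating this ODE in a tangential direction $r\vartheta$ (with $\vartheta$ parallel along rays, so $[r\vartheta,\partial_r]=0$) and integrating from infinity yields $\nabla w = O(r^{-2})$, and induction gives the higher derivatives. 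If you want to salvage a PDE-estimate approach, you would need to engage with this transport term (e.g.\ by integrating along characteristics), not hope that interior parabolic estimates with $R$-independent constants apply.
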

\begin{proof}
	We prove this for $k=1$---higher derivatives follow by induction. The shrinker equation \eqref{eq:defn-shrinker} along $\Sigma$ implies  (using our curvature conventions from Section \ref{sec:prelim.curvature}) that 
	\[
		H_{\Sigma}(\mathbf{x}+w(\mathbf{x})\nu_{\cC}(\mathbf{x}))  + \tfrac 12 \bangle{\mathbf{x} + w(\mathbf{x})\nu_{\cC}(\mathbf{x}), \nu_{\Sigma}}= 0.
	\]
	Moreover, by \cite[(C.1)]{ChodoshSchulze} we have 
	\[
		\nu_{\Sigma}(\mathbf{x}) = (1+|(\textrm{Id}-w(\mathbf{x})A_{\cC}(\mathbf{x}))^{-1}\nabla w(\mathbf{x})|^{2})^{-\frac 12}(-(\textrm{Id}-w(\mathbf{x})A_{\cC}(\mathbf{x}))^{-1}\nabla w(\mathbf{x}) + \nu_{\cC}(\mathbf{x})).
	\]
	By combining these equations we find 
	\[
		r \nabla_{\partial_{r}} w(\mathbf{x}) - w(\mathbf{x})  = W(\mathbf{x}),
		\]
	where
	\[ 
		W(\mathbf{x}) := 2 (1+|(\textrm{Id}-w(\mathbf{x})A_{\cC}(\mathbf{x}))^{-1}\nabla w(\mathbf{x})|^{2})^{\frac 12}H_{\Sigma}(\mathbf{x}+w(\mathbf{x})\nu_{\cC}(\mathbf{x})).
	\]
	We have used the fact that $A_{\cC}(\partial_r, \cdot) \equiv 0$, as well as that $\textrm{Id} - wA_{\cC}$ is an endomorphism of $T\cC$ and $\nu_{\cC} \perp T\cC$. Observe that
	\begin{equation} \label{eq:shrinker.geometry.decay.W}
		\nabla^{(k)} W = O(r^{-1-k}).
	\end{equation}
	Indeed, $\nabla^{(k)} H_{\Sigma} = O(r^{-1-k})$, while the other terms decay at a faster rate. For $\mathbf{x} = rp$ for $p\in\Gamma$, the link of $\cC$, choose a vector $\vartheta \in T_{p} \Gamma$. Extend $\vartheta$ to be parallel along $\gamma:r\mapsto rp$. Note that $[r\vartheta, \partial_r] = 0$, so by \eqref{eq:shrinker.geometry.decay.W} we find:
	\[
		r \nabla_{\partial_{r}} (\nabla_{r\vartheta} w) - \nabla_{r\vartheta} w = \nabla_{r\vartheta} W = O(r^{-1})
	\]
	Integrating this from infinity (cf.\ \cite[Lemma 2.3]{ChodoshSchulze}), we find $\nabla_{r\vartheta} w = O(r^{-1})$. As $\vartheta = O(1)$, we find that $\nabla w = O(r^{-2})$ (decay of the radial component was  shown in \cite[Lemma 2.3]{ChodoshSchulze}). 
\end{proof}

Using this improved decay, one can set $\eta := 0$ in \cite[Corollary 2.4]{ChodoshSchulze}, \cite[Lemma 2.5]{ChodoshSchulze}, \cite[Lemma 2.7]{ChodoshSchulze}, \cite[Lemma 2.8]{ChodoshSchulze}. Thus, we have:

\begin{lemma} \label{lemma:shrinker.geometry.sff.diff}
	The second fundamental form of $\Sigma$ satisfies, for $k \geq 0$,
	\[
		|\nabla^{(k)}_{\cC}(A_{\Sigma}\circ F - A_{\cC})| = O(r^{-3-k})
	\]
	as $r \to \infty$. Here, $F: \mathbf{x} \mapsto \mathbf{x} + w(\mathbf{x}) \nu_{\cC}(\mathbf{x})$ parametrizes the end of $\Sigma$ over $\cC$. 
\end{lemma}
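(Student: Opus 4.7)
The strategy is to revisit the proof of \cite[Corollary 2.4]{ChodoshSchulze}, which already yields the stated decay with a loss of $r^{\eta}$ for arbitrary $\eta>0$, and to observe that this loss came exclusively from the non-sharp tangential gradient bound on $w$. Now that Lemma \ref{lemma:shrinker.geometry.decay} has upgraded this to the sharp bound $|\nabla^{(k)}_{\cC} w| = O(r^{-1-k})$, the same computation will deliver the sharp estimate $|\nabla^{(k)}_\cC(A_\Sigma \circ F - A_\cC)| = O(r^{-3-k})$ with no loss.

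In more detail, recall from \cite[Appendix C]{ChodoshSchulze} the graph formula for the second fundamental form of $\Sigma = \Graph_\cC w$: in a local frame, one has an expansion of the schematic form
\[
A_\Sigma \circ F - A_\cC = -\nabla^2_\cC w + w \cdot (A_\cC)^{\ast 2} + \cR(\mathbf{x}, w, \nabla_\cC w, A_\cC),
\]
where $\cR$ gathers the nonlinear error terms, each of which is a smooth contraction of at least two of $w$, $\nabla_\cC w$, $A_\cC$, with coefficients smooth in $w A_\cC$ (note that $\Vert w A_\cC\Vert \to 0$ as $r \to \infty$, so all power-series denominators are bounded). On a regular cone, $|\nabla^{(j)}_\cC A_\cC| = O(r^{-1-j})$ for all $j \geq 0$ by direct computation. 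Combined with Lemma \ref{lemma:shrinker.geometry.decay}, each term in the schematic expansion above has pointwise size $O(r^{-3})$: the $\nabla^2_\cC w$ term is $O(r^{-3})$ directly, the $w(A_\cC)^{\ast 2}$ term is $O(r^{-1}) \cdot O(r^{-2}) = O(r^{-3})$, and any quadratic contraction of $\{w, \nabla_\cC w\}$ with $A_\cC$ is dominated by $O(r^{-1}) \cdot O(r^{-2}) + O(r^{-2})\cdot O(r^{-1}) = O(r^{-3})$.

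The case $k \geq 1$ follows by applying $\nabla^{(k)}_\cC$ to the expansion and using the Leibniz rule. Each derivative that falls on $w$ gives an extra factor of $r^{-1}$ (by Lemma \ref{lemma:shrinker.geometry.decay}), each derivative on $A_\cC$ gives $r^{-1}$, and each derivative on the smooth nonlinear coefficient gives at worst a factor that is bounded plus lower-order contributions (since $\nabla_\cC(w A_\cC) = O(r^{-3})$, etc.). In every resulting monomial, the total decay rate is at least $r^{-3-k}$, which is the claim.

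The only conceivably subtle step is verifying that all the terms in $\cR$ are genuinely of quadratic type (not merely linear) in $(w, \nabla_\cC w)$, since otherwise one would only recover decay of order $r^{-2-k}$. However, this is already encoded in the structure used in \cite[Lemma 2.5 and Lemma 2.7]{ChodoshSchulze}, where the leading linear part of $A_\Sigma \circ F$ about $A_\cC$ is precisely $-\nabla^2_\cC w + w\cdot(A_\cC)^{\ast 2}$; the remainder $\cR$ is quadratic in $(w, \nabla_\cC w)$ and hence automatically $O(r^{-4})$ at order zero and $O(r^{-4-k})$ after $k$ derivatives, which is more than enough. Thus the main obstacle was really the sharp decay on $\nabla_\cC w$, which Lemma \ref{lemma:shrinker.geometry.decay} has now provided, and the rest of the argument is bookkeeping.
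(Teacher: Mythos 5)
Your proposal is correct and takes essentially the same route as the paper: the paper's own proof is just the observation that, with the sharp decay $\nabla^{(k)}_{\cC}w = O(r^{-1-k})$ from Lemma \ref{lemma:shrinker.geometry.decay}, one may set $\eta := 0$ in the graphical second-fundamental-form estimates of \cite{ChodoshSchulze} (Corollary 2.4 and Lemmas 2.5, 2.7, 2.8), since the $r^{\eta}$ loss there came only from the non-sharp tangential derivative bounds on $w$. Your explicit expansion and order-counting simply spell out that bookkeeping (and in fact, by scaling, even the \emph{linear} terms already decay like $r^{-3-k}$ once the sharp bound on $w$ is available, so the concern you flag about quadratic structure is not needed).
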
 

\begin{corollary} \label{coro:shrinker.geometry.sff.radial}
	The second fundamental form of $\Sigma$ satisfies, for $k \geq 0$,
	\[ |\nabla^{(k)}_\Sigma A_\Sigma(\mathbf{x}^T, \cdot)| = O(r^{-2-k}) \]
	as $r \to \infty$. Here, $\mathbf{x}^T$ is the projection of the ambient position vector $\mathbf{x} \in \Sigma$ to $T_{\mathbf{x}} \Sigma$.
\end{corollary}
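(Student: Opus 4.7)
The plan is to reduce the claim to a computation on the asymptotic cone $\cC$ via the parametrization $F : \cC \setminus B_R(\bOh) \to \Sigma$, $F(\mathbf{y}) = \mathbf{y} + w(\mathbf{y})\nu_\cC(\mathbf{y})$, exploiting the structural cancellation $A_\cC(\partial_r^\cC, \cdot) \equiv 0$. The key inputs are Lemma \ref{lemma:shrinker.geometry.decay} (sharp decay of $w$) and Lemma \ref{lemma:shrinker.geometry.sff.diff} (sharp comparison of $F^* A_\Sigma$ with $A_\cC$).

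For $k = 0$, I would first observe that, extending $A_\Sigma$ by zero on the normal bundle of $\Sigma$, $A_\Sigma(\mathbf{x}^T, X) = A_\Sigma(\mathbf{x}, X)$ for every $X \in T_p\Sigma$. Writing $\mathbf{x} = p = F(\mathbf{y}) = \mathbf{y} + w\nu_\cC$ and $\mathbf{y} = |\mathbf{y}|\partial_r^\cC$, this yields
\[ A_\Sigma(\mathbf{x}^T, X) = |\mathbf{y}|\,A_\Sigma(\partial_r^\cC, X) + w\,A_\Sigma(\nu_\cC, X). \]
The second summand is easy: $|\nu_\cC - \nu_\Sigma| = O(r^{-2})$ by \cite[(C.1)]{ChodoshSchulze} and Lemma \ref{lemma:shrinker.geometry.decay}, hence the tangential projection $\nu_\cC^{T_\Sigma}$ has norm $O(r^{-2})$, and the summand is bounded by $|w|\,|A_\Sigma|\,|\nu_\cC^{T_\Sigma}|\,|X| = O(r^{-4})|X|$.

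The heart of the matter is the first summand. Setting $\tilde X := (dF)^{-1}X \in T_\mathbf{y}\cC$ and pulling back via $F$,
\[ A_\Sigma(dF(\partial_r^\cC), X) = (F^*A_\Sigma)(\partial_r^\cC, \tilde X) = A_\cC(\partial_r^\cC, \tilde X) + E(\partial_r^\cC, \tilde X), \]
where $E := F^*A_\Sigma - A_\cC$. The cancellation $A_\cC(\partial_r^\cC, \cdot) \equiv 0$---the radial direction is a principal direction of $\cC$ with zero principal curvature---together with $|E| = O(r^{-3})$ from Lemma \ref{lemma:shrinker.geometry.sff.diff}, gives $|A_\Sigma(dF(\partial_r^\cC), X)| = O(r^{-3})|X|$. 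Since $dF(\partial_r^\cC) - \partial_r^\cC = (\nabla_{\partial_r^\cC}w)\nu_\cC = O(r^{-2})$, adjusting yields $|A_\Sigma(\partial_r^\cC, X)| = O(r^{-3})|X|$, and the prefactor $|\mathbf{y}| \sim r$ gives the desired $O(r^{-2})|X|$. The main obstacle is precisely this step: the naive bound $|\mathbf{x}|\,|A_\Sigma|\,|X| = O(1)|X|$ provides no decay whatsoever, and the argument rests on the radial cancellation combined with the sharp $O(r^{-3})$ rate in Lemma \ref{lemma:shrinker.geometry.sff.diff}---which in turn relies on the sharpened decay of $w$ from Lemma \ref{lemma:shrinker.geometry.decay} rather than the weaker estimate known in \cite[Corollary 2.4]{ChodoshSchulze}.

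For $k \geq 1$, I would proceed by induction. Differentiating yields
\[ (\nabla^\Sigma_Y A_\Sigma(\mathbf{x}^T, \cdot))(X) = (\nabla^\Sigma_Y A_\Sigma)(\mathbf{x}^T, X) + A_\Sigma(\nabla^\Sigma_Y \mathbf{x}^T, X), \]
and using Codazzi together with $\nabla^\Sigma_Y \mathbf{x}^T = Y - 2H_\Sigma A_\Sigma(Y)^\sharp$, the second term contributes a principal part $+A_\Sigma(Y, X)$, while the first, by pulling back via $F$ and invoking the cone identity $\nabla_{\mathbf{x}} A_\cC = -A_\cC$ (a consequence of the $(-1)$-homogeneity of $A_\cC$ on $\cC$), contributes a principal part $-A_\Sigma(Y, X)$. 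These cancel and leave an $O(r^{-3})|Y||X|$ remainder. Iterating this cancellation via the higher-order bounds in Lemmas \ref{lemma:shrinker.geometry.decay} and \ref{lemma:shrinker.geometry.sff.diff} produces the full $O(r^{-2-k})$ decay.
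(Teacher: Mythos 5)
Your proposal is correct and follows essentially the argument the paper intends (the corollary is stated without an explicit proof, as an immediate consequence of Lemmas \ref{lemma:shrinker.geometry.decay} and \ref{lemma:shrinker.geometry.sff.diff}): the decay comes from the exact cancellation $A_\cC(\partial_r,\cdot)\equiv 0$ together with the sharp $O(r^{-3-k})$ comparison of $F^*A_\Sigma$ with $A_\cC$, plus the $w$-corrections controlled by Lemma \ref{lemma:shrinker.geometry.decay}. Your treatment of the derivatives via Codazzi and the homogeneity identity $\nabla_{\mathbf{x}}A_\cC=-A_\cC$ is just an intrinsic repackaging of differentiating the pulled-back identity (using $\nabla^\cC\mathbf{x}=\operatorname{Id}$), and correctly reflects that the stated bound concerns derivatives of the contracted tensor $A_\Sigma(\mathbf{x}^T,\cdot)$, for which the product-rule cancellation is essential.
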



\section{Non-standard Schauder estimates} \label{sec:schauder}

We recall the following non-standard Schauder estimate due to Knerr:

\begin{theorem}[{\cite[Theorem 1]{Knerr:Schauder}}] \label{theo:schauder.estimate.knerr}
	Suppose that $B_2 \subset \RR^n$ and we are given coefficients $a_{ij}$, $b_i$, $c : B_2 \times [-2, 0] \to \RR$ and functions $u$, $h : B_2 \times [-2, 0] \to \RR$ so that $u$ is a classical solution of
	\[ \tfrac{\partial}{\partial t} u - a_{ij} \tfrac{\partial^2}{\partial x^i \partial x^j} u - b_i \tfrac{\partial}{\partial x^i} u - cu = h. \]
	Assume
	\[ \sup_{t \in [-2, 0]} \Big[ \Vert a_{ij}(\cdot, t) \Vert_{0,\alpha;B_2} + \Vert b_i(\cdot, t) \Vert_{0,\alpha;B_2} + \Vert c(\cdot, t) \Vert_{0,\alpha;B_2} \Big] \leq \Lambda \]
	and
	\[ a_{ij}(x, t) \xi_i \xi_j \geq \lambda |\xi|^2, \; \forall \xi \in \RR^n, \]
	with $\lambda$, $\Lambda \in (0, \infty)$. Then, for $T \in [-1, 0]$,
	\begin{multline} \label{eq:schauder.estimate.knerr}
		\sum_{j=0}^2 \Vert D_{\mathbf{x}}^j u \Vert_{0,\alpha,\alpha/2;B_1 \times [-1, T]} + \sup_{t \in [-1, T]} \Vert \tfrac{\partial}{\partial t} u(\cdot, t) \Vert_{0,\alpha;B_1} \\
			\leq C \sup_{t \in [-4, T]} \Big[ \Vert u(\cdot, t) \Vert_{0;B_2} + \Vert h(\cdot, t) \Vert_{0,\alpha;B_2} \Big],
	\end{multline}
	where $C = C(n, \alpha, \lambda, \Lambda)$. Here, $\Vert \cdot \Vert_{0,\alpha,\alpha/2}$ denotes the standard parabolic spacetime H\"older norm and $D^j_{\mathbf{x}} u$ denotes the matrix of $j$ partial derivatives in spatial directions.
\end{theorem}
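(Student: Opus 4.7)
The plan is to adapt classical parabolic Schauder theory, the twist being that the right-hand side $h$ has only spatial $C^\alpha$ regularity (rather than full parabolic $C^{\alpha,\alpha/2}$) and that no initial condition is imposed on $u$. I would first localize by multiplying $u$ with a smooth spacetime cutoff $\eta$ equal to $1$ on a neighborhood of $\overline{B_1}\times[-1,T]$ and supported in $B_{3/2}\times(-3/2,T]$. The function $v := \eta u$, extended by $0$, solves an equation of the same form on $\RR^n\times(-\infty,T]$ with zero data at $t=-3/2$ and a modified right-hand side
\[ \tilde h = \eta h + (\partial_t \eta - a_{ij}\partial^2_{ij}\eta - b_i\partial_i\eta - c\eta)\,u - 2 a_{ij}(\partial_i\eta)(\partial_j u), \]
whose $\sup_t \|\cdot\|_{C^\alpha_x}$ norm is controlled by the right side of \eqref{eq:schauder.estimate.knerr} thanks to $\Lambda$ and, for the $\partial_j u$ term, an interpolation inequality. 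Next I would freeze coefficients at each $(x_0,t_0)$ with $\bar a_{ij} := a_{ij}(x_0,t_0)$ and, on a small parabolic cylinder of radius $r_0 = r_0(\alpha,\lambda,\Lambda)$, absorb the difference $a_{ij}-\bar a_{ij} = O(r_0^\alpha)$ into the left side of the Schauder estimate in the usual way. After diagonalizing $\bar a_{ij}$, everything reduces to the following heat-equation statement: if $v$ has compact support with $v\equiv 0$ for small $t$ and $\partial_t v - \Delta v = g$ with $\sup_t \|g(\cdot,t)\|_{C^\alpha_x}\leq M$, then $v$ satisfies the full parabolic $C^{2,\alpha,\alpha/2}$ estimate with constant $CM$.

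For this reduced heat-equation statement I would use Duhamel, $v(x,t) = \int_{-3/2}^t (e^{(t-s)\Delta} g(\cdot,s))(x)\,ds$, and estimate $D^2 v(\cdot,t)$ in $C^\alpha_x$ by a Campanato-type splitting. For fixed $x_1 \neq x_2$ with separation $\delta$, I would split the time integral at $s = t-\delta^2$: on the near piece $s \in [t-\delta^2,t]$, use that $g$ is spatially $C^\alpha$ to replace $g(y,s)$ with $g(y,s)-g(x_1,s)$ (the constant part integrates to zero against $D^2 \Gamma$), yielding a bound $\lesssim (t-s)^{-1+\alpha/2}\|g(\cdot,s)\|_{C^\alpha_x}$, which is integrable in $s$; on the far piece, standard Calder\'on--Zygmund bounds for convolution against $D^2 \Gamma$ apply. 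Summing gives $\sup_t \|D^2 v(\cdot,t)\|_{C^\alpha_x}\leq CM$, and the PDE then produces the matching bound on $\partial_t v$. The parabolic $C^{\alpha/2}_t$ regularity of $D^2 v$ and $\partial_t v$ follows from the standard interpolation trick: for $|t_1-t_2| = h$, mollify spatially at scale $\sqrt h$, bound the time increment of the mollified quantity via $\partial_t v$, and control the mollification error using the $C^\alpha_x$ bound.

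The main obstacle is the Duhamel estimate above. Classical parabolic Schauder arguments use \emph{parabolic} H\"older regularity of $g$ to obtain symmetric space-time control of the heat convolution; here $g$ is only spatially H\"older and merely $L^\infty$ in time, so that symmetric splitting must be replaced by the one-sided Campanato/Calder\'on--Zygmund argument above, which exploits the spatial modulus of continuity of $g$ to tame the near-diagonal singularity of $D^2 \Gamma$ at the cost of the integrable singular weight $(t-s)^{-1+\alpha/2}$. Unfreezing, summing over localizations, and tracking constants through the perturbation argument then produces \eqref{eq:schauder.estimate.knerr}; this is precisely the content of Knerr's proof \cite{Knerr:Schauder}.
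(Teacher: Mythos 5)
The paper does not prove this statement at all: it is quoted verbatim as an external result of Knerr (the citation \cite{Knerr:Schauder}), so there is no internal proof to compare against; moreover, Knerr's actual argument is a maximum-principle/barrier proof in the spirit of Brandt, not the potential-theoretic route you sketch, so your closing claim that your argument ``is precisely the content of Knerr's proof'' is not accurate.

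Judged on its own merits, your outline has one genuine gap: the coefficient-freezing step. The hypothesis is only $\sup_{t} \Vert a_{ij}(\cdot,t)\Vert_{0,\alpha;B_2} \leq \Lambda$, i.e.\ H\"older regularity in space \emph{uniformly} in time, with no modulus of continuity in $t$ whatsoever (this is exactly what makes the estimate ``non-standard''). Consequently, for $\bar a_{ij} := a_{ij}(x_0,t_0)$ the difference $a_{ij}(x,t)-\bar a_{ij}$ on a small parabolic cylinder is bounded only by $\Lambda r_0^\alpha + |a_{ij}(x_0,t)-a_{ij}(x_0,t_0)|$, and the second term does not become small as $r_0 \to 0$; it can be of order $\lambda$ or larger, so it cannot be absorbed into the left-hand side, and the perturbation scheme as written breaks down. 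The repair is to freeze only the spatial dependence: compare with the operator $\partial_t - a_{ij}(x_0,t)\partial^2_{ij}$, whose coefficients are spatially constant but merely bounded measurable in time. Its fundamental solution is the anisotropic Gaussian with covariance $\int_s^t a(x_0,\sigma)\,d\sigma$, which satisfies the same kernel bounds $|D^2_x\Gamma| \lesssim (t-s)^{-(n+2)/2}e^{-c|x-y|^2/(t-s)}$ and $\int D^2_x \Gamma\, dy = 0$ that your one-sided Campanato/Duhamel splitting uses, so the reduced estimate (spatial $C^\alpha$ control of $D^2 v$ uniformly in $t$, plus the mollification trick for the $C^{\alpha/2}$ time regularity) goes through with this model operator in place of the heat equation; alternatively one can follow Knerr's barrier argument and avoid representations entirely. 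A smaller gloss: the cutoff error term $-2a_{ij}(\partial_i\eta)(\partial_j u)$ is controlled by interpolation only up to an $\eps\,\Vert D^2 u\Vert$-type term on a larger ball, so you need an absorption/iteration over nested radii (e.g.\ Simon's absorption lemma, exactly as the paper itself does in the proof of Corollary \ref{coro:schauder.estimate.L1}) rather than a single application of interpolation.
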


Note that this differs from the standard Schauder estimates because we're only assuming H\"older continuity on $a_{ij}$, $b_i$, $c$, $h$ in the space directions. As a result, we only get a spatial H\"older bound on $\tfrac{\partial}{\partial t} u$. The other H\"older bound remain as in the standard Schauder theory.

We also have the following variant:

\begin{corollary} \label{coro:schauder.estimate.L1}
	Assume the setup of Theorem \ref{theo:schauder.estimate.knerr}. Then, for $T \in [-1, 0]$,
	\begin{multline} \label{eq:schauder.estimate.L1}
		\sum_{j=0}^2 \Vert D_{\mathbf{x}}^j u \Vert_{0,\alpha,\alpha/2;B_1 \times [-1, T]} + \sup_{t \in [-1, T]} \Vert \tfrac{\partial}{\partial t} u(\cdot, t) \Vert_{0,\alpha;B_1} \\
			\leq C \Big[ \Vert u \Vert_{L^1(B_2 \times [-2, T])} + \sup_{t \in [-4, T]} \Vert h(\cdot, t) \Vert_{0,\alpha;B_2} \Big],
	\end{multline}
	where $C = C(n, \alpha, \lambda, \Lambda)$. 
\end{corollary}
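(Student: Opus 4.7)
The plan is to deduce Corollary \ref{coro:schauder.estimate.L1} from Theorem \ref{theo:schauder.estimate.knerr} via scaling, interpolation, and absorption, using the standard bootstrap procedure. First, by rescaling Theorem \ref{theo:schauder.estimate.knerr} in the spatial and temporal variables, I would obtain the family of estimates
\[ \Vert u\Vert_{\mathscr{N}(Q_\rho)} \leq C(\rho'-\rho)^{-N}\Big(\sup_{Q_{\rho'}}|u| + \sup_{t}\Vert h(\cdot,t)\Vert_{0,\alpha;B_{\rho'}}\Big) \]
valid for all $1 \leq \rho < \rho' \leq 2$, where $Q_r := B_r\times[-r^2,T]$ and $\mathscr{N}(Q)$ abbreviates the full spacetime norm appearing on the left-hand side of \eqref{eq:schauder.estimate.L1} restricted to $Q$. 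Here $N = N(n)$ and $C = C(n,\alpha,\lambda,\Lambda)$.

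Second, I would invoke the standard parabolic Ehrling-type interpolation inequality: since $\mathscr{N}(Q)$ embeds compactly into $C^0(Q)$ on any fixed parabolic cylinder, for every $\varepsilon > 0$ there exists $C(\varepsilon) > 0$ such that
\[ \sup_{Q_{\rho'}}|u| \leq \varepsilon\, \Vert u\Vert_{\mathscr{N}(Q_{\rho'})} + C(\varepsilon)\,\Vert u\Vert_{L^1(Q_{\rho'})} . \]
This follows by a routine contradiction/compactness argument.

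Third, I would run the absorption on the weighted quantity
\[ \Phi := \sup_{\rho\in[1,2)}(2-\rho)^N\,\Vert u\Vert_{\mathscr{N}(Q_\rho)}, \]
which is finite by the smoothness of $u$. Combining the two displayed estimates (taking $\rho' = (\rho+2)/2$ and $\varepsilon$ a small multiple of $(2-\rho)^N$) yields the self-improvement
\[ \Phi \leq \tfrac{1}{2}\Phi + C\big(\Vert u\Vert_{L^1(Q_2)} + \sup_{t}\Vert h(\cdot,t)\Vert_{0,\alpha;B_2}\big), \]
from which \eqref{eq:schauder.estimate.L1} follows by absorbing the first right-hand term and evaluating $\Phi$ at $\rho = 1$.

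The only modest obstacle is verifying the Ehrling-type interpolation in the correct parabolic Hölder spaces with the prescribed $\varepsilon$-dependence and choosing the weight exponent $N$ large enough so that absorption actually closes, but both are completely standard; everything else is routine bookkeeping.
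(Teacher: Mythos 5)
Your overall architecture is the same as the paper's: apply Knerr's estimate, trade the sup-norm of $u$ for its $L^1$-norm by interpolation, and absorb the top-order term. The difference is in the absorption device: you propose the classical nested-cylinder weighted-supremum iteration, whereas the paper interpolates once at unit scale with a \emph{fixed} $\eps$, rescales the resulting combined inequality to all parabolic balls $B_r(y_0)\times[t_0-r^2,t_0]$, and then invokes L.~Simon's absorption lemma for monotone subadditive seminorms (applied to $S=[D^2_{\mathbf{x}}u]_{\alpha,\alpha/2}$ with scaling exponent $2+\alpha$), recovering the lower-order terms afterwards by interpolation and by reusing the PDE. Your route can be made to work, but as written it has a genuine gap at exactly the two points you wave off as standard.

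First, the Ehrling inequality obtained by a compactness/contradiction argument gives a constant $C(\eps)$ with no quantitative control, and that is not enough here: in your absorption you are forced to take $\eps$ comparable to a power of $(2-\rho)$ (otherwise the factor $C(\rho'-\rho)^{-N}\eps$ in front of $\Vert u\Vert_{\mathscr{N}(Q_{\rho'})}$ cannot be beaten by the weight), so the leftover term $C(\eps(\rho))\Vert u\Vert_{L^1}$ is \emph{not} uniformly bounded as $\rho\to 2$, and the displayed self-improvement $\Phi\le\tfrac12\Phi+C(\cdots)$ simply does not follow. What is needed is the quantitative interpolation $\sup_Q|u|\le\eps\Vert u\Vert_{\mathscr{N}(Q)}+C\eps^{-M}\Vert u\Vert_{L^1(Q)}$ with an explicit power $M=M(n)$, proved by averaging over (backward) parabolic cylinders and integrating along line segments -- this is precisely the paper's ``interpolation and integrating along line segments'' step, and it cannot be replaced by soft compactness. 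Second, even with the polynomial form, the weight exponent in $\Phi$ must be taken strictly larger than the covering blow-up exponent from Knerr (roughly $N(M+1)$ rather than $N$); with the same $N$ in both places, as in your formulas, the error term still blows up like $(2-\rho)^{-NM}$. Equivalently, one can feed the combined estimate into the standard Giaquinta-type iteration lemma for $f(\rho)=\Vert u\Vert_{\mathscr{N}(Q_\rho)}$, which again requires the polynomial $\eps$-dependence. Finally, a small bookkeeping point: your cylinders $Q_\rho=B_\rho\times[-\rho^2,T]$ leave the domain $B_2\times[-2,T]$ once $\rho>\sqrt2$; take, e.g., $B_\rho\times[-\rho,T]$, and make sure the parabolic cylinders used in the covering step stay inside the region where the equation holds.
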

\begin{proof}
	For simplicity, let's prove this for $T = 0$. Let us consider the seminorm
	\[ [u]_{B_r \times [-r^2, 0]}^* := [D_{\mathbf{x}}^2 u]_{\alpha,\alpha/2;B_r \times [-r^2, 0]}. \]
	By interpolation and integrating along line segments, we can show that for each $\eps > 0$ there exists $C = C(n, \alpha, \eps)$ such that
	\[ \sup_{t \in [-4, 0]} \Vert u \Vert_{0;B_2} \leq \eps [u]^*_{B_2 \times [-4, 0]} + C \Vert u \Vert_{L^1(B_2 \times [-4, 0])}. \]
	Thus, \eqref{eq:schauder.estimate.knerr} implies
	\begin{equation} \label{eq:schauder.estimate.knerr.interpolated}
		[u]^*_{B_1 \times [-1, 0]} \leq \eps [u]^*_{B_2 \times [-4, 0]} + C \Big[ \Vert u \Vert_{L^1(B_2 \times [-4, 0])} + \sup_{t \in [-4, T]} \Vert h(\cdot, t) \Vert_{0,\alpha;B_2} \Big],
	\end{equation}
	where $C = C(n, \alpha, \eps, \lambda, \Lambda)$. By scaling down to parabolic balls $B_r \times [-r^2, 0]$ and also recentering in space and time, we obtain
	\[ r^{2+\alpha} [u]^*_{B_r(y_0) \times [t_0-r^2, t_0]} \leq \eps r^{2+\alpha} [u]^*_{B_{2r}(y_0) \times [t_0-4r^2, t_0]} + \gamma, \]
	where
	\[ \gamma := C \Big[ \Vert u \Vert_{L^1(B_2 \times [-4, 0])} + \sup_{t \in [-4, T]} \Vert h(\cdot, t) \Vert_{0,\alpha;B_2} \Big] \]
	is just the second term of the right hand side of \eqref{eq:schauder.estimate.knerr.interpolated}. We now apply the  absorption lemma due to L. Simon, \cite[Lemma, p. 398]{Simon:Schauder} on the monotone subadditive function $S(B_{r}(y_0) \times [t_0-r^2, t_0]) := [u]^*_{B_r(y_0) \times [t_0^2-r^2, t_0]}$, with scaling exponent $2+\alpha$. (Note that this monotone subadditive function extends trivially to convex sets.) By L. Simon's absorption lemma, we can choose $\eps$ small enough depending on $n, \alpha$, such that
	\[ [u]^*_{B_1 \times [-1, 0]} \leq C' \Big[ \Vert u \Vert_{L^1(B_2 \times [-4, 0])} + \sup_{t \in [-4, 0]} \Vert h(\cdot, t) \Vert_{0,\alpha;B_2} \Big], \]
	where $C' = C'(n, \alpha, \lambda, \Lambda)$. This yields \eqref{eq:schauder.estimate.L1}: the first summand of the left hand side is obtained by interpolation, and the second by reusing the parabolic PDE.
\end{proof}


\section{Brakke flow uniqueness of regular mean curvature flows}\label{app:uniqueness-BF}
We include here the following uniqueness result for Brakke flows. 
\begin{proposition}\label{prop:unique-cont-brakke}
Suppose that $\cM$ is an integral unit-regular Brakke flow in $\RR^{n+1}\times[t_0,t_2]$ and $[t_0,t_2] \ni t\mapsto M(t)$ is a smooth mean curvature flow with
\[
\lim_{r\to 0} \sup_{\bx \in M(t)} \Theta_{M(t)}((\bx,t),r) = 1
\]
for all $t \in (t_0,t_2]$ and 
\[
\sup_{(\bx,t)\in\RR^{n+1}\times[t_0,t_2]}|A_{M(t)}(\bx)|<\infty. 
\]
If $\cM(t_1) = \cH^n\lfloor M(t_1)$ for some $t_1 \in (t_0,t_2]$ then $\cM(t) = \cH^n\lfloor M(t)$ for all $t \in [t_1,t_2]$. 
\end{proposition}
\begin{proof}
The monotonicity formula and unit regularity property (cf.\ \cite{White:Brakke}) of $\cM$ implies that $\cM$ is the multiplicity one Brakke flow associated to a smooth flow for some interval $t \in [t_1,t_1+\eta]$. As in \cite[Proposition 4.4]{BernsteinWang:TopologicalProperty} (following \cite{EckerHuisken:interior,Ecker:book}), this flow is a smooth graph over $M(t)$ and has bounded curvature; thus $\cM(t) = \cH^n\lfloor M(t)$ by e.g. \cite{ChenYin} or \cite{EckerHuisken:graphs}. 

We can thus conclude via a continuity argument. Let $T \in (t_1,t_2]$ denote the first time the assertion fails. Suppose that $T < t_2$. Unit regularity and the assumptions about $M(t)$ imply that $\cM(T) = \cH^n\lfloor M(T)$. Thus, we can repeat the previous argument to conclude that $\cM(t) = \cH^n\lfloor M(t)$ for $t \in [T,T+\eta']$ for some $\eta'>0$. This is a contradiction, completing the proof. 
\end{proof}

\section{Ilmanen's localized avoidance principle} \label{sec:Ilmanen.avoidance}

In this section we will give a proof of Ilmanen's localized avoidance principle for mean curvature flow. The proof is a parabolic version of the barrier principle and moving around barriers in \cite{Ilmanen:maximum}. 

Let $\Omega$ be an open subset of $\RR^{n+1}\times \RR$, and let $\Gamma \subset  \RR^{n+1}\times \RR$ be relatively closed in $\Omega$. We call $\Gamma$ a \emph{barrier} (resp. \emph{strict barrier}) for mean curvature flow in $\Omega$ provided that, for every smooth open set $E \subset \Omega \setminus \Gamma$ and for every $(\mathbf{x},t) \in \partial E \cap \Gamma \cap \Omega$ with $\nabla_{\partial E}\mathfrak{t}(\mathbf{x},t) \neq 0$, we have
\begin{equation}\label{def:barrier}
f(\mathbf{x},t) \leq \mathbf{H}_{\partial E(t)} \cdot \nu(\mathbf{x},t) 
\end{equation}
(\text{resp.} $f(\mathbf{x},t) < \mathbf{H}_{\partial E(t)} \cdot \nu(\mathbf{x},t)$), where $\mathbf{H}_{\partial E(t)}$ the mean curvature vector of $\partial E(t)$, $\nu(\mathbf{x},t)$ is the inward normal of $\partial E(t)$ at $\mathbf{x}$, and $f \nu$ is the normal speed of the evolution $t\mapsto \partial E(t)$ in a neighborhood of $(\mathbf{x},t)$.

Let $W \subset \RR^{n+1}\times \RR$ be open and let $u:W \rightarrow \RR$ be smooth, positive, bounded and such that $u$ vanishes on $\partial W(t)$ for all $t \in \mathfrak{t}(W)$. For $\mathbf{p}, \mathbf{q} \in W(t)$, define the distance
\begin{equation}\label{eq:ilmanen-distance} d_t(\mathbf{p}, \mathbf{q}) := \inf \Big\{ \int_\gamma u(\gamma(s), t)^{-1} \, ds : \gamma \text{ is a curve joining } \mathbf{p}, \mathbf{q} \text{ in } W(t) \Big\}. 
\end{equation}
We assume that, for each $t \in \mathfrak{t}(W)$, the distance $d_t$ is complete. We use the standard convention that $\inf \emptyset = \infty$. Note that $d_t$ is just the distance in the (complete) conformally Euclidean metric $g_t := u(\cdot,t)^{-2} g_{\RR^{n+1}}$. More generally, we can consider the distance between two closed sets in $W_t$ defined in the usual way. 
For $U \subset W$, $U$ open, define 
\begin{equation*}
U^r = \big\{ (\mathbf{x},t) \in U : d_t(\mathbf{x}, \partial U(t)) > r \big\}\, .
\end{equation*}
Define the degenerate second order elliptic operator
\[ Ku(\mathbf{x},t) = \inf_S \tr_S D^2 u(\mathbf{x},t)\]
where $S$ ranges over all $n$-dimensional subspaces of $\RR^{n+1}$. 

\begin{lemma}\label{lem: barrier} Suppose that $W \setminus U$ is a barrier in $W$ and $u:W \rightarrow \RR$ is as above, with
$$ u_t  - Ku \leq 0\ (\text{resp.} < 0)\, .$$
Then $W\setminus U^r$ is a barrier (resp.~a strict barrier) in $W$. 
\end{lemma}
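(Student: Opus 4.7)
I propose to verify \eqref{def:barrier} at each candidate contact point by constructing an ``$r$-inflated'' comparison set and invoking the barrier property of $W\setminus U$. Fix a smooth open set $E\subset W\setminus(W\setminus U^r)=U^r$ and a space-time point $(\mathbf{x}_0,t_0)\in\partial E\cap(W\setminus U^r)$ with $\nabla_{\partial E}\mathfrak{t}(\mathbf{x}_0,t_0)\neq 0$. If $(\mathbf{x}_0,t_0)\in W\setminus U$, the conclusion is immediate from the barrier hypothesis on $W\setminus U$ applied directly to $E$, so I reduce to the case $\mathbf{x}_0\in U(t_0)$ with $d_{t_0}(\mathbf{x}_0,\partial U(t_0))=r$. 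By completeness of $d_{t_0}$ on $W(t_0)$, I can choose $\mathbf{y}_0\in\partial U(t_0)$ attaining this distance.

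I would then introduce the tubular neighborhood of $E$ of $d_t$-radius $r$,
\[
\widetilde E := \{(\mathbf{x},t)\in W : d_t(\mathbf{x},E(t))<r\},
\]
and establish: (a) any curve in $W(t)$ joining $E(t)\subset U^r(t)$ to a point outside $U(t)$ has $d_t$-length at least $r$, so $\widetilde E\subset U$ and in particular $\widetilde E\cap(W\setminus U)=\emptyset$; (b) $(\mathbf{y}_0,t_0)\in\partial\widetilde E\cap(W\setminus U)$; (c) after possibly perturbing $E$ slightly inward (using $\nabla_{\partial E}\mathfrak{t}\neq 0$ to maintain the contact picture), the closest point $\mathbf{y}_0$ is unique and lies before the focal distance of $\partial E(t_0)$ in the $g_{t_0}:=u(\cdot,t_0)^{-2}g_{\RR^{n+1}}$ metric, so $\partial\widetilde E$ is smooth near $(\mathbf{y}_0,t_0)$ with $\nabla_{\partial\widetilde E}\mathfrak{t}(\mathbf{y}_0,t_0)\neq 0$. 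Applying the barrier property of $W\setminus U$ to $\widetilde E$ at $(\mathbf{y}_0,t_0)$ then delivers
\[
\widetilde f(\mathbf{y}_0,t_0)\leq \mathbf{H}_{\partial\widetilde E(t_0)}\cdot\nu_{\widetilde E}(\mathbf{y}_0,t_0),
\]
with $\widetilde f,\nu_{\widetilde E}$ the inward normal speed and inward normal of $\widetilde E$.

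The remaining and most substantive step is to transfer this inequality back to $(\mathbf{x}_0,t_0)$ for $E$. I would parametrize by $s\in[0,r]$ the parallel hypersurfaces
\[
\Sigma_s(t) := \{\mathbf{x}\in U(t) : d_t(\mathbf{x},E(t))=s\},
\]
which are the equidistant surfaces to $\partial E(t)$ in $u(\cdot,t)^{-2}g_{\RR^{n+1}}$, and I would denote by $f_s$ and $H_s$ the inward normal speed and inward Euclidean mean curvature of $\Sigma_s$. Along the unit-speed $g_{t_0}$-geodesic $\gamma:[0,r]\to W(t_0)$ from $\mathbf{x}_0$ to $\mathbf{y}_0$ (normal to every $\Sigma_s(t_0)$ by first variation), I would derive explicit ODEs for $f_s(\gamma(s),t_0)$ and $H_s(\gamma(s),t_0)$ in $s$ from the Jacobi-field equation for parallel hypersurfaces in a conformally Euclidean metric together with the identity linking $f_s$ to the $t$-derivative of $d_t$. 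A computation expresses $\partial_s H_s$ in terms of $|A_{\Sigma_s}|^2$ together with second derivatives of $u$ tangent to $\Sigma_s$, while $\partial_s f_s$ involves $u_t$ and first derivatives of $u$; the terms quadratic in the second fundamental form cancel between speed and curvature, so that
\[
\tfrac{d}{ds}(f_s-H_s)\bigl(\gamma(s),t_0\bigr)\leq u_t(\gamma(s),t_0)-\mathrm{tr}_{T_{\gamma(s)}\Sigma_s}D^2u(\gamma(s),t_0).
\]
Since the tangent plane $T_{\gamma(s)}\Sigma_s$ is an arbitrary $n$-dimensional subspace of $\RR^{n+1}$ that one cannot identify a priori, the hypothesis must be dimensionally uniform over all such $n$-planes; this is precisely why $K$ is defined as an infimum. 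The assumption $u_t-Ku\leq 0$ (resp.\ $<0$) then gives $\tfrac{d}{ds}(f_s-H_s)\leq 0$ (resp.\ $<0$) along $\gamma$, and integrating from $s=0$ to $s=r$ yields
\[
(f-\mathbf{H}_{\partial E}\cdot\nu_E)(\mathbf{x}_0,t_0)\leq \bigl(\widetilde f-\mathbf{H}_{\partial\widetilde E}\cdot\nu_{\widetilde E}\bigr)(\mathbf{y}_0,t_0)\leq 0\quad(\mathrm{resp.}~<0),
\]
which is \eqref{def:barrier} (resp.\ its strict version) for $E$ at $(\mathbf{x}_0,t_0)$.

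The main obstacle is this last computation: carefully tracking how the ``mean curvature flow defect'' $f_s-H_s$ evolves under parallel displacement in the conformal metric $u^{-2}g_{\RR^{n+1}}$, and checking that the resulting $s$-derivative is bounded by $u_t-\mathrm{tr}_SD^2u$ with $S=T\Sigma_s$, so that the pointwise hypothesis $u_t-Ku\leq 0$ controls it uniformly regardless of the unknown tangent plane at $\gamma(s)$. A secondary technical issue is ensuring smoothness of $\widetilde E$ and uniqueness of the closest point; this is standard and handled by perturbing $E$ slightly inward (while preserving the transversality $\nabla_{\partial E}\mathfrak{t}\neq 0$) and exploiting that focal issues occur only at distances exceeding any prescribed $r$ after such a perturbation.
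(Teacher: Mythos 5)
Your overall architecture matches the paper's proof: inflate $E$ through the equidistant sets $E^s$ (your $\widetilde E$ is the paper's $F=E^r$), observe $F\Subset U$ so the touching point $\mathbf{y}$ lies in $\partial F\cap(W\setminus U)$, apply the barrier hypothesis there, handle focal points by perturbing $E$, and transfer the information back along the minimizing geodesic via an ODE in $s$ for the defect $\psi_s:=f_s-H_s$. The gap is precisely in the transfer step that you flag as the main obstacle. The correct evolution equations are $\partial_s f=-u_t+f\,Du\cdot\nu$ (obtained by differentiating in time the length identity for the geodesics joining $\partial E(\tau)$ to $\partial E^s(\tau)$; it contains no second-fundamental-form term at all) and $\partial_s H=-\Delta_{\partial E^s}u-|A|^2u=-\tr_S D^2u+H\,Du\cdot\nu-|A|^2u$, whence
\[
\partial_s\psi=(\tr_S D^2u-u_t)+|A|^2u+\psi\,Du\cdot\nu\ \geq\ (Ku-u_t)+\psi\,Du\cdot\nu.
\]
So there is no cancellation of $|A|^2$ terms (the $-|A|^2u$ in the curvature evolution is simply discarded, with a favorable sign), and, decisively, the first-order term $\psi\,Du\cdot\nu$ survives and has no sign. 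Your claimed inequality $\partial_s\psi\leq u_t-\tr_S D^2u$ is therefore in the wrong direction and omits this term; it is also internally inconsistent with your conclusion, since integrating $\partial_s\psi\leq 0$ from $0$ to $r$ would give $\psi(\mathbf{y}_0)\leq\psi(\mathbf{x}_0)$, not $\psi(\mathbf{x}_0)\leq\psi(\mathbf{y}_0)$. A plain monotonicity statement for $\psi$ along the geodesic is simply not available.

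The way to close this, as in the paper, is a Gronwall/propagation-of-positivity argument rather than direct integration: the displayed inequality gives $\partial_s\psi\geq-C\psi$ (with $C$ controlling $|Du|$ along the geodesic), so if the barrier inequality failed at the inner point, i.e.\ $\psi(0)>0$, then $\psi(r)>0$, contradicting the barrier property of $W\setminus U$ applied to $F$ at $(\mathbf{y},t)$; in the strict case one uses $\psi(0)\geq 0$ together with the strictly positive contribution $Ku-u_t>0$ to force $\psi(r)>0$. (Equivalently, your desired conclusion is the contrapositive of this, but it cannot be reached by integrating a sign-definite derivative.) A secondary correction: a small inward perturbation of $E$ does not push focal distances beyond an arbitrary prescribed $r$; what is actually used is that a minimizing geodesic from $\partial E(t)$ to $\partial U(t)$ has no focal points before its endpoint, and if the focal point occurs exactly at $\mathbf{y}$ one replaces $E$ by a slightly smaller $E'\subset E$ meeting $W\setminus U^r$ only at $\mathbf{x}$ for which $\mathbf{y}$ is no longer focal.
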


\begin{proof} Let $E \Subset W$ be a smooth open set with
$$ E \subset U^r \text{ and } (\mathbf{x},t) \in \partial E\cap (W\setminus U^r)\, .$$
We have to show that \eqref{def:barrier} holds. Define 
$$ E^s := \{ (\mathbf{x},t) \in W : d_t(\mathbf{x}, \bar{E}) < s\}, \quad F := E^r\, .$$
Then $\bar{F}$ is compact, $F\subset U$, and $\partial F$ meets $\partial U$. 

We fix $\bm{\gamma}(t)$ to be a shortest $g_t$-geodesic from $\partial E(t)$ to $\partial U(t)$ with endpoints $\mathbf{x} \in E(t)$ and $\mathbf{y} \in \partial F(t) \cap \partial U(t)$. Thus the normal exponential map of $\partial E(t)$ with respect to $g_t$ has no focal points along $\bm{\gamma}(t) \setminus \{\mathbf{y}\}$. Note that this also holds for the normal exponential map of $\partial E(\tau)$ with respect to $g_\tau$ in a spacetime neighborhood of $\bm{\gamma} \setminus \{\mathbf{y}\}$. Therefore in a spacetime neighborhood of $\bm{\gamma} \setminus \{\mathbf{y}\}$, $(\tau,s) \mapsto \partial E^s(\tau)$ is smooth and smoothly varying. For $\tau$ close to $t$, let $\mathbf{x}(\tau)$ be the normal evolution of $\mathbf{x}$ along $\tau \mapsto \partial E(\tau)$ such that $\mathbf{x}(t) = \mathbf{x}$. For $\tau$ close to $t$ we define $\gamma(\tau)$ to be the normal $g(\tau)$-geodesic starting at $x(\tau)$, i.e.~the normal evolution of $\mathbf{x}(\tau)$ along $s \mapsto \partial E^s(\tau)$.  
 Note that
\begin{equation*} 
		1 = g_\tau(\bm{\gamma}'(\tau), \bm{\gamma}'(\tau)) = u^{-2} g_{\RR^{n+1}}(\bm{\gamma}'(\tau), \bm{\gamma}'(\tau)) \implies |\bm{\gamma}'(\tau)|_{g_{\RR^{n+1}}} = u.
\end{equation*}
We denote $\mathbf{x}(\tau,s) = \bm{\gamma}(\tau, s)$ and $f_\tau \nu_{\partial E^s(\tau)}$ to be the normal velocity of the evolution $\tau \mapsto \partial E^s(\tau)$ in $\RR^{n+1}$. Furthermore, note that the $g_\tau$-length of $\bm{\gamma}(\tau, \cdot)$ satisfies 
\[ \ell_{g_\tau}\big(\bm{\gamma}(\tau, [0,s]) \big) = s \]
and thus

\begin{align*} 
0	& = \tfrac{d}{d\tau} \ \ell_{g_\tau}\big(\bm{\gamma}(\tau, [0,s])\big) =  \tfrac{d}{d\tau} \int_{\bm{\gamma}(\tau, [0,s])} d\ell_{g_\tau}
			 = \tfrac{d}{d\tau} \int_{\bm{\gamma}(\tau, [0,s])} u^{-1} \, d\ell_{g_{\RR^{n+1}}} \nonumber \\
			& = - u(\mathbf{x}(\tau,s))^{-1} f(\mathbf{x}(\tau,s),\tau) + u(\mathbf{x}(\tau,0))^{-1} f(\mathbf{x}(\tau,0),\tau) - \int_{\bm{\gamma}(\tau, [0,s])}u^{-2} u_\tau \, d\ell_{g_{\RR^{n+1}}} \\
			&= - u(\mathbf{x}(\tau,s))^{-1} f(\mathbf{x}(\tau,s),\tau) + u(\mathbf{x}(\tau,0))^{-1} f(\mathbf{x}(\tau,0),\tau) - \int_0^su^{-1} u_\tau \, d\ell_{g_\tau}
 \end{align*}
Differentiating the last equation in $s$ yields
 \begin{equation} \label{eq:1} 
 \tfrac{\partial}{\partial s} f = - u_\tau - f Du \cdot  \nu_{\partial E^s(t)}
 \end{equation}
 Similarly, looking at the evolution of $ s \mapsto \partial E^s(t)$ in $\mathbb{R}^{n+1}$ we have
 \begin{equation} \label{eq:2} 
 \begin{split}
  \tfrac{\partial}{\partial s} H_{\partial E^s(t)} & = - \Delta_{\partial E^s(t)} u - |A_{\partial E^s(t)}|^2 u\\
  & = - \tr_S D^2 u  - H_{\partial E^s(t)}  Du \cdot  \nu_{\partial E_t^s} - |A|^2 u\\
  & \leq - K u  - H_{\partial E^s(t)} Du \cdot  \nu_{\partial E^s(t)}
  \end{split}
 \end{equation}
 Combining 
 \eqref{eq:1}, \eqref{eq:2} we see that $\psi := f-H$ satisfies, along $\bm{\gamma}(t)$,
  \begin{equation} \label{eq:3} 
 \tfrac{\partial}{\partial s} \psi \geq - C \psi \, .
  \end{equation}
We first assume that $\mathbf{y}(t)$ is not a focal point of the exponential map of $\partial E(t)$. This implies that $F$ is locally smooth around $y$ and $\nabla_{\partial F}\mathfrak{t}(\mathbf{y},t) \neq 0$. If $\psi(0) >0$ then \eqref{eq:3} implies that $\psi(r) >0$ which gives a contradiction to the assumption that $W \setminus U$ is a barrier. If $\psi(0) \geq 0$ and $ u_t  - Ku < 0 $ then likewise $\psi(r)>0$ which again yields a contradiction, proving that $P \setminus U_r$ is a strict barrier.

If the normal exponential map of $\partial E(t)$ focuses at $\mathbf{y}(t)$, then we may approximate $E$ by $E' \subset E$ such that $E' \cap \partial U_r = \{\mathbf{x}\}$, $\mathbf{y}$ is not a focal point and such that in the above argument we can replace $E$ by $E'$.
\end{proof}

\begin{lemma}\label{weak set flows barriers} If $\Omega$ is an open subset of spacetime and $\cM$ is a closed weak set flow in $\Omega$, then $\cM$ is a barrier in $\Omega$.
\end{lemma}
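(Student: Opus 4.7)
The plan is to argue by contradiction, using the weak set flow property to violate the barrier inequality only if we can construct a classical mean curvature flow whose heat boundary avoids $\cM$ but whose spacetime track meets $\cM$. Suppose, toward a contradiction, that for some smooth open $E \subset \Omega \setminus \cM$ there is a touching point $(\mathbf{x}_0,t_0) \in \partial E \cap \cM \cap \Omega$ with $\nabla_{\partial E}\mathfrak{t}(\mathbf{x}_0,t_0) \neq 0$ at which
\[
f(\mathbf{x}_0,t_0) > \mathbf{H}_{\partial E(t_0)}(\mathbf{x}_0) \cdot \nu(\mathbf{x}_0,t_0).
\]
I would produce a classical flow $\cM'$ whose heat boundary is disjoint from $\cM$ but whose track contains $(\mathbf{x}_0,t_0) \in \cM$, contradicting the weak set flow definition from Section~\ref{sec:prelim.weak.flow.level.set.flow}.

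First, since $\nabla_{\partial E}\mathfrak{t}(\mathbf{x}_0,t_0) \neq 0$, I would pick coordinates so that $\mathbf{x}_0 = 0$, $\nu(\mathbf{x}_0,t_0) = e_{n+1}$, and $\partial E$ is locally the graph $\{x_{n+1} = u(x',t)\}$ over $\overline{B} \times [t_0-\delta,t_0]$ (with $B = B_r(0) \subset \RR^n$), where $u(0,t_0) = 0$ and $E$ is locally $\{x_{n+1} > u(x',t)\}$. In these coordinates the hypothesis becomes the strict differential inequality $\mathcal{F}[u] := u_t - \Phi(\nabla u, \nabla^2 u) > 0$ at $(0,t_0)$, with $\Phi$ the graphical mean curvature operator. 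By continuity, after shrinking $r$ and $\delta$, I may assume $\mathcal{F}[u] \geq \eta > 0$ throughout $\overline{B} \times [t_0-\delta,t_0]$.

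Second, for each $s \geq 0$ I would let $w_s \colon \overline{B} \times [t_0-\delta,t_0] \to \RR$ be the unique smooth solution to the Dirichlet problem $(w_s)_t = \Phi(\nabla w_s, \nabla^2 w_s)$ with $w_s = u+s$ on the parabolic boundary; this exists classically for $\delta$ small, since $|\nabla u|$ and hence $|\nabla w_s|$ stay bounded. As $\Phi$ is invariant under adding constants to the graph height, $u+s$ is itself a strict supersolution matching $w_s$ on the parabolic boundary, so parabolic comparison and the strong maximum principle give $w_s < u+s$ strictly in the interior. Taking $s=0$ yields $w_0(0,t_0) < u(0,t_0) = 0$. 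On the other hand, comparison with the constant MCF solution $s - \sup_{\overline B \times [t_0-\delta,t_0]} |u|$ gives $w_s(0,t_0) \geq s - \sup |u|$, which exceeds $0$ for $s$ large. Since $s \mapsto w_s(0,t_0)$ is continuous and non-decreasing by stability and comparison, the intermediate value theorem produces some $s^* > 0$ with $w_{s^*}(0,t_0) = 0$.

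Finally, I would take
\[
\cM' := \{(x', w_{s^*}(x', t), t) : x' \in \overline{B},\ t \in [t_0-\delta,t_0]\},
\]
parameterized by $f(x',t) = (x', w_{s^*}(x',t))$ over the base manifold $\overline{B}$. This is a classical mean curvature flow, and its heat boundary $f(\overline{B},t_0-\delta) \cup f(\partial B, [t_0-\delta,t_0])$ lies at height $u + s^* > u$, hence is contained in $E$ and disjoint from $\cM$. Yet $f(0,t_0) = (0,w_{s^*}(0,t_0)) = \mathbf{x}_0$, so $(\mathbf{x}_0,t_0) \in \cM' \cap \cM$, contradicting the definition of a weak set flow. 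The main technical point I expect some care with is ensuring a strict (not merely weak) conclusion $w_0(0,t_0) < 0$ via the strong parabolic maximum principle, which amounts to verifying that the linearization of $\Phi$ around $u$ is uniformly parabolic on $\overline{B} \times [t_0-\delta,t_0]$; this follows from the boundedness of $|\nabla u|$ obtained by choosing $r,\delta$ small. Constructing the Dirichlet solutions $w_s$ uniformly for $s$ in a compact range and tracking their continuous/monotone dependence on $s$ is standard but must be carried out.
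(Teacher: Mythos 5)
Your proof is correct and shares the paper's core mechanism — reduce to graphical coordinates near the touching point, observe that the negated barrier inequality makes $u$ a strict supersolution of graphical MCF, and solve a local Dirichlet problem for MCF to contradict the avoidance property — but you close the contradiction differently. The paper solves the Dirichlet problem with boundary data induced from $\partial E$ itself (your $s=0$ case), reduces WLOG to $\partial E \cap \cM = \{(\mathbf{x}_0,t_0)\}$, and then argues somewhat tersely that the lagging solution ``has to run into $\cM$''; that step implicitly needs to know that $\cM$ has points behind $\partial E$ accumulating at the touch point. You instead lift the boundary data by a constant $s$ and use an intermediate-value argument to find $s^*>0$ for which the classical flow passes exactly through the known point $(\mathbf{x}_0,t_0)\in\cM$ while its heat boundary sits strictly inside $E$, so the contradiction is immediate and you need neither the WLOG reduction nor the ``runs into $\cM$'' step. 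This is in the spirit of Ilmanen's ``moving barriers,'' which the paper invokes elsewhere, and is arguably cleaner. You can also simplify your own argument: since the graphical MCF operator does not involve $w$ itself, $w_s = w_0 + s$ exactly, so existence for all $s$, monotonicity, continuity, and the IVT are all automatic, with $s^* = -w_0(0,t_0)$.

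Two small points to button up. First, ``lies at height $u+s^*>u$, hence is contained in $E$'' needs the lifted graph to stay inside the fixed spacetime neighborhood in which $E$ coincides with the supergraph of $u$; this is fine because comparison with constants gives $s^* \le \sup_{\overline{B}\times[t_0-\delta,t_0]}|u|$, which tends to $0$ as $r,\delta \to 0$ (recall $u(0,t_0)=0$), but it should be said. Second, classical solvability of the Dirichlet problem up to time $t_0$ (continuous up to the parabolic boundary, smooth inside, gradient controlled for $\delta$ small) is asserted rather than proved; this is standard and is treated at exactly the same level of detail in the paper's own proof, so it is acceptable, but it is the one analytic ingredient you are leaning on.
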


\begin{proof}
Assume  $E \subset \Omega \setminus \cM$ is open and smooth, and at $(\mathbf{x}_0,t_0) \in \partial E \cap \cM \cap \Omega$ we have
\begin{equation}\label{eq:non.barrier}
\nabla_{\partial E}\mathfrak{t}(\mathbf{x}_0,t_0) \neq 0, \; f(\mathbf{x}_0,t_0) > H_{\partial E(t)}(\mathbf{x}_0,t_0). 
\end{equation}
where $H_{\partial E(t)}(\mathbf{x},t) =  \mathbf{H}_{\partial E(t)} \cdot \nu_{\partial E(t)}$ and $ \nu_{\partial E(t)}$ is the inward pointing unit normal of $\partial E(t)$. We can furthermore assume that $\partial E \cap \cM = \{(\mathbf{x}_0,t_0)\}$. For small $r>0$, \eqref{eq:non.barrier} implies that 
\begin{equation} \label{eq:non.barrier.nearby}
f > H_{\partial E(t)} \text{ on } B_r(\mathbf{x}_0) \times [t_0-r^2,t_0],
\end{equation} and that $\partial E(t)$ is $C^2$-close to an $n$-dimensional plane for all $t \in [t_0-r^2,t_0]$. We can thus solve mean curvature flow $\mathcal{S}=(S(t))_{t \in [t_0-r^2,t_0]}$ with the induced parabolic boundary conditions on $\partial E \cap B_r(\mathbf{x}_0) \times [t_0-r^2,t_0]$. Note that $t \mapsto \partial E(t) \cap B_r(\mathbf{x}_0)$ is a barrier for $\mathcal{S}$ from one side, in view of \eqref{eq:non.barrier.nearby}. Thus $\mathcal{S}$ has to run into $\cM$, contradicting that $\cM$ is a weak set flow. Thus, \eqref{eq:non.barrier} fails, and the result follows.
\end{proof}

We can now state and prove Ilmanen's localized avoidance principle. For $R$, $\alpha\geq0$, and $(\mathbf{x}_{0},t_{0})\in\RR^{n+1}\times\RR$, we set
\begin{equation}\label{eq:defn-ilmanen-avoidance-u}
u_\alpha(\mathbf{x}, t) := (R^{2} - |\mathbf{x}-\mathbf{x}_{0}|^2 - (2n+\alpha)(t-t_{0}))_+ 
\end{equation}
on $\RR \times \RR^{n+1} $. Note that for $\alpha>0$:
\begin{equation} \label{eq:u:differential.inequality}
	\tfrac{\partial}{\partial t} u_\alpha(\mathbf{x},t) < K u_\alpha(\mathbf{x},t).
\end{equation}
for all $(\mathbf{x},t)$ with $u_\alpha(\mathbf{x},t)>0$.

\begin{theorem}[Ilmanen]\label{theo:ilmanen-avoidance}
Consider two closed weak set flows $\cM$, $\cM'$ in $\mathbb{R}^{n+1}$ and constants satisfying $R>0,\gamma>0$, $a<b<a + \frac{R^{2}-\gamma}{2n}$. Assume that
\[
\cM(t) \cap B_{\sqrt{\gamma + R^{2} - 2n(t-a)}}(\mathbf{x}_{0}) \quad \textrm{and} \quad \cM'(t) \cap B_{\sqrt{\gamma + R^{2} - 2n(t-a)}}(\mathbf{x}_{0})
\]
are disjoint for $t \in [a,b)$. Then, using this choice of $R$ and $\mathbf{x}_{0}$ along with $t_{0}=a$ and $\alpha=0$ in \eqref{eq:defn-ilmanen-avoidance-u}, we have that $t\mapsto d_{t}(\cM(t),\cM'(t))$ is non-decreasing for $t \in [a,b)$ and
\[
\cM(b) \cap \cM'(b) \cap B_{\sqrt{R^{2} - \gamma - 2n(b-a)}}(\mathbf{x}_{0}) = \emptyset.
\]
\end{theorem}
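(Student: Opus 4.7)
The plan is to combine Lemmas \ref{weak set flows barriers} and \ref{lem: barrier} via a first-contact argument applied to the strict supersolution $u_\alpha$ (for $\alpha > 0$), then pass $\alpha \searrow 0$; the pointwise disjointness at $t = b$ will follow from the resulting monotonicity by a continuity argument.

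First, I would fix $\alpha > 0$ and set $W_\alpha := \{u_\alpha > 0\}$, so that $W_\alpha(t) = B_{\sqrt{R^2 - (2n+\alpha)(t-a)}}(\mathbf{x}_0)$. Since $\alpha, \gamma > 0$, one checks $W_\alpha(t) \Subset B_{\sqrt{\gamma + R^2 - 2n(t-a)}}(\mathbf{x}_0)$ for $t \in [a, b)$, so the hypothesis gives $\cM \cap \cM' \cap W_\alpha = \emptyset$. The function $u_\alpha$ is smooth and positive on $W_\alpha$, vanishes on $\partial W_\alpha(t)$, and the conformal metric $u_\alpha^{-2} g_{\RR^{n+1}}$ is complete on each time-slice; let $d_t^\alpha$ denote its distance. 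A direct computation (as recorded in \eqref{eq:u:differential.inequality}) yields the \emph{strict} inequality $\partial_t u_\alpha - K u_\alpha = -\alpha < 0$ on $W_\alpha$.

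The main step is distance monotonicity. Lemma \ref{weak set flows barriers} shows $\cM \cap W_\alpha$ and $\cM' \cap W_\alpha$ are both barriers in $W_\alpha$. Setting $U := W_\alpha \setminus \cM'$, Lemma \ref{lem: barrier} together with the above strict inequality upgrades $W_\alpha \setminus U^r$ to a \emph{strict} barrier in $W_\alpha$ for every $r > 0$. If $t \mapsto d_t^\alpha(\cM(t) \cap W_\alpha(t), \cM'(t) \cap W_\alpha(t))$ failed to be non-decreasing on $[a, b)$, then for some $r > 0$ there would exist a first time $t_1 \in (a, b)$ at which $\cM$ meets $W_\alpha \setminus U^r$, at some spacetime point $(\mathbf{x}_1, t_1)$. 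Near this first-contact point one exploits the incompatibility of a non-strict barrier against a strict one: using the $d_t^\alpha$-distance function to $\cM'$ one builds, in a spacetime neighborhood of $(\mathbf{x}_1, t_1)$, a smooth classical flow that is disjoint from $\cM$ at earlier times and whose boundary has normal speed strictly exceeding its mean curvature vector, contradicting the weak-set-flow property of $\cM$. Hence $d_t^\alpha$ is non-decreasing on $[a, b)$; passing $\alpha \searrow 0$ and using $u_\alpha \nearrow u_0$ yields non-decrease of $d_t$ itself. I expect this first-contact/strict-versus-weak-barrier step to be the main obstacle, as it requires running the comparison within the weak-set-flow category rather than with classical flows.

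Finally, on $\overline{B_{\sqrt{R^2 - \gamma - 2n(b-a)}}(\mathbf{x}_0)}$ one has $u_0(\cdot, b) \geq \gamma > 0$, so the $d_b$- and Euclidean topologies agree uniformly there. The hypothesis at $t = a$ gives $d_a(\cM(a), \cM'(a)) > 0$, and the monotonicity propagates this strictly positive lower bound to all $t \in [a, b)$. If some $\mathbf{x}_* \in \cM(b) \cap \cM'(b)$ lay inside $B_{\sqrt{R^2 - \gamma - 2n(b-a)}}(\mathbf{x}_0)$, then the closedness of $\cM, \cM'$ in spacetime combined with the uniform $u_0$-control near $\mathbf{x}_*$ would allow one to extract $t_k \nearrow b$ and nearby points in $\cM(t_k), \cM'(t_k)$ forcing $d_{t_k}(\cM(t_k), \cM'(t_k)) \to 0$, contradicting the lower bound. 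This completes the plan.
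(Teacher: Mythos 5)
Your overall framework (localizing with $u_\alpha$, invoking Lemmas \ref{weak set flows barriers} and \ref{lem: barrier} with the strict inequality \eqref{eq:u:differential.inequality}, sending $\alpha \searrow 0$, and then converting monotonicity into the disjointness statement at $t=b$) matches the paper, and your final continuity step at time $b$ is fine and in fact more explicit than the paper's. The problem is the step you yourself flag as the main obstacle: the first-contact argument between $\cM$ and $W_\alpha \setminus U^r$. Both of these are merely closed sets, and the barrier/strict-barrier properties they enjoy are statements quantified over \emph{smooth} test sets $E$ (equivalently, classical flows) touching them. At a first-contact point $(\mathbf{x}_1,t_1)$ neither $\cM$ nor the distance level set $\partial U^r$ of $\cM'$ has any regularity, so neither barrier property can be tested against the other; playing a barrier off a strict barrier requires a smooth intermediary, and your sketch of "building a smooth classical flow disjoint from $\cM$ at earlier times whose normal speed strictly exceeds its mean curvature, using the $d^\alpha_t$-distance function to $\cM'$" is exactly the missing construction: the distance function to a closed set is only semiconcave, its level sets are not smooth hypersurfaces, and there is no way to extract the required classical comparison flow at the contact point itself.

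The paper resolves this by running the comparison \emph{before} contact, while the two sets are still at positive $d^\alpha$-distance: Ilmanen's $C^{1,1}$ Interposition Lemma produces a hypersurface $\Gamma$ midway between $\cM$ and $\cM'$ at time $a$; after truncating to $\Gamma'$ one solves smooth mean curvature flow with Dirichlet boundary data for a short time, and this \emph{classical} flow $\Gamma'_t$ is the smooth test object: if $\Gamma'_t$ ever reached the strict barrier $W\setminus U^r$ built from $\cM$ (resp.\ $\cM'$) via Lemma \ref{lem: barrier}, the classical flow itself would violate the strict inequality in the barrier definition at first touching. Hence $d^\alpha_t(\cM,\Gamma'_t)$ and $d^\alpha_t(\cM',\Gamma'_t)$ are non-decreasing, and since $\Gamma'_t$ separates the two flows, $d^\alpha_t(\cM,\cM')$ is non-decreasing; iterating in time and letting $\alpha \to 0$ gives the theorem. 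So your plan can be repaired, but only by inserting exactly this interposition-plus-classical-flow mechanism (or an equivalent smoothing device); as written, the contradiction at the first-contact point does not go through.
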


Before proving Theorem \ref{theo:ilmanen-avoidance}, let us indicate how we plan to apply it. If $\cM(a)$, $\cM'(a)$ are disjoint and one knows \emph{a priori} that 
\[
\cM(t) \cap \cM'(t) \cap (B_{\sqrt{R^{2} + \gamma - 2n(t-a)}}(\mathbf{x}_{0}) \setminus B_{\sqrt{R^{2} - \gamma - 2n(t-a)}}(\mathbf{x}_{0})) = \emptyset.
\]
for $t \in [a,b]$, then Theorem \ref{theo:ilmanen-avoidance} and a straightforward continuity argument imply that
\[
\cM(t) \cap \cM'(t) \cap B_{\sqrt{R^{2} - \gamma - 2n(t-a)}}(\mathbf{x}_{0}) = \emptyset
\]
for $t \in [a,b]$. In other words, if the two weak set flows are disjoint near the boundary of the comparison region, then they remain disjoint. 

\begin{proof}[Proof of Theorem \ref{theo:ilmanen-avoidance}] We first note that the assumptions imply that for suffciently small $\alpha>0$, the distance $d^\alpha_t$ with respect to $(u_\alpha(\cdot,t))^{-2} g_{\RR^{n+1}}$ between $\cM$ and $\cM'$ is attained away from the boundary of the set $W:=\{u_\alpha(\mathbf{x},t) > 0\}$ for all $t \in [a,b)$. Assume that $d^\alpha_a(\cM,\cM')= r>0$. We can thus argue as in \cite[$C^{1,1}$ Interposition Lemma]{Ilmanen:levelset} find a $C^{1,1}$ hypersurface $\Gamma$ in $(\{u_\alpha(\mathbf{x},a)>0\}, g_a)$ separating $\cM$ and $\cM'$, such that $d^\alpha_a(\cM,\Gamma) = d^\alpha_a(\cM',\Gamma) = r/2$, with both distances attained away from the boundary of  $W$. Consider $\Gamma' = \Gamma \cap B_{R - \eta}(\mathbf{x}_0)$ for suitable small $\eta >0$ such that $\Gamma'$ has smooth boundary. Solve smooth mean curvature flow $\Gamma'_t$ starting at $\Gamma'$ with fixed Dirichlet boundary conditions for $a\leq t < a+ \varepsilon$ for small $\varepsilon >0$. We can assume that $d^\alpha_t(\cM,\Gamma'_t)$ and $d^\alpha_t(\cM',\Gamma'_t)$ are attained away from the boundary $\partial \Gamma'$ for all  $a\leq t < a+ \varepsilon$.  Choose $U = W \setminus \cM$. Note that for $0<s<r/2$ we have $\partial U^s(a) \cap \Gamma' = \emptyset$. So by Lemma \ref{lem: barrier} and Lemma \ref{weak set flows barriers}, together with \eqref{eq:u:differential.inequality} we have that $\Gamma'(t) \subset U^s(t)$ for all $0<s<r/2$ and $a\leq t < a+ \varepsilon$. This implies that the distance $d^\alpha_t$ between $\cM$ and $\Gamma'_t$ is non-decreasing on $[a,a+\varepsilon)$. We can argue similarly to see that the distance $d^\alpha_t$ between $\cM'$ and $\Gamma'_t$ is non-decreasing on $[a,a+\varepsilon)$. Thus $d^\alpha_t(\cM,\cM')$ is non-decreasing 
for $t \in [a,a+\varepsilon)$. But the monotonicity formula implies that $d^\alpha_{a+\varepsilon}(\cM,\cM')\geq \limsup_{t\to (a+\varepsilon)^+} d^\alpha_{t}(\cM,\cM')$. Thus a direct continuity argument implies that $d^\alpha_t(\cM,\cM')$ is non-decreasing 
for $t \in [a,b)$. Letting $\alpha \rightarrow 0$ gives the result.
\end{proof}

We note that this implies a well-known Frankel property for self-shrinkers. For completeness, we state our result in full generality, in the context of $F$-stationary varifolds, i.e., varifolds in $\RR^{n+1}$ that are stationary for the conformally  Euclidean metric in Section \ref{sec:prelim.shrinkers} whose stationary points coincide with self-shrinkers.

\begin{corollary}[Frankel property for shrinkers]\label{coro:Frankel}
If $V,V'$ are $F$-stationary varifolds, then $\supp V \cap \supp V' \not = \emptyset$.
\end{corollary}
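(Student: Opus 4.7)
The plan is to argue by contradiction, using Theorem~\ref{theo:ilmanen-avoidance}. Suppose $\supp V \cap \supp V' = \emptyset$. Since $F$-stationarity is equivalent to the weak self-shrinker equation, the parabolic dilations $\cM(t) := \sqrt{-t}\,V$ and $\cM'(t) := \sqrt{-t}\,V'$, defined for $t < 0$, are integral Brakke flows and hence, in particular, closed weak set flows by \cite[10.5]{Ilmanen:elliptic}. Their time slices $\sqrt{-t}\,\supp V$ and $\sqrt{-t}\,\supp V'$ remain disjoint for every $t < 0$.

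Fix $\mathbf{p} \in \supp V$ and $\mathbf{q} \in \supp V'$, and choose $R$ with $R > \max\{|\mathbf{p}|, |\mathbf{q}|\}$ and $R^2 > 2n+2$. Pick $\gamma \in (0,1)$ and any $b \in (-1, 0)$ with $b < -1 + (R^2-\gamma)/(2n)$ (possible for $R$ large). Apply Theorem~\ref{theo:ilmanen-avoidance} with $\mathbf{x}_0 = \mathbf{0}$ and $a = -1$: the disjointness hypothesis is trivially satisfied because $\cM(t)$ and $\cM'(t)$ are globally disjoint. The conclusion yields that the weighted distance $t \mapsto d_t(\cM(t), \cM'(t))$, induced by the function $u_0$ of \eqref{eq:defn-ilmanen-avoidance-u}, is non-decreasing on $[-1, b)$.

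A straightforward pair of estimates on $d_t$ then produces a contradiction. For the lower bound, $\supp V \cap \bar B_R(\mathbf{0})$ and $\supp V' \cap \bar B_R(\mathbf{0})$ are nonempty disjoint compact sets at some positive Euclidean separation $\eta$, and since $u_0(\cdot, -1) = R^2 - |\mathbf{x}|^2 \leq R^2$ on $B_R(\mathbf{0})$, one finds $d_{-1}(\cM(-1), \cM'(-1)) \geq \eta/R^2 > 0$. For the upper bound, the straight line segment joining $\sqrt{-t}\,\mathbf{p} \in \supp\cM(t)$ to $\sqrt{-t}\,\mathbf{q} \in \supp\cM'(t)$ has Euclidean length $\sqrt{-t}\,|\mathbf{p}-\mathbf{q}|$ and, for $t$ close to $0$, lies in a small neighborhood of $\mathbf{0}$ on which $u_0(\cdot, t) \geq R^2 - 2n - 1 > 0$, so
\[ d_t(\cM(t), \cM'(t)) \leq \frac{\sqrt{-t}\,|\mathbf{p}-\mathbf{q}|}{R^2-2n-1} \longrightarrow 0 \quad \text{as } t \nearrow 0. \]
Taking $b$ close to $0$ contradicts the monotonicity $d_t \geq \eta/R^2 > 0$.

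The only real delicacy is checking that the line segment used in the upper bound is an admissible competitor in the definition of $d_t$, i.e., lies in the open set $\{u_0(\cdot, t) > 0\}$ on which the conformally Euclidean metric is defined; this is clear for $t$ close to $0$ because both endpoints approach $\mathbf{0}$ while $\{u_0(\cdot, t) > 0\}$ contains a fixed ball about $\mathbf{0}$ for $t$ in the relevant range. Apart from this, the argument is bookkeeping, with the essential input being the monotonicity of $d_t$ established in Theorem~\ref{theo:ilmanen-avoidance}.
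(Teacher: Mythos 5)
Your proof is correct and follows essentially the same route as the paper: both reduce the Frankel property to Theorem \ref{theo:ilmanen-avoidance} applied to the two disjoint self-similarly shrinking flows, after noting that the supports of the associated Brakke flows are weak set flows. The only difference is cosmetic: the paper invokes the disjointness conclusion of that theorem at $b=0$ together with the fact that $\bOh$ lies in both supports at time $0$, whereas you invoke the monotonicity of $d_t$ and quantify its collapse as $t \nearrow 0$; both are valid uses of the same theorem.
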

\begin{proof}
If $\supp V \cap \supp V' = \emptyset$, then the associated self-similarly shrinking Brakke flows $\cM$, $\cM'$ satisfy 
\[ \supp\cM(t) \cap \supp\cM'(t) = \emptyset, \; t < 0. \]
Applying Theorem \ref{theo:ilmanen-avoidance} with $a=-1$, $b=0$, $R > \sqrt{2n}$, and recalling that the support of the spacetime track of a Brakke flow is a weak set flow \cite[10.5]{Ilmanen:elliptic} we arrive at a contradiction; indeed, $\mathbf{0} \in \supp \cM(0) \cap \supp \cM'(0)$. 
\end{proof}


\section{The Ecker--Huisken maximum principle}\label{sec:Ecker-Huisken maximum principle}

For the reader's convenience, we recall here a special case of the variant of the Ecker--Huisken maximum principle (see \cite{EckerHuisken:graphs}) proven in \cite{BernsteinWang:TopologicalProperty}, which we're going to make use of:
\begin{theorem}[{\cite[Theorem A.1]{BernsteinWang:TopologicalProperty}}]\label{theo:ecker-huisken}
Suppose that $\{\Sigma_t\}_{t \in [a,b)}$ is a smooth mean curvature flow in $\RR^{n+1} \setminus B_R$, with $\partial\Sigma_t\subset \partial B_R$. Assume that $u$ is a $C^2$ function on $\Sigma_t$ so that 
\begin{enumerate}
\item it satisfies
\[
\left(  \tfrac{\partial}{\partial t} - \Delta_{\Sigma_t}\right) u \geq \mathbf{a} \cdot \nabla_{\Sigma_t} u + b u
\]
with $\sup_{t\in[a,b)}\sup_{\Sigma_t} |\mathbf{a}| + |b| < \infty$,
\item $u > 0$ on the parabolic boundary $\Sigma_a \cup (\cup_{t\in [a,b)} \partial\Sigma_t)$,
\item for all $t \in [a,b)$, and
\[
\int_{\Sigma_t} \left( |u|^2 + |\tfrac{\partial u}{\partial t}|^2 + |\nabla_{\Sigma_t} u|^2 + |\nabla^2_{\Sigma_t}u|^2 \right) \rho_{(0,b)}d\cH^n < \infty
\]
where
\[
\rho_{(0,b)}(\mathbf{x},t) = (4\pi(b-t))^{-\frac n2} e^{-\frac{|\mathbf{x}|^2}{4(b-t)}}.
\]
\end{enumerate}
Then, for all $t \in [a,b)$, $\inf_{\Sigma_t} u \geq 0$. 
\end{theorem}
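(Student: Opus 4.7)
The plan is to carry out a Gaussian-weighted $L^2$ energy estimate on the negative part of $u$, in the spirit of the original Ecker--Huisken maximum principle at infinity. Set $v := \max(-u,0) \geq 0$; since $u$ is $C^2$, $v$ is locally Lipschitz, vanishes on $\{u \geq 0\}$, and satisfies, in the weak sense where $v > 0$,
\[
\left(\tfrac{\partial}{\partial t} - \Delta_{\Sigma_t}\right) v \leq \mathbf{a} \cdot \nabla_{\Sigma_t} v + b v.
\]
Assumption (2) gives $v \equiv 0$ on the parabolic boundary $\Sigma_a \cup \bigcup_t \partial \Sigma_t$, and assumption (3) supplies the integrability needed below.

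Next, consider the weighted energy $\Phi(t) := \int_{\Sigma_t} v^2 \rho_{(0,b)} \, d\cH^n$, and compute $\Phi'(t)$ using Huisken's monotonicity formula. For a general function $f$ along the flow,
\[
\tfrac{d}{dt}\int_{\Sigma_t} f \rho_{(0,b)} \, d\cH^n = \int_{\Sigma_t} \big(\tfrac{\partial}{\partial t} f - \Delta_{\Sigma_t} f\big)\rho_{(0,b)} \, d\cH^n - \int_{\Sigma_t} f\,\big|\mathbf{H} - \tfrac{\mathbf{x}^\perp}{2(b-t)}\big|^2 \rho_{(0,b)} \, d\cH^n,
\]
plus boundary integrals over $\partial \Sigma_t$ that vanish because $v \equiv 0$ there (so $v^2 \equiv 0$ and $\nabla (v^2)$ contracts against a factor of $v$ on $\partial \Sigma_t$). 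Applying this with $f = v^2$ and using $\partial_t v^2 - \Delta v^2 = 2v(\partial_t - \Delta)v - 2|\nabla v|^2$ together with the differential inequality for $v$,
\[
\partial_t v^2 - \Delta_{\Sigma_t} v^2 \leq 2v\,\mathbf{a} \cdot \nabla v + 2bv^2 - 2|\nabla v|^2 \leq \big(2b + \tfrac{1}{2}|\mathbf{a}|^2\big) v^2,
\]
by Cauchy--Schwarz absorbing the gradient term. Dropping the nonpositive $|\mathbf{H} - \mathbf{x}^\perp/(2(b-t))|^2$ contribution yields $\Phi'(t) \leq C \Phi(t)$, with $C = \sup(2b + \tfrac{1}{2}|\mathbf{a}|^2) < \infty$ by (1). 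Since $\Phi(a) = 0$, Grönwall gives $\Phi \equiv 0$ on $[a,b)$, hence $v \equiv 0$ and $u \geq 0$.

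The main obstacle is rigorously justifying the differentiation under the integral and the integration by parts on the non-compact end, since neither $\Sigma_t$ is compact nor $v$ is compactly supported. The standard remedy is a cutoff argument: pick $\varphi_k \in C^\infty_c(\RR^{n+1})$ with $\varphi_k \equiv 1$ on $B_k$ and $|\nabla \varphi_k| \leq 2$; run the identity for $f = v^2 \varphi_k^2$; the extra error terms are controlled by $\int (|\nabla v|^2 + v^2) \rho_{(0,b)} \, d\cH^n$ on $\{\varphi_k < 1\}$, which tends to zero as $k \to \infty$ precisely by assumption (3). A secondary issue is the singularity of $\rho_{(0,b)}$ as $t \nearrow b$: we will work on $[a, b-\eta]$, apply Grönwall there, and let $\eta \searrow 0$; the assumed integrability of $u, \partial_t u, \nabla u, \nabla^2 u$ against $\rho_{(0,b)}$ on all of $[a,b)$ ensures the limit is harmless.
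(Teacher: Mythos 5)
The paper does not actually prove this statement: it is quoted verbatim from Bernstein--Wang \cite[Theorem A.1]{BernsteinWang:TopologicalProperty}, and the proof there is exactly the kind of Gaussian-weighted $L^2$ energy/Gr\"onwall argument on the negative part $u_-$ that you propose, so your route is essentially the standard one rather than a genuinely different alternative. Your outline is correct, with two small points worth making explicit. First, when you differentiate $\int_{\Sigma_t} v^2\varphi_k^2\,\rho_{(0,b)}$, the material derivative of the ambient, time-independent cutoff contributes a term $2v^2\varphi_k\,\nabla\varphi_k\cdot\mathbf{H}$, and $\int |\mathbf{H}|\,v^2\rho$ is \emph{not} controlled by hypothesis (3); the reason your claimed error bound by $\int_{\{\varphi_k<1\}}(v^2+|\nabla_{\Sigma_t} v|^2)\rho$ is nevertheless correct is that this term cancels exactly against the $\mathbf{H}\cdot\nabla\varphi_k^2$ part of $\Delta_{\Sigma_t}\varphi_k^2$ (since $\Delta_{\Sigma_t}\varphi=\operatorname{tr}_{\Sigma_t}\nabla^2\varphi+\mathbf{H}\cdot\nabla\varphi$ for an ambient $\varphi$), leaving only bounded annulus terms; this cancellation should be stated, as it is the crux of why the localized monotonicity identity closes. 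Second, to send $k\to\infty$ you should either apply Gr\"onwall at fixed $k$ and then pass to the limit, or integrate in time first; either way you need $s\mapsto\int_{\Sigma_s}(v^2+|\nabla_{\Sigma_s} v|^2)\rho$ to be locally integrable on $[a,t]$, which is the same (mild) uniformity implicitly used in the cited proof. Finally, the $\eta\searrow 0$ step is unnecessary: the conclusion is only asserted for $t<b$, where $\rho_{(0,b)}$ is smooth, so Gr\"onwall on $[a,t]$ suffices.
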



\section{Weak set flows of cones} \label{app:weak-set-flows} 

For this appendix, the reader might find it useful to recall the notions set forth in Section \ref{sec:prelim}. We collect results of \cite{HershkovtisWhite} on weak set flows and outermost flows and show that they are also applicable (with minor modifications) to the flow of hypercones. 

\begin{proposition}[{\cite[Proposition A.3]{HershkovtisWhite}}]\label{thm:app.1}
Suppose that $F$ is any closed subset of $\RR^{n+1}$, and let $\cM \subset \RR^{n+1} \times \RR^+$ be its level set flow. Set:
$$ M(t) := \{\mathbf{x} \in \RR^{n+1} : (\mathbf{x},t) \in \partial \cM \}\, .$$
Then $t \mapsto M(t)$ is a weak set flow.
\end{proposition}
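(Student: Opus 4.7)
We need to verify the defining avoidance property for $\partial \cM$ as a weak set flow (see Section \ref{sec:prelim.weak.flow.level.set.flow}): for any classical flow $\cM''$ whose heat boundary $\partial \cM''$ is disjoint from $\partial \cM$ in spacetime, $\cM''$ is itself disjoint from $\partial \cM$. We argue by contradiction: assume $\cM'' \cap \partial \cM \neq \emptyset$ and choose $X_0 = (\mathbf{x}_0, t_0) \in \cM'' \cap \partial \cM$ with $t_0$ minimal. Since $\partial \cM''$ is disjoint from $\partial \cM$, $X_0$ lies in $\cM'' \setminus \partial \cM''$ and $\cM''$ is a smoothly embedded hypersurface in spacetime near $X_0$. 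Localizing to a small spacetime neighborhood $U$ of $X_0$ and restricting to times $t \in [t_0 - \delta, t_0]$ yields a truncated classical flow $\cM'''$ whose heat boundary is bounded away from $\partial \cM$ and, by the minimality of $t_0$ together with the connectedness of a small neighborhood of $X_0$ in $\cM'' \cap \{t < t_0\}$, lies entirely in $\cM^{\circ}$ or entirely in $\cM^{c}$.

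\textbf{Case A: $\partial \cM''' \subset \cM^{c}$.} Since $\cM$ itself is a weak set flow (being the level set flow, per Section \ref{sec:prelim.weak.flow.level.set.flow}), the defining avoidance property of $\cM$ applied to $\cM'''$ yields $\cM''' \cap \cM = \emptyset$. But $X_0 \in \cM''' \cap \partial \cM \subset \cM''' \cap \cM$, a contradiction.

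\textbf{Case B: $\partial \cM''' \subset \cM^{\circ}$.} This is the delicate case, as the weak set flow property of $\cM$ does not apply directly (since now $\partial \cM''' \subset \cM$). The strategy is to exploit the inductive construction $\cM^{c} = \bigcup_{k} W_{k}$ from Section \ref{sec:prelim.weak.flow.level.set.flow}: $\cM^{c}$ is filled by unions of classical flows $\cN$, each with heat boundary $\partial \cN \subset \cM^{c}$. Because $X_0 \in \partial \cM$ is accumulated by points of $\cM^{c}$, such classical flows $\cN_{n}$ can be found passing arbitrarily close to $X_0$ from the $\cM^{c}$ side. One then perturbs $\cM'''$ by a one-parameter normal variation of classical flows $\{\cM'''_{s}\}_{s \in (-\eps, \eps)}$ whose heat boundary remains in $\cM^{\circ}$ by openness, and whose interior sweeps transversely through $X_0$ as $s$ varies. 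The classical avoidance principle, applied to $\cM'''_{s}$ versus the outer barriers $\cN_{n}$ (whose heat boundaries sit on opposite sides of $\partial \cM$ and are thus automatically disjoint from one another), together with a continuity argument in $s$, constrains $\cM'''$ and rules out the existence of the hypothetical first contact point $X_0$.

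\textbf{Main obstacle.} The crux is Case B, where the usual weak set flow property of $\cM$ is insufficient since $\partial \cM''' \subset \cM^{\circ} \subset \cM$. Its resolution leverages the two-sided nature of the level set flow: the inductive description of $\cM^{c}$ as a union of classical flows provides outer barriers $\cN_{n}$ lying in $\cM^{c}$ which, combined with the inner variation $\{\cM'''_{s}\}$, give a two-sided comparison forcing $\cM''' \cap \partial \cM = \emptyset$. Arranging these perturbations so as to produce a clean application of classical avoidance is the main technical content, and is carried out in \cite[Proposition A.3]{HershkovtisWhite}.
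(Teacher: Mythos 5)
The paper itself offers no proof of this proposition---it is quoted verbatim from \cite[Proposition A.3]{HershkovtisWhite}---so your argument has to stand on its own, and as written it does not: Case B, which you correctly identify as the crux, is not proved but deferred back to the very reference being reproduced, and the mechanism you sketch would not work. First, a ``one-parameter normal variation'' of a classical flow is not a classical flow (mean curvature flow is not preserved by normal perturbations); the only admissible deformations are space-time translations or re-solving from perturbed data, and neither is guaranteed to sweep $\cM'''$ across $X_0$ into $\cM^{c}$. More seriously, the avoidance principle you invoke is not of the form ``disjoint heat boundaries $\Rightarrow$ disjoint flows'': it only says that if two classical flows intersect, the intersection contains a point of one of the two heat boundaries. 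The flows $\cN_n$ sweeping out $\cM^{c}$ in the $W_k$-construction have heat boundaries lying somewhere in $\cM^{c}$, possibly arbitrarily close to $X_0$, so an intersection of your perturbed flow with $\cN_n$ occurring on the heat boundary of $\cN_n$ is perfectly consistent and produces no contradiction. There is also a smaller gap in the localization: after truncating in space, the artificial lateral rim of $\cM'''$ at time $t_0$ need not avoid $\partial\cM$ (the contact set at the first touching time can be large), so ``heat boundary bounded away from $\partial\cM$'' is unjustified; this part is fixable by truncating in time only, since then the lateral boundary is a subset of the original heat boundary, and the new initial slice together with the rim at time $t_0$ lies in the closure of the connected early part of the flow, hence on a single side of $\partial\cM$.

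What is missing in Case B is a containment-plus-translation argument rather than a two-sided barrier argument. Step 1: show that any classical flow with heat boundary contained in $\cM^{\circ}$ and living at positive times is contained in $\cM$. This follows by induction over the exhaustion $\cM^{c}=\bigcup_k W_k$: if such a flow $L$ met a point of $W_k$ with $k\geq 1$, that point lies on a classical flow $\cN\subset\cM^{c}$ with heat boundary in $W_{k-1}$; since the heat boundary of $L$ lies in $\cM^{\circ}$ it is disjoint from $\cN$, so by the maximum principle the intersection contains a point of the heat boundary of $\cN$, i.e.\ $L$ meets $W_{k-1}$; descending to $W_0$ forces $L$ to have a point at time $0$ outside $F$, which is impossible. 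Step 2: apply Step 1 not to the time-truncated flow alone but to all sufficiently small space-time translates of it; these are still classical flows, their heat boundaries stay in $\cM^{\circ}$ by compactness and openness, and because $X_0$ is a smooth interior point of the flow the translates sweep out a full spacetime neighborhood of $X_0$. Hence that whole neighborhood lies in $\cM$, so $X_0\in\cM^{\circ}$, contradicting $X_0\in\partial\cM$. Without an argument of this kind (which is essentially what Hershkovits--White do), your Case B restates the difficulty rather than resolving it; Case A is fine modulo the truncation fix and checking the ``disjoint from $\Gamma$ at $t=0$'' clause in the definition of weak set flow.
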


In what follows, we consider $F$ to be the closure of its interior in $\RR^{n+1}$ and satisfy
\[
\partial F = \partial F^c.
\]
We call such a set $F$  \emph{admissible}.\footnote{Note that this slightly extends  the definition in \cite{HershkovtisWhite}, where $\partial F$ ($\partial U$ in their notation) would be a compact, smooth hypersurface. This extension allows us to flow from non-compact and non-smooth initial surfaces. This does not change anything in the analysis of \cite{HershkovtisWhite}.} 
Let $F':= \overline{F^c}$, denote the level set flows of $F$, $F'$ by $\cM$, $\cM'$, and set $F(t):= \cM(t)$, $F'(t):= \cM'(t)$. In line with Proposition \ref{thm:app.1}, we set:
\begin{align*}
M(t) &:= \{ (\mathbf{x},t) \subset \RR^{n+1} : \mathbf{x} \in \partial \cM\},\\
M'(t) &:= \{ (\mathbf{x},t) \subset \RR^{n+1} : \mathbf{x} \in \partial \cM'\}.
\end{align*}
(Here $\partial \cM$, $\partial \cM'$ are the relative boundaries of $\cM$, $\cM'$ as subsets of $\RR^{n+1}\times \RR^+$). We call
\[ t \mapsto M(t), \; t \mapsto M'(t) \]
the \emph{outer} and \emph{inner} flows of $M:= \partial F$. By Proposition \ref{thm:app.1}, $M(t)$, $M'(t)$ are contained in the level set flow generated by $M$. Furthermore,
\[
M(t) = \lim_{\tau \nearrow t} \partial F(\tau) 
\]
for all $t > 0$, and $M(t) = \partial F(t)$ for all but countably many $t$. See \cite[Theorems B.2,  C.1]{HershkovtisWhite}. Note that \cite[Theorems B.2]{HershkovtisWhite} directly carries over to $M = \partial F$ where $F$ is admissible. 

Let $\Gamma \subset \SS^{n}$ denote a fixed smooth, embedded, closed hypersurface. Consider the equidistant deformations  $(\Gamma_s)_{-\varepsilon<s<\varepsilon}$ of $\Gamma \subset \SS^{n}$ for some consistent choice of normal orientation. We further consider the regular hypercone $C=C(\Gamma)$ and the smooth perturbations $C_s= C(\Gamma_s)$. Note that $C_s$ divides $\RR^{n+1}$ into two open sets $\Omega_s^\pm$ such that $C_s = \partial \Omega_s^\pm$ as well as $C(\Gamma) \cap \Omega_s^+ = \emptyset$ for $s>0$ and $C(\Gamma) \cap \Omega_s^- = \emptyset$ for $s<0$. We now consider, 
\[
\Sigma_{s,r} := \partial \big( \Omega^+_s \setminus B_r(0)),
\] 
for $0<r<1$ and $s > 0$. We denote with $\tilde{\Sigma}_{s,r}$ a smoothing of ${\Sigma}_{s,r}$ that rounds off the corners near $\partial B_r(0)$.  Similarly we set:
\[
\Sigma'_{s,r} := \partial \big( (\Omega^+_s \cap B_{1/r}(0)) \setminus B_r(0)\Big)
\]
for $0<r<1$, $s>0$, and $\tilde{\Sigma}_{s,r}$ to be a smoothing of ${\Sigma}'_{s,r}$ that rounds off its corners. Note that by using the smoothings $\tilde{\Sigma}'_{s,r}$ we can construct compact regions $F_i \subset \Omega^+$ with smooth boundaries such that 
\begin{itemize}
\item[(1)] For each $i$, $F_{i}$ is contained in the interior of $F_{i+1}$.
\item[(2)] $\cup F_i = \Omega^+$.
\item[(3)] $\cH^{n}\lfloor \partial F_i \rightarrow \cH^{n}\lfloor C(\Gamma)$. 
\end{itemize}
By perturbing $F_i$ slightly, we can also assume that
\begin{itemize}
\item[(4)] the level set flow of $\partial F_i$ never fattens.
\end{itemize}
We then directly generalize \cite[Theorems B.3,  B.5]{HershkovtisWhite}. The proof extends verbatim.

\begin{theorem}\label{thm:app.2} There is an integral unit-regular Brakke flow $t \in [0,\infty) \mapsto \mu(t)$ such that $\mu(0) = \cH^{n} \lfloor C(\Gamma)$ and such that the spacetime support of the flow is the spacetime set swept out by $t \in [0,\infty) \mapsto M(t)$, where $t  \mapsto M(t)$ is the outer flow of $C(\Gamma)$. That is, for $t>0$, the Gaussian density of the flow $\mu(\cdot)$ at $(\mathbf{x},t)$ is $>0$ if and only if $\mathbf{x} \in M(t)$.
\end{theorem}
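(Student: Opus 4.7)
The plan is to mimic the proof of \cite[Theorem B.6]{HershkovtisWhite}, producing the desired flow as a Brakke limit of the flows associated to the compact smooth approximators $F_i \subset \Omega^+$ constructed above (using the smoothings $\tilde\Sigma_{s,r}$), with the non-fattening assumption (4) ensuring that each approximator produces a well-behaved Brakke flow whose support coincides with its level set flow.

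First, for each $i$, since $\partial F_i$ is smooth, compact, and generates a non-fattening level set flow, Ilmanen's elliptic regularization \cite[Theorem 11.4]{Ilmanen:elliptic} yields a unit-regular, cyclic integral Brakke flow $\mu_i$ with $\mu_i(0) = \cH^n \lfloor \partial F_i$ whose spacetime support is precisely the spacetime track of the (inner $=$ outer) flow of $\partial F_i$. The cone $C(\Gamma)$ has polynomial volume growth, so the approximants $\partial F_i$ do as well (uniformly on compact sets), and Huisken's monotonicity formula gives uniform local Gaussian density bounds. This yields uniform local area bounds and the hypotheses of the Brakke compactness theorem. Thus, after passing to a subsequence, $\mu_i \rightharpoonup \mu$ for some integral Brakke flow $\mu$, and $\mu$ is unit-regular and cyclic by the closure results of \cite{White:Brakke,White:cyclic} cited in Section \ref{sec:prelim.unit.regular.cyclic}. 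Property (3) of the approximation guarantees $\mu(0) = \cH^n \lfloor C(\Gamma)$.

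Next I would identify $\supp \mu$ with the spacetime track of the outer flow $t \mapsto M(t)$. The nesting property (1) and the avoidance principle (both in its standard form and in the localized form of Theorem \ref{theo:ilmanen-avoidance}) ensure that the spacetime tracks of $\mu_i$ are nested as $i \to \infty$ and that the limit set is contained in the level set flow $\cM$ of $C(\Gamma)$. More precisely, since each $F_i \subset \Omega^+$, the avoidance principle applied to any classical flow testing the definition of the outer flow shows that $\supp \mu \subset$ spacetime track of $t \mapsto F(t)$, and hence that $\supp \mu(t) \subset M(t)$ for every $t > 0$ (using that $M(t) = \lim_{\tau \nearrow t} \partial F(\tau)$). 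Conversely, for any $(\mathbf{x},t)$ with $\mathbf{x} \in M(t)$, upper semicontinuity of density together with unit regularity and the Brakke compactness ensures that $(\mathbf{x},t) \in \supp \mu$: approximating $\mathbf{x} \in \partial F(\tau)$ for $\tau \nearrow t$ at which $F(\tau) = F_i(\tau)$ (for large $i$), the points persist in $\supp \mu_i$ and pass to $\supp \mu$ in the limit. This matches the dichotomy in \cite[Theorem B.6]{HershkovtisWhite}.

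The main obstacle I anticipate is the non-compactness of $C(\Gamma)$: the global convergence $\mu_i \rightharpoonup \mu$ and the identification $\supp \mu = \{(\mathbf{x},t) : \mathbf{x} \in M(t)\}$ rely on uniform local estimates that, for compact initial data in \cite{HershkovtisWhite}, are essentially automatic. Here they have to be verified by a scale-invariant argument along the conical ends, using pseudolocality (cf.\ \cite[Theorem 1.5]{IlmanenNevesSchulze}) and the self-similar scaling symmetry of the initial data to reduce estimates at large radii to fixed-scale estimates near the link $\Gamma \subset \SS^n$. Once this is done, the unit-regularity of $\mu$ and the identification of $\supp \mu$ with the outer flow follow as in the compact case, completing the proof.
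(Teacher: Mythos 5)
Your proposal is correct and follows essentially the same route as the paper: the paper constructs the nested, non-fattening compact approximators $F_i$ exhausting $\Omega^+$ with $\cH^n\lfloor \partial F_i \to \cH^n\lfloor C(\Gamma)$ and then simply asserts that the proof of \cite[Theorems B.6, B.8]{HershkovtisWhite} "extends verbatim," which is exactly the limiting argument (elliptic regularization for each $\partial F_i$, Brakke compactness, avoidance/nesting, and upper semicontinuity of density) that you spell out. Your remark that the only new issue relative to the compact case is uniform local mass bounds along the conical end is accurate, and it is handled exactly as you suggest, since the smooth compact link gives bounded area ratios (finite entropy) for $C(\Gamma)$ and its approximants.
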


\begin{remark} \label{rem:app.3}
(i) Note that by uniqueness of the level set flow, the outer flow satisfies $M(t) = \sqrt{t} M(1)$. Together with unit regularity and White's local regularity theory this implies that there is a $R_0=R_0(\Gamma)$ such that the Brakke flow constructed in  Theorem \ref{thm:app.2} is a smooth expanding solution, agreeing with $M(t)$, outside of $B_{\sqrt{t} R_0}(0)$ for all $t>0$. \\[1ex]
(ii) Theorem \ref{thm:app.2}, and all of the above, applies also to the inner flow.
\end{remark}


\section{Brakke flows with small singular set} \label{app:reg.set.conn}

In this section we show that if a Brakke flow has small singular set, then the regular set is connected, provided it is connected in a neighborhood of the initial time. To prove this, we show that for a closed set $S \subset \RR^{n+k}\times \RR$, a Brakke flow (with bounded area ratios) on $(\RR^{n+k}\times \RR)\setminus S$ extends across $S$ provided $S$ has vanishing $n$-dimensional parabolic Hausdorff measure.\footnote{See also \cite[Appendix D]{CESY} where it is shown that an integral $2$-dimensional Brakke flow in $\RR^3\setminus\{0\}$ with bounded area ratios extends across the origin.} 

\begin{remark}
In \cite[Claim 8.4]{ChoiHaslhoferHershkovits} it was observed that the classification of low entropy ancient flows implies connectivity of the regular part of a flow in $\RR^3$ with only (multiplicity-one) spherical and cylindrical singularities, by an argument similar to Kleiner--Lott's proof \cite[Theorem 7.1]{KleinerLott} that a singular Ricci flow of $3$-manifolds has only finitely many bad world lines.  We show here that one can prove connectivity under considerably weaker hypothesis. We note that our approach has no hope of estimating the number of bad world lines. It would be interesting to study the Hausdorff dimension of bad world lines in a $k$-convex mean curvature flow in $\RR^{n+1}$. 
\end{remark}

We first recall a well known extension theorem for varifolds, originally considered by de~Giorgi--Stampacchia \cite{DeGiorgiStampacchia}.

\begin{lemma}\label{lem:varifold-extension}
Let $V$ be a rectifiable $n$-varifold in $\RR^{n+k}$ with bounded area ratios, i.e.,  $\|V\|(B_r(\bx)) \leq C r^n$. If $S\subset \RR^{n+k}$ is closed, $\cH^{n-1}(S) =0$, and the restricted varifold $V' : = V\lfloor (\RR^{n+k}\setminus S)$ has absolutely continuous first variation $\bH' \in L^1_\textnormal{loc}(\RR^{n+k};\mu_{V'})$, then $V$ has absolutely continuous first variation equal to $\bH'$, too.
 \end{lemma}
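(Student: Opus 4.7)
My plan is to use a capacity-style cutoff argument, of the sort familiar from removable singularity theorems for the mean curvature operator (where $\cH^{n-1}(S)=0$ is exactly the borderline hypothesis). First I would show that $\mu_V(S)=0$, so that $\mu_V$ and $\mu_{V'}$ agree as Radon measures. Given any compact $K$, $\cH^{n-1}(S\cap K)=0$ furnishes, for any $\delta,\eta>0$, a finite cover of $S\cap K$ by balls $B_{r_i}(x_i)$ with $r_i<\delta$ and $\sum_i r_i^{n-1}<\eta$. The bounded area ratios hypothesis gives $\mu_V(B_{r_i}(x_i))\le C r_i^n \le C\delta\, r_i^{n-1}$, so $\mu_V(S\cap K)\le C\delta\eta$; sending $\eta\to0$ yields $\mu_V(S)=0$.

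Next, given $X\in C^1_c(\RR^{n+k};\RR^{n+k})$ and $\eta>0$, I would select a cover as above (with $\delta=1$, say) of $S\cap\supp X$ and construct a smooth cutoff $\varphi_\eta:\RR^{n+k}\to[0,1]$ that vanishes in a neighborhood of $S\cap\supp X$, equals $1$ outside $\bigcup_i B_{r_i}(x_i)$, and satisfies $|\nabla\varphi_\eta|\le C/r_i$ on $B_{r_i}(x_i)$. The key quantitative estimate, and the whole reason the dimension $n-1$ appears, is
\[
\int |\nabla \varphi_\eta|\,d\mu_V \;\le\; \sum_i \frac{C}{r_i}\,\mu_V(B_{r_i}(x_i)) \;\le\; C\sum_i r_i^{n-1} \;\le\; C\eta.
\]

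To finish, I would compare $\delta V(X)$ and $\delta V'(X)$ via the identity
\[
\delta V(X)-\delta V'(X) \;=\; \delta V\bigl((1-\varphi_\eta) X\bigr) - \delta V'\bigl((1-\varphi_\eta) X\bigr),
\]
which uses that $\varphi_\eta X$ is compactly supported away from $S$ and hence pairs identically against $V$ and $V'$. For the $\delta V'$ term, since $\bH'\in L^1_{\loc}(\mu_{V'})$ and $(1-\varphi_\eta)X\to 0$ pointwise $\mu_{V'}$-a.e., dominated convergence gives $\delta V'((1-\varphi_\eta) X)\to 0$. For the $\delta V$ term I would use the product rule,
\[
\bigl|\delta V((1-\varphi_\eta)X)\bigr| \;\le\; \int(1-\varphi_\eta)|\mathrm{div}_T X|\,d\mu_V \;+\; \|X\|_\infty \int|\nabla\varphi_\eta|\,d\mu_V,
\]
where the first summand tends to $0$ because $\mu_V(S)=0$ and the supports of $1-\varphi_\eta$ collapse onto $S$, and the second is bounded by $C\|X\|_\infty\,\eta$ by the key estimate above. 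Sending $\eta\to 0$ yields $\delta V(X)=\delta V'(X)=-\int X\cdot \bH'\,d\mu_V$, completing the proof. There is no genuinely hard step here; the main point is the sharp matching between the $(n-1)$-dimensional capacity hypothesis on $S$ and the $n$-dimensional area-ratio bound on $V$, which makes the cutoff estimate pass through.
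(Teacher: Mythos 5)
Your proposal is correct and follows essentially the same route as the paper: cover $S$ by balls with $\sum_i r_i^{n-1}$ small, build a cutoff whose gradient costs $\sim r_i^{-1}$, use the bounded area ratios to make $\int |\nabla \varphi|\, d\mu_V$ small, test the first variation with the truncated vector field, and pass to the limit by dominated convergence together with $\mu_V(S)=0$. The only difference is organizational: you prove $\mu_V(S)=0$ explicitly (the paper merely asserts it), and your justification that the term $\int (1-\varphi_\eta)|\mathrm{div}_T X|\,d\mu_V$ vanishes should be phrased via $\mu_V\big(\bigcup_i B_{r_i}(x_i)\big)\leq C\sum_i r_i^n\leq C\eta$ (and similarly the $\bH'$ term via absolute continuity of the integral) rather than ``supports collapsing onto $S$,'' but these are cosmetic points, not gaps.
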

 
\begin{proof}
Without loss of generality, we assume that $S$ is compact. For $\delta>0$, we can find balls $\{B_{r_i}(\bx_i)\}_{i=1}^N$ covering $S$ with
\[
\sum_{i=1}^N r_i^{n-1} < \delta. 
\]
Choose cut-off functions $0\leq\xi_i\leq1$ with $\xi_i \equiv 1$ outside of $B_{2r_i}(\bx_i)$, $\xi_i \equiv 0$ on $B_{r_i}(\bx_i)$, and $|D \xi_i| \leq \tfrac{2}{r_i}$. Then, set $\xi_\delta = \Pi_{i=1}^N \xi_i$ and note that 
\[
|D\xi_\delta| \leq \sum_{i=1}^N \frac{2}{r_i} \chi_{B_{r_i}(\bx_i)} 
\]
For a vector field $\Xi \in C^1_c(\RR^{n+1})$, we have
\[
\int \xi_\delta \Div_M \Xi \, d\mu_V + \int \xi_\delta \Xi \cdot \bH' \, d\mu_{V'}  = - \int D^T \xi_\delta \cdot \Xi \, d\mu_{V'}\]
Note that
\[ \left| \int D^T \xi_\delta \cdot \Xi \, d\mu_{V'}\right|  \leq \sum_{i=1}^N \frac{2}{r_i} \Vert V\Vert(B_{r_i}(\bx_i)) \Vert \Xi\Vert_{L^\infty} \leq C \delta. \]
Sending $\delta\to 0$, the dominated convergence theorem implies 
\[ \int \Div_M \Xi \, d\mu_V = - \int \Xi \cdot \bH' \, d\mu_{V'} .\]
Thus, $\delta V$ is absolutely continuous with respect to $d\mu_V$ and, since $\mu_V(S) = 0$, the generalized mean curvature of $V$ also equals $\bH'$. This completes the proof.
\end{proof}

We now extend this to Brakke flows (recall our conventions in Section \ref{sec:prelim.brakke.flow}).

\begin{theorem}\label{thm:Brakke-flow-extension}
Consider $(\mu(t))_{t\in I}$ be a $1$-parameter family of Radon measures on $\RR^{n+k}$ and $S \subset \RR^{n+k}\times \RR$ a closed set with $\cH^n_P(S) = 0$. Assume that 
\begin{enumerate}
\item The measures $\mu(t)$ have uniformly bounded area ratios, i.e., $\mu(t)(B_r(\bx)) \leq C r^n$. 
\item For almost every $t \in I$, there exists an integral $n$-dimensional varifold $V(t)$ with $\mu(t) = \mu_{V(t)}$ so that $V'(t) = V(t) \lfloor (\RR^{n+k}\setminus S(t))$ has absolutely continuous first variation in $L^1_\textnormal{loc}(\RR^{n+k};d\mu_{V'(t)})$ and has mean curvature $\bH$ orthogonal to $\textrm{Tan}(V'(t),\cdot)$ almost everywhere. 
\item For any compact set $K \subset (\RR^{n+k}\times \RR)\setminus S$, we have
\[
\int_K  (1+|\bH|^2) d\mu(t) dt < \infty
\]
\item If $[t_1,t_2]\subset I$ and $f \in C^1_c((\RR^{n+k}\times [t_1,t_2]) \setminus S)$ has $f\geq 0$, then 
\[
\int f(\cdot,t_{2}) \, d\mu(t_{2}) - \int f(\cdot,t_{1}) \, d\mu(t_{1}) \leq \int_{t_{1}}^{t_{2}} \int\left( - |\bH|^{2} f + \bH \cdot \nabla f + \tfrac{\partial }{\partial t} f \right) \, d\mu(t) \, dt.
\]
\end{enumerate}
Then $(\mu(t))_{t\in I}$ is a Brakke flow on $\RR^{n+k}$. 
\end{theorem}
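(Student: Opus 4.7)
The plan is to verify each of the three defining conditions of an integral Brakke flow---as in Section~\ref{sec:prelim.brakke.flow}---across the exceptional set $S$, starting from the fact that they already hold on $(\RR^{n+k}\times\RR)\setminus S$. The main tool will be a parabolic analogue of the cut-off construction used in Lemma~\ref{lem:varifold-extension}, adapted to spacetime.

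First, I would run a slicing step. Writing $S(t) := \{\bx : (\bx,t)\in S\}$, the hypothesis $\cH^n_P(S) = 0$ yields, for every $\eps,\delta>0$, a cover of $S$ by parabolic balls $P_{r_i}(\bx_i,t_i) = B_{r_i}(\bx_i)\times(t_i - r_i^2, t_i + r_i^2)$ with $r_i<\delta$ and $\sum r_i^n<\eps$. A Fubini-type estimate then gives
\[
\int_I \cH^{n-1}_\delta(S(t))\,dt \leq 2\sum_i r_i^{n+1} \leq 2\delta\eps,
\]
which, after sending $\delta \to 0$, shows that $\cH^{n-1}(S(t)) = 0$ for a.e.~$t \in I$. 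Applying Lemma~\ref{lem:varifold-extension} to $V(t)$ at such good times $t$ then promotes the first-variation control from $\RR^{n+k}\setminus S(t)$ to all of $\RR^{n+k}$, with the same generalized mean curvature $\bH$, verifying property (1) of Section~\ref{sec:prelim.brakke.flow}. In particular, since $V(t)$ is integral, $\mu(t)(S(t)) = 0$ for a.e.~$t \in I$.

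Both remaining conditions will be established via the parabolic cutoff $\xi_\delta := \prod_i \psi_i$, where $\psi_i$ is a spacetime cutoff supported in $P_{2r_i}$, equal to $0$ on $P_{r_i}$, with $|\nabla\psi_i|\leq 2/r_i$ and $|\partial_t\psi_i|\leq 2/r_i^2$. Using the area-ratio hypothesis~(1) of the theorem, the parabolic scaling of the cover yields the matched estimates
\[
\int\!\!\int |\nabla\xi_\delta|^2\,d\mu\,dt + \int\!\!\int |\partial_t\xi_\delta|\,d\mu\,dt \leq C\sum_i r_i^n \leq C\eps.
\]
To upgrade condition (2), I would plug a test function of the form $\eta^2\xi_\delta^2$ (for $\eta\geq 0$ a smooth spacetime cutoff equal to $1$ on $K\times[t_1,t_2]$) into the assumed Brakke inequality~(4), absorb the $\bH\cdot\nabla(\eta^2\xi_\delta^2)$ term via Cauchy--Schwarz, send $\eps\to 0$, and invoke Fatou to obtain the local $L^2$-bound on $\bH$. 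For condition (3), I would apply (4) to $f\xi_\delta$ for arbitrary $f\in C^1_c(\RR^{n+k}\times[t_1,t_2])$ with $f\geq 0$; the error terms $\int\!\!\int\bH \cdot f\,\nabla\xi_\delta\,d\mu\,dt$ and $\int\!\!\int f\,\partial_t\xi_\delta\,d\mu\,dt$ are handled via Cauchy--Schwarz using the newly established $L^2$-bound on $\bH$ and the display above, while the boundary integrals $\int f\xi_\delta\,d\mu(t_j)$ converge to $\int f\,d\mu(t_j)$ by dominated convergence provided $t_j$ is a good time (since then $\mu(t_j)(S(t_j)) = 0$). This recovers Brakke's inequality for a.e.~pair $(t_1,t_2)$, from which the general case follows by approximation from such good times.

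The hard part will be the last step, where the cutoff $\xi_\delta$ must simultaneously kill a spacetime neighborhood of $S$ and have $|\nabla\xi_\delta|^2$ and $|\partial_t\xi_\delta|$ that are $L^1(d\mu\,dt)$-small. This is where the parabolic scaling $P_{r_i} = B_{r_i}\times(t_i-r_i^2,t_i+r_i^2)$ is essential: both the spatial gradient cost $r_i^{-2}$ and the time-derivative cost $r_i^{-2}$ pair with a parabolic volume factor of $r_i^{n+2}$, so each error contributes $O(\sum r_i^n)$---precisely the quantity that the hypothesis $\cH^n_P(S) = 0$ allows one to make arbitrarily small. Any other scaling would create a mismatch between the two error terms and prevent closure of the argument.
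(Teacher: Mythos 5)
Your overall architecture is close to the paper's and mostly sound: the slicing step gives $\cH^{n-1}(S(t))=0$ for a.e.\ $t$ (slightly weaker than the paper's Lemma \ref{lemm:slicing-par-haus}, which gets $\cH^{n-2}$, but enough to invoke Lemma \ref{lem:varifold-extension} and to get $\mu(t)(S(t))=0$), and the heart of the matter is indeed testing hypothesis (4) against a parabolic cutoff vanishing near $S$, paying derivative costs $r_i^{-2}$ against the parabolic mass $\mu(t)(B_{r_i})\cdot r_i^2\lesssim r_i^{n+2}$. Your two-step organization---first extract the local spacetime $L^2$ bound on $\bH$ by testing with $\eta^2\xi_\delta^2$ (legitimate, since hypothesis (3) makes all integrals finite for test functions supported away from $S$), then prove Brakke's inequality for general $0\le f\in C^1_c$---differs from the paper, which does everything in a single pass with $C^2_c$ test functions, handling $\zeta^2\bH\cdot\nabla f$ via Ilmanen's trick (\cite[Lemma 6.6]{Ilmanen:elliptic}) and a $\gamma$-absorption before any $L^2$ bound on $\bH$ is available, and only at the end sends $\gamma\to 0$ and approximates $C^1_c$ by $C^2_c$. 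Your ordering would, if the estimates held, be a clean alternative.

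The genuine gap is the displayed ``matched estimates'' for the product cutoff $\xi_\delta=\prod_i\psi_i$. The time-derivative part is fine, because $|\partial_t\xi_\delta|\le\sum_i|\partial_t\psi_i|$ is linear in the individual cutoffs and integrates to $C\sum_i r_i^n$. But the spatial part requires a bound on the \emph{square} $|\nabla\xi_\delta|^2\le\bigl(\sum_i|\nabla\psi_i|\bigr)^2$, and expanding the square produces cross terms $|\nabla\psi_i||\nabla\psi_j|$ supported on the overlaps $P_{2r_i}\cap P_{2r_j}$. An efficient cover realizing $\cH^n_P(S)<\eps$ may have unboundedly many parabolic balls through a single point, so these cross terms are not controlled by $C\sum_i r_i^n$, and the claim $\int\!\!\int|\nabla\xi_\delta|^2\,d\mu\,dt\le C\eps$ (or even its uniform boundedness) is unjustified as stated; both your Step A (absorbing $\bH\cdot\nabla\xi_\delta$) and Step B (killing the error term) hinge on exactly this quantity. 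This is precisely the difficulty the paper's construction is built to avoid: it takes $\zeta_\delta=\min_i\zeta_i$, so that a.e.\ the gradient agrees with that of a single $\zeta_i$ and $|\nabla\zeta_\delta|^2\le\sum_i|\nabla\zeta_i|^2$ pointwise with no cross terms, and then mollifies to $\zeta_{\delta,\eps}$ (so that it is an admissible test function in (4)), carrying the pointwise derivative bounds through $\limsup_{\eps\to0}$. To repair your version you must either switch to this min-plus-mollification cutoff, or first replace the cover by one with bounded overlap (a Besicovitch-type refinement in the parabolic metric), which is an additional argument you have not supplied. With either fix, the remainder of your proof goes through.
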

\begin{proof}
It suffices to prove this for $S$ compact. We begin by defining the relevant cutoff function. Choose a family of parabolic balls
\[
P_{r_i}(\bx_i,t_i) = B_{r_i}(\bx_i) \times (t_i-r_i^2,t_i+r_i^2)
\]
where $i=1,\dots,N$, so that $S \subset \cup_{i=1}^N P_{r_i}(\bx_i,t_i)$ and 
\[
\sum_{i=1}^N r_i^n < \delta. 
\]
For each parabolic ball, choose a cutoff function $0\leq\zeta_i\leq1$ so that $\zeta_i \equiv 1$ on $P_{2r_i}(\bx_i,t_i)$ and $\zeta_i \equiv 0$ on $P_{r_i}(\bx_i,t_i)$. We can assume that $|D\zeta_i| \leq C/r_i$ and $|\tfrac{\partial}{\partial t} \zeta_i|\leq C/r_i^2$. Set 
\[
\zeta_\delta = \min_i \zeta_i
\]
and define a mollified function $\zeta_{\delta,\eps}$ as follows. Choose $0\leq \varphi_1,\varphi_{n+k}\leq 1$ standard mollifiers on $\RR,\RR^{n+k}$ and set 
\[
\zeta_{\delta,\eps}(\bx,t) = \int_{\RR^{n+k}\times \RR}  \eps^{-n-k-2} \varphi_{n+k}(\eps^{-1}(\bx-\by)) \varphi_1(\eps^{-2}(t-s)) \zeta_\delta(\by,s) \, d\by ds.
\]
We now estimate the derivatives of $\zeta_{\delta,\eps}$. 
\begin{claim}
\begin{equation}\label{eq:cutoff-time-der}
\limsup_{\eps\to 0} |\tfrac{\partial}{\partial t} \zeta_{\delta,\eps}(\bx,t)| \leq  C \sum_{i=1}^N \frac{1}{r_i^2} \chi_{P_{2r_i}(\bx_i,t_i)} 
\end{equation}
\end{claim}
\begin{proof}[Proof of \eqref{eq:cutoff-time-der}]
We have 
\begin{align*}
& \left| \frac{\partial}{\partial t} \zeta_{\delta,\eps}(\bx,t) \right| \\
& = \left| \int_{\RR^{n+k}\times \RR}  \eps^{-n-k-4} \varphi_{n+k}(\eps^{-1}(\bx-\by)) \varphi_1'(\eps^{-2}(t-s)) \zeta_\delta(\by,s) \, d\by ds\right|\\
& = \left| \int_{\RR^{n+k}\times \RR}  \eps^{-n-k-4} \varphi_{n+k}(\eps^{-1}(\bx-\by)) \varphi_1'(\eps^{-2}(t-s)) (\zeta_\delta(\by,s) - \zeta_\delta(\by,t))\, d\by ds \right|\\
& \leq C \max_i \sup_{(\by,s) \in P_\eps(\bx,t)} |\tfrac{\partial}{\partial t} \zeta_i(\by,s) | \\
& \leq C \sum_{i=1}^N \sup_{(\by,s) \in P_\eps(\bx,t)} |\tfrac{\partial}{\partial t} \zeta_i(\by,s) |,
\end{align*}
which implies the inequality follows after sending $\eps\to 0$. 
\end{proof}
\begin{claim}
\begin{equation}\label{eq:cutoff-spatial-der}
\limsup_{\eps\to 0} |D \zeta_{\delta,\eps}(\bx,t)|^2 \leq C \sum_{i=1}^N \frac{1}{r_i^2} \chi_{P_{2r_i}(\bx_i,t_i)}.
\end{equation}
\end{claim}
\begin{proof}[Proof of \eqref{eq:cutoff-spatial-der}]
As in the proof of \eqref{eq:cutoff-time-der}, we find 
\begin{align*}
& \left| D \zeta_{\delta,\eps}(\bx,t) \right|^2 \leq C \max_i \sup_{(\by,s) \in P_\eps(\bx,t)} |D \zeta_i(\by,s) |^2  \leq C \sum_{i=1}^N \sup_{(\by,s) \in P_\eps(\bx,t)} |D \zeta_i(\by,s) |^2 
\end{align*}
This implies the claim, as before. 
\end{proof}

Now, for $0\leq f \in C^2_c(\RR^{n+k}\times[t_1,t_2])$ we consider $\zeta_{\delta,\eps}^2 f$ in (4) above. We find
\begin{align*}
& \int \zeta_{\delta,\eps}(\cdot,t_2)^2 f(\cdot,t_2) d\mu(t_2) - \int \zeta_{\delta,\eps}(\cdot,t_1)^2 f(\cdot,t_1) d\mu(t_1)\\
& \leq \int_{t_1}^{t_2} \int \left( - |\bH|^2 \zeta_{\delta,\eps}^2 f + \zeta_{\eps,\delta}^2 \bH \cdot \nabla f + \zeta_{\eps,\delta}^2 \tfrac{\partial}{\partial t} f\right) d\mu(t) dt\\
& + \int_{t_1}^{t_2} \int \left(2 \zeta_{\eps,\delta} f \bH \cdot \nabla \zeta_{\eps,\delta} + f \tfrac{\partial}{\partial t} \zeta_{\eps,\delta}^2\right) d\mu(t) dt\\
& \leq \int_{t_1}^{t_2} \int \left( - (1-\gamma) |\bH|^2 \zeta_{\delta,\eps}^2 f + \zeta_{\eps,\delta}^2 \bH \cdot \nabla f + \zeta_{\eps,\delta}^2 \tfrac{\partial}{\partial t} f\right) d\mu(t) dt\\
& + \int_{t_1}^{t_2} \int \left( \gamma^{-1}|\nabla \zeta_{\eps,\delta}|^2 + f \tfrac{\partial}{\partial t} \zeta_{\eps,\delta}^2\right) d\mu(t) dt\\
& \leq \int_{t_1}^{t_2} \int \left( - (1-\gamma) |\bH|^2 \zeta_{\delta,\eps}^2 f + \zeta_{\eps,\delta}^2 \bH \cdot \nabla f + \zeta_{\eps,\delta}^2 \tfrac{\partial}{\partial t} f\right) d\mu(t) dt\\
& + C \gamma^{-1} \Vert f \Vert_{C^1} \int_{t_1}^{t_2} \sum_{i=1}^N \frac{1}{r_i^2} \mu(t)(B_{r_i}(\bx_i)) \chi_{(t_i-r_i^2,t_i+r_i^2)}(t) dt\\
& \leq \int_{t_1}^{t_2} \int \left( - (1-\gamma) |\bH|^2 \zeta_{\delta,\eps}^2 f + \zeta_{\eps,\delta}^2 \bH \cdot \nabla f + \zeta_{\eps,\delta}^2 \tfrac{\partial}{\partial t} f\right) d\mu(t) dt\\
& + C\gamma^{-1}  \delta \Vert f \Vert_{C^1} \\
& \leq \int_{t_1}^{t_2} \int \left( - (1-2\gamma) |\bH|^2 \zeta_{\delta,\eps}^2 f + \zeta_{\eps,\delta}^2 \tfrac{\partial}{\partial t} f\right) d\mu(t) dt\\
& + C\gamma^{-1}  \delta \Vert f \Vert_{C^1} + C \gamma^{-1} \Vert D^2 f\Vert_{L^\infty}.
\end{align*}
In the final inequality, we have used \cite[Lemma 6.6]{Ilmanen:elliptic}.

Sending $\delta\to 0$, we can use Lemma \ref{lem:varifold-extension} (and Lemma \ref{lemm:slicing-par-haus} below) to conclude that for almost every $t$, the varifold $V(t)$ has absolutely continuous first variation in $L^1_\textrm{loc}(\RR^{n+k},d\mu(t))$ and that
\[
\int_{t_1}^{t_2} \int_K (1+|\bH|^2) d\mu(t) dt < \infty
\]
for any compact set $K$ and $[t_1,t_2]\subset I$. Then, dominated convergence and the above inequality guarantees 
\begin{align*}
& \int   f(\cdot,t_2) d\mu(t_2) - \int   f(\cdot,t_1) d\mu(t_1)\\
& \leq \int_{t_1}^{t_2} \int \left( - (1-\gamma) |\bH|^2  f +  \bH \cdot \nabla f + \tfrac{\partial}{\partial t} f\right) d\mu(t) dt,
\end{align*}
which implies (4) after sending $\gamma \to 0$. This completes the proof, after observing that $0\leq f\in C^1_c$ can be approximated by $0\leq f\in C^2_c$.
\end{proof}
\begin{lemma}\label{lemm:slicing-par-haus}
 Suppose that $S\subset \RR^{n+k}\times \RR$ is a closed set with $\cH^n_P(S) = 0$. Then for almost every $t$, 
 \[
 \cH^{n-2}(S(t)) =0,
 \]
 where $S(t) = S \cap \mathfrak{t}^{-1}(t)$. 
\end{lemma}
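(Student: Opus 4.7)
My plan is to prove this via a direct Fubini-type slicing argument applied to good parabolic-ball covers. Recall that the parabolic Hausdorff $n$-measure $\cH^n_P$ is defined using covers by sets of small parabolic diameter, and that every set of parabolic diameter $\leq r$ is contained in some parabolic ball $P_r(\bx_0, t_0) = B_r(\bx_0) \times (t_0 - r^2, t_0 + r^2)$ with $(\bx_0, t_0)$ an arbitrary point of the set. Hence $\cH^n_P(S) = 0$ yields, for each integer $k \geq 1$, a countable cover
\[ S \subset \bigcup_{i} P_{r_i^{(k)}}(\bx_i^{(k)}, t_i^{(k)}), \quad r_i^{(k)} < 1/k, \quad \sum_i (r_i^{(k)})^n < 2^{-k}. \]

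First I would slice this cover at time $t$: the set $S(t)$ is covered by the spatial balls $B_{r_i^{(k)}}(\bx_i^{(k)})$ indexed by those $i$ with $|t - t_i^{(k)}| < (r_i^{(k)})^2$. Since each such ball has spatial radius $<1/k$, these are admissible covers for the $\cH^{n-2}_{1/k}$ pre-measure, so the key pointwise bound is
\[ \cH^{n-2}_{1/k}(S(t)) \leq C \phi_k(t), \quad \phi_k(t) := \sum_{i \, : \, |t - t_i^{(k)}| < (r_i^{(k)})^2} (r_i^{(k)})^{n-2}, \]
with $C$ an absolute constant coming from the chosen normalization. The function $\phi_k$ is a countable sum of scaled indicator functions of intervals, hence Borel measurable on $\RR$.

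Next I would apply Tonelli to $\phi_k$: exchanging sum and integral gives
\[ \int_\RR \phi_k(t) \, dt = \sum_i (r_i^{(k)})^{n-2} \cdot 2(r_i^{(k)})^2 = 2 \sum_i (r_i^{(k)})^n < 2^{1-k}. \]
This identity is exactly where the exponent $n - 2$ appears naturally: the time-slice of length $2r_i^2$ combined with the factor $r_i^{n-2}$ reproduces the parabolic budget $r_i^n$, which in turn is responsible for the ``loss of two spatial dimensions'' in slicing. Summing over $k$ gives $\sum_k \int \phi_k \, dt < \infty$, so by Beppo-Levi we have $\sum_k \phi_k(t) < \infty$ for a.e.\ $t$, hence $\phi_k(t) \to 0$ for a.e.\ $t$. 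Therefore $\cH^{n-2}_{1/k}(S(t)) \to 0$ as $k \to \infty$ for a.e.\ $t$, and passing to the limit yields $\cH^{n-2}(S(t)) = 0$ for a.e.\ $t$, as desired. I do not anticipate a serious technical obstacle; the only point requiring mild care is the initial reduction of arbitrary parabolic-diameter covers to parabolic-ball covers, which is routine and costs only an absolute constant.
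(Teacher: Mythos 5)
Your proof is correct and follows essentially the same route as the paper: cover $S$ by parabolic balls with $\sum_i r_i^n$ small, slice in time so that $S(t)$ is covered by the spatial balls whose time intervals contain $t$, and use Tonelli to convert the bound $\cH^{n-2}_\delta(S(t)) \lesssim \sum_{i \in \cI(t)} r_i^{n-2}$ into the parabolic budget $2\sum_i r_i^n$. The only (immaterial) difference is the final limiting device: the paper applies Chebyshev for each fixed $\delta$ and then uses monotonicity of $\cH^{n-2}_\delta$ as $\delta \searrow 0$, while you take a summable sequence of covers and conclude via Beppo Levi.
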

\begin{proof}
As usual, we can assume that $S$ is compact. Choose parabolic balls $P_{r_i}(\bx_i,t_i)$ covering $S$ with $r_i < \delta$ and $\sum_i r_i^n < \delta$. Set $\cI(t) : = \{i : t \in (t_i-r_i^2,t_i+r_i^2)\}$ and note that
\[
S(t) \subset \bigcup_{i\in \cI(t)} B_{r_i}(\bx_i). 
\]
Note that
\[
\int_{t_1}^{t_2} \sum_{i\in \cI(t)} r_i^{n-2} dt = \int_{t_1}^{t_2} \sum_{i} r_i^{n-2}\chi_{(t_i-r_i^2,t_i+r_i^2)}(t) dt = 2\sum_i r_i^n < 2\delta.
\]
This proves that 
\[
|\{ t \in [t_1,t_2] : \cH^{n-2}_\delta(S(t)) > \eps\}| < C \frac{\delta}{\eps}. 
\]
Because $\cH_\delta^{n-2}(S(t))$ is non-decreasing as $\delta\searrow0$, we thus see that 
\[
|\{ t \in [t_1,t_2] : \cH^{n-2}(S(t)) > \eps\}| =0.
\]
Sending $\eps\to 0$ completes the proof. 
\end{proof}

For a Brakke flow $\cM$, define $\widehat{\reg} \cM$ to be the set of points $(\bx,t)$ so that there is $\eps>0$ with
\[
\cM\lfloor (B_\eps(\bx) \times (t-\eps^2,t]) = k \cH^n\lfloor M(t),
\]
where $k$ is a positive integer and $M(t)$ is a smooth mean curvature flow. Note that points in $\reg \cM$ are defined similarly, but with $k=1$; thus, $\reg \cM \subset \widehat{\reg}\, \cM$.

\begin{corollary}\label{cor:regset-connectedness}
Consider $\cM = (\mu(t))_{t\in I}$ a unit-regular integral $n$-dimensional Brakke flow in $\RR^{n+k}$ with $\mu(t) = \cH^n\lfloor M(t)$ for $t \in [0,\delta)$, where $M(t)$ is a mean curvature flow of connected, properly embedded submanifolds of $\RR^{n+k}$ and $\delta>0$. If 
\[
\cH^n_P(\supp \cM\setminus \widehat{\reg} \cM) = 0,
\]
then $\widehat{\reg}\, \cM = \reg\cM$ and $\reg\cM$ is connected. 
\end{corollary}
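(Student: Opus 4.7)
The plan is to show simultaneously that every point of $\widehat{\reg}\, \cM$ has local multiplicity one and that $\widehat{\reg}\, \cM$ is connected, by proving that $\widehat{\reg}\, \cM$ has exactly one connected component, namely the one containing the initial smooth slab. Since the local structure on $\widehat{\reg}\, \cM$ is $k\,\cH^n \lfloor M$ for an integer $k \geq 1$ and a smooth local mean curvature flow $M$, the multiplicity function $k$ is locally constant, hence constant on each connected component. Let $C_0$ denote the component containing $M(0)\times\{0\}$; on the initial slab $\mu(t) = \cH^n\lfloor M(t)$ forces $k\equiv 1$ on $C_0$, and since $M(0)$ is connected we moreover have $C_0(0) = M(0) = \supp\cM(0)$.

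Now suppose for contradiction that $\widehat{\reg}\, \cM$ has another component $C \neq C_0$. Disjointness of components together with $C_0(0)=\supp\cM(0)$ forces $C(0)=\emptyset$. Because $\mathfrak{t}(C)$ is a connected subset of $I$ disjoint from $[0,\delta)$, $C$ is either confined to $\{t\geq\delta\}$ or (in the ancient setting) to $\{t\leq 0\}$; the argument below handles the former, and the latter case either doesn't arise in the setting of the statement (where $t=0$ is the initial time) or can be reduced to the former by symmetry after restricting $I$.

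Define the Radon measure family
\[
\nu_C(t) := \cH^n \lfloor C(t),
\]
i.e., the underlying \emph{multiplicity-one} piece of $\cM$ restricted to the spacetime set $C$, extended by zero off $C$. I would then verify that $\nu_C$ satisfies the hypotheses of the extension theorem (Theorem \ref{thm:Brakke-flow-extension}): the bounded area ratios follow (locally) from those of $\mu \geq \nu_C$; off $S$, $\nu_C$ is the smooth, classical, multiplicity-one mean curvature flow given by $C$ (and zero elsewhere), which is integral, has absolutely continuous first variation with smooth orthogonal mean curvature, and obeys Brakke's inequality classically; and $\cH^n_P(S) = 0$ by hypothesis. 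Theorem \ref{thm:Brakke-flow-extension} then produces an integral Brakke flow $\nu_C$ on the full spacetime, whose initial condition is $\nu_C(0) = \cH^n \lfloor C(0) = 0$.

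The punchline is Huisken's monotonicity formula applied to $\nu_C$. For any $X_0 = (x_0,t_0)$ with $t_0 > 0$, $t \mapsto \int \rho_{X_0}(\cdot,t)\,d\nu_C(t)$ is non-increasing on $[0,t_0)$, so
\[
0 \leq \int \rho_{X_0}\,d\nu_C(t) \leq \int \rho_{X_0}\,d\nu_C(0) = 0 \quad \text{for all } t\in[0,t_0);
\]
positivity of $\rho_{X_0}$ on its slice then forces $\nu_C(t)\equiv 0$ for $t < t_0$, and since $t_0$ is arbitrary, $\nu_C \equiv 0$. But $C \neq \emptyset$ contains a smooth multiplicity-one classical flow for $\nu_C$, which is nonzero on $C$—contradiction. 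Hence $\widehat{\reg}\,\cM = C_0$, which is connected with $k\equiv 1$, giving $\widehat{\reg}\,\cM = \reg\cM$ and the connectedness claim. The main technical hurdle is a careful verification of the hypotheses of Theorem \ref{thm:Brakke-flow-extension} for $\nu_C$—especially the local mass bound and the validity of Brakke's inequality off $S$ for the "truncated" multiplicity-one family—and handling the ancient-time components (if any) by a symmetric time-restricted version of the same argument.
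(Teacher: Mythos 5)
Your proposal is correct and is essentially the paper's own argument: one restricts the flow to a putative component not reaching the initial slab, invokes Theorem \ref{thm:Brakke-flow-extension} (using $\cH^n_P(\supp\cM\setminus\widehat{\reg}\,\cM)=0$) to see the restriction is a genuine Brakke flow, and then Huisken's monotonicity from vanishing initial mass forces it to be identically zero, a contradiction, after which multiplicity one and connectedness follow from the initial connected slab. The only (immaterial) difference is that you restrict the multiplicity-one measure $\cH^n\lfloor C(t)$, whereas the paper restricts $\mu(t)$ to $\mathfrak{M}\cup(\supp\cM\setminus\widehat{\reg}\,\cM)$, i.e.\ keeps the multiplicity and includes the singular set in the support.
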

\begin{proof}
We claim that $\mathfrak{M}(0) \not = \emptyset$ for any  a connected component, $\mathfrak{M}$, of $\widehat{\reg}\, \cM$. From this, we immediately have that the multiplicity on this component is $k=1$, so $\widehat{\reg}\, \cM = \reg\cM$. Moreover, since $M(t)$ is connected for $t \in [0,\delta)$, we also will have $\reg\cM$ is connected. 

Now, consider $\mathfrak{M}$ as above. Set $\widehat{\mathfrak{M}} : = \mathfrak{M} \cup (\supp \cM\setminus \widehat{\reg}\, \cM)$. Theorem \ref{thm:Brakke-flow-extension} implies that $\mu(t) \lfloor \widehat{\mathfrak{M}}(t)$ is a Brakke flow. However, if $\mathfrak{M}(0) =\emptyset$, then we can apply Huisken's monotonicity formula to conclude that $\mu(t) \lfloor \widehat{\mathfrak{M}}(t) = 0$ for all $t$. This is a contradiction, completing the proof. 
\end{proof}

Combining White's parabolic stratification \cite[Theorem 9]{White:stratification} with the previous corollary this implies:

\begin{corollary}\label{cor:connected-reg-part}
Suppose that $\cM$ is a unit-regular integral $n$-dimensional Brakke flow in $\RR^{n+k}$ with $\mu(t) = \cH^n\lfloor M(t)$ for $t \in [0,\delta)$, where $M(t)$ is a mean curvature flow of connected, properly embedded submanifolds of $\RR^{n+k}$ and $\delta>0$. Assume that $\cM$ has the following properties:
\begin{enumerate}
\item If there is a static or quasi-static planar tangent flow at $X$, then $X \in \widehat{\reg}\, \cM$. 
\item There are no static or quasi-static tangent-flows supported on a union of half-planes or polyhedral cones. 
\end{enumerate}
Then $ \widehat{\reg}\, \cM=\reg\cM$ is connected. 
\end{corollary}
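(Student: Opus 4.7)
The plan is to combine White's parabolic stratification \cite[Theorem 9]{White:stratification} with Corollary \ref{cor:regset-connectedness}: it suffices to verify that $\cH^n_P(\supp \cM \setminus \widehat{\reg}\, \cM) = 0$, since then Corollary \ref{cor:regset-connectedness} yields the conclusion directly, including the equality $\widehat{\reg}\, \cM = \reg \cM$ and the connectedness of $\reg \cM$.

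White's stratification provides a filtration $\cS_0 \subset \cS_1 \subset \cdots \subset \cS_n$ of $\supp \cM$ by the maximal dimension of spatial translational symmetry of tangent flows, with $\dim_P(\cS_j) \leq j$ for each $j$. A point in $\cS_n \setminus \cS_{n-1}$ admits a tangent flow invariant under $n$ independent spatial translations, which combined with parabolic dilation invariance forces that tangent flow to be static or quasi-static and supported on a finite union of $n$-planes, half-planes meeting along a common $(n-1)$-spine, or polyhedral cones. Assumption (2) eliminates the latter two possibilities, leaving only (possibly higher-multiplicity) static or quasi-static planes. Assumption (1) then places all such points inside $\widehat{\reg}\, \cM$. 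Consequently,
\[
\supp \cM \setminus \widehat{\reg}\, \cM \subset \cS_{n-1},
\]
which has parabolic Hausdorff dimension at most $n-1$ and hence vanishing $\cH^n_P$-measure, as required.

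The main point requiring care will be the classification of tangent flows with maximal spatial translational symmetry. After quotienting by the $n$-dimensional translation subspace, the shrinker equation reduces to a zero-dimensional stationary problem, forcing the tangent flow to be a cone over a discrete set of directions in the orthogonal complement, i.e., a union of planes, half-planes, or polyhedral cones; the integrality and boundaryless structure of Brakke flows further constrains this to exactly the cases anticipated by assumptions (1) and (2). This classification is implicit in the proof of White's stratification, but I expect the cleanest approach is to cite the relevant facts directly from \cite[\S9]{White:stratification} rather than reprove them here.
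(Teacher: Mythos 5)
Your proposal is correct and follows essentially the same route as the paper, whose proof is precisely the one-line combination of White's parabolic stratification \cite[Theorem 9]{White:stratification} with Corollary \ref{cor:regset-connectedness}: hypotheses (1) and (2) dispose of the top-stratum tangent flows (static/quasi-static planes, and unions of half-planes or polyhedral cones), so $\supp\cM\setminus\widehat{\reg}\,\cM$ lies in a stratum of parabolic Hausdorff dimension at most $n-1$, hence has vanishing $\cH^n_P$-measure. Your filling-in of the top-stratum classification, cited from \cite[\S9]{White:stratification}, is exactly the intended reading.
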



\section{Localized topological monotonicity} \label{app:loc-top-monotonicity}

In this appendix we localize some of the results from \cite{White:topology-weak}. We say a closed subset $\cM$ of a spacetime $\RR^{n+1} \times \RR$ is a \emph{simple flow} in an open set $U \subset \RR^{n+1}$ with smooth boundary and over a time interval $I \subset \RR$, or a simple flow in $U \times I$ for short, 
if there is a compact $n$-manifold $M$, with or without boundary, and a continuous map 
\[
f : M \times I \to \RR^{n+1}
\]
so that:
\begin{enumerate}
\item $\cM(t) \cap \overline{U} = f(M, t)$, where $\cM(t) := \{ \mathbf{x} \in \RR^{n+1} : (\mathbf{x}, t) \in \cM \}$,
\item $f$ is smooth on $M^\circ \times I$, where  $M^\circ := M \setminus \partial M$,
\item $f(\cdot,t)$, $t \in I$, is an embedding of $M^\circ$ into $U$, 
\item $t \mapsto f( M^\circ, t)$, $t \in I$, is a smooth mean curvature flow: $(\frac{\partial }{\partial t}f(\cdot,t))^\perp = \mathbf{H}(\cdot,t)$, and 
\item $f|_{\partial M \times I}$ is a smooth family of embeddings of $\partial M$ into $\partial U$. 
\end{enumerate}

The following lemma is easily proven but we will use it repeatedly in the sequel.
\begin{lemma}\label{lemm:top-restr-straighten-simple-flow}
If $\cM \subset \RR^{n+1} \times \RR$ is a simple flow in $U\times [0,T]$ then 
 we have a diffeomorphism 
\[ (U \times [0,T]) \setminus \cM \approx (U \setminus \cM(0)) \times [0,T] \]
that restricts to diffeomorphisms $U \setminus \cM(t) \approx U \setminus \cM(0)$ along each fibre.
\end{lemma}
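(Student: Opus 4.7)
The plan is to construct an ambient isotopy $\{\Phi_t\}_{t \in [0,T]}$ of $\overline{U}$ with $\Phi_0 = \operatorname{Id}$ and $\Phi_t(\cM(0) \cap \overline U) = \cM(t) \cap \overline U$, and then define the desired diffeomorphism by $(\mathbf{x}, t) \mapsto (\Phi_t^{-1}(\mathbf{x}), t)$. The construction of $\Phi_t$ is a standard isotopy extension argument, adapted to respect the boundary $\partial U$.

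First, I would use the defining map $f : M \times [0,T] \to \RR^{n+1}$ of the simple flow to define a time-dependent vector field on $\cM \cap (\overline U \times [0,T])$ by $X(f(\mathbf{p}, t), t) := \tfrac{\partial}{\partial t} f(\mathbf{p}, t)$. Conditions (2)--(5) in the definition of simple flow guarantee that $X$ is well defined and smooth on a neighborhood of $\cM \cap (\overline U \times [0,T])$ in $\overline U \times [0,T]$: on $M^\circ \times [0,T]$ this is the smooth mean-curvature velocity (modified by a tangential component corresponding to the parametrization), and on $\partial M \times [0,T]$ it is tangent to $\partial U$ by condition (5).

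Next, I would extend $X$ to a time-dependent smooth vector field $\widetilde X$ on all of $\overline U$, arranged so that $\widetilde X$ is tangent to $\partial U$ everywhere on $\partial U$; this is a routine partition-of-unity extension in the manifold-with-boundary $\overline U$, using that $X$ is already tangent to $\partial U$ on $\cM \cap \partial U$. Integrating $\widetilde X$ gives a time-dependent flow $\Phi_t : \overline U \to \overline U$ (well defined because $\widetilde X$ is tangent to $\partial U$, keeping trajectories inside $\overline U$), and by construction $\Phi_t$ maps $\cM(0) \cap \overline U$ onto $\cM(t) \cap \overline U$ for every $t \in [0,T]$. Setting
\[
\Psi : (U \times [0,T]) \setminus \cM \;\longrightarrow\; (U \setminus \cM(0)) \times [0,T], \qquad \Psi(\mathbf{x}, t) := (\Phi_t^{-1}(\mathbf{x}), t),
\]
yields a diffeomorphism satisfying the fibrewise requirement.

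The main obstacle, modest as it is, is the boundary behaviour: we need $\widetilde X$ to integrate to global diffeomorphisms of $\overline U$ (not just of $U$) so that $\Phi_t$ fixes no ambient issue at $\partial U$, and we need $\Phi_t$ to send $\cM(0) \cap \overline U$ to $\cM(t) \cap \overline U$ including at $\partial M$. Both are handled by ensuring at the extension step that $\widetilde X$ agrees with the given boundary velocity $\tfrac{\partial}{\partial t} f|_{\partial M \times [0,T]}$ along $\cM \cap \partial U$ and is tangent to $\partial U$ everywhere on $\partial U$; standard ODE theory on manifolds with boundary then gives a global flow. No other subtleties arise because $M$ is compact and the time interval $[0,T]$ is bounded.
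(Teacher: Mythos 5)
The paper gives no proof of this lemma at all (it is simply declared ``easily proven''), and your argument is precisely the standard isotopy-extension proof the authors surely intend: read off the velocity field $\partial_t f$ along the spacetime track, extend it to a time-dependent field on $\overline U$ that is tangent to $\partial U$, integrate to an ambient isotopy $\Phi_t$ carrying $\cM(0)\cap\overline U$ to $\cM(t)\cap\overline U$, and straighten via $(\mathbf{x},t)\mapsto(\Phi_t^{-1}(\mathbf{x}),t)$; this is correct. The one point to flag is your assertion that conditions (2)--(5) alone make $X$ smooth up to the boundary portion $f(\partial M\times[0,T])\subset\partial U\times[0,T]$: as literally written, the definition of a simple flow only gives continuity of $f$ on $M\times I$, with smoothness on $M^\circ\times I$ and on $\partial M\times I$ separately, so the joint smoothness of $f$ (and $\partial_t f$) up to $\partial M\times[0,T]$ that your extension step needs is an implicit (and intended, and satisfied in the paper's applications) strengthening of the definition rather than a formal consequence of (2)--(5).
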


We recall some definitions from \cite{White:topology-weak}. For $\cM \subset \RR^{n+1} \times [0,T]$, $t \in [0, T]$, we set:
\begin{align}
W[t] & := \cM^c \cap \mathfrak{t}^{-1}(\{t\}), \label{eq:localized.monotonicity.w.t} \\
W[0,T] & := \cM^c \cap \mathfrak{t}^{-1}([0,T]). \label{eq:localized.monotonicity.w.0T}
\end{align} 
The results of \cite{White:topology-weak} apply precisely to these $W[t]$, $W[0,T]$. Since we wish to localize some of these results to open subsets $\Omega \subset \RR^{n+1}$ with smooth boundary, we introduce the following localized objects. 
\begin{align}
W_\Omega[t]  &:= \cM^c \cap \Omega \cap \mathfrak{t}^{-1}(\{t\}), \label{eq:localized.monotonicity.w.omega.t} \\
W_\Omega[0,T] & := \cM^c \cap \Omega \cap \mathfrak{t}^{-1}([0,T]). \label{eq:localized.monotonicity.w.omega.0T}
\end{align}
Note that, in this notation, $W[t] = W_{\RR^{n+1}}[t]$ and $W[0,T] = W_{\RR^{n+1}}[0,T]$.

The following is a localization of \cite[Theorem 5.2]{White:topology-weak}.

\begin{theorem}\label{theo:connected-components}
Let $\cM$ be a level set flow and $\Omega\subset \RR^{n+1}$ be an open set with smooth boundary, so that $\cM$ is a simple flow in $U \times [0,T]$ for some tubular neighborhood $U$ of $\partial \Omega$. Then:
\begin{enumerate}
\item For every point $X$ in $W_\Omega[0,T]$, there is a time-like path in $W_\Omega[0,T]$ joining $X$ to a point $Y=(\mathbf{y},0)$ at time $0$.
\item If $X,Y$ are in different connected components of $W_\Omega[0]$, then they are in different connected components of $W_\Omega[0,T]$. 
\end{enumerate}
\end{theorem}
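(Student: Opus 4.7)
The plan is to deduce both parts from White's global result \cite[Theorem 5.2]{White:topology-weak}, using Lemma \ref{lemm:top-restr-straighten-simple-flow} to control behavior near $\partial \Omega$. For (1), given $X=(\mathbf{x}_{0},t_{0}) \in W_\Omega[0,T]$, I would first invoke the global theorem to produce a timelike path $\gamma:[0,t_{0}]\to W[0,T]$ with $\mathfrak{t}(\gamma(s))=s$, $\gamma(t_{0})=X$, and $\gamma(0)\in W[0]$. If $\gamma$ already stays in $\Omega$ there is nothing to do; otherwise, after a small preliminary perturbation making $\gamma$ transverse to $\partial \Omega \times [0,T]$, the set of $s$ where $\gamma(s) \notin \Omega$ consists of finitely many intervals $[s_{1},s_{2}]$, each contained entirely inside the tubular neighborhood $U$ where the simple flow hypothesis is in force.

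The modification exploits Lemma \ref{lemm:top-restr-straighten-simple-flow}, which gives a time-preserving diffeomorphism $\phi:(U\setminus\cM(0))\times[0,T] \to (U\times[0,T])\setminus\cM$. In these product coordinates, a timelike path is just a path with monotone time-component, and $\Omega \cap U$ corresponds to a smoothly varying family of open subsets $\phi_{t}^{-1}(\Omega \cap U) \subset U\setminus\cM(0)$. On each excursion interval $[s_{1},s_{2}]$ I plan to replace $\gamma\big|_{[s_{1},s_{2}]}$ by a timelike path whose pull-back under $\phi$ has spatial component inside $\phi_{s}^{-1}(\overline\Omega \cap U)$ for $s \in [s_{1},s_{2}]$, with time component tracing $[s_{1},s_{2}]$ monotonically. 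Iterating over the finitely many excursions yields a modified $\gamma$ inside $W_\Omega[0,T]$; if the new endpoint $\gamma(0)$ lies in $\partial \Omega$ or just outside $\Omega$, one final application of the product structure near time $0$ pushes it across $\partial\Omega$ into $W_\Omega[0]$.

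The main obstacle is verifying that each replacement path actually exists, which reduces to the assertion that the excursion endpoints $\gamma(s_{1})$ and $\gamma(s_{2})$ lie in a common connected component of $\phi_{s}^{-1}(\overline\Omega \cap U)$ for $s$ slightly past the excursion. This is where the simple flow hypothesis is essential: smoothness of $\cM \cap U$ combined with the transversality arrangement makes each excursion a short, isolated arc, and the time-independence of the topology of $(\Omega \cap U)\setminus\cM(t)$ implicit in $\phi$ reduces the connectivity question to the component structure at a single time, which can be arranged by taking the replacement path in a small spacetime neighborhood of the original excursion arc.

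For (2), I would apply (1) together with a continuous-selection argument. Suppose for contradiction that distinct connected components $A, B$ of $W_\Omega[0]$ lie in a common connected component of $W_\Omega[0,T]$, joined by a path $\eta:[0,1]\to W_\Omega[0,T]$ with $\eta(0) \in A$ and $\eta(1) \in B$. Using (1), I would select for each $\tau \in [0,1]$ a timelike descendant $\alpha(\tau) \in W_\Omega[0]$ of $\eta(\tau)$, depending continuously on $\tau$. Continuity uses local path-lifting for timelike paths in $W[0,T]$, as in White's proof of \cite[Theorem 5.2]{White:topology-weak}, together with the product structure near $\partial\Omega$ to keep the lifted paths inside $\Omega$. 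Since $\pi_{0}(W_\Omega[0])$ is discrete, the map $\tau \mapsto [\alpha(\tau)]$ is constant, forcing $A = B$ and yielding a contradiction.
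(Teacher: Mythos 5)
Your part (1) breaks down at the surgery step. First, there is no reason the excursions of the global timelike path outside $\Omega$ are ``contained entirely inside the tubular neighborhood $U$'': once the path crosses $\partial\Omega$ it is only constrained to avoid $\cM$, and it may leave $U$ and wander arbitrarily far before returning, at a much earlier time and a far-away point of $\partial\Omega$. More seriously, your reconnection requires the two excursion endpoints $\gamma(s_1),\gamma(s_2)\in\partial\Omega$ to be joinable by a timelike path inside $(\overline\Omega\cap U)\setminus\cM$, and your justification (``short, isolated arc'', replacement ``in a small spacetime neighborhood of the original excursion arc'') is not correct: the excursion arc lies \emph{outside} $\Omega$, so a path in a small neighborhood of it does not lie in $W_\Omega[0,T]$, and connectivity of the endpoints through the exterior does not imply they lie in a common component of $(\Omega\cap U)\setminus\cM$ at any common time -- in the intended application $(\Omega\cap U)\setminus\cM(t)$ genuinely has several components. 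The reconnection is also unnecessary: the paper's proof simply truncates the global path at its \emph{first} entry into $U$ (which must occur before the path can exit $\Omega$) and then descends to $t=0$ entirely within $U$ using the simple-flow (product) structure, abandoning the rest of the global path.

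Part (2) has a more fundamental gap: the ``continuous selection'' $\tau\mapsto\alpha(\tau)\in W_\Omega[0]$ of descendants is exactly the hard point, and you have not supplied an argument for it. Near interior singularities of the level set flow there is no product or lifting structure (the flow can be highly singular, even fattening), so ``local path-lifting for timelike paths'' is simply not available off $U$; and the statement that the component of $W_\Omega[0]$ containing a descendant is well defined and locally constant in $\tau$ is essentially equivalent to conclusion (2) itself, so as sketched the argument is circular. Citing White's proof does not help, because his Theorem 5.2(ii) is global; forcing all paths to stay in $\Omega$ is precisely the content being proven. The paper's route is different in kind: it first disposes (via simplicity in $U$ and the global White 5.2(ii)) of the case where a component of $W_\Omega[0]$ misses $U$, and then, for components meeting $U$, concatenates a path in $W[0]$ with the given path in $W_\Omega[0,T]$ routed through the lateral pieces $\partial_+V(X)_U$, $\partial_+V(Y)_U$ of $\partial U$, homotopes the resulting loop to time $0$ by White's Theorem 5.4, and obtains a contradiction from mod-2 intersection numbers with these lateral pieces. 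Some mechanism of this homological type (or a genuine localization of White's lifting argument, as in Theorem \ref{theo:localized-loops-to-0}) is needed; your sketch does not contain one.
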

\begin{proof}
To prove (1), note that for $X \in U\times [0,T]$, it is not hard to construct such a path (by the simplicity assumption). In general, by \cite[Theorem 5.2(i)]{White:topology-weak}, we can find a time-like path in $\cM^c$ connecting $X$ to time $0$. If this path remains in $\Omega\times [0,T]$, the claim follows. On the other hand, if the path does not remain in $\Omega\times [0,T]$, then it must enter $U\times [0,T]$ at some point. In this case, we can stop and concatenate with the path in $U\times [0,T]$ that exists by the fact that the flow is smooth in that region. 

For (2), consider $X,Y \in W_{\Omega}[0]$ that are in distinct connected components of $W_{\Omega}[0]$, but in the same connected component of $W_{\Omega}[0,T]$. First, consider the case when at least one of the points, say $X$ is in a connected component $V$ of $W_{\Omega}[0]$ that does not intersect the tubular neighborhood $U$. Because $\cM$ is simple in $U\times [0,T]$, the component $\cV$ of $\cM^{c} \cap \mathfrak{t}^{-1}([0,T])$ containing $V$ does not intersect $U\times [0,T]$; thus, it is contained in $\Omega\times [0,T]$. As such, $X,Y$ are in distinct components of $W[0] : = \cM^{c} \cap \mathfrak{t}^{-1}(\{0\})$ but in the same component of $W[0,T] : = \cM^{c}\cap \mathfrak{t}^{-1}([0,T])$. This contradicts \cite[Theorem 5.2(ii)]{White:topology-weak}. Thus, both $X$ and $Y$ must be connected in $W_{\Omega}[0]$ to $U$. As such we can assume below, without loss of generality, that $X,Y \in U$.

Let us set up some notation. For each connected component $V$ of $W_\Omega[0]$, we write $V_U : = V\cap U$ (note that $V_U$ may be disconnected). Write $\partial V_U = \partial_{-}V_U \cup \partial_{+}V_U \cup \partial_{\cM}V_U$, where $\partial_{-}V_U = (\partial V\cap\partial\Omega)\setminus \cM(0)$, $\partial_{+}V_U = (V \cap \partial U \cap \Omega)\setminus \cM(0)$ and $\partial_{\cM}V = \partial V\cap \cM(0)$ are distinct and $\partial_{-}V_U$ (resp.\  $\partial_{+}V_U$) is relatively open in $\partial \Omega$ (resp.\ $\partial U$). Let $V(X) \not = V(Y)$ denote the components of $W_\Omega[0]$ containing $X,Y$. 

Because $X$ and $Y$ are assumed to be in the same connected component of $W_{\Omega}[0,T]$, they are in the same connected component of $W[0]$ by \cite[Theorem 5.2(ii)]{White:topology-weak}. Choose a path $\gamma\subset W[0]$ between $X$ and $Y$ so that $\gamma$ is transverse to $\partial U \cup \partial \Omega$. For $* \in \{X,Y\}$, we can assume that $\gamma$ does not intersect $\partial_+ V(*)_U$ (we might have to exchange the points $*\in\{X,Y\}$ for some other point in $V(*)_U$). Indeed, we can simply consider the last time that $\gamma$ intersects $\partial_+V(X)_U$ and the earliest time that $\gamma$ intersects $\partial_+V(Y)_U$ and truncate $\gamma$ near these times (to still have endpoints in $U$). 

Choose a curve $\eta \subset W_\Omega[0,T]$ from $Y$ to $X$ so that $\eta \cap (U \times [0,T]) \subset U \times \{0\}$ and consists of two arcs exiting $U$ through $\partial_+ V(Y)_U \cup \partial_+ V(X)_U$ (with a single transverse intersection with each). Concatenating $\gamma$ with $\eta$, we can find a loop $\sigma_1$ in $W[0,T]$. By \cite[Theorem 5.4]{White:topology-weak}, there is a homotopy of loops in W[0,T] between $\sigma_1$ and a loop $\sigma_0$ in $W[0]$. Perturb $\sigma_{0}$ slightly so it is transverse to $\partial U$. By construction and the simplicity of $\cM$ in $U\times [0,T]$, the loop $\sigma_0$ has the property that for $* \in \{X,Y\}$, the mod $2$ intersection number of $\sigma_{0}$ with $\partial_{+} V(*)_U$ is $1$. This is a contradiction. 
\end{proof}
The following is a localized version of \cite[Theorem 5.4]{White:topology-weak}. 
\begin{theorem}\label{theo:localized-loops-to-0}
Let $\cM$ be a level set flow and $\Omega\subset \RR^{n+1}$ be an open set with smooth boundary, so that $\cM$ is a simple flow in $U \times [0,T]$ for some tubular neighborhood $U$ of $\partial \Omega$ in $\Omega$. Then, any loop in $W_\Omega[0,T]$ is homotopic to one in $W_\Omega[0]$. In particular 
\[
H_1(W_\Omega[0]) \to H_1(W_\Omega[0,T])
\]
is surjective. 
\end{theorem}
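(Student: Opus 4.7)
Proof plan. The aim is to adapt the proof of \cite[Theorem 5.4]{White:topology-weak} to the localized setting, using Lemma \ref{lemm:top-restr-straighten-simple-flow} to keep everything inside $\Omega$. The first task is to extract from the proof of Lemma \ref{lemm:top-restr-straighten-simple-flow} the additional fact that the straightening diffeomorphism $\Phi : (U \times [0,T]) \setminus \cM \to (U \setminus \cM(0)) \times [0,T]$ can be arranged to be the identity in a neighborhood of $\partial U \times [0,T]$ and on $U \times \{0\}$. This is not hard: $\Phi$ arises as the time-$1$ map of the flow of a vector field on $U$ extending the ambient velocity field of $t \mapsto f(\cdot,t)$, and one may cut this extension off to have compact support in $U$. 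Writing $\Phi(\mathbf{x},t) = (\phi_t^{-1}(\mathbf{x}), t)$, the explicit formula
\[
r_s(\mathbf{x},t) := \bigl( \phi_{(1-s)t}(\phi_t^{-1}(\mathbf{x})),\, (1-s)t \bigr), \quad s \in [0,1],
\]
then defines a strong deformation retraction of $W_U[0,T]$ onto $W_U[0]$, which agrees with the identity in a neighborhood of $\partial U \times [0,T]$ and remains entirely within $W_U[0,T] \subset W_\Omega[0,T]$.

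Next I would build the global homotopy from $\mathrm{id}_{W_\Omega[0,T]}$ to a map into $W_\Omega[0]$ by a partition-of-unity argument, mirroring the structure of White's proof. Choose a smaller tubular neighborhood $U_0 \subset U$ with $\bar U_0 \subset U$; the retraction $r_s$ handles the chart $U \times [0,T]$. For points $(\mathbf{x}_0,t_0) \in W_\Omega[0,T]$ with $\mathbf{x}_0 \in \Omega \setminus \bar U_0$, one uses the local time-descending flows from the proof of \cite[Theorem 5.4]{White:topology-weak} (whose existence relies on Theorem \ref{theo:connected-components}(1) combined with a standard covering argument), each supported in a small open set $V_{(\mathbf{x}_0,t_0)} \subset (\Omega \setminus \bar U_0) \times [0,T]$ chosen small enough to stay inside $\Omega \times [0,T]$. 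Together with $U \times [0,T]$, these form an open cover of $W_\Omega[0,T]$; a subordinate partition of unity allows one to blend the local retractions into a global continuous $H : W_\Omega[0,T] \times [0,1] \to W_\Omega[0,T]$ with $H(\cdot,0) = \mathrm{id}$ and $H(\cdot,1)$ landing in $W_\Omega[0]$. Applying $H$ to a loop $\sigma$ then gives the required homotopy in $W_\Omega[0,T]$ to the loop $H(\sigma,1) \subset W_\Omega[0]$; the surjectivity of $H_1(W_\Omega[0]) \to H_1(W_\Omega[0,T])$ is an immediate consequence.

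The main obstacle is ensuring that the blending of the local retractions is continuous across the interface $\partial U_0 \times [0,T]$, where the product-structure retraction $r_s$ must be combined with the $V$-local retractions from White's construction. The key observation that makes this work is that, by the improved version of Lemma \ref{lemm:top-restr-straighten-simple-flow} extracted in the first step, $r_s$ is already the identity near $\partial U \times [0,T]$, so in particular the two families of retractions trivially commute in a neighborhood of $\partial U$. The partition-of-unity gluing can therefore be carried out in $U \setminus \bar U_0$ exactly as in the global case, and the resulting homotopy stays in $\Omega \times [0,T]$ because each ingredient does.
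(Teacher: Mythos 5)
Your strategy is to produce an actual homotopy $H$ of the identity of $W_\Omega[0,T]$ into $W_\Omega[0]$ by blending, with a partition of unity, a product-structure retraction on $U\times[0,T]$ with ``local time-descending flows from the proof of \cite[Theorem 5.4]{White:topology-weak}.'' This is where the argument breaks: White's proof contains no such local retractions to borrow. His Theorem 5.4 --- and the paper's localization of it --- is a covering-space argument: one passes to the cover of $W_\Omega[0,T]$ associated to the image of $\pi_1(W_\Omega[0])$ and shows it is trivial by inductively lifting along the classical flows appearing in the exhaustion $W_k$ that defines the level set flow, using here that $\hat W:=\cM^c\cap(U\times[0,T])$ deformation retracts onto $W_U[0]$ because the flow is simple there. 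The reason for this detour is precisely the ingredient you assume to exist: near singular points of $\cM$ the only available statement is Theorem \ref{theo:connected-components}(1), a pointwise existence of time-like paths to $t=0$ with no continuity in the starting point, and there is no construction of a local retraction pushing the complement down in time. Gluing retractions by a partition of unity is also not a legitimate operation: the target $W_\Omega[0,T]$ is not convex, and if one instead blends generating vector fields, the trajectories must stay in $\cM^c$ all the way down to $t=0$, a global constraint that cannot be verified from local data. Note, moreover, that your $H$ would exhibit $W_\Omega[0,T]$ as homotopy dominated by $W_\Omega[0]$, a much stronger statement than the claimed $\pi_1$-surjectivity; neither White nor the paper establishes such a retraction, and the theorem is proved by covering-space theory exactly so as to avoid needing one.

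A secondary but genuine error is the claimed refinement of Lemma \ref{lemm:top-restr-straighten-simple-flow}: the simple flow genuinely moves in time inside $U$ (in particular its trace $f(\partial M,t)\subset\partial U$ is time-dependent), so the straightening diffeomorphism cannot be arranged to be spatially the identity near $\partial U\times[0,T]$; a retraction onto the $t=0$ slice that is ``the identity'' near the gluing interface is self-contradictory, and the asserted trivial compatibility of the two families of retractions there fails. To repair the proof you should follow the paper's route: set $W_{\Omega,k}[0,T]:=(W_\Omega[0,T]\cap W_k[0,T])\cup\hat W$, lift $W_{\Omega,0}[0,T]$ to the cover using the deformation retraction of $\hat W$ onto $W_U[0]$, and then extend the lift inductively over classical flows with heat boundary in $W_{k-1}$, checking consistency by tracking points back to the heat boundary or to $\hat W$ via the maximum principle.
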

\begin{proof}
Fix a cover $\Pi: \tilde W_\Omega[0,T]\to W_\Omega[0,T]$ associated to $\iota_0: W_\Omega[0]\to W_\Omega[0,T]$. Set $\hat W : = \cM^c \cap (U\times[0,T])$. Note that $\hat W \subset W_\Omega[0,T]$ deformation retracts onto $W_U[0]\subset \hat W$, by the assumption that $\cM$ is simple in $U\times [0,T]$. Set $W_{\Omega,k}[0,T] : = (W_\Omega[0,T] \cap W_k[0,T]) \cup \hat W$, where $W_k[0,T] = W_k \cap \mathfrak{t}^{-1}([0,T])$ (see Section \ref{sec:prelim.weak.flow.level.set.flow} for the definition of $W_k$). Because $\hat W$ deformation retracts onto $W_U[0]$, we can find a lift
\[
\tilde \iota_0 :  W_{\Omega,0}[0,T] \to \tilde W_\Omega[0,T]. 
\]
In the remainder of the proof we inductively define lifts of $\iota_k : W_{\Omega,k}[0,T] \to W_\Omega[0,T]$,
\[
\tilde\iota_k : W_{\Omega,k}[0,T] \to \tilde W_\Omega[0,T],
\]
so that $\tilde \iota_k |_{W_{\Omega,k-1}[0,T]} = \tilde\iota_{k-1}$. Having done so, we can fit these lifts together to produce a lift $\tilde\iota : W_\Omega[0,T]\to \tilde W_\Omega[0,T]$; thus, the covering $\Pi$ was trivial, completing the proof.  

Let $\cM'$ be a classical flow corresponding to $F: M'\times [a,b] \to \RR^{n+1}$ in $\RR^{n+1} \times [0,T]$ disjoint from $\cM(0)$ so that $\partial\cM' \subset W_{k-1}$. Set $\cM'_\Omega : = \cM'\cap (\Omega\times [0,T])$. (Note that $\cM'$ might not intersect $\partial\Omega \times [0,T]$ transversely and there is no guarantee that points in $\cM'_\Omega$ can be connected to a part of the heat boundary of $\cM'$.) 

\begin{claim}
There is a unique lift $\phi : \cM'_\Omega \to \tilde{W}_\Omega[0,T]$ so that $\phi(X) = \tilde\iota_{k-1}(X)$ for all points $X \in \cM'\cap W_{\Omega,k-1}[0,T]$. 
\end{claim} 
\begin{proof}
Fix $X = F({p},t)\in \cM'_\Omega$. Choose an open set $\cO\Subset \Omega\times[0,T]$ so that $(\Omega \setminus U) \times [0,T]  \subset \cO$, $X \in \cO$, and $\partial\cO$ is a small $C^\infty$ perturbation of $\partial\Omega\times[0,T]$ intersecting $\cM$ transversely. Define
\[
t_0 = \inf\{\tau \in [a,t] : F({p}\times (\tau,t))\subset \cO\}. 
\]
It is clear that $F({p},t_0) \in W_{\Omega,k-1}[0,T]$, so we can consider $\tilde \gamma$ the unique lift of the curve $\gamma : [t_0,t] \ni \tau \mapsto F({p},\tau)$ with $\tilde\gamma(t_0) = \tilde\iota_{k-1}(F({p},t_0))$. We then define $\phi(X) = \tilde\gamma(t)$. 

It is clear that $\phi$ is continuous and does not depend on the choice of $\cO$. It remains to check that $\phi(X) = \tilde\iota_{k-1}(X)$ for $X \in \cM'\cap W_{\Omega,k-1}[0,T]$. Choose $\cO$ as above and let $V$ denote the connected component of $\cM'\cap W_{\Omega,k-1}[0,T]\cap\cO$ containing $X$. The argument in \cite[Lemma 5.3]{White:topology-weak} can be easily adapted to show that $V$ contains a point $Y\in\partial \cM'\cup \partial\cO\subset W_{\Omega,k-1}[0,T]$. 
 Since $\phi(Y) = \tilde\iota_{k-1}(Y)$, the maps agree on all of $V$. This completes the proof of the claim. 
\end{proof}

\begin{claim}
If $\cM_1,\cM_2$ are two classical flows with heat boundaries in $W_{k-1}$ and $X \in \cM_1\cap\cM_2 \cap(\Omega\times [0,1])$ then $\phi_1(X) = \phi_2(X)$. 
\end{claim}
\begin{proof}
Given $X$, we can choose $\cO$ as above but with $\partial \cO$ transverse to $\cM_1$ and $\cM_2$. Now, as in \cite[p.\ 328]{White:topology-weak}, the maximum principle guarantees that there is a connected subset $K$ of $\cM_1\cap \cM_2$ containing $X$ and some point in $ \partial\cM_1\cup\partial\cM_2$. Either $K\cap\partial\cO = \emptyset$, in which case there is $Y \in (\partial\cM_1\cup \partial\cM_2) \cap K \cap \cO$ or $K\cap\partial\cO \not= \emptyset$, in which case there is $Y \in K \cap \partial\cO$. Either way, $Y \in W_{\Omega,k-1}[0,T]$. By the previous claim, $\phi_1(Y) = \tilde\iota_{k-1}(Y) = \phi_2(Y)$. Because $\phi_1|_K,\phi_2|_K$ agree at $Y$, they must also agree at $X$. 
\end{proof}

This completes the proof. 
\end{proof}

The following is a localized version of \cite[Theorem 6]{White:topology-weak}.

\begin{theorem}\label{theo:local-top-monotone-kernel-timeT}
Consider  $\Omega\subset \RR^{n+1}$ an open set with smooth compact boundary and $U$ a fixed tubular neighborhood of $\partial \Omega$. Choose $T_{0}$ so that the mean curvature flow of $\partial\Omega$, $t\mapsto \partial\Omega(t)$ remains smooth and inside some open set $\tilde U\Subset U$ for $t \in [0,T_{0}]$. Then, for any $0<T\leq T_{0}$, let $\cM$ be a weak set flow in $\RR^{n+1}$ that is simple in $U \times [0,T]$. Then,
\[
H_{n-1}(W_{\Omega}[T]) \to H_{n-1}(W_{\Omega}[0,T]) 
\]
is injective. 
\end{theorem}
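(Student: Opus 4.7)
The plan is to localize White's proof of global topological monotonicity \cite[Theorem 6]{White:topology-weak} using the simplicity of $\cM$ in $U \times [0,T]$ and the smooth mean curvature flow of $\partial \Omega$ as a barrier. Let $\sigma$ be a Lipschitz $(n-1)$-cycle in $W_\Omega[T]$ that bounds a chain $\tau$ in $W_\Omega[0,T]$; I wish to produce a bounding chain in $W_\Omega[T]$ with boundary $\sigma$.

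First, I would use Lemma \ref{lemm:top-restr-straighten-simple-flow}, applied to a slightly smaller tubular neighborhood $U' \Subset U$ of $\partial\Omega$ chosen large enough that $\bigcup_{t \in [0,T]} \partial\Omega(t) \subset U'$, to construct a deformation retraction of $W_\Omega[0,T] \cap (\bar U' \times [0,T])$ onto $W_{U'}[T]$ that fixes $W_{U'}[T]$ pointwise. Extending this retraction by the identity via a cut-off function supported in $U' \setminus \bar U''$ for some intermediate $U'' \Subset U'$, I would then homotope $\tau$ within $W_\Omega[0,T]$, relative to its boundary $\sigma$, to a chain $\tilde\tau$ with $\partial \tilde\tau = \sigma$ and $\tilde\tau \cap (U'' \times [0,T)) = \emptyset$. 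This localizes the problem to pushing the remaining portion $\tilde\tau \cap ((\Omega \setminus U'') \times [0,T])$ to time $T$ while keeping it inside $\Omega$.

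Next, I would adapt the push-to-time-$T$ argument from the proof of \cite[Theorem 6]{White:topology-weak}, which produces, from any cycle in $W[T]$ bounding in $W[0,T]$, a bounding chain in $W[T]$. The novelty is the spatial constraint: the resulting chain must stay in $\Omega$. Here the smooth mean curvature flow $t \mapsto \partial\Omega(t)$ provides a barrier: it stays inside $\tilde U \Subset U$ for $t \in [0,T]$, so by the avoidance principle (e.g.\ Theorem \ref{theo:ilmanen-avoidance}) it is disjoint from the spacetime support of $\cM$ outside $U \times [0,T]$. The spacetime region $\cO := \bigcup_{t\in[0,T]} \Omega(t) \times \{t\}$ is therefore a spacetime domain with smooth boundary $\bigcup_t \partial\Omega(t) \times \{t\}$ whose portion outside $U \times [0,T]$ avoids $\cM$. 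Composing White's deformation with a retraction onto $\cO$ near $\partial\cO$ ensures the pushed chain does not exit $\Omega$.

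The main obstacle is implementing White's push-to-time-$T$ construction subject to this spatial constraint and making it compatible with the deformation retraction near $\partial\Omega$. White's original argument is inherently global, producing a vector field (or flow) on $W[0,T]$ that points toward time $T$ and away from $\cM$; I need to modify this vector field so that it is tangent to $\partial \cO$ where relevant and coincides with the retraction of the first step on a spacetime collar around $\partial U'' \times [0,T]$, so the two deformations glue into a single deformation of $\tilde\tau$ into a chain $\tau'$ in $W_\Omega[T]$ with $\partial \tau' = \sigma$. The technical heart of the argument is the careful matching of these two deformations on the overlap region, while checking that the combined deformation preserves the boundary $\sigma$ throughout.
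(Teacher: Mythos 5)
Your plan rests on a mischaracterization of White's proof of the $H_{n-1}$ statement, and the step you yourself flag as the ``technical heart'' is exactly where it breaks down. White does not prove \cite[Theorem 6]{White:topology-weak} by constructing a vector field or deformation that pushes chains in $W[0,T]$ forward to the time-$T$ slice while avoiding $\cM$; no such deformation exists in general, because a point of $W_\Omega[t]$ with $t<T$ may perfectly well lie in $\cM(T)$ (the flow moves, so the complement at earlier times is not contained in the complement at time $T$), and $W_\Omega[0,T]$ does not deformation retract onto $W_\Omega[T]$. If such a push-to-time-$T$ deformation existed, it would give injectivity of $H_k(W_\Omega[T])\to H_k(W_\Omega[0,T])$ for every $k$ by a soft argument, which is not how the asymmetric, degree-specific statement is actually obtained. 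So after your (plausible, if fiddly) first step of straightening the flow near $\partial\Omega$ via Lemma \ref{lemm:top-restr-straighten-simple-flow}, the proposal has no mechanism left for handling the bulk region $(\Omega\setminus U'')\times[0,T]$, where $\cM$ is an arbitrary, possibly singular weak set flow; ``modify the vector field to be tangent to $\partial\cO$ and glue'' cannot be carried out because the object to be modified was never constructed.

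The actual argument (both White's and the localized version in the paper) is homological rather than deformation-theoretic. Given an $(n-1)$-cycle $C$ in $W_\Omega[T]$ bounding a polyhedral $n$-chain $P$ in $W_\Omega[0,T]$ (arranged to avoid $\tilde U\cup\{t=0\}$), one projects $P$ to the time-$T$ slice, $P':=\pi_\# P$ with $\partial P'=C$ — crucially, $P'$ is \emph{not} claimed to avoid $\cM(T)$ — and then corrects $P'$ by a boundary. The correction uses an auxiliary level set flow $\cM'$ generated by the support $\Gamma$ of $P$: by the avoidance principle, $\cM'$ stays a positive distance from $\cM$ and from the mean curvature flow $\partial\Omega(t)$ (this is where the barrier enters, keeping everything in a slight enlargement $\Omega'$), so $\cM'$ is simple near $\partial\Omega'$ and Theorem \ref{theo:localized-loops-to-0} applies \emph{to} $\cM'$: any loop in the time-$T$ slice avoiding a fattening $K$ of $\cM'(T)$ homotopes back to time $0$ in the complement of $\cM'\supset\Gamma$, hence has zero intersection number with $P$ and thus with $P'$. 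That vanishing is what permits assigning consistent multiplicities to the components of $(\Omega'\times\{T\})\setminus(K\cup P')$ and building an $(n+1)$-chain $Q$ with $P'-\partial Q$ supported in $K$, hence disjoint from $\cM(T)$. None of these ingredients — the projection, the auxiliary flow generated by $\supp P$, the avoidance against both $\cM$ and $\partial\Omega(t)$, and the intersection-number bookkeeping — appears in your proposal, so as written it does not yield the theorem.
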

\begin{proof}
For $0<T\leq T_{0}$ fixed, suppose that $[C]\in H_{n-1}(W_{\Omega}[T])$ is a polyhedral $(n-1)$-chain so that there is $P$ a polyhedral $n$-chain in $W_{\Omega}[0,T]$ with $\partial P = C$. We can assume that the support $\Gamma$ of $P$ is disjoint from $\tilde U\cup\{t=0\}$. Consider the projection $\pi(\bx,t) = (\bx,T)$. Set $\pi_{\#}P = P'$ and note that $\partial P'=C$. We aim to show that $P'$ is homologous (relative to its boundary) to a chain disjoint from $\cM(T)$.

Let $\cM'$ be the level set flow generated by $\Gamma$. By the avoidance principle for weak set flows (cf.\ \cite[Theorem 4.1]{White:topology-weak}), $\cM'(t)$ remains a positive distance from $\cM(t)$ as well as a positive distance from $\partial\Omega(t)$. In particular, we can enlarge $\Omega$ slightly to $\Omega'$ to ensure that $\cM'$ avoids some tubular neighborhood $U'$ of $\partial\Omega'$ (so in particular, it is a simple flow in $U'\times [0,T]$).

Fatten $\cM'(T)$ slightly to get a closed set $K$ in $\RR^{n+1}\times \{T\}$ that is disjoint from $\tilde U \cup \cM(T)$ and has smooth boundary. If $\gamma$ is a loop in $(\Omega\times\{T\})\setminus K$, then by Theorem \ref{theo:localized-loops-to-0} applied to $\cM'$, $\gamma$ is homologous in $(\Omega' \times [0,T])\setminus \cM'$ to a loop at $t=0$. In particular, this means that the oriented intersection number of $\gamma$ with $P$ (and thus $P'$) is zero. 

Now, assign each component of 
\[
(\Omega'\times \{T\}) \setminus (K\cup P')
\]
a multiplicity so that the multiplicity changes by $n$ when crossing a face of $P'$ with multiplicity $n$; we can do this consistently, since the intersection of any loop avoiding $K$ with $P'$ is zero (this is only well defined up to a global additive constant, but this will not matter). This yields a $(n+1)$ chain $Q$ in $\Omega'\times \{T\}$ whose boundary is a chain in $K$ along with the part of $P'$ that is disjoint from $K$. Now $P'-\partial Q$ has $\partial(P'-\partial Q) = C$ and is supported in $K$. As such, $P'-\partial Q$ is disjoint from $\cM(t)$. The result follows. 
\end{proof}

\bibliographystyle{alpha}
\bibliography{ac-avoidance}

\begin{thebibliography}{CHHW22}

\bibitem[ABDv22]{ABDS}
Sigurd Angenent, Simon Brendle, Panagiota Daskalopoulos, and Natasa
  \v{S}e\v{s}um.
\newblock Unique asymptotics of compact ancient solutions to three-dimensional
  {R}icci flow.
\newblock {\em Comm. Pure Appl. Math.}, 75(5):1032--1073, 2022.

\bibitem[ACI95]{AngenentIlmanenChopp}
Sigurd Angenent, David~L. Chopp, and Tom Ilmanen.
\newblock A computed example of nonuniqueness of mean curvature flow in {$\bold
  R^3$}.
\newblock {\em Comm. Partial Differential Equations}, 20(11-12):1937--1958,
  1995.

\bibitem[ADS19]{ADS}
Sigurd Angenent, Panagiota Daskalopoulos, and Natasa Sesum.
\newblock Unique asymptotics of ancient convex mean curvature flow solutions.
\newblock {\em J. Differential Geom.}, 111(3):381--455, 2019.

\bibitem[ADS20]{ADS2}
Sigurd Angenent, Panagiota Daskalopoulos, and Natasa Sesum.
\newblock Uniqueness of two-convex closed ancient solutions to the mean
  curvature flow.
\newblock {\em Ann. of Math. (2)}, 192(2):353--436, 2020.

\bibitem[Ang92]{Angenent:torus}
Sigurd~B. Angenent.
\newblock Shrinking doughnuts.
\newblock In {\em Nonlinear diffusion equations and their equilibrium states, 3
  ({G}regynog, 1989)}, volume~7 of {\em Progr. Nonlinear Differential Equations
  Appl.}, pages 21--38. Birkh\"{a}user Boston, Boston, MA, 1992.

\bibitem[BC19]{BrendleChoi:3d}
Simon Brendle and Kyeongsu Choi.
\newblock Uniqueness of convex ancient solutions to mean curvature flow in
  {$\Bbb R^3$}.
\newblock {\em Invent. Math.}, 217(1):35--76, 2019.

\bibitem[BC21]{BrendleChoi:nD}
Simon Brendle and Kyeongsu Choi.
\newblock Uniqueness of convex ancient solutions to mean curvature flow in
  higher dimensions.
\newblock {\em Geom. Topol.}, 25(5):2195--2234, 2021.

\bibitem[BDS21]{BDS}
Simon Brendle, Panagiota Daskalopoulos, and Natasa Sesum.
\newblock Uniqueness of compact ancient solutions to three-dimensional {R}icci
  flow.
\newblock {\em Invent. Math.}, 226(2):579--651, 2021.

\bibitem[BH16]{BrendleHuisken:R3}
Simon Brendle and Gerhard Huisken.
\newblock Mean curvature flow with surgery of mean convex surfaces in {$\Bbb
  R^3$}.
\newblock {\em Invent. Math.}, 203(2):615--654, 2016.

\bibitem[BHS11]{BrendleHuiskenSinestrari}
Simon Brendle, Gerhard Huisken, and Carlo Sinestrari.
\newblock Ancient solutions to the {R}icci flow with pinched curvature.
\newblock {\em Duke Math. J.}, 158(3):537--551, 2011.

\bibitem[BK17]{BrendleKapouleas}
Simon Brendle and Nikolaos Kapouleas.
\newblock Gluing {E}guchi-{H}anson metrics and a question of {P}age.
\newblock {\em Comm. Pure Appl. Math.}, 70(7):1366--1401, 2017.

\bibitem[BK23]{BK:mult}
Richard~H Bamler and Bruce Kleiner.
\newblock On the multiplicity one conjecture for mean curvature flows of
  surfaces.
\newblock {\em \url{https://arxiv.org/pdf/2312.02106.pdf}}, 2023.

\bibitem[BLM21]{BLM:angenent-torus}
Theodora Bourni, Mathew Langford, and Alexander Mramor.
\newblock On the construction of closed nonconvex nonsoliton ancient mean
  curvature flows.
\newblock {\em Int. Math. Res. Not. IMRN}, (1):757--768, 2021.

\bibitem[BNS21]{BuzanoNguyenSchulz}
Reto {Buzano}, Huy~The {Nguyen}, and Mario~B. {Schulz}.
\newblock Noncompact self-shrinkers for mean curvature flow with arbitrary
  genus.
\newblock {\em \url{https://arxiv.org/abs/2110.06027}}, 2021.

\bibitem[Bra75]{Brakke}
Kenneth~A. Brakke.
\newblock {\em The motion of a surface by its mean curvature}.
\newblock ProQuest LLC, Ann Arbor, MI, 1975.
\newblock Thesis (Ph.D.)--Princeton University.

\bibitem[Bre15]{Brendle:inscribed-sharp}
Simon Brendle.
\newblock A sharp bound for the inscribed radius under mean curvature flow.
\newblock {\em Invent. Math.}, 202(1):217--237, 2015.

\bibitem[Bre16]{Brendle:genus0}
Simon Brendle.
\newblock Embedded self-similar shrinkers of genus 0.
\newblock {\em Ann. of Math. (2)}, 183(2):715--728, 2016.

\bibitem[Bre20]{Brendle:RF-ancient}
Simon Brendle.
\newblock Ancient solutions to the {R}icci flow in dimension 3.
\newblock {\em Acta Math.}, 225(1):1--102, 2020.

\bibitem[BW16]{BernsteinWang:1}
Jacob Bernstein and Lu~Wang.
\newblock A sharp lower bound for the entropy of closed hypersurfaces up to
  dimension six.
\newblock {\em Invent. Math.}, 206(3):601--627, 2016.

\bibitem[BW17]{BernsteinWang:TopologicalProperty}
Jacob Bernstein and Lu~Wang.
\newblock A topological property of asymptotically conical self-shrinkers of
  small entropy.
\newblock {\em Duke Math. J.}, 166(3):403--435, 2017.

\bibitem[BW18a]{BernsteinWang:hausdorff-stability}
Jacob Bernstein and Lu~Wang.
\newblock Hausdorff stability of the round two-sphere under small perturbations
  of the entropy.
\newblock {\em Math. Res. Lett.}, 25(2):347--365, 2018.

\bibitem[BW18b]{BernsteinWang:degree-expander}
Jacob {Bernstein} and Lu~{Wang}.
\newblock An integer degree for asymptotically conical self-expanders.
\newblock {\em \url{https://arxiv.org/abs/1807.06494}}, 2018.

\bibitem[BW18c]{BernsteinWang:topology-small-ent}
Jacob Bernstein and Lu~Wang.
\newblock Topology of closed hypersurfaces of small entropy.
\newblock {\em Geom. Topol.}, 22(2):1109--1141, 2018.

\bibitem[BW21a]{BernsteinWang:expander-compactness}
Jacob Bernstein and Lu~Wang.
\newblock Smooth compactness for spaces of asymptotically conical
  self-expanders of mean curvature flow.
\newblock {\em Int. Math. Res. Not. IMRN}, (12):9016--9044, 2021.

\bibitem[BW21b]{BernsteinWang:SpaceOfExpanders}
Jacob Bernstein and Lu~Wang.
\newblock The space of asymptotically conical self-expanders of mean curvature
  flow.
\newblock {\em Math. Ann.}, 380(1-2):175--230, 2021.

\bibitem[BW22a]{BernsteinWang:schoenflies}
Jacob Bernstein and Lu~Wang.
\newblock Closed hypersurfaces of low entropy in {$\Bbb R^4$} are isotopically
  trivial.
\newblock {\em Duke Math. J.}, 171(7):1531--1558, 2022.

\bibitem[BW22b]{BernsteinWang:relative-entropy}
Jacob Bernstein and Lu~Wang.
\newblock Relative expander entropy in the presence of a two-sided obstacle and
  applications.
\newblock {\em Adv. Math.}, 399:Paper No. 108284, 48, 2022.

\bibitem[BW22c]{bernsteinWang:top-uniqueness-expanders}
Jacob Bernstein and Lu~Wang.
\newblock Topological uniqueness for self-expanders of small entropy.
\newblock {\em Camb. J. Math.}, 10(4):785--833, 2022.

\bibitem[CCMS21]{CCMS:low1}
Otis Chodosh, Kyeongsu Choi, Christos Mantoulidis, and Felix Schulze.
\newblock Mean curvature flow with generic low-entropy initial data.
\newblock {\em to appear in Duke Math. J.,
  \url{https://arxiv.org/pdf/2102.11978.pdf}}, 2021.

\bibitem[CCS23]{CCS2}
Otis Chodosh, Kyeongsu Choi, and Felix Schulze.
\newblock Mean curvature flow with generic initial data {II}.
\newblock {\em \url{https://arxiv.org/pdf/2302.08409.pdf}}, 2023.

\bibitem[CDHS23]{CDHS}
Otis Chodosh, J.M. Daniels-Holgate, and Felix Schulze.
\newblock Mean curvature flow from conical singularities.
\newblock {\em \url{https://arxiv.org/pdf/2312.00759.pdf}}, 2023.

\bibitem[CESY21]{CESY}
Otis Chodosh, Michael Eichmair, Yuguang Shi, and Haobin Yu.
\newblock Isoperimetry, scalar curvature, and mass in asymptotically flat
  {R}iemannian 3-manifolds.
\newblock {\em Comm. Pure Appl. Math.}, 74(4):865--905, 2021.

\bibitem[CHH22]{ChoiHaslhoferHershkovits}
Kyeongsu Choi, Robert Haslhofer, and Or~Hershkovits.
\newblock Ancient low-entropy flows, mean-convex neighborhoods, and uniqueness.
\newblock {\em Acta Math.}, 228(2):217--301, 2022.

\bibitem[CHHW22]{ChoiHaslhoferHershkovitsWhite}
Kyeongsu Choi, Robert Haslhofer, Or~Hershkovits, and Brian White.
\newblock Ancient asymptotically cylindrical flows and applications.
\newblock {\em Invent. Math.}, 229(1):139--241, 2022.

\bibitem[CHN13]{CheegerHaslhoferNaber}
Jeff Cheeger, Robert Haslhofer, and Aaron Naber.
\newblock Quantitative stratification and the regularity of mean curvature
  flow.
\newblock {\em Geom. Funct. Anal.}, 23(3):828--847, 2013.

\bibitem[Cho94]{Chopp}
David~L. Chopp.
\newblock Computation of self-similar solutions for mean curvature flow.
\newblock {\em Experiment. Math.}, 3(1):1--15, 1994.

\bibitem[CIM15]{ColdingIlmanenMinicozzi}
Tobias~H. Colding, Tom Ilmanen, and William~P. Minicozzi, II.
\newblock Rigidity of generic singularities of mean curvature flow.
\newblock {\em Publ. Math. Inst. Hautes \'{E}tudes Sci.}, 121:363--382, 2015.

\bibitem[CIMW13]{ColdingMinicozziIlmanenWhite}
Tobias~H. Colding, Tom Ilmanen, William~P. Minicozzi, II, and Brian White.
\newblock The round sphere minimizes entropy among closed self-shrinkers.
\newblock {\em J. Differential Geom.}, 95(1):53--69, 2013.

\bibitem[CM12a]{ColdingMinicozzi:generic}
Tobias~H. Colding and William~P. Minicozzi, II.
\newblock Generic mean curvature flow {I}: generic singularities.
\newblock {\em Ann. of Math. (2)}, 175(2):755--833, 2012.

\bibitem[CM12b]{ColdingMinicozzi:compactness-shrinkers}
Tobias~H. Colding and William~P. Minicozzi, II.
\newblock Smooth compactness of self-shrinkers.
\newblock {\em Comment. Math. Helv.}, 87(2):463--475, 2012.

\bibitem[CM15]{ColdingMinicozzi:uniqueness-tangent-flow}
Tobias~H. Colding and William~P. Minicozzi, II.
\newblock Uniqueness of blowups and \l ojasiewicz inequalities.
\newblock {\em Ann. of Math. (2)}, 182(1):221--285, 2015.

\bibitem[CM16]{ColdingMinicozzi:sing-generic}
Tobias~H. Colding and William~P. Minicozzi, II.
\newblock The singular set of mean curvature flow with generic singularities.
\newblock {\em Invent. Math.}, 204(2):443--471, 2016.

\bibitem[CM22]{ChoiMantoulidis}
Kyeongsu Choi and Christos Mantoulidis.
\newblock Ancient gradient flows of elliptic functionals and {M}orse index.
\newblock {\em Amer. J. Math.}, 144(2):541--573, 2022.

\bibitem[CMS23a]{CMS:HS}
Otis Chodosh, Christos Mantoulidis, and Felix Schulze.
\newblock Generic regularity for minimizing hypersurfaces in dimensions 9 and
  10.
\newblock {\em \url{https://arxiv.org/pdf/2302.02253.pdf}}, 2023.

\bibitem[CMS23b]{CMS:low2}
Otis Chodosh, Christos Mantoulidis, and Felix Schulze.
\newblock Mean curvature flow with generic low-entropy initial data {II}.
\newblock {\em \url{https://arxiv.org/pdf/2309.03856.pdf}}, 2023.

\bibitem[CS21]{ChodoshSchulze}
Otis Chodosh and Felix Schulze.
\newblock Uniqueness of asymptotically conical tangent flows.
\newblock {\em Duke Math. J.}, 170(16):3601--3657, 2021.

\bibitem[CY07]{ChenYin}
Bing-Long Chen and Le~Yin.
\newblock Uniqueness and pseudolocality theorems of the mean curvature flow.
\newblock {\em Comm. Anal. Geom.}, 15(3):435--490, 2007.

\bibitem[DdPS18]{DdPS:type2Yamabe}
Panagiota Daskalopoulos, Manuel del Pino, and Natasa Sesum.
\newblock Type {II} ancient compact solutions to the {Y}amabe flow.
\newblock {\em J. Reine Angew. Math.}, 738:1--71, 2018.

\bibitem[DGS65]{DeGiorgiStampacchia}
Ennio De~Giorgi and Guido Stampacchia.
\newblock Sulle singolarit\`a eliminabili delle ipersuperficie minimali.
\newblock {\em Atti Accad. Naz. Lincei Rend. Cl. Sci. Fis. Mat. Nat. (8)},
  38:352--357, 1965.

\bibitem[DHS10]{DaskalopolousHamiltonSesum:CSF}
Panagiota Daskalopoulos, Richard Hamilton, and Natasa Sesum.
\newblock Classification of compact ancient solutions to the curve shortening
  flow.
\newblock {\em J. Differential Geom.}, 84(3):455--464, 2010.

\bibitem[DHS12]{DHS:RF-surf}
Panagiota Daskalopoulos, Richard Hamilton, and Natasa Sesum.
\newblock Classification of ancient compact solutions to the {R}icci flow on
  surfaces.
\newblock {\em J. Differential Geom.}, 91(2):171--214, 2012.

\bibitem[Eck00]{Ecker:Sobolev}
Klaus Ecker.
\newblock Logarithmic {S}obolev inequalities on submanifolds of {E}uclidean
  space.
\newblock {\em J. Reine Angew. Math.}, 522:105--118, 2000.

\bibitem[Eck04]{Ecker:book}
Klaus Ecker.
\newblock {\em Regularity theory for mean curvature flow}, volume~57 of {\em
  Progress in Nonlinear Differential Equations and their Applications}.
\newblock Birkh\"{a}user Boston, Inc., Boston, MA, 2004.

\bibitem[EH89]{EckerHuisken:graphs}
Klaus Ecker and Gerhard Huisken.
\newblock Mean curvature evolution of entire graphs.
\newblock {\em Ann. of Math. (2)}, 130(3):453--471, 1989.

\bibitem[EH91]{EckerHuisken:interior}
Klaus Ecker and Gerhard Huisken.
\newblock Interior estimates for hypersurfaces moving by mean curvature.
\newblock {\em Invent. Math.}, 105(3):547--569, 1991.

\bibitem[EHIZ22]{EHIJ:free-boundary}
Nick Edelen, Robert Haslhofer, Mohammad~N. Ivaki, and Jonathan~J. Zhu.
\newblock Mean convex mean curvature flow with free boundary.
\newblock {\em Comm. Pure Appl. Math.}, 75(4):767--817, 2022.

\bibitem[ES91]{EvansSpruck1}
Lawrence~C. Evans and Joel Spruck.
\newblock Motion of level sets by mean curvature. {I}.
\newblock {\em J. Differential Geom.}, 33(3):635--681, 1991.

\bibitem[ES95]{EvansSpruck:levelset4}
Lawrence~C. Evans and Joel Spruck.
\newblock Motion of level sets by mean curvature. {IV}.
\newblock {\em J. Geom. Anal.}, 5(1):77--114, 1995.

\bibitem[Eva10]{Evans:PDE}
Lawrence~C. Evans.
\newblock {\em Partial differential equations}, volume~19 of {\em Graduate
  Studies in Mathematics}.
\newblock American Mathematical Society, Providence, RI, second edition, 2010.

\bibitem[HH16]{HaslhoferHerskovits:ancient}
Robert Haslhofer and Or~Hershkovits.
\newblock Ancient solutions of the mean curvature flow.
\newblock {\em Comm. Anal. Geom.}, 24(3):593--604, 2016.

\bibitem[HH18]{HaslhoferHershkovits}
Robert Haslhofer and Or~Hershkovits.
\newblock Singularities of mean convex level set flow in general ambient
  manifolds.
\newblock {\em Adv. Math.}, 329:1137--1155, 2018.

\bibitem[Hir76]{Hirsch:diff-top}
Morris~W. Hirsch.
\newblock {\em Differential topology}.
\newblock Springer-Verlag, New York-Heidelberg, 1976.
\newblock Graduate Texts in Mathematics, No. 33.

\bibitem[HK17a]{HaslhoferKleiner:estimates}
Robert Haslhofer and Bruce Kleiner.
\newblock Mean curvature flow of mean convex hypersurfaces.
\newblock {\em Comm. Pure Appl. Math.}, 70(3):511--546, 2017.

\bibitem[HK17b]{HaslhoferKleiner:surgery}
Robert Haslhofer and Bruce Kleiner.
\newblock Mean curvature flow with surgery.
\newblock {\em Duke Math. J.}, 166(9):1591--1626, 2017.

\bibitem[HS85]{HardtSimon:foliation}
Robert Hardt and Leon Simon.
\newblock Area minimizing hypersurfaces with isolated singularities.
\newblock {\em J. Reine Angew. Math.}, 362:102--129, 1985.

\bibitem[HS99a]{HuiskenSinestrari:convexity}
Gerhard Huisken and Carlo Sinestrari.
\newblock Convexity estimates for mean curvature flow and singularities of mean
  convex surfaces.
\newblock {\em Acta Math.}, 183(1):45--70, 1999.

\bibitem[HS99b]{HuiskenSinnestrari:MCF-mean-convex}
Gerhard Huisken and Carlo Sinestrari.
\newblock Mean curvature flow singularities for mean convex surfaces.
\newblock {\em Calc. Var. Partial Differential Equations}, 8(1):1--14, 1999.

\bibitem[HS09]{HuiskenSinestrari:surgery}
Gerhard Huisken and Carlo Sinestrari.
\newblock Mean curvature flow with surgeries of two-convex hypersurfaces.
\newblock {\em Invent. Math.}, 175(1):137--221, 2009.

\bibitem[HS15]{HuiskenSinestrari:ancient}
Gerhard Huisken and Carlo Sinestrari.
\newblock Convex ancient solutions of the mean curvature flow.
\newblock {\em J. Differential Geom.}, 101(2):267--287, 2015.

\bibitem[Hui84]{Huisken:convex}
Gerhard Huisken.
\newblock Flow by mean curvature of convex surfaces into spheres.
\newblock {\em J. Differential Geom.}, 20(1):237--266, 1984.

\bibitem[Hui90]{Huisken:sing}
Gerhard Huisken.
\newblock Asymptotic behavior for singularities of the mean curvature flow.
\newblock {\em J. Differential Geom.}, 31(1):285--299, 1990.

\bibitem[Hui93]{Huisken:local-global}
Gerhard Huisken.
\newblock Local and global behaviour of hypersurfaces moving by mean curvature.
\newblock In {\em Differential geometry: partial differential equations on
  manifolds ({L}os {A}ngeles, {CA}, 1990)}, volume~54 of {\em Proc. Sympos.
  Pure Math.}, pages 175--191. Amer. Math. Soc., Providence, RI, 1993.

\bibitem[HW19]{HershkovitsWhite:sharp-entropy}
Or~Hershkovits and Brian White.
\newblock Sharp entropy bounds for self-shrinkers in mean curvature flow.
\newblock {\em Geom. Topol.}, 23(3):1611--1619, 2019.

\bibitem[HW20]{HershkovtisWhite}
Or~Hershkovits and Brian White.
\newblock Nonfattening of mean curvature flow at singularities of mean convex
  type.
\newblock {\em Comm. Pure Appl. Math.}, 73(3):558--580, 2020.

\bibitem[HW23]{HerskovitsWhite:avoidance-set-theoretic}
Or~Hershkovits and Brian White.
\newblock Avoidance for set-theoretic solutions of mean-curvature-type flows.
\newblock {\em Comm. Anal. Geom.}, 31(1):31--67, 2023.

\bibitem[Ilm93]{Ilmanen:levelset}
Tom Ilmanen.
\newblock The level-set flow on a manifold.
\newblock In {\em Differential geometry: partial differential equations on
  manifolds ({L}os {A}ngeles, {CA}, 1990)}, volume~54 of {\em Proc. Sympos.
  Pure Math.}, pages 193--204. Amer. Math. Soc., Providence, RI, 1993.

\bibitem[Ilm94]{Ilmanen:elliptic}
Tom Ilmanen.
\newblock Elliptic regularization and partial regularity for motion by mean
  curvature.
\newblock {\em Mem. Amer. Math. Soc.}, 108(520):x+90, 1994.

\bibitem[Ilm95a]{Ilmanen:Trieste}
Tom Ilmanen.
\newblock Lectures on mean curvature flow and related equations ({T}rieste
  notes).
\newblock {\em \url{https://people.math.ethz.ch/~ilmanen/papers/notes.ps}},
  1995.

\bibitem[Ilm95b]{Ilmanen:singularities}
Tom Ilmanen.
\newblock Singularities of mean curvature flow of surfaces.
\newblock {\em \url{https://people.math.ethz.ch/~ilmanen/papers/sing.ps}},
  1995.

\bibitem[Ilm96]{Ilmanen:maximum}
T.~Ilmanen.
\newblock A strong maximum principle for singular minimal hypersurfaces.
\newblock {\em Calc. Var. Partial Differential Equations}, 4(5):443--467, 1996.

\bibitem[Ilm03]{Ilmanen:problems}
Tom Ilmanen.
\newblock Problems in mean curvature flow.
\newblock {\em
  \url{https://people.math.ethz.ch/~ilmanen/classes/eil03/problems03.ps}},
  2003.

\bibitem[INS19]{IlmanenNevesSchulze}
Tom Ilmanen, Andr\'{e} Neves, and Felix Schulze.
\newblock On short time existence for the planar network flow.
\newblock {\em J. Differential Geom.}, 111(1):39--89, 2019.

\bibitem[IW15]{IlmanenWhite:sharp.entropy}
Tom Ilmanen and Brian White.
\newblock Sharp lower bounds on density for area-minimizing cones.
\newblock {\em Camb. J. Math.}, 3(1-2):1--18, 2015.

\bibitem[Ket16]{Ketover:self-shrinkers}
Daniel Ketover.
\newblock Self-shrinking platonic solids.
\newblock {\em \url{https://arxiv.org/abs/1602.07271}}, 2016.

\bibitem[KKM18]{KKM:AC}
Nikolaos Kapouleas, Stephen~James Kleene, and Niels~Martin M{\o}ller.
\newblock Mean curvature self-shrinkers of high genus: non-compact examples.
\newblock {\em J. Reine Angew. Math.}, 739:1--39, 2018.

\bibitem[KL17]{KleinerLott}
Bruce Kleiner and John Lott.
\newblock Singular {R}icci flows {I}.
\newblock {\em Acta Math.}, 219(1):65--134, 2017.

\bibitem[Kne81]{Knerr:Schauder}
Barry~F. Knerr.
\newblock Parabolic interior {S}chauder estimates by the maximum principle.
\newblock {\em Arch. Rational Mech. Anal.}, 75(1):51--58, 1980/81.

\bibitem[Lin20]{lin:star-shaped}
Longzhi Lin.
\newblock Mean curvature flow of star-shaped hypersurfaces.
\newblock {\em Comm. Anal. Geom.}, 28(6):1315--1336, 2020.

\bibitem[MN14]{MarquesNeves}
Fernando~C. Marques and Andr\'{e} Neves.
\newblock Min-max theory and the {W}illmore conjecture.
\newblock {\em Ann. of Math. (2)}, 179(2):683--782, 2014.

\bibitem[Ngu14]{Nguyen:AC}
Xuan~Hien Nguyen.
\newblock Construction of complete embedded self-similar surfaces under mean
  curvature flow, {P}art {III}.
\newblock {\em Duke Math. J.}, 163(11):2023--2056, 2014.

\bibitem[Sch14]{Schulze:Loj}
Felix Schulze.
\newblock Uniqueness of compact tangent flows in mean curvature flow.
\newblock {\em J. Reine Angew. Math.}, 690:163--172, 2014.

\bibitem[Ses08]{Sesum:rateMCF}
Natasa Sesum.
\newblock Rate of convergence of the mean curvature flow.
\newblock {\em Comm. Pure Appl. Math.}, 61(4):464--485, 2008.

\bibitem[Sim97]{Simon:Schauder}
Leon Simon.
\newblock Schauder estimates by scaling.
\newblock {\em Calc. Var. Partial Differential Equations}, 5(5):391--407, 1997.

\bibitem[Sma93]{Smale}
Nathan Smale.
\newblock Generic regularity of homologically area minimizing hypersurfaces in
  eight-dimensional manifolds.
\newblock {\em Comm. Anal. Geom.}, 1(2):217--228, 1993.

\bibitem[Smo98]{Smoczyk}
Knut Smoczyk.
\newblock Starshaped hypersurfaces and the mean curvature flow.
\newblock {\em Manuscripta Math.}, 95(2):225--236, 1998.

\bibitem[Sun23]{Sun:mult-gen}
Ao~Sun.
\newblock Local entropy and generic multiplicity one singularities of mean
  curvature flow of surfaces.
\newblock {\em J. Differential Geom.}, 124(1):169--198, 2023.

\bibitem[SW89]{SolomonWhite}
Bruce Solomon and Brian White.
\newblock A strong maximum principle for varifolds that are stationary with
  respect to even parametric elliptic functionals.
\newblock {\em Indiana Univ. Math. J.}, 38(3):683--691, 1989.

\bibitem[SW20a]{SchulzeWhite}
Felix Schulze and Brian White.
\newblock A local regularity theorem for mean curvature flow with triple edges.
\newblock {\em J. Reine Angew. Math.}, 758:281--305, 2020.

\bibitem[SW20b]{SunWang:compactness}
Ao~Sun and Zhichao Wang.
\newblock Compactness of self-shrinkers in {$\Bbb{R}^3$} with fixed genus.
\newblock {\em Adv. Math.}, 367:107110, 39, 2020.

\bibitem[Wan11]{XJWang:ancient}
Xu-Jia Wang.
\newblock Convex solutions to the mean curvature flow.
\newblock {\em Ann. of Math. (2)}, 173(3):1185--1239, 2011.

\bibitem[Wan14]{Wang:uniqueness}
Lu~Wang.
\newblock Uniqueness of self-similar shrinkers with asymptotically conical
  ends.
\newblock {\em J. Amer. Math. Soc.}, 27(3):613--638, 2014.

\bibitem[Wan16]{Wang:ends-conical}
Lu~Wang.
\newblock Asymptotic structure of self-shrinkers.
\newblock {\em \url{https://arxiv.org/abs/1610.04904}}, 2016.

\bibitem[Whi95]{White:topology-weak}
Brian White.
\newblock The topology of hypersurfaces moving by mean curvature.
\newblock {\em Comm. Anal. Geom.}, 3(1-2):317--333, 1995.

\bibitem[Whi97]{White:stratification}
Brian White.
\newblock Stratification of minimal surfaces, mean curvature flows, and
  harmonic maps.
\newblock {\em J. Reine Angew. Math.}, 488:1--35, 1997.

\bibitem[Whi00]{White:size}
Brian White.
\newblock The size of the singular set in mean curvature flow of mean-convex
  sets.
\newblock {\em J. Amer. Math. Soc.}, 13(3):665--695, 2000.

\bibitem[Whi02]{White:ICM}
Brian White.
\newblock Evolution of curves and surfaces by mean curvature.
\newblock In {\em Proceedings of the {I}nternational {C}ongress of
  {M}athematicians, {V}ol. {I} ({B}eijing, 2002)}, pages 525--538. Higher Ed.
  Press, Beijing, 2002.

\bibitem[Whi03]{White:nature}
Brian White.
\newblock The nature of singularities in mean curvature flow of mean-convex
  sets.
\newblock {\em J. Amer. Math. Soc.}, 16(1):123--138, 2003.

\bibitem[Whi05]{White:Brakke}
Brian White.
\newblock A local regularity theorem for mean curvature flow.
\newblock {\em Ann. of Math. (2)}, 161(3):1487--1519, 2005.

\bibitem[Whi09]{White:cyclic}
Brian White.
\newblock Currents and flat chains associated to varifolds, with an application
  to mean curvature flow.
\newblock {\em Duke Math. J.}, 148(1):41--62, 2009.

\bibitem[Whi13]{White:top-change-meanconvex}
Brian White.
\newblock Topological change in mean convex mean curvature flow.
\newblock {\em Invent. Math.}, 191(3):501--525, 2013.

\bibitem[Whi15]{White:subsequent}
Brian White.
\newblock Subsequent singularities in mean-convex mean curvature flow.
\newblock {\em Calc. Var. Partial Differential Equations}, 54(2):1457--1468,
  2015.

\bibitem[Whi21]{White:MCFboundary}
Brian White.
\newblock Mean curvature flow with boundary.
\newblock {\em Ars Inven. Anal.}, pages Paper No. 4, 43, 2021.

\bibitem[Zhu20]{Zhu:entropy}
Jonathan~J. Zhu.
\newblock On the entropy of closed hypersurfaces and singular self-shrinkers.
\newblock {\em J. Differential Geom.}, 114(3):551--593, 2020.

\end{thebibliography}

\end{document}